\numberwithin{equation}{section} 
\theoremstyle{plain}
\newtheorem{thm}{Theorem}[section]
\newtheorem{cor}[thm]{Corollary}
\newtheorem{prop}[thm]{{\bf Proposition}}
\newtheorem{lem}[thm]{{\bf Lemma}}
\newtheorem{claim}[thm]{Claim}
\newtheorem{rem}[thm]{Remark}
\newcounter{hyp-counter}
\theoremstyle{definition}
\newtheorem{defn}[thm]{Definition}
\newtheorem{conjecture}[thm]{Conjecture}
\theoremstyle{remark}
\newtheorem{example}[thm]{Example}
\newcommand{\diag}{\operatorname{diag}}
\newcommand{\diam}{\operatorname{diam}}
\newcommand{\Diff}{\operatorname{Diff}}
\newcommand{\Jac}{\operatorname{Jac}}
\newcommand{\len}{\operatorname{len}}
\newcommand{\N}{\mathbb{N}}
\newcommand{\R}{\mathbb{R}}
\newcommand{\RP}{\mathbb{R}\operatorname{P}}
\newcommand{\SL}{\operatorname{SL}}
\newcommand{\SO}{\operatorname{SO}}
\newcommand{\vol}{\operatorname{vol}}
\newcommand{\Z}{\mathbb{Z}}
\newcommand{\wt}[1]{\widetilde{#1}}
\newcommand{\abs}[1]{\left| #1\right|}
\newcommand{\mc}[1]{\mathcal{#1}}
\newcommand{\pez}[1]{\left( #1\right)}
\newcommand{\E}[1]{\mathbb{E}\left[{#1}\right]}
\def\blfootnote{\xdef\@thefnmark{}\@footnotetext}
\title[Expanding on Average Diffeomorphisms]{Expanding on Average Diffeomorphisms of Surfaces: Exponential Mixing}
\author{Jonathan DeWitt and Dmitry Dolgopyat}
\address{Department of Mathematics, The University of Maryland, College Park, MD 20742, USA}
\email{dewitt@umd.edu, dolgop@umd.edu}
\date{\today}
\begin{document}
\begin{abstract}
We show that the Bernoulli random dynamical system associated to a expanding on average tuple of volume preserving diffeomorphisms of a closed surface is exponentially mixing.
\end{abstract}

\maketitle

\tableofcontents

\allowdisplaybreaks

\section{Introduction}

\subsection{The main result}
In this paper, we prove exponential equidistribution and mixing results for expanding on average random dynamical systems. 
Suppose that $M$ is a closed Riemannian surface with a smooth area, and
$(f_1,\ldots,f_m)$ is a tuple of diffeomorphisms in $\Diff^2_{\vol}(M)$. We then define a random dynamical system, where at each time step we choose uniformly at random an index $i\in \{1,\ldots , m\}$ and apply $f_i$ to $M$. 
We call this the (uniform Bernoulli) random dynamical system on $M$ associated to the tuple 
$(f_1,\ldots,f_m)$. A  realization of the randomness is then given by a word from $\Sigma=\{1,\ldots, m\}^{\N}$.
As usual, we equip $\Sigma$ with the distance $d(\omega', \omega'')=2^{-k}$
where $k=\max\{N: \omega'_n=\omega''_n\text{ for }n<N\}$.
We let $\sigma\colon \Sigma\to \Sigma$ denote the left shift and let $\mu$ the uniform Bernoulli product measure on $\Sigma$.

For such random dynamical systems, mixing does not hold for all tuples $(f_1,\ldots,f_m)$. We will introduce an additional hypothesis. We say that a tuple $(f_1,\ldots,f_m)$ is \emph{expanding on average} if there exists $\lambda>0$ and $n_0\in \N$ such that for all $v\in T^1M$,  the unit tangent bundle of $M$,
\begin{equation}
\label{EqExpAv}
\frac{1}{n_0}\E{\ln \| Df^{n_0}_{\omega} v\|}\ge \lambda>0.
\end{equation}
Note that \eqref{EqExpAv} is  a $C^1$-open condition on the tuple $(f_1,\ldots,f_m)$, so in principle it could be checked on a 
computer (cf.~\cite{chung2020stationary}).

The main result of our paper is that the systems satisfying \eqref{EqExpAv} enjoy exponential mixing. 

\begin{thm}(Quenched Exponential Mixing) 
\label{ThQEM}
Suppose that $M$ is a closed surface and that $(f_1,\ldots,f_m)$ is an expanding on average 
 tuple of diffeomorphisms in $\Diff^2_{\vol}(M)$. Let $\beta\in (0,1)$ be a H\"older regularity. There exists $\eta>0$ such that for 
 a.e.~$\omega\in\Sigma$, there exists
 $C_{\omega}$ such that for any $\phi,\psi\in C^{\beta}(M)$,
\begin{equation}
 \label{EqQEM}   
\abs{\int \phi \psi\circ f^n_{\omega}\,d\vol-\int \phi\,d\vol\int \psi\,d\vol}\le C_{\omega}e^{-\eta n}\|\phi\|_{C^{\beta}}\|\psi\|_{C^{\beta}}
\end{equation}
 where $f_{\sigma^j(\omega)}^i=f_{\omega_{j+i}}\cdots f_{\omega_{j+1}}$.
Further, there exists $D_1>0$ such that
\begin{equation}
 \label{EqEMTail}  
\mu(\omega:C_{\omega}\ge C)\le D_1C^{-1}.
\end{equation}
\end{thm}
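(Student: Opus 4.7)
The plan is to establish an annealed correlation bound first and then upgrade to the quenched conclusion \eqref{EqQEM}--\eqref{EqEMTail}. Volume preservation on a surface combined with the expanding on average condition \eqref{EqExpAv} forces the random Lyapunov spectrum to be $\{\lambda,-\lambda\}$ with $\lambda>0$. Random Pesin theory then produces measurable families of stable and unstable manifolds whose sizes $r_\omega(x)$ admit exponential tails: the uniform-in-$v$ lower bound in \eqref{EqExpAv} combined with the boundedness of $\log\|Df_i\|$ on a compact surface yields a large deviation estimate for the cocycle $\log\|Df^n_\omega v\|$, which in turn controls the distribution of all the Pesin parameters.

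To prove annealed mixing, I would work in the standard pair / standard family formalism. A \emph{standard pair} is a short unstable curve $\gamma$ equipped with a smooth density $\rho$; writing $\phi\,d\vol$ as a signed combination of standard pairs is straightforward using local unstable foliations. Under iteration by $f^n_\omega$, the curve $\gamma$ expands at rate $e^{\lambda n}$ on average; pieces that become too long are subdivided, producing a new standard family. The essential \emph{growth lemma} asserts that the mass concentrated on short pieces decays exponentially in $n$, on average in $\omega$. Two standard families can then be coupled by matching paired points along stable manifolds of the random system; since distance along stables contracts at rate $e^{-\lambda n}$, H\"older correlations of exponent $\beta$ decay at rate $e^{-\eta_0 n}$ for some $\eta_0>0$. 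The resulting annealed bound reads
\begin{equation*}
\int_{\Sigma}\abs{\int \phi\,\psi\circ f^n_\omega\,d\vol-\int \phi\,d\vol\int\psi\,d\vol}\,d\mu(\omega)\le Ce^{-\eta_0 n}\|\phi\|_{C^\beta}\|\psi\|_{C^\beta}.
\end{equation*}

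The quenched statement follows from this by a standard Borel--Cantelli argument. Fix $\eta\in(0,\eta_0)$ and let $X_n(\omega,\phi,\psi)$ denote the normalized correlation at time $n$. Markov's inequality gives $\mu(X_n\ge e^{-\eta n})\le Ce^{-(\eta_0-\eta)n}$, so for any fixed $\phi,\psi$ only finitely many exceptional $n$ occur a.s. To get a uniform $C_\omega$ valid on all of $C^\beta(M)\times C^\beta(M)$, restrict to a countable dense subfamily (e.g., a countable $C^{\beta'}$-net for $\beta'$ slightly less than $\beta$) and interpolate via the H\"older modulus. Setting $C_\omega=\sup_{n,\phi,\psi}e^{\eta n}X_n(\omega,\phi,\psi)$ over the net, the tail bound \eqref{EqEMTail} is immediate from Markov's inequality applied to $\sum_n e^{\eta n}X_n$, whose annealed mean is $O(1)$ by the geometric series.

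The hard part is the coupling step, and specifically the stable-manifold matching in the random non-uniformly hyperbolic setting. Unlike the deterministic uniformly hyperbolic case, the sizes of stable manifolds and the constants in the graph transform depend on $\omega$, so one must establish the growth lemma and the matching lemma with explicit exponential tails (not merely positive density) on the relevant Pesin parameters along the random orbit. This in turn requires quantitative large deviation estimates for the cocycle and its derivative, and a careful chopping procedure ensuring that, on average in $\omega$, the family of standard pairs remains well-distributed after each iteration.
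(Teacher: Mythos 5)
Your overall architecture (standard pairs, a growth lemma, coupling along random stable manifolds, then an annealed-to-quenched upgrade) is the right family of ideas and broadly parallels the paper, but the quenched upgrade as you describe it has a genuine gap, and it concerns exactly the part of the statement that requires care. For a \emph{fixed} pair $(\phi,\psi)$, Markov plus the geometric series does give a random constant $C_\omega(\phi,\psi)=\sup_n e^{\eta n}X_n\le\sum_n e^{\eta n}X_n$ with $\mathbb{E}[C_\omega(\phi,\psi)]=O(1)$ and hence a $C^{-1}$ tail. But the theorem asserts a \emph{single} $C_\omega$ valid for all $\phi,\psi\in C^{\beta}(M)$ simultaneously, with the tail bound \eqref{EqEMTail}. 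Taking the supremum over a countable net destroys your estimate: the supremum of countably many nonnegative random variables, each with mean $O(1)$, need not be integrable, need not have a $C^{-1}$ tail, and need not even be finite almost surely; Markov applied termwise controls each member of the net separately but says nothing about the supremum. Weighting the net to restore integrability degrades the constant on the far members and ruins the uniform bound after interpolation. The paper circumvents this by never letting the random constant see the observables: it proves quenched tail bounds on \emph{observable-independent} quantities --- the mass of points whose coupling time to a family representing volume exceeds $i+n$, and the mass of points that are not $(n,\lambda,\epsilon)$-backwards good (Propositions \ref{prop:quenched_coupling_lemma} and \ref{prop:quenched_exp_equidistribution_on_subfamilies}, obtained by Fubini and a union bound over the pairs $(i,n)$) --- and then deduces \eqref{EqQEM} for arbitrary $\phi,\psi$ \emph{deterministically} from these, with $\|\phi\|_{C^{\beta}}\|\psi\|_{C^{\beta}}$ factored out. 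Concretely, one iterates $\delta n$ steps so that $\phi\circ(f^{\delta n}_{\omega})^{-1}$ is essentially constant on each backwards-contracted good piece, and then equidistributes $\psi$ over the remaining $(1-\delta)n$ steps by coupling to volume. Your proposal needs this (or an equivalent) mechanism to produce a legitimate $C_\omega$ with the stated tail.

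A second, smaller omission: in the coupling step you propose to match points along the true stable manifolds of the random system. Whether two points lie on the same stable manifold depends on the entire future of $\omega$, so this matching is not adapted to the filtration generated by the coordinates of $\omega$ and forfeits the Markov property on which the growth lemma and the exponential tail estimates for the coupling time rely. The paper's resolution is to run the coupling with finite-time ``fake'' stable manifolds and stopping times, discarding points whose tempering fails, and to show a posteriori that the fake holonomies converge exponentially fast to the true ones. You correctly flag the coupling as the hard step, but this adaptedness obstruction is the specific reason it is hard here, and your sketch does not address it.
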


 In fact, the tail bound \eqref{EqEMTail} implies a related result, annealed exponential mixing for the associated skew product.
  We give the proof of the following in 
\S \ref{sec:limit_theorems}. 

\begin{cor}
\label{CrAnEM}
(Annealed Exponential Mixing) Let $M$ be a closed surface, let $(f_1,\ldots,f_m)$ be an expanding on average tuple in $\Diff^2_{\vol}(M)$, and $\beta\in(0,1)$ be a H\"older regularity. 
Let $F\colon \Sigma\times M\to \Sigma\times M$ be the skew product defined by
$$ F(\omega, x)=(\sigma(\omega), f_{\omega_0} (x)).$$
Then $F$ is exponentially mixing, that is,
there exist $\bar\eta>0$, $D$ such that for any $\Phi,\Psi\in C^{\beta}(\Sigma\times M)$, 
$$
\left|\iint \Phi (\Psi\circ F^n)\, d\mu \,d\vol
-\iint \Phi\,d\mu\, d\vol \iint\Psi\,d\mu\, d\vol\right|
\le De^{-\bar\eta n}\|\Phi\|_{C^{\beta}}\|\Psi\|_{C^{\beta}}.
$$
\end{cor}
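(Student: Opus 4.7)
The plan is to combine the quenched bound \eqref{EqQEM} and the tail bound \eqref{EqEMTail} fiberwise, then invoke exponential mixing of the one-sided Bernoulli shift on H\"older observables. Fix $\Phi,\Psi\in C^\beta(\Sigma\times M)$, and for each $\omega$ set $\phi_\omega(x)=\Phi(\omega,x)$, $\psi_\omega(x)=\Psi(\omega,x)$. The product metric on $\Sigma\times M$ ensures $\|\phi_\omega\|_{C^\beta(M)}\le \|\Phi\|_{C^\beta}$, and similarly for $\psi_\omega$. By Fubini,
\[
\iint \Phi\cdot(\Psi\circ F^n)\,d\mu\,d\vol
= \int_\Sigma \biggl(\int_M \phi_\omega(x)\,\psi_{\sigma^n\omega}(f_\omega^n x)\,d\vol(x)\biggr)d\mu(\omega).
\]

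For $\omega$ in the full-measure set where \eqref{EqQEM} holds, the inner integral equals $A(\omega)\,B(\sigma^n\omega)+R_n(\omega)$, where $A(\omega)=\int\phi_\omega\,d\vol$, $B(\omega)=\int\psi_\omega\,d\vol$, and $|R_n(\omega)|\le C_\omega e^{-\eta n}\|\Phi\|_{C^\beta}\|\Psi\|_{C^\beta}$. The main obstacle here is that \eqref{EqEMTail} provides only a weak-$L^1$ tail, so $C_\omega$ need not be $\mu$-integrable. I would handle this by truncating at $T=e^{\eta n/2}$: on $\{C_\omega\le T\}$ the integrated remainder is bounded by $e^{-\eta n/2}\|\Phi\|_{C^\beta}\|\Psi\|_{C^\beta}$, while on $\{C_\omega>T\}$ the trivial bound $|R_n|\le 2\|\Phi\|_\infty\|\Psi\|_\infty$ combined with $\mu\{C_\omega>T\}\le D_1 e^{-\eta n/2}$ yields the same order of decay.

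It remains to estimate $\int_\Sigma A(\omega)\,B(\sigma^n\omega)\,d\mu(\omega)-\int A\,d\mu\int B\,d\mu$. Integration over $M$ preserves H\"older regularity in $\omega$, so $A,B\in C^\beta(\Sigma)$ with norms bounded by $\|\Phi\|_{C^\beta}$ and $\|\Psi\|_{C^\beta}$ respectively. This reduces to the classical fact that the Bernoulli shift $(\Sigma,\sigma,\mu)$ is exponentially mixing on $C^\beta$: approximate $A$ by its cylinder truncation $A_k=\mathbb{E}[A\mid\omega_0,\ldots,\omega_{k-1}]$, which satisfies $\|A-A_k\|_\infty\le c\,2^{-\beta k}\|A\|_{C^\beta}$; for $n>k$, the function $A_k$ depends only on coordinates $\omega_0,\ldots,\omega_{k-1}$ while $B\circ\sigma^n$ depends only on $\omega_n,\omega_{n+1},\ldots$, so they are $\mu$-independent and $\int A_k\cdot(B\circ\sigma^n)\,d\mu=\int A_k\,d\mu\int B\,d\mu$. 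Taking $k=\lfloor n/2\rfloor$ yields exponential decay, and combining with the previous step gives the corollary with $\bar\eta$ equal to $\min(\eta/2,\beta(\log 2)/2)$, using that $\int A\,d\mu=\iint\Phi\,d\mu\,d\vol$ and similarly for $B,\Psi$.
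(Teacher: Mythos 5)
Your proof is correct and follows essentially the same route as the paper: reduce to the fiber-averaged observables $\bar\Phi,\bar\Psi$ via the quenched bound, split $\Sigma$ according to whether $C_\omega\le e^{\eta n/2}$ using the tail bound \eqref{EqEMTail} on the complement, and then invoke exponential mixing of the Bernoulli shift on H\"older functions. The only cosmetic difference is that you prove the shift-mixing step directly by cylinder (conditional-expectation) approximation, where the paper cites Parry--Pollicott.
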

 Before we proceed to discussing the relationship of this work with the existing literature, we will look at some examples of systems satisfying 
 \eqref{EqExpAv}.

\begin{rem}
 Although we have written this paper for a finite tuple $(f_1,\ldots,f_m)$ of diffeomorphisms to emphasize the discreteness of the noise, one can consider random dynamics generated by any probability measure $\mu$ on $\Diff^2_{\vol}(M)$. 
Similar arguments to the ones we present here imply the analogous conclusions hold for random dynamics generated by a measure $\mu$ with compact support on $\Diff^2_{\vol}(M)$, where $M$ is a closed surface. 
\end{rem}

\subsection{Examples}\label{subsec:examples}
There are a number of sources of tuples $(f_1,\ldots,f_m)$ that are  
expanding on average.
The random dynamics arising from such tuples may exhibit uniform or non-uniform hyperbolicity. One of the simplest and archetypal examples is the following.

\begin{example}
Suppose that $(A_1,\ldots,A_m)$ is a tuple of matrices in $\SL(2,\Z)$ satisfying the hypotheses of Furstenberg's theorem, namely the tuple is strongly irreducible and contracting. Then the Bernoulli random product of these matrices has a positive top Lyapunov exponent. It follows from the proof of Furstenberg's theorem, see, e.g.~\cite[Thm.~III.4.3]{BougerolLacroix},
that there exists $N$ and $\lambda>0$ such that for all unit vectors $v\in \R^2$,
\[
N^{-1}\E{\ln \|A^N_{\omega}v\|}\ge \lambda>0.
\]
Each $A_i\in \SL(2,\Z)$ acts on $\mathbb{T}^2=\R^2/\Z^2$, and the associated random dynamics on $\mathbb{T}^2$ is uniformly expanding on average. Because this is an open condition, we see that any volume preserving perturbation of the $A_i$ is also uniformly expanding. Thus, our theorem applies to a class of non-linear systems that do not exhibit any uniform hyperbolicity. 

 In addition, the expanding on average property generalizes to many other random walks on homogeneous spaces,  see for example \cite[Def.~1.4]{eskin2018random}, which uses this property to study stiffness of stationary measures of random walks on homogeneous spaces.
\end{example}

Expanding on average systems also arise as perturbations of isometric systems.

\begin{example}
Perhaps the first example where this condition was considered for nonlinear diffeomorphisms was  the paper of Dolgopyat and Krikorian \cite{dolgopyat2007simultaneous}.
Suppose that $(R_1,\ldots,R_m)$ is a tuple of isometries of $S^2$ that generates a dense subgroup of $\SO(3)$. Then 
\cite{dolgopyat2007simultaneous} shows that there exists $k_0$ such that if $(f_1,\ldots,f_m)$ is a sufficiently $C^{k_0}$ small volume preserving perturbation of $(R_1,\ldots,R_m)$, and the tuple $(f_1,\ldots,f_m)$ has a stationary measure with non-zero Lyapunov exponents, then $(f_1,\ldots,f_m)$ is expanding on average. See also DeWitt \cite{dewitt2024simultaneous}.
\end{example}

 Other work has explored how ubiquitous expanding on average systems are, in some cases studying whether expanding on average systems can be realized by perturbing a known system of interest.
\begin{example}
Chung \cite{chung2020stationary} gives a proof that certain random perturbations of the standard map are expanding on average
(see also \cite{blumenthal2017lyapunov, blumenthal2018lyapunov} 
which studies the size of Lyapunov exponents for perturbations of the standard map with a large coupling constant).
\cite{chung2020stationary} also 
presents convincing numerical simulations showing 
that certain actions on character varieties are expanding on average as well. 
\end{example}

There are also some results that construct expanding on average systems densely in a weak* sense.

\begin{example}
The paper \cite{potrie2022remark} says that for every open set $\mc{U}\subseteq \Diff^{\infty}_{\vol}(M)$, where $M$ is a surface, there exists a finitely supported measure on $\mc{U}$ that is expanding on average. This result was generalized to higher dimensions in \cite{elliott2023uniformly}. 
\end{example}

\subsection{Relationship with other works}
Exponential mixing plays the central role in the study of statistical properties of dynamical systems. 
In particular, multiple exponential mixing implies several probabilistic results including
the Central Limit Theorem \cite{Chernov06, BG20}, Poisson Limit Theorem \cite{DFL22},and the dynamical Borel Cantelli Lemma
\cite{Galatolo10} among others.
Further, exponential mixing was recently shown to imply Bernoullicity \cite{dolgopyat2024exponential}.

 For deterministic systems, however, robust
exponential mixing has been only established for a limited class of systems:
uniformly hyperbolic systems in both smooth and piecewise smooth settings \cite{chernov2006chaotic, VianaLima, Young98},
or for partially hyperbolic systems where all Lyapunov exponents in the central direction have the same sign
\cite{deCastro, DeCastroVarandas, dolgopyat00}. 
Here we say that a certain property holds robustly if it holds for a given
system as well as for its small perturbations. In contrast, if additional symmetries are present then there are many other
cases where exponential mixing is known, see \cite{GorodnikSpatzier14, KleinbockMargulis96, Liverani04, TsujiiZhang23}.
There are also checkable conditions for exponential mixing in the nonuniformly hyperbolic setting,
see \cite{Young98, Young99}. However, except for the aforementioned examples, 
these conditions hold for individual systems rather than open sets. 
On the other hand KAM theory tells us that away from (partially) hyperbolic systems
one has open sets of non-ergodic systems, so one cannot expect chaotic behavior to be generic.

The situation is different for random systems. In fact, if the supply of random maps is rich enough then 
one show that exponential mixing and other statistical properties hold generically. 
Such results are known for stochastic 
flows of diffeomorphisms \cite{DKK04} as well as for random deterministic shear flows \cite{BCZG}.
It is therefore natural to ask how large should the set of random diffeomorphisms must be so that the
corresponding random dynamical system exhibits random behavior. The following conjecture is
formulated in \cite{dolgopyat2007simultaneous}.

\begin{conjecture}
\label{ConExpGen}
For each closed manifold $M$ with volume and regularity $k\ge 1$, there exists $m$, such that the space of tuples $(f_1,\ldots,f_m)$ that are stably ergodic is open and dense in $\displaystyle\left(\Diff^k_{\vol}(M)\right)^m$.
\end{conjecture}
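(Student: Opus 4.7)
The plan is to decompose the conjecture into three pieces: (i) the expanding on average property is $C^1$-open, (ii) expanding on average tuples are stably ergodic, and (iii) expanding on average tuples can be made $C^k$-dense in $(\Diff^k_{\vol}(M))^m$ for a suitable $m = m(\dim M, k)$. The first two pieces are essentially handled by the present paper and straightforward observations; the third is the substantial content.

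For (i), the openness in the $C^1$ topology follows directly from the definition \eqref{EqExpAv}, since $\E{\ln \|Df^{n_0}_\omega v\|}$ depends continuously on the tuple in $C^1$ for fixed $n_0$, and the inequality is strict. For (ii), in the surface case Theorem \ref{ThQEM} yields quenched exponential mixing, and Corollary \ref{CrAnEM} gives annealed exponential mixing of the skew product $F$, which in particular implies ergodicity of $F$ and hence of the random dynamical system. Combined with (i), this immediately upgrades ergodicity to \emph{stable} ergodicity: any sufficiently small $C^1$ perturbation of an expanding on average tuple is still expanding on average and hence still ergodic. So on surfaces the conjecture reduces entirely to producing a $C^k$-dense set of expanding on average tuples, and for higher-dimensional $M$ one additionally needs a replacement for Theorem~\ref{ThQEM}.

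For density on surfaces, the natural approach is to refine the weak-$*$ density result of \cite{potrie2022remark}: given a tuple $(f_1,\ldots,f_m)\in(\Diff^k_{\vol}(M))^m$ and a $C^k$-neighborhood $\mc{U}_1\times\cdots\times\mc{U}_m$, one needs to produce a perturbation $(\tilde f_1,\ldots,\tilde f_m)\in \mc{U}_1\times\cdots\times \mc{U}_m$ that is expanding on average. One would try to import a finitely supported expanding on average measure supported in a common $C^k$-neighborhood of some reference map, then realize this measure as the factors of a tuple after a change of index set and small $C^k$ adjustments. In higher dimensions, \cite{elliott2023uniformly} plays the analogous role for (i), but one must first establish the appropriate higher-dimensional analogue of stable ergodicity — for instance via a multidimensional extension of Theorem~\ref{ThQEM}, which currently uses surface-specific tools.

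The main obstacle, and the reason this remains a conjecture, is twofold. On the one hand, the density results of \cite{potrie2022remark, elliott2023uniformly} produce perturbations in a weak-$*$ sense (finitely supported measures on a prescribed open set), whereas the conjecture asks for $C^k$-density within a specified product of open sets $\mc{U}_1\times\cdots\times\mc{U}_m$; bridging this gap requires control over the shape of the supporting neighborhoods, which is delicate. On the other hand, in dimensions $\ge 3$, one cannot simply invoke the exponential mixing theorem of this paper, and the analogous statement in higher dimensions appears to require substantial new ideas to handle the presence of multiple positive Lyapunov exponents and possible neutral directions obstructed by KAM phenomena away from the expanding on average regime.
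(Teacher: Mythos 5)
You have not proved anything here, and neither does the paper: Conjecture~\ref{ConExpGen} is stated as an open problem (attributed to \cite{dolgopyat2007simultaneous}), and the paper offers no proof of it. So there is nothing to compare your argument against except the paper's own commentary, which already contains your reduction: the authors write that ``the obvious approach to this conjecture is to first show that an open and dense set of tuples is expanding on average.'' Your steps (i) and (ii) are correct as far as they go --- \eqref{EqExpAv} is a strict inequality depending $C^1$-continuously on the tuple, hence open, and on surfaces Proposition~\ref{prop:expanding_on_average_ergodic} (or, with much more work, Theorem~\ref{ThQEM}) gives ergodicity of every tuple in that open set, hence stable ergodicity. But these are exactly the easy parts.

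The genuine gap is the one you name in your final paragraph and then do not close: producing, inside an \emph{arbitrary prescribed} product neighborhood $\mc{U}_1\times\cdots\times\mc{U}_m$ in the $C^k$ topology, a tuple that is expanding on average. The results of \cite{potrie2022remark, elliott2023uniformly} construct \emph{some} finitely supported expanding on average measure inside a single open set $\mc{U}$, with no control on the number of maps, on which neighborhoods of which reference maps they lie in, or on matching a given index set $\{1,\ldots,m\}$ with uniform weights; ``importing'' such a measure and ``realizing it as the factors of a tuple after small $C^k$ adjustments'' is precisely the unsolved problem, not a step one can wave through. Likewise, in dimension $\ge 3$ your step (ii) has no substitute: ergodicity of expanding on average tuples is not supplied by this paper outside the surface case, and you correctly note that the mixing machinery here is surface-specific. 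In short, your proposal is an accurate map of the territory, and an honest one --- you concede it is a conjecture --- but it is a research program, not a proof, and should not be presented as the latter.
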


The point of this conjecture is that only a tiny bit of randomness, perhaps even the minimum amount, should be sufficient to ensure robust ergodic and statistical properties for dynamical system. Consequently, the situation where the driving measure has uniformly small, finite support on $\Diff^{2}_{\vol}(M)$ is the most interesting, and hardest case to consider this question. 
The obvious approach to this conjecture is to first to show that an open and dense set of tuples is expanding on average. 

Other papers have significantly extended the properties of expanding on average systems.
One of the first is 
\cite{BrownRodriguezHertz}, which shows a strong stiffness property of these systems: any stationary measure for the Markov process that is not finitely supported is volume \cite[Thm.~3.4]{BrownRodriguezHertz}. Thus, in some sense, volume is the only measure whose statistical properties are interesting to study. 
 The only statistical property beyond ergodicity studied before for expanding on average systems
is large deviations for ergodic sums established in \cite[Thm.~4.1.1]{liu2016lyapunov}.
Our paper provides an additional contribution to this topic by showing that expanding on average systems
enjoy exponential mixing.
In fact, Conjecture \ref{ConExpGen} provides an additional motivation for this work, because it shows that should the conjecture be true, then exponential mixing is a generic property for random dynamical systems.

Some work has been done towards showing that uniform expansion is a generic property.
In particular, \cite{obata2022positive} shows that one may obtain positive integrated Lyapunov exponent for conservative random systems on surfaces. This work differs from the papers \cite{potrie2022remark} and \cite{elliott2023uniformly} as \cite{obata2022positive} does not require an arbitrarily large number of diffeomorphisms to obtain its result.

Returning to deterministic systems, it is natural to ask for conditions for strong statistical properties to hold in a robust way. 
Optimal conditions are not yet well understood. 
While there are strong indications that at least a dominated splitting 
is necessary \cite{PalisConj}, the best available results pertain to partially hyperbolic systems.
A well known conjecture of Pugh and Shub 
 \cite{shub2006all} states that stably ergodic systems contain an open and dense subset of partially
hyperbolic systems. Currently the best results on this problem are due to \cite{burns2010ergodicity} which can be consulted 
for a detailed discussion on this subject. In fact, the methods of Pugh and Shub also give the $K$-property \cite{burns2010ergodicity}. Going beyond the $K$-property remains an outstanding challenge even in the partially hyperbolic 
setting. In view of the strong consequences of exponential mixing it is natural to  conjecture the following.

\begin{conjecture}
    Exponential mixing holds for an open and dense set of volume preserving partially hyperbolic systems.
\end{conjecture}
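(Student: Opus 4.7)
The plan is to attempt to reduce the conjecture to the random setting handled in Theorem \ref{ThQEM}, exploiting the fact that a partially hyperbolic diffeomorphism carries a natural source of ``noise'' via its stable and unstable holonomies. Concretely, for $f\in\Diff^k_{\vol}(M)$ with dominated splitting $E^s\oplus E^c\oplus E^u$, one obtains holonomy pseudogroups acting on center plaques by composing $s$- and $u$-holonomies along accessibility paths. These would play the role of the tuple $(f_1,\ldots,f_m)$ in the hypothesis \eqref{EqExpAv}, with accessibility (generically verified thanks to \cite{burns2010ergodicity} and its predecessors) playing the role of topological irreducibility for the induced random walk on $E^c$.

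First, I would isolate an open set inside the partially hyperbolic locus on which one can establish a quantitative version of accessibility together with an expanding-on-average estimate of the form \eqref{EqExpAv} for the holonomy pseudogroup acting on $E^c$. Density would come from a perturbation argument adapting the construction in \cite{potrie2022remark, elliott2023uniformly} to holonomies rather than to diffeomorphisms directly. Second, I would combine the resulting effective random dynamics in the center direction with a standard-pair/coupling scheme along unstable manifolds, in the style of \cite{chernov2006chaotic, dolgopyat00}, to lift the quenched exponential mixing provided by our main theorem (applied in the center) to exponential mixing of the full deterministic map, using the uniform contraction/expansion in $E^s,E^u$ to handle transversal regularity.

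The central obstacle is the first step. To my knowledge, a quantitative expansion-on-average statement for accessibility holonomies is not known for any $C^k$-open set of partially hyperbolic systems with nontrivial center of dimension at least one, and even the qualitative question of whether the center Lyapunov exponents are generically nonzero is a long-standing open problem; expansion on average is strictly stronger. A natural intermediate milestone would therefore be the case $\dim E^c=1$ with compact center leaves, such as perturbations of Anosov$\,\times\,$circle skew products, where the expansion-on-average condition reduces to a scalar estimate along center circles that is amenable to perturbative verification, and where the surface-case input of Theorem \ref{ThQEM} is most directly applicable. Only after this warm-up would one hope to attack the general partially hyperbolic setting, which likely requires a dimension-independent extension of Theorem \ref{ThQEM} as a prerequisite.
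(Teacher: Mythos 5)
The statement you are addressing is labeled a \emph{conjecture} in the paper; the authors offer no proof of it, and indeed they present it as an open problem motivating future work. So there is no argument in the paper to compare yours against, and the relevant question is whether your proposal constitutes a proof. It does not, and you say as much yourself: the entire program hinges on establishing an expanding-on-average estimate of the form \eqref{EqExpAv} for the accessibility holonomy pseudogroup on an open and dense set of partially hyperbolic systems, and you correctly note that this is not known for any such open set --- it is strictly stronger than generic nonvanishing of center exponents, which is itself open. A sketch whose first step is an unproved statement at least as hard as the conjecture's known obstructions is a research program, not a proof.

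There is also a structural problem with the second step that you should not gloss over. Theorem \ref{ThQEM} and the entire coupling machinery of the paper depend essentially on the driving noise being i.i.d.\ Bernoulli: the martingale large-deviation bounds (Azuma), the stopping-time constructions, and the insistence throughout on never looking into the future of $\omega$ all use the product structure of $\mu$ on $\Sigma$ and the independence of $\sigma^n\omega$ from the past. The ``noise'' you propose to extract from $s$- and $u$-holonomies along accessibility paths is determined by the deterministic orbit structure of a single map $f$; the sequence of holonomies applied to a center plaque is not an independent sequence, and there is no Markov property to exploit. So even granting step one, the reduction to Theorem \ref{ThQEM} is not available as stated --- one would need a genuinely new argument replacing independence by some decorrelation of the holonomy choices, which is itself of the same order of difficulty as the conjecture. (Separately, Theorem \ref{ThQEM} is proved only for surfaces, so even your warm-up case with one-dimensional center inside a three-manifold requires an extension the paper does not provide.)
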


Currently there are two possible ways to attack this conjecture. The first one is based on the theory of weighted Banach
spaces, \cite{AGT01, CastorriniLiverani, GouezelLiverani06, Tsujii01, TsujiiZhang23}. To describe the second approach recall that
the  papers \cite{Viana08, AvilaViana10} show that partially hyperbolic systems often have non-zero exponents.
It is therefore natural to see if one could try to extend the methods  used in proving exponential mixing in 
non-uniformly hyperbolic systems to handle partially hyperbolic setting. As mentioned above, this approach 
was successful in handling the case there the central exponents have the same sign. In the present paper we consider
a skew product with a shift in the base and where the Lyapunov exponents in the central direction have different signs. 
We hope that a similar approach could be useful for studying more general skew products,
and hopefully could provide a blueprint for studying mixing in partially hyperbolic systems. 

In summary, the present work is the first step in extending mixing to a large class of smooth systems both
random and deterministic, and we hope that various extensions will be addressed in future works.
\medskip

\noindent\textbf{Acknowledgments:} The first author was supported by the National Science Foundation under Award No.~DMS-2202967. The
second author was supported by the National Science Foundation under award No.~DMS-2246983.
The authors are grateful to 
Matheus Manzatto de Castro for comments on an earlier version of the manuscript.

\section{Setting and basic definitions}
\subsection{Random dynamics and skew products}
In this section, we will state some basic definitions that will be used throughout the paper. Although we introduce many of these definitions and notations here, we will recall and reintroduce them when they are used; this section is just an overview.

We begin by recalling the main definition of our setup.

\begin{defn}\label{defn:expanding_on_average}
    We say that a tuple $(f_1,\ldots,f_m)\in \Diff^1(M)$ is \emph{expanding on average} if there exists some $n_0\in \N$ and $\lambda_0>0$ such that for all $v\in T^1M$,
\begin{equation}
\E{n_0^{-1}\ln \|Df^{n_0}_{\omega} v\|}\ge \lambda_0>0.
\end{equation}
\end{defn}

Throughout the paper, $(f_1,\ldots,f_m)$ typically denotes an uniformly expanding on average tuple of volume preserving diffeomorphisms of a closed surface $M$. However, in some cases, we merely are referring to a tuple and do not make use of any further assumptions. 

We write $(\Sigma,\sigma)$ for the one sided shift on $m$ symbols, i.e. $\Sigma=\{1,\ldots,m\}^{\mathbb{N}}$ with $\sigma$ being the left shift. We endow this space with the measure $\mu$, which is the uniform Bernoulli measure on $\Sigma$. Write $\hat{\Sigma}$ and $\hat{\mu}$ for the two-sided shift and the invariant Bernoulli measure over $\mu$.

We may view the random dynamics in two ways. First, as a Markov process on $M$. The second way, as mentioned in the statement of Corollary \ref{CrAnEM}, is as the skew product $F\colon \Sigma\times M \to \Sigma\times M$. This skew product preserves the product measures $\mu\otimes\vol$. When we say that the tuple $(f_1,\ldots,f_m)$ is \emph{ergodic}, we mean that the skew product $F$ is ergodic for the measure $\mu\otimes \vol$. This is equivalent to the absence of almost surely invariant Borel subsets of $M$ of intermediate measure. See \cite{kifer1986ergodic} for more discussion of the relationship between the skew product and the random dynamics on $M$.

For a word $\omega\in \Sigma$, we write $f^n_{\omega}\colon M\to M$ for the composition $f_{\omega_{n}}\cdots f_{\omega_1}$. We use the same notation for finite words $\omega$. For a sequence of linear maps $(A_i)_{1\le i \le n}$, we write $A^i=A_i\cdots A_1$. We do not always start this product with the first matrix, so we also have the notation
\[
A_i^k=A_{i+k}\cdots A_{i+1}.
\]
Note that this is compatible with the notation
$f_{\sigma^j(\omega)}^i=f_{\omega_{j+i}}\cdots f_{\omega_{j+1}}$
from above.

\subsection{Stable subspaces} For a sequence of linear maps, we will frequently use the singular value decomposition when it is defined. If we have a sequence of matrices $A_1,A_2,\ldots$  then, when it is defined, we write $E^s_n$ for the most contracted singular direction of $A^n$. We usually apply this to the sequence of linear maps $D_xf^n_{\omega}$. We write $E^s_i(\omega,x)$ for most contracted singular direction of $D_xf^i_{\omega}$, and we write $E^u_{i}(\omega,x)$ for the most expanded singular direction of $D_xf^i_{\omega}$, should these directions be well defined. Often we will suppress the $x$ and $\omega$ and just write $E^s_i$, other times we will write $E^s_{\omega}(x)$.

Throughout the paper we will consider sets
$\Lambda^{\omega}_n$ which are the sets of points $x\in M$ that  are $(C,\lambda,\epsilon)$-tempered for the word $\omega$ up until time $n$, where temperedness is defined in \S\ref {subsec:tempered_sequences_of_linear_maps}. These points are essentially the finite time analogue of a Pesin block, c.f.~\cite{barreira2007nonuniform}.

\subsection{Stable manifolds}
\label{SSStable}

The most important dynamical objects we will consider are the stable manifolds and fake stable manifolds.
Given a point $x\in M$, we define its stable manifold to be the set of points
\[
W^s(\omega,x)=\{y\in M: d(f^n_{\omega}(x),f^n_{\omega}(x))\text{ exponentially fast}\}.
\]
Note that the stable manifold depends on $\omega$. We denote a segment of length $2\delta$ centered at $x$ in $W^s(\omega,x)$ by $W^s_{\delta}(\omega,x)$. The properties of these ``true" stable manifolds are discussed in Section~\ref{ScStMan}. For general information about stable manifolds in random dynamical systems, see \cite{Liu1995smooth}.

As alluded to above, we will not only work with the stable manifolds, but also with finite time versions of stable manifolds. We will denote by $W^{s}_{n,\delta_0}(\omega,x)$ the time $n$ fake stable manifold of $x$ for the word $\omega$ restricted to segment of radius $\delta_0$ centered at $x$. The point of the fake stable manifolds is that up to time $n$, they have similar contraction properties to an actual stable manifold. In the limit, they converge to the true stable manifold. Their definition is somewhat technical, but a detailed treatment of the fake stable manifolds is given in Appendix \ref{sec:finite_time_pesin_theory} which essentially concerns itself with a quantified, finite time version of Pesin theory.

An important application of stable manifolds, fake or otherwise, is their holonomy.
Suppose that we have two curves $\gamma_1,\gamma_2$ and a locally defined lamination $\mc{W}$ such that each leaf of $\mc{W}$ intersects $\gamma_1$ and $\gamma_2$ at a unique point. Let $I_1$ and $I_2$ be the points of intersection of $\mc{W}$ with $\gamma_1$ and $\gamma_2$. Then $\mc{W}$ defines a \emph{holonomy} map $H^{\mc{W}}\colon I_1\to I_2$ by carrying the unique point of intersection with a particular plaque of the lamination to the corresponding point in the other curve. 

An important property that such a holonomy may satisfy  is \emph{absolutely continuity} with respect to volume, which means that it carries Riemannian volume of $\gamma_1$ restricted to $I_1$ to a measure equivalent to the restriction to $I_2$ of Riemannian volume on $\gamma_2$. These properties will be discussed in more detail in Appendix \ref{sec:finite_time_pesin_theory}. 

\subsection{Norms}\label{subsubsec:norms}

In this paper, we will use many estimates from calculus.

First we consider the norms of curves. 
An unparametrized curve in a manifold does not come equipped with any $C^1$ norm, as the $C^1$ norm of a curve is dependent on parametrization. Consequently, we will always view such a curve with its arclength parametrization. 
For $x\in \gamma$, we may consider the norm of the second derivative of $\gamma$ at the origin
when we view $\gamma$ as a graph over its tangent in an exponential chart. We then define $\|\gamma\|_{C^2}$ as the supremum of this norm over all $x\in \gamma$. Note that this is essentially the same thing as the supremum of the extrinsic curvature of $\gamma$ at $x$ over all points $x\in \gamma$.

Throughout the proof, we will be interested in studying the log H\"older norms of some densities along curves. We will be slightly unconventional and write $\|\ln \rho\|_{C^{\alpha}}$ for the H\"older constant of $\ln \rho$, where $\rho$ is a density.  Note that this doesn't include an estimate on $\|\ln\rho\|_{\infty}$, as such a norm usually contains. This is because the magnitude of the density is infrequently the important things in our arguments.

When we work in coordinates, we will write $\|\phi\|_i$ as the supremum of all the $i$th partial derivatives of the function $\phi$. For example, if $\phi\colon \R^2\to \R$, then we define 
\[
\|\phi\|_2=\sup_{x\in \R^2} \max\left\{\abs{\frac{d^2\phi}{dxdy}},\abs{\frac{d^2\phi}{dx^2}},\abs{\frac{d^2\phi}{d y^2}}\right\}.
\]

\subsection{Probability facts}

In the course of the paper we will some facts from probability, which we state here for the convenience of readers who are familiar with dynamics but not as much with probability.
Sometimes we will write something like $\mathbb{P}_{\omega}(A)$ for the measure $\mu(A)$ when we are thinking probabilistically. Also, we will often write $\E{\ldots}$ when we are taking expectations with respect to $\mu$, as $\mu$ is the measure driving the random dynamics. 

The following concentration in equality is very useful for us. 

\begin{thm}\label{thm:azumas_inequality}\cite[Thm.~1.3.1]{steele1997probability}
(Azuma-Hoeffding inequality) Suppose that $X_1,X_2,\ldots$ is a martingale difference sequence. Then
\begin{equation}
\mathbb{P}\left(\abs{\sum_{i=1}^n X_i}\ge \lambda\right)\le 2\exp\left(\frac{-\lambda^2}{2\sum_{i=1}^n \|X_i\|^2_{L^{\infty}}}\right).
\end{equation}
\end{thm}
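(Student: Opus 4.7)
The plan is to follow the classical Chernoff/exponential-moment approach. Let $S_n=\sum_{i=1}^n X_i$ and let $\mathcal{F}_i$ denote the filtration to which $(X_i)$ is adapted. First I would apply Markov's inequality to the exponentiated sum: for any $\theta>0$,
\begin{equation*}
\mathbb{P}(S_n\ge \lambda)\le e^{-\theta\lambda}\,\mathbb{E}[e^{\theta S_n}].
\end{equation*}
To control the right-hand side I would peel off the last increment using the tower property, writing $\mathbb{E}[e^{\theta S_n}]=\mathbb{E}\bigl[e^{\theta S_{n-1}}\,\mathbb{E}[e^{\theta X_n}\mid \mathcal{F}_{n-1}]\bigr]$, and iterating.

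The heart of the argument, and the main technical step, is Hoeffding's lemma: if $Y$ is a random variable with $\mathbb{E}[Y\mid\mathcal{F}]=0$ and $|Y|\le c$ almost surely, then
\begin{equation*}
\mathbb{E}[e^{\theta Y}\mid\mathcal{F}]\le e^{\theta^2 c^2/2}.
\end{equation*}
This I would prove by convexity: on the interval $[-c,c]$ the function $y\mapsto e^{\theta y}$ lies below the chord joining the two endpoints, so
\begin{equation*}
e^{\theta Y}\le \frac{c-Y}{2c}e^{-\theta c}+\frac{c+Y}{2c}e^{\theta c}.
\end{equation*}
Taking conditional expectations and using $\mathbb{E}[Y\mid\mathcal{F}]=0$ gives $\mathbb{E}[e^{\theta Y}\mid\mathcal{F}]\le \cosh(\theta c)$, and a direct Taylor expansion shows $\cosh(\theta c)\le e^{\theta^2 c^2/2}$.

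Applying Hoeffding's lemma with $c=\|X_i\|_{L^\infty}$ successively for $i=n,n-1,\dots,1$ and using the martingale difference property to eliminate the conditional expectation at each stage yields
\begin{equation*}
\mathbb{E}[e^{\theta S_n}]\le \exp\!\left(\frac{\theta^2}{2}\sum_{i=1}^n\|X_i\|_{L^\infty}^2\right).
\end{equation*}
Combining with the Markov step gives $\mathbb{P}(S_n\ge\lambda)\le \exp\!\bigl(-\theta\lambda+\tfrac{\theta^2}{2}\sum\|X_i\|_{L^\infty}^2\bigr)$. Finally I would optimize in $\theta>0$, choosing $\theta=\lambda/\sum\|X_i\|_{L^\infty}^2$, which gives the exponent $-\lambda^2/(2\sum\|X_i\|_{L^\infty}^2)$. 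Running the same argument for $-S_n$ (noting that $(-X_i)$ is also a martingale difference sequence with the same $L^\infty$ bounds) and taking a union bound produces the factor of $2$ in the claimed two-sided estimate. The only real obstacle is Hoeffding's lemma itself; once that convexity bound is in place, the rest is a mechanical iteration and optimization.
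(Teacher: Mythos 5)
The paper does not prove this statement; it is quoted from Steele's book, and the proof there is exactly the exponential-moment argument you describe (Markov's inequality, the conditional Hoeffding lemma via convexity and $\cosh(\theta c)\le e^{\theta^2c^2/2}$, iteration over the filtration, optimization in $\theta$, and a union bound for the two-sided version). Your proposal is correct and coincides with the standard cited proof.
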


\section{Outline of the paper}
\label{ScOutline}
\subsection{Quenched and annealed properties}
The main technical result of this paper is a type of ``annealed" coupling theorem, Proposition \ref{prop:main_coupling_proposition}. From this theorem 
 we deduce after
a small amount of additional work, quenched exponential equidistribution  (Proposition \ref{prop:quenched_exp_equidistribution_on_subfamilies})
as well as quenched exponential mixing, which, in turn, implies the 
annealed exponential mixing (see Corollary \ref{CrAnEM}).

Before proceeding, let us recall what is meant, 
in the probabilistic sense, by an \emph{annealed} as opposed to a \emph{quenched} limit theorem for a random dynamical system defined by Bernoulli random application of maps $(f_1,\ldots,f_m)$. In an \emph{annealed} limit theorem, we average over the entire ensemble whereas in a \emph{quenched}  limit theorem one obtains a limit theorem for almost every realization of the random dynamics. For example, in the case of equidistribution consider $\phi\colon M\to \R$ a H\"older observable and $\nu$ a probability measure on $M$, such as a curve with density. Then annealed equidistribution says:
\[
\frac{1}{m^n}\sum_{\omega^n\in \{1,\ldots,m\}^n}\int  \phi\circ f^n_{\omega}\,d\nu\to \int \phi\,d\vol,
\]
whereas quenched equidistribution says
that for almost every $\omega\in \Sigma^{\mathbb{N}}$ with respect to the Bernoulli measure $\mu$ on $\Sigma$,
\[
\int \phi\circ f^n_{\omega}\,d\mu\to \int \phi\,d\vol.
\]
 Note that the annealed result follows from the mixing of the skew product studied in 
\S \ref{SS-K}.

While the two notions are not always equivalent, our annealed coupling theorem comes 
with such fast rates that 
by the Fubini theorem, we can deduce quenched limit theorems. This reduction happens in Section \ref{sec:claims_for_use_in_limi_theorem}.

\subsection{Description of the key step}
The main results of this paper 
follow from our annealed exponentially fast coupling proposition, Proposition \ref{prop:main_coupling_proposition},
which says the following.
Suppose we have two standard pairs $\hat{\gamma}_1$ and $\hat{\gamma}_2$. Each standard pair is a $C^2$ curve $\gamma_i$ along with a density $\rho_i$ defined along $\gamma$. Suppose that $\omega\in \Sigma$ is a random word. We say that two points $x\in \gamma_1$ and $y\in \gamma_2$ are ``coupled" at time $k$ if:
\begin{enumerate}
\item 
$f^k_{\omega}(x)\in W^s_{loc}(\sigma^k(\omega),f^k_{\omega}(y))$,
\item 
The stable manifold $W^s_{loc}(\sigma^k(\omega),f^k_{\omega}(y))$ contracts uniformly exponentially quickly, so that $f^k_{\omega}(x)$ and $f^k_{\omega}(y)$ attract uniformly exponentially fast, independent of $x,y,\omega$.
\end{enumerate}
In other words, after two points couple at time $k$ they attract uniformly quickly.
 In fact, in our coupling procedure
if $x$ and $y$ couple at time $k$ then
$f^k_{\omega}(x)$ and $f^k_{\omega}(y)$ both lie in a uniformly $(C,\lambda,\epsilon)$-tempered stable manifold (see Definition \ref{defn:uniformly_tempered_stable_manifold}).
Proposition \ref{prop:main_coupling_proposition} constructs a coupling which occur exponentially quickly in the sense that the set of points where the coupling 
time is greater than $k$ has exponentially small measure.

The first step towards constructing the coupling is to show that for two ``nice" standard pairs $\hat{\gamma}_1$ and $\hat{\gamma}_2$ that are quite close, there exist uniform $\epsilon_0,\epsilon_1>0$ such that with $\epsilon_0$ probability at least $\epsilon_1$ proportion of the mass of $\hat{\gamma}_1$ couples at time $0$.
Namely, with $\epsilon_0$ probability, the stable manifolds $W^s_{\omega}$ intersect $\hat{\gamma}_1$ and $\hat{\gamma}_2$ in sets of uniformly large measure, thus those points can be coupled. 
This fact implies that a positive proportion of the mass on $\hat{\gamma}_1$ can be coupled at the first attempt.

The complement of the pairs that couple is the disjoint union of a potentially large number of very small curves. For these ``leftover" curves we will wait a potentially long time for them to grow and smoothen and then equidistribute at small scale so that we can try coupling them again. We refer to this growth and smoothening as ``recovery" and the equdistribution as ``precoupling." As a positive proportion of the remaining mass gets coupled during each attempt at coupling,
we expect only an exponentially small amount of mass to remain uncoupled after $n$ attempts.

The actual argument is much more complicated for a fairly simple reason: we cannot determine if two points $x$ and $y$ lie in the same stable manifold until we have seen the entire word $\omega$. 
 However, we do not want to look into the future at the entire word $\omega$ since then we would loose the Markov character of dynamics and
would not be able to use many estimates that rely on the Markov property.
Consequently, we define a ``stopping" time for each pair $(x,\omega)$ which tells us when to ``give up" on trying to couple during the current attempt and switch to recovery. For the moment, we regard the coupling argument as having three main steps:
\begin{enumerate}
    \item 
    (Local Coupling) Attempt to couple two uniformly smooth nearby curves $\hat{\gamma}_1$ and $\hat{\gamma}_2$.
    \item 
    (Recovery) Show that pieces of curve that fail to couple recover quickly so that their image become long and smooth.
    \item (Precoupling) There is a time $N_0$ such that given two long smooth curves we can divide them into subcurves such that for most
    of the subcurves their images $N_0$ units of time later are close to each other, 
so  we can 
    then try to locally couple them again.
\end{enumerate}

We now describe the outline of the rest of the paper and how its different sections relate to the three main steps described above.

The first goal of the paper is show that for any point $x\in M$ that for 
most words $\omega\in \Sigma$  the stable manifolds $W^s(\omega,x)$ have good properties including
good distribution of their tangent vector, controlled $C^2$ norm, and that they contract quickly. To do this, we will need to obtain good estimates on $Df^n_{\omega}$. We  show that for typical words $\omega$, $Df^n_{\omega}$ has a putative stable direction that has all of the properties that the stable direction of a Pesin regular point would have. We formalize these properties with our notion of $(C,\lambda,\epsilon)$-temperedness, which is described in detail in \S \ref{subsec:tempered_sequences_of_linear_maps}. We remark, however, that this notion is weaker than the usual notion of $\epsilon$-temperedness used in Pesin theory. We show that there exist $\lambda,\epsilon>0$ such that for almost every word $\omega$ that the trajectory will exhibit $(C(\omega),\lambda,\epsilon)$-temperedness for some $C(\omega)>0$. Further, we obtain estimates for the tail of $C(\omega)$. We then also study the distribution of $E^s_{\omega}(x)$, the stable direction for the word $\omega$ at the point $x$ and obtain estimates on the regularity of this measure, which show that the distribution of $E^s(\omega)$ and hence the stable manifolds is not concentrated in any particular direction, see 
 Proposition~\ref {prop:hausdorff_est_on_stable_dirs}.
This discussion occupies Section \ref{sec:temperedness}.
Through the application of Azuma's inequality, we are able to show that a typical trajectory exhibits temperedness. 

In Section \ref{sec:k_property},
 we study the mixing properties of the skew product map $F$. 
 The proofs rely on the properties of stable manifolds that are recalled in
 Section \ref{ScStMan}. Mixing plays a crucial role in the Finite Time Mixing Proposition
given in Section \ref{sec:finite_time_mixing_prop}. This plays an important role at the precoupling stage.

 Section \ref{sec:coupling} contains the precise statement of the main coupling Proposition \ref{prop:main_coupling_proposition}. 
 We then divide the proof into three
 main parts: 
the Local Coupling Lemma \ref {ref:small_scale_coupling_lemma}, 
the Coupled Recovery Lemma \ref{prop:coupled_recovery_lemma}, and the Finite Time Mixing Proposition \ref{prop:finite_time_mixing}
which corresponds to steps (1)--(3) in the outline above.
Lemma \ref{prop:coupled_recovery_lemma} is proven in Section~\ref{sec:finite_time_smoothing_estimates}, Proposition \ref{prop:finite_time_mixing} is proven in Section~\ref{sec:finite_time_mixing_prop}, and
Lemma \ref{ref:small_scale_coupling_lemma} is proven in Section \ref{ScLocalCoupling}.

Finally, in Section \ref{sec:claims_for_use_in_limi_theorem} we derive our main results from the main coupling proposition: we derive Theorem \ref{ThQEM}
and Corollary \ref{CrAnEM} from Proposition \ref{prop:main_coupling_proposition}. 

The paper contains two appendices. Appendix \ref{AppSmoothing} describes how the smoothness of a curve which is transversal to the stable direction
improves under the dynamics, while Appendix \ref{sec:finite_time_pesin_theory} discusses fake stable manifolds and their holonomy. 
In particular, we show that these objects converge exponentially fast to true stable manifolds and holonomies respectively.
While the estimates in the appendices are similar to several results in Pesin theory, we provide the proofs in our paper
since we could not find exact references in the existing literature. This is partially due to the fact that we put a greater
emphasis to the finite time estimates because we want to preserve the Markov 
property of the dynamics and hence cannot base our coupling algorithm on the knowledge of the future behavior of orbits.
 
\subsection{Mixing in hyperbolic dynamics} We now compare our work with strategies used in other works.
Historically the first mixing results for hyperbolic systems relied on symbolic dynamics, see
\cite{BowenLNM, RuelleTF, SinaiGibbs, ParryPollicott}. Currently the most flexible realization of this approach is via
symbolic dynamics given by Young towers (\cite{Young98}). 
Later, several methods working directly with the hyperbolic systems were developed.
In particular, we would like to mention weighted Banach spaces developed in \cite{GouezelLiverani06} (see
\cite{BaladiBook} for a review)
as well as the coupling approach developed in \cite{Young99}. We note that most hyperbolic systems could be analyzed by
each of these methods but a different amount of work is required in different cases. For example, a recent paper
\cite{DemersLiverani} constructs weighted Banach spaces suitable for the billiard dynamics. However, these spaces are necessarily 
complicated reflecting the complexity of billiards systems. 

In our work, we use the coupling approach. This method was originally used in \cite{Young99} to handle symbolic systems,
while the modifications which allow working directly on the phase space are due to \cite{dolgopyat00, chernov2006chaotic}.
The two papers mentioned above implemented the coupling methods for systems with dominated splitting. In our case, we have to deal with
the general non-uniformly hyperbolic situation and this significantly expands the potential applications of the coupling method.

An attractive feature of our result is that we make only one assumption \eqref{EqExpAv} which is, in fact, open.
Our result is an example of a successful implementation of the line of research asking which dynamical properties 
follow just from existence of a hyperbolic set with controlled geometry. This direction is exemplified by a conjecture
of Viana \cite{VianaICM}, which asks if the existence of positive measure hyperbolic set implies existence of a physical measure.
While several important recent results obtained progress on this question (see \cite{BenOvadia21, Burguet24, BCS23, CLP22}
as well as \cite{BCS22} which deals with a measure of maximal entropy), much less is known about qualitative properties.
In the present (and a follow up) paper we are able to get a full package of statistical properties starting from a simple
assumption \eqref{EqExpAv}.

Below we list key ingredients of our approach since similar ideas could be useful in studying other hyperbolic systems.

\begin{enumerate}
    \item Using martingale large deviation bounds, we demonstrate an  abundance of times where the orbit of a given vector
    is backward tempered.
    \item Using two dimensionality and volume preservation, we promote exponential growth of the norm to existence of a hyperbolic
    splitting.
        \item Using Pesin theory we show that hyperbolic set cannot have gaps of too small a size since these gaps
        would be filled with orbits of slightly weaker hyperbolicity.
      \item We use fake stable manifolds and quantitative estimates on their convergence to construct a finite time ``fake" coupling.
      \item Using a Ma\~ne type argument we show that a fake coupling converges quickly to a real coupling for most trajectories.
\end{enumerate}

Finally, we would like to mention that recently a different approach to quenched mixing based on random Young towers
has been developed, see \cite{ABR22, ABRV23}. So far, the authors have proved the existence of random towers for relatively 
simple systems where hyperbolicity is uniform at least in one direction. It might be possible to obtain exponential mixing
in our case by verifying the conditions of \cite{ABRV23}, however, this would not simplify our analysis.
Indeed the main ingredients of the Young towers is the following: the existence of a positive measure horseshoe,
an exponential tail on the return time, and a finite time mixing estimate. The last ingredient is already established in
our paper. To construct a large horseshoe would require estimates similar to our local coupling lemma of Section \ref{ScLocalCoupling},
while having an exponential tail on return times  would be similar to our recovery lemma of Section \ref{sec:finite_time_smoothing_estimates}. 
In addition there several technical properties of Young tower whose verification would require additional space and effort.
For this reason we prefer to give a direct proof of exponential mixing in our setting rather than deducing our result  
by a lengthy verification of the conditions of 
the deep recent work of \cite{ABRV23}.

\section{Estimates on the growth of vectors and temperedness}\label{sec:temperedness}

In this section, we study infinitesimal properties of uniformly expanding random dynamical systems. The main results of this section are a proof that the sequence of linear maps $Df_{\omega_0}, Df_{\omega_1},\ldots,Df_{\omega_n}$ applied along the trajectory of a point $x$ typically has a splitting with most of the same properties as a point in a Pesin block has. Moreover, we give quantitative estimates on the angle between the vectors in the splitting, as well as the probability that the splitting experiences a renewal.

\subsection{Tempered vectors and sequences of linear maps}\label{subsec:tempered_sequences_of_linear_maps}
In this subsection we discuss some notions of tempering for sequences of linear maps. We remark that typical notions of tempering used in Pesin theory involve both lower and upper bounds, i.e.~they involve a statement like $e^{\lambda-\epsilon}\le \|A\vert_{E^u}\|\le e^{\lambda+\epsilon}$. We will only take one of these two bounds to avoid having to do more estimates than necessary. Further, the version of tempering used in Pesin theory is often adapted so that the value of $\lambda$ is a particular Lyapunov exponent for a particular measure. In such a context, a tempered splitting will have expansion at rate $e^{\lambda-\epsilon}$ rather than at rate $e^{\lambda}$, as we have below. Compare for example, with the definition of $(\lambda,\mu,\epsilon)$-tempered in \cite[Def.~1.2.]{barreira2007nonuniform}. In the language of this section, points that are $(\lambda,\mu,\epsilon)$-tempered in the sense of \cite{barreira2007nonuniform}, have a splitting that is $(C,\lambda-\epsilon,\epsilon)$-tempered in our sense.

Before we get to our ultimate notion of a tempered splitting, Definition~\ref{defn:tempered_splitting}, we first record several estimates and introduce intermediate notions.

\begin{defn}\label{defn:sub_super_tempered}
Consider a finite or infinite sequence of linear maps $(A_n)_{n\in I}$ between a sequence of normed $2$-dimensional vector spaces $V_i$, where $I$ is either $\mathbb{N}$ or a set of the form $\{1,\ldots, n\}$, and $A_i\colon V_{i}\to V_{i+1}$.
\begin{enumerate}
    \item\label{item:subtempered_norms}
We say that $(A_n)$ has $(C,\lambda,\epsilon)$-\emph{subtempered} norms when
\[
\|A^{i+j}\|\ge e^{C}e^{\lambda i}e^{-\epsilon j}\|A^j\|,
\]
for all $i\ge 1$, $j\ge 0$, with $i+j\in I$.
\item 
We say that a vector $v$ is $(C,\lambda,\epsilon)$-\emph{subtempered} for the sequence of linear transformations $A_i$ if 
\begin{equation}
\|A^{m}_kv^k\|\ge e^{C}e^{\lambda m}e^{-\epsilon k},
\end{equation}
where  $A^{m}_k=A_{k+m}\cdots A_{k+1}$ and $v^k=A^kv/\|A^kv\|$, for all $k,m\in \N$ with $k+m\in I$. 
\item 
We say that the vector $v$ is $(C,\lambda,\epsilon)$-\emph{supertempered} if 
\begin{equation}
\|A^{m}_kv^k\|\le e^{C}e^{\lambda m}e^{\epsilon k},
\end{equation}
for all $m,k$ and $v^k$ as above.
\item 
Similarly, we may speak of a vector $v\in T_xM$ being sub or super tempered for a sequence of diffeomorphisms $(f_n)_{n\in I}$ if it sub or super tempered for the sequence of differentials $D_xf_1,D_{f_1(x)}f_2,\ldots$, etc.
\end{enumerate}
\end{defn}

Finally, we say that a sequence of maps has an $(C,\lambda,\epsilon)$-tempered splitting if there exists a pair of directions $e^u$ and $e^s$ such that the action of the maps is $(-C,\lambda,\epsilon)$-subtempered on $e^u$ and $(C,-\lambda,\epsilon)$-supertempered on $e^s$. In addition, we impose a lower bound on the angle between these two directions. Note that we do not require the angle itself to be tempered in the sense that it locally decays slowly: we just require that it stay bounded below by a slowly decaying function.

\begin{defn}\label{defn:tempered_splitting}
We say that a finite or infinite sequence $A_1,\ldots,A_n$ of linear maps $A_i\colon V_{i}\to V_{i+1}$ of $2$-dimensional inner product spaces has a $(C,\lambda,\epsilon)$-\emph{tempered} splitting if there exists a pair of unit vectors $e^s,e^u\in V_1$ such that 
\begin{align}
\|A^m_k (A^{k}e^u)\|/\|A^{k}e^u\|&\ge e^{-C}e^{\lambda m}e^{-\epsilon k},\\
\|A^{m}_k (A^{k}e^u)\|/\|A^{k}e^s\|&\le e^{C}e^{-\lambda m}e^{+\epsilon k},\\
\angle (A^ke^s,A^ke^u)\ge e^{-C}e^{-\epsilon k}.
\end{align}
Similarly, we say that this sequence of maps has a \emph{reverse tempered} splitting, if the sequence of maps $A_n^{-1},\ldots,A_1^{-1}$ has a tempered splitting.
\end{defn}

In the rest of this section we will show that typically the sequence of differentials 
along a random orbit has a tempered splitting. 

\subsection{Temperedness of sums of real valued random variables}

In order to study the temperedness of vectors, we will first study additive sequences of real random variables. This will be sufficient for our purposes because one may think of the norm of a vector acted upon by matrices as the sum of random variables of the form $\ln \|Av\|\|v\|^{-1}$.

In what follows, we will be studying tempered sequences of sums of real valued random variables.  The  results of this subsection 
will be used in the proof of Proposition \ref{prop:exponentially_return_to_tempered_additive}, which says that tempered times occur exponentially fast. 

\begin{defn}    
If $X_1,\ldots, X_n$ is a finite or infinite sequence of real numbers then we say that this sequence is $(C,\lambda,\epsilon)$\emph{-tempered} if for each $0\le j< k\le n$, we have that 
\begin{equation}
\sum_{i=j+1}^k X_i -\lambda(k-j)+j\epsilon\ge C.
\end{equation}
We also say that a finite sequence $X_1,\ldots X_n$ is $(C,\lambda,\epsilon)$-\emph{reverse tempered} if the sequence $X_n,\ldots,X_1$ is $(C,\lambda,\epsilon)$-tempered.
\end{defn}
\noindent Note that for fixed $\lambda,\epsilon>0$  every finite sequence is $(C,\lambda,\epsilon)$-tempered for a sufficiently negative choice of $C$. Further, note that this condition is harder to satisfy for large positive $C$, and easier to satisfy for very negative $C$.

We are interested in finding tempered times for sequences of random variables.

\begin{prop}\label{prop:temperedness_tail_bound}
Fix constants $c>\lambda_0>\lambda_1>0$ and $\epsilon>0$. Then there exist $D_1,D_2>0$ such that the following hold. Suppose that $X_1,X_2,\ldots$ is a submartingale difference sequence with respect to a filtration $(\mc{F}_n)_{n\in \N}$ such that
\begin{enumerate}
    \item $\abs{X_i}\le c$;
    \item $\E{X_i\vert \mc{F}_{i-1}}\ge \lambda_0$.
\end{enumerate}
Then the temperedness constant of the random sequence has an exponential tail. Namely, for $C\ge 0$,
    \begin{equation}\label{eqn:prob_not_subtempered_additive_seq1}
    \mathbb{P}(X_1,X_2,\ldots,\text{ is not } (-C,\lambda_1,\epsilon)\text{-tempered})\le D_1\exp(-D_2C).
    \end{equation}
\noindent Under the same assumptions on a finite sequence,  \eqref{eqn:prob_not_subtempered_additive_seq1} holds with the same constants. 
\end{prop}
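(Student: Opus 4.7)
The plan is to reduce to a standard martingale concentration argument via Azuma--Hoeffding and then perform a union bound over all pairs $(j,k)$. First I would center the increments by setting $Z_i = X_i - \E{X_i \mid \mc{F}_{i-1}}$, so that $(Z_i)$ is an honest martingale difference sequence bounded by $2c$. Writing $\delta = \lambda_0 - \lambda_1 > 0$, the hypothesis $\E{X_i \mid \mc{F}_{i-1}} \ge \lambda_0$ gives
$$\sum_{i=j+1}^k X_i \ge \sum_{i=j+1}^k Z_i + \lambda_0(k-j),$$
so the event that the sequence fails to be $(-C,\lambda_1,\epsilon)$-tempered witnessed by the pair $(j,k)$ implies
$$\sum_{i=j+1}^k Z_i \le -\delta(k-j) - j\epsilon - C.$$

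Next I would apply the Azuma--Hoeffding inequality (Theorem \ref{thm:azumas_inequality}) to bound the probability of this event, for fixed $j,k$, by
$$2\exp\!\left(-\frac{(\delta(k-j) + j\epsilon + C)^2}{8c^2 (k-j)}\right).$$
Setting $n = k - j \ge 1$ and $A = j\epsilon + C \ge 0$, the elementary inequality $(\delta n + A)^2 \ge \delta^2 n^2 + 2\delta n A$ gives
$$\frac{(\delta n + A)^2}{8c^2 n} \;\ge\; \frac{\delta^2 n}{8c^2} + \frac{\delta A}{4c^2},$$
which neatly separates the $n$-dependence from the $(j, C)$-dependence.

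Now I would union bound over all $0 \le j < k$: the probability of failure is at most
$$\sum_{j \ge 0}\sum_{n \ge 1} 2\exp\!\left(-\frac{\delta^2 n}{8c^2} - \frac{\delta(j\epsilon + C)}{4c^2}\right) = 2\exp\!\left(-\tfrac{\delta C}{4c^2}\right) \left(\sum_{n \ge 1} e^{-\delta^2 n/(8c^2)}\right)\!\left(\sum_{j \ge 0} e^{-\delta j \epsilon /(4c^2)}\right).$$
Both sums on the right converge to constants depending only on $c,\lambda_0,\lambda_1,\epsilon$, so this yields the desired estimate with $D_2 = \delta/(4c^2)$ and $D_1$ equal to twice the product of the two sums. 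The same bound applies verbatim to any finite sequence because one simply restricts the union bound to finitely many pairs.

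There is no real obstacle; the only minor point to watch is that one must center the submartingale to apply Azuma--Hoeffding (since that theorem requires a martingale difference), and one must choose the split of $(\delta n + A)^2$ so that the $n$-sum converges and the $C$-factor is pulled outside. The passage from $\lambda_0$ to the strictly smaller $\lambda_1$ is exactly what creates the positive linear drift $\delta n$ that drives the union bound; this is why the hypothesis $\lambda_0 > \lambda_1$ is needed.
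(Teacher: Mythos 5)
Your proof is correct and follows essentially the same route as the paper's: center the submartingale increments, apply Azuma--Hoeffding to the pair $(j,k)$, expand $(\delta n + A)^2 \ge \delta^2 n^2 + 2\delta n A$ to separate the $n$-sum from the $C$-dependence, and union bound over all pairs. Your treatment is in fact slightly more careful about the constant (using $|Z_i|\le 2c$ and hence $8c^2n$ in the Azuma denominator, where the paper writes $2mc^2$), but this only affects $D_1,D_2$ and not the argument.
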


\begin{proof}
For a fixed $C$, for the sequence to be $(-C,\lambda_1,\epsilon)$-tempered, for each pair of indices $0\le j< k$ the following inequality must be satisfied:
\begin{equation}\label{eqn:temperedness_defC}
X_k+\cdots+X_{j+1}-(k-j)\lambda_1+j \epsilon \ge -C.
\end{equation}
To estimate the probability of this event 
consider $\chi_{k+1}=\E{X_{k+1}\vert \mc{F}_k}$, and let
$\hat{X}_k\!\!=\!\!X_{k+1}\!-\!\chi_{k+1}$. Then the sequence $\hat{X}_k$ is a martingale difference sequence. Then,
\begin{align*}
&\mathbb{P}(X_k+\cdots+X_{j+1}-(k-j)\lambda_1+j\epsilon \le\!\! -C)\!\!=\!\!\mathbb{P}(\hat{X}_k+\cdots+\hat{X}_{j+1}
\!\!+\!\!\sum_{i=j+1}^k \chi_i-(k-j)\lambda_1+j\epsilon \le\!\! -C)\\
&\le \!\mathbb{P}\!\left(\abs{\sum_{i=j+1}^k \hat{X}_i}\ge \abs{-\!\!\!\sum_{i=j+1}^k \chi_i+(k-j)\lambda_1-j\epsilon \!\!-\!\! C}\right)\!
\le \!\mathbb{P}\!\left(\abs{\sum_{i=j+1}^k \hat{X}_i}\ge \abs{-\!(k-j)(\lambda_0-\lambda_1)\!\!-\!\!j\epsilon \!\!-\!\! C}\right)
\end{align*}
because we know that the term in the right hand absolute value is negative and $\chi_i\ge \lambda_0>\lambda_1$. Then by Azuma's inequality (Thm.~\ref{thm:azumas_inequality}),
\begin{equation}
    \mathbb{P}\left(X_k+\cdots+X_{j+1}-(k-j)\lambda_1+j\epsilon 
\le -C\right)\le 2\exp\left(-\frac{(m(\lambda_0-\lambda_1) +j\epsilon +C)^2}{2mc^2}\right)\label{eqn:good_azuma_est}
\end{equation}
$$
   \le 2\exp\left(-\frac{m(\lambda_0-\lambda_1)^2+2(j\epsilon+C)(\lambda_0-\lambda_1) }{2c^2}\right), $$
 where $m=k-j$.
Summing over  $j$ and $m$ we obtain that there exist $D_1,D_2>0$ independent of $n$ such that:
\begin{equation}
\sum_{k\ge j+1}^n\mathbb{P}(X_k+\cdots+X_{j+1}-(k-j)\lambda_1+j \epsilon \le -C)\le D_1\exp(-D_2C),
\end{equation}
which gives the needed conclusion.
\end{proof}

We now  estimate the probability that a sequence of random variables as above first fails to be tempered at a time $n$. This will be used to ensure that failure times in the local coupling lemma have an exponential tail.

\begin{prop}\label{prop:temperedness_failure_time_tail}
Fix constants $c>\lambda_0>\lambda_1>0$ and $\epsilon>0$. Then there exists $\eta>0$ such that the following holds. For each $C$ there exists $D_1$ such that if $X_1,X_2,\ldots$ is a submartingale difference sequence with respect to a filtration $(\mc{F}_n)_{n\in \N}$ and
\begin{enumerate}
    \item $\abs{X_i}\le c$;
    \item $\E{X_i\vert \mc{F}_{i-1}}\ge \lambda_0$,
\end{enumerate}
then if $\mc{S}$ is the first $n$ such that $X_1,X_2,\ldots,X_n$ is not $(C,\lambda_1,\epsilon)$-tempered then:
\[
\mathbb{P}(\mc{S}\ge n)\le D_1e^{-\eta n}.
\]
\end{prop}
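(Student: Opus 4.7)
The plan is to adapt the proof of Proposition~\ref{prop:temperedness_tail_bound}, which bounds the probability of any temperedness violation, into one controlling the probability that the \emph{first} violation occurs at exactly step $n$; summing these per-$n$ bounds geometrically will yield the claimed tail for $\mathbb{P}(\mc{S}\ge n)$. The key observation is that if $\mc{S} = n$ then $X_1,\ldots,X_{n-1}$ is already $(C,\lambda_1,\epsilon)$-tempered, so the new violation must involve the index $n$: there exists some $j\in\{0,\ldots,n-1\}$ with
\[
\sum_{i=j+1}^n X_i < \lambda_1(n-j) - j\epsilon + C,
\]
giving a union bound over at most $n$ values of $j$.

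For each such $j$, I would repeat the centering step from the proof of Proposition~\ref{prop:temperedness_tail_bound}. Setting $\chi_i := \E{X_i \mid \mc{F}_{i-1}}\ge \lambda_0$ and $\hat X_i := X_i - \chi_i$, the sequence $(\hat X_i)$ is a martingale difference with $|\hat X_i|\le 2c$, and the event above is contained in $\{\sum_{i=j+1}^n \hat X_i < -[(n-j)(\lambda_0-\lambda_1) + j\epsilon - C]\}$. Azuma--Hoeffding (Theorem~\ref{thm:azumas_inequality}) then gives, whenever the bracketed quantity is positive,
\[
\mathbb{P}\bigl(\text{pair }(j,n)\text{ fails}\bigr) \le 2\exp\!\left( -\frac{[(n-j)(\lambda_0-\lambda_1) + j\epsilon - C]^2}{8(n-j)c^2} \right).
\]
Setting $\nu := \min(\lambda_0-\lambda_1,\epsilon)>0$, the elementary inequality $(n-j)(\lambda_0-\lambda_1) + j\epsilon \ge n\nu$ shows that once $n \ge 2C/\nu$ the bracket is at least $n\nu/2$, so the Azuma exponent is at least $n\nu^2/(32c^2) =: \eta_0 n$, linear in $n$ with a rate that is independent of both $j$ and $C$.

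Unioning over the $n$ choices of $j$ gives $\mathbb{P}(\mc{S}=n) \le 2n\,e^{-\eta_0 n}$ for $n \ge 2C/\nu$, and for the finitely many smaller $n$ the trivial bound $\mathbb{P}(\mc{S}=n)\le 1$ is absorbed into a $C$-dependent prefactor $D_1$. Summing the resulting geometric series over $n'\ge n$ will produce $\mathbb{P}(\mc{S}\ge n) \le D_1 e^{-\eta n}$ for any $\eta\in(0,\eta_0)$. There is little conceptual obstacle beyond this bookkeeping; the essential analytic input is the uniform linear lower bound on the Azuma exponent, which in turn is an immediate consequence of the submartingale drift hypothesis $\E{X_i \mid \mc{F}_{i-1}}\ge \lambda_0$ combined with the positive $\epsilon$-shift that protects against large $j$.
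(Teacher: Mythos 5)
Your argument is correct and follows essentially the same route as the paper: both reduce the first-failure event at time $n$ to a union over $j$ of Azuma estimates for $\sum_{i=j+1}^n \hat X_i$, with the linear-in-$n$ exponent coming from the observation that $(n-j)(\lambda_0-\lambda_1)+j\epsilon$ grows linearly in $n$ uniformly in $j$. The only cosmetic difference is that the paper phrases the localization by proving a strengthened temperedness inequality at time $n$ (which then forbids a first failure at time $n+1$), whereas you observe directly that a first failure at time $n$ must come from a constraint with right endpoint $n$; the underlying computation is identical.
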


\begin{proof}
To obtain a proof of the proposition we show that except on a set of exponentially small probability, the sequence $X_1,\ldots,X_n$ satisfies better estimates than $(C,\lambda_1,\epsilon)$-temperedness requires for the constraints related on $X_{n+1}$. In fact, these estimates are so much better than what is needed, that regardless of what $X_{n+1}$ is the sequence will remain $(C,\lambda_1,\epsilon)$-tempered as long as $X_1,\ldots,X_n$ is $(C,\lambda_1,\epsilon$)-tempered. Hence the sequence fails to be tempered for the first time at time $n+1$ with exponentially small probability.

We claim that there exist $\eta,D_1>0$ such that with probability at least $1-D_1e^{-n\eta}$, for all $0\le j< n$, 
\begin{equation}\label{eqn:strengthened_temperedness_formula}
\sum_{i=j+1}^n X_i -\lambda_1(n-j)+j\epsilon\ge C+(n-j)(\lambda_0-\lambda_1)/2.
\end{equation}

We now estimate the probability that \eqref{eqn:strengthened_temperedness_formula} holds for each $0\le j< n$. 
This is the same as estimating the probability that
\[
\sum_{i=j+1}^n X_i <\lambda_1(n-j)-j\epsilon+ C+(n-j)(\lambda_0-\lambda_1)/2.
\]
Note that this is the same inequality as 
\eqref{eqn:temperedness_defC}, with $(n-j)(\lambda_0-\lambda_1)/2$ added to the constant $C$ appearing there. 
Thus
\eqref{eqn:good_azuma_est} gives
\[
\!\!\!\mathbb{P}\left(\!\!\sum_{\;\;i=j+1}^n \! \! \!X_i <\lambda_1(n-j)\!-\!j\epsilon+\!C\!+\frac{(n-j)(\lambda_0-\lambda_1)}2\right)
\!\!\le\!\! 2\exp\left(\!\!-\frac{((n-j)(\lambda_0-\lambda_1)/2 \!+\!j\epsilon +C)^2}{2(n-j)c^2}\right)
\]
As at least one of $j$ and $n-j$ exceeds $n/2$ in size, we see that there exists $a>0$ such that 
\[
\mathbb{P}\left(\sum_{i=j+1}^n X_i <\lambda_1(n-j)-j\epsilon+ C+(n-j)(\lambda_0-\lambda_1)/2\right)\le e^{-an}.
\]
Hence there exists $D_1>0$ such that 
\[
\sum_{j=0}^{n-1} 
\mathbb{P}\left(\sum_{i=j+1}^n X_i <\lambda_1(n-j)-j\epsilon+ C+(n-j)(\lambda_0-\lambda_1)/2\right)\le D_1e^{-(a/2)n}.
\]
Thus we see that there is a set of probability $1-D_1e^{-(a/2)n}$ such that the inequalities \eqref{eqn:strengthened_temperedness_formula} all hold. In particular as long as $n$ is sufficiently large, for a realization $X_1,\ldots,X_n$ in this set, it follows that $X_1,\ldots,X_n,X_{n+1}$ is necessarily also $(C,\lambda_1,\epsilon)$-tempered if $X_1,\ldots,X_n$ is.

This implies that the probability of $X_1,X_2\ldots$ failing to be $(C,\lambda_1,\epsilon)$-tempered for the first time at time $n$ is at most $D_1e^{-(a/2)n}$, and the proposition follows.
\end{proof}

\subsection{Tempered splittings from tempered norms}

In this subsection, we show that one may obtain a tempered splitting for a sequence of matrices in $\SL(2,\R)$ when the norms of the matrix products are themselves tempered. 
Namely, we show that if the norms of a product of matrices has subtempered norm in the sense of 
Definition~\ref{defn:sub_super_tempered}, then the product has a hyperbolic splitting. 
The proof consists of several steps. The first step is to show that there is a stable subspace on which the product's action is super-tempered.

As before, we write $A^n=A_n\cdots A_1$. We denote by $s_n$ the most contracted singular direction of $A^n$
and by $u_n$ the most expanded singular direction. Recall that for $A\in \SL(2,\R)$ we have $\|As\|=\|A\|^{-1}$
where $s$ is a unit vector in the most contracted singular direction.

Before proceeding to the next proof, we see how the most contracted singular direction changes as we compose more matrices. Note that the following computation does not use any temperedness assumptions. Define $\alpha_n$ as follows:
\begin{equation}\label{eqn:defn_of_alpha_n}
s_n=\cos\alpha_n s_{n+1}+\sin \alpha_n u_{n+1}.
\end{equation}
Then we can compute that
$$
\|A^{n+1} s_n\|=\sqrt{\|A^{n+1}\|^{-2}\cos^2\alpha_n +\|A^{n+1}\|^2\sin^2\alpha_n}
\ge \|A^{n+1}\|\sin \alpha_n.
$$
But we also have the estimate:
\[
\|A^{n+1}s_n\|\le \|A_{n+1}\|\|A^n(s_n)\|=\|A_{n+1}\|\|A^n\|^{-1}.
\]
Thus 
\begin{equation}\label{eqn:sine_angle_estimate}
\sin \alpha_n \le \frac{\|A_{n+1}\|}{\|A^{n+1}\|\|A^n\|}.
\end{equation}

We now observe that if the sequence $(A_n)_{n\in \N}$ has a well defined stable direction $E^s$,
then $s_n\to E^s$ and we can estimate their distance by
\begin{equation}\label{eqn:angle_est1}
\angle (E^s,s_n)\le D\sum_{m\ge n} \alpha_m.
\end{equation}
This is good because we expect this sum to be dominated by its first term in the presence of non-trivial Lyapunov exponents.

Now consider a sequence of matrices $A_1,A_2,\ldots$ whose norm is $(C,\lambda,\epsilon)$-tempered and such that each matrix has norm bounded above  by $\Lambda>0$.  If we have $\|A^nv\|\ge e^{C}e^{n\lambda}$
 for some unit vector $v$,
then 
\begin{equation}\label{eqn:angle_est2}
\angle(E^s,s_n)\le D\sum_{m\ge n} e^{-2C}\Lambda e^{-2m\lambda}\le e^{D'-2C}\Lambda e^{-2n\lambda},
\end{equation}
for some $D'$ depending only on $\lambda$.
\begin{prop}\label{prop:tempered_norm_implies_splitting}
Suppose that $C_0,\lambda,\epsilon,\Lambda>0$ are fixed. Then there exist $D$ and $N\in \N$ such that if $A_1,\ldots,A_n$, $n\ge N$ is a sequence of matrices in $\SL(2,\R)$ with $(C,\lambda,\epsilon)$-subtempered norms. Then:
\begin{enumerate}[leftmargin=*]
    \item 
     There exist perpendicular vectors $s$ and $u$ so that $(A_i)_{1\le i\le n}$ has a $(\max\{0,-3C\}+D, \lambda-2\epsilon,3\epsilon)$ tempered splitting in the sense of Definition \ref{defn:tempered_splitting}.
    In the case that $\|A^n\|>1$, we may take $s$ and $u$ to be the most contracted and expanded singular directions of $A^n$, respectively.
    \item 
    In the case of an infinite sequence 
    $(A_i)_{i\in \N}$ with subtempered norms
    there exists an orthogonal pair of unit vectors $s$ and $u$ that defines such a splitting. Further, there exists a unique one dimensional subspace $E^s$ such that any non-zero $v\in E^s$ that satisfies
    $\displaystyle
    \limsup_{n\to \infty} n^{-1}\ln \|A^nv\|<0
    $
    is in $E^s$.
    \item 
    Finally, there exists $N_0(C)=\lceil (C+\ln(2))/\lambda \rceil$ and $D'$ such that for $n\ge N_0$ and  
    $m_2 \! \ge\! m_1\!\ge\! N_0$, and any $(C,\lambda,\epsilon)$-tempered sequence of matrices $(A_i)_{1\le i\le n}$ as above, $A^{m_1}$ and $A^{m_2}$ have unique contracted singular directions $E^s_{m_1}$ and $E^s_{m_2}$ and moreover, 
    $$
        \angle(s_{m_1},s_{m_2})\le e^{-4C+D'}e^{-2(\lambda-\epsilon)m_1}.
    $$
    The analogous statement also holds for $n=\infty$.
\end{enumerate}
\end{prop}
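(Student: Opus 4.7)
The plan has three phases corresponding to the three items of the proposition, all anchored on a single computation. Subtemperedness forces $\|A^n\|\ge e^Ce^{\lambda n}$, so formula \eqref{eqn:sine_angle_estimate} gives $\sin\alpha_n\le \Lambda e^{-2C}e^{-\lambda(2n+1)}$. Summing this geometric series shows that $(s_n)$ is Cauchy and quantifies $\angle(s_k,s_l)\le De^{-2C}e^{-2\lambda k}$ for $l\ge k$, which proves Item (3) (the stated form absorbs the slack). For an infinite sequence set $E^s=\lim s_n$; for a finite sequence of length $n\ge N_0(C)$, for which subtemperedness forces $\|A^n\|>1$, take $e^s=s_n$. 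In both cases choose $e^u$ orthogonal to $e^s$.

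The key technical step is a sharpened angle bound
\[
\theta_j\|A^j\|^2\le De^{-2C}e^{2\epsilon j},
\]
where $\theta_j=\angle(e^s,s_j)$. To prove it, insert the factor $\|A^j\|^2$ inside the telescoping sum $\theta_j\le C\sum_{n\ge j}\|A_{n+1}\|/(\|A^{n+1}\|\|A^n\|)$ and cancel it term by term using the subtempered inequality $\|A^{n+1}\|\ge e^Ce^{\lambda(n+1-j)}e^{-\epsilon j}\|A^j\|$ (and its analogue for $\|A^n\|$). This is essential because it prevents the cross term $\sin^2\theta_j\|A^j\|^2$ from overwhelming the principal contraction term $\|A^j\|^{-2}$ in
\[
\|A^j e^s\|^2=\cos^2\theta_j\,\|A^j\|^{-2}+\sin^2\theta_j\,\|A^j\|^2,
\]
which comes from decomposing $e^s=\cos\theta_j s_j+\sin\theta_j u_j$ in the singular-value basis at time $j$ (an analogous formula holds for $e^u$).

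The supertempered bound on $e^s$ then follows: $\|A^{k+m}e^s\|/\|A^k e^s\|$ is at most $\|A^k\|/\|A^{k+m}\|+\theta_{k+m}\|A^{k+m}\|\|A^k\|$. The first summand is controlled by $e^{-C}e^{-\lambda m}e^{\epsilon k}$ via the subtempered norm inequality. For the second, rewrite it as $(\theta_{k+m}\|A^{k+m}\|^2)\cdot(\|A^k\|/\|A^{k+m}\|)$ and apply the sharpened bound together with the same subtempered inequality; bookkeeping produces $e^{\max\{0,-3C\}+D}e^{-(\lambda-2\epsilon)m}e^{3\epsilon k}$, matching the proposition. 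The subtempered bound on $e^u$ is easier since the cross term there is subordinate and $\|A^j e^u\|$ is comparable to $\|A^j\|$ up to $\sqrt{2}$. The angular lower bound $\angle(A^ke^s,A^ke^u)\ge e^{-C'}e^{-\epsilon'k}$ follows from the determinant identity $|A^ke^s\wedge A^ke^u|=|\det A^k|\cdot|e^s\wedge e^u|=1$, giving $\sin\angle(A^ke^s,A^ke^u)=(\|A^ke^s\|\|A^ke^u\|)^{-1}$, with the denominator controlled by the sharpened bound.

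For Item (2), uniqueness of $E^s$: write any $v$ not in the limit direction as $v=ae^s+be^u$ with $b\ne 0$; subtempered growth of $A^ne^u$ and angular separation give $\|A^nv\|\ge c|b|e^{-C'}e^{(\lambda-2\epsilon)n}$, ruling out exponential decay. The main obstacle throughout is the sharpened angle bound: the naive estimate $\theta_j\le De^{-2C}e^{-2\lambda j}$ yields only $\theta_j\|A^j\|^2\le De^{-2C}(\|A^j\|^2/e^{2\lambda j})$, which is uncontrollable when $\|A^j\|$ substantially exceeds its subtempered floor. Bringing the factor $\|A^j\|^2$ inside the telescoping sum before applying subtemperedness matches each contribution with its own denominator, yielding the tight bound.
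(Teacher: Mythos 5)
Your proposal follows essentially the same route as the paper's proof: the recursion $\sin\alpha_n\le \|A_{n+1}\|/(\|A^{n+1}\|\|A^n\|)$, the key step of bounding the telescoping sum with the subtempered inequality anchored at index $j$ to get $\angle(e^s,s_j)\le D e^{-2C}\|A^j\|^{-2}e^{2\epsilon j}$, the decomposition of $e^s,e^u$ in the singular basis at time $j$ to extract super/subtemperedness, and the determinant identity $1=\|A^ke^s\|\,\|A^ke^u\|\sin\angle(A^ke^s,A^ke^u)$ for the angle lower bound. The substance is correct.

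One step as written does not close: for Item (3) you claim the crude bound $\angle(s_{m_1},s_{m_2})\le De^{-2C}e^{-2\lambda m_1}$ suffices because ``the stated form absorbs the slack.'' It does not when $C>0$: the target $e^{-4C+D'}e^{-2(\lambda-\epsilon)m_1}$ has the \emph{smaller} constant $e^{-4C}$, and the extra decay $e^{2\epsilon m_1}$ available from $m_1\ge N_0(C)\approx C/\lambda$ only recovers a factor $e^{2\epsilon C/\lambda}$, not $e^{-2C}$. The fix is to use the sharpened bound you prove in the next paragraph, $\angle(s_{m_1},s_{m_2})\le De^{-2C}\|A^{m_1}\|^{-2}e^{2\epsilon m_1}$, and then substitute $\|A^{m_1}\|\ge e^{C}e^{\lambda m_1}$, which supplies the second factor of $e^{-2C}$. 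Relatedly, the assertion that $\|A^je^u\|$ is comparable to $\|A^j\|$ up to $\sqrt2$ fails for very negative $C$ at small $j$ (where $\angle(e^s,s_j)$ is not yet controlled); in that regime one should fall back on the determinant identity together with the upper bound on $\|A^je^s\|$, which is where the $\max\{0,-3C\}$ and $3\epsilon$ in the stated constants come from.
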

\begin{proof}
If $\|A^n\|=1$, choose arbitrarily a vector $s_n$. Otherwise, let $s_n$ be a unit vector most contracted by $A^n$.
Let $s_m$ be the most contracted vector for $A^m$. If $s_m$ does not exist because $\|A^m\|=1$, then there is no most contracted direction, and we instead set $s_m=s_n$. Let $u_n$ be a unit vector in the orthogonal complement of $s_n$.
We show that $u_n$ and $s_n$ define a tempered splitting. This requires estimating three things: the contraction of $s_n$, the growth of $u_n$, and the decay of the angle between them.

We now proceed with the proof of (1).
First, we will show that the action on the vector $s_n$ is super-tempered. Define $\alpha_m$ as in \eqref{eqn:defn_of_alpha_n}. Then there exists some $D_1$ such that 
\begin{equation}\label{eqn:proof_sin_alpha_m}
\sin\alpha_m \le D_1\frac{\|A_m\|}{\|A^m\|\|A^{m+1}\|}.
\end{equation}
Indeed for indices $m$ where $s_m$ and $s_{m+1}$ are both defined by the actual most contracting directions, this follows as in \eqref{eqn:sine_angle_estimate}. Otherwise, note that one of $A^m$ or $A^{m+1}$ has norm $1$, hence the right hand side is uniformly bounded below by $e^{-2\Lambda}$, and thus there exists such a~$D_1$.

From \eqref{eqn:proof_sin_alpha_m}, it is immediate that there exists $D_2>0$ such that
\begin{equation}\label{eqn:angle_crude_bound_12}
\angle(s_m,s_n)\le D_2 \sum_{m\le j<n} \frac{\|A_j\|}{\|A^j\|\|A^{j+1}\|}.
\end{equation}
From $(C,\lambda,\epsilon)$-subtempered norms we have for all $m+l\le n$,
\begin{equation}\label{eqn:applied_temperedness_12}
\|A^{m+l}\|\ge e^Ce^{\lambda l}\|A^m\|e^{-\epsilon m}. 
\end{equation}
Combining \eqref{eqn:angle_crude_bound_12} and \eqref{eqn:applied_temperedness_12}, and the uniform bound $\|A\|\le e^{\Lambda}$, we get
\begin{equation}
\angle(s_m,s_n)\le D_2e^{-2C+2\Lambda}\|A^m\|^{-2}e^{2\epsilon m}\sum_{0\le l<n-m} e^{-2\lambda l}
\le \label{eqn:est_on_angle}D_2D_{\lambda}e^{-2C+2\Lambda}\|A^m\|^{-2}e^{2\epsilon m}
\end{equation}
Hence there exists $D_3>0$ such that
 for all $0\le m\le n$,
\begin{align}
\|A^ms_n\|&\le \|A^m\|^{-1}+\sin\angle (s_n,s_m)\|A^m\|
\le \|A^m\|^{-1}+D_3D_{\lambda}e^{-2C+2\Lambda}\|A^m\|^{-1}e^{2\epsilon m}\notag \\
&\le (1+D_3D_{\lambda}e^{-2C+2\Lambda}e^{2\epsilon m})\|A^m\|^{-1}\label{eqn:prelim_temperedness_bound}.
\end{align}

We now check that $s_n$ is supertempered. This is more complicated. Write $\hat{s}_n^k$ for $A^ks_n/\|A^ks_n\|$.
For all $j+k\le n$, we have 
\[
\|A^j_k\hat{s}^k_n\|\|A^ks_n\|=\|A^{j+k}s_n\|.
\]
Thus
\[
\|A^j_k\hat{s}^k_n\|\le \|A^{j+k}s_n\|\|A^ks_n\|^{-1}.
\]
Applying \eqref{eqn:prelim_temperedness_bound}
with $m=j+k$ we get
\begin{align}
\|A^j_k\hat{s}^k_n\|&\le (1+D_3D_{\lambda}e^{-2C+2\Lambda}e^{2\epsilon(j+k)})\|A^{j+k}\|^{-1}\|A^ks_n\|^{-1}
\end{align}
By subtemperedness, $\|A^{j+k}\|\ge e^{C}e^{j\lambda}e^{-k\epsilon}\|A^k\|$, thus
\[
\|A^j_k\hat{s}_n^k\|\le e^{-C}e^{-j\lambda}e^{k\epsilon}(1+D_3D_{\lambda}e^{-2C+2\Lambda}e^{2\epsilon (j+k)}).
\]
Hence 
there exists $D_4$ such that 
\begin{equation}\label{eqn:s_n_supertemperedness_bound}
\|A^j_k\hat{s}_n^k\|\le e^{-\min\{-C,-3C\}+D_4}e^{-j(\lambda-2\epsilon)}e^{3k\epsilon}.
\end{equation}
Thus $s_n$ is $(\max\{0,-3C\}+D_4,\lambda-2\epsilon,3\epsilon$)-supertempered.

Next we  estimate how fast the angle between $s_n$ and $u_n=(s_n)^{\perp}$ decays. This will lead to a growth estimate on $u_n$. 
Consider the angle $\theta_m$ between $A^ms_n$ and $A^mu_n$. Because the maps are in $\SL(2,\R)$, 
\begin{equation}\label{eqn:area_preserved}
1=\|A^ms_n\|\|A^mu_n\|\sin \theta_m.
\end{equation}
Hence by  \eqref{eqn:prelim_temperedness_bound},
\begin{equation}
\sin\theta_m\ge\frac{1}{\|A^ms_n\|\|A^m\|}
\ge (1+D_3D_{\lambda}e^{-2C+2\Lambda}e^{2\epsilon m})^{-1}.
\label{eqn:angle_est_eqn_1} 
\end{equation}
For $0\le D_3D_{\lambda}e^{-2C+2\Lambda}e^{2\epsilon m}\le 1$,
\begin{equation}
    \sin\theta_m\ge 1/2.
\end{equation}
Otherwise,  as $1/(1+x)\ge 1/(2x)$ for $x\ge 1$,
\begin{equation}
 \sin\theta_m\ge  (2D_3)^{-1}D_{\lambda}^{-1}e^{2C-2\Lambda}e^{-2\epsilon m}.
\end{equation}
In both cases, we see that there exists $ D_5$ such that 
\begin{equation}\label{eqn:final_angle_temp_bound}
\sin \theta_m\ge e^{\min\{2C,0\}- D_5}e^{-2\epsilon m}.
\end{equation}

Finally, we estimate the rate of growth of $u_n$. First, note that because $s_n$ and $u_n$ are orthogonal, applying 
\eqref{eqn:angle_est_eqn_1} and \eqref{eqn:prelim_temperedness_bound} to \eqref{eqn:area_preserved} gives
$$
\|A^m u_n\| = (\sin\theta_m)^{-1}\|A^m s_n\|^{-1}
\ge 1\cdot (1+D_3D_{\lambda}e^{-2C+2\Lambda}e^{2\epsilon m})^{-1}\|A^m\|.
$$
Then letting $\hat{u}_n^k=A^ku_n/\|A^ku_n\|$, we can estimate $\|A^j_k\hat{u}_n^k\|$ as before:
\begin{align}
\|A^j_k\hat{u}_n^k\|&=\|A^{j+k} u_n\|\|A^k u_n\|^{-1}\\
&\ge  (1+D_3D_{\lambda}e^{-2C+2\Lambda}e^{2\epsilon (j+k)})^{-1}\|A^{j+k}\|\|A^k\|^{-1}\\
&\ge(1+D_3D_{\lambda}e^{-2C+2\Lambda}e^{2\epsilon (j+k)})^{-1}e^{C}e^{-\epsilon k}e^{\lambda j}\|A^{k}\|\|A^k\|^{-1} \\
& = (1+D_3D_{\lambda}e^{-2C+2\Lambda}e^{2\epsilon (j+k)})^{-1}e^{C}e^{-\epsilon k}e^{\lambda j}.
\end{align}
If $D_3D_{\lambda}e^{-2C+2\Lambda}e^{2\epsilon (j+k)}<1$, then
\begin{equation}
\|A^j_k\hat{u}_n^k\|\ge \frac{1}{2}e^Ce^{-\epsilon k}e^{\lambda j}. 
\end{equation}
Otherwise, as $1/(1+x)\ge 1/ (2x)$ for $x\ge 1$, we see that there exists $D_5>0$ such that:
\begin{equation}
\|A^j_k\hat{u}_n^k\|\ge { (2 D_3)}^{-1}D_{\lambda}^{-1}e^{2C-2\Lambda}e^{-2\epsilon (j+k)}e^{C}e^{-\epsilon k}e^{\lambda j}\\
\ge e^{D_5}e^{3C-2\Lambda}e^{-3\epsilon k} e^{(\lambda-2\epsilon )j}.
\end{equation}
So, we see that there exists $D_6$ such that 
\begin{equation}\label{eqn:u_n_super_tempered}
\|A^j_k\hat{u}_n^k\|\ge e^{\min\{C,3C\}+D_6}e^{(\lambda-2\epsilon)j}e^{-3\epsilon k},
\end{equation}
which shows that $u_n$ is $(\max\{0,-3C\}+D_6,\lambda-2\epsilon,3\epsilon)$-subtempered.

We can now conclude by reading off the constants for the splitting we just obtained from equations \eqref{eqn:s_n_supertemperedness_bound}, \eqref{eqn:final_angle_temp_bound}, and \eqref{eqn:u_n_super_tempered} and comparing with Definition \ref{defn:tempered_splitting}. Thus there is $D_7$ depending only on $\lambda,\Lambda,\epsilon$, such that $s_n$ and $u_n$ define a subtempered splitting with constants:
\begin{equation}
   D_7=(\max\{0,-3C\}+D_7,\lambda-2\epsilon,3\epsilon).
\end{equation}
This finishes the proof of the first conclusion of the proposition. 

The proof of (2) is straightforward, similar to part (1), and very similar to a usual proof of Osceledec theorem \cite[Ch.~4]{viana2014lectures}, so we omit it. 

Item (3) also follows from the above proof  
once we know that $N$ is large enough that the stable subspace is well defined.  This certainly holds if $n\ge \lceil (C+\ln(2))/\lambda\rceil$ since then $\|A^n\|\ge 2$. Then from equation \eqref{eqn:est_on_angle} and temperedness of the norm, if $m_1\le m_2$, we have that 
\begin{align*}
\angle(s_{m_1},s_{m_2})&\le D_2D_{\lambda}e^{-2C+2\Lambda}\|A^{m_1}\|^{-2}e^{2\epsilon m_1}\\
&\le D_2D_{\lambda}e^{-2C+2\Lambda}e^{2\epsilon m_1}(e^{-2C}e^{-2m_1\lambda})
\le e^{-4C+D_8}e^{-2(\lambda-\epsilon)m_1},
\end{align*}
for some $D_8$, which gives item (3).
\end{proof}

\subsection{Tempered splittings for expanding on average diffeomorphisms}
\label{SSTempDiffeo}

In this subsection, we apply the above developments to describe hyperbolicity of
expanding on average random dynamical systems. There are two main results, the first is Proposition \ref{prop:splitting_with_high_probability}, which is a quantitative estimate on the probability that $D_xf^n_{\omega}$ has a $(C,\lambda,\epsilon)$-tempered splitting. The second estimate is Proposition \ref{prop:finite_time_distribution_of_stable}, which controls the stable direction for this splitting.

To begin, we estimate the probability that the sequence $\|D_xf^n\|$ is tempered.
\begin{prop}\label{prop:diffeos_are_subtempered}
For a closed surface $M$, suppose that $(f_1,\ldots,f_m)$ is a uniformly expanding on average tuple in $\Diff_{\vol}(M)$ with constants $n_0$ and $\lambda_0$.
Then for all $0<\lambda_1<\lambda_0$ and all sufficiently small $\epsilon>0$, there exists $D,\alpha>0$ such that for all $x\in M$,

\begin{equation}\label{eqn:norm_of_matrix_subtempered}
\mu(\{\omega: \|D_xf^n_{\omega}\| {\ \rm is\ not\ } (-C,\lambda_1,\epsilon){\rm-subtempered}\})\le De^{-\alpha C}.
\end{equation}
\end{prop}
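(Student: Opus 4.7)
The plan is to deduce subtemperedness of the operator norm $\|D_xf^n_\omega\|$ from the scalar temperedness result, Proposition \ref{prop:temperedness_tail_bound}, applied along a family of unit vectors indexed by a ``starting time'' $j$. For each $j \ge 0$, choose $\mc{F}_j$-measurably a unit vector $v_j \in T_xM$ with $\|D_xf^j_\omega v_j\| = \|D_xf^j_\omega\|$, and set $w_j := D_xf^j_\omega v_j/\|D_xf^j_\omega v_j\| \in T_{f^j_\omega(x)}M$. The chain rule yields
\[
\|D_xf^{i+j}_\omega\| \;\ge\; \|D_xf^{i+j}_\omega v_j\| \;=\; \|D_{f^j_\omega(x)}f^i_{\sigma^j\omega}\,w_j\|\cdot\|D_xf^j_\omega\|,
\]
so it suffices to show that outside an event of probability $\le De^{-\alpha C}$ one has $\ln\|D_{f^j_\omega(x)}f^i_{\sigma^j\omega}\,w_j\| \ge -C + \lambda_1 i - \epsilon j$ for every $i\ge 1$ and $j\ge 0$ simultaneously.

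For each fixed $j$, consider the block increments
\[
X_k^{(j)} := \ln\|D_{f^j_\omega(x)}f^{kn_0}_{\sigma^j\omega}\,w_j\| - \ln\|D_{f^j_\omega(x)}f^{(k-1)n_0}_{\sigma^j\omega}\,w_j\|, \qquad k\ge 1,
\]
adapted to $\mc{G}^{(j)}_k := \mc{F}_{j+kn_0}$. Because the coordinates $\omega_{j+(k-1)n_0+1},\dots,\omega_{j+kn_0}$ are independent of $\mc{G}^{(j)}_{k-1}$, while both the base point $f^{j+(k-1)n_0}_\omega(x)$ and the normalised current image of $w_j$ are $\mc{G}^{(j)}_{k-1}$-measurable, the expanding on average hypothesis \eqref{EqExpAv} gives $\E{X_k^{(j)}\mid \mc{G}^{(j)}_{k-1}}\ge n_0\lambda_0$, and $|X_k^{(j)}|$ is uniformly bounded in terms of $\max_\ell \|Df_\ell\|_{C^0}$. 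Proposition \ref{prop:temperedness_tail_bound} (with its scalar parameters rescaled by $n_0$) therefore yields constants $D_1,D_2>0$ independent of $j$ such that for every $C''\ge 0$,
\[
\mu\bigl((X_k^{(j)})_{k\ge 1}\text{ is not }(-C'',\,n_0\lambda_1,\,n_0\epsilon)\text{-tempered}\bigr)\le D_1 e^{-D_2 C''}.
\]

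The final step is a union bound over $j$. Apply the above with $C'' := C + \tfrac{1}{2}\epsilon j$: by geometric summability the total failure probability is $\le D_1' e^{-D_2 C}$. Outside the bad set, specialising the temperedness of $(X_k^{(j)})$ to the ``$j'=0$'' clause yields, for every $j\ge 0$ and every $i = n_0 k$,
\[
\ln\|D_{f^j_\omega(x)}f^{n_0 k}_{\sigma^j\omega}\,w_j\| \;\ge\; -C + n_0\lambda_1 k - \tfrac{1}{2}\epsilon j \;\ge\; -C + \lambda_1 i - \epsilon j.
\]
For a general $i\ge 1$, round up to $n_0\lceil i/n_0\rceil$ and transfer the estimate using the uniform $C^1$ bound on the $f_\ell$; the loss is a fixed additive constant absorbed into $C$. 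The main technical obstacle is the need to control every starting time $j\ge 0$ by a single tail bound: the vector $w_j$ genuinely depends on $j$, so Proposition \ref{prop:temperedness_tail_bound} must be applied separately for each $j$, and it is precisely the $\epsilon j$-slack built into the definition of temperedness that makes the union bound over $j$ summable.
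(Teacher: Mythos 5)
Your proof is correct, and the core reduction is the same as the paper's: express the growth of the norms as a submartingale difference sequence of block increments with conditional drift at least $n_0\lambda_0$, invoke Proposition \ref{prop:temperedness_tail_bound}, and handle times not divisible by $n_0$ by the uniform bound on $\|Df_\ell^{\pm1}\|$.

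The one place you diverge is in how the operator norm is handled, and your version is arguably more careful. The paper's one-line proof speaks of a ``fixed $v\in T^1_xM$'' and then concludes for $\|D_xf^{nn_0}_\omega\|$; taken literally this has a gap, since temperedness of $\|A^nv\|$ for a single fixed $v$ does not bound the ratios $\|A^{i+j}\|/\|A^j\|$ from below (one has $\|A^jv\|\le\|A^j\|$, which points the wrong way). The intended fix is exactly your singular-vector observation, but it can be deployed more economically: define $X_k$ as the increment $\ln\|D_xf^{kn_0}_\omega\|-\ln\|D_xf^{(k-1)n_0}_\omega\|$ of the operator norm itself. Each such increment is bounded below by $\ln\|Df^{n_0}_{\sigma^{(k-1)n_0}\omega}w_{k-1}\|$ where $w_{k-1}$ is the ($\mc{F}_{(k-1)n_0}$-measurable) normalised image of the top singular direction of $D_xf^{(k-1)n_0}_\omega$, so $(X_k)$ is itself a bounded submartingale difference sequence with drift $\ge n_0\lambda_0$, and scalar $(-C,\lambda_1 n_0,\epsilon n_0)$-temperedness of $(X_k)$ is, by telescoping, literally the definition of subtempered norms. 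This requires only a single application of Proposition \ref{prop:temperedness_tail_bound}, whereas your route applies it once per starting time $j$ and then runs a union bound, paying with the $\epsilon j$ slack. Both arguments are valid; yours costs a little extra bookkeeping but makes fully explicit the step the paper elides.
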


\begin{proof}
This follows from the estimates on temperedness obtained for submartingales. Essentially, for a fixed $v\in T^1_xM$, $X_n=\|D_xf^{nn_0}_{\omega}v\|$ is a submartingale with respect to a filtration $\mc{F}_n$ generated  by the coordinates of $\omega$, and $\E{X_n\vert \mc{F}_{n-1}}\ge \lambda_0$. Thus Proposition \ref{prop:temperedness_tail_bound}
gives that for all sufficiently small $\epsilon>0$, and $0<\lambda_1<\lambda_0$, there exist $D_1,D_2>0$ such that:
\[
\mathbb{P}( \|D_xf^{nn_0}\|\text{ is not } (-C,\lambda_1,\epsilon)\text{-tempered})\le D_1e^{-D_2C}.
\]
Then to obtain temperedness along the entire sequence, not just times of the form $n n_0$, note that we have a uniform bound on the norm and conorm of all $\|D_xf_{\omega_i}\|$, $1\le i\le m$. 
\end{proof}

Since a tempered sequence of norms implies the existence of a tempered splitting by Proposition \ref{prop:tempered_norm_implies_splitting}, 
the following is immediate.

\begin{prop}\label{prop:splitting_with_high_probability}
Suppose that $M$ is a closed surface and $(f_1,\ldots,f_m)$ is uniformly expanding on average tuple of diffeomorphisms in $\Diff_{\vol}^{2}(M)$ with expansion constant $\lambda_0$. Then for all $0<\lambda_1<\lambda_0$, and sufficiently small $\epsilon>0$, there exists $D,\alpha>0$ such that for all $x\in T^1M$,
\begin{equation}
\mu(\{\omega: D_xf^n_{\omega} {\rm\ does\ not \ have\ a\ }(C,\lambda,\epsilon)-{\rm tempered\ splitting}\})\le De^{-\alpha C}.
\end{equation}
In particular,
for all $x\in M$ and almost every $\omega$, $D_xf^n_{\omega}$ has a well defined one-dimensional stable subspace $E^s_{\omega}(x)$.
\end{prop}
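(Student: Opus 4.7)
The plan is to combine the two main results already established in this section: Proposition~\ref{prop:diffeos_are_subtempered}, which controls the probability that the sequence of norms $\|D_xf^n_\omega\|$ is subtempered, and Proposition~\ref{prop:tempered_norm_implies_splitting}, which upgrades subtempered norms (for sequences of $\SL(2,\R)$ matrices on a $2$-dimensional surface) to the existence of a tempered splitting. Since the $f_i$ are area-preserving on the surface $M$, the linear cocycle $D_xf^n_\omega$ is a sequence of maps between $2$-dimensional inner product spaces whose determinants have uniformly bounded absolute value; after rescaling by the Jacobian (which is bounded above and below because $M$ is closed and $f_i\in \Diff^2_{\vol}(M)$), it falls within the scope of Proposition~\ref{prop:tempered_norm_implies_splitting}.

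Concretely, first I would fix $0<\lambda_1<\lambda_0$ and a sufficiently small $\epsilon>0$, and apply Proposition~\ref{prop:diffeos_are_subtempered} to obtain constants $D_1,\alpha_1>0$ such that for each $x\in M$,
\[
\mu\bigl(\bigl\{\omega:\|D_xf^n_\omega\|\ \text{is not}\ (-C,\lambda_1,\epsilon)\text{-subtempered}\bigr\}\bigr)\le D_1 e^{-\alpha_1 C}.
\]
Second, I would invoke Proposition~\ref{prop:tempered_norm_implies_splitting}(1), which says that whenever $\|D_xf^n_\omega\|$ is $(-C,\lambda_1,\epsilon)$-subtempered, the cocycle $D_xf^n_\omega$ admits a tempered splitting with parameters $(\max\{0,3C\}+D',\lambda_1-2\epsilon,3\epsilon)$ for a universal constant $D'$ depending only on $\lambda_1,\epsilon$, and on the uniform bound $\Lambda$ for $\|D f_i\|$ (which exists by compactness of $M$ and smoothness of the $f_i$). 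Rewriting and shrinking $\lambda_1$ and enlarging $\epsilon$ at the outset so that the constants $\lambda,\epsilon$ in the statement match the parameters produced by Proposition~\ref{prop:tempered_norm_implies_splitting}, and substituting $C\mapsto (C-D')/3$, gives the desired tail bound with some new constants $D,\alpha>0$.

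For the final sentence, for almost every $\omega$ the sequence $\|D_xf^n_\omega\|$ is $(-C(\omega),\lambda_1,\epsilon)$-subtempered for some finite $C(\omega)$, since by the Borel--Cantelli lemma the event in \eqref{eqn:norm_of_matrix_subtempered} holds for all sufficiently large $C$ (along, say, integer values). Then Proposition~\ref{prop:tempered_norm_implies_splitting}(2) applies to the infinite sequence of matrices $D_{f^{i-1}_\omega(x)}f_{\omega_i}$ and produces a unique one-dimensional subspace $E^s_\omega(x)\subset T_xM$ characterized as the set of vectors $v$ with $\limsup_{n\to\infty}n^{-1}\ln\|D_xf^n_\omega v\|<0$.

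The main obstacle is essentially bookkeeping: matching the constants coming from Proposition~\ref{prop:tempered_norm_implies_splitting} (which degrade $\lambda$ to $\lambda-2\epsilon$ and $\epsilon$ to $3\epsilon$, and depend on $C$ via $\max\{0,3C\}+D'$) with the parameters in the statement of Proposition~\ref{prop:splitting_with_high_probability}. Since $\epsilon$ and $\lambda_1<\lambda_0$ are only constrained to be small and close to $\lambda_0$ respectively, this is handled by reparametrizing at the start. No new probabilistic input is needed beyond Proposition~\ref{prop:diffeos_are_subtempered}.
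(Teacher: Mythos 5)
Your proposal is correct and follows exactly the paper's route: the paper derives this proposition immediately by combining Proposition~\ref{prop:diffeos_are_subtempered} (exponential tail on the subtemperedness constant of $\|D_xf^n_\omega\|$) with Proposition~\ref{prop:tempered_norm_implies_splitting} (subtempered norms yield a tempered splitting, and, for infinite sequences, a unique stable line). The only remark worth making is that no rescaling by the Jacobian is needed, since volume preservation on a surface already places $D_xf_i$ in $\SL(2,\R)$ with respect to orthonormal frames; otherwise your constant bookkeeping matches the paper's.
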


Below, it will be important to consider the probability that a trajectory that is $(C,\lambda,\epsilon)$-tempered suddenly fails to be tempered. In order to quantify this we will introduce an auxiliary quantity for $(C,\lambda,\epsilon)$-tempered orbits of length $n$. We call this the \emph{cushion} of the orbit and it measures how far the inequalities from Definition \ref{defn:sub_super_tempered}\eqref{item:subtempered_norms} are from failing.

\begin{defn}\label{defn:cushion}
If the sequence of matrices $A_1,\ldots,A_n$ is $(C_0,\lambda,\epsilon)$-tempered, then we define its \emph{cushion} $U$ to be 
\[
U= \min_{0\le k<n} \left[\ln \|A^{n}\|-\ln \|A^k\|-C_0 -(n-k)\lambda+\epsilon k\right]
\]
\end{defn}
\noindent Note that a trajectory can have such a large cushion that whatever happens at the next iterate, the trajectory will not fail to be tempered. The cushion reflects the only inequalities relevant to tempering that the term $A_{n+1}$ would affect, should it be added to the sequence.

The following proposition is a large deviations estimate that says that typically the cushion is quite large. 

\begin{prop}\label{prop:large_deviations_for_cushion}
For a closed surface $M$, suppose that $(f_1,\ldots,f_m)$ is an expanding on average tuple in $\Diff^2_{\vol}(M)$ with expansion constant  $\lambda_0>0$. For fixed $C_0$, let $U(n,\omega,x)$ be the cushion of $D_xf^n$ when viewed as a $(C_0,\lambda,\epsilon)$-tempered trajectory.
 
Then for any $C_0$, $\lambda<\lambda_0$, and $\epsilon>0$, there exist $\delta,\eta,D>0$ such that
\[
\mathbb{P}(U(n,\omega,x)<n\delta\vert D_xf^n_{\omega}\text{ is } (C_0,\lambda,\epsilon)\text{-tempered})\le D e^{-\eta n}.
\]
\end{prop}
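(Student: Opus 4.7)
My plan is to bound the unconditional probability $\mathbb{P}(U(n,\omega,x)<n\delta)$ by a union bound over the index $k$ appearing in the minimum defining the cushion, and then convert to the conditional probability at the end. Concretely, writing $A_i = D_{f^{i-1}_\omega(x)} f_{\omega_i}$ and $A^j = A_j\cdots A_1$, the event $\{U<n\delta\}$ forces, for some $0\le k<n$, the inequality
$$\ln\|A^n\|-\ln\|A^k\| < C_0 + (n-k)\lambda - \epsilon k + n\delta. \qquad (\star)$$

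For a fixed such $k$, I would condition on $\mathcal{F}_k=\sigma(\omega_1,\ldots,\omega_k)$. Let $v_k$ be a unit vector in the most-expanded singular direction of $A^k$ and set $w=A^kv_k/\|A^k\|$. Then $\|A^n v_k\|=\|A^{n-k}_k w\|\cdot\|A^k\|\le\|A^n\|$, so $(\star)$ implies $\ln\|A^{n-k}_k w\|<C_0+(n-k)\lambda-\epsilon k+n\delta$. Crucially, under conditioning on $\mathcal{F}_k$ the vector $w$ is determined, while $A^{n-k}_k$ is a product of independent fresh differentials; since expanding on average at the base point $f^k_\omega(x)$ applies to every unit vector, the sequence $\ln\|A^{\ell n_0}_k w\|$ is a submartingale with uniformly bounded increments and conditional drift $\ge n_0\lambda_0$. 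The modification for steps not a multiple of $n_0$ contributes only a bounded $O(1)$ error because of the uniform bound on $\|Df^{\pm1}_i\|$.

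Applying Azuma--Hoeffding (exactly as in the proof of Proposition~\ref{prop:temperedness_tail_bound}) conditionally on $\mathcal{F}_k$ yields
$$\mathbb{P}\bigl(\ln\|A^{n-k}_k w\|<(n-k)\lambda_0 - t \,\big|\, \mathcal{F}_k\bigr)\le \exp\!\bigl(-t^2/(2(n-k)c^2)\bigr).$$
For $(\star)$ to occur, we need a downward deviation of $t=(n-k)(\lambda_0-\lambda)+\epsilon k - C_0 - n\delta - O(1)$. Choosing $\delta<\tfrac12\min(\lambda_0-\lambda,\epsilon)$ and splitting $n\delta=\delta(n-k)+\delta k$, both coefficients $(\lambda_0-\lambda-\delta)$ and $(\epsilon-\delta)$ are positive, so $t\ge c_1\max(n-k,k)\ge c_1 n/2$ for $n$ large. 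Thus $t^2/(n-k)\ge c_1^2 n/4$, giving $\mathbb{P}((\star)\text{ for fixed }k)\le e^{-c_2 n}$. A union bound over the $n$ choices of $k$ yields $\mathbb{P}(U<n\delta)\le n e^{-c_2 n}$. Because the probability of being $(C_0,\lambda,\epsilon)$-tempered stays bounded away from $0$ uniformly in $n$ (by Proposition~\ref{prop:diffeos_are_subtempered}, whose tail bound is uniform in $n$; for large positive $C_0$ the ambient constant simply gets absorbed into $D$), dividing gives the stated conditional bound.

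The main subtlety I anticipate is the regime where $k$ is close to $n$: then $n-k$ is small, the submartingale has few steps, and one cannot really appeal to expanding-on-average. However, in that regime the term $\epsilon k$ dominates the required deviation, so $\ln\|A^n\|-\ln\|A^k\|$ would have to be less than a very negative number, which is precluded by the deterministic bound $|\ln\|A^n\|-\ln\|A^k\||\le (n-k)\Lambda$. Handling this boundary case cleanly and keeping careful track of the $O(1)$ slack from non-multiples of $n_0$ are the only real bookkeeping hurdles; the essential probabilistic content is just martingale concentration against the expanding-on-average drift, applied to the image of the top singular direction at an intermediate time.
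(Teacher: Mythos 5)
Your proof is correct and uses the same overall strategy as the paper's: a union bound over the intermediate index $k$ followed by a submartingale/Azuma estimate against the expanding-on-average drift, in the style of Proposition~\ref{prop:temperedness_tail_bound}. The one substantive divergence is your treatment of $k$ close to $n$, and there your version is the more careful one. The paper's written argument discards the $+\epsilon k$ term, replacing $\Omega_{n,k}$ by the larger event $\bar\epsilon n + C_0\ge \ln\|A^n\|-\ln\|A^k\|-\lambda(n-k)$, and then asserts this forces a deviation of size $\bar\epsilon n+C_0$ below the mean; but the actual deviation below the conditional mean is $(\lambda_0-\lambda)(n-k)-\bar\epsilon n-C_0$, which is negative when $n-k$ is small, so the displayed bound is vacuous for $k$ near $n$ (where the enlarged event is in fact nearly certain). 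Retaining $\epsilon k$, splitting $n\delta=\delta(n-k)+\delta k$, and using $\max(k,n-k)\ge n/2$ is exactly the device the paper deploys in Proposition~\ref{prop:temperedness_failure_time_tail}, and it is what is needed here to make the union bound close; your deterministic exclusion of the extreme case via $\abs{\ln\|A^n\|-\ln\|A^k\|}\le(n-k)\Lambda$ is also the right backstop. Your reduction to $\ln\|A^{n-k}_k w\|$ through the top singular vector of $A^k$ is a cleaner way of exhibiting the submartingale structure the paper invokes with ``as before.'' The remaining soft spot, shared with (indeed glossed over entirely by) the paper, is the division by $\mathbb{P}(D_xf^n_\omega\text{ is tempered})$: one needs this bounded below uniformly in $n$, which holds because temperedness up to time $n$ is a decreasing family of events whose intersection has positive probability for the relevant $C_0$ by Proposition~\ref{prop:diffeos_are_subtempered}; you flag this adequately.
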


\begin{proof}
The proof is straightforward: we are just estimating the difference between $\ln \|D_xf^n_{\omega}\|$ and $\ln \|D_xf^i_{\omega}\|$. 

Note that in order for a given trajectory to fail to have a cushion of size $\bar{\epsilon}n$, it needs to be the case that for each $0\le k\le n$, that
\begin{equation}\label{eqn:defn_of_cushion1}
\bar{\epsilon} n>\ln \|D_xf^n_{\omega}\|-\ln \|D_xf^k_{\omega}\|-C_0-\lambda(n-k)+\epsilon k.
\end{equation}
Call this event $\Omega_{n,k}$. Note that this event is a subset of the event that 
\[
\overline{\epsilon}n+C_0\ge \ln \|D_xf^n_{\omega}\|-\ln \|D_xf^k_{\omega}\|-\lambda(n-k)
\]

As before,
$\ln \|D_xf^n_{\omega}\|-\ln \|D_xf^k_{\omega}\|-\lambda(n-k)$ is a submartingale with differences bounded by some $\Lambda>0$. Hence as $\bar{\epsilon}n+C_0$ is positive for $n$ sufficiently large, it is less than the expectation of $\ln \|D_xf^n_{\omega}\|-\ln \|D_xf^k_{\omega}\|-\lambda(n-k)$. Thus Azuma's inequality gives
\begin{align*}
\mathbb{P}(\Omega_{n,k})&\le \mathbb{P}\left(\abs{\ln \|A^n\|-\ln \|A^k\|-\E{\ln \|A^n\|-\ln \|A^k\|}}>\bar{\epsilon}n+C_0\right)\\
&\le 2\exp\left(-\frac{(\bar{\epsilon}n +C_0)^2}{2\Lambda n}\right)
\le C_1\exp\left(-\frac{\overline{\epsilon}}{2\Lambda }n\right).
\end{align*}
Summing over $k$, we find that the probability that  at least one of the inequalities \eqref{eqn:defn_of_cushion1} fails for $1\le k\le n$ is exponentially small, which gives the result.
\end{proof}

Next, we study the distribution of the stable subspaces in an expanding on average system. We obtain two estimates. First, we obtain an estimate on the distribution of all stable subspaces through a point, Proposition \ref{prop:hausdorff_est_on_stable_dirs}. 
 Second, in Proposition \ref{prop:finite_time_distribution_of_stable}, we show that the empirical distribution of stable subspaces converges quickly to the actual distribution of the true stable subspaces.

\begin{prop}
\label{prop:hausdorff_est_on_stable_dirs}
Suppose that $M$ is a closed surface and that $(f_1,\ldots,f_m)$ is an expanding on average tuple of diffeomorphisms in $\Diff^2_{\vol}(M)$. Then there exist constants $C,\alpha>0$ such that if $\nu^s_x$ denotes the distribution of stable subspaces through the point $x$, then for each $v\in \mathbb{P}T_xM$,
\[
\nu^s_x(\{z\mid d(z,v)\le \epsilon\})\le C\epsilon^\alpha,
\]
where $d$ is the angle between those points and $\mathbb{P}(T_xM)$ denotes the projectivization of $T_xM$.
\end{prop}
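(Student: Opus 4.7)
The plan is to combine the tempered splitting of Proposition~\ref{prop:splitting_with_high_probability} with a large-deviation estimate for $\ln\|D_xf^n_\omega v\|$ obtained from the expanding-on-average hypothesis. The key mechanism is: if $v$ lies within angle $\epsilon$ of $E^s_\omega(x)$, then at time $n\sim\log(1/\epsilon)$, super-temperedness forces $\|D_xf^n_\omega v\|$ to be polynomially small in $\epsilon$, while expanding on average forces $\|D_xf^n_\omega v\|$ to exceed $e^{(\lambda_0-\delta)n}$ except on an exponentially small set. These bounds are contradictory once $\epsilon$ is small, so the target event is contained in the union of two rare events.

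Fix $\lambda_1\in(0,\lambda_0)$, $\delta\in(0,\lambda_0)$, and $\epsilon_1>0$ small enough that Proposition~\ref{prop:splitting_with_high_probability} applies. Let $\Lambda=\max_i\|Df_i\|_\infty$. I would set $n=\lfloor c_1\log(1/\epsilon)\rfloor$ and $C_0=c_2\log(1/\epsilon)$ for small $c_1,c_2>0$ to be chosen below, and prove the inclusion
\[
\{\omega:\angle(E^s_\omega(x),v)\le\epsilon\}\subseteq\mathcal{B}_{\mathrm{split}}\cup\mathcal{B}_{\mathrm{drift}},
\]
where $\mathcal{B}_{\mathrm{split}}$ is the event that $D_xf^n_\omega$ admits no $(C_0,\lambda_1,\epsilon_1)$-tempered splitting and $\mathcal{B}_{\mathrm{drift}}=\{\omega:\|D_xf^n_\omega v\|\le e^{(\lambda_0-\delta)n}\}$. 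On the complement of $\mathcal{B}_{\mathrm{split}}$, decompose $v=\cos\theta\, e^s_\omega+\sin\theta\, e^u_\omega$ in the tempered basis; the hypothesis gives $|\sin\theta|\le\epsilon$. Super-temperedness yields $\|D_xf^n_\omega e^s_\omega\|\le e^{C_0-\lambda_1 n}$, and the a priori estimate gives $\|D_xf^n_\omega e^u_\omega\|\le\Lambda^n$. Therefore
\[
\|D_xf^n_\omega v\|\le e^{C_0-\lambda_1 n}+\epsilon\Lambda^n=\epsilon^{c_1\lambda_1-c_2}+\epsilon^{1-c_1\log\Lambda}.
\]
Choosing $c_2<c_1(\lambda_0-\delta+\lambda_1)$ and $c_1(\log\Lambda-\lambda_0+\delta)<1$, both exponents strictly exceed $-c_1(\lambda_0-\delta)$, so for $\epsilon$ sufficiently small the right-hand side is $\le e^{(\lambda_0-\delta)n}$, placing $\omega\in\mathcal{B}_{\mathrm{drift}}$.

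It remains to estimate the two bad events. Proposition~\ref{prop:splitting_with_high_probability} directly gives $\mu(\mathcal{B}_{\mathrm{split}})\le De^{-\alpha C_0}=D\epsilon^{\alpha c_2}$. For $\mathcal{B}_{\mathrm{drift}}$, I would exploit that $S_k:=\ln\|D_xf^{kn_0}_\omega v\|$ is a submartingale whose conditional increments, by the Markov property and the expanding-on-average condition applied at the updated point and direction, are bounded below by $n_0\lambda_0$, and whose increments are bounded by $n_0\log\Lambda$. Centering and applying the Azuma--Hoeffding inequality (Theorem~\ref{thm:azumas_inequality}) to the resulting martingale yields $\mu(\mathcal{B}_{\mathrm{drift}})\le Ce^{-\eta n}=C\epsilon^{\eta c_1}$ for some $\eta=\eta(\delta)>0$; the $O(1)$ correction from times not a multiple of $n_0$ is absorbed into constants. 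Combining,
\[
\nu^s_x(\{z:d(z,v)\le\epsilon\})\le D\epsilon^{\alpha c_2}+C\epsilon^{\eta c_1},
\]
which is the claimed H\"older bound with exponent $\min(\alpha c_2,\eta c_1)$.

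The main obstacle is the constant juggling: $c_1$ and $c_2$ must be chosen simultaneously so that the tempered-splitting upper bound on $\|D_xf^n_\omega v\|$ is strictly beaten by the expanding-on-average lower bound, while $\mathcal{B}_{\mathrm{split}}$ and $\mathcal{B}_{\mathrm{drift}}$ both have polynomially small probability. The a priori bound $\|D_xf^n_\omega e^u_\omega\|\le\Lambda^n$ is crude compared to what volume preservation would give, but it is enough to obtain \emph{some} positive H\"older exponent, which is all that is required for the proposition.
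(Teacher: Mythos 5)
Your argument is correct in substance, but it takes a genuinely different route from the paper. The paper never invokes super-temperedness of the stable direction here. Instead it runs a multiscale cone-contraction argument: Lemma \ref{lem:close_vectors_get_closer} shows that over a block of length $b_k\sim\log(1/\epsilon_k)$ the cone $B_{\epsilon_k}$ around the running image of $v$ contracts to $B_{\epsilon_k^{1+\sigma}}$ while every vector in it is stretched (this uses volume preservation to control the sine of the angle), and then the proof of Proposition \ref{prop:hausdorff_est_on_stable_dirs} iterates this over blocks, using independence to multiply the failure probabilities $\epsilon_k^{\theta}$, concluding that with probability $1-C\epsilon^{\theta}$ every vector in $B_\epsilon(v)$ grows for all time and so cannot equal $E^s_\omega(x)$. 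Your proof replaces the iteration by a single time scale $n\sim c_1\log(1/\epsilon)$ and trades the cone-contraction lemma for the tail bound on the tempering constant (Proposition \ref{prop:splitting_with_high_probability}) plus one large-deviation estimate; this is shorter, but it leans on the $\SL(2,\R)$ splitting machinery of Proposition \ref{prop:tempered_norm_implies_splitting}, whereas the paper's version is self-contained modulo Lemma \ref{lem:growth_with_high_prob} and is reused almost verbatim to prove Proposition \ref{prop:finite_time_distribution_of_stable}.

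One point in your write-up must be made precise, because as literally stated it would fail. You define $\mathcal{B}_{\mathrm{split}}$ as the failure of a tempered splitting for $D_xf^{n}_\omega$ at the fixed time $n$, but then decompose $v$ in ``the tempered basis'' and claim $|\sin\theta|\le\epsilon$ from the hypothesis $\angle(v,E^s_\omega(x))\le\epsilon$. The stable direction of the time-$n$ splitting is the most contracted singular direction $s_n$ of $D_xf^n_\omega$, not $E^s_\omega(x)$; Proposition \ref{prop:tempered_norm_implies_splitting}(3) only gives $\angle(s_n,E^s_\omega(x))\lesssim e^{-4C_0}e^{-2(\lambda_1-\epsilon_1)n}\sim\epsilon^{4c_2+2c_1(\lambda_1-\epsilon_1)}$, which for small $c_1,c_2$ dominates $\epsilon$, and feeding that larger angle into the term $\angle(v,s_n)\,\Lambda^n$ destroys the comparison with $e^{(\lambda_0-\delta)n}$ unless $\log\Lambda$ is artificially small relative to $\lambda_0$. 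The fix is to take $\mathcal{B}_{\mathrm{split}}$ to be the failure of the tempered splitting for the \emph{infinite} sequence, which is what Proposition \ref{prop:splitting_with_high_probability} actually controls; by Proposition \ref{prop:tempered_norm_implies_splitting}(2) the stable direction of that splitting is exactly $E^s_\omega(x)$, so $|\sin\theta|\le\epsilon$ is legitimate and the super-tempered bound $\|D_xf^n_\omega e^s\|\le e^{C_0-\lambda_1 n}$ still holds at time $n$. With that reading, your choice of exponents is consistent and the conclusion follows with $\alpha=\min(\alpha c_2,\eta c_1)$.
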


Naturally, before proceeding with the proof, we must show for $v\in T^1M$ that the norm of $D_xf^n_{\omega}v$ along a typical trajectory does grow exponentially. In fact, we show 
that even slow exponential growth is quite unlikely.
\begin{lem}\label{lem:growth_with_high_prob}
In the setting of Proposition \ref{prop:hausdorff_est_on_stable_dirs}, suppose that \eqref{EqExpAv} holds with constants 
$n_0\in \N$ and $\lambda_0>0$. Then there exist $\gamma,C>0$ such that if $v\in T^1M$, then
\begin{equation}
\mathbb{P}_{\omega}(\|Df^n_{\omega} v\|\le e^{\lambda_0 n/3})\le Ce^{-\gamma n}.
\end{equation}
\end{lem}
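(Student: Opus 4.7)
The plan is to reduce this to a direct application of the Azuma--Hoeffding inequality (Theorem~\ref{thm:azumas_inequality}) applied to the telescoping sum
\[
\ln \|Df^{mn_0}_{\omega} v\| = \sum_{k=1}^m Y_k,
\qquad
Y_k = \ln \bigl\|Df^{n_0}_{\sigma^{(k-1)n_0}(\omega)} \, v_{k-1}\bigr\|,
\]
where $v_{k-1}$ is the unit vector in the direction of $Df^{(k-1)n_0}_{\omega} v$. Let $\mc{F}_k$ be the $\sigma$-algebra generated by the coordinates $\omega_1,\ldots,\omega_{kn_0}$. The expanding on average hypothesis \eqref{EqExpAv} applied at the point $f^{(k-1)n_0}_{\omega}(x)$ with direction $v_{k-1}$ gives $\E{Y_k \mid \mc{F}_{k-1}} \ge n_0\lambda_0$. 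Since $M$ is compact and each $f_i \in \Diff^2_{\vol}(M)$, there exists $\Lambda>0$ with $|\ln \|Df^{\pm 1}_{\omega_i}\|\,| \le \Lambda$, so each $Y_k$ lies in $[-n_0\Lambda, n_0\Lambda]$ almost surely.

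First I would set $Z_k = Y_k - \E{Y_k \mid \mc{F}_{k-1}}$, so that $(Z_k)$ is a martingale difference sequence with $\|Z_k\|_{L^\infty} \le 2n_0\Lambda$. Applying Theorem~\ref{thm:azumas_inequality} with $\lambda = \tfrac{2}{3}n_0\lambda_0 \, m$ yields
\[
\mathbb{P}\!\left(\sum_{k=1}^m Z_k \le -\tfrac{2}{3}n_0\lambda_0 \, m\right)
\le 2\exp\!\left(-\frac{(\tfrac{2}{3}n_0\lambda_0)^2 m}{2(2n_0\Lambda)^2}\right)
= 2\exp(-c\,m)
\]
for some explicit $c>0$ depending only on $\lambda_0$ and $\Lambda$. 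Since $\sum Y_k \ge \sum Z_k + m n_0\lambda_0$, this bounds the probability that $\ln \|Df^{mn_0}_{\omega}v\| \le \tfrac{1}{3} m n_0 \lambda_0$ by $2e^{-cm}$.

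To pass from times of the form $mn_0$ to arbitrary $n$, I would write $n = m n_0 + r$ with $0\le r < n_0$ and use the trivial bound $\|Df^n_\omega v\| \ge e^{-r\Lambda}\|Df^{mn_0}_\omega v\|$, so that the event $\{\|Df^n_\omega v\| \le e^{\lambda_0 n/3}\}$ is contained in $\{\|Df^{mn_0}_\omega v\| \le e^{\lambda_0 n/3 + n_0 \Lambda}\}$. Since $\lambda_0 n/3 + n_0\Lambda = \lambda_0 m n_0/3 + K$ for a bounded constant $K = \lambda_0 r/3 + n_0\Lambda$, the previous step gives the desired bound with $\gamma = c/n_0$ after adjusting the multiplicative constant $C$ to absorb $K$ and to handle the finitely many initial $n < n_0$. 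No step here is really an obstacle; the only care needed is in tracking that the deviation $\tfrac{2}{3} n_0 \lambda_0 m$ strictly dominates the target deficit $\lambda_0 m n_0/3 + K$ for all sufficiently large $m$, which is immediate since $\tfrac{2}{3} > \tfrac{1}{3}$.
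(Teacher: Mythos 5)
Your proof is correct, but it takes a genuinely different route from the paper's. You run a bounded-differences martingale argument: telescoping $\ln\|Df^{mn_0}_{\omega}v\|$ into increments $Y_k$, using \eqref{EqExpAv} to get the conditional drift $\E{Y_k\mid\mc{F}_{k-1}}\ge n_0\lambda_0$, and applying Azuma--Hoeffding to the centered sequence $Z_k$ — essentially the same machinery the paper deploys for Proposition~\ref{prop:temperedness_tail_bound}, specialized to the single pair $j=0$, $k=m$. The paper's own proof of this lemma instead goes through exponential moments: it uses a Taylor expansion to get $\E{e^{-t\ln\|Df^{n_0}_{\omega}v\|}}\le 1-(n_0\lambda_0/2)t$ for small $t$, iterates this via the Markov property to bound $\E{e^{-t\ln\|Df^{n}_{\omega}v\|}}$, and finishes with Markov's inequality (a Chernoff bound). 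Both arguments rest on the same two inputs — the uniform drift from \eqref{EqExpAv} and the uniform bound on $\ln\|Df^{\pm1}_{\omega_i}\|$ coming from compactness — and both yield a rate $\gamma$ depending only on $\lambda_0$, $n_0$, and $\Lambda$. Your version has the mild advantage of reusing a tool already set up in the paper (Theorem~\ref{thm:azumas_inequality}) rather than introducing a separate moment-generating-function computation; the paper's version avoids explicitly centering the increments. Your handling of the reduction from general $n$ to multiples of $n_0$ (absorbing the bounded correction $K$ into the deviation for large $m$ and the finitely many small $n$ into $C$) is the standard bookkeeping and is fine.
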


\begin{proof}
First, note that by considering the Taylor expansion of $e^{-t}$, that for sufficiently small $t$ and all $v\in T^1M$,
\[
\E{e^{-t\ln \|Df^{n_0}_{\omega}v\|}}\le (1-(n_0\lambda_0/2)t).
\]
Next, observe that writing $\overline{v}$ for $v/\|v\|$,
\begin{align*}
\E{e^{-t\ln \|Df^{2n_0}_{\omega}v \|}}&=\E{e^{-t\ln \|Df^{n_0}_{\omega}v\|}e^{-t\ln \|Df^{n_0}_{\sigma^{n_0}(\omega)}(\overline{Df^{n_0}_{\omega}v})\|}}\\
&\le \E{e^{-t\ln \|Df^{n_0}_{\omega}v\|}(1-(n_0\lambda_0/2)t)}
\le \left(1-(n_0\lambda_0/2)t\right)^2,
\end{align*}
where we have used the independence of $\sigma^{n_0}\omega$ from $\omega_i$ for $i<n_0$. Similarly, by boundedness of the $C^1$ norm of the $f_i$, we see inductively that there exists $D>0$ such that for all $n$,
\[
\E{e^{-t\ln \|Df^{n}_{\omega}v\|}}\le D\left(1-(n_0\lambda_0/2)t\right)^{n/n_0}
\le e^{-n \lambda_0/2},
\]
since $1-t/2<e^{-t}$ for small $t$.
By Markov's inequality 
$$
 \mathbb{P}(\|Df^n_{\omega} v\|\le e^{\lambda_0n/3})\le \mathbb{P}(e^{-t\ln \|Df^n_{\omega}v\|}\ge e^{-t\lambda_0n/3})
$$
\hskip1cm $\displaystyle \le \frac{\E{e^{-t\ln \|Df^n_{\omega}v\|}}}{e^{-t\lambda_0 n/3}}
\le D\frac{\left(1-(n_0\lambda_0/2)t\right)^{ n/n_0}}{e^{-t\lambda_0n/3}}
\le De^{-n\lambda_0t/2+\lambda_0nt/3}\le De^{-n\lambda_0t/6}.$
\end{proof}

For $v\in T^1M$, let $B_{\epsilon}(v)$ be the set of directions $w$ with $\sin( \angle (v,w))\le \epsilon$ and $\Lambda$ be the maximum of the norm of $\|D_xf_i\|$ over the set of all $1\le i\le m$ and $x\in M$.

\begin{lem}\label{lem:close_vectors_get_closer}
For all $\sigma>0$ sufficiently small there exist $0<\theta<1$ such that for any $v\in \mathbb{P}(T_xM)$ and sufficiently small $\epsilon>0$, if
$ -\frac{\lambda_0}{6\Lambda}\ln(\epsilon)\le  n\le -\frac{\lambda_0}{3\Lambda}\ln(\epsilon)$, and
\[
\delta = \max_{u\in B_{\epsilon}(v)} \sin \angle(Df^n_{\omega}u,Df^n_{\omega}v),
\]
then
\[
\mathbb{P}(\delta\le \epsilon^{1+\sigma} \text{ and  for all } u\in B_{\epsilon}(v),\,\, \|Df^n_{\omega} u\|\ge 2^{-1}e^{n\lambda_0/3}\|u\|)\ge 1- \epsilon^{\theta}.
\]
\end{lem}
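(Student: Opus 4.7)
The plan is to prove the growth and angle bounds separately on a common high-probability event. The growth half is controlled by applying Lemma~\ref{lem:growth_with_high_prob} to the fixed vector $v$ and then extending the lower bound to the $\epsilon$-ball $B_\epsilon(v)$ by a Lipschitz argument. Once uniform growth is in hand, the angle contraction comes essentially for free from volume preservation of the $f_i$, which in orthonormal bases gives $|\det Df^n_\omega|=1$, and hence the identity
\[
\sin\angle(Df^n_\omega u, Df^n_\omega v)\;=\;\frac{\sin\angle(u,v)}{\|Df^n_\omega u\|\,\|Df^n_\omega v\|}.
\]

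For the growth half, Lemma~\ref{lem:growth_with_high_prob} gives $\|Df^n_\omega v\|\ge e^{n\lambda_0/3}$ off an event of $\mu$-measure at most $Ce^{-\gamma n}$. Since the hypothesis $n\ge -\lambda_0\ln\epsilon/(6\Lambda)$ forces this failure probability to be at most $C\epsilon^{\gamma\lambda_0/(6\Lambda)}$, this already pins down the order of magnitude of the exponent $\theta$. On the good event, for any $u\in B_\epsilon(v)$ write $u=\cos\alpha\,v+\sin\alpha\,v^\perp$ with $|\sin\alpha|\le\epsilon$ in the orthonormal basis $\{v,v^\perp\}$ of $T_xM$, and estimate
\[
\|Df^n_\omega u\|\;\ge\;\cos\alpha\,\|Df^n_\omega v\|\;-\;|\sin\alpha|\,\|Df^n_\omega v^\perp\|\;\ge\;\cos\alpha\,e^{n\lambda_0/3}\;-\;\epsilon\,\Lambda^n.
\]
The role of the upper bound $n\le -\lambda_0\ln\epsilon/(3\Lambda)$ is precisely to force $\epsilon\,\Lambda^n\ll e^{n\lambda_0/3}$ for small $\epsilon$, after which $\|Df^n_\omega u\|\ge 2^{-1}e^{n\lambda_0/3}\|u\|$ follows.

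For the angle half, we plug the just-proved uniform growth bound into the $\sin\angle$ identity to get, for every $u\in B_\epsilon(v)$,
\[
\sin\angle(Df^n_\omega u, Df^n_\omega v)\;\le\;\frac{\epsilon}{(2^{-1}e^{n\lambda_0/3})^{2}}\;=\;4\,\epsilon\,e^{-2n\lambda_0/3}.
\]
The lower bound $n\ge -\lambda_0\ln\epsilon/(6\Lambda)$ gives $e^{-2n\lambda_0/3}\le \epsilon^{\lambda_0^2/(9\Lambda)}$, so the right side is at most $4\epsilon^{1+\lambda_0^2/(9\Lambda)}$, which is $\le \epsilon^{1+\sigma}$ whenever $\sigma<\lambda_0^2/(9\Lambda)$ and $\epsilon$ is sufficiently small. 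Taking the maximum over $u\in B_\epsilon(v)$ controls $\delta$, so both conclusions of the lemma hold simultaneously on the common good event, whose $\mu$-measure is at least $1-\epsilon^\theta$ for any $\theta<\gamma\lambda_0/(6\Lambda)$.

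The one mildly delicate point is the balancing act in the choice of the window $[-\lambda_0\ln\epsilon/(6\Lambda),\;-\lambda_0\ln\epsilon/(3\Lambda)]$: it must be large enough that the angle has shrunk to $o(\epsilon)$ (which needs $n\gtrsim |\ln\epsilon|$), yet small enough that the crude bound $\|Df^n_\omega\|\le\Lambda^n$, applied to the perpendicular component of a perturbation of size $\epsilon$, remains negligible compared with the growth $e^{n\lambda_0/3}$ of $\|Df^n_\omega v\|$ itself. The stated window is precisely the overlap of these constraints, and verifying this constant bookkeeping is the only truly technical step; everything else is either a direct invocation of Lemma~\ref{lem:growth_with_high_prob} or a one-line consequence of area preservation.
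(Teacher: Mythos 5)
Your proposal is correct and follows essentially the same route as the paper: invoke Lemma~\ref{lem:growth_with_high_prob} for the fixed vector $v$, extend the growth bound to all of $B_\epsilon(v)$ via the crude operator-norm bound (the paper uses $\|Df^n_\omega(u-v)\|\le\epsilon e^{\Lambda n}$ where you decompose into parallel and perpendicular components, which is the same estimate), and then deduce the angle contraction from area preservation together with the window condition on $n$. The only cosmetic differences are that you record the $\sin\angle$ relation as an identity before bounding and make the exponent bookkeeping slightly more explicit; no substantive step differs.
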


\begin{proof}
By Lemma \ref{lem:growth_with_high_prob}, for each $n$ we have $\|Df^n_{\omega}v\|\ge e^{\lambda_0n/3}$
 on a set of measure $1-Ce^{-\gamma n}$.  Then for any unit vector $u$ with $\sin(\angle (v,u))\le \epsilon$,
$$
\|Df^n_{\omega} u\|\ge \|Df^n_{\omega}v\|-\|Df^n_{\omega}(u-v)\|
\ge  e^{\lambda_0 n/3}-\epsilon e^{\Lambda n}
\ge e^{\lambda_0 n/3}/2,
$$
as long as $\epsilon$ is sufficiently small and $n$ satisfies 
$
 n\le -\frac{\lambda_0}{3\Lambda}\ln(\epsilon).
$

 Since the $f_i$ are volume preserving, the areas of the triangles between vectors are preserved. Since all vectors in $B_{\epsilon}(v)$ are stretched, we see that
\[
\sin \angle(Df^n_{\omega}v,Df^n_{\omega} u)= \epsilon \|Df^n_{\omega}v\|^{-1}\|Df^n_{\omega}u\|^{-1}\le 2\epsilon e^{-(2/3)\lambda_0 n}.
\]
But if $ n\ge -\frac{\lambda_0}{6\Lambda}\ln(\epsilon)$ 
and $\epsilon$ is sufficiently small, then
$\displaystyle\sin \angle(Df^n_{\omega}v,Df^n_{\omega} u)\le 2\epsilon e^{-\frac{2}{3}\lambda_0\frac{\lambda_0}{6\Lambda}(-\ln(\epsilon))}.
$
Thus we see that for sufficiently small $\epsilon$ and $\sigma>0$ that for $n$ satisfying
\begin{equation*}
 -\frac{\lambda_0}{6\Lambda}\ln(\epsilon)\le  n\le -\frac{\lambda_0}{3\Lambda}\ln(\epsilon)
\end{equation*}
it holds that
$\displaystyle
\sin \angle(Df^n_{\omega}v,Df^n_{\omega} u)\le \epsilon^{1+\sigma}
$
for all $\omega$ in a set of size $1-C\epsilon^{-\gamma n}$.
\end{proof}

\begin{proof}[Proof of Proposition \ref{prop:hausdorff_est_on_stable_dirs}.]

Using Lemma \ref{lem:close_vectors_get_closer} we may now conclude. Fix some $\sigma>0$ as in the lemma, $\Lambda/(6\lambda_0)<\alpha<\Lambda/(3\lambda_0)$ and let $\epsilon>0$ be small enough that the lemma applies. Let $\epsilon_1=\epsilon$ and then define $\epsilon_k=\epsilon^{(1+\sigma)^k}$. Let $b_k=\lfloor -\alpha (1+\sigma)^k\ln(\epsilon)\rfloor$ and  
$\displaystyle n_k=\sum_{k=0}^{k-1} b_k$ be an increasing sequence of times. By our choice of $\alpha$ we may apply the lemma to each additional block of iterations of $f_\omega$ of length $b_k$ with $\epsilon=\epsilon_k$. We then define:
\begin{align*}
\eta_k^{\omega}(\epsilon, v) &= \max_{w\in B_{{\epsilon_k}}(Df_{\omega}^{n_{k-1}}v)} \sin \angle(Df^{b_k}_{\omega}w,Df^{b_k}_{\omega}v),\\
\tau_k^{\omega}(\epsilon,v)&= \inf_{w\in B_{\epsilon_k}(Df^{n_{k-1}}_\omega v)} \|Df^{b_k}_{\sigma^{n_{k-1}}\omega}w\|.
\end{align*}
Lemma \ref{lem:close_vectors_get_closer} asserts that for every $v$ and $k$ that
\[
\mathbb{P}(\eta_k^{\omega}(\epsilon_k,v)\le \epsilon_k^{1+\sigma}\text{ and }\tau_k^{\omega}(\epsilon_k,v)\ge 2^{-1}e^{\lambda_0(n_k-n_{k-1})/3})\ge 1-\epsilon_k^{\theta}.
\]
As the dynamics is IID and the above estimate is independent of the vector $v\in \mathbb{P}(TM)$, we see that there exists $C>0$ such that:
\begin{equation}
\label{eqn:intermediate_distribution_estimate_a}
\mathbb{P}\left(\text{for all  } k\,\, \eta_k^{\omega}(\epsilon,v)\le \epsilon_k \text{ and } \tau_k^{\omega}(\epsilon,v)\ge \frac{e^{\lambda_0n_k/3}}{2}\right)\ge \prod_{i=1}^{\infty} \left(1-\epsilon_k^{\theta}\right)
\geq 1-C\epsilon^\theta.
\end{equation}
By Proposition \ref{prop:splitting_with_high_probability}, at the point $x$ almost every word $\omega$ has a well defined stable subspace $E^s_{\omega}(x)$. If a vector $v\in T^1_xM$ satisfies \eqref{eqn:intermediate_distribution_estimate_a}, then for any $w\in B_{\epsilon}(v)$, $\|Df^{n_k}_\omega w\|\ge e^{\lambda_0n_k/3}2^{-k}$, which grows rapidly in $k$ as long as $\epsilon$ was chosen sufficiently small. Thus this vector cannot be in $E^s_{\omega}(x)$. Thus
$\mathbb{P}(E^s_{\omega}(x)\in B_{\epsilon}(v))\le C\epsilon^{\theta},
$
and we are done.
\end{proof}

Next we check that if we consider the distribution of stable subspaces for finite time realizations of the dynamics that the distribution of the finite time stable subspaces converges quickly to the stationary stable distribution. Essentially this should be true for the same reason that it is true for IID matrix products. The proof is a slight extension of the argument that appears above.

\begin{prop}\label{prop:finite_time_distribution_of_stable}
Suppose that $M$ is a closed surface and $(f_1,\ldots,f_m)$ is an expanding on average tuple in $\Diff^2_{\vol}(M)$. There exist $c_0,C,\theta$ such that for any $x\in M$ and $v\in T^1_xM$, if $N_0\ge c_0\abs{\ln(\epsilon)}$ the 
following 
holds. Let $E^s_{n}(\omega)$ be the maximally contracted subspace of the product $D_xf^n_{\omega}$. Then:
\begin{equation}
\mathbb{P}(\text{for some } n>N_0, E^s_{n}(\omega)\in B_{\epsilon}(v)\text{ or } E^s_{n}(\omega) \text{ does not exist})\le C\epsilon^{\theta}.
\end{equation}

\end{prop}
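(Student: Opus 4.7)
The plan is to bootstrap from two earlier results: the Hausdorff-type estimate on the distribution of the asymptotic stable direction (Proposition~\ref{prop:hausdorff_est_on_stable_dirs}), and the quantitative convergence of finite-time contracted singular directions to the true stable direction given by part~(3) of Proposition~\ref{prop:tempered_norm_implies_splitting}. The key point is that if the trajectory is $(C,\lambda,\varepsilon)$-tempered on the relevant time scale, then for every $n\ge N_0$ the finite-time stable direction $E^s_n(\omega)$ sits within exponentially small angular distance of the true infinite-time stable direction $E^s_\omega(x)$; the problem then reduces to controlling $E^s_\omega(x)$, which is exactly what Proposition~\ref{prop:hausdorff_est_on_stable_dirs} does.

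In more detail, I would first fix $\lambda<\lambda_0$ and a small $\varepsilon>0$, and let $C=c_1|\ln\epsilon|$ for a constant $c_1>0$ to be chosen. By Proposition~\ref{prop:splitting_with_high_probability}, outside a set of $\omega$ of measure $De^{-\alpha C}\le D\epsilon^{\alpha c_1}$, the sequence $D_xf^n_\omega$ has a $(C,\lambda,\varepsilon)$-tempered splitting for all $n$, so in particular $E^s_n(\omega)$ is well defined for every $n\ge N_0(C)=\lceil (C+\ln 2)/\lambda\rceil$, which is $O(|\ln\epsilon|)$. Choosing $c_0$ large enough, we may ensure $N_0\ge N_0(C)$, so the ``does not exist'' alternative contributes only the exceptional set of measure $\le D\epsilon^{\alpha c_1}$.

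Next, on the good set, part~(3) of Proposition~\ref{prop:tempered_norm_implies_splitting} applied with $m_1=N_0$ and $m_2\to\infty$ gives, for every $n\ge N_0$,
\[
\angle(E^s_n(\omega),E^s_\omega(x))\le e^{-4C+D'}e^{-2(\lambda-\varepsilon)N_0}.
\]
Choosing $c_0$ large enough (depending on $c_1$, $\lambda$, $\varepsilon$, $D'$), the right hand side is bounded by $\epsilon/2$. Consequently, if some finite-time $E^s_n(\omega)$ with $n>N_0$ lies in $B_\epsilon(v)$, then $E^s_\omega(x)$ must lie in $B_{2\epsilon}(v)$. By Proposition~\ref{prop:hausdorff_est_on_stable_dirs}, the probability of this event is at most $C(2\epsilon)^\alpha$.

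Combining the two bad events yields a total probability of at most $D\epsilon^{\alpha c_1}+C'\epsilon^\alpha$, so taking $\theta=\min(\alpha c_1,\alpha)$ gives the desired estimate. The only nontrivial step is balancing $C$, $N_0$, and $\epsilon$ so that the exponential convergence rate from Proposition~\ref{prop:tempered_norm_implies_splitting}(3) (which degrades like $e^{4C}$ as temperedness worsens) still beats $\epsilon$ on a time scale $N_0=O(|\ln\epsilon|)$; since both quantities are linear in $|\ln\epsilon|$, this is a matter of choosing $c_0$ sufficiently large relative to the chosen $c_1$. I expect no further obstacle beyond this bookkeeping.
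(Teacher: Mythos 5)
Your argument is correct in substance, but it is a genuinely different route from the paper's. The paper recycles the block-iteration estimate from the proof of Lemma~\ref{lem:close_vectors_get_closer}: on a set of words of measure $1-C\epsilon^{\theta}$ it shows that \emph{every} vector $w\in B_{\epsilon}(v)$ satisfies $\|Df^{n}_{\omega}w\|>1$ for all $n>N_0$, so the most contracted singular direction (whose image has norm $\|D_xf^n_\omega\|^{-1}<1$) can exist and yet never enter $B_{\epsilon}(v)$. You instead pin down the asymptotic direction $E^s_\omega(x)$: a large-deviation bound on the temperedness constant ($C=c_1|\ln\epsilon|$), the quantitative convergence $E^s_n\to E^s_\omega(x)$ from Proposition~\ref{prop:tempered_norm_implies_splitting}(3), and the Hausdorff-type bound of Proposition~\ref{prop:hausdorff_est_on_stable_dirs} applied to $B_{2\epsilon}(v)$. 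This is a clean reduction and avoids re-running the block argument; the paper's version is more self-contained in that it only reuses the growth estimates and does not need the convergence rate of the finite-time singular directions. There is no circularity in your route, since all three ingredients precede the proposition.

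One notational caution: the temperedness constant produced by Propositions~\ref{prop:diffeos_are_subtempered}/\ref{prop:splitting_with_high_probability} with failure probability $De^{-\alpha C}$ is $-C$ with $C\ge 0$ (a positive constant in Definition~\ref{defn:sub_super_tempered} would force $\|A^i\|\ge e^{C}e^{\lambda i}$ for small $i$, which is impossible for $C$ large). Hence the bound from Proposition~\ref{prop:tempered_norm_implies_splitting}(3) reads $e^{+4C+D'}e^{-2(\lambda-\epsilon)m_1}$, not $e^{-4C+D'}e^{-2(\lambda-\epsilon)m_1}$ as in your display; with the sign as you wrote it you would not even need $N_0$ large. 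Your closing paragraph shows you know the rate degrades like $e^{4C}$ and that the balance $c_0\gg c_1$ is what makes the argument close, so this is a slip in the displayed formula rather than a gap, but it should be fixed, and you should route the application of part (3) through the subtempered-norm hypothesis (Proposition~\ref{prop:diffeos_are_subtempered}) rather than through the splitting statement, since the angle bound is expressed in terms of the norm-temperedness constant.
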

\begin{proof}
The proof of the above fact is essentially a corollary of the estimates obtained in the proof of Lemma \ref{lem:close_vectors_get_closer}.

We apply that same proof and choose sufficiently small $0<\sigma<\lambda_0/(3\Lambda)$ where $\lambda_0$ and $\Lambda$ are as in that proposition, as are $b_k$ and $n_k$. 
Then we find that there exists $C,\theta$ such that for all sufficiently small $\epsilon>0$, we have equation \eqref{eqn:intermediate_distribution_estimate_a}, so for $\epsilon_k=\epsilon^{(1+\sigma)^k}$,
\begin{equation}
\mathbb{P}(\text{for all  } k\,\, \delta_k^{\omega}(\epsilon,v)\le \epsilon_k \text{ and } \tau_k^{\omega}(\epsilon,v)\ge 2^{-1}e^{\lambda_0n_k/3})\ge 1-C\epsilon^{\theta}.
\end{equation}
This shows as before that at the times $n_k$, that we have the estimate
\[
\|Df^{n_k}_{\omega}w\|\ge e^{\lambda_0n_k/3}2^{-k}
\]
for all $w\in B_{\epsilon}(v)$ on a set of measure $1-C\epsilon^{\theta}$. In particular, as we chose $\sigma$ quite small, for $k\ge 2$, we see that for any time $n$ from $n_{k-1}$ to $n_k$, that 
\[
\|Df^n_{\omega}w\|\ge \|Df^{n_{k-1}}_{\omega}w\|e^{-{(n-n_{k-1})}\Lambda}\ge e^{n_{k-1}\lambda_0/3-(n-n_{k-1})\Lambda}.
\]
But by choice of $\sigma$, that exponent is at least
\begin{align*}
((1+\sigma)^{k-1}\lambda_0/3-((1+\sigma)^k-(1+\sigma)^{k-1})\Lambda)\ln(\epsilon)=(1+\sigma)^{k-1}(\lambda_0/3-\sigma\Lambda)\ln(\epsilon)>0.
\end{align*}
Thus from the definition of the $n_k$ in Lemma  \ref{lem:close_vectors_get_closer}, we see that on a set of probability $1-C\epsilon^{\theta}$  
for any $n>n_1=\abs{\alpha(1+\sigma)\ln(\epsilon)}$, that $E^s_{n}(\omega)$ does not lie in $B_{\epsilon}(v)$ and the result follows.
\end{proof}

\subsection{Reverse tempered sequences}
We are interested in reverse tempered times since they are key for proving smoothing lemmas.
The main result of this subsection is Proposition~\ref{prop:exponential_tail_tempered_times}, which shows that the waiting time until a reverse tempered time occurs has an exponential tail. 

The following lemma estimates how much the temperedness of a sequence improves when we prepend entries on it. Note that by reversing the order of the sequence, this gives the corresponding estimate for reverse temperedness.

\begin{lem}\label{lem:concat_tempered_additive_est}
Suppose that $a_1,\ldots,a_n$ is a $(C,\lambda_0,\epsilon)$ tempered sequence and $b_1,\ldots,b_m$ is a $(D,\lambda_1,\epsilon/2)$ tempered sequence where $\lambda_1-\lambda_0>\epsilon$, then $b_1,\ldots,b_m,a_1,\ldots,a_n$ is 
\[
(\min\{D,m\epsilon/2 +C+D,m\epsilon+C\},\lambda_0,\epsilon)
\]
tempered sequence.
\end{lem}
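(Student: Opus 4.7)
The plan is a direct case analysis on the location of the endpoints $0 \le j < k \le n+m$ relative to the boundary index $m$, using the definition of $(C,\lambda,\epsilon)$-temperedness in terms of partial sums. Let $c_1,\ldots,c_{n+m}$ denote the concatenated sequence, so $c_i = b_i$ for $i \le m$ and $c_i = a_{i-m}$ for $i > m$. We need to show
\[
S(j,k) \;:=\; \sum_{i=j+1}^k c_i - \lambda_0(k-j) + j\epsilon \;\ge\; \min\{D,\, m\epsilon/2 + C + D,\, m\epsilon + C\}
\]
for every $0 \le j < k \le n+m$.

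There are three cases. In the case $k \le m$, only the $b$-temperedness enters; we have $\sum_{i=j+1}^k b_i \ge D + \lambda_1(k-j) - j\epsilon/2$, so $S(j,k) \ge D + (\lambda_1 - \lambda_0)(k-j) + j\epsilon/2 \ge D$. In the case $j \ge m$, only the $a$-temperedness enters (with indices shifted by $m$); one finds $S(j,k) \ge C + m\epsilon$. The crossover case $j < m < k$ is where the three terms in the minimum come from: split $\sum_{i=j+1}^k c_i = \sum_{i=j+1}^m b_i + \sum_{i=1}^{k-m} a_i$, apply $b$-temperedness on the first piece to get $\ge D + \lambda_1(m-j) - j\epsilon/2$, and $a$-temperedness on the second to get $\ge C + \lambda_0(k-m)$. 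After the $\lambda_0(k-j)$ and $j\epsilon$ corrections, the $\lambda_0$ terms telescope and one is left with
\[
S(j,k) \;\ge\; C + D + (\lambda_1 - \lambda_0)(m-j) + j\epsilon/2.
\]

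The only step that uses the hypothesis $\lambda_1 - \lambda_0 > \epsilon$ is bounding this crossover expression. Since $\lambda_1 - \lambda_0 > \epsilon$, we have $(\lambda_1-\lambda_0)(m-j) + j\epsilon/2 \ge \epsilon(m-j) + j\epsilon/2 = m\epsilon - j\epsilon/2$, and because $j \le m$ this is $\ge m\epsilon/2$. Hence $S(j,k) \ge C + D + m\epsilon/2$ in the crossover case. Taking the minimum over the three cases yields exactly $\min\{D,\, m\epsilon/2 + C + D,\, m\epsilon + C\}$, which finishes the proof.

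No step is really an obstacle; the only place any nontrivial arithmetic happens is bounding the crossover term via the gap $\lambda_1 - \lambda_0 > \epsilon$, which is precisely why the hypothesis is stated that way. The rest is bookkeeping of partial sums and the shift of index by $m$ between the $a$-sequence inside the concatenation and the $a$-sequence in isolation.
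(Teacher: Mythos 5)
Your proof is correct and follows essentially the same route as the paper's: the same three-way case split on the position of $j,k$ relative to $m$, the same use of the gap $\lambda_1-\lambda_0>\epsilon$ to bound the crossover term below by $m\epsilon/2+C+D$, and the same index-shift argument giving $C+m\epsilon$ when both endpoints lie in the $a$-block. Nothing to add.
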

\begin{proof}
Let $c_1,\ldots,c_{m+n}$ denote the new joined sequence and
let $C'$ be the $(\lambda_0,\epsilon)$ temperedness constant for this sequence. Each pair of indices $0\le j< k\le n+m$ gives a constraint on the constant of temperedness:
\begin{equation}
C'=\min_{0\le j< k\le n+m} j\epsilon+\sum_{i=j+1}^k(c_i-\lambda_0).
\end{equation}
Note that the only pairs of indices that offer a non-trivial constraint are those with at least one of $j+1,k\ge m+1$. The constraint arising from a pair of indices with $j,k\le m$, is certainly satisfied as long as the temperedness constant is at most $D$. This leaves two cases.

For a pair of indices $j< m<k$, we obtain the constraint that
\begin{equation}
C'\le j\epsilon+ \sum_{i=j+1}^m (b_i-\lambda_0)+\sum_{i=m+1}^k (a_i-\lambda_0).
\end{equation}
But by temperedness, we can bound the right hand side below: 
$$
j\epsilon+ \sum_{i=j+1}^m (b_i-\lambda_0)+\sum_{i=m+1}^k (a_i-\lambda_0)\ge D+\frac{j\epsilon}{2}+(m-j)(\lambda_1-\lambda_0)+C
\ge m\epsilon/2+D+C.
$$
If both $j+1,k\ge m+1$, then as the sequence $a_1,\ldots,a_m$ is already $(C,\lambda_0,\epsilon)$-tempered, the constraint on these entries of the sequence improves by $m\epsilon$ as they are now additionally offset by $m$ from $0$. So, they give the constraint $C'\le C+m\epsilon$. 

Taking the minimum over the three bounds above  gives the result.
\end{proof}

Using the above, we will now prove  that 
 for submartingale difference sequences
the renewals of backward temperedness have exponential tails. 

\begin{prop}\label{prop:exponentially_return_to_tempered_additive}
(Exponential return times to the tempered set) 
Fix $c>\lambda_0>\lambda>0$ and pick $0<\epsilon<(\lambda_0-\lambda)/3$. There exist $C_0,D_1,D_2>0$ such that the following holds. Let $X_1,X_2,\ldots$ be a submartingale difference sequence with respect to a filtration $(\mc{F}_n)_{n\in \N}$ such that for all $n\in \N$,
\begin{enumerate}
    \item $\abs{X_n}<c$;
    \item $\E{X_n\vert \mc{F}_{n-1}}\ge \lambda_0$.
\end{enumerate}
Fix $N\in \N$ and let $T$ denote the first time $k$ 
after $N$ such that $X_1,\ldots,X_{N+k}$ is  $(C_0,\lambda,\epsilon)$-reverse tempered. Then
\begin{equation}
\mathbb{P}(T>N+k)\le D_1e^{-D_2k}.
\end{equation}
\end{prop}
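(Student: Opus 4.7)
My plan is to reduce the proof to a first-passage argument for the submartingale $M_n := \sum_{i=1}^n X_i - \lambda n$, which has strict positive drift $\nu := \lambda_0 - \lambda > 0$ and bounded increments $|M_n - M_{n-1}| \le c + \lambda$. The condition that $X_1,\ldots,X_n$ is $(C_0,\lambda,\epsilon)$-reverse tempered rewrites as
$$
M_b - M_{a-1} + (n-b)\epsilon \ge C_0 \quad \text{for all } 1 \le a \le b \le n.
$$
For each fixed $(a,b)$, the bonus $(n-b)\epsilon$ grows with $n$, so every such condition is monotone in $n$; once satisfied it remains so. The non-trivial conditions at time $n$ are therefore those with $b$ in a window of size roughly $C_0/\epsilon$ from $n$.

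I first establish a reverse analog of Proposition~\ref{prop:temperedness_tail_bound}: the identical Azuma argument applied to the sums $\sum_{i=a}^b X_i$, with $(a,b)$ playing the role of $(j+1,k)$ and $(n-b)\epsilon$ in place of $j\epsilon$, yields that for every $C \ge 0$,
$$
\mathbb{P}\big(X_1,\ldots,X_M \text{ is not } (-C,\lambda,\epsilon)\text{-reverse tempered}\big) \le D_1' e^{-D_2' C},
$$
with constants independent of $M$. I then invoke Lemma~\ref{lem:concat_tempered_additive_est} in the reversed direction via a block renewal argument. Fix $\lambda_1 \in (\lambda+\epsilon,\lambda_0)$ (nonempty by $\epsilon < (\lambda_0-\lambda)/3$) and a block length $L$, and partition $\{N+1,\ldots\}$ into blocks of length $L$. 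Declare block $i$ \emph{good} if the sub-sequence $X_{N+(i-1)L+1},\ldots,X_{N+iL}$ is $(C_0,\lambda_1,\epsilon/2)$-reverse tempered. Using the submartingale bound
$$
\mathbb{P}(X_j \ge \lambda_1 + C_0 \mid \mc{F}_{j-1}) \ge \frac{\lambda_0-\lambda_1-C_0}{c-\lambda_1-C_0} > 0
$$
(valid for $C_0 < \lambda_0-\lambda_1$), together with Azuma bounds on the joint distribution of the partial sums within the block, one shows that $\mathbb{P}(\text{block $i$ good} \mid \mc{F}_{N+(i-1)L}) \ge p > 0$ uniformly in the past.

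On the intersection of ``block $i$ good'' with the event that $X_1,\ldots,X_{N+(i-1)L}$ is $(-L\epsilon/2,\lambda,\epsilon)$-reverse tempered, Lemma~\ref{lem:concat_tempered_additive_est} applied to the reversal of the concatenated sequence implies that $X_1,\ldots,X_{N+iL}$ is $(C_0,\lambda,\epsilon)$-reverse tempered, giving $T \le iL$. By the tower property the blocks have conditionally independent success probability $\ge p$, so the probability that no block among the first $K$ is good is at most $(1-p)^K$. Choosing $L$ sufficiently large and setting $K = \lfloor k/L \rfloor$, the combination with the exponentially small history-failure probability yields $\mathbb{P}(T > N + k) \le D_1 e^{-D_2 k}$ for suitable $D_1, D_2 > 0$.

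The main obstacle is the uniform lower bound on the good-block probability with a \emph{positive} tempering constant $C_0$: the reverse analog of Proposition~\ref{prop:temperedness_tail_bound} only gives high probability for very negative constants, so this step requires a direct calculation exploiting the strict positivity of $\lambda_0 - \lambda_1$ and the boundedness of increments to handle the $O(L^2)$ joint constraints inside a block. A secondary technical point is combining the block renewal with the accumulating history-failure events so that the total bound is genuinely exponential in $k$ rather than sub-exponential; this is accomplished by calibrating $L$ so that $D_1' e^{-D_2' L\epsilon/2}$ is dominated by $(1-p)^L$.
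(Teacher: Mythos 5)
Your reduction to a drifting walk, the reverse analogue of Proposition~\ref{prop:temperedness_tail_bound}, and the uniform lower bound $p>0$ on the probability that a block of length $L$ is internally $(C_0,\lambda_1,\epsilon/2)$-reverse tempered are all fine. The gap is in the combining step. Writing $A_i$ for ``block $i$ is internally good'' and $B_i$ for ``the history $X_1,\ldots,X_{N+(i-1)L}$ is $(-L\epsilon/2,\lambda,\epsilon)$-reverse tempered'', your argument gives $T\le iL$ only on $A_i\cap B_i$, and the best decomposition available from what you prove is
\[
\mathbb{P}(T>N+KL)\ \le\ \mathbb{P}\Bigl(\bigcap_{i\le K}A_i^c\Bigr)+\mathbb{P}\Bigl(\bigcup_{i\le K}B_i^c\Bigr)\ \le\ (1-p)^K+K\,D_1'e^{-D_2'L\epsilon/2}.
\]
Each $\mathbb{P}(B_i^c)$ is a \emph{constant in $k$} (bounded below by roughly $e^{-c'L}$, coming from the constraints with $b$ near the end of the history), so the second term grows linearly in $K$ and no choice of the constant block length $L$ makes it decay in $k$; even the sharpest bound on $\mathbb{P}(\bigcup_i B_i^c)$ is at least $\mathbb{P}(B_1^c)>0$. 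You cannot fold $B_i$ into the renewal probability either, since $B_i$ is $\mc{F}_{N+(i-1)L}$-measurable: on $B_i^c$ the conditional probability of success is zero, and after a run of bad blocks the history's reverse-temperedness constant can be arbitrarily negative, after which it needs a time \emph{proportional to the deficit} to recover. Your proposal has no mechanism that charges for this recovery time, and ``calibrating $L$'' cannot supply one.

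The repair is to promote the reverse-temperedness constant $C_i'$ of the growing sequence $X_1,\ldots,X_{N+i\Delta_0}$ to the state variable, which is what the paper does. Lemma~\ref{lem:concat_tempered_additive_est} gives the one-step recursion $C_{i+1}'=\min\{D_{i+1},\ \epsilon\Delta_0/2+D_{i+1}+C_i',\ \epsilon\Delta_0+C_i'\}$, where $D_{i+1}$ is the backwards constant of the new block; Proposition~\ref{prop:temperedness_tail_bound} shows $\E{\epsilon\Delta_0/2+D_{i+1}\vert\mc{F}_i}\ge\delta>0$ for $\Delta_0$ large, so a surrogate $\hat C_i$ of $C_i'$ is a bounded-increment submartingale with positive drift. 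Azuma then bounds its first-passage time to the level $C_0:=-\epsilon\Delta_0/2$ by $Ae^{R-Bk}$ when the initial constant is $R$, and a final split on whether $X_1,\ldots,X_N$ is $(-Bk/2,\lambda,\epsilon)$-tempered (Proposition~\ref{prop:temperedness_tail_bound} again) absorbs the initial deficit. Note also that the $C_0$ produced this way is negative, so the positivity of the per-block constant that you single out as your ``main obstacle'' is not actually needed.
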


\begin{proof}
The proof has essentially two steps. First, in the following claim, we study how long it takes for a sequence with bad temperedness constant to recover. This happens with linear speed because we are studying a submartingale sequence with $\E{X_n\vert\mc{F}_{n-1}}$ uniformly bounded away from zero. We estimate how fast the reverse-temperedness constant improves as we append blocks of a fixed size $\Delta_0$. As a sequence of length $N$ might have a bad temperedness constant, to obtain the result we then apply the tail estimate on the temperedness constant for sequences of length $N$. As each of these things has an exponential tail, we  obtain the result.

The main claim is the following.
\begin{claim}
There exist $C_0$ and $A,B>0$ independent of $N$, such that if $X_1,\ldots,X_N$ 
is $(R,\lambda,\epsilon)$-tempered and $T$ is the first time greater than $N$ that is $(C_0,\lambda,\epsilon)$-reverse tempered, then
\[
\mathbb{P}(T>N+k\vert X_1,\ldots,X_N\text{ is }(R,\lambda,\epsilon)\text{-tempered})\le Ae^{R-Bk}.
\]
\end{claim}
\begin{proof} Let $\lambda_1=(\lambda+\lambda_0)/2$ and denote by $B_{i,\Delta}$ the backwards $(\lambda_1,\epsilon/2)$-temperedness constant of the sequence $X_{i+1},\ldots,X_{i+\Delta}$. By Proposition~ \ref{prop:temperedness_tail_bound}, 
there exist $A_2,B_2$  (independent of $i$ and $\Delta$)
such that for $C\ge 0$,
\[
\mathbb{P}(X_{i+1},\ldots,X_{i+\Delta}\text{ is not }(-C,\lambda,\epsilon)\text{-tempered})\le A_2e^{-B_2C}.
\]
As this tail on the temperedness constant is independent of $i$ and $\Delta$, we see that there exists $\Delta_0$ sufficiently large and $\delta>0$ such that for any $i\in\N$,
\begin{equation}
\E{\Delta_0 \epsilon/2 + B_{i,\Delta_0}\vert \mc{F}_i}>\delta>0.
\label{DeltaDrift}
\end{equation}

We now check how much appending a block of length $\Delta_0$ improves temperedness.
Let $C_i'$ denote the backwards $(\lambda,\epsilon)$-temperedness constant of the sequence 
\[
X_1,\ldots,X_{N},X_{N+1},\ldots,X_{N+i\Delta_0}.
\]
and let $D_{i}$ denote the $(\lambda_1,\epsilon/2)$ backwards tempered constant of the sequence 
\[
X_{N+(i-1)\Delta_0+1},\ldots,X_{N+i\Delta_0}.
\]
Then by Lemma \ref{lem:concat_tempered_additive_est},
\[
C'_{i+1}=\min\{D_{i+1},\epsilon \Delta_0/2+D_{i+1}+C'_i,\epsilon \Delta_0+C'_i\}.
\]
We also define $\hat{C_0}=C'_0$ and 
\[
\hat{C}_{ i+1}=\min\{\epsilon\Delta_0/2+D_{i+1}+\hat{C}_{i},\epsilon \Delta_0+\hat{C}_i\}.
\]
Note that by \eqref{DeltaDrift}
there exists $\delta>0$ depending only on $c,\lambda,\lambda_1,\epsilon$, such that 
\begin{equation}
\E{\hat{C}_{i+1}\vert \mc{F}_{N+i\Delta_0}}-\hat{C}_i\ge \delta>0.
\end{equation}

Suppose that we define $T$ so that we decide to stop when $C'_{i}\ge -\epsilon \Delta_0/2$. Observe that if $i+1$ is the first index such that  $\hat{C}_{i+1}\ge 0$ then because 
\[
\hat{C}_{i+1}\ge \epsilon \Delta_0/2+D_{i+1}+\hat{C}_i,
\]
and $\hat{C}_i<0$ we must have that $D_{i+1}\ge -\epsilon\Delta_0/2$. Thus
\begin{equation}
C'_{i+1}\ge \min\{D_{i+1},\epsilon \Delta_0/2+D_{i+1}+C'_i,\epsilon \Delta_0+C'_i\}\ge -\epsilon \Delta_0/2.
\end{equation}
Let $C_0=-\epsilon\Delta_0/2$. Thus if $k$ is the first index such that $\hat{C}_k\ge 0$, then $T<n+\Delta_0k$. Thus we need to obtain a bound for the first time $\hat{C}_i\ge 0$. 

We now bound the tail on the first time $\hat{C}_i\ge 0$. Note that $\hat{C}_i$ is a submartingale. Further let $M$ be an upper bound on $\abs{C'_{i+1}-C'_i}$ over all $i$ (an upper bound exists because $\abs{X_i}<c$). 
Let
$\displaystyle
\chi_i=\E{\hat{C}_i\vert\mc{F}_{n+(i-1)\Delta_0}}\ge \delta>0.
$
Then $\beta_i=\hat{C}_{i+1}-\chi_i$ is a martingale difference sequence. 
We now estimate:
$$
\mathbb{P}(\hat{C}_k\le 0)
\le \mathbb{P}\left(-R+\sum_{i=1}^k\beta_k\le -\sum_{i=0}^{k-1}\chi_i\right)
\le \mathbb{P}\left(\sum_{i=1}^k\beta_k\le -k\delta+R \right)
$$
Thus for $k\ge R/\delta$, by Azuma's inequality (Theorem \ref{thm:azumas_inequality}),
$$
\mathbb{P}(\hat{C}_k\le 0)\!\!\le 2\exp\!\!\left(\!\!-\frac{(k\delta-R)^2}{2kM^2}\right)
\!\!\le \!\!2\exp\!\!\left(\!\!-\frac{k\delta^2}{2M^2}+\frac{R\delta}{M^2}-\frac{R^2}{2kM^2}\right)
\!\!\le \!\!2\exp\!\!\left(\!\!-k\frac{\delta^2}{2M^2}\!+\!R\left(\frac{\delta}{M^2}\right)\!\right).
$$
If $\delta/M^2\le 1$, then we are already done with $B=\delta^2/(2M^2\Delta_0)$. Otherwise,  if $\delta/M^2>1$, then for $k\ge 2R/\delta$, which is the only range where the bound is less than $1$, the right hand side is bounded above by
\[
2\exp\left(-k\frac{\delta^2}{2M^2}+R\left(\frac{\delta}{M^2}\right)\right)\le 2\exp\left(R-k\frac{\delta^2}{2M^2}\frac{M^2}{\delta}\right),
\]
and thus the estimate holds with $B=\delta/(2\Delta_0)$ in this case as well. This finishes the proof of the claim.
\end{proof}

Let $A,B$ and $C_0$ be as in the claim. From Proposition \ref{prop:temperedness_tail_bound}, there exists $D_1,D_2$ such that for all $C\ge 0$,
\[
\mathbb{P}(X_1,\ldots,X_N\text{ is } (-C,\lambda,\epsilon)\text{-tempered})\ge 1-D_1\exp(-D_2C).
\]
From the claim we know that if $X_1,\ldots,X_N$ is $(-C,\lambda,\epsilon)$-tempered and $T$ is the waiting time for a future $(C_0,\lambda,\epsilon)$-tempered time, then
\[
\mathbb{P}(T>N+k)\le Ae^{C-Bk}.
\]
Combining these two estimates we see that

\begin{align*}
\mathbb{P}(T>N+k)\le& \mathbb{P}(X_1,\ldots,X_N\text{ is } (-Bk/2,\lambda,\epsilon)\text{-tempered and } T > N+k)\\
&+\mathbb{P}(X_1,\ldots,X_N\text{ is not } (-Bk/2,\lambda,\epsilon)\text{-tempered}) \\
&\le A\exp(Bk/2-Bk) +D_1\exp(-D_2Bk/2) \\
&\le A\exp(-Bk/2)+ D_1\exp(-D_2Bk/2).
\end{align*}
The conclusion is now immediate.
\end{proof}

The above results imply that expanding on average diffeomorphisms  have frequent reverse tempered times. 

\begin{prop}\label{prop:exponential_tail_tempered_times}
Suppose that $(f_1,\ldots,f_m)$ is an expanding on average tuple of diffeomorphisms in $\Diff^2_{\vol}(M)$. There exist $\lambda>0$ such that for all sufficiently small $\epsilon>0$, there exists $C_0,C,\alpha$ such that for all $x\in M$ and $N\in \N$, if we let $T(x)$ be the first $(C_0,\lambda,\epsilon)$-reverse tempered time for $\|D_xf^n_{\omega}\|$ that is greater than or equal to $N$, then
\[
\mathbb{P}(T(x)\le  N+k)\ge 1-Ce^{-\alpha k},
\]
and $D_xf^{T(x)}_{\omega}$ has a well defined splitting into maximally expanded and contracted singular directions.
\end{prop}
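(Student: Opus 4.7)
The plan is to derive the statement directly from Proposition~\ref{prop:exponentially_return_to_tempered_additive} by applying it to the blockwise log-norm increments of $D_xf^n_\omega$, and then to read off the splitting assertion from the $\SL(2,\R)$ structure. For fixed $x\in M$, set $A_k = D_{f^{k-1}_\omega(x)}f_{\omega_k}$, $A^n=A_n\cdots A_1$, and
\[
Z_k=\ln\|A^{kn_0}\|-\ln\|A^{(k-1)n_0}\|
\]
with $A^0=\Id$. The first task is to verify that $(Z_k)_{k\ge 1}$ is a submartingale difference sequence with respect to the filtration $\mc{G}_k:=\mc{F}_{kn_0}$ generated by blocks of $n_0$ coordinates, with bounded increments and conditional mean at least $n_0\lambda_0$. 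Submultiplicativity of the operator norm gives $\abs{Z_k}\le n_0\Lambda$, where $\Lambda$ is a uniform bound on $\ln\|Df_j^{\pm1}\|_\infty$. For the lower bound on the conditional expectation, let $v_{k-1}\in T_xM$ be a top singular direction of $A^{(k-1)n_0}$, which is $\mc{G}_{k-1}$-measurable, and set $\hat w_{k-1}=A^{(k-1)n_0}v_{k-1}/\|A^{(k-1)n_0}\|$. Then
\[
\|A^{kn_0}\|\ge\|A^{kn_0}v_{k-1}\|=\|A^{(k-1)n_0}\|\cdot\|A^{kn_0}_{(k-1)n_0}\hat w_{k-1}\|.
\]
Conditionally on $\mc{G}_{k-1}$ the base point $f^{(k-1)n_0}_\omega(x)$ and the unit vector $\hat w_{k-1}$ are determined, while the remaining $n_0$ coordinates produce an independent $n_0$-step random product, so \eqref{EqExpAv} gives $\E{\ln\|A^{kn_0}_{(k-1)n_0}\hat w_{k-1}\|\mid\mc{G}_{k-1}}\ge n_0\lambda_0$.

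Choosing $\lambda<\lambda_0$ and $\epsilon\in(0,(\lambda_0-\lambda)/3)$, I would apply Proposition~\ref{prop:exponentially_return_to_tempered_additive} to $(Z_k/n_0)$ starting at index $N'=\lceil N/n_0\rceil$. This yields constants $C_0',D_1,D_2>0$ such that the first block time $k\ge N'$ for which $Z_1,\ldots,Z_k$ is $(C_0',\lambda,\epsilon)$-reverse tempered has a $D_1e^{-D_2(k-N')}$ tail. Because $\sum_{i=1}^k Z_i=\ln\|A^{kn_0}\|$, this reverse-tempered condition for the increments is precisely the reverse-tempered norm condition of Definition~\ref{defn:sub_super_tempered} for the matrix sequence at block times, with constants rescaled by $n_0$. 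The uniform bound $\abs{\ln\|A^{n+1}\|-\ln\|A^n\|}\le\Lambda$ then lets us interpolate from block times to arbitrary integers at the cost of absorbing a $2n_0\Lambda$ error into the tempering constant; this defines the time $T(x)$ in the statement and gives its exponential tail with some $C,\alpha>0$ depending only on $n_0,\lambda_0,\lambda,\epsilon,\Lambda$.

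For the splitting assertion, evaluating the reverse-tempered condition near $m=0$ gives $\|A^{T(x)}\|\ge e^{C_0}e^{\lambda(T(x)-N)}$, which strictly exceeds $1$ once $T(x)-N$ is large enough. A matrix in $\SL(2,\R)$ with norm greater than $1$ has two distinct singular values and hence a well-defined orthogonal splitting into maximally expanded and contracted singular directions. The only subtle point in the argument is the conditional-expectation lower bound in the first paragraph: one must choose $\hat w_{k-1}$ measurable with respect to the past so that the independent fresh randomness in the next block of coordinates can be averaged against a now-fixed base point and unit vector. The remaining steps, namely interpolation between block and arbitrary times and the elementary linear algebra producing the splitting, are routine.
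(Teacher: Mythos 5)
Your proof is correct and takes essentially the same route as the paper: the paper's (two-line) proof likewise observes that the block log-norms $\ln\|D_xf^{nn_0}_\omega\|$ form a submartingale satisfying the hypotheses of Proposition~\ref{prop:exponentially_return_to_tempered_additive} and then invokes Proposition~\ref{prop:tempered_norm_implies_splitting} for the existence of the singular splitting, which is exactly what your direct $\SL(2,\R)$ argument (norm $>1$ implies distinct singular values) amounts to. Your write-up usefully fills in the conditional-expectation verification and the block-to-integer interpolation that the paper leaves implicit.
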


\begin{proof}
$X_n=\|D_xf^{nn_0}\|$ is a submartingale satisfying the hypotheses of 
Proposition~\ref{prop:exponentially_return_to_tempered_additive}, hence $X_n$ satisfies the required estimate on reverse tempered times. 
The last claim follows from Proposition \ref{prop:tempered_norm_implies_splitting}.
\end{proof}

Proposition \ref{prop:exponential_tail_tempered_times}
 shows that
there is a uniformly large density subset of points such that $D_xf^n_{\omega}$ is reverse tempered.
  We now show that the stable direction of the resulting tempered splitting does not lie too close to any particular vector $v$. 

\begin{lem}\label{lem:prob_good_stable_when_stopped}
Suppose that $(f_1,\ldots,f_m)$ is an expanding on average tuple  in $\Diff^2_{\vol}(M)$, for $M$ a closed surface.
There exist $D,\alpha,c_0$ and $C,\lambda>0$ such that for all sufficiently small $\epsilon>0,x\in M$ and interval $I\subset T^1_xM$, if $n\ge c_0\ln\abs{I}$, where $\abs{I}$ is the length of $I$,
if $T(x)$ is the first time greater than $n$ that the sequence $D_xf^n_{\omega}$ has a $(C,\lambda,\epsilon)$ reverse tempered splitting, 
denoting the most contracted direction of $D_xf^n_{\omega}$ by $E^s_T$,
\[
\mathbb{P}(E^s_T\in I\vert T(x)\le n+k)\le C\abs{I}^{\alpha}.
\]
\end{lem}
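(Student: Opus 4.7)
The plan is to bound the conditional probability by dealing with the numerator and denominator of
\[
\mathbb{P}(E^s_T \in I \mid T(x) \leq n+k) = \frac{\mathbb{P}(E^s_T \in I, \, T(x) \leq n+k)}{\mathbb{P}(T(x) \leq n+k)}
\]
separately. The numerator will be controlled by an unconditional estimate on $\mathbb{P}(E^s_T \in I)$ using the uniform H\"older regularity of finite-time stable directions from Proposition \ref{prop:finite_time_distribution_of_stable}, and the denominator will be bounded below by a positive universal constant using Proposition \ref{prop:exponential_tail_tempered_times}. The key observation is that $T(x)>n$ always, so any realization of $E^s_T \in I$ produces a time $m>n$ with $E^s_m$ in a small neighborhood of the interval $I$.

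Concretely, let $v \in \mathbb{P}(T_xM)$ be the midpoint of the arc $I$, so that $I \subseteq B_{|I|}(v)$. Interpreting the hypothesis as $n \geq c_0|\ln|I||$, Proposition \ref{prop:finite_time_distribution_of_stable} applied with $\epsilon = |I|$ yields
\[
\mathbb{P}\bigl(\exists\, m > n : E^s_m(\omega) \in B_{|I|}(v) \text{ or } E^s_m(\omega) \text{ is undefined}\bigr) \leq C_1|I|^\theta.
\]
Since $T(x) > n$ and since $E^s_T \in I \subseteq B_{|I|}(v)$ on the event of interest, taking $m = T(\omega)$ in the display shows $\mathbb{P}(E^s_T \in I) \leq C_1|I|^\theta$. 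For the denominator, Proposition \ref{prop:exponential_tail_tempered_times} gives $\mathbb{P}(T(x) \leq n+k) \geq 1 - C_2 e^{-\alpha_0 k}$ uniformly in $x, n$, so fixing $K_0$ large enough that $C_2 e^{-\alpha_0 K_0} \leq 1/2$ yields $\mathbb{P}(T \leq n+k) \geq 1/2$ for all $k \geq K_0$. For the remaining range $1 \leq k < K_0$, one notes that $\{T = n+1\}$ coincides with the event that $D_xf^{n+1}_\omega$ is $(C,\lambda,\epsilon)$-reverse tempered, whose probability is bounded below by a positive universal constant $p_0$ via Proposition \ref{prop:temperedness_tail_bound} applied to the reversed sequence of log-norm increments (which, by the IID structure and the expanding-on-average hypothesis, is again a positive-drift submartingale difference sequence). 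Hence $\mathbb{P}(T \leq n+k) \geq \min\{1/2, p_0\}$ for every $k \geq 1$, and dividing the two estimates produces the lemma with $\alpha = \theta$ and $C$ a suitable multiple of $C_1$.

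The main technical subtlety is the passage from the pointwise H\"older bound on each individual $E^s_m$ to a bound on the stopped variable $E^s_{T(x)}$. A naive union bound over $m \in (n, n+k]$ would incur a multiplicative factor of $k$ and destroy the uniformity in the conditioning range. This is exactly the reason Proposition \ref{prop:finite_time_distribution_of_stable} was formulated as a uniform-in-$m$ statement controlling all $m > n$ simultaneously with probability $1 - C|I|^\theta$; that uniformity makes the stopped-time bound work with no $k$-dependence in either the constant or the exponent.
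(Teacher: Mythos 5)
Your proof is correct and follows essentially the same route as the paper: bound the numerator $\mathbb{P}(E^s_T\in I)$ by the uniform-in-$m$ statement of Proposition \ref{prop:finite_time_distribution_of_stable} and bound the denominator below via Proposition \ref{prop:exponential_tail_tempered_times}. Your extra care with the small-$k$ range of the conditioning (where $1-C_2e^{-\alpha_0 k}$ is not yet positive) is a point the paper's one-line treatment of the denominator glosses over, and is a worthwhile addition.
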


\begin{proof}
This probability equals
$\displaystyle
\frac{\mathbb{P}(E^s_T\in I\text{ and } T(x)\le n+k)}{\mathbb{P}(T(x)\le n+k)}.
$
By Proposition~\ref{prop:exponential_tail_tempered_times}, the denominator is at least $1-C_1e^{-k C_2}$, for some 
$ C_1,C_2$. If $c_0$ is as in Proposition \ref{prop:finite_time_distribution_of_stable}, then for $n\ge c_0\ln\abs{I}$, then the numerator is bounded above by 
$\displaystyle \mathbb{P}(E^s_T\in I)\le C_3\abs{I}^{\alpha}.$
\end{proof}

\section{Stable manifolds of expanding on average systems}
\label{ScStMan}

In this section we show Proposition \ref{prop:stable_manifolds_exist_with_high_prob}, which says that with probability $1-C^{-\alpha}$  
a point has a stable manifold of length at least $C$. The proof has two parts. First we state a abstract proposition that gives  the existence of 
a stable manifold with good properties through a point $x$
provided that
there exists a tempered hyperbolic splitting along the orbit of $x$. We then estimate the probability that this criterion holds. 

In \S \ref{SSStable} 
we introduced the stable manifolds for the random dynamics. We now introduce a quantitative property of them that will be of use later.

\begin{defn}\label{defn:uniformly_tempered_stable_manifold}
We say that a stable manifold $W^s(\omega,z)$ is $(C,\lambda,\epsilon)$-tempered if the length of $W^s(\omega,z)$ is at least  $C^{-1}$ and the points in the stable manifold attract uniformly quickly: for 
$\displaystyle
x,y\in f^{n}_\omega(W^s_{C^{-1}}(\omega,z))
$,
\[
d_{f^{n+m}_{\omega}(W^s_{C^{-1}}(\omega,z))}({ f^{m}_{\sigma^n(\omega)}(x),f^{m}_{\sigma^n(\omega)}(y)})\le Ce^{-\lambda m}e^{\epsilon n}.
\]
\end{defn}

Now we give a quantitative estimate on the number of stable curves of a given $C^2$ norm and length. This result follows from a careful reading of the construction of stable manifolds in the book of Liu and Qian \cite{Liu1995smooth},
 in particular, Theorem III.3.1, which constructs stable manifolds of random dynamical systems lying in a certain type of Pesin block that the authors denote by $\Lambda^{l,r}_{a,b,k,\epsilon}$. 
 In the case that the random dynamics only arises from a finite collection of diffeomorphisms (i.e. has bounded $C^2$ norm), the constraint from the $r$ parameter does not matter---$r$ essentially measures how small a neighborhood of $x$ one must look at for the map in an exponential chart to be uniformly close to its derivative. In our setting, once we pick sufficiently large $r_0>0$ there is no constraint. The number $k$ is our case also does not matter---it specifies the dimension of the splitting we are considering.

In the $2$-dimensional setting a point $x\in M$ lies in $\Lambda^{l,r}_{a,b,k,\epsilon}$ for the sequence of diffeomorphisms $f_1,f_2,\ldots$ if, writing $f^{n+k}_{n}=f_{n+k}\cdots f_{n+1}$, we have an invariant splitting along the trajectory $E^s_{f^n(x)}\oplus E^u_{f^n(x)}$ such that for the reference metric on the manifold we have that:
\begin{align*}
\abs{Df^{n+k}_n(f^{n}(x))\vert_{E^s}}&\le l e^{\epsilon n}e^{(a+\epsilon)k}\\
\abs{Df^{n+k}_n(f^{n}(x))\vert_{E^u}}&\ge l^{-1} e^{-\epsilon n}e^{(b-\epsilon)k}\\
\angle(E^s_{f_1^{n}(x)},E^u_{f_1^{n}(x)})&\ge l^{-1}e^{-\epsilon n}.
\end{align*}
This is defined at the beginning of \cite[Sec.~3]{Liu1995smooth}.
In the language we have been using above, a   $(-C,\lambda,\epsilon)$-tempered trajectory
belongs to the set $\Lambda^{e^C,r_0}_{\lambda,-\lambda,1,\epsilon}$. From 
\cite[Thm.~III.3.1]{Liu1995smooth}, we may now deduce the following proposition.
\begin{prop}\label{prop:norm_of_stable}
Suppose that $(f_1,\ldots,f_m)$ is a tuple in $\Diff^2_{\vol}(M)$, where $M$ is a closed surface. Fix $\lambda,\epsilon>0$. Then there exist constants $D_1,D_2$ such that if $(\omega,x)$ is a $(-C,\lambda,\epsilon)$-tempered trajectory, then $W^s_{\omega}(x)$ exists and is at least 
$D_1e^{-2C}$ long. Further, on this interval, its $C^2$ norm is at most $D_2e^{6C}
$ (when viewed as a graph over its tangent space at $x$). Moreover these estimates are $e^{7\epsilon}$-tempered along the trajectory.

\end{prop}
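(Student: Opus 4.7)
The approach is to translate the hypothesis into the Liu--Qian framework of \cite[Thm.~III.3.1]{Liu1995smooth} and then carefully track the dependence of the constants on $l := e^{C}$ through their construction of the stable manifold. A $(-C,\lambda,\epsilon)$-tempered trajectory at $(\omega, x)$, together with the splitting $E^s\oplus E^u$ supplied by Proposition \ref{prop:tempered_norm_implies_splitting}, satisfies along its forward orbit
\[
\|Df^k_{\sigma^n\omega}|_{E^s}\| \le e^{C} e^{\epsilon n} e^{-\lambda k}, \qquad
\|Df^k_{\sigma^n\omega}|_{E^u}(v)\| \ge e^{-C} e^{-\epsilon n} e^{\lambda k}\|v\|,
\]
and $\angle(E^s, E^u)\ge e^{-C} e^{-\epsilon n}$. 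Thus $(\omega, x)$ belongs to the Pesin block $\Lambda^{l, r_0}_{\lambda, -\lambda, 1, \epsilon}$ with $l = e^{C}$, for any fixed $r_0 > 0$, since the tuple $(f_i)$ has uniformly bounded $C^2$ norm.

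Next I would revisit the graph-transform argument that produces $W^s_\omega(x)$. The stable manifold is built as a Lipschitz graph over $E^s_x$ in an exponential chart at $x$, obtained as the fixed point of an operator on graphs contained in a ball of radius $\rho$. The change of frame from an orthonormal basis to $(E^s, E^u)$ carries a distortion proportional to $1/\sin\angle(E^s, E^u)$, contributing a factor of $e^{C}$. The fixed-point operator contracts on this space precisely when the second-order Taylor remainder of $f_\omega$ on a $\rho$-ball is dominated by the linear hyperbolic gap. Tracking the two sources of $l$-dependence---the non-orthogonal frame and the unstable conorm used in the cone estimate---the admissible radius satisfies $\rho \ge D_1 e^{-2C}$, which gives the length bound.

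For the $C^2$ bound one differentiates the graph-transform fixed-point equation once to recover the slope of $W^s$ (bounded by the angle datum, absorbing a factor $e^{C}$ into $D_2$), and a second time to obtain a recursion for the second derivative along iterates. This recursion is resolved using the hyperbolic gap, and each step produces additional $l$-factors from the non-orthogonal frame transformation. Three independent sources each contribute $e^{2C}$: the two changes of frame between the Euclidean and hyperbolic coordinates and the inverse hyperbolic gap applied to the $C^2$ nonlinearity. Summing these exponents yields the claimed bound $D_2 e^{6C}$ on the curvature.

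Tempering along the orbit then comes for free: if $(\omega, x)$ is $(-C, \lambda, \epsilon)$-tempered, the shifted trajectory at $f^n_\omega(x)$ for the word $\sigma^n \omega$ is $(-C - \epsilon n, \lambda, \epsilon)$-tempered. Applying the two bounds to this shifted trajectory gives length at least $D_1 e^{-2(C + \epsilon n)}$ and curvature at most $D_2 e^{6(C + \epsilon n)}$; allowing an extra $e^{\epsilon n}$ slack to absorb the lower-order contributions of the shifted angle datum upgrades this to the stated $e^{7\epsilon}$-tempering per step. The principal obstacle is that \cite{Liu1995smooth} is written with unspecified multiplicative constants, so the substance of the proof is an explicit accounting of every place $l = e^{C}$ appears in their construction, and in particular arguing that the ambient $C^2$ size of the $(f_i)$ contributes only to $D_1, D_2$ and not to the exponents $2$, $6$, $7$. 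Distinguishing precisely which factors give the exponent $6$ (rather than $5$ or $7$) is the delicate bookkeeping that a careful reading is needed for.
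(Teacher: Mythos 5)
Your overall strategy coincides with the paper's: place the $(-C,\lambda,\epsilon)$-tempered trajectory in the Liu--Qian Pesin block $\Lambda^{l,r_0}_{\lambda,-\lambda,1,\epsilon}$ with $l=e^{C}$ and then track how $l$ enters the size and curvature bounds of the constructed stable manifold. The difference is in how that tracking is executed. The paper does not re-run the graph transform; it reads off the explicit quantities from the proof of \cite[Thm.~III.3.1]{Liu1995smooth}: the Lyapunov-chart distortion constant $A\sim l^{2}=e^{2C}$, the radius $\alpha_n=A^{-1}r_0e^{-5\epsilon n}\gtrsim e^{-2C}e^{-5\epsilon n}$ (whence the length $D_1e^{-2C}$), and the curvature bound $\beta_n=2DA^{2}e^{7\epsilon n}$ with $D\sim c_0\sim A$, so $\beta_n\lesssim e^{6C}e^{7\epsilon n}$. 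In particular the $e^{7\epsilon}$-tempering is not obtained by shifting the base point and adding ``slack'' as you propose -- it is already built into Liu--Qian's $\beta_n$ and $\alpha_n$. The paper also handles a point you pass over: Liu--Qian only prove the stable manifolds are $C^{1,1}$, so one must invoke \cite[Rem.~7.3.20]{arnold1998random} to know the curve is genuinely $C^2$ before the Lipschitz bound on its derivative can be read as a $C^2$ bound.

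The gap in your write-up is that you defer exactly the step that constitutes the proposition's content. The statement is nothing but the exponents $2$, $6$, $7$; asserting that ``three independent sources each contribute $e^{2C}$'' and then conceding that ``distinguishing precisely which factors give the exponent $6$ (rather than $5$ or $7$) is the delicate bookkeeping that a careful reading is needed for'' leaves the claim unproved. Moreover your attribution of the factors is off: you assign one factor $e^{C}$ to $1/\sin\angle(E^s,E^u)$, whereas in the actual construction each occurrence of the distortion constant costs $e^{2C}$ (it is the full Lyapunov-metric comparison constant, combining the angle with the tempered sums, cf.\ Lemma \ref{lem:lyapunov_metric}(3)), and the exponent $6$ arises as $DA^{2}$ with $D\sim A\sim e^{2C}$. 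The totals happen to agree, but a proof must identify the sources correctly, not just guess a count that matches.
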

\begin{proof}
From the above discussion,
a $(C,\lambda,\epsilon)$-tempered point lies in $\Lambda^{e^{C},r_0}_{\lambda,-\lambda,1,\epsilon}$. So, we just need to recover the estimates from the proof of \cite[Thm.~III.3.1]{Liu1995smooth}. In fact these estimates are stated there. As we are keeping $\lambda,\epsilon$ fixed, the conclusion will follow once we compute the quantities $\alpha_n$ and $\beta_n$ appearing in that theorem given our particular choices. Although \cite{Liu1995smooth} only shows the stable manifolds are $C^{1,1}$, the estimates provided there on the Lipschitz constant of the derivative is enough for controlling the $C^2$ norm because we know that the stable manifolds are in fact as smooth as the dynamics, which is $C^2$ \cite[Rem.~7.3.20]{arnold1998random}.

First we explain how to estimate $\beta_n$, which controls the norm. The first quantity that gets defined in the proof is
$\displaystyle
c_0=4Ar'e^{2\epsilon}.
$
Here, $A$ is the quantity appearing in the proof of \cite[Lem.~1.3]{Liu1995smooth}, which is equal to 
$4(l^2)(1-\epsilon^{-2\epsilon})^{-1/2}$. Thus $c_0\leq C_1e^{2C}$. Therefore the quantity $D=(1-e^{-2\epsilon})^{-3}(1+e^{-2\epsilon})^2c_0e^{-a}$ on p.~66 of \cite{Liu1995smooth} is at most $C_2e^{2C}$. Hence $\beta_n$, which is defined on p.~68 of \cite{Liu1995smooth} as $2DA^2e^{7\epsilon n}$ and controls the norm of the stable curve, is at most $C_3e^{6C}e^{7\epsilon n}$. 

The length of the curve given by the quantity $\alpha_n$ defined on p.~68 of \cite{Liu1995smooth} where it is defined to be $A^{-1}r_0e^{-5\epsilon n}$. From the definition of $A$ given above, this is bounded below by  $C_4e^{-2C}e^{-5\epsilon n}$.  We are done.
\end{proof}

We then estimate the probability that a stable manifold is $(C,\lambda,\epsilon)$-tempered. 

\begin{prop}\label{prop:stable_manifolds_exist_with_high_prob}
Suppose that $(f_1,\ldots,f_m)\in \Diff_{\vol}(M)$ is a uniformly expanding on average tuple, where $M$ is a closed surface. Then there exists $\lambda,\epsilon,\alpha>0$ such that for all $C>0$
\[
\mu(\{\omega:W^s_{\omega}(x){\rm\ is\ not\ }(C,\lambda,\epsilon)\text{-tempered}\})\le C^{-\alpha}.
\]
\end{prop}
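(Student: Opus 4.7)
The plan is to combine two ingredients already established in the excerpt: the exponential tail on the temperedness constant of $D_xf^n_\omega$ (Proposition~\ref{prop:splitting_with_high_probability}) and the quantitative stable manifold construction (Proposition~\ref{prop:norm_of_stable}). The former gives us that trajectories of $x$ are $(-C_1,\lambda,\epsilon)$-tempered with probability $1-De^{-\alpha C_1}$, while the latter converts such temperedness into geometric data about $W^s_\omega(x)$ whose dependence on $C_1$ is purely exponential. A logarithmic choice $C_1 \approx \kappa \ln C$ then converts the exponential tail in $C_1$ into the polynomial tail in $C$ demanded by the conclusion.

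First, I would fix $\lambda$ and $\epsilon>0$ small enough that Proposition~\ref{prop:splitting_with_high_probability} applies, producing constants $D_0,\alpha_0>0$ such that for every $x\in M$,
\[
\mu\bigl(\bigl\{\omega : (\omega,x)\text{ is not }(-C_1,\lambda,\epsilon)\text{-tempered}\bigr\}\bigr)\le D_0 e^{-\alpha_0 C_1}.
\]
On the complementary good event, Proposition~\ref{prop:norm_of_stable} supplies $W^s_\omega(x)$ together with the estimates
\[
\len\bigl(W^s_\omega(x)\bigr)\ge D_1 e^{-2C_1},\qquad \|W^s_\omega(x)\|_{C^2}\le D_2 e^{6C_1},
\]
and these bounds degrade along the trajectory only by the universal factor $e^{7\epsilon}$ per step. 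In particular, iterating, the attraction rate inside $f^n_\omega(W^s_{D_1 e^{-2C_1}}(\omega,x))$ is controlled by an expression of the form $D_3 e^{kC_1}e^{-\lambda m}e^{7\epsilon n}$ for some universal constant $k$ (one can read off such a $k$ by combining the $C^2$-norm estimate with the super-temperedness of the stable direction from Definition~\ref{defn:sub_super_tempered}).

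Next, I would choose $C_1=\kappa\ln C$ with $\kappa>0$ small enough that simultaneously $D_1 e^{-2C_1}=D_1 C^{-2\kappa}\ge C^{-1}$ and $D_3 e^{kC_1}=D_3 C^{k\kappa}\le C$ for all sufficiently large $C$. Concretely this requires $\kappa\le \min\{1/2,1/k\}$ together with $C$ exceeding some threshold; for $C$ below the threshold the estimate $C^{-\alpha}$ is vacuous after enlarging the implicit constant. With this choice, the bounds supplied by Proposition~\ref{prop:norm_of_stable} match exactly the conditions in Definition~\ref{defn:uniformly_tempered_stable_manifold} for $W^s_\omega(x)$ to be $(C,\lambda,7\epsilon)$-tempered (and one can absorb the factor $7$ into a rescaled $\epsilon$ from the start). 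The probability that this fails is then bounded by
\[
D_0 e^{-\alpha_0 C_1}=D_0 C^{-\alpha_0\kappa},
\]
so setting $\alpha=\alpha_0\kappa$ yields the desired polynomial tail.

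The main technical point is the careful bookkeeping of the exponent $k$ above: one must verify that the constant governing the \emph{attraction rate} inside a $(-C_1,\lambda,\epsilon)$-tempered stable manifold grows only like $e^{kC_1}$ for some explicit universal $k$, so that a small enough $\kappa$ makes all three requirements of Definition~\ref{defn:uniformly_tempered_stable_manifold} (length, attraction constant, degradation factor along the forward orbit) simultaneously satisfiable. Once this is in place, the rest of the argument is purely a translation between the exponential ``trajectory'' scale $C_1$ and the polynomial ``stable manifold'' scale $C$ via the logarithmic substitution.
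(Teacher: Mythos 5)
Your proposal is correct and follows essentially the same route as the paper: invoke Proposition~\ref{prop:splitting_with_high_probability} for the exponential tail on the trajectory temperedness constant, feed that into Proposition~\ref{prop:norm_of_stable} (plus the graph-transform contraction estimates) for the geometry of $W^s_\omega(x)$, and note that the exponential tail in the trajectory constant becomes the polynomial tail $C^{-\alpha}$ after the logarithmic rescaling between the two temperedness scales. The paper's own proof leaves that last substitution implicit, so your explicit bookkeeping of $C_1=\kappa\ln C$ is if anything a clearer presentation of the same argument.
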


\begin{proof}

As the maps $f_1,\ldots,f_m$ are uniformly $C^{1+\text{H\"older}}$ and uniformly expanding, the trajectory is $(-C,\lambda,\epsilon)$-tempered with probability $1-De^{-\alpha C}$ by Proposition \ref{prop:splitting_with_high_probability}. This stable curve is at least $D_1e^{-2C}$ long from Proposition \ref{prop:norm_of_stable}.
The contracting of the stable manifold required by Definition \ref{defn:uniformly_tempered_stable_manifold}
then follows from a standard graph transform argument, appearing in Chapter 7 of \cite{barreira2007nonuniform} or \cite[Lem.~3.2]{Liu1995smooth}, or from keeping track of the contraction in the graph transform arguments in \S \ref{subsec:graph_transform}. 
\end{proof}

\section{Exactness of the skew product}\label{sec:k_property}

We now consider measure theoretic properties of the skew product $F\colon \Sigma\times M\to \Sigma\times M$. We begin with the most basic property, ergodicity, in Proposition~\ref{prop:expanding_on_average_ergodic}. Then we show that this system is exact in Proposition~\ref{prop:skew_product_is_mixing}.
 As exactness implies mixing, this proposition plays a key role in the proof of finite time mixing in Section \ref{sec:finite_time_mixing_prop} where it is used  in the proof of fiberwise mixing in  Proposition~\ref{prop:fibrewise_mixing}.

\subsection{Ergodicity}

The ergodicity of expanding on average systems has been known since \cite[Section~10]{dolgopyat2007simultaneous}. 
 We need an extension of this result.
Consider the diagonal skew product 
\begin{equation}
\label{KDiag}
F_k\colon \Sigma\times M^k\to \Sigma\times M^k\quad\text{given by}\quad (\omega, x_1,\ldots,x_k)\mapsto (\sigma(\omega),f_{\omega_0}(x_1),\ldots,f_{\omega_0}(x_k)).
\end{equation}
Note that $F_k$ preserves the measure $\mu\otimes \vol^k$.

\begin{prop}\label{prop:expanding_on_average_ergodic}
Suppose that $(f_1,\ldots,f_m)$ is an expanding on average tuple in $\Diff^2_{\vol}(M)$ for $M$ a closed surface. Then for each $k\in \N$, $F_k$ is ergodic with respect to $\mu\otimes \vol^k$. 
\end{prop}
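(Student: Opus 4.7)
The case $k=1$ is proven in \cite[\S 10]{dolgopyat2007simultaneous}, so the task is to handle the diagonal action for $k\geq 2$. My plan is a Hopf-style argument adapted to the product stable/unstable structure on $M^k$.

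First, I would replace $\Sigma$ by the two-sided shift $\hat{\Sigma}$ and consider the natural extension $\hat{F}_k$ on $\hat{\Sigma}\times M^k$; because $\mu$ is Bernoulli, $F_k$ is ergodic iff $\hat{F}_k$ is. For $\hat{\mu}$-a.e.\ $\omega$, Proposition~\ref{prop:stable_manifolds_exist_with_high_prob} supplies tempered stable manifolds $W^s(\omega, x_i)$ for $\vol$-a.e.\ $x_i$, and applying this factor-by-factor via Fubini, for $\hat{\mu}\otimes\vol^k$-a.e.\ $(\omega, x_1,\dots,x_k)$ the stable manifold for $\hat{F}_k$ through this point is the $k$-dimensional product $W^s(\omega, x_1)\times\cdots\times W^s(\omega, x_k)$ (since contraction acts factor-by-factor with the same word $\omega$). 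Using the past coordinates of $\omega$, one obtains the corresponding $k$-dimensional product unstable manifold.

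Given an $\hat{F}_k$-invariant $\hat{\Phi}\in L^2$, the standard Hopf argument, applied to forward and backward Birkhoff averages of $\hat{\Phi}$, shows that $\hat{\Phi}(\omega,\cdot)$ is constant a.e.\ on stable product manifolds and on unstable product manifolds for a.e.\ $\omega$. The transversality of $\bigoplus_i E^s(\omega,x_i)$ and $\bigoplus_i E^u(\omega,x_i)$ at a.e.\ $(x_1,\dots,x_k)$---these are $k$-dimensional complementary subspaces of $T_{(x_1,\dots,x_k)}M^k$ since a hyperbolic splitting is provided on each factor by Proposition~\ref{prop:splitting_with_high_probability}---combined with absolute continuity of stable and unstable holonomies (Appendix~\ref{sec:finite_time_pesin_theory}) yields a local product structure on $M^k$. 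Together with the connectedness of $M^k$, this forces $\hat{\Phi}$ to be constant a.e., proving ergodicity.

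The main obstacle is not the Hopf mechanism itself but the fact that the \emph{same} word $\omega$ drives all $k$ coordinates, so the random stable directions $E^s(\omega, x_i)$ are correlated across $i$. To handle this, I would invoke Proposition~\ref{prop:hausdorff_est_on_stable_dirs}, which bounds the concentration of the distribution of stable directions: it prevents these random subspaces from being pathologically aligned and guarantees that, on a full-measure set of $(x_1,\dots,x_k)$, the product stable and unstable laminations genuinely span the tangent bundle and support absolutely continuous holonomies. With this input the classical Hopf proof goes through essentially unchanged.
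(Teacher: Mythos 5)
Your route is genuinely different from the paper's. The paper does not pass to the natural extension or use unstable manifolds at all for this proposition: it runs a ``random Hopf argument'' in which the roles of the stable and unstable laminations are both played by \emph{stable} laminations, taken for two different positive-measure sets of words $V_1,V_2\subseteq\Sigma$ whose stable manifolds are tangent to two transverse cones (Lemma~\ref{lem:soft_local_configuration}). The key content of that lemma is uniformity: for \emph{every} $x\in M$ there are balls $B_{\delta_1}(x)$ of a scale $\delta_1$ independent of $x$, each of which is $99\%$ covered by two transverse, absolutely continuous stable laminations. That uniform local product structure, plus connectedness of $M$, is what rules out more than one ergodic component. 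Your approach (natural extension, product stable and product unstable manifolds, classical Hopf) is a legitimate alternative in principle and is closer to the Liu--Qian framework, but it buys you nothing here and costs you the bookkeeping of how an invariant function depends on past versus future coordinates of $\omega$.

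There are two genuine problems with your write-up. First, the step ``transversality plus absolute continuity plus connectedness forces $\hat\Phi$ to be constant'' is exactly where nonuniform hyperbolicity bites: Pesin-theoretic local product structure holds only at scales that degenerate off Pesin blocks, so the classical Hopf argument a priori yields at most countably many positive-measure ergodic components, not ergodicity. To merge the components you need a local product structure at a scale uniform over $M^k$; in the paper this comes from the uniform tail bound of Proposition~\ref{prop:stable_manifolds_exist_with_high_prob} (valid at every $x$, with constants independent of $x$), packaged as Lemma~\ref{lem:soft_local_configuration}. Your plan never produces such a uniform scale. Second, you invoke Proposition~\ref{prop:hausdorff_est_on_stable_dirs} to address transversality of $\bigoplus_i E^s(\omega,x_i)$ and $\bigoplus_i E^u(\omega,x_i)$, but that transversality is automatic factor by factor (each $T_{x_i}M=E^s(\omega,x_i)\oplus E^u(\omega,x_i)$), so the non-concentration estimate does no work in your argument; it is needed precisely in the paper's approach, where one must find two \emph{stable} families (for different word sets) that are transverse to each other. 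So you have misplaced the actual difficulty, and the connectedness step as stated does not go through without the uniform-scale input.
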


 We will not include a full proof of the above proposition as the result for $F=F_1$ is explained quite clearly in \cite[\S 3.2]{chung2020stationary} as well as \cite[Lem.~4.41]{liu2016lyapunov}. For $k>1$, the result can be deduced along similar lines. 
No higher dimensional dynamics is needed because the dynamics is a product and hence all dynamical constructs, like stable manifolds, are just products of the constructs for the system $F_1$.

The proof of Proposition \ref{prop:expanding_on_average_ergodic} relies implicitly on the following lemma
 which will be important in Section \ref{SS-K} as well.
For $x\in M$, we let $B_{\delta}(x)$ denote the ball of radius $\delta$ centered at $x$.

\begin{lem}\label{lem:soft_local_configuration}
Suppose that $(f_1,\ldots,f_m)$ is an expanding on average tuple in $\Diff^2_{\vol}(M)$. Then there exist $0<\delta_1<\delta_2$ and  $\lambda,\epsilon,C_0,\epsilon_0>0$ such that for all $x\in M$ there exist two positive measure subsets $V_1,V_2\subseteq \Sigma$ and a pair of transverse cones $\mc{C}_1,\mc{C}_2$ defined on $B_{\delta_2}(x)$ by parallel transport of cones based at $x$
 such that the following holds.
Let $\Lambda_{\omega}$ denote the set of $(C_0,\lambda,\epsilon)$-tempered points in $B_{\delta_1}(x)$  under the dynamics
defined by $\omega$, and set
\[
Q^{\omega}(x)=\bigcup_{y\in \Lambda_{\omega}\cap B_{\delta_1}(x)} W^s_{\delta_2}(\omega,y).
\]
Then 
\begin{enumerate}[leftmargin=*]
    \item 
For $i\in \{1,2\}$, $\omega_i\in V_i$, and $y\in \Lambda_{\omega_i}$ the stable manifold $W^s_{\delta_2}(\omega, y)$ is uniformly contracting and tangent to $\mc{C}_i$.
\item 
 For $i\in \{1,2\}$ and $\omega_i\in V_i$, the laminations by stable manifolds satisfy the usual absolute continuity properties: 

\noindent \textbf{(AC 1)} If $K\subseteq M$ is a Borel set, and for almost every $y\in \Lambda_{\omega_i}$ the Riemannian leaf measure of $K\cap W^s_{\delta_2}(\omega_i,y)$ is zero, then $\vol(Q^{\omega_i}\cap K)=0$.

\noindent \textbf{(AC 2)} If $T$ is a transversal to $\mc{C}_i$  and 
$K\subseteq M$ is a Borel set, and for a positive measure subset of $z\in T$, $W^s_{\delta_2}(\omega_i,z) \cap K$ has positive leaf measure, then $\vol(K)>0$. 

\item 
For $i\in \{1,2\}$ and $\omega_i\in V_i$, $\vol(Q^{\omega_i}\cap B_{\delta_1}(x))> .99\vol(B_{\delta_1}(x))$.
\end{enumerate}
\end{lem}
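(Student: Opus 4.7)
The plan is to combine the Hausdorff-type estimate on the distribution of stable directions (Proposition~\ref{prop:hausdorff_est_on_stable_dirs}) with standard random Pesin theory of stable manifolds.

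First, I would construct the two cones. By Proposition~\ref{prop:hausdorff_est_on_stable_dirs} the distribution $\nu^s_x$ of stable directions at $x$ assigns mass at most $C\epsilon^\alpha$ to every $\epsilon$-ball in $\mathbb{P}T_xM$; in particular it is non-atomic, so its support contains at least two distinct directions $v_1\neq v_2$. Choose disjoint closed arcs $\hat{\mc{C}}_1,\hat{\mc{C}}_2\subset\mathbb{P}T_xM$ around $v_1,v_2$ with $p_i:=\nu^s_x(\hat{\mc{C}}_i)>0$, and let $\mc{C}_i$ be a slight widening so that $\mc{C}_1$ and $\mc{C}_2$ remain uniformly transverse; extend $\mc{C}_i$ to $B_{\delta_2}(x)$ by parallel transport.

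Second, I would define $V_i$ by the conjunction of three conditions on $\omega$: (i) $x$ is $(C_0,\lambda,\epsilon)$-tempered for $\omega$; (ii) $E^s_\omega(x)\in\hat{\mc{C}}_i$; and (iii) the set $\Lambda_\omega$ of $(C_0,\lambda,\epsilon)$-tempered points in $B_{\delta_1}(x)$ satisfies $\vol(\Lambda_\omega)>0.99\,\vol(B_{\delta_1}(x))$. Condition (i) fails with probability at most $De^{-\alpha C_0}$ by Proposition~\ref{prop:splitting_with_high_probability}, and Fubini with the same proposition yields $\mathbb{E}[\vol(\Lambda_\omega)]\geq(1-De^{-\alpha C_0})\vol(B_{\delta_1}(x))$, so Markov's inequality shows condition (iii) fails with probability at most $100De^{-\alpha C_0}$. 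Choosing $C_0$ large enough that both failure probabilities are below $\min(p_1,p_2)/4$, both $V_1$ and $V_2$ have positive $\mu$-measure, and this choice yields conclusion (3) directly.

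Third, I would verify conclusion (1). For $\omega\in V_i$ and $y\in\Lambda_\omega$, Proposition~\ref{prop:norm_of_stable} gives a stable manifold through $y$ of length at least $D_1 e^{-2C_0}$ and $C^2$-norm at most $D_2 e^{6C_0}$; setting $\delta_2\leq D_1 e^{-2C_0}$ produces stable manifolds of the required length. The tangent at $y$ is $E^s_\omega(y)$, which by H\"older continuity of stable directions on a uniform Pesin block (a standard consequence of the graph transform estimates, cf.~\cite[Ch.~III]{Liu1995smooth} and Appendix~\ref{sec:finite_time_pesin_theory}) lies at distance at most $K(C_0,\lambda,\epsilon)\, d(x,y)^\beta$ from $E^s_\omega(x)\in\hat{\mc{C}}_i$. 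For $\delta_1$ small enough, combining this with the $C^2$ bound along $W^s_{\delta_2}(\omega,y)$ keeps the entire stable curve tangent to $\mc{C}_i$. Uniform exponential contraction is Definition~\ref{defn:uniformly_tempered_stable_manifold} applied to temperedness constants depending only on $(C_0,\lambda,\epsilon)$. Conclusion (2), the absolute continuity statements (AC 1) and (AC 2), is the standard random absolute continuity theorem for stable laminations on Pesin sets \cite[Ch.~IV]{Liu1995smooth}, and also follows from the finite-time holonomy estimates in Appendix~\ref{sec:finite_time_pesin_theory}. The main technical point is keeping all quantitative constants (H\"older moduli and Jacobian bounds) uniform in $\omega\in V_i$, which is exactly why we froze $C_0$ in defining $V_i$.
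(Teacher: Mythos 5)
Your proposal is correct and follows the route the paper itself intends: the paper does not write out a proof of this lemma, instead noting it is implicit in Chung and Liu--Qian and deducible from Propositions~\ref{prop:set_up_scale_prop} and \ref{prop:holonomies_converge_exponentially_fast}, and your argument (non-atomicity of $\nu^s_x$ to pick two transverse cones of positive stable-direction mass, the temperedness tail bound plus Fubini--Markov for the $.99$ volume estimate, Proposition~\ref{prop:norm_of_stable} and H\"older continuity of $E^s_\omega$ for length, regularity and tangency, and Pesin-type absolute continuity for (AC1)--(AC2)) is exactly the construction carried out in the proof of Proposition~\ref{prop:set_up_scale_prop}. The one loose end is uniformity in $x$: the lemma quantifies $C_0,\delta_1,\delta_2$ before $x$, whereas your $p_i=\nu^s_x(\hat{\mc{C}}_i)$ and hence your choice of $C_0$ depend on $x$; this is repaired, as in the paper, by the weak* continuity of $x\mapsto\nu^s_x$ (Proposition~\ref{prop:stationary_measure_is_continuous}) together with compactness of $M$, which give a uniform lower bound on $\min(p_1,p_2)$ over all base points.
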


This lemma is implicit in Chung \cite{chung2020stationary} and Liu \cite{liu2016lyapunov}, and further can be deduced from the propositions we prove below. In particular, our Propositions \ref{prop:set_up_scale_prop} and \ref{prop:holonomies_converge_exponentially_fast} contain the needed claims. Lemma \ref{lem:soft_local_configuration} allows a random version of the Hopf argument where the stable manifolds for different words $\omega\in \Sigma$ play the role of the stable and unstable manifolds in the usual Hopf argument. This can be used to prove 
 Proposition \ref{prop:expanding_on_average_ergodic}. We will not repeat this argument here as it is adequately explained in the sources mentioned.

\subsection{Strong mixing}
\label{SS-K}

Here we show that for $k\ge 1$  the skew product $F_k\colon \Sigma\times M^k\to \Sigma\times M^k$ defined in \eqref{KDiag} is strong mixing for the measure $\mu\otimes \vol^k$. We will use this property later. A good reference for many of the properties discussed in this section is \cite{ rohlin1967lectures}.

\begin{defn}
An endomorphism $T$ of a Lebesgue space $(M,\mc{B},\mu)$  is \emph{exact} if $\displaystyle\bigcap_{n=0}^{\infty} T^{-n}\mc{B}=\mc{N}$, the trivial sub-sigma algebra of $M$. 

An invertible map, i.e.~an automorphism, $T$ of a Lebesgue space $(M,\mc{B},\mu)$, is called a \emph{$K$-automorphism} if there exists a sub-sigma algebra $\mc{K}\subset \mc{B}$ such that:

(1) $\mc{K}\subset T\mc{K}$;\hskip2mm
(2) $\bigvee_{n=0}^{\infty} T^n\mc{K}=\mc{B}$; \hskip2mm
(3)  $\displaystyle\bigcap_{n=0}^{\infty} T^{-n}\mc{K}=\{\emptyset, M\}$.
\end{defn}
Both exact systems and  $K$-automorphisms are strong multiple mixing \cite[p.~17, 27]{rohlin1964exact}, \cite[15.2]{rohlin1967lectures}. 
Further, an endomorphism is exact if and only if its natural extension is a $K$-automorphism \cite[p.~27]{rohlin1964exact}.

We now describe how one may show that
 an automorphism $T\colon (M,\mu)\to (M,\mu)$ is exact. The \emph{Pinsker partition} of $M$ is the finest measurable partition $\pi(T)$ of $M$ that has zero entropy. This means that any other measurable partition with zero entropy is coarser, mod $0$, than $ \pi(T)$. It turns out that $T$ is a $K$-automorphism if the Pinsker partition of $T$ trivial, i.e.~$\pi(T)=\{\emptyset, M\}$, see \cite[13.1,13.10]{rohlin1967lectures}.
In fact, the conditions enumerated in the definition of $K$-automorphism above essentially say that the Pinsker partition is trivial.

A useful fact for studying the Pinsker partition is the following.

\begin{lem}\label{rem:detect_pinsker}
(see \cite[p.~288]{barreira2007nonuniform}, \cite[12.1]{rohlin1967lectures})
If a measurable partition $\eta$ satisfies $T\eta \ge \eta$ and $\bigvee_{n=0}^{\infty} T^n\eta=\epsilon$, the partition into points, then $\bigwedge_{n=0}^{\infty} T^{-n}\eta\ge \pi(T)$ .
\end{lem}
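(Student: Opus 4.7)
The plan is to prove the inequality by showing that the factor of $T$ obtained by quotienting out the tail $\sigma$-algebra of $\eta$ is a $K$-automorphism, and then invoking the fact that a $K$-automorphism has trivial Pinsker partition. The hypotheses $T\eta\ge\eta$ and $\bigvee_{n\ge 0} T^n\eta=\epsilon$ are exactly the first two defining conditions for a $K$-generator as recalled just above the lemma; only the triviality of $\bigcap_{n\ge 0}T^{-n}\sigma(\eta)$ might fail, and the content of the lemma is precisely that this failure is at least as large as $\pi(T)$.

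Concretely, I would set $\mathcal{A}=\sigma(\eta)$ and $\mathcal{A}_{-\infty}=\bigcap_{n\ge 0} T^{-n}\mathcal{A}$, which is the $\sigma$-algebra corresponding to $\bigwedge_{n\ge 0} T^{-n}\eta$. The hypothesis $T\eta\ge\eta$ translates to $T^{-1}\mathcal{A}\subseteq\mathcal{A}$, so $T^{-n}\mathcal{A}$ is decreasing in $n$ and consequently $\mathcal{A}_{-\infty}$ is $T$-invariant. I would then form the factor $(\bar{M},\bar{\mu},\bar{T})$ of $(M,\mu,T)$ modulo $\mathcal{A}_{-\infty}$ and let $\mathcal{K}$ be the image of $\mathcal{A}$ on the quotient. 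On $\bar{M}$, the $\sigma$-algebra $\mathcal{K}$ satisfies the three conditions in the definition of a $K$-automorphism given just above the lemma: invariance $\mathcal{K}\subseteq \bar{T}\mathcal{K}$ by construction, generation $\bigvee_n \bar{T}^n\mathcal{K}=\bar{\mathcal{B}}$ from the hypothesis $\bigvee_n T^n\eta=\epsilon$, and trivial backward intersection because its lift to $M$ is exactly $\mathcal{A}_{-\infty}$, which collapses to a point in the quotient. Hence $\bar{T}$ is a $K$-automorphism, so $\pi(\bar{T})$ is trivial; pulled back to $M$ this is exactly the statement that the Pinsker $\sigma$-algebra is contained in $\mathcal{A}_{-\infty}$, i.e.\ $\bigwedge_n T^{-n}\eta\ge \pi(T)$ in the refinement order on measurable partitions.

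The main obstacle here is not any hard analytic step but rather the bookkeeping: one must use the correspondence between measurable partitions, their $\sigma$-algebras, and the lattice operations $\bigvee$ and $\bigwedge$ (which correspond at the $\sigma$-algebra level to generated $\sigma$-algebra and intersection, respectively), together with the standard factor construction of a Lebesgue space by a $T$-invariant sub-$\sigma$-algebra and the descent of the three $K$-generator conditions through it. These are the foundational tools developed in \cite{rohlin1967lectures}, which I would simply cite rather than reprove; once they are accepted the three-line verification above completes the proof.
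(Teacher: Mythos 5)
The paper does not prove this lemma at all — it is quoted from \cite[12.1]{rohlin1967lectures} and \cite[p.~288]{barreira2007nonuniform} — so your proposal has to stand on its own, and as written it has a genuine gap. The step ``form the factor of $(M,\mu,T)$ modulo $\mathcal{A}_{-\infty}$'' in which $\mathcal{A}_{-\infty}$ ``collapses to a point'' is not a well-defined operation on measure-preserving systems. The only measure-theoretic factor one can form from the $T$-invariant sub-$\sigma$-algebra $\mathcal{A}_{-\infty}$ is the system \emph{carried by} $\mathcal{A}_{-\infty}$ (points are its atoms); in that system the image of $\mathcal{A}$ is $\mathcal{A}\cap\mathcal{A}_{-\infty}=\mathcal{A}_{-\infty}$, the backward intersection is everything rather than trivial, and the $K$-conditions fail. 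To instead ``kill'' $\mathcal{A}_{-\infty}$ while retaining the rest of $\mathcal{B}$ you would need a $T$-equivariant independent complement, i.e.\ a direct-product splitting of $(M,\mu,T)$ over the tail factor, which does not exist in general; the measure does not descend to the Boolean quotient $\mathcal{B}/\mathcal{A}_{-\infty}$ either. The legitimate version of your idea is to work \emph{relatively} over the tail factor $(Y,\nu,S)$ determined by $\mathcal{A}_{-\infty}$: one shows the system is relatively $K$ over $Y$ and that $h(T,\xi)=0$ forces the relative entropy of $\xi$ over $Y$ to vanish, whence $\xi\subseteq\mathcal{A}_{-\infty}$. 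But that requires the relative Pinsker formalism and a relative Rohlin--Sinai theorem (relatively exhaustive with relatively trivial tail implies relatively $K$), which is at least as deep as the lemma itself — indeed the lemma with $\mathcal{A}_{-\infty}$ trivial \emph{is} the hard direction of Rohlin--Sinai. So the reduction is circular in content even after the construction is repaired.

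The proof in the cited sources is a direct entropy computation and is what you should reproduce: take a finite partition $\xi$ with $h(T,\xi)=0$, use exhaustiveness of $\mathcal{A}=\sigma(\eta)$ and the standard conditional-entropy identities to bound $H\bigl(\xi \mid T^{-n}\mathcal{A}\bigr)$ by quantities tending to $h(T,\xi)=0$, and pass to the limit with martingale convergence for the decreasing family $T^{-n}\mathcal{A}$ to conclude $H\bigl(\xi\mid\mathcal{A}_{-\infty}\bigr)=0$, i.e.\ $\xi\le\bigwedge_{n\ge0}T^{-n}\eta$ mod $0$. Your opening bookkeeping (translating $T\eta\ge\eta$ into $T^{-1}\mathcal{A}\subseteq\mathcal{A}$ and identifying $\bigwedge$ with intersection of $\sigma$-algebras) is correct and worth keeping; it is the ``quotient'' step that must be replaced.
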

Here we use the standard notation for partitions where we write $\mc{A}\le \mc{B}$ if $\mc{A}$ is coarser than $\mc{B}$. 
An example of a partition satisfying the hypotheses of Lemma \ref{rem:detect_pinsker} is the partition of a shift space $\Sigma$ into local stable sets, $W^s_{loc}(\omega)=\{\eta: \omega_i=\eta_i\text{ for } i\ge 0\}$.

We now show for $k\ge 1$ that the map $F_k$ defined above is mixing.

\begin{prop}\label{prop:skew_product_is_mixing}
Let $(f_1,\ldots,f_m)$ be an expanding on average tuple in $\Diff^2_{\vol}(M)$ for $M$ a closed surface. Then the associated skew product $F\colon \Sigma\times M\to \Sigma\times M$ is exact, and hence strong mixing of all orders, for the measure $\mu\otimes \vol$. The same holds for 
$F_k\colon \Sigma\times M^k \to \Sigma\times M^k$. 
\end{prop}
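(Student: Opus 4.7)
My plan is to show that the natural extension $\hat F_k \colon \hat\Sigma \times M^k \to \hat\Sigma \times M^k$ is a $K$-automorphism; by Rokhlin's theorem this is equivalent to $F_k$ being exact, as the paper has already noted. I will spell out the argument for $k=1$, writing $\hat F = \hat F_1$; the case of general $k$ follows identically since the stable and unstable manifolds of the diagonal $\hat F_k$ factor as products of those of $\hat F$, and ergodicity of $F_k$ is furnished by Proposition~\ref{prop:expanding_on_average_ergodic}.

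First I would construct two measurable partitions $\xi^s$ and $\xi^u$ of $\hat\Sigma \times M$ subordinate respectively to the stable and unstable laminations of $\hat F$. The atoms of $\xi^s$ are of the form $W^s_{loc}(\omega) \times W^s_\delta(\omega, x)$, where $W^s_{loc}(\omega) = \{\omega' : \omega'_j = \omega_j \text{ for } j \geq 0\}$ and $W^s_\delta(\omega, x)$ is a small piece of the random stable manifold supplied by Proposition~\ref{prop:stable_manifolds_exist_with_high_prob} (which depends on $\omega$ only through its future). The atoms of $\xi^u$ are constructed analogously, using local unstable sets in $\hat\Sigma$ (same past) and local unstable manifolds of $\hat F$, which exist by applying the theory of Sections~\ref{sec:temperedness}--\ref{ScStMan} to the backward random dynamics; the backward dynamics inherits a positive top Lyapunov exponent because two-dimensionality and volume preservation force the bottom Lyapunov exponent to equal the negative of the top one.

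Next I would verify the hypotheses of Lemma~\ref{rem:detect_pinsker} for both partitions. For $\xi^s$: forward iteration strictly refines each atom (one more future coordinate of $\omega$ is pinned and the stable manifold contracts), so $\hat F \xi^s \geq \xi^s$; and $\bigvee_n \hat F^n \xi^s = \epsilon$ because iterated forward images eventually pin down every coordinate of $\omega$ and shrink stable manifolds to points. The analogous properties hold for $\xi^u$ under $\hat F^{-1}$. Lemma~\ref{rem:detect_pinsker} then yields
\[
\pi(\hat F) \leq \bigwedge_n \hat F^{-n}\xi^s \quad\text{and}\quad \pi(\hat F) = \pi(\hat F^{-1}) \leq \bigwedge_n \hat F^n \xi^u,
\]
and a direct computation identifies the atoms of the first meet as $\{(\omega', x') : (\omega')^+ = \omega^+,\ x' \in W^s(\omega^+, x)\}$ and those of the second as $\{(\omega', x') : (\omega')^- = \omega^-,\ x' \in W^u(\omega^-, x)\}$. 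Consequently every Pinsker-measurable function of $\hat F$ is constant along both the stable and the unstable manifolds of $\hat F$.

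Finally the proof concludes with a Hopf argument. Since a Pinsker-measurable $\phi$ is constant on stable atoms (allowing the past of $\omega$ to vary freely) and on unstable atoms (allowing the future to vary freely), $\phi$ must be independent of $\omega$ altogether, so $\phi(\omega, x) = g(x)$ for a measurable $g$ that is constant on $W^s(\omega^+, x)$ for every $\omega^+$ and on $W^u(\omega^-, x)$ for every $\omega^-$. Using Lemma~\ref{lem:soft_local_configuration} to obtain, at each point, two transverse families of stable manifolds (for $\omega \in V_1$ and $\omega \in V_2$) foliating a neighborhood with absolutely continuous holonomies (the required regularity being developed in Appendix~\ref{sec:finite_time_pesin_theory}), the standard Hopf argument forces $g$ to be locally constant on a full-measure subset of $M$; ergodicity of $F$ (Proposition~\ref{prop:expanding_on_average_ergodic}) then upgrades local constancy to global constancy. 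Hence $\pi(\hat F)$ is trivial, $\hat F$ is a $K$-automorphism, and $F$ is exact. The main obstacle I anticipate is the construction of unstable manifolds with absolutely continuous holonomies in the random setting, since the expanding-on-average hypothesis is not assumed for the backward dynamics; this is resolved by 2D volume preservation pairing the positive top exponent with a negative bottom exponent, so that the stable manifold theory of the forward chapters applies verbatim to the time-reversed system.
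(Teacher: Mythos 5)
Your overall strategy — pass to the natural extension, bound the Pinsker partition from above via Lemma~\ref{rem:detect_pinsker}, and then kill the remaining atoms with two transverse absolutely continuous stable laminations from Lemma~\ref{lem:soft_local_configuration} plus ergodicity — is the same as the paper's. Where you diverge is in how you show that Pinsker functions do not depend on $\omega$: you build an unstable partition for the time-reversed system and invoke $\pi(\hat F)=\pi(\hat F^{-1})$, whereas the paper applies Lemma~\ref{rem:detect_pinsker} to the much cruder partition $\eta$ with atoms $W^s_{loc}(\omega)\times\{x\}$ (points in the fiber), which already yields that every Pinsker atom has the form $\hat\Sigma\times A$ without any unstable theory at all. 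Your route is workable but heavier, and your justification for it is slightly off: the expanding-on-average hypothesis \eqref{EqExpAv} is a uniform condition on the \emph{forward} tuple and is not inherited by $(f_1^{-1},\dots,f_m^{-1})$, so the quantitative machinery of Sections~\ref{sec:temperedness}--\ref{ScStMan} does not apply ``verbatim'' to the reversed system. What is true is that the two-sided cocycle has exponents $\lambda>0>-\lambda$ a.e.\ by volume preservation, so unstable manifolds and a subordinate measurable partition exist by Oseledets plus the general random Pesin theory of Liu--Qian; that is enough for your argument, but it is a different (and purely qualitative) input than the one you cite. In the fiber direction the paper instead uses the Liu--Qian partition subordinate to the stable lamination together with AC1/AC2, exactly as you do in your final step.

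The one genuine gap is at the end: ``ergodicity of $F$ then upgrades local constancy to global constancy'' is not sufficient as stated. What the transversality/absolute-continuity argument delivers is a Pinsker atom of positive measure; since $\hat F$ preserves the Pinsker partition, an ergodic $\hat F$ may still cyclically permute finitely many such atoms of measure $1/p$ (an ergodic map need not have ergodic powers), so ergodicity of $F$ alone does not force a single atom. The paper closes this by observing that every power of an expanding-on-average tuple is expanding on average, hence every power of $F$ is ergodic by Proposition~\ref{prop:expanding_on_average_ergodic}, which rules out a nontrivial cycle. Alternatively, you could push your Hopf argument quantitatively: Lemma~\ref{lem:soft_local_configuration} gives constancy on at least $0.98$ of \emph{every} ball $B_{\delta_1}(x)$, so by compactness and connectedness of $M$ a single level set has measure greater than $1/2$ and is therefore fixed (not merely permuted) by $\hat F$, after which plain ergodicity finishes. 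Either patch works, but one of them must be supplied.
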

\begin{proof} 
To show exactness and hence strong mixing of $F$, we will show that the natural extension of the skew product $F\colon \Sigma\times M\to \Sigma\times M$ has the $K$-property. As before, we denote by $\hat{\Sigma}$ the two sided shift, so that the natural extension 
of $F$ is $\hat{F}\colon (\hat{\Sigma}\times M,\hat{\mu}\otimes \vol)\to (\hat{\Sigma},\hat{\mu}\otimes \vol)$, where $\hat{\mu}$ is the Bernoulli measure on $\hat{\Sigma}$. Note that the measure on the natural extension has this simple description because each $f_i$ preserves volume.

We begin by showing that modulo $0$, any element of the Pinsker partition is of the form $\hat{\Sigma}\times U$ where $U\subseteq M$.  
The local stable sets of the words $\omega\in \hat{\Sigma}$, form a measurable partition of $\hat{\Sigma}$ indexed by the elements of $\Sigma$. 
Further, the sets $\{W^s_{loc}(\omega)\times \{x\}\}_{x\in M}$ form a measurable partition of $\hat{\Sigma}\times M$.
If we let $\eta$ denote this partition, then  $\bigwedge_{n=0}^{\infty} {F}^{-n}\eta$ is the partition into sets of the form $\hat{\Sigma}\times \{x\}$, where $x\in M$. By Lemma \ref{rem:detect_pinsker}, we see that $\pi(\hat{F})\le \{\hat{\Sigma}\times \{x\}:x\in M\}$. Note that this shows that the atoms of the 
Pinsker partition of $\hat{F}$ are of the form $\Sigma\times A$ where $A$ are the atoms of a partition of $M.$
We denote this partition by $\mc{P}$ and the atom containing a point $x\in M$ by $\mc{P}(x)$.

 We now show that 
the Pinsker partition is even coarser by using the dynamics in the fiber; in fact our goal is to show that $\pi(\hat{F})$ has an atom with positive mass.  From Liu and Qian, there is a measurable partition of $\hat{\Sigma}\times M$ subordinate to the partition into full stable leaves \cite[Proposition~VI.5.2]{Liu1995smooth} where each atom is a non-trivial curve in a stable leaf. 
This shows that for almost every $x\in M$ and almost every $\omega$, that Lebesgue almost every $y\in W^s(\omega,x)$ is in $\mc{P}(x)$. (This uses AC1 for the stable lamination.)
Let $G^{\omega_i}$ be the subset of $Q^{\omega_i}$ of points $y$ such that $W^s_{\delta_2}(\omega_i,y)$ 
satisfies that almost every $z\in W^s_{\delta_2}(\omega_i,y)$ is in $\mc{P}(y)$.
Note that there there is a subset $\bar V_i$ of full measure in $V_i$ 
such that for $\omega_i\in \bar V_i$,\; $G^{\omega_i}$ has full measure in $Q^{\omega_i}$. 
Now for $\omega_2\in \bar V_2$ and $z\in G^{\omega_2}$, 
 consider the intersection of a leaf $W^s_{\delta_2}(\omega_2,z)$ with $G^{\omega_1}$, where $\omega_1\in \bar V_1$.
Suppose that for some such $z$ the set  $G^{\omega_1}\cap W^s_{\delta_2}(\omega_2,z)$ has positive measure. Then by definition of $G^{\omega_1}$, almost every $y\in G^{\omega_1}$ has $W^s_{\delta_2}(\omega_1,y)$ saturated with points in $\mc{P}(z)$, and hence by AC2, $\mc{P}(z)$ has positive measure. Thus the Pinsker partition has a positive measure atom.
If there were no such point $z$, then for almost every $z\in G^{\omega_2}$, the intersection $G^{\omega_1}\cap W^s(z,\omega_2)$ has zero leaf measure. Thus by AC1,  $Q^{\omega_2}\cap Q^{\omega_1}\cap B_{\delta_1}(x)$ has measure zero. But as $Q^{\omega_1}$ and $Q^{\omega_2}$ each take up $.99$ proportion of the volume of $B_{\delta_1}(x)$, this is impossible. Thus we see that there is a positive volume atom of $\mc{P}$.  Let $\Sigma\times A$  be this  atom of $\pi(\hat{F})$ of positive measure.

As $\hat{F}$ is ergodic, it must cyclically permute a finite number of these positive measure sets.
Because $\hat{F}$ is expanding on average, every power of $\hat{F}$ is also  expanding on average.
 Hence, by Proposition \ref{prop:expanding_on_average_ergodic},
 every power of $F$ is ergodic.
 Thus the Pinsker partition has only a single non-trivial element, hence $\pi(\hat{F})$ is trivial. Hence $\hat{F}$ is a $K$-automorphism and so $F$ is exact.
 
For the higher ``diagonal" skew products $F_k$, the proof proceeds along very similar lines. As before, one has stable and unstable manifolds in each of the factors of $M^k$ and hence through any particular point $(x_1,\ldots,x_k)\in M^k$, one has the stable/unstable manifold that is the product of the stable manifolds $W^{s/u}_{loc}(\omega,x_i)$. Hence in the extended system the stable an unstable foliations are transverse as before. By using these, one can similarly deduce that the Pinsker partition is finite. Further, from Proposition \ref{prop:expanding_on_average_ergodic} every power of $F_k$ is ergodic, which, as before implies that the Pinsker partition is trivial and thus the $K$-property holds for $\hat{F}_k\colon \hat{\Sigma}\times M^k\to \hat{\Sigma}\times M^k$. 
\end{proof}

\section{Coupling}\label{sec:coupling}
 In this section we present our main technical tool: the coupling lemma. We divide its proof into several steps according to the plan from
Section \ref{ScOutline}. Accordingly, this section contains the outline of the rest of the paper.

\subsection{Standard pairs and standard families}\label{sec:standard_pairs}

The proof of exponential mixing in this paper proceeds by showing that if $\mu_1$ and $\mu_2$ are two measures with smooth densities
and $\psi$ is a H\"older function then 
$\mu_1(\psi\circ f_\omega^n x)-\mu_2(\psi\circ f_\omega^n x) $ 
is exponentially small. Taking $\mu_2$ to be $\vol$ and $\mu_1$ to be the measure with density $\phi$ we obtain Theorem \ref{ThQEM}.
Unfortunately, the set of measures whose densities satisfy a certain bound on their H\"older norm is not invariant by the dynamics,
since compositions worsen H\"older regularity. So we need to consider a larger class of measures: the
measures that are convex combinations of measures on (unstable) curves. This leads to notions of standard pairs and 
standard families that we now recall. We refer to \cite[Chapter 7]{chernov2006chaotic}
for a detailed discussion of these notions.

\begin{defn}\label{defn:standard_pair}
A \emph{standard pair} in a Riemannian manifold $M$ is an arclength parametrized $C^2$ curve $\gamma\colon [a,b]\to M$ of bounded length along with a log-H\"older density 
$\rho$ defined along $\gamma$ (or equivalently $[a,b]$). We denote the pair of the curve and density by $\hat{\gamma}$ for emphasis. 
\end{defn}

There are two different ways of thinking about standard pairs. The first is that a standard pair is literally a pair of a curve and a density as in Definition \ref{defn:standard_pair}. The second way is that we think of  $\hat{\gamma}=(\gamma,\rho)$ as a ``thickened" version of the underlying curve $\gamma$ where the ``thickness" is given at a point $x$ by $\rho(x)$. More precisely, we may think of $\hat{\gamma}$ as a 
subset of $[a,b]\times [0,\max{\rho}]$ comprising the points $(c,y)$ where $y\le \rho(c)$. We will often write $x\in \hat{\gamma}$ when referring to a point in this set associated to $\hat{\gamma}$. By thinking of the standard pair in this manner, we can imagine geometrically subdividing the pair into pieces. This type of subdivision is frequently used below.

 Each standard pair defines a measure on $M$ given for continuous $\psi\colon M\to \R$ by the formula
\begin{equation}\label{DefRhoHat}
    \hat\rho_\gamma(\psi)=\int_\gamma \psi(x) \rho(x) dx
    \end{equation}
        where $dx$ denotes the arclength parametrization of $\gamma$.

A standard curve comes with a notion of regularity. The regularity of $\hat{\gamma}$ is determined by the $C^2$ norm of $\gamma$ as well as the $C^2$ norm of the density along $\gamma$. We recall now some notions from \S \ref{subsubsec:norms}.
 Recall that we define the $C^2$ norm, $\|\gamma\|_{C^2}$, of the curve $\gamma$ as the supremum of its second derivative as a graph over its tangent space in exponential charts.

\begin{defn}\label{defn:R_good_standard_pair}
Suppose that $\hat{\gamma}$ is a $C^2$ standard pair consisting of a curve $\gamma$ and a density $\rho$. 
We say that $\hat{\gamma}$ is \emph{$R$-good} if

(1) The length of $\gamma$ is at least $e^{-R}$. 
 
   (2) The $C^2$ norm of $\gamma$ is at most $e^{R}$.
    
  (3)  The density of $\rho$ satisfies $\|\ln \rho\|_{C^{\alpha}}\le e^R$, where we measure distance with respect to the arclength parameter of $\gamma$.
    Recall that  $C^{\alpha}$ only means the H\"older constant of the function. \vskip1mm

We say that a standard pair $\hat{\gamma}$ is $R$\emph{-regular} when at least (2) and (3) are satisfied.

\end{defn}
Note that a larger $R$ corresponds to a less regular curve.

\begin{defn}
For a standard pair $\hat{\gamma}=(\gamma,\rho)$, we say that $x\in \gamma$ has an \emph{$R$-good neighborhood}, if there is a subcurve $\gamma'\subseteq \gamma$ containing $x$ such that $(\gamma',\rho\vert_{\gamma'})$ is $R$-good. 
\end{defn}

Note that if $x$ is in an $R$-good neighborhood of $\hat\gamma$, this does not imply that $x$ is centered in long neighborhood. The point $x$ might still be quite close to the edge. Later we will also deal with points $x$ that are \emph{centered} in an $R$-good neighborhood, meaning that the segments on either side of $x$ form $R$-good neighborhoods.

\begin{defn}
A standard family is a collection  of standard pairs $\{\hat\gamma_\theta\}_{\theta\in\Lambda}$ indexed by points from a probability space
$(\Lambda, \lambda).$
\end{defn}

Thus in the case that $\lambda$ is atomic we just have a finite collection of standard pairs (counted with weights).

 We say that a standard family is \emph{$R$-good} if each standard pair that comprises it is $R$-good.
 We will only consider standard families where the goodness is bounded below.

 Given a standard family $\{\gamma_\theta\}_{\theta\in\Lambda}$ 
we can associate a measure 
 by integrating the measures corresponding to individual standard pairs with respect to the factor measure $\lambda$.
For a function $\psi\colon M\to \R$, we set
\begin{equation}
\label{SFMes}    
 \hat\rho_\Lambda(\psi)=\int_\Lambda \hat\rho_{\gamma_\theta}(\psi) d\lambda(\theta) 
 \end{equation}
where $\hat{\rho}_{\gamma_\theta}$ is defined by \eqref{DefRhoHat}.

A particularly useful property of standard families is that they can represent volume.  It is straightforward to check that a standard pair representing volume exists by using charts.

\begin{prop}
\label{PrVolStandard}
Given a closed smooth manifold $M$ endowed with a volume, there exists some $C>0$ and a $C$-good standard family $P_{\vol}$ such that the associated measure represents volume on $M$, i.e. for any continuous function 
\[
\int \phi\,dP_{\vol}=\int\phi\,d\vol.
\]
\end{prop}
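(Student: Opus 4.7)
The plan is to exhibit $P_{\vol}$ by decomposing $M$ into finitely many smooth ``rectangles'' and foliating each by coordinate lines. First I would choose a finite decomposition $M = \bigcup_{i=1}^N R_i$ by smoothly embedded squares with pairwise disjoint interiors; since $M$ is a closed smooth surface, such a decomposition exists (for instance by smoothing a smooth triangulation and pairing adjacent triangles), giving smooth embeddings $\psi_i\colon [0,1]^2 \to M$, each extending to an open neighbourhood of $[0,1]^2$, whose images cover $M$.

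Next, for each $i$ I would write $\psi_i^*\vol = J_i(t,c)\,dt\,dc$ with $J_i$ smooth and bounded above and away from zero. For each $c\in[0,1]$ I set $\gamma_{i,c}(t) = \psi_i(t,c)$ and define the density
\[
\rho_{i,c}(s) = N\cdot \frac{J_i(t(s),c)}{|\partial_t\psi_i(t(s),c)|}
\]
along the arclength parameter $s = \int_0^t |\partial_t\psi_i(\tau,c)|\,d\tau$ of $\gamma_{i,c}$. A change of variables then gives $\int_{R_i}\phi\,d\vol = N^{-1}\int_0^1 \hat\rho_{\gamma_{i,c}}(\phi)\,dc$ for any continuous $\phi\colon M\to\R$, and summing over $i$ shows that the family indexed by $\Lambda = \{1,\dots,N\}\times [0,1]$ with probability measure $\lambda = N^{-1}\sum_i\delta_i\otimes dc$ represents $\vol$ in the sense of \eqref{SFMes}.

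It then remains to verify that each $\hat\gamma_{i,c}$ is $C$-good for some $C$ independent of $(i,c)$, in the sense of Definition~\ref{defn:R_good_standard_pair}. The length of $\gamma_{i,c}$ is bounded below because $\psi_i$ is a diffeomorphism of the compact square $[0,1]^2$, so $|\partial_t\psi_i|$ is uniformly positive; the extrinsic $C^2$ norm of $\gamma_{i,c}$ is controlled by the first two derivatives of $\psi_i$; and $\ln\rho_{i,c}$ is smooth with derivative controlled by $\|J_i\|_{C^1}$ and $\|\psi_i\|_{C^2}$, hence H\"older with uniform constant. Finiteness of the decomposition makes all three bounds uniform in $(i,c)$.

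There is no substantive obstacle: the argument is essentially elementary differential geometry. The only bookkeeping is to make all constants uniform (immediate from compactness and finiteness of the decomposition) and to respect the probability-measure constraint on $\Lambda$ from the definition of a standard family, which is why I rescale the densities by the factor~$N$.
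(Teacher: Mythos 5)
Your construction is correct and is precisely the chart-based argument the paper has in mind; the paper in fact gives no proof at all, remarking only that the existence of such a family is ``straightforward to check by using charts,'' and your write-up supplies exactly those details (rectangle decomposition, disintegration of $\vol$ along coordinate lines, uniform goodness by compactness). One small caveat: pairing adjacent triangles does not always yield a quadrilateral decomposition (the dual graph of a triangulation need not have a perfect matching), but the standard fix --- subdividing each triangle into three quadrilaterals via its barycenter and edge midpoints --- makes your argument go through verbatim.
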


 Below we will use a na\"ive estimate saying that the goodness of a standard pair  can deteriorate at most exponentially quickly.

\begin{prop}\label{prop:decay_of_goodness_in_general}
Suppose that $(f_1,\ldots,f_m)$ are $C^2$ diffeomorphisms of a closed manifold. Then there exists $C,\eta>0$ such that for any standard pair $\hat{\gamma}$ that is $R$-good and any $\omega\in \Sigma$, $f^n_{\omega}(\hat{\gamma})$ is $\max\{C+R+n\eta,C+n\eta\}$-good.
\end{prop}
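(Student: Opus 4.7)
The plan is to bound the three defining quantities of $R$-goodness---length lower bound, $C^2$ curvature of the curve, and $C^\alpha$ H\"older seminorm of $\ln\rho$---separately under a single application of some $f_i$, each by a recursion of the form $a_n\le K a_{n-1}+K'$ with constants depending only on the uniform $C^2$ data of $(f_1,\ldots,f_m)$, and then iterate. Throughout, let $\Lambda=\max_i\max\{\|Df_i\|_\infty,\|Df_i^{-1}\|_\infty\}$.

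First, for the length: since $\|Df_i^{-1}\|_\infty\le \Lambda$, one has $\len(f_i\circ\gamma)\ge \Lambda^{-1}\len(\gamma)$, hence $\len(f^n_\omega\gamma)\ge e^{-R}\Lambda^{-n}=e^{-(R+n\ln\Lambda)}$. For the curvature, a chain-rule computation of the second derivative of the arclength reparametrization of $f_i\circ\gamma$ yields $\|f_i\circ\gamma\|_{C^2}\le K_1\|\gamma\|_{C^2}+K_2$, and iterating gives $\|\gamma_n\|_{C^2}\le K_1^n e^R+K_3 K_1^n$.

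The H\"older bound on $\ln\rho$ is the only step that requires real care, since its per-step increment couples to the current $\|\gamma\|_{C^2}$. Let $T$ be the unit tangent to $\gamma$ and $J(s)=\|Df_{i,\gamma(s)}T(s)\|$ the tangential Jacobian. The pushforward density on the arclength reparametrization $\tilde\gamma$ of $f_i\circ\gamma$ is $\tilde\rho(\tilde s)=\rho(s(\tilde s))/J(s(\tilde s))$, and since $J\ge\Lambda^{-1}$ the reparametrization $\tilde s\mapsto s$ is $\Lambda$-Lipschitz. Thus
\begin{equation*}
\|\ln\tilde\rho\|_{C^\alpha}\le \Lambda^\alpha \|\ln\rho\|_{C^\alpha}+\|\ln J\|_{C^\alpha(\gamma)}.
\end{equation*}
Because $(x,v)\mapsto \ln\|Df_{i,x}(v)\|$ is uniformly $C^1$ on $T^1M$ and the unit tangent of $\gamma$ has Lipschitz constant $\le\|\gamma\|_{C^2}$, one has $\|\ln J\|_{C^\alpha(\gamma)}\le K_4(1+\|\gamma\|_{C^2})$. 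Substituting the geometric bound on $\|\gamma_{j-1}\|_{C^2}$ and summing the resulting geometric series of ratio $\max\{\Lambda^\alpha,K_1\}$ produces $\|\ln\rho_n\|_{C^\alpha}\le K^n(e^R+K_5)$ for some constants $K,K_5$.

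Taking $\eta$ larger than each of $\ln\Lambda$, $\ln K_1$, $\ln K$, and $\alpha\ln\Lambda$, and choosing $C$ large enough to absorb the additive constants $\ln K_3$ and $\ln K_5$, the three iterated quantities all satisfy the $R$-goodness bounds at level $C+R+n\eta$ when $R\ge 0$; when $R<0$, the additive pieces $K_3,K_5$ do not depend on $R$, which forces the floor $C+n\eta$ appearing in the $\max$ (e.g.\ the $C^2$ norm cannot drop below the constant $K_3 K_1^n$ regardless of how flat the initial curve is). The only nontrivial point is the coupling between the $C^2$ recursion and the $C^\alpha$ recursion, which is resolved by iterating both simultaneously and noticing that a single geometric rate $\eta$ dominates both.
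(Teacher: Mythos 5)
Your proposal is correct and follows essentially the same route as the paper, which handles the three quantities separately: length via the uniform derivative bound, curvature via the at-most-exponential growth of $C^2$ norms (Lemma \ref{lem:growth_of_C_2_norms}), and the density via the pushforward estimate of Lemma \ref{lem:pushforward_density_est_diffeo}. The only cosmetic difference is that the paper applies the density lemma once to $f^n_\omega$ (using that $\|f^n_\omega\|_{C^2}$ grows at most exponentially), whereas you iterate the one-step estimate and sum a geometric series; your observation that the additive constants force the $\max\{C+R+n\eta,\,C+n\eta\}$ floor is exactly the point of that $\max$.
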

\begin{proof}
The condition that the length of the curve can shrink at most exponentially fast is clear from the uniform bound on the derivative. The fact about the $C^2$ norm of curve follows immediately from Lemma \ref{lem:growth_of_C_2_norms}. 
This leaves the estimate on
 the density, which follows from Lemma \ref{lem:pushforward_density_est_diffeo} because the $C^2$ norm of $f^n_{\omega}$ grows at most exponentially.
 \end{proof}

 Note that the representation \eqref{SFMes} (including the representation of the volume from Proposition \ref{PrVolStandard})
is highly non-unique.
One type of non-uniqueness that we shall often exploit in our proof is the possibility to divide a standard pair into pieces.
 To do so we
 partition the underlying curve $\gamma$ into multiple disjoint subcurves $\gamma_1,\ldots,\gamma_n$. We then obtain a subdivision of $(\gamma,\rho)$ from the restrictions $(\gamma_1,\rho\vert_{\gamma_1}),\ldots,(\gamma_n,\rho\vert_{\gamma_n})$. We give each piece unit mass for the indexing measure $\lambda$.
Note that $(\gamma,\rho)$ as well as the standard family $\{(\gamma_i,\rho\vert_{\gamma_i})\}_{1\le i\le n}$   both represent the same measure on $M$.

A more subtle type of subdivision occurs when we view a standard pair as a subset of $\gamma\times [0,\max \rho]$ and partition this subset in the vertical direction. Similarly, we will obtain a new standard family. But now the underlying curves of the family may not be disjoint. For a simple example, something we do multiple places in the local coupling argument is take a standard pair $(\gamma,\rho)$, a number $\alpha\in (0,1)$, and subdivide this standard pair into $\{(\gamma,\alpha \rho),(\gamma,(1-\alpha)\rho)\}$ and give each piece mass $1$ for the indexing measure $\lambda$.  Alternatively, we could take 
$\hat\gamma_1=\hat\gamma_2=(\gamma,\rho)$ and allow the indexing measure assign them mass $\alpha$ and $1-\alpha$, 
which gives the same measure on $M$ independent of $\alpha$.
Below, we will often think of this geometrically: we take the region associated to the standard pair in $\gamma\times [0,\max\rho)$ and slice it into regions. Projecting the Lebesgue measure on each region down to $\gamma$ naturally defines a standard pair.

Next, if we have a standard family $\hat{\gamma}$ and a subfamily $\hat{\gamma}'$ of $\hat{\gamma}$ defined by some subdivision of $\gamma\times [0,\max\rho)$ as mentioned above, then we define $\hat{\gamma}\setminus \hat{\gamma}'$ to be the standard family defined by the complement of $\hat{\gamma}'$ in the subdivision.

\subsection{Main coupling proposition}

We now state the main technical result of the paper, from which the main mixing results of this paper are a consequence.

\begin{prop}\label{prop:main_coupling_proposition}
Suppose that $(f_1,\ldots,f_m)$ is an expanding on average tuple in $\Diff^2_{\vol}(M)$, where $M$ is a closed surface. There exists $\lambda>0$ such that for all sufficiently small $\epsilon>0$, there exist $C,\alpha>0$, such that for any $R$, a goodness of standard pairs, the following holds.

Let $\hat{\gamma}_1$ and $\hat{\gamma}_2$ be two standard pairs with associated measures $\rho_1$ and $\rho_2$ of equal mass that are $R$-good. Then we have the measures $\mu \otimes \rho_i$ on $\Sigma\times \hat{\gamma}_i$, where $\mu$ is the Bernoulli measure on the one sided shift. There exists a coupling function $\Upsilon\colon \Sigma\times \hat{\gamma}_1\to 
\hat{\gamma}_2$, where for each $\omega$ the map $\Upsilon(\omega,\cdot)\colon \hat{\gamma}_1\to \hat{\gamma}_2$ is measure preserving,
and a time $\hat{T}(\omega,x)$ such that 
\[
f^{\hat{T}(\omega,x)}_{\omega}(x)\in W^s_{\sigma^{\hat{T}(\omega,x)}\omega,C^{-1}}({f^{\hat{T}(\omega,x)}_{\omega}\Upsilon(\omega,x)}),
\]
and this stable manifold is uniformly $(C,\lambda,\epsilon)$-tempered in the sense of Definition \ref{defn:uniformly_tempered_stable_manifold}. Further
\[
\mathbb{P}_{\omega,x}(\hat{T}(\omega,x)\ge n)\le e^{\max\{R,0\}}e^{-\alpha n}.
\]
\end{prop}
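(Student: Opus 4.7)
The plan is to implement the three-step coupling strategy outlined in Section~\ref{ScOutline} (local coupling, recovery, precoupling) as an iterative scheme, showing that at each round a uniformly positive fraction of the remaining mass is coupled, so that the uncoupled tail decays geometrically. The coupling function $\Upsilon$ will be defined by successively ``peeling off'' mass that is coupled during each attempt, and the factor $e^{\max\{R,0\}}$ in the tail bound will be absorbed into an initial phase during which the two standard pairs deteriorate or recover to a uniform goodness $R_\star$ independent of $R$; by Proposition~\ref{prop:decay_of_goodness_in_general} and the recovery estimate, this initial adjustment costs at most $O(\max\{R,0\})$ units of time, explaining the prefactor.

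First, I would set up a sequence of ``attempt times'' $0 = T_0 < T_1 < T_2 < \cdots$ chosen so that immediately before $T_k$ the two standard pairs (or rather, the subfamilies they have been subdivided into) have been precoupled via Proposition~\ref{prop:finite_time_mixing} into a collection of pairs of $R_\star$-good curves that are pairwise close. At each attempt time $T_k$, the Local Coupling Lemma~\ref{ref:small_scale_coupling_lemma} provides uniform constants $\epsilon_0, \epsilon_1 > 0$ so that with probability at least $\epsilon_0$ on $\omega$, at least an $\epsilon_1$ proportion of the remaining mass in each such pair is coupled, with the coupled points lying in a common uniformly $(C,\lambda,\epsilon)$-tempered stable manifold in the sense of Definition~\ref{defn:uniformly_tempered_stable_manifold}. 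Crucially, to preserve the Markov property, each attempt is accompanied by an adapted stopping time (from Proposition~\ref{prop:temperedness_failure_time_tail}) telling us when to abandon the current attempt, and the exponential tail on this failure time will be used throughout. The coupled mass is removed from $\hat\gamma_1$ and $\hat\gamma_2$ (in the subdivision picture of \S\ref{sec:standard_pairs}), defining $\Upsilon$ on this piece and assigning stopping time $\hat T = T_k$ on it. The leftover pieces, which are typically a disjoint union of many short curves, are passed to the recovery stage.

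For the recovery stage I invoke the Coupled Recovery Lemma~\ref{prop:coupled_recovery_lemma}, which says that short leftover pieces grow back to $R_\star$-good length on a random time window with exponential tails, using the reverse-tempered-time estimates of Proposition~\ref{prop:exponential_tail_tempered_times} together with the smoothing estimates of Appendix~\ref{AppSmoothing}. Once both leftover families have recovered, I apply the Finite Time Mixing Proposition~\ref{prop:finite_time_mixing}, built on the exactness result Proposition~\ref{prop:skew_product_is_mixing}, to break each long smooth curve into subcurves whose images $N_0$ steps later are close to the images of corresponding subcurves from the other family. This produces the matched pairs of $R_\star$-good curves needed for the next attempt at time $T_{k+1}$.

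The exponential tail on $\hat T$ then follows by a standard geometric-type argument: at each round the probability of failing to couple a fixed $\epsilon_1$-fraction of the remaining mass is at most $1 - \epsilon_0 \epsilon_1$, the random length $T_{k+1}-T_k$ is dominated by a random variable with exponential tail (combining failure-time tails, recovery tails, and the deterministic constant $N_0$ from precoupling), and these are iterated through an Azuma-type estimate (Theorem~\ref{thm:azumas_inequality}) controlling the number of rounds required to reduce the uncoupled mass below $e^{-\alpha n}$. The main obstacle, and the reason the paper devotes separate sections to each step, is the mismatch between the Markov structure of the process and the global nature of ``being on the same stable manifold'': we cannot verify coupling until we have seen the entire word, so the whole scheme must be redefined in terms of adapted stopping times and fake stable manifolds (Appendix~\ref{sec:finite_time_pesin_theory}) whose exponentially fast convergence to the true stable manifolds (a quantitative Ma\~n\'e-type argument) is what lets the fake coupling upgrade to the true coupling statement required by Definition~\ref{defn:uniformly_tempered_stable_manifold}; verifying this upgrade uniformly in $\omega$ on an exponentially large proportion of the mass is where most of the work lies.
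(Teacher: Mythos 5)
Your proposal follows essentially the same route as the paper: iterate local coupling (Lemma~\ref{ref:small_scale_coupling_lemma}), coupled recovery (Lemma~\ref{prop:coupled_recovery_lemma}), and precoupling (Proposition~\ref{prop:finite_time_mixing}), use the uniform positive proportion coupled per round to get geometric decay in the number of attempts, and control the total elapsed time by the exponential tails on each round's duration. One small correction: the increments $T_{k+1}-T_k$ are unbounded, so the Azuma--Hoeffding inequality does not apply to their sum; the paper instead bounds $\E{e^{tT_k}}\le M(t)^k$ by induction and applies a Chernoff bound, which is what you should do at that final step.
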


The proof of this proposition is a combination of a local coupling lemma 
(Lemma~\ref{ref:small_scale_coupling_lemma}) along with a recovery procedure.

When we attempt to couple two curves, we will insist that they are in a configuration that allows us to try and apply the Local Coupling Lemma (Lemma~\ref{ref:small_scale_coupling_lemma}). What we mean by this is that the curves have controlled regularity and are sufficiently near to each other. 

\begin{defn}\label{defn:C_0_delta_configuration}
Let $\hat\gamma$ be a standard pair and $x\in \gamma$. We say that $x$ is {\em $(C,\delta)$-well positioned in $\hat{\gamma}$}
if $\hat\gamma$ is $C$-regular and $x$ is $\delta$ distance away from the endpoints of $\gamma$, with distance measured along $\gamma$.

We say that two standard pairs $\hat{\gamma}_1$ and $\hat{\gamma}_2$ are in a \emph{$(C,\delta,\upsilon)$-configuration} 
if there exist $x$ which is $(C, \delta)$-well positioned in $\hat\gamma_1$, and $y$ which is  $(C, \delta)$-well positioned in
$\hat \gamma_2$ such that $d(x,y)\!<\!\upsilon$.
\end{defn}

The proof of Proposition \ref{prop:main_coupling_proposition} proceeds along the following steps. 
We start with two $C_0$-good standard pairs, $\hat{\gamma}_1$ and $\hat{\gamma}_2$. Here $C_0$ is some uniform regularity appearing in Proposition \ref{prop:coupled_recovery_lemma} that we may obtain starting from an arbitrarily bad curve by waiting long enough.

\begin{enumerate}[leftmargin=*]
\item 
We prove that for a large proportion of words $\omega\in \Sigma$, the images $f^n_{\omega}(\hat{\gamma}_1)$ and $f^n_{\omega}(\hat{\gamma}_2)$ are mostly quite regular, and moreover, there is a large measure subset of the images that can be paired to form 
$ (C_1,\delta,\upsilon)$-configurations for some $C_1$ that is worse that $C_0$. 
 This relies on the mixing properties of our system studied in 
 Section \ref{sec:k_property}, and the needed conclusions are made precise in Proposition \ref{prop:finite_time_mixing}.
    \item 
    We then run a ``local" coupling argument on each tiny $(C_1,\delta,\upsilon)$-configuration. At each time step, we attempt to couple the remaining well tempered points using ``fake" stable manifolds.  This local coupling argument, Lemma \ref{ref:small_scale_coupling_lemma}, has a number of steps and draws on several intermediate estimates.
    
        (a)  There are $C,\lambda,\epsilon>0$ and a cone field $\mc{C}_{\theta}$  
        that is uniformly transverse to both $\gamma_1$ and $\gamma_2$
        such that the probability that any point is $(C,\lambda,\epsilon)$-tempered and has $E^s$ tangent to 
          $\mc{C}_{\theta}$ 
        is positive.
         Further, 
        the probability that the tempering fails at time $n$ is exponentially small.
       
       (b) For a $(C,\lambda,\epsilon)$-tempered point at time $n$, we see that there is a ``fake" stable manifold $W^s_n$ given by taking a curve nearly tangent to $Df^n_{\omega}(E^s_n)$ and pushing this curve backwards by $(Df^n_{\omega})^{-1}$. 
       (This construction is the subject of \S \ref{subsec:construction_of_fake_stable})

       (c)
        There exist worse $(C',\lambda',\epsilon')$ such that for every $(C,\lambda,\epsilon)$-tempered point $x$ in $\gamma_1$, all points within distance$\|D_xf^n_{\omega}\|^{-(1+\sigma)}$ of 
        are       
        $(C',\lambda',\epsilon')$-tempered points at time $n$.
        (This is the content of Proposition \ref{prop:nearby_points_inherit_temperedness}). These $(C',\lambda',\epsilon')$-tempered points also have fake stable manifolds. We will try to couple these thickened neighborhoods of the $(C,\lambda,\epsilon)$-tempered points with some neighborhoods in $\gamma_2$ determined by the fake stable holonomies.  At the time when $D_xf^n_{\omega}$ fails to be $(C,\lambda,\epsilon)$-tempered with $E^s$ tangent to $\mc{C}_{\theta}$ we discard the point $x$ and stop trying to couple it. 
        
       (d)
        For $(C',\lambda',\epsilon')$-tempered points, the holonomies of the fake stable manifolds $W^s_n$ between $\gamma_1$ and $\gamma_2$ converge exponentially fast to the true, limiting stable holonomy. Moreover, the image of a point $ x\in \gamma_1$ under $H^s_n$ has fluctuations, as $n$ changes, of size $\|D_xf^n_{\omega}\|^{-1.99}$, i.e.~the distance between $H^s_n(x)$ and $H^s_{n+1}(x)$ in $\gamma_2$ is at most $\|D_xf^n_{\omega}\|^{-1.99}$. (This is proved in Proposition \ref{prop:fluctuations_in_fake_stable_leaves}.)

        (e)
        The points we try to couple with on $\gamma_2$ are the image of the points on $\gamma_1$ under the fake stable holonomy $H^s_n$. 

       (f)
        By carefully choosing subdivisions of the standard pairs $\hat{\gamma}_1$ and $\hat{\gamma}_2$ we may discard mass from the standard pairs so that at the end of the procedure  a positive proportion of the mass above each $(C,\lambda,\epsilon$)-tempered point remains. The control on the size of the fluctuations of $H^s_n$ relative to the lengths of the intervals of $(C',\lambda',\epsilon')$-tempered points containing the $(C,\lambda,\epsilon)$-tempered points $\|D_xf^n_{\omega}\|^{-1.99}\ll \|D_xf^n_{\omega}\|^{-(1+\sigma)}$ allows us to ensure that we always have enough points on $\gamma_2$ to try to couple with.

  \item 
  We prove that we may find simultaneous recovery times for a pair of $R$-good standard pairs (Proposition \ref{prop:coupled_recovery_lemma}), so that if we have failed to couple and are left with a 
    short standard subcurve of $\hat{\gamma}_1$ we can have this subcurve recover at the same time as a subcurve of $\hat{\gamma}_2$.

        \item  Once we recover we will try to couple again using steps (1)--(3) above.
        Each time we try to couple, a positive amount of mass couples, and as the tail 
 on the recovery time is exponential we do not spend too much time recovering.
 \end{enumerate}

\subsection{Statements of the lemmas for use during coupling}
We now state the main propositions and lemmas that are used in the proof of Proposition \ref{prop:main_coupling_proposition}.

\begin{lem}\label{prop:coupled_recovery_lemma}
(Coupled Recovery Lemma) Let $M$ be a closed surface and let $(f_1,\ldots,f_m)$ be an expanding on average tuple with entries in $\Diff^2_{\vol}(M)$. There exist $C_0,D_1,\alpha>0$ such that if $\hat{\gamma_1}=(\gamma_1,\rho_1)$ and $\hat{\gamma_2}=(\gamma_2,\rho_2)$ are $R$-good standard families of equal mass then there is a pair of stopping times $\hat{T}_1$ and $\hat{T}_2$ defined on $\hat{\gamma_1}$ and $\hat{\gamma}_2$ with the following properties:
\vskip1mm

\noindent
(1)    There is an exponential tail on the stopping time. Namely,
    \[
    (\mu\otimes \rho_1)((\omega,x)\mid \hat{T}_1(\omega,x)>n)\le D_1e^{\max\{R,0\}-\alpha n}.
    \]
    \vskip1mm
\noindent
(2)    If $z\in \hat{\gamma}_i$ is a point that stops at time $n$, and $B_i(z)$ is the connected component of $z$ in the set $\{x\in \hat{\gamma}_i:\hat{T}_i(\omega,x)=n\}$, i.e~the set of points $z\in \hat{\gamma_i}$ stopped at time $n$, then
$\displaystyle
    f^{\hat{T}_i(z)}_{\omega}(B_i(z))
    $
    is a $C_0$-good standard pair.
    \vskip1mm
  
  \noindent
  (3)  For each $\omega\in \Sigma$, we always stop on the same amount of mass of $\hat{\gamma}_1$ and $\hat{\gamma}_2$ at each time $n$. Specifically,
    for each $\omega$ and $n$,
 denote $S_i(\omega,n)=\{x\in \hat{\gamma}_i: \hat{T}_i(\omega,x)=n\}$. For each pair $(\omega,n)$ there is a  measure preserving
 map $\Phi_n^{\omega}\colon S_1(\omega,n)\to S_2(\omega,n)$ carrying $C_0$-good connected components of $S_1(\omega,n)$ to $C_0$-good connected components of $S_2(\omega,n)$.
 \end{lem}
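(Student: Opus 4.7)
The plan is two-staged. First, I construct ``natural'' stopping times $\tilde T_1$ and $\tilde T_2$ on $\hat\gamma_1$ and $\hat\gamma_2$ independently, at which a neighborhood of the stopped point has become a uniformly $C_0$-good standard pair under the dynamics. Second, I synchronize these into the desired $\hat T_1, \hat T_2$ by deferring, at each time $n$, the excess mass on whichever side has more natural stopping; this matches the cumulative stopped masses and allows the measure-preserving map $\Phi_n^\omega$ to be constructed.

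For the construction of $\tilde T_i$, fix $x\in\gamma_i$ and $\omega\in\Sigma$. By Proposition \ref{prop:exponential_tail_tempered_times}, the sequence $D_xf^n_\omega$ admits reverse $(C_1,\lambda,\epsilon)$-tempered times with exponential tail, and at such times Proposition \ref{prop:tempered_norm_implies_splitting} yields a well-defined stable direction $E^s_n(\omega,x)$. The Hausdorff-type bound of Proposition \ref{prop:hausdorff_est_on_stable_dirs} guarantees that on a positive-measure subset of $\omega$, the direction $E^s_n$ is uniformly transverse to $T_x\gamma_i$. Under this transversality, the smoothing estimates of Appendix \ref{AppSmoothing}, applied to $f^n_\omega|_{\gamma_i}$ near $x$, produce a subcurve of $f^n_\omega(\gamma_i)$ with uniformly controlled arclength, $C^2$ norm, and log-density H\"older constant, i.e.\ a $C_0$-good standard pair for some universal $C_0$. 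Define $\tilde T_i(\omega,x)$ as the first such good time. The $R$-dependence in the tail reflects that initial irregularity must be overcome by smoothing: starting from length $e^{-R}$ and $C^2$ norm $e^R$, and in view of the worst-case deterioration from Proposition \ref{prop:decay_of_goodness_in_general}, the pushforward can only become $C_0$-good after roughly $R/\lambda$ iterations, while the conditional tail beyond this time is uniformly exponential. Combining these yields $\mathbb{P}(\tilde T_i > n) \leq D e^{\max\{R,0\} - \alpha n}$, and property (2) follows by construction since components of $\{\tilde T_i = n\}$ are the maximal good subcurves produced at time $n$.

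For synchronization, let $F_i(\omega,n)$ denote the total $\rho_i$-mass of points $x\in\gamma_i$ with $\tilde T_i(\omega,x) \leq n$, and set $F(\omega,n) = \min\{F_1(\omega,n), F_2(\omega,n)\}$. I define $\hat T_i$ so that on each side the cumulative $\rho_i$-mass stopped by time $n$ equals $F(\omega,n)$: at each time $n$, one partitions the $C_0$-good components of $\{\tilde T_i = n\}$ via the standard-pair slicing of \S\ref{sec:standard_pairs}, retaining precisely $F(\omega,n) - F(\omega,n-1)$ mass and deferring the remainder to its next reverse tempered time (to which Proposition \ref{prop:exponential_tail_tempered_times} reapplies with no $R$-penalty). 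The map $\Phi_n^\omega$ pairs the retained components in any measure-preserving way, for instance ordered by arclength position along each $\gamma_i$; since vertical slicing preserves $C_0$-goodness of a standard pair, the matched pieces remain $C_0$-good, giving (3).

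The main technical obstacle is propagating the exponential tail through synchronization. Since $|F_1(\omega,n) - F_2(\omega,n)| \leq 2De^{\max\{R,0\} - \alpha n}$ by the tail of $\tilde T_i$, the mass deferred at each step is itself exponentially small in $n$. Once deferred, that mass enters a fresh natural-stopping process whose tail, by the uniform recurrence of reverse tempered times in Proposition \ref{prop:exponential_tail_tempered_times}, is controlled independently of $R$. Summing over deferral generations with a geometric-series argument then produces $\mathbb{P}(\hat T_i > n) \leq D_1 e^{\max\{R,0\} - \alpha n}$ after possibly enlarging $D_1$ and shrinking $\alpha$, establishing (1).
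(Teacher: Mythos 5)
Your overall architecture (stop each curve at its first ``backwards good'' time, then synchronize by stopping on the minimum of the two available masses and deferring the excess) is the same as the paper's, and your construction of the natural stopping times $\tilde T_i$ via reverse tempered times, transversality of $E^s$, and the smoothing estimates is the right first half. However, the second half --- the synchronization and its tail bound --- contains a genuine gap. The inequality $|F_1(\omega,n)-F_2(\omega,n)|\le 2De^{\max\{R,0\}-\alpha n}$ does \emph{not} follow from the tail of $\tilde T_i$: that tail is an annealed bound with respect to $\mu\otimes\rho_i$, whereas $F_i(\omega,n)$ is a fiberwise quantity. For a fixed $\omega$ the two curves can be badly mismatched (e.g.\ $\omega$ is favorable for $\gamma_1$ but not for $\gamma_2$), in which case almost all of one curve's mass is deferred. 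The paper's proof spends most of its effort on exactly this point: Claims \ref{claim:simultaneous_stopping_claim_1} and \ref{claim:simultaneous_stopping_claim2} run an induction on the unstopped mass $A_n^i$ and use a Fubini argument to show that for most words $\omega$ a definite proportion of the remaining mass on \emph{both} curves recovers simultaneously, so that the minimum is still a definite proportion. Your ``geometric series over deferral generations'' cannot be closed without such an argument; moreover your accounting is internally inconsistent, since mass deferred on side $1$ is only available again at its \emph{next} tempered time, so the target increment $F(\omega,n)-F(\omega,n-1)$ (which can be driven by a large natural stop on side $2$) need not be realizable on side $1$ at time $n$.

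A second gap concerns property (2) and the recovery of deferred mass. The component of $x$ in $\{\tilde T_i=n\}$ is not in general a maximal $C_0$-good subcurve: points of that subcurve may have had earlier good times (and been stopped or partially stopped already), leaving an arbitrarily short remnant around $x$. Symmetrically, once you remove a good component from $\gamma_i$, the neighboring unstopped points may be left with too short a neighborhood to recover at their next tempered time, since the deterministic recovery lemma (Lemma \ref{lem:deterministic_recovery_lemma_3}, hypothesis (2)) requires a surviving interval of length at least $e^{-C_1}e^{-.8\lambda(n-m)}$. The paper resolves this by merging overlapping good intervals, trimming boundary segments of length $e^{-\delta}$ from each stopped component (so the trimmed-off and adjacent pieces remain $\delta$-good), and restricting stopping to the last $\Delta$ iterates of blocks of length $(q+1)\Delta$ with $q$ chosen so that $e^{-\delta}>e^{-C_1}e^{-.8\lambda q\Delta}$, which guarantees the leftover pieces do recover at their next admissible tempered time. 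Your proposal omits all of this geometric bookkeeping, and without it both the $C_0$-goodness of the stopped components and the exponential tail for the leftover mass are unproven.
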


The following lemma is the most technical part of the coupling argument.

\begin{lem}\label{ref:small_scale_coupling_lemma}
(Local Coupling Lemma)
Suppose that $(f_1,\ldots,f_m)$ is an expanding on average tuple.
There exists $0<\tau<1$ such that for any $C_1>0$ there exists $\delta_0,L, D_1,D_2,\beta,C,\lambda,\epsilon>0$ such that for any $0<\delta'<\delta_0$ there exists $\delta_1$ and $\epsilon_0,a_0>0$ such that 
for any two standard pairs $\hat{\gamma}_1$ and $\hat{\gamma}_2$ that are in a $ (C_1,\delta',\upsilon)$-configuration with $\upsilon\le \tau\delta'$, we may couple a uniform proportion of the points on the two curves with an exponential tail on the points that do not couple.

Specifically, for two $C_1$-good standard pairs $\hat{\gamma}_1,\hat{\gamma}_2$ of the same mass in a $(C_1,\delta',\upsilon)$-configuration with $\upsilon\le \tau\delta'$, there is a point $x\in M$, a ball $B_{\delta_0}(x)\subset M$ and connected components $\Gamma_1$ and $\Gamma_2$ of $\hat{\gamma}_1\cap B_{\delta_1}(x)$ and $\hat{\gamma}_2\cap B_{\delta_1}(x)$ such that $\Gamma_1$ and $\Gamma_2$ each contain $a_0$ proportion of the mass of $\hat\gamma_1$ and $\hat\gamma_2$ respectively.

Further, there exist a pair of stopping times $\hat{T}_1(\omega,x)$ and $\hat{T}_2(\omega,x)$ defined on $\hat{\gamma}_1$ and $\hat{\gamma}_2$ such that if  $B^{\hat{T}_i}(\omega,x)\subseteq \hat{\gamma}_i$ denotes the block of points stopped at the same time as $x$, then
\begin{enumerate}[leftmargin=*]
\item \label{item:equal_mass_stops_lcl}
For all $\omega,n$ there exists $\Psi^{\omega}_n\colon \{x\in \hat{\gamma}_1\colon \hat{T}_1(\omega,x)=n\}\to \{x\in \hat{\gamma}_2: \hat{T}_2(\omega,x)=n\}$ such that if $\hat{T}_i(\omega,x)=n$, then $B(\omega, x)$ is an $nL$-good standard pair and $\Phi^{\omega}_n$ carries $B(x)$ to an $nL$-good standard pair $B(\Phi^{\omega}_n(x))\subseteq \hat{\gamma}_2$ of equal mass that is also stopped at time $n$. 
\item \label{item:tail_points_intertwined}
For each $\omega$, the set of points in $\hat\gamma_1$ and $\hat\gamma_2$ where $\hat{T}_i=\infty$ are of equal measure and moreover these sets are intertwined by a measure preserving stable holonomy along uniformly $(C,\lambda,\epsilon)$-tempered stable manifolds. 
\item\label{item:good_tail} 
There exists $D_1>0$ such that
$\displaystyle
(\mu\otimes \hat{\rho}^1)(\{(\omega,\hat{x}): \hat{T}_1(\omega,\hat{x})=n\})\le D_1e^{-\beta n}.
$
For $\hat{\gamma}_2$, we have a similar estimate, 
$\displaystyle
(\mu\otimes \hat{\rho}^2)(\{(\omega,\hat{x}): \hat{T}_2(\hat{x})=n\})\le D_1e^{-\beta n}.
$
\item\label{item:positive_prob_of_coupling}
For all $x\in \Gamma_1 $, the measure of words $\omega$ such that $\hat{T}_i(\omega, x)=\infty$ is at least $\epsilon_0$. 
\end{enumerate}
\end{lem}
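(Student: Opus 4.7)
The overall strategy is to construct a fake-stable holonomy from $\Gamma_1\subset \hat\gamma_1$ to $\Gamma_2\subset \hat\gamma_2$ at every time $n$, permanently couple the mass of $\Gamma_1$ above those points whose fake-holonomy image stabilizes (declaring $\hat T_1=\infty$ there), and stop the remainder the moment temperedness or the holonomy fluctuation first fails. The two stopping times are then defined symmetrically by transferring the block structure through the fake holonomy, so that items \ref{item:equal_mass_stops_lcl} and \ref{item:tail_points_intertwined} become automatic.

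First I would set up the geometry. Pick $x_0\in M$ inside the $\upsilon$-neighborhood of both of the well-positioned points in $\hat\gamma_1$ and $\hat\gamma_2$, and let $\Gamma_i$ be the connected component of $\hat\gamma_i\cap B_{\delta_1}(x_0)$ containing them. For $\delta_1$ a small fixed fraction of $\delta'$ but strictly larger than $\upsilon$, each $\Gamma_i$ carries a fixed proportion $a_0$ of the mass of $\hat\gamma_i$, since the log-H\"older bound on $\rho_i$ from $C_1$-regularity makes the density essentially flat at scale $\delta_1$. Using Propositions \ref{prop:hausdorff_est_on_stable_dirs} and \ref{prop:finite_time_distribution_of_stable}, I would choose a thin cone $\mc C_\theta$ at $x_0$ uniformly transverse to both $\gamma_1$ and $\gamma_2$ such that for every $z\in \Gamma_1$ the event $\mc G_\infty(z)$ of words $\omega$ for which $D_z f^n_\omega$ admits a $(C,\lambda,\epsilon)$-tempered splitting with $E^s_\omega(z)\in \mc C_\theta$ for all $n$ has $\mu(\mc G_\infty(z))\ge p_0>0$.

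Next I would implement the coupling. On $\mc G_\infty(z)$, Propositions \ref{prop:norm_of_stable} and \ref{prop:stable_manifolds_exist_with_high_prob}, together with the fake-stable-manifold machinery of Appendix \ref{sec:finite_time_pesin_theory}, produce a true stable segment $W^s_{C^{-1}}(\omega,z)$ uniformly $(C,\lambda,\epsilon)$-tempered and transverse to $\Gamma_2$; its unique intersection with $\Gamma_2$ defines a coupling partner $\Upsilon(\omega,z)\in \Gamma_2$ at which I set $\hat T_1(\omega,z)=\infty$. Off $\mc G_\infty(z)$, I let $\mc S(\omega,z)$ be the first time $n$ at which either the tempered splitting breaks (the norm ceases to be $(C,\lambda,\epsilon)$-subtempered, or $E^s_n$ exits $\mc C_\theta$) or the cumulative fluctuation of the fake holonomies $H^s_n$ pushes the image out of the subinterval of $\Gamma_2$ reserved for $z$; setting $\hat T_1(\omega,z)=\mc S(\omega,z)$, I take $B(\omega,z)$ to be the maximal tempered neighborhood of $z$ in $\Gamma_1$ stopped together with $z$. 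Its length is comparable to $\|D_z f^n_\omega\|^{-(1+\sigma)}$ by Proposition \ref{prop:nearby_points_inherit_temperedness}, and $f^n_\omega(B(\omega,z))$ is an $nL$-good standard pair by Proposition \ref{prop:decay_of_goodness_in_general} and Appendix \ref{AppSmoothing}. The map $\hat T_2$ is defined by transporting this block structure through $H^s_n$, using a vertical Jacobian-balancing subdivision of the densities so the matching is literally mass preserving.

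The exponential tail in item \ref{item:good_tail} follows from two independent inputs: the probability that temperedness first fails at time $n$ is at most $D_1 e^{-\beta n}$ by Proposition \ref{prop:temperedness_failure_time_tail} applied to the drifting submartingale $\ln \|D_z f^n_\omega v\|$, combined with Proposition \ref{prop:exponential_tail_tempered_times} for the cone condition; and the probability that the geometric sum of the $\|D_z f^n_\omega\|^{-1.99}$ fluctuations of $H^s_n$ furnished by Proposition \ref{prop:fluctuations_in_fake_stable_leaves} ever exceeds the reserved width is exponentially small on the tempered set. The main obstacle is item \ref{item:positive_prob_of_coupling}, the uniform lower bound $\mu\{\hat T_1(\omega,z)=\infty\}\ge \epsilon_0$. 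The probability $p_0$ of persistent temperedness is already uniform, so the task is to show that on a definite fraction of $\mc G_\infty(z)$ the fake holonomies never overshoot their reservations. The delicate point is to allocate, once and for all, reserved subintervals on $\Gamma_2$ indexed by $z\in \Gamma_1$, whose widths absorb the total fluctuation budget without overflowing the length of $\Gamma_2$; this is possible because the limiting stable holonomy is absolutely continuous (Appendix \ref{sec:finite_time_pesin_theory}), so the total budget consumes only a fixed fraction of the mass of $\Gamma_2$, after which the fluctuation bound confines the fake-holonomy images to their reservation with probability bounded below by some $\epsilon_0>0$ uniformly in $z$ and in the starting configuration. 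Balancing the reservation size, the fluctuation geometric series, and the temperedness constants to extract such a $\epsilon_0$ is the most technically demanding step.
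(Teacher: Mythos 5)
Your architecture matches the paper's: select a cone $\mc C_\theta$ transverse to both curves with a uniform positive probability of persistent cone-temperedness, couple via fake stable holonomies $H^s_n$, stop when temperedness or fluctuation control fails, and get the tail from martingale estimates. However, there are genuine gaps at exactly the points where the paper does its hardest work. The central one is the consistency of the mass bookkeeping on $\hat\gamma_2$. Your ``reserved subintervals on $\Gamma_2$ indexed by $z\in\Gamma_1$'' with a ``vertical Jacobian-balancing subdivision'' does not explain how to guarantee that, at \emph{every} finite time $n$ simultaneously, the mass you are attempting to place on $\gamma_2$ sits underneath the density actually available there and underneath what was placed at time $n-1$. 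The paper's mechanism (Lemma \ref{lem:local_coupling_lemma}, Steps 2, 4, 6) is a nested construction: $\mc I^2_n=H^s_n(\mc I^1_n)\subseteq\mc I^2_{n-1}$, an initial discount $\rho^1_N=a_0\rho^1$ so that $(H^s_n)_*\rho^1_N\le\rho^2$ (Proposition \ref{prop:set_up_scale_prop}(4b)), and a per-step discount $(1-e^{-n\hat\eta})$ calibrated so that $\rho^2_{n-1}-\rho^2_n\ge e^{-2\hat\eta n}\rho^2_{n-1}\ge 0$; proving this positivity requires comparing three competing error terms (the holonomy fluctuation $\|Df^n\|^{-1.99}$, the Jacobian convergence rate $e^{-\eta n}$ of \eqref{JacExp}, and the discount $e^{-n\hat\eta}$). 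Without this monotonicity the stopping times on $\gamma_2$ are not well defined as removals of nonnegative mass, and item (1) also fails: the density of a stopped block on $\gamma_2$ is a \emph{difference} of two pushed-forward densities, and its log-H\"older regularity (needed for $nL$-goodness) only follows from the quantitative lower bound on that difference via Claim \ref{claim:bounded_below_plus_holder_gives_log_holder}.

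Two further gaps. For item (3), bounding the probability that a single point's temperedness first fails at time $n$ (Proposition \ref{prop:temperedness_failure_time_tail}) does not bound the measure of the stopped set, because your rule stops an entire padded neighborhood of length $\approx\|D_zf^n_\omega\|^{-(1+\sigma)}$ whenever one point in it fails; the paper closes this with the cushion argument (Proposition \ref{prop:nearby_points_close_cushion} shows all points in such a neighborhood have cushion within $O(1)$ of the failing point, and Proposition \ref{prop:large_deviations_for_cushion} shows small-cushion points have exponentially small measure). For item (4), you treat ``the fluctuations never overshoot the reservation'' as a probabilistic event to be bounded below on a fraction of $\mc G_\infty(z)$, but the construction should (and in the paper does) make this \emph{deterministic} on the persistently tempered set: the fluctuation $d(H^s_n(y),H^s_{n-1}(y))\le\|D_zf^n_\omega\|^{-1.99}$ is of strictly higher order than the padding width $\|D_zf^n_\omega\|^{-(1+\sigma)}$ with $1+\sigma<1.99$, so cone-tempered points are never dropped (Step 9) and $\{\hat T_1=\infty\}\supseteq\mc G_\infty(z)$ up to the retained fraction $a_0\prod_n(1-e^{-n\hat\eta})>0$ of the fiber. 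Until these three points are supplied, the proposal is an outline of the correct strategy rather than a proof.
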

\vskip2mm

In the lemma above, part \eqref{item:tail_points_intertwined} says that the points where $\hat{T}_i=\infty$
are coupled and such points attract exponentially fast. Part \eqref {item:positive_prob_of_coupling} says
that the probability that the next coupling attempt is successful is at least $\epsilon_0.$
Part \eqref{item:good_tail} says that the probability that ``a point" stops and fails to couple at time $n$ is exponentially small,
while part \eqref{item:equal_mass_stops_lcl} controls he regularity of the set of such points.
\vskip1mm

The following proposition says that there is a fixed time $N_0$ required for the $C_0$-good pairs produced by the coupled recovery lemma 
to 
get into position for the application of the local coupling lemma. The proof relies on the mixing properties from Section \ref{sec:k_property}.

\begin{prop}\label{prop:finite_time_mixing} 
(Finite Time Mixing)
Suppose $(f_1,\ldots,f_m)$ is an expanding on average tuple as in Proposition \ref{prop:main_coupling_proposition}. For any fixed $C_0>0$, there exist $C_1,C_2,\delta,\upsilon >0$ such that the following holds. 
\begin{enumerate}[leftmargin=*]
    \item 
    $C_1,\delta,\upsilon >0$ are such that a $(C_1,\delta,\upsilon)$-configuration
    satisfies the hypotheses of the Local Coupling Lemma \ref{ref:small_scale_coupling_lemma} with $C_1=C_1$, $\delta'=\delta$, and $\upsilon=\upsilon$.
    
    \item 
    There exists $N_0\in \N$ and $b_0>0$ such that for any $C_0$ regular standard pairs $\hat{\gamma}_1$ and $\hat{\gamma}_2$ of equal mass, for $.99\%$ of the words $\omega\in \{1,\ldots,m\}^{N_0}$, there is a subdivision $P^1_{\omega},P^2_{\omega}$ of the standard families $f^{N_0}_{\omega}(\hat{\gamma}_1)$ and $f^{N_0}_{\omega}(\hat{\gamma}_2)$ and subfamilies $Q^1_{\omega},Q^2_{\omega}$ of $P^1_{\omega}$ and $P^2_{\omega}$, and a map $\Psi\colon Q^1_{\omega}\to Q^2_{\omega}$ preserving measure such that the following hold.
    \begin{enumerate}
        \item 
        Each pair $\hat{\gamma}\in Q^1_{\omega}$ is associated by $\Psi$ with a pair $\Psi(\hat{\gamma})$ such that 
       these pairs have equal mass and satisfy (1) above. 
        \item 
        The set $ Q^1=\bigcup_{\omega\in \hat{\Sigma}} \{\sigma^{N_0}(\omega)\}\times Q^1_{\omega}$ has measure $b_0\rho_1(\hat{\gamma})$ with respect to $\hat{\mu}\otimes \rho_1$. The same holds for $Q^2$.
    \end{enumerate}
    \item 
    The complement of $Q^1_{\omega}$ in $f^n_{\omega}(\hat{\gamma}_1)$ is a standard family of $C_2$-good standard pairs. The same holds for $Q^2_{\omega}$.
\end{enumerate}
\end{prop}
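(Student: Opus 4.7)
The plan is to combine the exactness of the diagonal skew product $F_2$ from Proposition \ref{prop:skew_product_is_mixing} with the finite-time smoothing of standard pairs from Appendix \ref{AppSmoothing}: the former controls the joint distribution of the pushforwards of the two standard pairs in $M\times M$, while the latter upgrades the regularity of most of their mass. I would first fix parameters $C_1,\delta,\upsilon$ small enough that $(C_1,\delta,\upsilon)$-configurations fall under the hypotheses of Lemma \ref{ref:small_scale_coupling_lemma}, which settles part (1) of the conclusion. Then I would cover $M$ by a finite collection of balls $\{B_r(x_j)\}_{j=1}^J$ of radius $r\ll\min(\upsilon,\delta)$, chosen so that any two subcurves of length at least $2\delta$ whose center points lie in the same $B_r(x_j)$ automatically form a $(C_1,\delta,\upsilon)$-configuration.

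Second, for each $C_0$-regular standard pair $\hat{\gamma}_i$, a large $N_0$ to be chosen, and most words $\omega\in\{1,\ldots,m\}^{N_0}$, I would decompose $f^{N_0}_\omega(\hat{\gamma}_i)$ into a ``good'' subfamily and a complement as follows. For each ball $B_r(x_j)$, intersect $f^{N_0}_\omega(\gamma_i)$ with $B_r(x_j)$ and retain those connected components that are $C_1$-regular and have length at least $2\delta$. The tempering bounds of Proposition \ref{prop:splitting_with_high_probability} together with the smoothing statements of Appendix \ref{AppSmoothing} guarantee that, outside a set of $\omega$ of exponentially small $\mu$-measure, $C_1$-regularity at such scales is achieved on all but an exponentially small fraction of the pushforward mass. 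Everything not retained (short end-fragments, components passing through non-tempered points, pieces too close to the boundary of some $B_r(x_j)$) forms the complement; by Proposition \ref{prop:decay_of_goodness_in_general} this complement is $C_2$-good for some $C_2$ depending on $C_0$ and $N_0$.

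Third, I would invoke the mixing of $F_2$ to select the pairing $\Psi$ ball-by-ball. Approximating $\mathbf{1}_{B_r(x_j)}$ by H\"older bumps and applying Proposition \ref{prop:skew_product_is_mixing} to the densities $\hat{\rho}_i$ restricted to their $C_1$-regular parts, one obtains that for any $\nu>0$ there exists $N_0$ such that on a set of $\omega$ of $\mu$-measure at least $0.99$, for every $j$ and $i=1,2$ the good mass inside $B_r(x_j)$ is within $\nu$ of $\vol(B_r(x_j))\,\hat{\rho}_i(M)$. Within each ball I then pair equal amounts of good mass from the two sides by an arbitrary measure-preserving bijection between the good subpairs; this defines $Q^i_\omega$, and since paired subpairs have center points within $2r<\upsilon$ of each other, the $(C_1,\delta,\upsilon)$-configurations emerge automatically. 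The total paired mass is at least $b_0\,\hat{\rho}_1(M)$ for some positive constant $b_0$ depending only on the cover.

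The main obstacle will be producing a mixing estimate that is quantitative at a uniform finite $N_0$ and valid for all $C_0$-regular starting pairs, since the class of such pairs is not $C^2$-compact. The remedy is that after restriction to their $C_1$-regular parts the densities on $M$ have uniformly bounded H\"older norms, at which point Proposition \ref{prop:skew_product_is_mixing} applied to H\"older test functions yields convergence with constants independent of the particular initial pair, and a standard Chebyshev argument converts this into the needed ``$99\%$ of $\omega$'' statement.
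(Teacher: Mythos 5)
Your overall architecture (fix the configuration parameters, cover $M$ by small balls, discard the irregular part of the pushforward, then use mixing to show both curves deposit comparable mass in most balls and pair them there) matches the paper's. But there is a genuine gap at the central step: you propose to apply the mixing of the skew product (Proposition \ref{prop:skew_product_is_mixing}, via $F_2$) directly "to the densities $\hat{\rho}_i$". The measure $\hat\rho_{\gamma_i}$ associated to a standard pair is supported on a one-dimensional curve and is singular with respect to $\vol$; mixing of $F$ or $F_k$ is a statement about correlations of observables against the invariant measure $\mu\otimes\vol^k$ and says nothing about the pushforward of a singular measure. Indeed, the assertion you want to extract --- that $f^{N_0}_\omega(\hat\gamma_i)$ gives each ball mass close to $\vol(B_r(x_j))\hat\rho_i(M)$ for most $\omega$ --- is exactly the quenched equidistribution of curves that the whole coupling machinery of the paper is built to prove, so invoking it here is circular. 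Your "remedy" paragraph does not close this gap: having uniformly H\"older densities \emph{along the curves} does not make $\hat\rho_i$ absolutely continuous on $M$.

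The paper bridges this by two ingredients you are missing. First, a Local Thickening Lemma: each $C_0$-good curve is saturated by uniformly tempered stable manifolds for most $\omega$, producing a set $U_\omega\subseteq M$ of uniformly positive volume whose projection along stable leaves onto $\gamma$ is proportional to $\rho$. Second, a quantitative \emph{fibrewise} mixing statement (Proposition \ref{prop:fibrewise_mixing}): for a set $U$ of fixed fiber mass, for all large $n$ and for $(1-2\epsilon_2)$ of the words $\omega$, $\vol(f^n_\omega(U_\omega)\cap A)$ is within a factor $(1\pm\epsilon_1)$ of the product of the measures. This is not a direct consequence of mixing of $F$ alone; it is extracted from mixing of the diagonal products $F_k$ for \emph{all} $k$ by a concentration/tensor-power argument. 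Since the stable leaves in the thickening contract forward in time, equidistribution of the positive-volume set $f^n_\omega(U_\omega)$ over the balls $B_i''$ transfers to the curve measure on slightly enlarged balls, and from there the ball-by-ball pairing proceeds essentially as you describe. Without the thickening and the fibrewise (rather than merely annealed) mixing estimate, your step three cannot be carried out.
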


As mentioned before, the proofs of these lemmas appear later in the paper.
Lemma \ref{prop:coupled_recovery_lemma} is proven in Section \ref{sec:finite_time_smoothing_estimates},
Proposition \ref{prop:finite_time_mixing} is proven in Section \ref{sec:finite_time_mixing_prop}, and 
Lemma \ref{ref:small_scale_coupling_lemma} is proven in Section \ref{ScLocalCoupling}.

\subsection{Proof of the main coupling proposition}

We now show how to deduce the main coupling proposition, Proposition \ref{prop:main_coupling_proposition}, from the various results stated in this section.
 We need a preliminary estimate showing that if we fail to couple then the whole failed attempt does not take too long.
In the lemma below the recovery time is the sum of three terms:

(1) The time when we stop trying to locally couple as in Lemma \ref{ref:small_scale_coupling_lemma} item \eqref{item:good_tail}; 

(2) The time it takes for a point to recover so that it belongs to a $C_0$-good pair as in the Coupled Recovery Lemma \ref{prop:coupled_recovery_lemma};

(3) The fixed time $N_0$ where the point has a chance to enter a $(C_1,\delta,\upsilon)$-configuration according to Proposition \ref{prop:finite_time_mixing}.

The following lemma verifies that each trip through the coupling procedure has an exponential tail on its duration.

\begin{lem}\label{lem:tail_onrecovery_time}
In the setting of Proposition  \ref{prop:main_coupling_proposition}, for each $C$ there exist $\hat C$ and $\bar r$ such that if $\hat\gamma_1$ and $\hat\gamma_2$ are $C$-good standard pairs of equal mass, then  
  $$ (\mu\otimes \rho_1)((\omega,\hat{x}): (\omega,\hat{x})\text{ fails to couple and the recovery time is greater than } n)\leq 
  \hat C e^{-\bar r n}. $$
  \end{lem}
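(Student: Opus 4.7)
The plan is to decompose the total duration of a failed coupling attempt into its three natural phases corresponding to the three ingredient lemmas: the local coupling phase of random length $T_{\text{loc}}$ (the time $\hat T_i$ from Lemma \ref{ref:small_scale_coupling_lemma} at which the attempt is abandoned), the subsequent recovery phase of random length $T_{\text{rec}}$ provided by Lemma \ref{prop:coupled_recovery_lemma}, and the deterministic mixing delay $N_0$ from Proposition \ref{prop:finite_time_mixing}. Since $N_0$ is a fixed constant, it is harmless (it only worsens $\hat C$), so it suffices to prove an exponential tail for $T_{\text{loc}} + T_{\text{rec}}$.

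First, item \eqref{item:good_tail} of Lemma \ref{ref:small_scale_coupling_lemma} yields $(\mu\otimes\rho_i)(T_{\text{loc}} = k) \le D_1 e^{-\beta k}$, so by summation $\mathbb{P}(T_{\text{loc}} > k) \le D_1' e^{-\beta k}$. Next I would use the crucial quantitative observation from item \eqref{item:equal_mass_stops_lcl} of Lemma \ref{ref:small_scale_coupling_lemma}: on the event $\{T_{\text{loc}} = k\}$ the connected component containing the stopped point is an $kL$-good standard pair. Conditioning on this event and applying the Coupled Recovery Lemma \ref{prop:coupled_recovery_lemma} with $R = kL$ gives
\begin{equation*}
\mathbb{P}\bigl(T_{\text{rec}} > m \,\big|\, T_{\text{loc}} = k\bigr) \le D_1 \exp\bigl(\max\{kL,0\} - \alpha m\bigr).
\end{equation*}
I would then combine these via the standard conditioning identity
\begin{equation*}
\mathbb{P}(T_{\text{loc}} + T_{\text{rec}} > n) \le \mathbb{P}(T_{\text{loc}} > n/2) + \sum_{k=0}^{\lfloor n/2\rfloor} \mathbb{P}(T_{\text{loc}} = k)\,\mathbb{P}\bigl(T_{\text{rec}} > n - k \,\big|\, T_{\text{loc}} = k\bigr).
\end{equation*}
The first term is at most $D_1' e^{-\beta n/2}$, and the second is bounded by $D_1^2 e^{-\alpha n/2} \sum_{k=0}^{n/2} e^{k(L + \alpha - \beta)}$, which is exponentially small in $n$ provided the constants of Lemmas \ref{ref:small_scale_coupling_lemma} and \ref{prop:coupled_recovery_lemma} satisfy a compatibility relation; essentially we need the local-coupling tail rate $\beta$ to dominate the deterioration rate $L$, after accounting for $\alpha$.

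The main obstacle, and the only delicate step, is precisely this compatibility: the goodness $kL$ inherited from a failed local coupling attempt feeds back into the recovery estimate and inflates it by $e^{kL}$, so one must ensure that the tail exponent $\beta$ on $T_{\text{loc}}$ is strong enough to absorb this growth. This is not a property visible from the statements of the lemmas alone, but must be secured in their proofs (in Sections \ref{ScLocalCoupling} and \ref{sec:finite_time_smoothing_estimates}) by taking the constant $L$ sufficiently small, or equivalently by taking $\beta$ sufficiently large, before invoking the recovery estimate. Assuming such a choice of constants has been made (which is legitimate since $L$ and $\beta$ are free parameters in Lemma \ref{ref:small_scale_coupling_lemma}), one obtains $\bar r = \min\{\beta, \alpha, \beta - L\}/3$ (or any similar positive rate) and the bound $\hat C e^{-\bar r n}$ follows with $\hat C$ depending on $C$ through the initial goodness of $\hat\gamma_1, \hat\gamma_2$.
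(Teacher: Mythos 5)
Your decomposition into a local-coupling phase and a recovery phase, with the goodness $kL$ of the stopped component feeding into the recovery tail, is the right skeleton, and you have correctly identified the exact danger point: the factor $e^{kL}$ from the deteriorated goodness must be absorbed by something. But your resolution of that danger does not work. Splitting at $n/2$ leads to a second term of order $e^{-\alpha n}\sum_{k\le n/2}e^{k(L+\alpha-\beta)}$, which is exponentially small only if $L<\alpha+\beta$; and this compatibility relation is not available. The constants $L$ (the goodness-deterioration rate of stopped components, coming from the $n\Lambda$-goodness in the Inductive Coupling Lemma, hence tied to the maximal expansion $\Lambda_{\max}$ of the $f_i$) and $\beta$ (the tail rate of the local stopping time) are \emph{outputs} of Lemma~\ref{ref:small_scale_coupling_lemma}, not free parameters; you cannot legitimately "take $L$ sufficiently small or $\beta$ sufficiently large," and nothing in Sections~\ref{ScLocalCoupling} or \ref{sec:finite_time_smoothing_estimates} secures $L<\alpha+\beta$.

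The missing idea is that the split point, not the constants of the ingredient lemmas, is the free parameter. The paper splits at $\kappa n$ for a small $\kappa>0$: either the local stopping time $S$ exceeds $\kappa n$, which has probability $O(e^{-\beta\kappa n})$ for \emph{every} $\kappa>0$; or $S\le\kappa n$, in which case the stopped component is only $\kappa L n$-good and the Coupled Recovery Lemma bounds the probability that recovery takes longer than $(1-\kappa)n$ by $D_1e^{(\kappa L-\alpha(1-\kappa))n}$. Choosing $\kappa<\alpha/(L+\alpha)$ makes the exponent negative no matter how large $L$ is or how small $\beta$ is. In other words, the $e^{\kappa Ln}$ inflation is beaten not by the local tail rate $\beta$ but by the recovery rate $\alpha$ acting over the long residual interval $(1-\kappa)n$. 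Replacing your cutoff $n/2$ by $\kappa n$ (and then optimizing $\kappa$) repairs your argument; as written, it has a genuine gap.
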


\begin{proof}

Take a small $\kappa>0$ that will be specified below. First we try to locally couple, and then we recover. 
Let $T$ be the recovery time and $S$ be the time when we stop our attempt at coupling $(\omega,x).$
Then if $T\ge n$ then either:

(i) $S\ge \kappa n$ or (ii) $S\le \kappa n$ and the time it takes the corresponding part of the curve to recover is
at least $(1-\kappa)n .$

The probability of the first event is exponentially small due to Proposition~\ref{ref:small_scale_coupling_lemma}(3). 
In the second case since $S\leq \kappa n$, it follows that $(\omega,x)$ belongs to $\kappa L n$-good component.
Thus by Proposition \ref{prop:coupled_recovery_lemma} the probability that the recovery takes more than $(1-\kappa)n$ time is less than
$D_1 e^{(\kappa L-\alpha(1-\kappa)) n}$ which is exponentially small if $\kappa<\alpha/(L+\alpha).$  
\end{proof}

The main coupling proposition is now easy to deduce because each coupling attempt couples a positive proportion of the remaining mass and, from Lemma \ref{lem:tail_onrecovery_time}, there is an exponential tail bound on how long a coupling attempt takes.

\begin{proof}[Proof of Proposition ~\ref{prop:main_coupling_proposition}.]
Let $N(\omega,x)+1$ be the number of total attempts at local coupling before $(\omega,x)$ couples. Let $\hat{T}(\omega,x)$ be the time when $(\omega,x)$ couples, and let $T_k(\omega,x)$ be its $k$th recovery time, i.e.~the  $k+1$st time we attempt to locally couple. As a positive amount of mass couples each time we apply the local coupling lemma, we see that there exists $\delta>0$ such that 
\begin{equation}\label{eqn:number_of_trials}
(\mu\otimes \rho_1)((\omega,x): N(\omega,x)>k)\le e^{-k\delta}.
\end{equation}

Next we show that for points that take $k$-attempts at local coupling to couple, that these attempts occur linearly fast. This will follow once we have a tail bound on $T_k$.
By Lemma~\ref{lem:tail_onrecovery_time}, ${T}_1$ has an exponential moment. In particular, 
$\sup \E{e^{tT_1}}=M(t)$ is finite for $t\le r$ where $r< \bar r$ and $\bar r$ is the constant from Lemma \ref{lem:tail_onrecovery_time}
and the supremum is taken over all pairs $\hat\gamma_1, \hat\gamma_2$ of
$C_1$-good standard pairs which are in $(C_1,\delta,\upsilon)$-configurations
as required by Lemma \ref{ref:small_scale_coupling_lemma} and produced by Proposition~\ref{prop:finite_time_mixing}. 

Extend $T_k=T_N(\omega)$ if $k>n(\omega).$
A straightforward induction shows that 
$\displaystyle
\E{e^{tT_k}}\le M(t)^k.
$
Thus by the Chernoff bound
$ \displaystyle
(\mu\otimes\rho_1)(T_k\ge n)
\le M(t)^{k}e^{-tn}.
$
In particular taking $t=r$, there is some $\beta>0$ such that 
$\displaystyle
(\mu\otimes\rho_1)(T_k\ge n\vert N=  k)\le e^{\beta k}e^{-rn}.
$
 Fix some small number $\alpha$ such that $0<\beta\alpha <r/2$.
Then
$$
(\mu\otimes\rho_1)(T_N> n \text{ and } N \le \alpha n)\le
(\mu\otimes\rho_1)(T_{\alpha n}> n) \le
D_1e^{-r/2 n}.
$$
By \eqref{eqn:number_of_trials}, with probability $1-e^{-\delta \alpha n}$, a point $(\omega,x)$ couples after at most $\alpha n$ trials, and the result  follows.
\end{proof}

\section{Proof of the Coupled Recovery Lemma}\label{sec:finite_time_smoothing_estimates}

\subsection{Recovery times}
In this subsection, we use the preceding lemmas to 
 describe a recovery algorithm for the $C^2$ norm of an irregular curve and estimate the tail of the recovery time.

The next definition 
describes an iterate of $f^n_{\omega}$ that has a good enough splitting that $f^n_{\omega}(\gamma)$ will have a good neighborhood of a particular point. Note that a ``good enough" splitting requires both a condition on the hyperbolicity as well as a condition on the angle between the curve $\gamma$ and and the stable subspace. This definition will be used in the proof of the recovery lemma.

\begin{defn}\label{defn:backwards_good_time}
Fix a tuple of non-negative numbers $(C,\lambda,\epsilon,A,\epsilon',R)$. For a standard pair $\hat{\gamma}$, a point $x\in \gamma$ and a word $\omega\in \Sigma$, we say that $n$ is a $(C,\lambda,\epsilon,A,\epsilon',R)$\emph{-backwards good time} for $x,\gamma,\omega$ if $n=A\max\{R,1\}+i$, for some $i\ge 0$ and
\begin{enumerate}
    \item 
    $Df^n_{\omega}$ has a $(C,\lambda,\epsilon)$-reverse tempered splitting, for which we write $E_m^s,E_m^u$ for the stable and unstable subspaces of this splitting in $T_{f^m_{\omega}(x)}M$.
    \item 
    $\angle(E^s_0,\dot{\gamma}(x))\ge e^{-\epsilon'i}$.

\end{enumerate}
\end{defn}

The following lemma asserts that this type of backwards good time is sufficient to conclude that an $R$-good curve $\gamma$ has its neighborhood of $x$ smoothed by the random dynamics $f^n_{\omega}$.

Note that the second condition in the lemma considers the situation where $\gamma$ ``recovers" in a neighborhood of $x$ prior to time $n$. It is important in this case to know that from that point on, we can just restrict to the portion of the curve that has already recovered. This is useful because it helps us deal with situations where we wish to ``stop" on certain parts of the curve and know that the parts we have stopped on will not be needed later when a different part of the curve recovers. Recall from Definition \ref{defn:R_good_standard_pair} that an $R$-regular curve has all the characteristics of $R$-good curves except that it is not required to be $e^{-R}$ long.

\begin{lem}\label{lem:deterministic_recovery_lemma_3}
Suppose $M$ is a closed surface and that $(f_1,\ldots,f_m)$ is a tuple in $\Diff^2_{\vol}(M)$. Then for any $\lambda>0$, sufficiently small $\epsilon,\epsilon'>0$, and any $C>0$, there exists $A,C_0,C_1>0$ such that for any $R$-regular standard pair $\hat{\gamma}=(\gamma,\rho)$ and any $(C,\lambda,\epsilon, A, \epsilon',R)$-backwards good time $n$ for $\omega\in \Sigma$ and $x\in \gamma$ if:
\begin{enumerate}
    \item 
    $\hat{\gamma}$ is $R$-good, or
    \item 
    there exists a time $0\le m< n$ and a subinterval $I\subseteq \gamma$ such that $f^m_{\omega}(I)$ contains a neighborhood of $f^m_{\omega}(x)$ that is $e^{-C_1}e^{-.8\lambda(n-m)}$-long;
    \end{enumerate}
then $f^n_{\omega}(\hat{\gamma})$ contains a $C_0$-good neighborhood of $f^n_{\omega}(x)$.  Moreover, if (2) holds, this neighborhood is contained in $f^n_{\omega}(I)$.
\end{lem}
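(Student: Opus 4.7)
The plan is to verify in a neighborhood of $f^n_\omega(x)$ the three defining properties of a $C_0$-good standard pair: length $\ge e^{-C_0}$, $C^2$ norm $\le e^{C_0}$, and log-H\"older density norm $\le e^{C_0}$. The $(C,\lambda,\epsilon)$-reverse tempered splitting along the orbit gives quantitative expansion and contraction estimates, while the angle hypothesis $\angle(E^s_0,\dot\gamma(x))\ge e^{-\epsilon'i}$ provides a lower bound on the component of $\dot\gamma(x)$ transverse to the forward-contracting direction. First I would argue that under forward iteration the tangent direction $Df^k_\omega \dot\gamma(x)$ aligns with the forward-expanding subspace at a rate governed by the hyperbolicity, and that the expansion factor obeys
\[
\|Df^n_\omega\dot\gamma(x)\|\ge c\,e^{-\epsilon' i}\,e^{(\lambda-O(\epsilon))n},
\]
using Proposition~\ref{prop:tempered_norm_implies_splitting} applied to the sequence $Df_{\omega_k}$ along the orbit of $x$.

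For the length estimate in case~(1), a length-$e^{-R}$ neighborhood of $x$ in $\gamma$ is stretched by (at least) the above factor, giving image length at least $e^{-R}\,c\,e^{-\epsilon'i}\,e^{(\lambda-O(\epsilon))n}$; with $n=A\max\{R,1\}+i$ and $A$ chosen large relative to $1/\lambda$, $\epsilon'/\lambda$, and $\epsilon/\lambda$, this exceeds $e^{-C_0}$ for a uniform $C_0$. In case~(2) I would treat $f^m_\omega(I)$ as a fresh (possibly degraded) standard pair at time $m$ and apply the same stretching argument on the remaining $n-m$ iterations: since $f^m_\omega(I)$ has size at least $e^{-C_1}e^{-0.8\lambda(n-m)}$, its image at time $n$ has length at least $e^{-C_1}e^{(0.2\lambda-O(\epsilon))(n-m)}$, which exceeds $e^{-C_0}$ for $C_1,C_0$ chosen compatibly, and by construction the recovered neighborhood lies in $f^n_\omega(I)$.

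For the $C^2$-norm and density bounds I would invoke the graph-transform smoothing results of Appendix~\ref{AppSmoothing}. Curvature satisfies a recursion of the form
\[
\kappa_{k+1}\le \kappa_k\,\|Df_{\omega_k}|_{\dot\gamma_k}\|^{-2}+B,
\]
where $B$ is a uniform creation term coming from the $C^2$ norms of the $f_i$; once the tangent has aligned with the expanding direction the factor $\|Df_{\omega_k}|_{\dot\gamma_k}\|^{-2}$ is geometrically small, so the initial curvature $e^R$ is absorbed inside the $A\max\{R,1\}$ burn-in and the asymptotic curvature settles at $O(1)$. For the density, the change of variables yields $\ln\tilde\rho=\ln\rho\circ (f^n_\omega)^{-1}-\ln\bigl|Df^n_\omega|_{\dot\gamma}\bigr|\circ(f^n_\omega)^{-1}$; the H\"older seminorm of the first term is scaled by $\|Df^n_\omega|_{\dot\gamma}\|^{-\alpha}$, which overwhelms the initial $e^R$, while the second term has H\"older norm uniformly controlled by the $C^2$ bounds on the $f_i$ and the $C^2$ bound on the image curve just established.

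The main obstacle is making the $C^2$ smoothing precise under merely tempered (rather than uniformly) hyperbolic dynamics: one has to show that the tempered error factors $e^{\pm\epsilon k}$ and the creation term $B$ never swamp the exponential gains from the unstable expansion, and this is exactly what the burn-in period $A\max\{R,1\}$ is calibrated to achieve. A minor structural point, relevant to item~(2), is the two-phase view of the argument—preserve the subinterval $I$ up to time $m$, then run the recovery on $f^m_\omega(I)$ for the remaining $n-m$ steps—which both keeps the localization claim clean and avoids having to track behavior on parts of $\gamma$ outside $I$. The graph-transform verifications underlying the curvature iteration and the pushforward density estimate are standard in flavor and I expect to lift them wholesale from Appendix~\ref{AppSmoothing}.
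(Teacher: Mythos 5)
Your treatment of case (1) is essentially the paper's: the lemma is deduced from the finite-time smoothing machinery of Appendix~\ref{AppSmoothing} (Proposition~\ref{prop:finite_time_smoothing_estimate}), with the reverse-tempered splitting supplying the expansion, the angle hypothesis $\angle(E^s_0,\dot\gamma(x))\ge e^{-\epsilon' i}$ supplying transversality, and the burn-in $n\ge A\max\{R,1\}+i$ calibrated so that the exponential gains dominate both the initial irregularity $e^R$ and the tempered error factors. (Minor point: for area-preserving maps the curvature contracts like $\|L\|/m(L)^2$, i.e.\ by the cube of the expansion along the curve, not the square as in your recursion; this does not affect the argument.)

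There is, however, a genuine gap in your mechanism for case (2) and the ``moreover'' localization. You propose to treat $f^m_\omega(I)$ as a fresh standard pair at time $m$ and to run the recovery over the remaining $n-m$ iterations. But hypothesis (2) controls only the \emph{length} of $f^m_\omega(I)$ at time $m$, not its regularity: the $C^2$ norm and the log-H\"older norm of the density of $f^m_\omega(I)$ may have degraded like $e^{\Lambda m}e^{R}$ (Proposition~\ref{prop:decay_of_goodness_in_general}), and nothing forces $n-m$ to be large compared with $m$ or $R$. When $n-m$ is small the smoothing over $[m,n]$ cannot undo that degradation, so your two-phase scheme cannot produce the curvature and density bounds required for $C_0$-goodness. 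The paper's route is different: the $C_0$-good neighborhood $\gamma_n$ of $f^n_\omega(x)$ is produced by running the smoothing over the \emph{entire} interval $[0,n]$ (exactly as in case (1), using that $\gamma$ is $R$-regular at time $0$ and that $n$ is a backwards good time), and the containment $\gamma_n\subseteq f^n_\omega(I)$ is then obtained from the preimage estimate of Proposition~\ref{prop:finite_time_smoothing_estimate}\eqref{item:preimages_of_recovered_curves_shrink}: the preimage of $\gamma_n$ in $f^m_\omega(\gamma)$ is an interval around $f^m_\omega(x)$ of length at most $D_4 e^{-.9\lambda(n-m)}$, which is swallowed by the assumed neighborhood of length $e^{-C_1}e^{-.8\lambda(n-m)}$ in $f^m_\omega(I)$ once $C_1$ is chosen appropriately. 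The mismatch of the exponents $.8\lambda$ versus $.9\lambda$ in the statement exists precisely to make this containment work; your proposal does not use it, and without the preimage-containment step the localization claim and the regularity bounds in case (2) are not established.
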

The above lemma follows immediately from the  result below.  The second paragraph of the statement of the lemma essentially says: if there is another point in $\gamma$ that also experiences a recovery time, then we can stop on that recovering segment while still leaving enough of the curve $\gamma$ so that $x$ can still recover.

\begin{lem}\label{lem:deterministic_recovery_lemma2}
(Deterministic Recovery Lemma) 
Given a closed surface $M$ and a tuple $(f_1,\ldots,f_m)$ in $\Diff^2_{\vol}(M)$, for any $\alpha,\lambda>0$ and all sufficiently small $\epsilon,\epsilon'>0$ and any $C>0$, there exist $C_0,A>0$ such that for any $R$-good standard pair $\hat{\gamma}=(\gamma,\rho)$,
and any word $\omega$ such that time $n$ is a $(C,\lambda,\epsilon,A,\epsilon',R)$-backwards good time for $x\in \gamma$,
then there exists a neighborhood $B(x)\subseteq \gamma$ of size at most $e^{-.9\lambda n}$ such that $f^n_{\omega}(\hat{B}(x))$ is $C_0$-good, i.e.~the pushforward of the standard pair $\hat{\gamma}$ restricted to $B(x)$ is $C_0$-good. 

Further, there exists $C_1$ such that for $\omega,x,\gamma$ as in the first part of the lemma, if $I\subseteq \gamma$ is an interval containing $x$ and for some $1\le m< n$, $f^m_{\omega}(I)$ has length at least $e^{-C_1}e^{-.8\lambda(n-i)}$, then $f^n_{\omega}(I)$ contains a $C_0$-good neighborhood of $f^n_{\omega}(x)$. 
\end{lem}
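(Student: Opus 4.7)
The plan is to construct $B(x)\subseteq\gamma$ as the preimage under $f^n_\omega$ of a subinterval of $f^n_\omega(\gamma)$ of length approximately $e^{-C_0}$ centered at $f^n_\omega(x)$, and then verify the three properties of $C_0$-goodness in the sense of Definition~\ref{defn:R_good_standard_pair}, together with the upper bound $|B(x)|\le e^{-0.9\lambda n}$. First I would compute the stretch of $\dot\gamma(x)$ under $Df^n_\omega$. Unpacking Definition~\ref{defn:tempered_splitting} applied to the reversed inverse sequence shows that $Df^n_\omega$ stretches unit vectors in $E^u_0$ by at least $e^{-C}e^{(\lambda-\epsilon)n}$, while the angle condition $\angle(E^s_0,\dot\gamma(x))\ge e^{-\epsilon'i}$ guarantees that $\dot\gamma(x)$ has an $E^u_0$-component of size at least $c_0 e^{-\epsilon'i}$ for a uniform $c_0>0$. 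The total stretch of $\dot\gamma(x)$ by $Df^n_\omega$ is therefore at least $c_0 e^{-C-\epsilon'i}e^{(\lambda-\epsilon)n}$. Choosing $|B(x)|$ so that $|f^n_\omega(B(x))|$ is approximately $e^{-C_0}$ gives $|B(x)|\le c_0^{-1}e^{C_0-C+\epsilon'i-(\lambda-\epsilon)n}$, and the bound $|B(x)|\le e^{-0.9\lambda n}$ then follows whenever $\epsilon,\epsilon'<0.05\lambda$ and $C_0\ge C+\ln(1/c_0)$, using $i\le n$.

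The $C^2$ norm of the image is the most delicate ingredient. The initial $C^2$ norm of $\gamma$ is at most $e^R$, and a naive chain-rule bound would grow exponentially in $n$. Instead I would invoke the smoothing estimates from Appendix~\ref{AppSmoothing}: a curve transverse to the stable direction aligns exponentially fast with the unstable direction under the dynamics, after which the extrinsic curvature of $f^k_\omega(\gamma)$ decays essentially like $e^{-2\lambda k}$ (modulo tempered losses $e^{\epsilon k}$). Restricting attention to the small subinterval $B(x)$ makes these estimates uniform in the position along the curve. After $n\ge A\max\{R,1\}$ iterations, with $A$ chosen large enough for the smoothing to absorb the initial curvature $e^R$ and the tempered losses, the resulting $C^2$ norm is bounded by a constant $e^{C_0}$ independent of $R$. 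A parallel argument using Lemma~\ref{lem:pushforward_density_est_diffeo} controls $\|\ln\rho\|_{C^\alpha}$ after pushforward: the Jacobian factor appearing in the pushforward has H\"older norm controlled by the $C^2$ regularity of the $f_i$ together with the now-smoothed image curve.

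For the ``moreover'' clause I would restart the argument at time $m$ with the curve $f^m_\omega(I)$, the shifted word $\sigma^m\omega$, and the remaining time $n-m$. The reverse tempered splitting of $Df^n_\omega$ restricts to the suborbit from $m$ to $n$ with comparable constants, and the required angle condition at $f^m_\omega(x)$ is preserved because the tangent line to $f^m_\omega(\gamma)$ has already been pushed toward $E^u_m$. Taking the effective ``$R$'' of $f^m_\omega(I)$ to be at most $C_1+0.8\lambda(n-m)$ (from the length hypothesis together with the crude estimate of Proposition~\ref{prop:decay_of_goodness_in_general}), the hypothesis $n-m\ge A\cdot R_{\mathrm{new}}$ needed for the first part becomes a linear inequality in $n-m$ that is solvable because the gap between the exponent $0.8\lambda$ in the hypothesis and $0.9\lambda$ in the conclusion leaves room to pick $A$ compatible with both constraints. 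The hard part will be the $C^2$ norm control: the competing factors $e^R$, $e^{\epsilon n}$, and $e^{-\lambda n}$ must be balanced in a genuinely finite-time regime (since the coupling argument cannot condition on the infinite future), and invoking Appendix~\ref{AppSmoothing} requires producing constants $A$ and $C_0$ that work uniformly across all admissible $R$, $\omega$, and $x$.
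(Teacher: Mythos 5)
Your treatment of the first paragraph of the lemma is essentially the paper's: the paper simply reads off Steps 1--3 from Proposition \ref{prop:finite_time_smoothing_estimate} (applied with $\theta=e^{-i\epsilon'}$), which packages exactly the length, $C^2$, and density estimates you describe, and the choice of $A$ absorbing $e^R$ works as you say. One caveat: citing Lemma \ref{lem:pushforward_density_est_diffeo} for the density smoothing is the wrong tool. Its prefactor is $(1/m(Df))^{1+\alpha}$ with $m(Df)$ the \emph{ambient} conorm, which for an area-preserving map with expansion is $\|Df\|^{-1}<1$, so the prefactor exceeds $1$ and the iteration blows up; that lemma is only used for the crude deterioration bound (Proposition \ref{prop:decay_of_goodness_in_general}). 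The smoothing of $\ln\rho$ requires the one-step graph-transform estimate, Proposition \ref{prop:1-step_smoothing_estimate}(4), where the contraction factor is the expansion of the map \emph{along the curve} in the Lyapunov charts.

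The second paragraph is where there is a genuine gap. Restarting the argument at time $m$ with the curve $f^m_\omega(I)$ requires $f^m_\omega(I)$ to be $R_{\mathrm{new}}$-good with $n-m\ge A\max\{R_{\mathrm{new}},1\}+i'$, and this fails for two reasons. First, the $C^2$ and log-H\"older regularity of $f^m_\omega(I)$ is controlled only by Proposition \ref{prop:decay_of_goodness_in_general}, giving $R_{\mathrm{new}}\gtrsim R+\eta m$; nothing in the hypotheses prevents $m$ from being close to $n$, in which case $n-m$ is tiny while $R_{\mathrm{new}}$ is enormous, and no further smoothing can occur. Second, even for the length-based contribution $R_{\mathrm{new}}\approx C_1+0.8\lambda(n-m)$, solvability of $n-m\ge A R_{\mathrm{new}}$ forces $A<1/(0.8\lambda)$, whereas the density-smoothing step of part one forces $A\gtrsim 1/(0.9\alpha\lambda)$ (the H\"older exponent $\alpha<1$ enters the decay rate $e^{-0.9\alpha\lambda n}$ in Proposition \ref{prop:finite_time_smoothing_estimate}(4)), and these are incompatible for small $\alpha$. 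The ``gap between $0.8\lambda$ and $0.9\lambda$'' is not what saves the day. The paper's argument is entirely different and avoids re-smoothing $I$: part one, applied to the \emph{full} $R$-good curve $\gamma\supseteq I$, produces a $C_0$-good neighborhood $\gamma_n\subseteq f^n_\omega(\gamma)$ of $f^n_\omega(x)$ whose preimage in $f^m_\omega(\gamma)$ has length at most $D_4e^{-0.9\lambda(n-m)}$ (the last clause of Proposition \ref{prop:finite_time_smoothing_estimate}(2)). Since $f^m_\omega(I)$ contains $f^m_\omega(x)$ and has length at least $e^{-C_1}e^{-0.8\lambda(n-m)}\ge D_4e^{-0.9\lambda(n-m)}$, it contains that preimage, hence $f^n_\omega(I)\supseteq\gamma_n$. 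This containment argument is the missing idea.
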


\begin{proof}
We divide the proof into several steps. We begin by fixing some preliminaries. For the given $(C,\lambda,\epsilon)$, we apply 
Proposition \ref{prop:finite_time_smoothing_estimate} with $e^{-i\epsilon'}=\theta$,
which gives us the constants $\epsilon_0,\ell_{\max},D_2,\ldots,D_8$ appearing in that proposition.

\noindent\textbf{Step 1.} (Length of $f^n_{\omega}\gamma$) 
By Proposition \ref{prop:finite_time_smoothing_estimate}\eqref{item:preimages_of_recovered_curves_shrink}, if 
\begin{equation}
\label{N-D5}
n\ge D_{5}+\frac{\max\{R,0\}-2\ln(e^{-i\epsilon'})}{.99\lambda},
\end{equation}
then $f^n_{\omega}\gamma$ contains a neighborhood $\gamma_n$ of $f^n_{\omega}(x)$ of length $\ell_{\max}$. For $\epsilon'$ sufficiently small relative to $\lambda$, it follows that  \eqref{N-D5} holds
as long as $n\ge A_1\max\{R,1\}+i$ for some $A_1$ depending only on $D_5, \lambda,\epsilon'$.

\noindent\textbf{Step 2.} ($C^2$ estimate) 
By Proposition \ref{prop:finite_time_smoothing_estimate}\eqref{item:C_2_estimate_smoothing_seq} 
\begin{equation}
\|\gamma_n\|_{C^2}<D_6e^{-2.9\lambda n}e^{D_7\ln\theta}\max\{\|\gamma\|_{C^2},1\}+D_8.
\end{equation}
Thus 
there exists $A_2, C_2$ such that as long as $n\ge A_2\max\{R,1\}+i$, that $\|\gamma_n\|_{C^2}\le C_2$.

\noindent\textbf{Step 3.} (Smoothing the density)
From Proposition \ref{prop:finite_time_smoothing_estimate}\eqref{item:regularity_of_density_sequence_lem} applied to $D_9=C_2$ from the previous step, we see that there exists $D_{10},D_{11}$ such that the following holds. If $\|\gamma_n\|_2<D_8$, then the pushforward of $\rho$ along $\gamma_n$ is given by:
\begin{equation}
    \|\ln \rho_n\vert_{\gamma_n}\|_{C^{\alpha}}\le D_{10}e^{-.9\alpha \lambda n}e^{D_7\ln \theta}(1+\|\ln \rho\|_{C^{\alpha}}+\|\gamma\|_{C^2})+D_{11}.
\end{equation}
In particular as long as $N\ge A_2\max\{R,1\}+i$, the above estimate holds. In the case that this estimate holds, then as $\|\ln\rho\|_{C^{\alpha}}$ and $\|\gamma\|_{C^2}$ are both at most  $e^R$, we similarly see that there exists $C_3$ and $A_3$ such that if $n\ge A_3\max\{R,1\}+i$ then $\|\ln \rho_n\vert_{\gamma_n}\|_{C^{\alpha}}\le C_3$. Thus we see that there exists $A$ such that the conclusion of the first paragraph holds.

For the claim in the second paragraph of the Lemma, we can apply 
Proposition \ref{prop:finite_time_smoothing_estimate}\eqref{item:preimages_of_recovered_curves_shrink}. 
The choice of $A,C_0$ in the first part of the proof imply that for such $n$, $\ell_{\max}$ is realized and thus by the final part of item \eqref{item:preimages_of_recovered_curves_shrink} then 
the preimage of $\gamma_n$ in $f^i_{\omega}\gamma$ has length at most $D_4e^{-.9\lambda (n-i)}$, thus if $f^i_{\omega}(I)$ has length at least $D_4e^{-.8\lambda(n-i)}$, then the image of $f^i_{\omega}(I)$ will have image that is a $C_0$ good neighborhood of $f^n_{\omega}(x)$.
\end{proof}

Next we show that  the recovery times from the above lemma occur frequently. 
\begin{prop}\label{prop:exponential_recovery_time_pointwise_annealed_over_curve} Let $M$ be a closed surface and suppose that $(f_1,\ldots,f_m)$ is an expanding on average tuple in $\Diff^2_{\vol}(M)$. There exists $\lambda>0$ such that for any $A>0$ and sufficiently small $\epsilon,\epsilon'>0$, there exist $C>0$ and $\alpha_3>0$ such that for any $R$-good standard pair $\hat{\gamma}$, if for $x\in \gamma$ we let $\hat{T}(\omega,x)$  be the first 
$(C,\lambda,\epsilon,A,\epsilon',R)$-backwards good time.
Then
\begin{equation}\label{eqn:exp_tail_on_local_recovery_time0}
(\mu\otimes\rho)((\omega,x): \hat{T}(\omega,x)>A\max\{R,1\}+i)\le Ce^{-\alpha_3 i}.
\end{equation}
The same holds for the analogous stopping time defined on an $R$-good standard family.
\end{prop}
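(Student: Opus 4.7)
The plan is to establish the pointwise exponential tail
\[
\mu\bigl(\{\omega : \hat T(\omega,x) > A\max\{R,1\}+i\}\bigr) \le \widetilde{C}\, e^{-\alpha_3 i}
\]
uniformly in $x \in \gamma$, and then integrate against $d\rho(x)$ to obtain \eqref{eqn:exp_tail_on_local_recovery_time0}; the total mass of $\rho$, controlled by the $R$-good hypothesis, is absorbed into the final constant.

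Write $N=A\max\{R,1\}$ and let $C$ and $\lambda$ be the constants produced by Proposition~\ref{prop:exponential_tail_tempered_times}, for a choice of $\epsilon>0$ small enough that Propositions~\ref{prop:exponential_tail_tempered_times} and \ref{prop:finite_time_distribution_of_stable} both apply. Require further that $\epsilon'>0$ satisfy $c_0\epsilon'<1$, where $c_0$ is the constant in Proposition~\ref{prop:finite_time_distribution_of_stable}. I will split the event $\{\hat T(\omega,x) > N+i\}$ by examining the upper half of the window. Define
\begin{align*}
\mathcal{A}_i &= \{\text{no integer in }(N+i/2,\,N+i]\text{ is a }(C,\lambda,\epsilon)\text{-reverse tempered time}\},\\
\mathcal{B}_i &= \{\exists\, n>N+i/2 \text{ such that } E^s_n(\omega)\text{ is undefined or lies in } B_{e^{-\epsilon' i/2}}(\dot\gamma(x))\}.
\end{align*}
Then $\{\hat T>N+i\}\subseteq \mathcal{A}_i\cup\mathcal{B}_i$, because on the intersection of the two complements some integer $n\in(N+i/2,\,N+i]$ is $(C,\lambda,\epsilon)$-reverse tempered and satisfies $\angle(E^s_n,\dot\gamma(x))\geq e^{-\epsilon' i/2}\geq e^{-\epsilon'(n-N)}$, so time $n$ is a backwards good time and $\hat T\leq n\leq N+i$.

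The event $\mathcal{A}_i$ is handled directly by Proposition~\ref{prop:exponential_tail_tempered_times} applied with starting time $\lceil N+i/2\rceil$, giving $\mu(\mathcal{A}_i)\le C_1 e^{-\alpha_1 i/2}$. The event $\mathcal{B}_i$ is precisely the event forbidden by Proposition~\ref{prop:finite_time_distribution_of_stable} with $\varepsilon=e^{-\epsilon' i/2}$, $v=\dot\gamma(x)$, and $N_0=\lceil N+i/2\rceil$; the hypothesis $N_0\ge c_0|\ln\varepsilon|$ reduces to $i/2\ge c_0\epsilon' i/2$, which holds by our choice of $\epsilon'$. This yields $\mu(\mathcal{B}_i)\le C_2 e^{-\theta\epsilon' i/2}$.

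Setting $\alpha_3 = \tfrac{1}{2}\min\{\alpha_1,\,\theta\epsilon'\}$ and adding the two bounds produces the pointwise exponential tail; integration over $\gamma$ against $\rho$ then gives the annealed statement. The main obstacle is merely bookkeeping: arranging the constants $\lambda$, $\epsilon$, $\epsilon'$ so that both invoked propositions apply with parameters consistent with those appearing in Definition~\ref{defn:backwards_good_time}. The substantive work has been performed upstream, in the large-deviation estimate for reverse tempered renewal times (Proposition~\ref{prop:exponential_tail_tempered_times}) and in the Hölder regularity of the distribution of stable directions along finite-time orbits (Proposition~\ref{prop:finite_time_distribution_of_stable}).
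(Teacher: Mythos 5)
Your proof is correct and follows essentially the same route as the paper's: reduce to a pointwise-in-$x$ estimate, use Proposition~\ref{prop:exponential_tail_tempered_times} to find a reverse tempered time in the window $(N+i/2,\,N+i]$ with exponentially small failure probability, and control the angle of the resulting stable direction with $\dot\gamma(x)$. The only (cosmetic) difference is that you take a union bound with the unconditional Proposition~\ref{prop:finite_time_distribution_of_stable}, whereas the paper invokes its conditional corollary, Lemma~\ref{lem:prob_good_stable_when_stopped}; your choice of threshold $e^{-\epsilon' i/2}$ also cleanly dominates the required $e^{-\epsilon'(n-N)}$ for every $n$ in the window.
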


\begin{proof}
It suffices to prove this estimate at a single point $x$ as we may then integrate the resulting estimate over all of $\hat{\gamma}$. 
From Proposition  \ref{prop:exponential_tail_tempered_times} there exist $C_1,\alpha_1$ and $C,\lambda>0$ such that for all sufficiently small $\epsilon>0$ there exists $N\in \N$ such that if we let $S(\omega)$ be the stopping time that stops at the first $(C,\lambda,\epsilon)$-reverse tempered time of $D_xf_{\omega}^n$ greater than any fixed $n\ge N$, then at that time there is a well defined splitting $T_xM=E^s_S\oplus E^u_S$ into maximally expanded and contracted singular directions, and
\begin{equation}\label{eqn:speed_of_stopping_89}
\mathbb{P}(S(\omega)>n+k)\le C_1e^{-\alpha_1k}.
\end{equation}
By Lemma \ref{lem:prob_good_stable_when_stopped} there exist $C_2,\alpha_2>0$ such that as long as $n\ge c_0\abs{\ln \theta}$,
\begin{equation}
\mathbb{P}(\angle( E^s_S,\dot{\gamma}(x))<\theta\vert S\le n+k)<C_2\theta^{\alpha_2}.
\end{equation}
Hence there exists $\alpha_3>0$ such that if $S$ is the first time greater than $n=c_0\epsilon' i$ that has a reverse tempered splitting, then 
\begin{equation}\label{eqn:stopped_angle_est_close_E_s}
\mathbb{P}(\angle(E^s_S,\dot{\gamma}(x))<e^{-\epsilon' i}\vert S\le n+k)<C_2e^{-\alpha_2\epsilon' i}.
\end{equation}
In particular, as long as $\epsilon'$ is sufficiently small relative to $c_0$, then $c_0\epsilon' i<i/2$. Let $S$ be the first $(C,\lambda,\epsilon)$-reverse tempered time greater than $A\max\{R,1\}+i/2$. 
Multiplying equations \eqref{eqn:speed_of_stopping_89} and \eqref{eqn:stopped_angle_est_close_E_s}, we find that there exist $C_3,\alpha_3>0$ such that:

\hskip2cm $\displaystyle
\mathbb{P}(S\le A\max\{R,1\}+i\text{ and } \angle(E^s_S,\dot\gamma)\ge e^{-\epsilon'i})\ge 1-C_3e^{-\alpha_3 i}.
$
\end{proof}

We now state without proof a more technical variant of the preceding lemma. It will be used in the proof of the coupled recovery lemma to allow ``recovery times" for the hyperbolicity. We will divide the iterates of the system into blocks of size $\Delta q+\Delta$, where $\Delta,q\in \N$. Each block will be divided into two pieces one of length $\Delta q$ and one of length $\Delta$. We will only be interested in backwards good tempered times that occur in the second part of the block, which has length $\Delta$. This is to ensure that there are large (temporal) gaps between possible recovery times. The following lemma shows that given this extra restriction on the backwards good times, we still have an exponential tail. 

\begin{prop}\label{prop:blocked_backwards_good_times_1}
Let $M$ be a closed surface and suppose that $(f_1,\ldots,f_m)$ is an expanding on average tuple in $\Diff^2_{\vol}(M)$. There exists $\lambda>0$ such that for any $A>0$ and sufficiently small $\epsilon,\epsilon'>0$, there exist $C>0$ and 
$\alpha_4>0$ such that for all $\Delta,q\in \N$ and any $R$-good standard pair $\hat{\gamma}$, for any $N\ge A\max\{R,1\}$, if for $x\in \gamma$ we let $\hat{T}(\omega,x)$ be the first time greater than equal to $N$ such that 
\[
\lceil A\max\{R,1\}\rceil +j(q+1)\Delta+q\Delta<\hat{T}(\omega,x)\le \lceil A\max\{R,1\}\rceil +(j+1)(q+1)\Delta,
\] 
for some $j>0$ and $\hat{T}$ is a
$(C,\lambda,\epsilon,A,\epsilon',R)$ backwards good time, 
then
\begin{equation}\label{eqn:exp_tail_on_local_recovery_time1}
(\mu\otimes\rho)((\omega,x): \hat{T}(\omega,x)>N+i(q+1)\Delta)\le Ce^{-\alpha_4 i\Delta}.
\end{equation}
\end{prop}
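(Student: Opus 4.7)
The plan is to treat the $i$ allowed windows $W_j := (N_j, N_j + \Delta]$ (where $N_j := \lceil A\max\{R,1\}\rceil + j(q+1)\Delta + q\Delta$) as $i$ nearly-independent trials, each offering failure probability $\lesssim e^{-B\Delta}$ for finding a backwards good time, and then chain these via the tower property to obtain the $e^{-\alpha_4 i\Delta}$ rate. Concretely, $\hat{T} > N + i(q+1)\Delta$ is equivalent to the event that no backwards good time lies in $W_1 \cup \cdots \cup W_{i-1}$, so this reduces to bounding a joint failure probability.

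For the per-window estimate, let $F_j$ denote the event that no $(C,\lambda,\epsilon,A,\epsilon',R)$-backwards good time lies in $W_j$. I would first show that for each $j \geq 1$, conditional on $\mathcal{F}_{N_j}$,
\begin{equation*}
\mathbb{P}(F_j \mid \mathcal{F}_{N_j}) \leq A\, e^{R_j - B\Delta},
\end{equation*}
where $R_j$ is the forward $(\lambda,\epsilon)$-temperedness constant of the submartingale $X_k := n_0^{-1}\ln \|D_xf^{kn_0}_\omega\|$ for $k \leq N_j/n_0$. This follows from the Claim inside the proof of Proposition~\ref{prop:exponentially_return_to_tempered_additive}, applied at $N = N_j/n_0$ with $k = \Delta/n_0$: conditional on $X_1, \ldots, X_{N_j/n_0}$ being $(R_j, \lambda, \epsilon)$-tempered, the first reverse tempered time after $N_j$ exceeds $N_j + \Delta$ with probability at most $A e^{R_j - B\Delta}$. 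The companion angle condition $\angle(E^s_0, \dot\gamma(x)) \geq e^{-\epsilon' i}$ in the definition of backwards good time contributes only an additional factor $O(e^{-\alpha_2 \epsilon' i})$ via Lemma~\ref{lem:prob_good_stable_when_stopped}; since the relevant $i$ grows linearly with the block index, this is absorbed into constants.

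Iterating by the tower property, and using a uniform conditional exponential moment bound $\mathbb{E}[e^{R_j} \mid \mathcal{F}_{N_{j-1}}] \leq K$, we obtain
\begin{equation*}
\mathbb{P}\!\left(\bigcap_{j=1}^{i-1} F_j\right) \leq (AK)^{i-1} e^{-B(i-1)\Delta},
\end{equation*}
which gives the required bound with any $\alpha_4 < B$. The main technical obstacle is establishing the uniform conditional exponential moment bound on $R_j$: the marginal bound of Proposition~\ref{prop:temperedness_tail_bound} is not enough, since conditioning on past failures can bias $R_j$ upward. The remedy is to split $R_j$ via Lemma~\ref{lem:concat_tempered_additive_est} into contributions from the frozen history up to $N_{j-1}+\Delta$ and from the fresh segment $(N_{j-1}+\Delta, N_j]$, and then to apply Azuma's inequality conditionally to the fresh martingale increments on this segment. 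Choosing $\epsilon$ small enough that the resulting conditional Azuma exponent strictly exceeds $B$ makes $\mathbb{E}[e^{R_j} \mid \mathcal{F}_{N_{j-1}}]$ finite and uniform in the past, completing the chain.
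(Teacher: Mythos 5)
The paper states this proposition without proof (it is introduced as ``a more technical variant of the preceding lemma''), so the relevant comparison is with the argument it is meant to be a variant of, namely the proofs of Propositions \ref{prop:exponentially_return_to_tempered_additive} and \ref{prop:exponential_recovery_time_pointwise_annealed_over_curve}. Your overall shape --- reduce to the norm submartingale, get a per-window conditional recovery bound from the Claim inside Proposition \ref{prop:exponentially_return_to_tempered_additive}, treat the angle condition separately via Lemma \ref{lem:prob_good_stable_when_stopped}, and chain over windows --- is the right spirit, but the chaining step has a genuine gap. The quantity $R_j$ (the badness of the reverse-temperedness constant of the full history $X_1,\ldots,X_{N_j}$) is largely \emph{determined by} $\mc{F}_{N_{j-1}}$, not integrated out by conditioning on it: appending the fresh block of length $(q+1)\Delta$ improves the old constraints by only $O(\epsilon (q+1)\Delta)$ (Lemma \ref{lem:concat_tempered_additive_est}), so deterministically $e^{R_j}\gtrsim e^{R_{j-1}-\epsilon(q+1)\Delta}$ on the conditioning event. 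Hence no uniform bound $\E{e^{R_j}\mid \mc{F}_{N_{j-1}}}\le K$ holds; your proposed remedy (conditional Azuma on the fresh increments) controls only the fresh contribution and leaves a recursion $e^{R_j}\lesssim Z_j+e^{R_{j-1}}e^{-\epsilon(q+1)\Delta}$ that does not close uniformly in $\Delta,q$. This is exactly the difficulty the paper's proof of Proposition \ref{prop:exponentially_return_to_tempered_additive} is built to avoid: there one runs a \emph{single} Azuma bound on the drifting submartingale $\hat C_j$ over the whole trajectory, rather than multiplying window-by-window conditional bounds.

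Even granting the per-window bound and a uniform $K$, your final estimate $(AK)^{i-1}e^{-B(i-1)\Delta}$ does not yield the asserted conclusion $Ce^{-\alpha_4 i\Delta}$ with constants uniform over all $\Delta,q\in\N$: for $\Delta$ below $\ln(AK)/B$ the prefactor $(AK)^{i-1}$ swamps the decay (and for $\Delta$ smaller than the block length $\Delta_0 n_0$ used in the Claim, the per-window bound $Ae^{R_j-B\Delta}$ is not even meaningful, since the Claim only detects recovery at multiples of $\Delta_0$). To repair the argument you should follow the structure of Proposition \ref{prop:exponentially_return_to_tempered_additive} directly: track the modified reverse-temperedness constant $\hat C_j$ across all blocks, use its uniform positive drift and one application of Azuma to show it exceeds the threshold (with enough cushion, via Proposition \ref{prop:large_deviations_for_cushion}, to survive appending up to $\Delta$ further terms) except with probability $e^{R-c\,j(q+1)\Delta}$, and observe that once this holds at the start of a designated window every time in that window is reverse tempered; the restriction to the last $\Delta$ steps of each block then only dilutes the rate by the factor $(q+1)$ already built into the statement. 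The angle condition can then be appended as you describe, since its failure probability decays like $e^{-\alpha_2\epsilon' j(q+1)\Delta}$ and is summable against the target rate.
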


\subsection{Coupled Recovery Lemma}
In this subsection, we prove the coupled recovery lemma, Lemma \ref{prop:coupled_recovery_lemma}. In the statement we view the standard pair as the uniform distribution on the
subset of $\gamma\times [0,\infty)$ of pairs $(x,t)$ where $t\le \rho(x)$. We do this so that we may define stopping times for $\hat{\gamma}$ that stop on only part of the fiber over each point in $\gamma$. Additionally, in an abuse of notation, we will identify the density $\rho$ with a measure that we also call $\rho$.

\begin{proof}[Proof of Lemma \ref{prop:coupled_recovery_lemma}]
After initial preliminaries, the proof divides into two parts. The first part is a coupled stopping procedure, which takes a word $\omega\in \Sigma$ and two standard pairs $\hat{\gamma}_1$ and $\hat{\gamma}_2$, and shows which parts of each curve get stopped as we follow the dynamics specified by $\omega$ so that we always stop on the same amount of mass of each pair. 
In the second part we show that with high probability the procedure from the first part actually stops on all but an exponentially small amount of $\hat{\gamma}_1,\hat{\gamma}_2$ in a linear amount of time. In the proof, we consider the case that $R>1$ as otherwise we can stop immediately and conclude.

We now fix some constants. By Proposition \ref{prop:blocked_backwards_good_times_1}
there exists $\lambda>0$ such that for any $A>0$ and sufficiently small $\epsilon,\epsilon'>0$, there exists $C>0$ and $\alpha>0$ such that $(C,\lambda,\epsilon,A,\epsilon',R)$-backwards good times at the end of blocks of length $(q+1)\Delta$ occur exponentially fast after any time $N$ greater than $A\max\{R,1\}$ for an $R$-good standard pair $\hat{\gamma}$, i.e.  \eqref{eqn:exp_tail_on_local_recovery_time1} holds.

We then apply Lemma \ref{lem:deterministic_recovery_lemma_3}, which shows that for this choice of $\lambda,C,\epsilon,\epsilon',A$, that any $R$-good standard pair $\hat{\gamma}$ and any $(C,\lambda,\epsilon,A,\epsilon',R)$-backwards good time to $x\in \hat{\gamma}$, 
$f^n_{\omega}(x)$ has a $C_0$-good neighborhood in $f^n_{\omega}(\hat{\gamma})$, i.e.~the dynamics smoothens a neighborhood of $x$ and makes it $C_0$ regular. Lemma \ref{lem:deterministic_recovery_lemma_3} also gives the constant $C_1$ so that as long as $f^i_{\omega}(I)$ contains a neighborhood of $f^i_{\omega}(x)$ of size at least $e^{-C_1}e^{-.8\lambda(n-i)}$, then $f^{n-i}_{\sigma^i(\omega)}(f^i_{\omega}(I))$ contains a $C_0$-good neighborhood of $f^n_{\omega}(x)$.

For the rest of the proof we will not repeat $(C,\lambda,\epsilon,A,\epsilon',R)$-backwards good but just refer to such times as \emph{tempered times} with this particular choice of constants being understood. 

In the proof that follows, we divide the iterates of the system into blocks of size $(q+1)\Delta$. We will attempt to stop on a neighborhood of a point $x$ when $D_xf^n_{\omega}$ has a tempered time in the interval $(\lceil AR\rceil+i(q+1)\Delta+q\Delta, \lceil AR\rceil +(i+1)(q+1)\Delta]$. This is the $i$th block, if there is such a tempered time, then we say that this is a \emph{tempered block}. In the following, there will be points $x$ that experience a tempered block ending at $\lceil AR\rceil +iq\Delta$ but that we do not stop because 
there was not enough mass stopping on the other curve to couple them. For these curves, we then wait for their next tempered time relative to the original curve. 
That we only allow stopping on the last $\Delta$ iterates of a block of length $(q+1)\Delta$ is to ensure that the hyperbolicity has enough time to stretch what remains of the recovered neighborhood of $f^{\lceil AR\rceil +i\Delta}_\omega(\gamma)$ so that it can recover to be a $C_0$-good curve at the tempered time.

In the proof we only try to couple recovered curves at the very last time in 
each block, whereas a curve may have a tempered time up to $\Delta$ iterates before then. If we have a $C_0$-good curve, $\hat{\gamma}$, and we apply the dynamics from $(f_1,\ldots,f_m)$ at most $\Delta$ additional times, then there is some $C_0'\ge C_0$, so that the image of the curve will still be $C_0'$ good even after those extra iterates. Consequently, for any $\alpha>0$, there exists $\delta(\alpha)>0$, such that if $\hat\gamma$ is a $C_0'$ good curve, and we trim off the end segments of the curve of length $e^{-\delta}$, then we have lost at most $e^{-\alpha}$ proportion of the curve, where $\alpha$ is some number we will choose below. Further, note that as long as $\delta$ is sufficiently large, 
the trimmed off curves will be $e^{-\delta}$-good and that when we trim a $C_0'$-good curve, what remains will also still be $\delta$-good.

The proof involves four additional parameters
some of which were alluded to above, and which we choose to be sufficiently large that the following hold:

(1)
There is an exponential tail on the wait for the first tempered block. For any $N\ge \lceil AR\rceil$, if $T(\omega,x)$ is the next tempered block after $N$, then 
     \begin{equation}\label{eqn:tempered_time_ptwise_coupled_recovery}
    \mathbb{P}_{\omega}(T(\omega,x)\ge N + i(q+1)\Delta)\le e^{-i\alpha}. 
     \end{equation}
   
   (2) We also fix a small constant $\beta>0$. Then by possibly increasing $\Delta$ even further we can arrange that $\beta<\alpha/7$ and in addition have that $\alpha$ is greater than the cutoffs in Claims~\ref{claim:simultaneous_stopping_claim_1} and \ref{claim:simultaneous_stopping_claim2} below.

    (3)
    We then choose $q$ sufficiently large that
    $e^{-\delta}>e^{-C_1}e^{-.8\lambda q\Delta}$, where $\delta$ is the goodness of the recovered curve from above and depends on $\alpha$ and $\Delta$.

Note that when picking the constants above, from the statement of Proposition \ref{prop:blocked_backwards_good_times_1} we first choose $\Delta$ to make $e^{-\alpha}$ arbitrarily small and both (1) and (2) hold. Then we increase $q$ to ensure that (3) holds as well, which does not affect (1) or (2).

\noindent\textbf{Part 1: Coupled Stopping Procedure.} Fix a word $\omega\in \Sigma$.
We begin with two standard pairs $\hat{\gamma}_1$ and $\hat{\gamma}_2$. We will let $P_n^i$ be the subset of $\hat{\gamma}_i$ that has not been coupled after $n$ attempts at coupled stopping, i.e.~it consists of points that are not permanently stopped at time $\lceil AR\rceil +i(q+1)\Delta$. Note that $P_n^i$ is naturally viewed as a standard family. We let $I^i_j$ be the set of points in $P^i_j$ whose $(j+1)$st block is a tempered block. 
For every point $x\in P^i_j$ its next stopping time  $T(x,\omega)$ is defined to be the end of the next tempered block for that point. To simplify the notation, we write $N_0=\lceil AR\rceil$. 

An inductive assumption of the following procedure is the following:
\begin{align}\label{claim:enough_curve_remains_inductive}
&\text{ For any $\hat\gamma\in P^i_j$, and $x\in \hat\gamma$, $\hat\gamma$ is sufficiently long that if for some $k>j$,} \\
\notag
&\text{ the $k$th block is tempered, then $f^{(q+1)(k-j)\Delta}_{\sigma^{N_0+(q+1)j\Delta}(\omega)}(\hat\gamma)$ is $C_0'$-good.}
\end{align}

For $i\in \{1,2\}$, let $\wt{U}_j^i$ be the union of the $C_0'$ good intervals of the points $x\in I^i_j$ at the end of the $(j+1)$st block; if two intervals within a single standard pair in $P^i_j$ overlap, we take their union, so some intervals may be longer than $e^{-C_0'}$. 
Note that $\wt{U}_j^i$ is a $C_0'$-good standard family. Then for each standard pair $I\in \wt{U}_j^i$, we discard the interval of size $e^{-\delta}$ from the end of the interval. This gives us a new standard family $U_j^i\subseteq \wt{U}_j^i$. By choice of $\delta(\alpha)$ from above,
\[
\rho_i(U_j^i)\ge (1-e^{-\alpha})\rho_i(\wt{U}_j^i). 
\]

We now choose which of the subpairs in $\wt{U}_j^1$ and $\wt{U}_{j}^2$ to stop on for our fixed word $\omega$.
Suppose without loss of generality that $U^{1}_j$ has less mass than $U^{2}_j$. We now stop on all points in $U^{1}_j$. We would like to stop on all the points in $U^{2}_j$, however $U^{2}_j$ has too much mass compared with $U^{1}_j$. To compensate, we subdivide the standard family to create pieces with the appropriate height so that we can stop on a set of equal mass to $U^{1}_j$. 
First we subdivide $\hat{\gamma}_2$ vertically at height $\rho_1(U^{1}_j)(\rho_2(U^{2}_j))^{-1}\rho_2$ so that we keep over each point the same proportion of the mass. Call the two pieces of $\hat{\gamma}_2$ by $A$ and $B$, where $A$ is the piece with mass $\rho_1(U^{1}_j)(\rho_2(U^{2}_j))^{-1}\rho_2(\hat{\gamma}_2)$. Then if we take $A'$ to be the restriction of the standard pair $A$ to the points over $U^{2}_j$, this subpair satisfies that $\rho_2(A')=\rho_1(U^{1}_j)$. We stop on all points in $A'$. The map $\Phi$ in the statement of the proposition associates $A'$ and $U^{1}_j$. The complement of these stopped sets $A'$ and $U^1_j$ then defines a pair of new standard families $
P^i_{j+1}$.

In order for us to be able to proceed with this argument inductively, we must verify that the inductive assumption
\eqref{claim:enough_curve_remains_inductive}  still holds. 
From the second part of Lemma \ref{lem:deterministic_recovery_lemma_3}, as long as $x\in f^{N_0+(j+1)(q+1)\Delta}_{\omega}(\gamma)$ has length at least $e^{-\delta}$, and a point $x$ experiences another tempered time $q\Delta$ iterates later, then by choice of $q$, 
\[
e^{-\delta}>e^{-C_1}e^{-.8\lambda q\Delta},
\]
so by that lemma if there is a future tempered time $n>N_0+(j+1)(q+1)\Delta+q\Delta$, then at that time the image of $x$ will lie in a $C_0$-good pair.
Note that as we only consider future tempered times that are at least $q\Delta$ past the point where the curve is $e^{-\delta}$ long that by our choice of constants and the last part of Lemma \ref{lem:deterministic_recovery_lemma_3} 
 the assumption \eqref{claim:enough_curve_remains_inductive} holds inductively. 

This completes the description of the stopping procedure. We now turn to estimating the tail of the stopping time. \\

\noindent\textbf{Part 2: Rate of Stopping.} Let $A_n^1$ and $A_n^2$ be the pairs $(\omega,x)\subset \Sigma\times \hat{\gamma}_1$ and $\Sigma\times \hat{\gamma}_2$ that have not permanently stopped at time $n(q+1)\Delta$, i.e.~after $n$ attempts at coupled stopping they are still not stopped. Our goal now is to show that $(\mu\otimes \rho_1)(A_n^1)$ has an exponential tail. We begin with several claims. The idea is that if the amount of mass that has not stopped at time $n$ is large, then this implies that a large proportion of points will have a tempered time very quickly. If a large proportion of each curve has a tempered time, then we can stop on these points and obtain the result.  

In this part of the proof, we will write all stopping times as if we had reindexed things so that $N_0=\lceil AR\rceil $ is time $0$, $\lceil AR\rceil +(q+1)\Delta$ is time $1$, etc, to avoid a mess of notation. Keep in mind from our choice of constants earlier that we can pick $\Delta$ as large as we like at the beginning of the proof to ensure that $\alpha$ is as large as we like below.

\begin{claim}\label{claim:simultaneous_stopping_claim_1}
For any $\beta>0$, there exists $\alpha_0\ge 2\beta$ such that for all $\alpha\ge \alpha_0$, if we have chosen the block size $\Delta$ as above to ensure an $e^{-n\alpha}$ tail on tempered times pointwise \eqref{eqn:tempered_time_ptwise_coupled_recovery}, then if for some $n\in \N$ and all $i<n$, $ (\mu\otimes \rho_1)(A_i^1)\le e^{-i\beta}e^{\beta}$ and $e^{-n\beta} \le(\mu\otimes \rho_1)(A_n^1)\le e^{2\beta}e^{-n\beta}$, then at the end of 
the next block,  $1-e^{-\frac{99}{100}\alpha}$ proportion of the points $(\omega,x)$ in $A_n^1$ experience a tempered time. 
\end{claim}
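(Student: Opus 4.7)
The plan is to deduce the claim directly from the pointwise tail bound on tempered times, \eqref{eqn:exp_tail_on_local_recovery_time1} of Proposition \ref{prop:blocked_backwards_good_times_1}. The key idea is to turn an unconditional probability estimate into a conditional one using the hypothesized lower bound on the measure of $A_n^1$.

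First, fix $x \in \hat\gamma_1$ and apply \eqref{eqn:exp_tail_on_local_recovery_time1} with $N$ placed at the start of block $n+1$ and $i=1$. This gives
\[
\mathbb{P}_\omega\bigl(\text{block } n+1 \text{ contains no tempered time for } (\omega,x)\bigr)\le e^{-\alpha},
\]
where $\alpha$ is the exponential rate provided by that proposition (controlled by $\Delta$, which the setup allows us to take as large as we need). Crucially this bound is unconditional: it does not rest on any hypothesis about the past of $\omega$, and so it applies verbatim after restricting to the past-measurable event $A_n^1$.

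Next, I integrate over $\hat\gamma_1$ against $\rho_1$ (of total mass at most $1$) to obtain the absolute bound
\[
(\mu\otimes\rho_1)\bigl(\{(\omega,x):\text{block } n+1 \text{ has no tempered time}\}\bigr)\le e^{-\alpha}.
\]
Intersecting the set on the left with $A_n^1$ cannot increase its measure, and dividing by the hypothesis $(\mu\otimes\rho_1)(A_n^1)\ge e^{-n\beta}$ shows that the proportion of $(\omega,x)\in A_n^1$ whose block $n+1$ contains no tempered time is at most $e^{n\beta-\alpha}$. Therefore the complementary proportion—points that do experience a tempered time in block $n+1$—is at least $1-e^{n\beta-\alpha}$.

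The desired conclusion $1-e^{-(99/100)\alpha}$ follows once $e^{n\beta-\alpha}\le e^{-(99/100)\alpha}$, equivalently $\alpha \ge 100\, n\beta$. The main obstacle is precisely this parameter constraint: the bound degrades with $n$, so $\alpha_0$ (and therefore $\Delta$ in Proposition \ref{prop:blocked_backwards_good_times_1}) must be taken large enough relative to $\beta$ to cover the range of $n$ at which the hypothesis $(\mu\otimes\rho_1)(A_n^1)\ge e^{-n\beta}$ is actually invoked in the surrounding induction in Part 2 of the proof. This is exactly what the quantifier order ``for any $\beta>0$ there exists $\alpha_0\ge 2\beta$'' encodes: once $\beta$ is fixed we are free to enlarge $\alpha_0$ to accommodate the needed values of $n$. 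The exponent $99/100$ is exactly the boundary case $n\beta = \alpha/100$, so this approach is sharp up to the split of $\alpha$ between the two terms $\alpha/100$ and $(99/100)\alpha$.
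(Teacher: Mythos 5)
Your reduction of the conditional probability to the unconditional one is where the argument breaks. You bound
\[
(\mu\otimes\rho_1)\bigl(\{\text{block } n+1 \text{ has no tempered time}\}\cap A_n^1\bigr)\le (\mu\otimes\rho_1)\bigl(\{\text{block } n+1 \text{ has no tempered time}\}\bigr)\le e^{-\alpha},
\]
and then divide by $(\mu\otimes\rho_1)(A_n^1)\ge e^{-n\beta}$ to get $e^{n\beta-\alpha}$. The logic is valid but the bound is far too weak: it degrades with $n$, and you yourself note that matching $e^{-(99/100)\alpha}$ forces $\alpha\ge 100\,n\beta$. Your proposed fix --- enlarge $\alpha_0$ ``to cover the range of $n$ at which the hypothesis is invoked'' --- does not work, because that range is all of $\N$. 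The claim is quantified over every $n\in\N$, and in the induction of Part 2 the case $e^{-n\beta}\le(\mu\otimes\rho_1)(A_n^1)$ can occur at arbitrarily large $n$ (the induction only rules out its occurring at two \emph{consecutive} indices). So no single $\alpha_0$ depending only on $\beta$ makes your estimate close.

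The missing idea is that the numerator is in fact much smaller than $e^{-\alpha}$: it carries an extra factor of order $e^{-n\beta}$ which cancels the denominator. To see this you must decompose $A_n^1=\bigsqcup_{j=0}^{n}B_j^n$, where $B_j^n$ consists of the points that have had no tempered block since block $j$ (so $B_j^n\subseteq A_j^1$). For such points the event ``still no tempered time at the end of block $n+1$'' is a waiting time of $n+1-j$ blocks measured from time $j$, so the tail bound \eqref{eqn:tempered_time_ptwise_coupled_recovery} applied from $N$ at block $j$ gives
\[
(\mu\otimes\rho_1)\bigl(T>n+1 \text{ and } B_j^n\bigr)\le e^{-(n-j+1)\alpha}\,(\mu\otimes\rho_1)(A_j^1)\le e^{-(n-j+1)\alpha}e^{-j\beta+2\beta}.
\]
Summing over $j$, the geometric series is dominated by the $j=n$ term once $\alpha>\beta$, yielding a numerator of order $e^{-\alpha}e^{-n\beta+2\beta}$; after dividing by $e^{-n\beta}$ the $n$-dependence cancels and one gets $e^{2\beta-\alpha}(1+2e^{\beta-\alpha})\le e^{-(99/100)\alpha}$ for $\alpha$ large relative to $\beta$, uniformly in $n$. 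Note that your crude intersection bound discards exactly the information (how long each point has already been waiting, hence the upper bounds on the $(\mu\otimes\rho_1)(A_j^1)$ for $j<n$, which you never use) that makes the estimate uniform.
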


\begin{proof}
Let $T(\omega, x)$ denote the next tempered time for $(\omega, x)\in A_n^1$ then we wish to study a conditional probability 
$\displaystyle
\mathbb{P}(T(\omega,x)> n+1\vert (\omega,x)\in A_n^1),
$
as this gives a bound on the probability that we stop at the next attempt. Then
\begin{align}
\mathbb{P}(T(\omega,x)>n+1\vert (\omega,x)\in A_n^1)=\frac{\mathbb{P}(T(\omega,x)>n+1\text{ and } (\omega,x)\in A_n^1)}{\mathbb{P}(A_n^1)}
\end{align}
Let $B_j^n\subseteq A_n^1$ be the set of trajectories that have not had a tempered time since iterate $j$ and hence are in $A_n^1$ for this reason. Thus $\displaystyle A_n^1=\sqcup_{j=0}^n B_j^n$. 
Note that $B_j^n\subseteq A_j^1$ as these points certainly weren't stopped at time $j$. Hence
\begin{align*}
&\mathbb{P}(T(\omega,x)>n+1\vert (\omega,x)\in A_n^1)=\frac{\sum_{j=0}^n \mathbb{P}(T(\omega,x)>n+1\text{ and } (\omega,x) \in B_j^n)}{\mathbb{P}(A_n^1)}
\\&
\le \frac{\sum_{j=0}^n \mathbb{P}(T(\omega,x)>n+1\text{ and } (\omega,x) \in A_j^1)}{\mathbb{P}(A_n^1)}
\le (\mathbb{P}(A_n^1))^{-1}\sum_{j=0}^n e^{-(n-j+1)\alpha}e^{-\beta j+2\beta}
\quad \text{by } \eqref{eqn:exp_tail_on_local_recovery_time1}\\
&\le e^{2\beta}e^{n(\beta-\alpha)}e^{-\alpha}\sum_{j=0}^n e^{j(\alpha-\beta)}
=e^{2\beta}e^{-\alpha}\sum_{j=0}^n e^{(n-j)(\beta-\alpha)}
=e^{2\beta}e^{-\alpha}\sum_{j=0}^n e^{j(\beta-\alpha)}\\
&\le e^{2\beta} e^{-\alpha}(1+2e^{(\beta-\alpha)})
\le e^{-\frac{99}{100}\alpha}, 
\end{align*}
for $\alpha$ sufficiently large relative to $\beta$. This is the needed claim, so we are done.
\end{proof}

The following claim shows that if most of the remaining pairs $(\omega,x)$ are experiencing a tempered time  at time $n$ then we stop on a relatively large amount of mass at that step.

\begin{claim}\label{claim:simultaneous_stopping_claim2} There exists $\alpha_0$ such that for all $\alpha\ge \alpha_0$, if $B_n^1$ and $B_n^2$ are the subsets of $A_n^1$ and $A_n^2$ having tempered times at time $n+1$ and if for $i\in \{1,2\}$,
\begin{equation}\label{eqn:assump1}
(\mu\otimes \rho_i)(B_n^i)\ge (1-e^{-\alpha})(\mu\otimes \rho_i)(A_n^i),
\end{equation}
then 
\begin{equation}
(\mu\otimes \rho_i)(A_{n+1}^i)\le e^{-\alpha/3}(\mu\otimes \rho_i)(A_n^i).
\end{equation}
\end{claim}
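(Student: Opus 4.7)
The plan is to reduce the claim to a fibrewise estimate via Markov's inequality. First I would observe the key conservation: because the stopping procedure is designed to stop equal amounts of mass from $\hat\gamma_1$ and $\hat\gamma_2$ for every fixed $\omega$, and the two pairs start with equal mass, we have for almost every $\omega\in\Sigma$
\[
a_\omega:=\rho_1((A_n^1)_\omega)=\rho_2((A_n^2)_\omega).
\]
This equality is what lets the two curves be compared fibrewise. For a fixed $\omega$, the mass that actually gets stopped at step $n+1$ is $\min(\rho_1(U_n^{1,\omega}),\rho_2(U_n^{2,\omega}))$, where $U_n^{i,\omega}$ is the trimmed good family, so
\[
\rho_i((A_{n+1}^i)_\omega)=a_\omega-\min\bigl(\rho_1(U_n^{1,\omega}),\rho_2(U_n^{2,\omega})\bigr).
\]

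Next I would exploit the hypothesis \eqref{eqn:assump1} together with the trimming bound. Since $B_n^{i,\omega}\subseteq \wt{U}_n^{i,\omega}$ and $\rho_i(U_n^{i,\omega})\ge (1-e^{-\alpha})\rho_i(\wt U_n^{i,\omega})$, the assumption rewrites as $\int (a_\omega-\rho_i(B_n^{i,\omega}))\,d\mu\le e^{-\alpha}\int a_\omega\,d\mu$. Applying Markov's inequality to $a_\omega-\rho_i(B_n^{i,\omega})\ge 0$ with threshold $e^{-\alpha/2}a_\omega$ and taking the intersection over $i=1,2$, there exists a set $G\subseteq\Sigma$ with $\int_G a_\omega\,d\mu\ge(1-2e^{-\alpha/2})\int a_\omega\,d\mu$ on which simultaneously
\[
\rho_i(B_n^{i,\omega})\ge(1-e^{-\alpha/2})a_\omega,\qquad i=1,2.
\]
Combining this with the trimming bound gives, for $\omega\in G$,
\[
\min\bigl(\rho_1(U_n^{1,\omega}),\rho_2(U_n^{2,\omega})\bigr)\ge (1-e^{-\alpha})(1-e^{-\alpha/2})a_\omega\ge(1-2e^{-\alpha/2})a_\omega.
\]

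Finally I would integrate over $\omega$ and chain the two ``$1-2e^{-\alpha/2}$'' losses:
\[
\int\min\bigl(\rho_1(U_n^{1,\omega}),\rho_2(U_n^{2,\omega})\bigr)d\mu\ge(1-2e^{-\alpha/2})\int_G a_\omega\,d\mu\ge(1-4e^{-\alpha/2})(\mu\otimes\rho_i)(A_n^i),
\]
hence $(\mu\otimes\rho_i)(A_{n+1}^i)\le 4e^{-\alpha/2}(\mu\otimes\rho_i)(A_n^i)\le e^{-\alpha/3}(\mu\otimes\rho_i)(A_n^i)$ once $\alpha\ge \alpha_0$ is large enough to absorb the factor $4$ and shift the exponent from $\alpha/2$ to $\alpha/3$. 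The only subtle point is verifying the fibrewise mass conservation $\rho_1((A_n^1)_\omega)=\rho_2((A_n^2)_\omega)$, which follows inductively from the Part 1 construction, where on every block and every $\omega$ one stops on a pair of subfamilies of exactly equal $\rho$-mass; everything else is routine Markov/Fubini manipulation.
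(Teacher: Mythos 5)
Your proof is correct and follows essentially the same route as the paper: the fibrewise mass equality $\rho_1((A_n^1)_\omega)=\rho_2((A_n^2)_\omega)$, a Markov/Fubini step producing a large set of words on which both curves have most of their remaining mass recovering (the paper's $X_1\cap X_2$ is your $G$), the trimming loss, and integration over $\omega$. The only cosmetic difference is that the paper phrases the Markov step as a proof by contradiction and chains the factors $(1-2e^{-\alpha/2})(1-e^{-\alpha/2})(1-e^{-\alpha})$ rather than your $(1-4e^{-\alpha/2})$, which is the same estimate up to constants absorbed by taking $\alpha_0$ large.
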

\begin{proof}
Let $\pi\colon \Sigma\times \hat{\gamma}_1\to \Sigma$ denote the projection. Associated to $A_n^1$ and $A_n^2$ we have a measure $\wt{\mu}_n$ on $\Sigma$, given by 
$$\wt{\mu}_n(X)=  (\mu\otimes \rho_1) (\pi^{-1}(X)\cap A_n^1).$$ 
Note that if we had used $A_n^2$ to define $\wt{\mu}_n$, we would have obtained the same result.

Let $A_n^i(\omega)$ denote $\pi^{-1}(\{\omega\})\cap A_n^i$.  
We claim that there is a set $X\subseteq \Sigma$ such that $\wt{\mu}_n(X)\ge (1-e^{-\alpha/2})(\mu\otimes\rho_1)(A^i_n)$ and for $\omega\in X$, we have that \begin{equation}\label{eqn:good_saturated_sets}
\rho_1(A_n^1(\omega)\cap B^1_n)\ge (1-e^{-\alpha/2})\rho_1(A_n^1(\omega)).
\end{equation}
Otherwise there would exist a set $Y$ such that $\wt{\mu}_n(Y)> e^{-\alpha/2}(\mu\otimes \rho_1)(A^1_n)$ such that for $\omega\in Y$, equation \eqref{eqn:good_saturated_sets} fails. Then by Fubini, we would find
\[
(\mu\otimes \rho_1)(B_n^1)\le ((1-\wt{\mu}_n(Y))+\wt{\mu}_n(Y)(1-e^{-\alpha/2}))(\mu\otimes \rho_1)(A_n^1)<(1-e^{-\alpha/2})(\mu\otimes \rho_1)(A_n^1),
\]
which is impossible from our assumption \eqref{eqn:assump1}. 

Thus we may find a set $X_1\subseteq X$ such that $\wt{\mu}_n(X_1)\ge (1-e^{-\alpha/2})(\mu\otimes \rho_1)(A_n^1)$ and for $\omega\in X_1$, \eqref{eqn:good_saturated_sets} holds. Similarly we may find a set $X_2$ such that the same holds for $A_n^2$. 
Then $\wt{\mu}_n(X_1\cap X_2)\ge (1-2e^{-\alpha/2})\wt{\mu}_n(A_n^1)$ and  for every point $\omega\in X_1\cap X_2$, each curve in $A^i_{n}(\omega)$ has at least $1-e^{-\alpha/2}$ proportion of its remaining mass recovering. 
As described in the first part of the proof, we then trim segments of length $e^{-\delta}$ off these subcurves, which by the choice of $\delta$, leaves us with $(1-e^{-\alpha})$ proportion of the remaining mass. 
Thus on each curve there is at least
\[
(1-e^{-\alpha/2})(1-e^{-\alpha})(\mu\otimes\rho_1)(A_n^1(\omega))
\]
mass to stop on.
Hence by the estimate on the measure of such $\omega$, we can stop on 
\[
(1-2e^{-\alpha/2})(1-e^{-\alpha/2})(1-e^{-\alpha})(\mu\otimes\rho_1)(A_n^1)
\]
of the remaining mass.
In particular, this implies that for sufficiently large $\alpha$, that the unstopped mass remaining at the $(n+1)$th step satisfies:
\begin{equation}
(\mu\otimes \rho_1)(A_{n+1}^1)\le e^{-\alpha/3}(\mu\otimes \rho_1)(A_{n}^1),
\end{equation}
as desired.
\end{proof}

We can now conclude the desired rate of stopping. From our choice of constants, we have $\beta>0$ sufficiently small and $\alpha>0$ sufficiently large that $\beta<\alpha/7$ and both Claims \ref{claim:simultaneous_stopping_claim_1} and \ref{claim:simultaneous_stopping_claim2} of the proof hold. As mentioned previously, from the choice of $\Delta$ at the beginning, we may take $\alpha$ as large as we like. Then we will show that for $n\in \N$,
\begin{equation}\label{eqn:simultaneous_stopping_rate}
(\mu\otimes \rho_1)(A_n^1)\le e^{-n\beta}e^{\beta}. 
\end{equation}
We consider two cases depending on how much mass is left at time $n$. \smallskip

(1)
First, suppose that
\begin{equation}
(\mu\otimes \rho_1)(A_n^1)\le e^{-n\beta}
\end{equation}
Then certainly, 
$\displaystyle
(\mu\otimes \rho_1)(A_{n+1}^1)\le e^{\beta}e^{-(n+1)\beta}.
$

(2) If at time $n$, 
\begin{equation}
e^{-n\beta}\le  (\mu\otimes \rho_1)(A_n^1)\le e^{2\beta}e^{-\beta n},
\end{equation}
and at all previous times $(\mu\otimes \rho_1)(A_n^1)\le e^{\beta}e^{-n\beta}$, then Claim \ref{claim:simultaneous_stopping_claim_1} applies to $A_n^1$ and $A_n^2$, which gives that at time $n+1$, that $1-e^{-99/100\alpha }$ proportion of the points in $A^1_n$ and $A^2_n$ will recover at time $n+1$. Thus by Claim \ref{claim:simultaneous_stopping_claim2} and our choice of $\alpha>7\beta$, we see that 
\begin{equation}
(\mu\otimes \rho_i)(A_{n+1}^i)\le e^{-\frac{99}{300}\alpha}(\mu\otimes \rho_i)(A_n^i)<e^{-2\beta}(\mu\otimes \rho_i)(A_n^i),
\end{equation}
and for the next iterate we are back in the first case, $(\mu\otimes \rho_1)(A_{n+1}^1)\le e^{-(n+1)\beta}$.

In order to conclude, we apply the two options above inductively to obtain equation \eqref{eqn:simultaneous_stopping_rate} for all $n$.  In fact, we will show something slightly stronger: there are never two consecutive indices $n,n+1$ such that
\[
e^{-n\beta}<(\mu\otimes \rho_1)(A_n^1)\le e^{-n\beta}e^{\beta}
\]
holds for both $n$ and $n+1$.

Throughout the induction either we have 
\begin{equation}\label{eqn:two_cases}
(\mu\otimes \rho_i)(A_n^i)<e^{-\beta n}\text{ or }e^{-n\beta}\le (\mu\otimes \rho_i)(A_n^i).
\end{equation}
In the former case, we may apply item (1) in the list just mentioned.

Suppose we are in the latter case, that at time $n-1$ that $(\mu\otimes \rho_i)(A_n^i)<e^{-\beta (n-1)}$ and at time $n$ that $e^{-\beta n}\le (\mu\otimes \rho_i)(A_n^i)\le e^{-\beta (n-1)}$, and that for all prior iterates equation  \eqref{eqn:simultaneous_stopping_rate} holds.
Then we may apply (2) above to find that
\begin{equation}
(\mu\otimes \rho_i)(A_{n+1}^i)
<e^{-2\beta}(\mu\otimes \rho_i)(A_n^i)
\le e^{-2\beta}e^{-(n-1)\beta}
=e^{-(n+1)\beta}.
\end{equation}
Thus for the iteration $n+1$ we have $(\mu\otimes \rho_i)(A_{n+1}^i)<e^{-(n+1)\beta}$. Note that this means that the second case in \eqref{eqn:two_cases} cannot occur twice in a row.
Hence we may proceed inductively to verify that \eqref{eqn:simultaneous_stopping_rate} holds for every $n$. This concludes the proof of the lemma. 
\end{proof}

\section{Precoupling}
\label{sec:finite_time_mixing_prop}
In this section, we prove the finite time mixing proposition, Proposition~\ref{prop:finite_time_mixing}, which prepares curves for the application of the local coupling lemma.
\subsection{Fibrewise mixing} 
\label{SSFiberwise}
In this subsection we study fiber-wise mixing properties of the skew product $F\colon \Sigma\times M\to \Sigma \times M$. 
A skew product being mixing does not imply that it has any mixing properties fiberwise. For example, the system could be isometric on the fibers. For this reason we will leverage 
 the mixing of $F_k\colon \Sigma\times M^k\to \Sigma\times M^k$. We will obtain a sort of coarse fiberwise mixing by using a concentration of measure argument. The basic idea of the argument is that if $A$ is a subset of $M$, and $B\subset \Sigma\times M$ is a set giving equal measure to each fiber, then if $B$ does not mix with $A$ fiberwise, then it implies that on many fibers $A\cap F^n(B)$ is quite concentrated. 
 As a consequence of this concentration we show that $F_k$ cannot be mixing as there are too many points that stay in the set $A^k\subset M^k$. 

\begin{prop}\label{prop:fibrewise_mixing}
Suppose that the skew product $F_k\colon \Sigma\times M^k\to  \Sigma\times M^k$ from \eqref{KDiag} is mixing for $\mu\otimes \vol^k$ for all $k\in \N$. Let $A\subseteq M$ be a positive measure set. Then for all $\epsilon_1,\epsilon_2>0$ if $U\subseteq \hat{\Sigma}\times M$ is a set giving exactly mass $\alpha_0>0$ to $(1-\epsilon_2)$ of the fibers of $\hat{\Sigma}$ and $0$ to the rest, then there exists $N\in \N$, such that for all $n\ge N$, there exist $(1-2\epsilon_2)$ proportion of words $\omega\in \hat{\Sigma}$, such that 
\begin{equation}\label{eqn:fibre_mixing_bound}
\vol(A)\alpha_0(1-\epsilon_1)\le \vol(f^n_{\omega}(U_{\omega})\cap A)\le \vol(A)\alpha_0(1+\epsilon_1),
\end{equation}
where we write $U_{\omega}\subseteq M$ for the portion of $U$ in the fibre over $\omega$.
\end{prop}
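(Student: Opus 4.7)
The plan is to use the mixing of the diagonal skew products $F_k$ from Proposition~\ref{prop:skew_product_is_mixing} to control the first two moments of the random variable $\psi_n(\omega) := \vol(f^n_\omega(U_\omega) \cap A)$, and then deduce concentration by Chebyshev's inequality.

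First, since each $f_{\omega_i}$ is volume-preserving, a change of variables gives $\psi_n(\omega) = \vol(U_\omega \cap (f^n_\omega)^{-1}A) = \int_M \mathbf{1}_A(f^n_\omega(x))\mathbf{1}_{U_\omega}(x)\, dx$. For $k \in \{1,2\}$, set
\[
\tilde U_k := \{(\omega, x_1, \ldots, x_k) \in \hat\Sigma \times M^k : x_i \in U_\omega \text{ for each } i\},
\]
which has $\hat\mu\otimes\vol^k$-measure $\alpha_0^k(1-\epsilon_2)$. Fubini's theorem then gives
\[
\int_{\hat\Sigma} \psi_n(\omega)^k\, d\hat\mu(\omega) = (\hat\mu \otimes \vol^k)\bigl(\tilde U_k \cap \hat F_k^{-n}(\hat\Sigma \times A^k)\bigr),
\]
where $\hat F_k$ is the natural extension of $F_k$ to $\hat\Sigma \times M^k$. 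Since $F_k$ is mixing by Proposition~\ref{prop:skew_product_is_mixing} and its natural extension is a $K$-automorphism (hence mixing), applying mixing to the indicator functions of $\tilde U_k$ and $\hat\Sigma \times A^k$ shows that the right-hand side converges to $\alpha_0^k(1-\epsilon_2)\vol(A)^k$ as $n \to \infty$.

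Now let $G := \{\omega \in \hat\Sigma : \vol(U_\omega) = \alpha_0\}$, so $\hat\mu(G) = 1-\epsilon_2$ and $\psi_n \equiv 0$ on $G^c$. Hence $\int_G \psi_n^k\, d\hat\mu = \int \psi_n^k\, d\hat\mu$ for each $k$. Expanding
\[
\int_G \bigl(\psi_n - \alpha_0\vol(A)\bigr)^2 d\hat\mu = \int_G \psi_n^2\, d\hat\mu - 2\alpha_0\vol(A)\int_G\psi_n\, d\hat\mu + \alpha_0^2\vol(A)^2\hat\mu(G),
\]
we see that the three terms converge to $\alpha_0^2(1-\epsilon_2)\vol(A)^2$, $-2\alpha_0^2(1-\epsilon_2)\vol(A)^2$, and $\alpha_0^2(1-\epsilon_2)\vol(A)^2$ respectively, summing to $0$. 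Chebyshev's inequality then yields
\[
\hat\mu\bigl(\{\omega \in G : |\psi_n(\omega) - \alpha_0\vol(A)| > \epsilon_1 \alpha_0 \vol(A)\}\bigr) \le \frac{\int_G(\psi_n - \alpha_0\vol(A))^2\, d\hat\mu}{\epsilon_1^2 \alpha_0^2 \vol(A)^2} \xrightarrow[n\to\infty]{} 0,
\]
so choosing $N$ large enough, this bad subset of $G$ has $\hat\mu$-measure less than $\epsilon_2$; together with $G^c$ (of measure $\epsilon_2$) this exhibits $(1-2\epsilon_2)$-worth of words $\omega$ satisfying \eqref{eqn:fibre_mixing_bound}.

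The only nontrivial input is the $k=2$ moment, which requires mixing of the second diagonal skew product $F_2$; this is exactly where we need the full strength of Proposition~\ref{prop:skew_product_is_mixing}, since $F$-mixing alone would give only the mean of $\psi_n$ and no concentration. Once the mixing of $F_2$ is in hand, the rest is a routine moment-method concentration estimate with no serious obstacle.
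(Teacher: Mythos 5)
Your proof is correct, and it takes a genuinely different route from the paper. The paper argues by contradiction: assuming the lower bound in \eqref{eqn:fibre_mixing_bound} fails on a set of words of measure $2\epsilon_2$ for arbitrarily large $n$, it passes to the complement $M\setminus A$, lifts to the $k$-fold product $\hat\Sigma\times M^k$, and observes that the resulting lower bound $2\epsilon_2\,\alpha_0^k(\vol(M\setminus A)+\epsilon_1\vol(A))^k$ is incompatible with the upper bound $(1-\epsilon_2/2)\alpha_0^k\vol(M\setminus A)^k$ supplied by mixing of $F_k$, once $k$ is taken large enough that the ratio $\bigl(1+\epsilon_1\vol(A)/\vol(M\setminus A)\bigr)^k$ overwhelms the constants. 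Your argument instead identifies $\int\psi_n^k\,d\hat\mu$ with $(\hat\mu\otimes\vol^k)\bigl(\tilde U_k\cap\hat F_k^{-n}(\hat\Sigma\times A^k)\bigr)$ and runs a direct second-moment/Chebyshev computation. What you gain: you only need mixing of $F_1$ and $F_2$ (rather than all $F_k$), the argument is direct rather than by contradiction, and it produces an explicit variance that tends to zero, hence in principle a rate if the mixing were quantitative. What the paper's version buys is that it never needs to set up the moment identity or track the cross terms; it is a softer argument at the cost of invoking the full family $\{F_k\}_{k\in\N}$. Both proofs rest on the same underlying input, Proposition \ref{prop:skew_product_is_mixing}, and both implicitly use that the natural extension $\hat F_k$ is mixing (since $U$ is fibered over the two-sided shift $\hat\Sigma$), which you make explicit and which follows from exactness of $F_k$. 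One small bookkeeping point worth stating in a final write-up: the Chebyshev step needs $\vol(A)>0$ and $\alpha_0>0$ in the denominator, both of which are hypotheses of the proposition, so there is no issue.
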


\begin{proof}
We will prove the lower bound; the upper bound then follows by taking the complement of $A$. For the sake of contradiction, suppose that the lower bound in \eqref{eqn:fibre_mixing_bound} is false. Then there exist $\epsilon_1,\epsilon_2>0$ such that for arbitrarily large $n$, there exist measure $2\epsilon_2$ words $\omega$ such that 
\begin{equation}\label{eqn:full_fibre_bound}
 \vol(U_\omega)=\alpha_0\text{ and }
\vol(f^{n}_{\omega}(U_{\omega})\cap A)<  \vol(A)\alpha_0(1-\epsilon_1).  
\end{equation}
For these words $\omega$
\begin{equation}
    \vol(f^{n}_{\omega}(U_{\omega})\cap (M\setminus A))\ge \alpha_0(\vol(M\setminus A)+\epsilon_1\vol(A)).
    \end{equation}

We now consider what this implies on $\hat{\Sigma}\times M^k$. 
Write $U^k$ for the union of the sets $\{\omega\}\times U_{\omega}^k$. Then for the words $\omega$ satisfying \eqref{eqn:full_fibre_bound}, we obtain
\begin{equation}
(\vol^k)(F^{n}_{k,\omega}(U^k_{\omega})\cap \{\sigma^k(\omega)\}\times (M\setminus A)^k)\ge \alpha_0^k(\vol(M\setminus A)+\epsilon_1\vol(A))^k,
\end{equation}
because fiberwise this intersection is equal to the product $(f^n_{\omega}(U_{\omega})\cap  (M\setminus A))^k$. Thus integrating over this set of  $\omega$ of measure $2\epsilon_2$, we find that 
\begin{equation}\label{eqn:lower_bound_fibre_mixing_k}
(\hat{\mu}\otimes \vol^k)(F^n_k(U^k)\cap \hat{\Sigma}\times (M\setminus A)^k)\ge 2\epsilon_2 \alpha_0^k(\vol(M\setminus A)+\epsilon_1\vol(A))^k. 
\end{equation}
Note that 
$(\hat{\mu}\otimes \vol^k){ (U^k)\le} (1-\epsilon_2)\alpha_0^k$ by the definition of $U$. Since
$(\hat{\mu}\otimes \vol^k)(\hat{\Sigma}\times (M\setminus A)^k)=\vol(M\setminus A)^k$,
 mixing of $F_k$ implies that for sufficiently large $n$,
\begin{equation}\label{eqn:upper_bound_fibre_mixing_k}
(\hat{\mu}\otimes \vol^k)(F^{n}_k(U^k)\cap \hat{\Sigma}\times (M\setminus A)^k)\le (1-\epsilon_2/2)\vol(M\setminus A)^k\alpha_0^k.
\end{equation}
For large $k$ the bounds \eqref{eqn:lower_bound_fibre_mixing_k} and \eqref{eqn:upper_bound_fibre_mixing_k} are incompatible, so we obtain a contradiction. 
\end{proof}

\subsection{Proof of the finite time mixing proposition}

In this subsection we prove the finite time mixing Proposition  \ref{prop:finite_time_mixing}. The idea is straightforward. We can saturate the curve $\hat{\gamma}$ with stable manifolds to embed $\hat{\gamma}$ in a positive measure set that will contract onto the image of $\hat{\gamma}$ forward in time. As the skew product $F\colon \hat{\Sigma}\times M\to \hat{\Sigma}\times M$ is
 fibrewise mixing (Proposition \ref{prop:fibrewise_mixing}), this positive measure thickening of $\hat{\gamma}$ must equidistribute for most words. Simultaneously, we know that most images of $\hat{\gamma}$ will be relatively smooth. This allows us to conclude. 

In the proof we will need some intermediate claims. 

\begin{defn}\label{defn:thickening}
An $\epsilon$\emph{-thickening} of a curve $\gamma$ for a word $\omega\in \Sigma$ consists of two pieces of information. The first piece is a subset $\gamma_0\subset\gamma$ that will be thickened. The second piece is a set of the form
\[
\bigcup_{x\in \gamma_0}W^s_{\epsilon(x)}(\omega,x),
\]
and $W^s_{\epsilon(x)}(\omega,x)$ is the local stable leaf of radius $0<\epsilon(x)<\epsilon$ through $x$.
We will often denote such sets by $\kappa_{\omega}(\gamma)$. 
\end{defn}
Note that although the thickening can in principle be defined over all of $\gamma$, we will usually only use it on a special subset $\gamma_0$ that has better properties.

The following lemma shows that we may choose thickenings of $\gamma$ so that the pushforward of the volume along the thickening to $\gamma$ by the stable holonomy is proportional to $\rho$ on $\gamma_0$. 

\begin{lem}\label{lem:local_thickening}(Local Thickening Lemma)
Fix $\epsilon_1>0$ and $C_0>0$, a level of goodness of standard pairs. For any $\epsilon_2>0$, there exist $\epsilon_3,c_1,C_2, \varrho>0$ such that for $1-\epsilon_2$ of words $\omega\in \Sigma$, and any $C_0$-good standard pair $\hat{\gamma}=(\gamma,\rho)$ of unit mass, we can form an $\epsilon_1$-thickening of $\gamma$, $\kappa_{\omega}(\gamma)$, in the sense of Definition \ref{defn:thickening}, such that:
\begin{enumerate}[leftmargin=*]
\item
Let $\pi^s$ be the projection to $\gamma$ along the stable leaves. Then
    $\displaystyle
    \pi^s_*(\vol\vert_{\kappa_{\omega}(\gamma)})=c_1\rho\vert_{\pi^s(\kappa_{\omega}(\gamma))}
    $
    and 
    $\displaystyle
    \rho(\pi^s(\kappa_{\omega}(\gamma)))>\varrho.
    $

\item
    Every  stable leaf in $\kappa_{\omega}(\gamma)$ is uniformly $(C_2,\lambda,\epsilon_3)$-tempered under forward iterations.
    
\item 
 The choice of thickening $\kappa_{\omega}(\gamma)$ depends measurably on $\omega$. 

 \end{enumerate}
\end{lem}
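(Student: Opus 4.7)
The plan is to build $\kappa_\omega(\gamma)$ by choosing, for each $x$ in a well-chosen subset $\gamma_0(\omega)\subseteq\gamma$, a stable-leaf segment through $x$ whose length is tuned so that the pushforward of area by $\pi^s$ becomes proportional to $\rho$.

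First, identify the ``good'' subset $\gamma_0(\omega)$. By Proposition~\ref{prop:stable_manifolds_exist_with_high_prob}, there exist $\lambda,\alpha,\epsilon_3>0$ such that for any $x\in M$,
\[
\mu(\{\omega: W^s(\omega,x)\text{ is not }(C,\lambda,\epsilon_3)\text{-tempered}\})\le C^{-\alpha}.
\]
Applied at each $x\in\gamma$ and integrated against $\rho$, Fubini gives that the set $\mathcal{B}_C\subset\Sigma\times\gamma$ of $(\omega,x)$ where $W^s(\omega,x)$ fails to be $(C,\lambda,\epsilon_3)$-tempered has $(\mu\otimes\rho)$-measure at most $C^{-\alpha}$. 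Next, Proposition~\ref{prop:hausdorff_est_on_stable_dirs} gives, for every $x$, a H\"older bound $\nu^s_x(\{v:\angle(v,\dot\gamma(x))\le\theta\})\le D\theta^{\alpha'}$, so the set $\mathcal{T}_\theta\subset\Sigma\times\gamma$ where $E^s_\omega(x)$ is within angle $\theta$ of $\dot\gamma(x)$ has $(\mu\otimes\rho)$-measure at most $D\theta^{\alpha'}$. Choose $C=C_2$ large and $\theta=\theta_0$ small so that the bad set $\mathcal{B}_{C_2}\cup\mathcal{T}_{\theta_0}$ has measure at most $\epsilon_2^2/4$; by Markov's inequality there is a set $\Omega_0\subseteq\Sigma$ with $\mu(\Omega_0)\ge 1-\epsilon_2$ such that for each $\omega\in\Omega_0$ the fiberwise bad set has $\rho$-mass at most $\epsilon_2/2$. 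Define $\gamma_0(\omega)$ to be the complement of this bad set; then $\rho(\gamma_0(\omega))\ge 1-\epsilon_2/2$.

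Second, build the thickening. For $\omega\in\Omega_0$ and $x\in\gamma_0(\omega)$, the leaf $W^s(\omega,x)$ is at least $C_2^{-1}$ long and makes angle at least $\theta_0$ with $\dot\gamma(x)$. Fix $c_1>0$ small (to be determined) and define
\[
\Phi_\omega(x,\tau):\;x\in\gamma_0(\omega),\; \tau\in[-1,1],\quad \Phi_\omega(x,\tau)\in W^s(\omega,x),\; d_{W^s}(\Phi_\omega(x,\tau),x)=|\tau|\,\ell_\omega(x),
\]
where $\ell_\omega(x)\in(0,\epsilon_1]$ is to be chosen. Let $\kappa_\omega(\gamma)$ be the image of $\Phi_\omega$. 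The Jacobian of $\Phi_\omega$ at $\tau=0$ equals $\ell_\omega(x)|\sin\theta_\omega(x)|$, where $\theta_\omega(x)=\angle(\dot\gamma(x),E^s_\omega(x))$, and varies continuously; hence for each $x$ the total $\pi^s$-mass
\[
g_\omega(x,\ell):=\int_{-1}^{1}|\det D\Phi_\omega(x,\tau)|\,d\tau
\]
is a continuous, strictly increasing function of $\ell$ with $g_\omega(x,0)=0$ and $g_\omega(x,\epsilon_1)\ge 2\epsilon_1\sin\theta_0/2$ by the uniform transversality and bounded geometry of the leaves. Solve $g_\omega(x,\ell_\omega(x))=c_1\rho(x)$; since $\rho$ is bounded above on $C_0$-good pairs, taking $c_1$ small makes $\ell_\omega(x)\le\epsilon_1$ uniformly. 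By construction $\pi^s_*(\vol|_{\kappa_\omega(\gamma)})=c_1\rho|_{\gamma_0(\omega)}$, which gives (1) with $\varrho:=c_1(1-\epsilon_2/2)$ after noting that $\pi^s(\kappa_\omega(\gamma))=\gamma_0(\omega)$.

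Third, verify the remaining items. The temperedness in (2) is exactly the $(C_2,\lambda,\epsilon_3)$-tempered property used to define $\gamma_0(\omega)$, possibly with a slight worsening of $C_2$ absorbed into the constant; since $\ell_\omega(x)\le\epsilon_1$ and the leaves $W^s(\omega,x)$ contract uniformly (Definition~\ref{defn:uniformly_tempered_stable_manifold}), every segment in $\kappa_\omega(\gamma)$ inherits the uniform temperedness with the same rates. Measurable dependence on $\omega$ in (3) follows because the map $\omega\mapsto W^s(\omega,x)$ is measurable (standard for random stable manifolds; see \cite{Liu1995smooth}), the angle and length functions are measurable composites, and $\ell_\omega(x)$ is obtained by inverting a jointly measurable family of monotone continuous functions.

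The main obstacle is the exact pushforward identity in (1). The first-order formula $g_\omega(x,\ell)\approx 2\ell|\sin\theta_\omega(x)|$ is only approximate because neighboring stable leaves are not parallel, and the precise Jacobian depends on the geometry of the stable lamination. The fact that one can solve $g_\omega(x,\ell_\omega(x))=c_1\rho(x)$ exactly and uniformly in $x$ and $\omega\in\Omega_0$ relies on the uniform transversality bound $\theta_\omega(x)\ge\theta_0$ and on the quantitative $C^2$ control of stable leaves from Proposition~\ref{prop:norm_of_stable}; verifying these give a uniform continuity modulus for $g_\omega$ is the technical heart of the argument.
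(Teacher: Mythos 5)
Your proposal is correct and takes essentially the same route as the paper's (much terser) proof: select, via the tail bounds on temperedness and on the distribution of $E^s_\omega(x)$ plus a Fubini--Markov argument, a large set of words and a large-$\rho$-mass subset $\gamma_0(\omega)$ saturated by uniformly tempered, uniformly transverse stable leaves, and then shorten each leaf so that the projected volume is exactly proportional to $\rho$. The only point to flag is the one you already identify: since the stable lamination is only measurable, the "Jacobian of $\Phi_\omega$" must be interpreted through the absolute continuity and Jacobian bounds for stable holonomies (Propositions~\ref{prop:norm_of_stable} and \ref{prop:holonomies_converge_exponentially_fast}), a step the paper's own proof also leaves implicit.
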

\begin{proof}
We know that for every point $x$ and almost every word $\omega$, that $x$ is in the Pesin block $\Lambda^{\omega}_{\infty}(C)$ for some sufficiently large $C$, and on a measure one subset, 
$E^s$ is not tangent to $\gamma$. Thus we can saturate a positive measure subset of $\gamma$ with stable manifolds with uniformly controlled geometry by increasing $C$. By taking a shorter subset of the saturating stable curves in such a Pesin block, we can ensure that the volume measure of the saturation projected along the stable leaves to $\gamma$ gives a measure that is proportional to $\rho$ restricted to the images of $\pi^s$.
\end{proof}

The following lemma says that if we start with $C$-good curve, then we can ensure that a large proportion of the images of the curve are $C_0$-good at any time in the future.

\begin{lem}\label{lem:bulk_stays_smoothed}
For any $\epsilon>0$, there exists $C_0$, such that for any $C>0$, a level of goodness, there exists $N_0\ge 0$, such that for any $C$-good standard pair $\hat{\gamma}$ and all $n\ge N_0$, there exists a set $\Sigma_0^n\subseteq\Sigma$ of measure at least $1-\epsilon$, such that for $\omega\in \Sigma_0^n$,
\begin{equation}
\rho(x:f^n_{\omega}(\hat{\gamma})\text{ has a } C_0\text{-good neighborhood of }f^n_{\omega}(x))\ge (1-\epsilon)\rho(\hat{\gamma}).
\end{equation}
The same holds for a $C$-good standard family.

\end{lem}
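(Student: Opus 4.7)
The plan is to iterate Proposition~\ref{prop:exponential_recovery_time_pointwise_annealed_over_curve} to build a renewal sequence of backwards good times for the trajectory of $(\omega,x)$, argue that for any fixed large $n$ the most recent such time lies within a controlled window $[n-K,n]$ with high probability, and then push the recovered $C_0$-goodness forward by at most $K$ iterates with only a bounded degradation via Proposition~\ref{prop:decay_of_goodness_in_general}.

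First, I fix parameters $A,\lambda,\epsilon,\epsilon'>0$ for which Proposition~\ref{prop:exponential_recovery_time_pointwise_annealed_over_curve} and Lemma~\ref{lem:deterministic_recovery_lemma_3}(1) apply, and let $C_0$ be the recovery-goodness constant produced by the latter; the key point is that $C_0$ depends only on $(C,\lambda,\epsilon,A,\epsilon')$ and not on the initial goodness $R$. For an $R$-good pair $\hat\gamma$, the first $(C,\lambda,\epsilon,A,\epsilon',R)$-backwards good time $\hat T_1$ satisfies $(\mu\otimes\rho)(\hat T_1>A\max\{R,1\}+i)\le Ce^{-\alpha_3 i}$, and at time $\hat T_1$ the curve $f^{\hat T_1}_\omega(\hat\gamma)$ contains a $C_0$-good neighborhood $B_1$ of $f^{\hat T_1}_\omega(x)$. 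Since $\hat T_1$ is a stopping time for the i.i.d.\ sequence $\omega$, by the strong Markov property $\sigma^{\hat T_1}\omega$ is independent of $\mc{F}_{\hat T_1}$ and again uniformly Bernoulli. Apply the proposition anew to the restarted data $(\sigma^{\hat T_1}\omega, B_1)$ at $f^{\hat T_1}_\omega(x)$, now with initial goodness $R=C_0$, to obtain $\hat T_2>\hat T_1$ and a $C_0$-good neighborhood $B_2\subseteq f^{\hat T_2}_\omega(\hat\gamma)$ of $f^{\hat T_2}_\omega(x)$. Iterating yields $\hat T_1<\hat T_2<\cdots$ whose inter-arrival gaps, for $k\ge 1$, are bounded in distribution by $A\max\{C_0,1\}$ plus a variable with exponential tail of rate $\alpha_3$, \emph{uniformly in $R$}.

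Next, writing $T^*(n)=\max\{\hat T_k:\hat T_k\le n\}$, a standard renewal estimate for processes with uniformly exponentially tailed inter-arrival times (after the first) yields constants $C_2,\alpha_2>0$ and $K_0=K_0(\epsilon)$, all independent of $R$ and $n$, such that for $n\ge N_0:=A\max\{R,1\}+K_0$,
\[
(\mu\otimes\rho)\bigl(n-T^*(n)>K\bigr)\le C_2 e^{-\alpha_2 K}.
\]
Choose $K$ so that $C_2 e^{-\alpha_2 K}<\epsilon^2$. On the complementary good event the $C_0$-good neighborhood at time $T^*(n)$ is pushed forward by at most $K$ iterates to a neighborhood of $f^n_\omega(x)$ inside $f^n_\omega(\hat\gamma)$, and by Proposition~\ref{prop:decay_of_goodness_in_general} this image is $C_0'$-good with $C_0':=C_0+\eta K+\mathrm{const}$ depending only on $\epsilon$ (through $K$). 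Markov's inequality then converts the $\epsilon^2$-bound on $(\mu\otimes\rho)(\mathrm{bad})$ into the desired Fubini-type statement: $\mu$-measure at least $1-\epsilon$ of $\omega$ for which the $\rho$-mass of $x$ admitting a $C_0'$-good neighborhood at time $n$ is at least $(1-\epsilon)\rho(\hat\gamma)$; we rename $C_0'$ as the $C_0$ of the lemma statement.

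The main obstacle is the strong Markov restart: one must carefully track the change of measure from $\rho$ on $\hat\gamma$ to the pushed-forward density on each $B_k$ (so that the $(\mu\otimes\rho)$-tail bounds of Proposition~\ref{prop:exponential_recovery_time_pointwise_annealed_over_curve} apply to the original product measure at each stage), and verify that Lemma~\ref{lem:deterministic_recovery_lemma_3}(1) can be re-applied at each renewal to produce a $C_0$-good neighborhood inside the previously recovered one, with the \emph{universal} constant $C_0$ in place of $R$. Once this renewal structure is in place, the age bound and the Fubini/Markov conclusion are routine.
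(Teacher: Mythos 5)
Your argument is correct in outline, but it takes a genuinely different route from the paper's. You build a regeneration chain: recover once, restart from the recovered $C_0$-good piece via the strong Markov property, and control the age $n-T^*(n)$ of the resulting renewal process. The paper instead gets the statement in one shot from the fact that reverse-tempered times \emph{recur} after any prescribed starting time: Proposition~\ref{prop:exponential_tail_tempered_times} (fed into Propositions~\ref{prop:exponential_recovery_time_pointwise_annealed_over_curve}--\ref{prop:blocked_backwards_good_times_1}) gives, for the full product $D_xf^m_\omega$ and for any $N$, an exponential tail on the first backwards good time $T\ge N$. Taking $N=n-\Delta$ with $\Delta=\Delta(\epsilon)$ fixed, one gets with probability $1-\epsilon^2$ a backwards good time $T\in[n-\Delta,n]$ at which Lemma~\ref{lem:deterministic_recovery_lemma_3} recovers the \emph{original} $C$-good curve to uniform goodness; pushing forward the remaining $n-T\le\Delta$ steps costs only a bounded loss by Proposition~\ref{prop:decay_of_goodness_in_general}, and Markov/Fubini finishes as in your last step. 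The paper's route avoids precisely the two obligations you flag as your ``main obstacle'' (the change of measure at each restart, and re-verifying that Lemma~\ref{lem:deterministic_recovery_lemma_3} re-applies inside each recovered piece with the universal $C_0$), because the recovery at time $T$ uses reverse-temperedness of the whole word from time $0$ and so acts directly on the original curve --- no chaining is needed. Your renewal construction is essentially a re-derivation, at the level of curves, of the recurrence of tempered times that the paper has already established at the level of submartingales (Proposition~\ref{prop:exponentially_return_to_tempered_additive}); it is valid, just redundant, and you should cite the arbitrary-starting-time version of the tail bound rather than regenerate it.
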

\begin{proof}
This is immediate from Proposition \ref{prop:exponential_recovery_time_pointwise_annealed_over_curve}, which says that for large enough $\Delta$, we may ensure that $1-\epsilon$ of the pairs $(\omega,x)$ will have a tempered time between times $n+\Delta$ and $n+2\Delta$ for any $n$. We choose $N_0$ large enough that such a tempered time recovers a $C$-good curve to being $D$-good for some uniform $D$. Then we wait until to the end of the block, which gives a further, bounded loss of goodness. As in other places in the paper, a Fubini argument gives the fiberwise estimate stated here. Finally, note that this argument is independent of $n\ge N_0$. 
\end{proof}

We are now ready to prove the finite time mixing proposition. 

\begin{proof}[Proof of Proposition \ref{prop:finite_time_mixing}]
The outline of the proof is as follows. We first find a collection of balls in $M$ that a thickened version of $\gamma_1$ and $\gamma_2$ will mix onto due to the fibered mixing lemma. Then once mixing is accomplished most subcurves of $f^n_{\omega}(\gamma_1)$ and $f^n_{\omega}(\gamma_2)$ will still be long. Consequently, if there are subcurves intersecting a small ball $B_{\upsilon}(x)$ then those subcurves will form a $(C_1,\delta,\upsilon)$ configuration for some $C_1$. To achieve this setup, we will construct subsets $\Sigma_0,\ldots,\Sigma_4$ of $\Sigma$. Each of these sets will consist of words $\omega$ that have some particular finite time mixing properties, so that their intersection has all the properties we need to conclude along the lines just described. We will also have some additional parameters $m_i$ that are chosen below.

The input to this proposition requires some constants. First, let $0<\tau<1$ be the constant from the Local Coupling Lemma, Lemma \ref{ref:small_scale_coupling_lemma}, which says that the conclusions of that lemma hold for $(C,\delta,\tau\delta)$-configurations for any $C$ as long as $\delta$ is sufficiently small relative to $C$. We then obtain the following claim---note that this holds for all  sufficiently small $\delta$ with a uniform lower bound in the last term. 

\begin{claim}\label{claim:delta_covering}
There exists $m_0$ such that for all sufficiently small $\delta>0$, we can find a family of disjoint balls $B_i=B_{\delta_i}(x_i)$ in $M$ such that:

\noindent
   (1)  Each $B_i$ has equal volume between $(1/10)\delta^2$ and $10\delta^2$;
   
\noindent   (2)  Each $B_i$ contains a ball $B_i'$ of diameter at most $\tau \delta/2$ so that $d(\partial B_i',\partial B_i)>\delta/2$;
  
   \noindent (3)
    Each $B_i'$ contains a ball $B_i''$ with the same center and radius between $\tau \delta/9$ and $\tau \delta/10$, and the balls $B_i''$ all have equal volume;
  
    \noindent (4)
    $\displaystyle\vol(\bigcup_i B_i'')\ge 10^{-m_0}$.
\end{claim}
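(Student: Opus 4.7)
The plan is to construct the family by a standard Vitali-type maximal packing argument on the closed surface $M$, then specify nested concentric balls $B_i' \supset B_i''$ inside each $B_i$, and finally perturb radii slightly to enforce the two equal-volume conditions. The key observation is that all three radii are forced to be of order $\delta$ (with different explicit constants depending on $\tau$), so a single scale parameter $r = c\delta$ suffices for the packing; the inner radii are much smaller than the outer radius, so there is room for the margin condition $d(\partial B_i', \partial B_i) > \delta/2$.

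First I would fix a constant $c$ with $1/2 + \tau/4 < c < \sqrt{10/\pi}$ (for example $c = 3/4$, which works since $\tau < 1$). For all sufficiently small $\delta$, normal-coordinate volume comparison on the closed surface $M$ gives $\vol(B_{c\delta}(x)) \in [\delta^2/10, 10\delta^2]$ for every $x \in M$, and every such ball is a topologically embedded disk. Choose a maximal collection of disjoint geodesic balls $\{B_{(c+\eta)\delta}(x_i)\}_{i=1}^N$, for a small auxiliary $\eta > 0$; the extra $\eta$-room leaves us freedom to perturb radii later without losing disjointness. By maximality the doubled balls cover $M$, and uniform volume comparison yields $\vol(B_{2(c+\eta)\delta}(x_i)) \le 5\vol(B_{(c+\eta)\delta}(x_i))$, so $N \ge \vol(M)/(50\delta^2)$. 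Set $B_i' = B_{\tau\delta/4}(x_i)$ and $B_i'' = B_{\tau\delta/10}(x_i)$, concentric with $B_i$. Then $\operatorname{diam}(B_i') \le \tau\delta/2$, and $d(\partial B_i', \partial B_i) = (c - \tau/4)\delta > \delta/2$ verifies (2), while $B_i'' \subset B_i'$ has the required radius for (3).

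For (4), each $B_i''$ has volume at least $\pi\tau^2\delta^2/200$ (again by volume comparison for small $\delta$), so
\[
\vol\Bigl(\bigcup_i B_i''\Bigr) \ge N \cdot \frac{\pi\tau^2\delta^2}{200} \ge \frac{\pi\tau^2 \vol(M)}{10^4},
\]
a positive constant depending only on $M$ and $\tau$; any $m_0$ making $10^{-m_0}$ smaller than this constant will do. The remaining issue is the two equal-volume conditions in (1) and (3). Since $\vol(B_r(x))$ depends smoothly on $r$, and since we reserved $\eta$-room, each radius $r_i \in [(c-\eta)\delta, (c+\eta)\delta]$ can be chosen individually to make $\vol(B_{r_i}(x_i))$ equal to any common target in an open subinterval of $[\delta^2/10, 10\delta^2]$ containing $\pi c^2\delta^2$; likewise each inner radius $r_i'' \in [\tau\delta/10, \tau\delta/9]$ can be chosen independently to produce a common volume for all $B_i''$. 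These perturbations are $O(\delta^2)$ and do not disturb the margin $> \delta/2$ in (2).

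There is no substantive obstacle here: the construction is entirely standard Vitali covering plus a small continuity-in-radius adjustment. The only place requiring any care is arranging disjointness after the radius perturbation, but this is handled by building the packing with a small amount of slack from the start. I expect the total length of a careful write-up to be under a page.
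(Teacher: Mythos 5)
Your proposal is correct; note that the paper states Claim \ref{claim:delta_covering} without proof, and your Vitali-type maximal packing with concentric inner balls and a continuity-in-radius adjustment is exactly the standard argument it implicitly relies on. The only point needing a word of care is that the margin in (2) is $(c-\eta-\tau/4)\delta$ rather than $(c-\tau/4)\delta$ after the radius perturbation, but your choice $c>1/2+\tau/4$ absorbs this for $\eta$ small.
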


We now pick $\Sigma_0$, which are words where $\gamma_1$ and $\gamma_2$ have good thickenings. Both $\gamma_1$ and $\gamma_2$ are $C_0$-good by assumption. 
Then for any $m_1\in \N$, which we will pick later,
we see that there exists $c_1$, which is distinct from $C_1$, and $\varrho$ such that there is a set $\Sigma_0\subseteq \Sigma$, such that $\mu(\Sigma_0)>1-10^{-m_1}$ such that for $\omega\in \Sigma_0$,    there exists a thickening $\kappa_{\omega}(\gamma_i)$, $i\in \{1,2\}$ satisfying the properties of Lemma \ref{lem:local_thickening}. By possibly shrinking the thickening we may make the thickenings each have the same identical mass $\varrho$. For 
the words in $\Sigma_0$, we form a set $U^1\subseteq \Sigma\times M$ by taking the union of the sets $\{\omega\}\times \kappa_{\omega}(\gamma_1)$, similarly we define $U^2$. We denote by
$U^1_{\omega}$ the part of $U^1$ above $\omega$ and use a similar notation for
$U^2_{\omega}$. 

We now choose $C_1$, the regularity of the pairs that will be in $(C_1,\delta,\tau\delta)$-configurations in the conclusion of the proposition, as well as $\Sigma^n_1$ and $\Sigma^n_2$, words where most images of $\hat{\gamma}_1$ and $\hat{\gamma}_2$ are $C_1$-good curves.
Choose $C_1>0$ such that the conclusion of Lemma \ref{lem:bulk_stays_smoothed} holds for a set $\Sigma_1^n$ of words of measure $(1-10^{-m_2})$, for some $m_2$ that we will choose later, so that for $ \omega \in \Sigma_1^n$,
\begin{equation}\label{eqn:most_pts_good_nbd}
\rho_1(x:f^n_{\omega}(\hat{\gamma}_1)\text{ has a } C_1\text{-good neighborhood of }f^n_{\omega}(x))\ge (1-10^{-m_2})\rho_1(\hat{\gamma}_1).
\end{equation}
For all $\epsilon>0$, there exists $D(\epsilon)>0$ such that  for a $C_0$-good standard pair $\hat{\gamma}=(\gamma,\rho)$, the measure of the points $x\in \hat{\gamma}$ such that 
\begin{equation}
\rho(x\in \gamma:d(x,\partial \hat{\gamma})<D)<\epsilon\rho(\gamma).    
\end{equation}
Recalling Definition \ref{defn:C_0_delta_configuration}, the previous equation implies that there exists $D_1>0$ such that we may strengthen the conclusion in equation \eqref{eqn:most_pts_good_nbd} above:
\begin{equation}\label{eqn:well_positioned_est1}
\rho_1(x:f^n_{\omega}(x)\text{ is } (C_1,D_1)\text{-well positioned in }f^n_{\omega}(\hat{\gamma}_1))\ge (1-10^{-(m_2-1)})\rho_1(\hat{\gamma}_1).
\end{equation}
Call this set of $(C_1,D_1)$-well positioned points $G_{n,\omega}^1$. Similarly, for $\hat{\gamma}_2$ there exists a set $\Sigma_2^n$ and a set $G_{n,\omega}^2$ with this same property. 

Take a covering $B_i$ as in  Claim \ref{claim:delta_covering}
applied with the parameter $\delta$ small enough that the local coupling lemma holds for $(C_1,\delta,\tau\delta)$-configurations. Let $m_1=m_2=m_3=m_4=20$.

Next we choose $\Sigma_3^n$ and $\Sigma_4^n$, which are sets that mix the thickenings of $\hat{\gamma}_1$ and $\hat{\gamma_2}$ onto the balls $B_i''$. Let $\epsilon_2=10^{-m_3}$ from above,
and let $0<\epsilon_1<10^{-m_3}$. Then by the fibrewise mixing proposition (Proposition~\ref{prop:fibrewise_mixing}), there exists $N_1$ such that for $n\ge N_1$, there is a set $\Sigma_3^n$ of $\omega\subseteq \Sigma$ of $\mu$-measure $1-2\cdot 10^{-m_3}$ such that for $\omega\in \Sigma_3^n$, $U_{\omega}^1$ mixes onto the $B_i''$ for each $B_i''$ in the covering, i.e. for $\omega\in \Sigma_3^n$, 
\begin{equation}\label{eqn:fiber_wise_mixing_est_3}
(1-10^{-m_3})\varrho\vol(B_i'') \le \vol(f^n_{\omega}(U_{\omega}^1)\cap \{\omega\}\times B_i'')\le \vol(B_i'')\varrho(1+10^{-m_3}).
\end{equation}
Similarly we have a cutoff $N_2$, and sets $\Sigma_4^n$ such that the same holds for $U_2$. We will strengthen this estimate even further,
we will let $B_i'''$ be a ball with the same center as $B_i''$ but with slightly larger radius so that the ratio of the volumes $\vol(B_i''')/\vol(B_i'')=1+10^{-m_4}$. Then by possibly enlarging the numbers $N_1$ and $N_2$, we can arrange that the same estimate holds simultaneously for the sets $B_i'''$ as well. 

Now consider what happens for $\omega\in \Sigma_0\cap \Sigma_1^n\cap \Sigma_3^n$. These are words $\omega$ where $\gamma_1$ has a good thickening by stable manifolds, and many of the points in the image of $\hat{\gamma}_1$ are good standard pairs and there is equidistribution. For any $m_4$ as long as $n$ is sufficiently large, the diameter of the image of any $W^s_{\epsilon(x)}(\omega,x)$ leaf in the thickening of $\hat{\gamma}_1$ is at most $\tau\delta/10^{2m_4}$. Thus from the measure preservation of the projection $\pi^s$ of $f^n_{\omega}(U_{\omega})$ onto $f^n_{\omega}(\hat{\gamma}_1)$, we see that if some point $x\in f^n_{\omega}(U_{\omega}^1)$ is in $B_i''$, then as $B_i'''$ contains a neighborhood of $B_i''$ of radius $\tau\delta/10^{m_4}$,  all points on $f^n_{\omega}(W^s_{\epsilon(x)}(\omega,x))$ and, in particular, the points of $\hat{\gamma}$ lie in this set. Hence, writing  $\rho^{n,\omega}_1$ for the density on $f^n_{\omega}(\hat{\gamma}_1)$, 
\begin{equation}
\rho_1^{n,\omega}(f^n_{\omega}(\hat{\gamma}_1)\cap B_i''')\ge \vol(B_i'')(1-10^{-m_3})\rho_1(\hat{\gamma}_1).
\end{equation}

We claim that for such $\omega\in \Sigma_0\cap \Sigma_1^n\cap \Sigma_3^n$ that there exists a subfamily of the $B_i''$ containing at least $90\%$ of the $B_i''$, and such that for each of these $B_i''$,
\begin{equation}
\rho_1^{n,\omega}(G_{n,\omega}^1\cap B_i''')\ge \vol(B_i''')\rho_1(\hat{\gamma})/2.
\end{equation}
Suppose that this were not the case, then for such an $\omega$ there is a set of $10\%$ of the balls $B_i'''$ such that for these balls we have $\rho_1^{n,\omega}(G_{n,\omega}^1\cap B_i''')<\vol(B_i''')\rho_1(\hat{\gamma})/2$. 
Then, from \eqref{eqn:well_positioned_est1} and the fibrewise mixing estimate \eqref{eqn:fiber_wise_mixing_est_3},
$$
\vol(f^n_{\omega}(U_{\omega})\cap \bigcup_i B_i'')\le \frac{\rho_1^{n,\omega}(G_{n,\omega}^{1}\cap \bigcup_i B_i''')\varrho}{\rho_1(\hat{\gamma})}+10^{-(m_2-1)}\varrho $$
$$ \le .1\sum_{i } \vol(B_i''')\varrho\frac{1}{2}+.9\sum_{i}\vol(B_i''')\varrho(1+10^{-m_3})+10^{-(m_2-1)}\varrho $$
$$\le .96\sum_i \vol(B_i''')\varrho 
\le .96(1+\frac{1}{10^{m_4-1}})\varrho\sum_i\vol(B_i'')
$$
which contradicts fiberwise mixing of $U^1$ with the set $\bigcup_i B_i'''$. 

Now consider $\omega\in \Sigma_0\cap \Sigma_1^n\cap \Sigma_2^n\cap \Sigma_3^n\cap \Sigma_4^n$. We have that for $90\%$ of the balls $B_i'''$, that $B_i'''$ has radius at most $\tau\delta/8$, and this ball contains points of $f^n_{\omega}(\hat{\gamma}_1)$ that are $(C_1,D_1)$-well centered of measure at least $\rho_1(\hat{\gamma}_1)\tau^2\delta^2/200$. The same holds for $\hat{\gamma}_2$ for a possibly different $90\%$ of balls. Thus for $80\%$ of the balls $B_i'''$ each of $f^n_{\omega}(\hat{\gamma}_1)$ and $f^n_{\omega}(\hat{\gamma}_1)$ contains measure $\rho_1(\hat{\gamma}_1)\tau^2\delta^2/200$ points that are $(C_1,D_1)$-well centered. As these points are in a ball of radius $\tau\delta/8$. From our choice of $\delta$, it follows that any pair of such images is $(C_1,\delta,\tau\delta)$-configured. Thus the needed conclusion follows by possibly subdividing the standard pairs we have identified so that they may be coupled in a measure preserving way.
We may now conclude because

$\displaystyle \mu(\Sigma_0\cap \Sigma_1^n\cap \Sigma_2^n\cap \Sigma_3^n\cap \Sigma_4^n)\ge 1-10^{-m_1}-2\cdot 10^{-m_2}-4\cdot 10^{-m_3}\ge 1-10^{-19}.
$
\end{proof}

\section{Proof of the Local Coupling Lemma}
\label{ScLocalCoupling}

\subsection{Inductive local coupling procedure}\label{subsec:local_coupling_procedure}
To prove the Local Coupling Lemma~\ref{ref:small_scale_coupling_lemma}, we would like two positive measure sets to be intertwined under the true stable holonomy. 
However, at any finite time, we do not yet know what the true limiting stable manifold is.
To compensate, at finite times we approximate the limiting holonomy by using the fake stable manifolds. 
In the proof of the local coupling lemma, we will consider the differences between different standard families as discussed in  \S \ref{sec:standard_pairs}.

To begin this section, we introduce a notion of a ``fake coupling" of two standard pairs $\hat{\gamma}_1$ and $\hat{\gamma}_2$. 
We use fake couplings because in our setting we cannot use the stable manifold as is done in the deterministic setting. 
In the deterministic setting, if $\hat{\gamma}_1$ and $\hat{\gamma}_2$ are near each other, then we can immediately determine which points in $\hat{\gamma}_1$ attract to which in $\hat{\gamma}_2$ by using the stable holonomy.
We work in an opposite manner: at each time $n$ we discard points that cannot couple yet. 
For example, if $y\in \hat{\gamma}_2$ and none of the time $n$ fake stable manifolds come near $y$, then $y$ can't couple because the true stable manifold is near the fake one.
Consequently, we stop trying to couple $y$ at time $n$.
After we see the dynamics for all time, the points that remain in $\hat{\gamma}_1$ and $\hat{\gamma}_2$ are those that can be coupled with each other using the stable manifold. 
Hence after the fact, we see that they were coupled. 
The fake coupling is not a coupling. A time $n$ fake-coupling is a pair of subfamilies $P_n^1\subseteq \hat{\gamma}_1$ and $P_n^2\subseteq \hat{\gamma}_2$ that could \emph{potentially} be coupled by the true stable manifolds. For a time $n$ fake coupling, we insist that the holonomies of the time $n$ fake stable manifolds carry $P_n^1$ to $P_n^2$. Another way to describe  this is that $P_n^1$ and $P_n^2$ seem  coupled until time $n$.

The definition of a fake coupling that follows that is adapted to the neighborhood $B_{\delta_0}(x)$ from Proposition \ref{prop:set_up_scale_prop} and relies on the constants obtained in that proposition. Fake stable manifolds $W^s_n$ and their properties are discussed in detail in Appendix \ref{sec:finite_time_pesin_theory}.

\begin{defn}\label{defn:fake_coupling}
Suppose that $\hat{\gamma}_1$ and $\hat{\gamma}_2$ are two standard pairs that we are attempting to couple that are $(C,\delta',\upsilon)$-configured where $C,\delta,\upsilon$ are parameters as in Proposition \ref{prop:set_up_scale_prop}. 
Fix some $x$ and neighborhood $B_{\delta_0}(x)$, $\mc{C}_{\theta}$ as in part \ref{item:there_exists_well_cfd_nbd} of that Proposition. We will use the other constants from that proposition as well without reintroducing them. 

For $n\ge N$, we say that $P^1_n\subseteq \hat{\gamma}_1$ and  $P^2_n\subseteq\hat{\gamma}_2$ are a 
$(b_0,\hat{\eta})$-\emph{fake coupled pair at time $n\ge N$} for some word $\omega$ on $B_{\delta_0}(x)$ if the following statements hold. Write $\rho_n^1$ and $\rho_n^2$ for the densities of $P^1_n$ and $P^2_n$ on $\gamma_1$ and $\gamma_2$. Let $\mc{I}^1_n$ and $\mc{I}^2_n$ be the underlying curves of $P^1_n$ and $P^2_n$.
\begin{enumerate}
    \item $P^1_n$ and $P^2_n$ have equal mass and $(H^s_{n-1})_*(\rho^1_n)=\rho^2_n$. 

    \item 
    $H^s_{n-1}$ carries $\mc{I}^1_n$ to $\mc{I}^2_n$.
   \item\label{item:P_1_well_tempered_points_padded}
    If $x\in \gamma_1$ is  $(C,\lambda,\epsilon,\mc{C}_{\theta})$-tempered for times $N\le i\le n$, 
    then $x\in \mc{I}_n^1$. 
    \item 
    At each point $x$ in the curve underlying $P^1_n$, we have that 
    \[
    \rho^1_n(x)\ge b_0\prod_{N\le i\le n}(1-e^{-i\hat{\eta}})\rho^1(x).
    \]
\end{enumerate}
\end{defn}
We will see below that
if for a given word $\omega$ we are able to arrange that the statements above hold for each $n$, then in the limit, for each point $x\in \gamma_1$ that is $(C,\lambda,\epsilon)$-tempered and in each $P^1_n$ that at least $\epsilon_0$ of the mass above $x$ in $\hat{\gamma}_1$ couples. 
Thus as typically a positive measure set of $x$ have this property, a positive proportion of the mass of $P^1_n$ couples. 

The structure of the rest of this section is as follows.  In \S\ref{subsec:nearby_points_tempered_splitting} and
\S\ref{SSCushion} we show that if a trajectory has a tempered splitting then nearby trajectories also have tempered splittings. In 
\S \ref{subsec:scale_selection} we prove Proposition \ref{prop:set_up_scale_prop} which shows how small a scale we need to work at in order to run a coupling procedure. Then in 
\S \ref{subsec:inductive_local_coupling_procedure} we prove the local coupling lemma in two steps. First, we prove Lemma \ref{lem:local_coupling_lemma}, which describes a deterministic local coupling procedure that can be applied to a fixed word $\omega$ under the choice of constants provided by Proposition \ref{prop:set_up_scale_prop}. We then finish the proof of Lemma \ref{ref:small_scale_coupling_lemma} by using that the hypotheses of this deterministic local coupling procedure are satisfied with high probability.

\subsection{Nearby points inherit tempered splitting}\label{subsec:nearby_points_tempered_splitting} In this subsubsection we prove Proposition \ref{prop:nearby_points_inherit_temperedness}, which says that nearby trajectories inherit splittings from each other. This will be used later to show that the set of points on a curve that have a tempered splitting after $n$ iterations is quite fat.
The idea that points close to hyperbolic orbits inherit hyperbolicity is useful in many problems in dynamics. For example, a
classical Collet--Eckmann condition is used in one dimensional dynamics to show that near critical orbits recover hyperbolicity
if the critical orbit is hyperbolic (see \cite{ColletEckmann}). Analogous results for two dimensional strongly dissipative maps
appear in \cite{BenedicksCarleson, WangYoung01}. In this paper we present a  version for general two dimensional maps based 
on Pesin theory.

We begin with a fact showing how far attracting and repelling directions of a linear map of $\RP^1$ move under perturbation.

\begin{lem}\label{lem:linearized_perturbation_RP1}
Fix some $\lambda>1$,  then there exists $C,\epsilon_0,\delta_0,N_0>0$, such that if $L\colon \R^2\to \R^2$ is a linear map of the form 
\begin{equation}
\begin{bmatrix}
\sigma_1 & 0 \\
0 & \sigma_2
\end{bmatrix}
\end{equation}
 with $\abs{\sigma_1},\abs{\sigma}_2^{-1}\ge \abs{\lambda}>1$,
$g_0\colon \RP^1\to \RP^1$ is the induced map, and $g_{\epsilon}$ is a perturbation with $d_{C^1}(g_0,g_{\epsilon})=\epsilon<\epsilon_0$, then:

    (1)
    $g_{\epsilon}$ has a unique repelling fixed point $r_{\epsilon}$ and a unique attracting fixed point $a_{\epsilon}$, and these satisfy
    $\displaystyle d(r_{\epsilon},(0,1))\le C\epsilon,$  
    $\displaystyle d(a_{\epsilon},(1,0))\le C\epsilon.$
   
    (2)
    On the neighborhood $B_{\delta_0}((0,1))$, $\|Dg_{\epsilon}\|\ge \lambda-C\epsilon$ and on the neighborhood $B_{\delta_0}((1,0))$, $\|Dg_{\epsilon}\|\le \lambda^{-1}+C\epsilon$. These neighborhoods are overflowing and under-flowing, respectively.
     
    (3) If $y\notin B_{\delta_0}((0,1))$, then $g^{N_0}_{\epsilon}(y)\in B_{\delta_0}((1,0))$. 

\end{lem}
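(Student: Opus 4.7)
The plan is to exploit the fact that the action of a diagonal map on $\RP^1$ reduces to scalar multiplication in suitable affine charts, so that the whole statement becomes a standard persistence-of-hyperbolic-fixed-points argument for $1$-dimensional maps. Using the affine coordinate $t = y/x$ near $(1,0)$ we have $g_0(t) = (\sigma_2/\sigma_1)t$, a linear contraction of rate at most $\lambda^{-2}$; and using $s = x/y$ near $(0,1)$ we have $g_0(s) = (\sigma_1/\sigma_2) s$, a linear expansion of rate at least $\lambda^2$. Crucially, only the ratio $\sigma_1/\sigma_2$ enters the induced map, and $|\sigma_1/\sigma_2|\ge \lambda^2$, which gives all the uniformity we will need in $\sigma_1,\sigma_2$.

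For part (1), I would apply the implicit function theorem in each chart to the equation $g_\epsilon(t)-t=0$. At the attractor the derivative $g_0'(0) = \sigma_2/\sigma_1$ satisfies $|1-g_0'(0)|\ge 1-\lambda^{-2}$, and analogously at the repeller $|1-g_0'(0)|\ge \lambda^2-1$; both quantities are bounded away from $0$ uniformly in $\sigma_i$. $C^1$-closeness of $g_\epsilon$ to $g_0$ then produces unique fixed points $a_\epsilon$, $r_\epsilon$ in small charts around $(1,0)$ and $(0,1)$ with $d(a_\epsilon,(1,0))\le C\epsilon$ and $d(r_\epsilon,(0,1))\le C\epsilon$. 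Uniqueness of $a_\epsilon$ and $r_\epsilon$ in each chart is immediate since, for $\epsilon$ small, $g_\epsilon$ is still a contraction (resp.\ expansion) on the chart. Global uniqueness on $\RP^1$ will follow from part (3), which forces every fixed point to lie in $B_{\delta_0}((0,1)) \cup B_{\delta_0}((1,0))$.

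Part (2) is a direct consequence of $C^1$-closeness: on $B_{\delta_0}((1,0))$ the unperturbed map has derivative of modulus $\le \lambda^{-2}$, so provided $\delta_0$ is small enough that second-order terms are controlled and $\epsilon_0$ is small enough, $\|Dg_\epsilon\| \le \lambda^{-1}+C\epsilon$; the analogous lower bound $\|Dg_\epsilon\|\ge \lambda-C\epsilon$ holds near $(0,1)$. The overflowing/underflowing property is then a consequence of contraction (resp.\ expansion) around a fixed point very close to the center of the ball: a contraction on a ball with fixed point in the ball maps the ball strictly inside itself, and dually for an expansion.

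For part (3), the idea is that $g_0$ attracts the complement of the repeller to the attractor at uniform rate. Concretely, in the $t$-coordinate, a point $y\notin B_{\delta_0}((0,1))$ corresponds to $|t|\le c/\delta_0$ for a universal constant, and $|g_0^n(t)| \le \lambda^{-2n}\,|t|$, so choosing $N_0$ with $\lambda^{-2N_0}\cdot c/\delta_0 \le \delta_0/2$ drives every such $y$ into $B_{\delta_0/2}((1,0))$. By iterating the $C^1$-closeness $N_0$ times, $g_\epsilon^{N_0}$ is $O(\epsilon)$-close to $g_0^{N_0}$ in $C^0$, so the same conclusion holds for $g_\epsilon$ once $\epsilon_0$ is small. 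The main subtlety (and the only real obstacle) is chart-switching: the orbit of a point starting far from both fixed points may spend its early iterates in the ``wrong'' affine chart, so one has to bound how long a point can remain outside a definite neighborhood of either fixed point. This is handled by observing that $\RP^1\setminus (B_{\delta_0}((1,0))\cup B_{\delta_0}((0,1)))$ is compact and that $g_0$ has no fixed point there, so the escape time to $B_{\delta_0}((1,0))$ is uniformly bounded; $C^0$-closeness then transfers the bound to $g_\epsilon$. All of these estimates depend only on $\lambda$ and $\delta_0$, giving the claimed $N_0$.
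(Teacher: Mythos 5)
The paper does not actually supply a proof of this lemma --- it states that ``these are standard facts about the dynamics in a neighborhood of a hyperbolic fixed point'' and omits the argument --- so there is no written proof to compare against. Your strategy (linearize in the affine charts $t=y/x$ and $s=x/y$, where $g_0$ is exactly multiplication by $\sigma_2/\sigma_1$ and $\sigma_1/\sigma_2$, then transfer to $g_\epsilon$ by $C^1$-closeness) is certainly the intended one, and parts (1) and (3) and the contraction half of (2) go through as you describe.

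There is, however, a genuine gap in your treatment of the expansion estimate in part (2), and it sits exactly where you wave it away. You assert that ``only the ratio $\sigma_1/\sigma_2$ enters the induced map, and $|\sigma_1/\sigma_2|\ge\lambda^2$ gives all the uniformity we will need,'' and later that $\delta_0$ can be taken ``small enough that second-order terms are controlled.'' In the angular coordinate one computes
\begin{equation*}
\|D g_0(\theta)\|=\frac{|\sigma_1\sigma_2|}{\sigma_1^2\cos^2\theta+\sigma_2^2\sin^2\theta},
\end{equation*}
so at angular distance $\phi$ from $(0,1)$ this is $\frac{r}{r^2\sin^2\phi+\cos^2\phi}$ with $r=|\sigma_1/\sigma_2|$, which drops below $\lambda$ as soon as $\phi\gtrsim (\lambda r)^{-1/2}$. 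A \emph{lower} bound on the ratio therefore does not control the second-order terms near the repeller: the linear regime around $(0,1)$ shrinks like $r^{-1/2}$, and on any ball of fixed radius $\delta_0$ the bound $\|Dg_\epsilon\|\ge\lambda-C\epsilon$ fails once $r$ is large. To make part (2) correct with a uniform $\delta_0$ you need an \emph{upper} bound on $\|L\|$ (equivalently on $r$), with $\delta_0$ then depending on that bound; this is harmless for the paper, since in every application the matrices have uniformly bounded norm and the lemma is invoked in Lyapunov coordinates where the diagonal entries are uniformly bounded (see the footnote in the proof of Claim \ref{claim:nearness_of_vu}), but it should be stated. A secondary, smaller point: in part (3) you cannot iterate $C^1$-closeness using the global Lipschitz constant of $g_0$, which equals $r$ and is unbounded; you should instead observe that on the forward-invariant region $\RP^1\setminus B_{\delta_0}((0,1))$ the derivative of $g_0$ is bounded above uniformly in $r\ge\lambda^2$ (by roughly $1/\sin(2\delta_0)$), and that $g_0$ pushes this region strictly away from $(0,1)$, so the perturbed orbit remains there and the $N_0$-fold composition bound is uniform.
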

We omit the proof of the above lemma as these are standard facts about the dynamics in a neighborhood of a hyperbolic 
fixed point. 
The proof of the next result is long and relies on a number of intermediate lemmas.

\begin{prop}\label{prop:nearby_points_inherit_temperedness}
(Nearby points inherit temperedness)
Fix $C_0,C_1,\lambda,\alpha,\epsilon_0,D_0,\sigma>0$ and $0<\lambda'<\lambda$. Then for sufficiently small $\epsilon>0$ there exist $\nu,k,D_1,N>0$ such that $k\epsilon<\epsilon_0$ and if we have a sequence of matrices of length $n\ge N$ $(A_i)_{1\le i\le n}\in \SL(2,\R)$ that are uniformly bounded in norm by $D_0$
and are $(C_0,\lambda,\epsilon)$-tempered,  and $(B_i)_{1\le i\le n}$ is another sequence of matrices such that $\|A_i-B_i\|\le C_1e^{-\alpha(n-i)}$ then:
\begin{enumerate}
    \item $B_i$ has a $( D_1C_0,\lambda',k\epsilon)$-subtempered splitting with the stable direction equal to the contracting singular direction of $B^n$, and
    \item
    The angle between $(B_i)_{i=1}^n$ and $(A_i)_{i=1}^n$'s stable directions is at most $e^{-\nu n}$.
    \item 
    $\displaystyle
    \|B^n\|\ge \|A^n\|^{(1-\sigma)}.
 $
\end{enumerate}

\end{prop}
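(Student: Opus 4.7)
The plan is to exploit the cone structure provided by the tempered splitting of the $A$-sequence. Since the perturbation $\|B_i - A_i\| \le C_1 e^{-\alpha(n-i)}$ is exponentially small at early times---the times that primarily determine the stable direction of the product---the forward-invariant unstable cone around $e^u$ and the backward-invariant stable cone around $e^s$ for the $A$-dynamics persist under the $B$-dynamics, yielding an invariant splitting with only a subexponential cost.

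First I would pass to the (non-orthonormal) adapted basis $\hat u_i = A^i e^u/\|A^i e^u\|$, $\hat s_i = A^i e^s/\|A^i e^s\|$ at each time $i$. The angle bound in the tempered splitting gives $\|P_i\|, \|P_i^{-1}\| \le e^{C_0' + \epsilon i}$ for the change-of-basis matrix $P_i$, and in adapted coordinates $\tilde A_{i+1} = P_{i+1}^{-1} A_{i+1} P_i$ is diagonal. The tempered splitting provides multiplicative bounds on \emph{products} of these diagonal entries but not on single steps, so I would group times into blocks of a uniform length $k_0$, chosen large enough that each block's $A$-product is genuinely hyperbolic, with ratio at least $e^{(\lambda - 2\epsilon)k_0 - O(C_0)}$ (up to the drift $e^{-\epsilon k_0 j}$ per block allowed by temperedness).

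Next I would propagate the cone structure to the $B$-sequence. The adapted-coordinate perturbation is bounded by $\|P_{i+1}^{-1}\|\,\|B_{i+1}-A_{i+1}\|\,\|P_i\| \le e^{2C_0' + 2\epsilon i}\,C_1 e^{-\alpha(n-i)}$, which remains exponentially small in $n-i$ provided $\alpha > 2\epsilon$---a condition that fixes how small $\epsilon$ must be. Hence each block-length $B$-product is an exponentially small perturbation of the corresponding $A$-product for $i$ not too close to $n$, and preserves thin unstable cones around $\hat u_i$ and thin stable cones around $\hat s_i$ with only subexponential loss. A standard invariant-cone / graph-transform construction then yields invariant directions $E^u_B$ (the nested forward intersection of unstable cones) and $E^s_B$ (the nested backward intersection of stable cones) for the $B$-product. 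Because $E^s_B$ is the unique direction that $B^n$ contracts maximally, it coincides with the smallest singular direction of $B^n$. The angle between $E^s_B$ and $\hat s_0$ is controlled by a telescoping sum whose leading term is $C_1 e^{-\alpha n}$, giving $\angle(E^s_B, e^s) \le e^{-\nu n}$ for a small $\nu > 0$, which is item (2).

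Finally, I would verify items (1) and (3). Along a block, the $B$-expansion on a vector in the unstable cone differs from the $A$-expansion by a factor $1 + O(e^{-\alpha(n-i)})$; taking products over blocks, together with the tempered bounds for $A$, yields expansion on $E^u_B$ at rate $e^{(\lambda - k\epsilon)m}$ up to a constant $e^{D_1 C_0}$ (where the factor $k$ in $k\epsilon$ absorbs the block size $k_0$ and the adapted-basis degradation), and the analogous argument on the reverse system yields supertemperedness of $E^s_B$. For item (3), the cone comparison gives $\|B^n\| \ge \|B^n e^u\| \ge e^{-\nu' n}\|A^n e^u\|$, and combined with $\|A^n\| \ge e^{-C_0 + \lambda n}$ from the tempered splitting, this yields $\|B^n\| \ge \|A^n\|^{1-\sigma}$ once $\nu' \ll \sigma\lambda$ and $n \ge N$. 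The main technical obstacles are managing the adapted-basis degradation (which forces $\alpha > 2\epsilon$ and the loss from $\epsilon$ to $k\epsilon$ in the final constant), and ensuring that the cone-invariant direction $E^s_B$ really is the smallest-singular-value direction of the finite product $B^n$ rather than just a contracting direction in the limit---this uses that once $\|B^n\|$ is much larger than $1$ the smallest-singular-value direction is unique and must realize the contraction obtained on the stable cone.
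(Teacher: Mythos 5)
Your overall architecture---diagonalize the $A$-dynamics in adapted coordinates, observe that the perturbation there has size $e^{O(\epsilon i)}e^{-\alpha(n-i)}$ (hence is small only when $\epsilon\ll\alpha$ and $i$ is not too close to $n$), and then compare the two systems direction by direction---matches the paper's, which works in finite-time Lyapunov charts and studies the induced dynamics on $\RP^1$ near the two fixed points. But there are two concrete gaps. First, the tail: your cone invariance and per-block comparisons are only valid up to the time $n_2\approx(1-O(\epsilon/\alpha))n$ at which $e^{O(\epsilon i)}e^{-\alpha(n-i)}$ becomes order one, yet conclusion (1) demands subtemperedness for all index pairs with $k+m\le n$, including those landing in the uncontrolled segment $(n_2,n]$. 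You never say how the final $O(\epsilon n/\alpha)$ iterates are absorbed. The paper does this with a dedicated bookkeeping lemma (Lemma \ref{lem:bad_tail_still_tempered}): appending $m$ arbitrary bounded matrices to a tempered sequence of length $n_2$ preserves tempering provided $m\lesssim nk\epsilon/\Delta$, and this is precisely where the degradation from $\epsilon$ to $k\epsilon$ in the conclusion comes from---not, as you suggest, from the block size or the basis distortion.

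Second, the supertemperedness of the stable direction. You propose to obtain it by ``the analogous argument on the reverse system,'' but reversing time turns the hypothesis $\|A_i-B_i\|\le C_1e^{-\alpha(n-i)}$ into a perturbation that is $O(1)$ at the beginning of the reversed sequence, so a backward cone construction starting from time $n$ has no control over exactly the iterates that matter most for it; the situation is not symmetric. The paper sidesteps this entirely: it only proves that the norms $\|B^i\|$ are subtempered (a statement about the unstable direction alone, obtained by the shadowing argument you sketch) and then invokes Proposition \ref{prop:tempered_norm_implies_splitting}, which manufactures the full tempered splitting---including the supertempered contraction of the most contracted singular direction of $B^n$ and the angle bounds---from norm subtemperedness plus area preservation. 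Adopting that reduction would also discharge your worry about identifying the cone-constructed stable direction with the singular direction of $B^n$. A smaller imprecision: in your item (3) the per-step ratio of expansions is not $1+O(e^{-\alpha(n-i)})$, because $B^ie^u$ and $A^ie^u$ point in slightly different directions and the expansion factor depends on direction; the extra error per step is of order (angular discrepancy)$\times\ln\|A_i\|$, which is why the paper settles for $\ln\|B^{n_2}\|\ge(1-\sigma/2)\ln\|A^{n_2}\|-C$ rather than a loss that is exponentially small uniformly.
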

\begin{proof}
Before we begin, observe that due to the presence of the factor $D_1$ in the conclusion, it suffices to show that the needed claim holds for $n$ sufficiently large as we may always deal with small $n$ by adjusting $D_1$. 
Let $\hat{\lambda}=\frac{\lambda+\lambda'}{2}$. 

As long as $\epsilon_0<(\lambda-\lambda')/2$, we may view the sequence of matrices $A_i$ in the finite time Lyapunov charts from Lemma \ref{lem:lyapunov_metric},
where we view the sequence as being $(C_0,\hat{\lambda},\epsilon+\frac{\lambda-\lambda'}{2})$-tempered. In these charts, we have: 
$\displaystyle
A_i=\begin{bmatrix}
\sigma_{1,i} & 0 \\
0 & \sigma_{2,i}
\end{bmatrix},
$
where $\min\{\sigma_{1,i},\sigma_{2,i}^{-1}\}\ge e^{\hat{\lambda}}$. From Lemma \ref{lem:lyapunov_metric}, the ratio of the reference norm and the Lyapunov norm at step $i$ is $O_{C_1,\alpha,\lambda,\lambda'}(e^{4\epsilon i})$. 

As $B_i$ is a perturbation of size $e^{-\alpha(n-i)}$ by viewing $B_i$ in the same Lyapunov coordinates as $A_i$, we have that 
\begin{equation}\label{eqn:lyapunov_chart_B_i_expression}
B_i=\begin{bmatrix}
\sigma_{1,i}&  0\\
0 & \sigma_{2,i}
\end{bmatrix}+O_{C_1,\alpha,\lambda,\lambda'}(e^{-\alpha (n-i)}e^{4\epsilon i}),
\end{equation}
where $\min\{\sigma_{1,i},\sigma_{2,i}^{-1}\}\ge e^{\hat{\lambda}}$. 

Using this representation, we will now study $B_i$ as a perturbation of the matrix product involving the $A_i$.  For most $i$, the two are quite close and consequently $B_i$ will inherit temperedness of its norm. The remaining $i$ will be negligible. To show this, we first identify where the stable direction of $B^n$ lies. Then using this we show that the norm of $B^i$ is subtempered up to a particular time. Then we do a little bookkeeping to show that if we relax the subtemperedness condition, then norm will remain subtempered up to time $n$.

First we study how temperedness changes as we continue appending matrices to a sequence.

\begin{lem}\label{lem:bad_tail_still_tempered}
Fix some bound $e^{\Delta}>1$. Suppose that $A_1,\ldots,A_n$ is a sequence of matrices whose splitting into singular directions is $(C,\lambda,\epsilon)$-tempered. Then for any $k$ and sequence $B_1,\ldots,B_m$ with $\|B_i\|\le e^{\Delta}$ and $m<\Delta^{-1}(nk\epsilon-C)$, the sequence $A_1,\ldots,A_n,B_1,\ldots,B_m$ is $(C,\lambda-k\epsilon,k\epsilon)$-tempered.
\end{lem}
\begin{proof}
A straightforward generalization of Lemma \ref{lem:concat_tempered_additive_est} gives that if we have a sequence of matrices $A_1,\ldots,A_n$ with $(C,\lambda,\epsilon)$-tempered norm and we append  a sequence $B_1,\ldots,B_m$ that is $(-\Delta m,\lambda-\epsilon,\epsilon)$-tempered, then the concatenation is a $(\tilde C,\lambda-k\epsilon,k\epsilon)$ tempered sequence with
\begin{equation}
 \tilde C= \min\{C,C-m\Delta+n k\epsilon/2,-m\Delta+nk\epsilon\}. 
\end{equation}
Thus the needed conclusion holds as long as 
$\displaystyle
m\le \frac{nk\epsilon-C}{\Delta}.
$
\end{proof}

The following lemma gives tight control on where $v^s$, the most contracted vector for the sequence $(B_i)_{i=1}^n$ lies. Below we will write $g_{i,\epsilon}$ for the map on $\RP^1$ induced by $B_i$, viewed in the Lyapunov coordinates above. We write $g^i_{\epsilon}$ for the composition $g_{i,\epsilon}\circ \cdots\circ g_{1,\epsilon}$. 

\begin{lem}\label{lem:nearness_of_unstable_directions_lemma}
For all $C_0,C_1,\alpha,\lambda,\lambda',D_0>0$ as above and all sufficiently small $\epsilon>0$, there exists $\nu>0$ and $N_s\in \N$ such that if $n\ge N_s$ and $(B_i)_{i=1}^n$ is a sequence of matrices as above, a perturbation of $(A_i)_{i=1}^n$, a sequence of matrices with a $(C_0,\lambda,\epsilon)$-subtempered splitting, then the most contracted direction of $B^n$, $v^s_B$, lies within a neighborhood of size $e^{-n\nu}$ of the most contracted direction of $A^n$.
\end{lem}
\begin{proof}
We will use the perturbed dynamics $g_{i,\epsilon}$ on $\RP^1$ from above and  prove this result by studying how fast a vector near the vector $(0,1)$ escapes and goes to $(1,0)$. We will use the estimates of Lemma \ref{lem:linearized_perturbation_RP1} freely and not restate them here. Given $\delta_0>0$ in the conclusion of that lemma, we see that as long as the size of the perturbation is at most some $\epsilon_{\delta_0}$, then  
on the neighborhoods of size $\delta_0$ of $(0,1)$ the expansion is by a factor of at least $e^{.9 \hat{\lambda}}$ and similarly in the $\delta_0$-neighborhood of $a_{\epsilon}$, the contraction of distance is by a factor of $e^{-.9 \hat{\lambda}}$. As long as $\epsilon$ is sufficiently small relative to $\alpha$ and $i\le \frac{99}{100}n$, then $g_{i,\epsilon}$ is a perturbation of size less than $\epsilon_{\delta_0}$ and the estimate for the norm of $g_{i,\epsilon}$ on $B_{\delta_0}((1,0))$ and $B_{\delta_0}((0,1))$ holds.

Next, we study the norm growth of $v$ over its entire trajectory. Define
$\Phi^i_{\epsilon}\colon \RP^1\to \R^+$ by
\begin{equation}
\Phi^1_{\epsilon}(v)=\ln \frac{\|B_iv\|}{\|v\|}.
\end{equation}
Then  $\|B^iv\|$ is the sum of $\Phi^i_{\epsilon}$ along the trajectory of $v$.  We divide the trajectory of $v$ into three segments. The first segment is when $v$ is does not yet lie in $B_{\delta_0}((1,0))$. The middle segment is when it lies in $B_{\delta_0}((1,0))$ and $B_i$ remains a small enough perturbation of $A_i$ that we may use the approximations of Lemma \ref{lem:linearized_perturbation_RP1}. Finally, during the last part of the trajectory $i$ is so big that these estimates no longer hold. We will let $1< n_1<n_2<n$ denote the indices where $g^i_{\epsilon}(v)$ first enters $B_{\delta_0}((1,0))$ and $n_2$ the index where the approximations of Lemma \ref{lem:linearized_perturbation_RP1} first cease to hold. We now proceed to estimate how large $n_1$ and $n_2$ are. Then using this information we will calculate $\|B^nv\|$. 

By estimating in this manner, we will see that any vector that starts at distance more than $e^{-n\nu}$ from $(0,1)$ cannot be a stable vector as its norm grows. Below, we will track the estimates for $B_i$, the same apply to $A_i$. Consequently, we see that the stable vector for both $A_i$ and $B_i$ must lie within distance $e^{-n\nu}$ of $(1,0)$ for some sufficiently large $\nu$.

We now estimate $n_1$, i.e.~we study how long it takes a vector $v$ near $(0,1)$ to leave $B_{\delta_0}((0,1))$. 
We claim that if $\nu$ is sufficiently small then for sufficiently large $n$,
any vector $v$ that starts $e^{-n\nu}$ away from $(0,1)$  will exit $B_{\delta_0}((0,1))$
 after at most $(2\nu/\lambda) n$ iterates. 
To this end consider 
$$
d(g_{\epsilon}^i(v),(0,1))\ge d(g^i_{\epsilon}(v),g_{i,\epsilon}(0,1))-d(g_{i,\epsilon}((0,1)),(0,1))
$$$$\ge e^{.9\hat{\lambda}}d(g^{i-1}_{\epsilon}(v),(0,1))\!\!\pez{1\!-\!\frac{C_1e^{-n\alpha} e^{i(\alpha+4\epsilon)}}{e^{.9\hat{\lambda}}d(g^{i-1}_{\epsilon}(v),(0,1))}}
$$
As long as $i\le (1/3)n$, $\epsilon<\alpha/100$, and $n$ is sufficiently large, 
\begin{equation}\label{eqn:size_of_error_B_i1}
C_1e^{-n\alpha} e^{i(\alpha+4\epsilon)}\le e^{-\frac{\alpha}{2}n}.
\end{equation}
Thus if $\nu<\alpha/2$, then for sufficiently large $n$, if $d(g^{i-1}_{\epsilon}(v),(0,1))\ge e^{-n\nu}$, then 
\begin{equation}
\pez{1-\frac{C_1e^{-n\alpha} e^{i(\alpha+4\epsilon)}}{e^{.9\hat{\lambda}}d(g^{i-1}_{\epsilon}(v),(0,1))}}\ge e^{-.1\hat{\lambda}}. 
\end{equation}

From the above, we see that as long as $n$ is sufficiently large, $i\le n/3$, and the trajectory of $v$ has not left the $B_{\delta_0}((0,1))$ after $i$, iterates, then 
\begin{equation}
d(g_{\epsilon}^i(v),(0,1))\ge e^{.8\hat{\lambda}}d(g^{i-1}_{\epsilon}((0,1)),(0,1)).
\end{equation}
Proceeding iteratively, we see that after $i$ iterations, assuming $i\le n/3$ and that the trajectory of $v$ has not left $B_{\delta_0}((0,1))$,  
\begin{equation}
    d(g_{\epsilon}^i(v),(0,1))\ge e^{.8\hat{\lambda}i}d(v,(0,1)).
\end{equation}

In particular, if $g_{\epsilon}^i(v)$ has not left $B_{\delta_0}((0,1))$ after 
$ (2\nu/.8\hat{\lambda}) n$, iterates then we would have that 
$\displaystyle
d(g_{\epsilon}^i(v),(0,1))\ge e^{\nu n},
$
which is absurd.

Thus as long as $\epsilon<\alpha/10$ it follows  for sufficiently large $n$ that $g^i_{\epsilon}(v)$ exits $B_{\delta_0}((0,1))$ after at most $\frac{2\nu}{.8\hat{\lambda}}n$ steps. Moreover, by Lemma \ref{lem:linearized_perturbation_RP1},
 it enters the neighborhood $B_{\delta_0}((1,0))$ after an additional $N_0$ iterates. Thus for sufficiently large $n$, $n_1\le \frac{2\nu}{.79\hat{\lambda}}n$.

We now estimate $n_2$. 
In the Lyapunov charts, $B_i$ is a perturbation of $A_i$ of size $e^{-\alpha(n-i)}e^{4\epsilon i}$.
Lemma \ref{lem:linearized_perturbation_RP1} ceases to hold when the size of the perturbation is size $O_{\epsilon_0}(1)$. 
This will occur when $e^{-\alpha(n-i)}e^{4\epsilon i}$ is order $1$, which happens when $i\approx \frac{\alpha}{\alpha+4\epsilon}n$. 
If $\epsilon$ is sufficiently small relative to $\alpha$, then $\alpha/(\alpha+4\epsilon)\ge 1-8\epsilon/\alpha$. 
Hence by picking some $N_2'$ depending only on $\epsilon_0,\delta_0$ and $C_1$, we see that $n_2$ may be chosen to be the smallest number satisfying
$n_2\ge (1-8\frac{\epsilon}{\alpha})n-N_2'$. Hence for
sufficiently large $n$ we can take the bound $n_2\ge  (1-9\frac{\epsilon}{\alpha})n$.

Thus between times $n_1$ and $n_2$ there are at least $(1-9\frac{\epsilon}{\alpha}-\frac{2\nu}{.79\hat{\lambda}})n$ iterates. As long as $n$ is sufficiently large and
\begin{equation}
\left(1-9\frac{\epsilon}{\alpha}-\frac{2\nu}{.79\hat{\lambda}}\right)>\frac{1}{2}
\end{equation}
which we can certainly arrange if we take $\epsilon,\nu$ sufficiently small,  we see that there are at least $n/2$ iterates between $n_1$ and $n_2$.

We now estimate $\|B^nv\|$. Let us first consider the norm $\|B^{n_2}v\|$ by estimating in the Lyapunov metric. Let $v^i$ equal $g^i_{\epsilon}(v)$. 
Then, for $i\le n_1$ and $n$ sufficiently large, 
using \eqref{eqn:lyapunov_chart_B_i_expression} and \eqref{eqn:size_of_error_B_i1} 
and the inequality $e^X+Y\leq e^{X+Y}$, valid for $X, Y\geq 0$, we obtain\\
$\displaystyle \|B_i\|'\le e^{\Delta}+e^{-\frac{\alpha}{2} n}\le e^{\Delta+e^{-\frac{\alpha}{2}n}}$. 
Taking logarithms we get  $\displaystyle\ln \|B_i\|'\le \Delta+e^{-\frac{\alpha}{2}n}$.
Thus,
\begin{align*}
\ln \|B^{n_2}v\|'&\ge \sum_{i=n_1}^{n_2}\Phi^i_{\epsilon}(v^i)+\sum_{i=0}^{n_1} \Phi_{\epsilon}^i (v^i)
\ge (n_2-n_1).8\hat{\lambda}-\pez{n\frac{2\nu}{.79\hat{\lambda}}}(\Delta+e^{-\frac{\alpha}{2}n}).
\end{align*}
This is the amount of growth in the Lyapunov coordinates. For the original metric, by 
Lemma~\ref{lem:lyapunov_metric}(3) this implies from our bounds on $n_1$ and $n_2$, that 
\begin{equation}
    \ln\|B^{n_2}v\|\ge (n_2-n_1).8\hat{\lambda}-n\frac{2\nu}{.79\hat{\lambda}}(\Delta+e^{-\frac{\alpha}{2}n})
    -4 n_2\epsilon.
\end{equation}
Since  $\ln \|B_i\| \leq \Delta$, and because $n-n_2\le  (9\epsilon/\alpha)n$, and $n_2-n_1>n/2$, we see that
\begin{equation}
\ln \|B^n\|\ge .4\hat{\lambda}n-n\frac{2\nu}{.79\hat{\lambda}}(\Delta+e^{-\frac{\alpha}{2}n})-n_24\epsilon-
\frac{9\Delta \epsilon n}\alpha.
\end{equation}
So, we may conclude if 
\begin{equation}
.4\hat{\lambda}-\frac{2\nu}{.79\hat{\lambda}}-9\Delta \frac{\epsilon}{\alpha}>0,
\end{equation}
which is certainly true as long as $\epsilon$ and $\nu$ are sufficiently small relative to $\alpha$, $\lambda'$, and $\Delta$.
\end{proof}

\begin{rem}\label{rem:location_of_v_A_v_B}
Note that the proof of the previous claim shows something more precise: letting $v_A^s,v_A^u$ be the most contracted and expanded direction of $A^n$, in the Lyapunov charts both $v_A^s$ and $v_B^s$ lie within the neighborhood $B_{\delta_0}((0,1))$ and $v_A^u$ and $v_B^u$ both lie within the neighborhood $B_{\delta_0}((1,0))$ of where the conclusions of Lemma \ref{lem:linearized_perturbation_RP1}  hold. 
\end{rem}

Now that we have located where $v^s$, and hence $v^u$ lies, we check that the norm of $B^i$ is subtempered.

\begin{lem}
For any $\epsilon_0>0$, suppose that we have a sequence of matrices as above. Then there exists $k(C,\lambda,\epsilon,\alpha,\Delta)$ such that $k\epsilon<\epsilon_0$ and the norm $\|B^i\|$ is $(C,\lambda-k\epsilon,4k\epsilon)$ sub-tempered.
\end{lem}

\begin{proof}
From  Lemma \ref{lem:linearized_perturbation_RP1}, we see that if $v^u\in (E^s_0)^{\perp}$, then $v^u$ lies in $B_{\delta_0}((1,0))$. Given any $\beta_0>0$ and $n$ sufficiently large, any vector $v$ in this neighborhood satisfies that for $i<n_2$,
\begin{equation}
\Phi^i_{\epsilon}(v)\ge (1-\beta_0)\lambda.
\end{equation}
Thus we see that along the trajectory from time $1$ to $n_2$ that every vector that begins in $B_{\delta_0}(0,1)$ is $(C, (1-\beta_0)\lambda,0)$-subtempered for the sequence of matrices $B_i$ viewed in Lyapunov charts. Take $\beta_0$ such that $(1-\beta_0)\lambda>(\lambda+\lambda')/2$.

With respect to the reference metric, such a sequence is $(C,(1-\beta_0)\lambda,4\epsilon)$-tempered due to 
Lemma~\ref{lem:lyapunov_metric}(3). 
 This gives temperedness up to time $n_2$. 

 Recall that Lemma \ref{lem:bad_tail_still_tempered}  says that if we extend the sequence by $m$ matrices where 
\[
m<\Delta^{-1}(nk\epsilon-C),
\]
then the result will be $(C,(1-\beta_0)\lambda-k\epsilon,4k\epsilon)$-tempered. In our case
because $n_2\ge (1-\frac{9\epsilon}{\alpha})n$, we would like to append $\frac{9\epsilon}{\alpha}$ matrices of norm at most $e^{\Delta}$ and have the resulting sequence still be tempered. So, we need that
\begin{equation}
\frac{9\epsilon}{\alpha}n<\Delta^{-1}(nk\epsilon -C)
\end{equation}
For sufficiently large $n$, this holds as long as $k\epsilon\Delta^{-1}>9\epsilon/\alpha$,
that is, $k>9{\Delta}/{\alpha}$. 
Taking $\epsilon$ sufficiently small we can arrange that $k\epsilon<\epsilon_0$. 
In particular choosing $\beta_0$ sufficiently small, we can have that $(1-\beta_0)\lambda-k\epsilon\ge \lambda'$, so the needed conclusion holds.
\end{proof}

The first and second conclusions of Proposition \ref{prop:nearby_points_inherit_temperedness} for sufficiently large $n$ are now immediate from the two lemmas once we apply 
Proposition \ref{prop:tempered_norm_implies_splitting}, 
which constructs a splitting for a norm subtempered sequence.

We now turn to the proof of the third conclusion of the proposition. 
We need additional estimates.

We let $n_2$ be as above; it is the point past which the estimate in Lemma \ref{lem:linearized_perturbation_RP1} ceases to hold. Note that there exists $\beta_1$ such that $\|A_i-B_i\|'\le e^{-\beta_1(n_2-i)}$ where $\|\cdot\|'$, denotes the Lyapunov metric. Also, recall that from our choice of $n_2$, that on a neighborhood of $(1,0)$ of size $\delta_0$ that $B_i$ contracts distances by a factor of $e^{-.9\hat{\lambda}}$. 

\begin{claim}\label{claim:nearness_of_vu}
There exists $\beta_2>0$ such that if $v^u$ is the unstable vector for the $A_i$, then 
\begin{equation*}
d'(A^iv^u,B^iv^u)\le Ke^{- \beta_2(n_2-i)},
\end{equation*}
where 
$\displaystyle d'(u_1, u_2)={\left\|\frac{u_1}{\|u_1\|'}-\frac{u_2}{\|u_2\|'}\right\|'}$
is  the metric on $\RP^1$ with corresponding to the Lyapunov metric.
\end{claim}
\begin{proof}
Recall 
that in the Lyapunov coordinates, we have  $A^i((1,0))=(1,0)$. Further, from the previous Lemma, $v_A$ is within distance $e^{-n\nu}$ of $(1,0)$. Consequently, we begin by suppose that $v$ is a vector with $d'(v,(1,0))<e^{-n\nu}$ and then seeing how this vector shadows the trajectory of $(1,0)$. Then as both $v_A$ and $v_B$ are vectors satisfying this property, the needed conclusion follows by the triangle inequality. 

This can be seen inductively because, by that lemma\footnote
{ \label{FtRegLyap} Note that Lemma \ref{lem:linearized_perturbation_RP1}
applies to the Lyapunov metric since the eigenvalues of the matrices $A_i$ are 
 uniformly bounded in both in both original and Lyapunov coordinates.},

\begin{align*}
 d'(B^{i}(v),(1,0))&\le d'(B^{i}(v),B_{i}(1,0)) +d'(B_i(1,0),(1,0)) \\
 &\le e^{-.9\hat{\lambda}}d'(B^{i-1}(v),(0,1))+C \|A_{i}-B_{i}\|'.
\end{align*}
We may continue inductively as long as $B^iv$ still lies in the neighborhood $B_{\delta_0}$. For such $i$ before this point, the form of the estimate that we obtain is:
\[
d'(B^{i}(v),(1,0))\le e^{-n\nu}e^{-.9i\hat{\lambda}}+C\sum_{j=1}^i e^{-.9\hat{\lambda}(i-j)}
 \|A_j-B_j\|' \le C'e^{-\beta_2 (n_2-i)}.
\]
Note that as this estimate is growing exponentially quickly that the difference between the index $i$ where it first exceeds $\delta_0$ and $n_2$ is of size at most $\ln(C')/\beta_2$, which is constant. Hence by possibly adjusting the constant, the needed result follows. 

To conclude we apply apply the triangle inequality to the corresponding estimates on $d'(B^i(v),(1,0)$ and $d'(A^i(v),(1,0))$ 
\end{proof}

Before proceeding further, we record an additional quantitative estimate about the norms of the maps considered in Lemma \ref{lem:linearized_perturbation_RP1}. 

\begin{claim}\label{claim:perturb_size_norm}
For a matrix $A$ as in Lemma \ref{lem:linearized_perturbation_RP1}, for all $\sigma>0$, there exists $\epsilon_1>0$ such that if $E\colon \R^2\to \R^2$ is a matrix of norm $\epsilon\le \epsilon_1$, then if $v\in \RP^1$ with $d((1,0),v)\le \epsilon_1$:
\begin{enumerate}
    \item 
    $\abs{\Phi_A(v)-\Phi_{A+E}(v)}\le \|E\|$.
    \item 
    $\abs{\Phi_A(v)-\Phi_A((1,0))}\le (\sigma/2)\ln\|A\|$.
\end{enumerate}
\end{claim}
\begin{proof}
This claims follows easily because we are restricting to a neighborhood in $\RP^1$ where $A$ has large norm.  
Note that if $v$ is a unit vector $v$ and $\epsilon_1$ is sufficiently small then $\|Av\|$ and $\|(A+E)v\|$ are both greater than $1$, hence as $\ln$ is $1$-Lipschitz on $[1,\infty)$, so the first claim follows. The second claim is straightforward because by assumption $A=\diag(\sigma_1,\sigma_2)$.
\end{proof}

 Similar to before
we have the map $\Phi_A'(v)=\ln(\|Av\|'/\|v\|')$ on $\RP^1$; note that this measures the expansion of vectors with respect to the Lyapunov metric. By possibly decreasing the constants in the statement of Lemma \ref{lem:linearized_perturbation_RP1}, we can arrange that the conclusions of Claim \ref{claim:perturb_size_norm} hold as well for all $i\le n_2$.  (Both
statements hold  with respect to the Lyapunov metric, see footnote \ref{FtRegLyap}).
We record two facts that follow from Claim \ref{claim:nearness_of_vu} along with the estimate  $\|A_i-B_i\|'\le e^{-\beta_1(n_2-i)}$:
\[
\abs{\Phi_{A_i}'(A^{i-1}v^u)-\Phi_{A_i}'(B^{i-1}v^u)}\le (\sigma/2)\ln \|A_i\|'
\]
\[
\abs{\Phi_{A_i}'(B^{i-1}v^u)-\Phi_{B_i}'(B^{i-1}v^u)}\le \|B_i\|'\le e^{-\beta_1(n_2-i)}.
\]

Using these claims, we now estimate $\|B^{n_2}v^u\|'$:
$$
\abs{\|B^{n_2}v^u\|'- \|A^{n_2}v^u\|'}=\abs{\sum_{i=1}^{n_2} \Phi_{A_i}'(A^{i-1}v^u)-\Phi_{B_i}'(B^{i-1}v^u)}
$$
$$
\le \sum_{i=1}^{n_2}\abs{\Phi_{A_i}'(A^{i-1}v^u)-\Phi_{A_i}'(B^{i-1}v^u)}+\abs{\Phi_{A_i}'(B^{i-1}v^u)-\Phi_{B_i}'(B^{i-1}v^u)}
$$
$$
\le \sum_{i=1}^{n_2} \left((\sigma/2)\ln \|A_i\|'+e^{-\beta_1(n_2-i)}\right)
$$
Thus we see that $\ln \|B^{n_2}\|'\ge (1-\sigma/2)\ln\|A^{n_2}\|'-C_1$. 
Using this we now estimate the norm of $\|B^n\|$. As the norm of all $B_i$ and $A_i$ are uniformly bounded by $e^{\Delta}$ by assumption, it follows that 
$\abs{\ln \|A^{n}\|-\ln\|A^{n_2}\|}\le (n-n_2)\Delta$. Thus,
\begin{align*}
\ln \|B^n\|&\ge \ln\|B^{n_2}\|-(n-n_2)\Delta\\
&\ge (1-\sigma/2)\ln \|A^{n_2}\|-4n_2\epsilon-C_1-(n-n_2)\Delta\\
&\ge (1-\sigma/2)\ln \|A^n\| -(\sigma/2)(n-n_2)\Delta-4n_2\epsilon-C_1-(n-n_2)\Delta.\label{eqn:last_equation_est_A_n}
\end{align*}
By subtemperedness $\ln \|A^n\|\ge \lambda n-C_2$ for some $C_2$, and hence  
 if $\varepsilon$ is small enough compared to $\lambda$ and $\sigma$, then as $n-n_2=O(\epsilon)$ the estimate 
of part (3) of Lemma \ref{lem:linearized_perturbation_RP1} 
holds.
\end{proof}

Proposition \ref{prop:nearby_points_inherit_temperedness} implies that nearby points have close splittings so that the blocks where a tempered splitting fails to exist are
 not too small.

\subsection{Cushion of nearby points}
\label{SSCushion}
In this subsection, we prove a refinement of the estimate from the previous subsection. 
Recall Definition \ref{defn:cushion}.
We show that points with very close trajectories have cushion that differs by $O(1)$. This will be used later because it shows that if a short curve has a single point with bad cushion, then all of these points have bad cushion.

\begin{prop}\label{prop:nearby_points_close_cushion}
Fix $(C_0,\lambda)$, $\Lambda>0$, $\sigma>0,$ $\varpi>0$, then for all sufficiently small $\epsilon>0$ there exists $N$ and $D$ such that the following holds. 
Suppose that $(A_i)_{1\le i\le n}$ and $(B_i)_{1\le i\le n}$ are sequences of matrices in $\SL(2,\R)$ with norm at most $e^{\Lambda}$ that are $(C_0,\lambda,\epsilon)$-tempered such that $\|A_i-B_i\|\le C_1e^{-\sigma(n-i)}e^{-n\varpi}$. Let $U(A)$ and $U(B)$ denote the cushion of $A$ and $B$. Then
\[
\abs{U(A)-U(B)}\le D.
\]
\end{prop}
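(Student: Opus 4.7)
The plan is to reduce the estimate on cushions to a uniform pointwise comparison of norms: if we establish that there is a constant $D'$ such that
\[
|\ln \|A^k\| - \ln \|B^k\|| \le D' \qquad \text{for all } 0\le k\le n,
\]
then plugging the $k$-th term in the minimum defining $U(B)$ into the minimum defining $U(A)$ (and vice versa) shows $|U(A)-U(B)|\le 2D'$. So the whole task is to prove this uniform comparison.

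First I would pass to the finite-time Lyapunov chart adapted to the $(C_0,\lambda,\epsilon)$-tempered splitting of $A$ (Lemma~\ref{lem:lyapunov_metric}), as in the proof of Proposition~\ref{prop:nearby_points_inherit_temperedness}. In these coordinates $A_i$ is diagonal, $\tilde A_i=\diag(\sigma_{1,i},\sigma_{2,i})$ with $|\sigma_{1,i}|\ge e^{\hat\lambda}$ for some $\hat\lambda$ slightly below $\lambda$, and $\tilde B_i=\tilde A_i+\tilde E_i$ with $\|\tilde E_i\|'\le C'C_1 e^{-\sigma(n-i)-n\varpi+4\epsilon i}$. Tracking the first-coordinate image $\tilde B^k e_1=(x_k,y_k)$ and writing $p_k=\prod_{i=1}^k \sigma_{1,i}=\|\tilde A^k e_1\|'$, the telescoping identity
\[
\frac{x_k}{p_k}-1=\sum_{j=1}^k \frac{(\tilde E_j \tilde B^{j-1}e_1)_1}{p_j}
\]
together with the inductive hypothesis $\|\tilde B^{j-1} e_1\|'\le 2 p_{j-1}$ shows that the sum is dominated by a geometric series of the form $\sum_j e^{-\sigma(n-j)-n\varpi+4\epsilon j}\le C'' e^{-n(\varpi-4\epsilon)}$, and similarly for $y_k/p_k$. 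Choosing $\epsilon$ small enough that $4\epsilon<\varpi$ then gives, for $n\ge N$ large,
\[
|\ln \|\tilde B^k e_1\|'-\ln p_k|\le \ln 2.
\]

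To convert this to a reference-metric statement I would use the vector $u_0^A$ (the unstable direction of $A^n$), which corresponds to $e_1$ in the chart. The above inequality, plus the bi-Lipschitz comparison between the Lyapunov and reference norms at time $k$ on the \emph{specific} direction $\tilde B^k e_1\approx p_k e_1$ (which lies in a uniformly bounded cone around $e_1$ throughout), yields the consistent two-sided bound
\[
|\ln \|A^k u_0^A\|_{\text{ref}}-\ln \|B^k u_0^A\|_{\text{ref}}|\le O(1).
\]
Here the $e^{O(\epsilon k)}$ Lyapunov-to-reference distortions that would a priori be dangerous cancel, because we are comparing the \emph{same} vector at the same endpoints for both sequences.

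It remains to upgrade the comparison of $\|\cdot u_0^A\|$ to a comparison of operator norms. For $A^k$, Proposition~\ref{prop:tempered_norm_implies_splitting}(3) gives that its unstable direction at time $0$ is within angle $e^{-\beta k}$ of $u_0^A$, for some $\beta>0$. The standard $\SL(2,\R)$ orthogonal decomposition $u_0^A=\cos\theta\, u_0^{A^k}+\sin\theta\, s_0^{A^k}$ together with $\|A^k s_0^{A^k}\|=\|A^k\|^{-1}$ yields $\|A^k\|\cos\theta\le \|A^k u_0^A\|\le \|A^k\|$, and so $|\ln \|A^k\|-\ln \|A^k u_0^A\||\le |\ln\cos\theta|=O(\theta^2)$, which is bounded (in fact tending to $0$) uniformly in $k\ge N_0$. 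The same decomposition works for $B^k$, where I combine Proposition~\ref{prop:tempered_norm_implies_splitting}(3) applied to $B$ (which is tempered with slightly worse constants by Proposition~\ref{prop:nearby_points_inherit_temperedness}) with Lemma~\ref{lem:nearness_of_unstable_directions_lemma} to bound $\angle(u_0^{B^k},u_0^A)\le e^{-\beta k}+e^{-n\nu}$. The few small values $k<N_0$ are handled by the trivial estimate $|\ln \|A^k\|-\ln \|B^k\||\le C_1 N_0 e^{-n\varpi}/e^{-\Lambda N_0}=O(1)$, absorbed into $D'$.

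The main obstacle is the inductive estimate on $\tilde B^k e_1$: the perturbation picks up an $e^{4\epsilon i}$ Lyapunov-chart distortion, so the argument only works when $\varpi>4\epsilon$, forcing the $\epsilon$-smallness hypothesis. A secondary subtlety is making sure the Lyapunov/reference conversion factors cancel in the right way when passing from $\|\cdot u_0^A\|$ to the operator norms; this is ensured by using the \emph{same} test vector $u_0^A$ for both sequences, and then invoking the tempered-splitting angle estimates to compare with the actual singular directions of $A^k$ and $B^k$.
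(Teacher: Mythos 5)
Your proof is correct and follows essentially the same route as the paper's: both reduce the cushion comparison to the uniform bound $\abs{\ln\|A^k\|-\ln\|B^k\|}\le D'$ for all $k$, pass to the Lyapunov charts of $A$, exploit that the perturbation $e^{-\sigma(n-i)}e^{-n\varpi}$ beats the chart distortion $e^{O(\epsilon i)}$ once $\epsilon$ is small relative to $\varpi$, and absorb the finitely many indices $k<N_0$ into the constant. The only difference is in implementation: where you control $\tilde B^k e_1$ by a telescoping identity against $p_k$ and then pass to operator norms via the singular-direction angle bounds of Proposition \ref{prop:tempered_norm_implies_splitting}(3) and Lemma \ref{lem:nearness_of_unstable_directions_lemma}, the paper proves an inductive projective shadowing bound $d(A^i v_k, B^i v_k)\le K(e^{-\beta_2(k+i)}+e^{-n\varpi/2})$ for the unstable vector $v_k$ of $A^k$ (Claim \ref{claim:angles_nearby}) and sums the resulting one-step expansion differences.
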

\begin{proof}

 In view of the definition of the cushion, it suffices to prove 
 that there exists $D>0$ such that for two such sequences and $1\le k\le n$, $\abs{\ln \|A^k\|-\ln \|B^k\|}\le D$. This will follow from the claim below, which gives an exponential shadowing for the most expanded directions of $A^k$ and $B^k$

\begin{claim}\label{claim:angles_nearby}
There exists $N_1$, $D_1$ such that as long as $n\ge N$, 
There exists $\beta_2(\lambda,\epsilon,\sigma)$ and $K(C_0,\lambda,\Lambda,\sigma,\varpi,\epsilon)$ such that 
for any $N\le k\le n$ the following holds.
If $v_k$ is the unstable vector for the $A^k$, then for $i\le k$,
\[
d(A^iv_k,B^iv_k)\le K(e^{-\beta_2(k+i)}+e^{-n\varpi/2}).
\]
\end{claim}

\begin{proof}
This essentially follows due to an enhancement of the argument surrounding 
Claim~\ref{claim:nearness_of_vu}, which we can improve due to the stronger assumptions of the present claim.

As before, we work in Lyapunov charts, and estimate the distance that a vector near $(1,0)$ can drift away from it. 
Comparing with \eqref{eqn:lyapunov_chart_B_i_expression}, when we look in the Lyapunov charts adapted to the sequence $A_1,\ldots,A_k$, we now have that
\[
A_i=\begin{bmatrix}
\sigma_{i,1} & 0 \\
0 & \sigma_{2,i}
\end{bmatrix} =B_i+O_{C,\lambda,\Lambda}(e^{-\sigma (n-i)}e^{-\varpi n}e^{4\epsilon i}). 
\]
Hence Lemma \ref{lem:linearized_perturbation_RP1} holds for all $1\le i\le n$, i.e.~for the entire sequence, as long as $\epsilon$ is sufficiently small relative to $\varpi$. Note that this implies that there exists some $C'$ such that $\|A_j-B_j\|'\le C'e^{-\sigma(n-i)}e^{-\varpi n}e^{4\epsilon i}$.

We now do an induction similar to that in Claim \ref{claim:nearness_of_vu}.  
Denote
$$ d_{n, k, i}=e^{-k\nu}e^{-.9 \hat{\lambda} i}+ \bar{C} e^{-n(2/3)\varpi}e^{-\sigma(n-k)},$$
where $\bar C$ is a large constant that will be chosen below.
From Lemma \ref{lem:linearized_perturbation_RP1}, we can take  $\delta_0$ so small that any vector making angle less than $\delta_0$ with $(1, 0)$ is contracted by at least
$e^{-.9\hat\lambda}$. Take $N$ so large that for all $N\leq k\leq n$ we have that $d_{n, k, 0}\leq \delta_0$
and hence also for all $i\leq N$, $d_{n, k, i}\leq \delta_0.$
We now verify by induction on $i$ that if we start with a vector $v$ such that $d'(v,(1,0))\le e^{-k\nu}$, 
then for all $i\leq k$,\; $d'(B^{i}(v),(1,0))\le d_{n,k, i}.$ Indeed
\begin{align*}
d'(B^{i}(v),(1,0))&\le e^{-.9 \hat \lambda} d_{n,k, i-1}+\|A_i-B_i\|'\\
&\le e^{-k\nu}e^{-.9i\hat{\lambda}}+e^{-.9 \hat \lambda}\bar{C}e^{-n(2/3)\varpi}e^{-\sigma(n-k)}+C'e^{-\sigma(n-i)}e^{-\varpi n}e^{4\epsilon i}.
\end{align*}
As long as $\bar C$ is sufficiently large and $\epsilon$ is sufficiently small relative to $\varpi$, it then follows that:
$$d'(B^{i}(v),(1,0))  \le e^{-k\nu}e^{-.9 \hat{\lambda} i}+ \bar{C} e^{-n(2/3)\varpi}e^{-\sigma(n-k)}
\le d_{n,k,i}.
$$

Thus for $1\le i\le k$,
\[
d'(B^i v,A^i(1,0))\le C_1(e^{-k\nu}e^{-.9 \hat{\lambda} i}+  \bar Ce^{-n(2/3)\varpi}e^{-\sigma(n-k)}).
\]
Lemma \ref{lem:change_of_metric_change_of_angle}, which compares distance on $S^1$ for different metrics, implies that as long as $\epsilon$ is sufficiently small relative to $\lambda$ and $\sigma$, then respect to the reference metric on $\RP^1$ that there exists $C_2$ such that
\[
d(B^i v,A^i((1,0)))\le C_2(e^{-k\nu}e^{-.45 \hat{\lambda} i}+\bar{C} e^{-n\varpi/2}e^{-\sigma(n-k)}).
\]
The above estimate holds for any vector $v$ at distance $e^{-k\nu}$ from $(1,0)$.

In particular, from Lemma \ref{lem:nearness_of_unstable_directions_lemma} whose weaker hypotheses $(A_i)_{1\le i\le k}$ and $(B_i)_{1\le i\le k}$ satisfy, we see that $v^k_A$ and $v^k_B$ are both within $e^{-k\nu}$ distance of $(1,0)$ in the Lyapunov charts as long as $k\ge N_2$ for some $N_2$. Thus by specializing to these vectors and applying the triangle inequality, we find that
$\displaystyle
d(B^i v_A^k,A^iv_A^k)\le C_3(e^{-k\nu}e^{-.45\hat{\lambda i}}+e^{-n\varpi/2}),
$
which is the desired claim.
\end{proof}

Because the norm of all the matrices we are considering is uniformly bounded by $e^{\Lambda}$, the estimate in Claim \ref{claim:angles_nearby} gives that for $k>N$, 
\[
\abs{\ln \|A^k\|-\ln \|B^k\|}\le \sum_{i=1}^k Ke^{-\beta_2(k+i)}+e^{-n\varpi/2}\le K'
\]
for some fixed $K'$. Note that this gives the conclusion of the lemma about cushioning for all indices greater than $N$. For those less than $N$,  since
there are only finitely many such words and the norms of matrices are bounded, 
we can accommodate them by increasing the constant in the conclusion of the theorem.
\end{proof}

\subsection{Scale selection proposition}\label{subsec:scale_selection}
Given two nearby standard pairs, we can attempt to ``couple" them using the fake stable manifolds. For this we need more quantitative estimates on how close and smooth standard pairs need to be so that we can couple 
a significant proportion of them. For example, if they are too far apart then a fake stable leaf may not reach from one to the next. 
Proposition \ref{prop:set_up_scale_prop} below are mostly a summary of results appearing elsewhere in the paper. Note that the first parts of the proposition are statements about temperedness and splittings on uniformly large balls in $M$. Part \eqref{item:there_exists_well_cfd_nbd} shows that for fixed $C_0$ if we consider sufficiently  small $(C_0,\delta,\upsilon)$-configurations that on balls of radius $O(\delta)$ that transversality to the contracting direction and temperedness of the splitting imply that the holonomies between the curves in a configuration exist and converge exponentially fast.

Below we say that a curve and a cone field are $\theta_0$-transverse if the smallest angle they make is at least $\theta_0$. Also, see Definition \ref{defn:cone_tempered} in the appendix for the definition of $(C,\lambda,\epsilon,\mc{C})$-tempered, which means $(C,\lambda,\epsilon)$-tempered plus the additional condition that the stable direction lies in the cone $\mc{C}$.

\begin{prop}\label{prop:set_up_scale_prop}
Suppose that $(f_1,\ldots,f_m)$ is an expanding on average tuple in $\Diff^2_{\vol}(M)$ with $M$ a closed surface.
There exists $\lambda>0$ such that for any $0<\lambda'<\lambda$, $0<\sigma$ there exists $0<\epsilon_0,\tau<1$ 
such that for any $0<\epsilon<\epsilon'<\epsilon_0$ 
there exist $\delta_0,\delta_1,\theta,b_0,C,C',C'',\theta_0,\eta >0$ and $N\in \N$ such that:
for any $x\in M$, $i\in \{1,2,3\}$, there are three nested cone fields $\mc{C}^i_{\theta}\subset \mc{C}^i_{2\theta}\subset \mc{C}^i_{3\theta}$ of angles $\theta$, $2\theta$, $3\theta$, respectively defined on $B_{\delta_0}(x)$ by parallel transport from a cone at $x$. 
Further, the $\mc{C}^i_{3\theta}$ are uniformly transverse on $B_{\delta_0}(x)$. These conefields satisfy the following properties for words $\omega$, where probabilities below are with respect to the Bernoulli measure $\mu$ on $\Sigma$.

\begin{enumerate}[leftmargin=*]
\item\label{item:positive_probability_of_tangency}
(Positive probability of tangency to $\mc{C}^i_{\theta}$)
For any point $y\in B_{\delta_0}(x)$  and any $i\in \{1,2,3\}$, the probability that
$D_xf^n_{\omega}$ is $(C,\lambda,\epsilon,\mc{C}^i_{\theta})$-tempered for all $n\ge N$ is at least $b_0>0$.

\item\label{item:nearby_points_are_also_tempered}
(Nearby points are also tempered)
For any curve $\gamma$, if $x\in \gamma$ is $(C,\lambda,\epsilon,\mc{C}^i_{\theta})$-tempered at time $n$ and $y\in \gamma$ is a point with $d_{\gamma}(x,y)\le \|D_xf^n_{\omega}\|^{-(1+\sigma)}$, then $y$ is $(C',\lambda',\epsilon',\mc{C}^i_{2\theta})$-tempered at time $n$ and 
\begin{equation}
\|D_yf^n_{\omega}\|\ge \|D_xf^n_{\omega}\|^{1-\sigma}.
\end{equation}

\item\label{item:existence_of_fake_manifolds1}
(Existence of fake stable manifolds)
For any $(C',\lambda',\epsilon',\mc{C}^i_{2\theta})$-tempered point $y\in B_{\delta_0}(x)$ at time $n\ge N$, 
the fake stable curve $W^s_{n,\delta_1}(\omega, y)$ exists, has length at least $\delta_1$, has $C^2$ norm at most $C''$, and is tangent to $\mc{C}^i_{3\theta}$ on $B_{\delta_0}(x)$.

\item\label{item:there_exists_well_cfd_nbd} (There exists a well configured neighborhood) For any $C_0$, there exists 
$\delta \in (0,1)$, 
$a_0,a_1,D_1,D_2>0$ and 
$N_1\in \N$
such that for all $0<\delta'<\delta$, and any $\upsilon\le \delta'\tau$, the following holds for any $(C_0,\delta',\upsilon)$-configuration $(\hat{\gamma}_1,\hat{\gamma}_2)$.  
There exists $x\in M$ and $i\in \{1,2,3\}$ such that $\gamma_1$ and $\gamma_2$ are uniformly $\theta_0$-transverse to $\mc{C}^i_{3\theta}$ on $B_{\delta_0}(x)$. We let $B_{2\nu}(x)$ be a ball that demonstrates that $\hat{\gamma}_1,$ and $\hat{\gamma}_2$ are in a $(C_0, \delta',  \upsilon)$-configuration, i.e.~it contains points of $\gamma_1$ and $\gamma_2$ that are distance at least $\upsilon$ from the boundary of those curves.
We maintain this choice of $x$ and $i$ in the following lettered items:

\begin{enumerate}
\item\label{item:fake_stable_manifolds_are_transverse_to_pairs}
(Fake stable manifolds tangent to $\mc{C}^i_{3\theta}$ are transverse to pairs) 
If $y\in B_{2\upsilon}(x)$  is as in item \eqref{item:existence_of_fake_manifolds1} above, then
 $W^s_{n,\delta_1}(\omega, y)$ intersects both $\gamma_1$ and $\gamma_2$ and the points of intersection are both $\theta_0$-transverse, i.e.~both $\gamma_1$ and $\gamma_2$ make an angle at least $\theta_0$ with $W^s_{n,\delta_1}(\omega, y)$.

\item\label{item:lower_bound_on_derivative}
(Lower bound on derivative of the holonomy)
For $n\ge N_1$, if $B\subseteq \gamma_1\cap B_{2\nu}(x)$ is a subset of $\gamma_1$
 consisting of
$(C',\lambda',\epsilon',\mc{C}^i_{2\theta})$-tempered points at time $n$, 
then $H^s_n(B)\subseteq \gamma_2$ has length at least $D_1\len(B)$.
Further, as long as $\hat{\gamma}_1$ and $\hat{\gamma}_2$ have equal mass and are at most
$4\delta'$-long, there are a pair of connected components of $\hat{\gamma}_1\cap B_{2\upsilon}(x)$ and $\hat{\gamma}_2\cap B_{2\upsilon}(x)$ each containing at least $a_1$ proportion of the mass of $\hat{\gamma}_1$ and $\hat{\gamma}_2$ such that if $B$ is as above and lies in this set, then
\[
a_0(H^s_n)_*\rho^1\vert_{B}\le \rho^2\vert_{H^s_n(B)}.
\]

\item\label{item:fluctuations_in_holonomies}
(Fluctuations in the holonomies) 
For any $(C_0,\delta',\upsilon)$-configured pair $(\gamma_1,\gamma_2)$, if $z\in B_{2\upsilon}(x)$ is a $(C',\lambda',\epsilon',\mc{C}_{2\theta}^i)$ tempered point at times $n,n-1\ge N_1$ and $y$ is any point with $d_{\gamma_1}(x,y)\le \|D_xf^n_{\omega}\|^{-(1+\sigma)}$, 
then  
\begin{equation}\label{eqn:fluctuation_of_endpoints_eqn}
d_{\gamma_2}(H_{n}^s(y),H^s_{n-1}(y))\le e^{-1.99\ln \|D_xf^n_{\omega}\|}.
\end{equation}
Further, for $n\ge N_1$ the rate of convergence of the Jacobians is exponentially fast
\begin{equation}
    \label{JacExp}
\abs{\Jac H^s_{n}-\Jac H^s_{n-1}}\le e^{-\eta n}.
\end{equation}

\item\label{item:hold_cont_jac}
(Log-$\alpha$-H\"older control of Jacobian)
If $B\subseteq \gamma_1\cap B_{2\upsilon}(x)$ is an open set comprised of $(C',\lambda',\epsilon',\mc{C}_{\theta}^i)$-tempered points at time $n$, then 
\begin{equation}
\abs{\log \Jac H^s_n(x)-\log \Jac H^s_{n}(y)}\le D_2d_{\gamma_1}(x,y)^{\alpha}. 
\end{equation}
\end{enumerate}
\end{enumerate}

\end{prop}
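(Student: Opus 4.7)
The plan is to recognize that this proposition is primarily an organizational result: it consolidates various estimates from Sections \ref{sec:temperedness} and \ref{ScStMan}, along with the fake stable manifold technology developed in Appendix \ref{sec:finite_time_pesin_theory}, into one statement with compatible constants. So the work is twofold: (i) construct the three cone fields and verify the probabilistic lower bound, and (ii) assemble the appendix estimates at the chosen scale.

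First I would construct the cone fields. By Proposition \ref{prop:hausdorff_est_on_stable_dirs}, for each $x \in M$ the distribution $\nu^s_x$ of stable directions satisfies $\nu^s_x(B_\epsilon(v)) \leq C\epsilon^\alpha$, so for any sufficiently small angle $6\theta$ the projective circle $\mathbb{P}(T_xM)$ admits three disjoint arcs of angular width $6\theta$, each carrying positive $\nu^s_x$-mass bounded below by some $b_0(\theta)$. Using compactness of $M$ and continuity of $\nu^s_x$ in $x$, I would choose a single $\theta$ and a single uniform lower bound $b_0 > 0$ valid for all $x$, and define $\mc{C}^i_\theta \subset \mc{C}^i_{2\theta} \subset \mc{C}^i_{3\theta}$ as nested parallel-transported cones around the three chosen centers. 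Transversality of the $\mc{C}^i_{3\theta}$ is then automatic for $\theta$ small. Item \eqref{item:positive_probability_of_tangency} follows by combining this with Propositions \ref{prop:splitting_with_high_probability} and \ref{prop:finite_time_distribution_of_stable}: the stable direction $E^s_\omega(y)$ lies in $\mc{C}^i_\theta$ on an event of measure at least $b_0$, and on that event, by Proposition \ref{prop:stable_manifolds_exist_with_high_prob}, $D_yf^n_\omega$ is $(C,\lambda,\epsilon)$-tempered for all $n \geq N$ (after possibly increasing $C$).

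Item \eqref{item:nearby_points_are_also_tempered} is a direct application of Proposition \ref{prop:nearby_points_inherit_temperedness}: given the $(C,\lambda,\epsilon,\mc{C}^i_\theta)$-tempered splitting at $x$, nearby points at distance $\|D_xf^n_\omega\|^{-(1+\sigma)}$ have sequences of differentials that are $e^{-\alpha(n-i)}$-close (with $\alpha$ determined by $\sigma$ and the uniform $C^2$ bound on the $f_j$), so they inherit a $(C',\lambda',\epsilon')$-tempered splitting, with stable direction lying in the $\theta$-neighborhood of the original — hence inside $\mc{C}^i_{2\theta}$ by choice of $\theta$ large relative to this perturbation. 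Item \eqref{item:existence_of_fake_manifolds1} is then a direct application of the fake stable manifold construction in Appendix \ref{sec:finite_time_pesin_theory}: for a $(C',\lambda',\epsilon',\mc{C}^i_{2\theta})$-tempered point, the graph transform produces a $C^2$ curve $W^s_{n,\delta_1}$ with the claimed length and $C^2$-norm bound, tangent at $y$ to $E^s_n \in \mc{C}^i_{2\theta}$ and hence remaining in $\mc{C}^i_{3\theta}$ on $B_{\delta_0}(x)$ for $\delta_0$ small.

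For item \eqref{item:there_exists_well_cfd_nbd}, the $C_0$-regularity of the pair gives a uniform bound on $\|\dot\gamma_j\|_{C^1}$, so if $\delta_0$ is small enough the tangent direction of each $\gamma_j$ on $B_{\delta_0}(x)$ varies by at most $\theta$. Since the three cones $\mc{C}^i_{3\theta}$ together miss a set of directions of angular measure at most $3(3\theta) \cdot 2 = 18\theta$ (generously), pigeonholing over the two curves shows that at least one cone $\mc{C}^i$ is $\theta_0$-transverse to both $\dot\gamma_1$ and $\dot\gamma_2$ for $\theta_0$ uniform. Then sub-items (a)–(d) are obtained by invoking the appendix: (a) is the transversality plus the length lower bound $\delta_1$ and the fact that the configuration places points of each $\gamma_j$ at distance $\geq \upsilon$ from their endpoints, so the fake leaf crosses both curves; (b) combines the finite-time holonomy Jacobian bounds (Propositions on holonomies in Appendix \ref{sec:finite_time_pesin_theory}) with the transversality to give a uniform lower bound, then a standard mass-equidistribution argument (density $\rho^i$ is log-Hölder of uniformly bounded constant on a $C_0$-good pair, so $\rho^1$ and $\rho^2$ are uniformly comparable to constants on $B_{2\upsilon}(x)$, which after subdividing pairs into equal-mass subfamilies produces the factor $a_0$); (c) is the fluctuation estimate from Proposition \ref{prop:fluctuations_in_fake_stable_leaves} applied at the $(C',\lambda',\epsilon')$-tempered point $z$ (the exponent $1.99$ is the $2 - O(\epsilon/\lambda)$ coming from Proposition \ref{prop:nearby_points_close_cushion}-type shadowing, and the exponential Jacobian convergence \eqref{JacExp} follows similarly); and (d) is the log-Hölder regularity of the Jacobian along a tempered stable holonomy, standard in Pesin theory.

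The main obstacle is sub-item (b): the density comparison $a_0 (H^s_n)_*\rho^1|_B \leq \rho^2|_{H^s_n(B)}$ requires simultaneously controlling the Jacobian of $H^s_n$ (from the holonomy estimates) and ensuring that the $C_0$-regularity of the two standard pairs forces their densities to be comparable constants on a neighborhood of size $\upsilon \leq \tau\delta'$ — this is what determines how small $\tau$ and $\delta$ can be. The choice of $\tau$ must beat the ratio of log-Hölder oscillation of $\rho^i$ across $B_{2\upsilon}(x)$ against the $C_0$-dependent minimum value of $\rho^i$ on the $(C_0,\delta')$-well-positioned neighborhoods of the configuration; the subdivision into connected components of mass $\geq a_1$ is then a combinatorial step splitting the pairs so that the restrictions have nearly constant mass on the scale $\upsilon$.
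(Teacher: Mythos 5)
Your proposal follows essentially the same route as the paper's proof: the cones are built from the non-atomicity and continuity of the stable distribution $\nu^s_x$ (with a compactness argument for uniformity), item (1) comes from the positive probability of the limiting stable direction landing in the inner cone together with the convergence of the finite-time directions $E^s_n$, item (2) is Proposition \ref{prop:nearby_points_inherit_temperedness}, item (3) is the fake stable manifold construction, and item (4) is the pigeonhole over the three cones followed by the appendix holonomy estimates. The only deviations are cosmetic choices of which intermediate propositions to cite, so the argument is correct as written.
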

\begin{proof}
The main non-trivial input to this proposition is the definition of the cones. After they are chosen correctly, the remaining statements follow in a straightforward manner from facts about the fake stable manifolds proven elsewhere.

For any point $x\in M$, we let $\nu_x$ denote the distribution of the true stable directions $E^s$ at the point $x$, which is a measure on $\RP^1_x$, the projectivization of $T_xM$. 
As $\nu_x$ is non-atomic, we can find three disjoint intervals $I_1,I_2,I_3$ of width $\theta$ that are each separated by angle at least $4\theta$ for some angle $\theta>0$ and such that $\nu_x(I_1),\nu_x(I_2),\nu_x(I_3)$ are each positive. 
We then use these intervals to define nested cones $\mc{C}^i_{\theta/2}(x)\subset \mc{C}^i_{\theta}(x)\subset \mc{C}^i_{2\theta}(x)\subset \mc{C}^i_{3\theta}(x)$ at $x$ for $i\in \{1,2,3\}$.
Due to the continuity of $\nu_x$ from Proposition~\ref{prop:stationary_measure_is_continuous}, we see that if we parallel translate $I_1,I_2,I_3$ to form cone fields $\mc{C}_1,\mc{C}_2,\mc{C}_3$ over a ball $B(x)$ around $x$, then we similarly have that $\nu_y(\mc{C}^i_{\theta/2})$ is uniformly positive for all $y\in B(x)$. 
All these properties are uniform, so we can do this for any $x\in M$ and obtain a neighborhood of uniform size, with uniform lower bound on $\nu_y(\mc{C}^i_{\theta/2})$ over all these neighborhoods.

We now verify item \eqref{item:positive_probability_of_tangency}.
There exist $\lambda,\epsilon>0$ such that for any $y\in M$ and almost every $\omega$, $D_yf^n_{\omega}$ is $(C(\omega),\lambda,\epsilon)$-tempered for some $C(\omega)$. Further, by Proposition \ref{prop:diffeos_are_subtempered} we have a uniform estimate on the tail on $C(\omega)$ independent of the point $y$. Thus by choosing $C_1$ sufficiently large for any $y\in B(x)$ and $1\le i\le 3$, with probability at least $b_0$, $D_yf^n_{\omega}$ is $(C_1,\lambda,\epsilon)$-subtempered and $E^s_n(\omega,y)\in \mc{C}^i_{\theta/2}(y)$ for all $n\ge N$. 
By Proposition \ref{prop:tempered_norm_implies_splitting}, there exists $N_0\in N$ such that for any $(C_1,\lambda,\epsilon)$-subtempered trajectory of length $n\ge N_0$, then for all $n\ge N_0$, $\angle (E^s_n(\omega,y),E^s)<\theta/4$ and so $E^s_n\in \mc{C}^i_{\theta}$. This gives us the  uniformly positive probability of at least $b_0>0$.

Item (2)  is immediate from Proposition \ref{prop:nearby_points_inherit_temperedness}.

 Item \eqref{item:existence_of_fake_manifolds1}, which states the existence of the fake stable manifolds for $(C',\lambda',\epsilon',\mc{C}^i_{2\theta})$-tempered points, 
follows from Proposition \ref{prop:fake_stable_C_2_control} (possibly after decreasing $\delta$).

We now verify item \eqref{item:there_exists_well_cfd_nbd}, which has many subparts. 
The statement in the initial part follows by making a judicious choice of $x$ as well as the particular cone $\mc{C}^i_{3\theta}$ on $B_{\delta_0}(x)$ that the fake stable manifolds will be tangent to. Because $\hat{\gamma}_1$ and $\hat{\gamma}_2$ are a $(C_0,\delta',\upsilon)$-configuration 
then there exists a pair of points $x\in \gamma_1$ and $y\in \gamma_2$ with $d(x,y)<\upsilon$. 
We choose to work on the neighborhood $B_{\delta_0}(x)$. We then must show that we can pick one of the cones $\mc{C}^i_{3\theta}$ that is uniformly transverse to $\gamma_1$ and $\gamma_2$ on $B_{\delta_0}(x)$.
 Let $\mathcal K_1$ be a small cone around $\gamma_1'(x)$ and $\mathcal K_2$ be a small cone around $\gamma_2'(y)$.
We can extend both cones to the whole of $B_{\delta_0}(x)$ by parallel transport.
Since there are  three cones, we let $i\in \{1,2,3\}$ be an index such that the cone $\mc{C}^i_{3\theta}$ is transverse 
to  both $\mathcal K_1$ and $\mathcal K_2$. We let $\theta_0$ be a lower bound on the angle that $\gamma_1,\gamma_2$ make with $\mc{C}^i_{3\theta}$ and note that, as before, that $\theta_0$ is uniform as it only relies on knowing $C_0,\delta'$.
We now proceed to checking the lettered items that follow.

Item \eqref{item:fake_stable_manifolds_are_transverse_to_pairs} says that the fake stable manifolds of $(C',\lambda',\epsilon',\mc{C}^i_{2\theta})$-tempered points are $\theta_0$-transverse to $\gamma_1,\gamma_2$ and intersect them. This follows from Proposition \ref{prop:fake_stable_C_2_control} because by choice of our constants, for such a tempered point $y$, it follows that $W^s_{n}(\omega, y)$ is tangent to $\mc{C}^i_{3\theta}$, and the uniform transversality follows from our control on the $C^2$ norm of $W^s_n(\omega, y)$ and the H\"older continuity of the most contracting subspace $E^s_n$. Further, the fact that we only need the curves to be at most $\upsilon=\tau\delta'$ apart from each other, with $\tau$ depending only on $\theta_0,\lambda',\lambda$ 
is clear from the uniform $C^2$ bound on the norm of the fake stable manifolds $W^s_n(\omega,y)$  from item \eqref{item:existence_of_fake_manifolds1}. Item \eqref{item:fake_stable_manifolds_are_transverse_to_pairs} follows because as long as $\delta$ is sufficiently small compared with $C_0$, 
the tangent direction to $\gamma_i$ is close to constant on a segment of length $\delta$.

The first part of item \eqref{item:lower_bound_on_derivative} saying that there is a lower bound on the derivative of the holonomies follows from Proposition \ref{prop:holonomies_converge_exponentially_fast}.

The next claim is that restricted to a segment in $B_{\delta_0}$, $\gamma_1$ and $\gamma_2$ have a positive proportion of their mass there. This follows due to the log-H\"older regularity of $\rho^1$ and $\rho^2$ as long as $\delta$ is sufficiently small. Due to the boundedness of the Jacobian, the log-H\"olderness of the densities and them both having a positive amount of their mass on $B_{\delta_0}(x)$, it additionally follows that there exists such a uniform constant $a_0$ as stated in item \eqref{item:lower_bound_on_derivative}.

Item \eqref{item:fluctuations_in_holonomies} 
is immediate from the statement of Proposition \ref{prop:fluctuations_in_fake_stable_leaves}.

Finally, item \eqref{item:hold_cont_jac}, which concerns the fluctuations in the Jacobian of $H^s_n$, follows from Proposition \ref{prop:holonomies_converge_exponentially_fast}.
\end{proof}

\subsection{Proof of Inductive Local Coupling Lemma.}\label{subsec:inductive_local_coupling_procedure}

We are now ready to prove the inductive local coupling lemma.

First we prove a result that
does not make any assertions about the quantity of points on the curve $\gamma_1$ that have a tempered splitting. It just shows that given an infinite trajectory $\omega\in \Sigma$, we may use this trajectory to define a fake coupling in the sense of 
Definition~\ref{defn:fake_coupling} at all future times.

\begin{lem}\label{lem:local_coupling_lemma}
(Inductive Coupling Lemma.) Let $(f_1,\ldots,f_m)$ be an expanding on average tuple in $\Diff^2_{\vol}(M)$ for $M$ a closed surface. 

For any $C_0>0$ let $\lambda,\lambda',\epsilon_0,\tau,\epsilon,\epsilon'$, etc.~be a valid choice of constants in the first paragraph of Proposition \ref{prop:set_up_scale_prop} and  $\delta,\delta',\upsilon,$ etc., be a valid choice of constants in part \eqref{item:there_exists_well_cfd_nbd} of that proposition.
Then there exist $b_1,\hat{\eta},\Lambda>0$ such that for any  $(C_0,\delta',\upsilon)$-configuration $(\hat{\gamma}_1,\hat{\gamma}_2)$ the conclusions of Proposition \ref{prop:set_up_scale_prop} apply
and
the following holds.
If $x\in M$ and  $B_{\delta_0}(x)$ is the  neighborhood  where the statements from Proposition 
\ref{prop:set_up_scale_prop}\eqref{item:there_exists_well_cfd_nbd} hold, then we can construct a $(b_1,\hat{\eta})$-fake couplings out of $(\hat{\gamma}_1,\hat{\gamma}_2)$: For each $\omega\in \Sigma$ there exists a decreasing sequence of pairs of standard subfamilies $P^1_n\subseteq \hat{\gamma}_1$ and $P^2_n\subseteq \hat{\gamma}_2$ that are $(b_1,\hat{\eta})$-fake coupled at each time $n\ge N_1$.
Further, for $n\ge N_1$ and $i\in \{1,2\}$ 
$P^i_n\setminus P^i_{n+1}$ are $n\Lambda$-good standard families. 

These sequences of standard families are decreasing and converge to measures $P^1_{\infty}$ and $P^2_{\infty}$. Further,
for such a fake coupling we also have the true stable holonomies $H^s_{\infty}$ and these satisfy $(H^s_{\infty})_*P^1_{\infty}=P^2_{\infty}$.
\end{lem}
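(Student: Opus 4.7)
The plan is to construct the fake couplings $P^1_n, P^2_n$ inductively for $n\ge N_1$, at each step removing a controlled amount of mass in order to absorb both the fluctuations of the fake stable holonomy and the decay of the tempered set. Let $T_n\subseteq \gamma_1\cap B_{2\upsilon}(x)$ denote the points that are $(C,\lambda,\epsilon,\mc{C}^i_\theta)$-tempered at every time in $[N,n]$, and let $\mc{I}^1_n$ be the open $\|D_xf^n_\omega\|^{-(1+\sigma)}$-neighborhood of $T_n$ in $\gamma_1$; by Proposition~\ref{prop:nearby_points_inherit_temperedness} every point of $\mc{I}^1_n$ is $(C',\lambda',\epsilon',\mc{C}^i_{2\theta})$-tempered at all times in $[N,n]$, so each $H^s_j$ with $j\le n$ is well defined on it. For the base case $n=N_1$, set $\rho^1_{N_1}:=a_0\rho^1\vert_{\mc{I}^1_{N_1}}$, $\mc{I}^2_{N_1}:=H^s_{N_1-1}(\mc{I}^1_{N_1})$, and $\rho^2_{N_1}:=(H^s_{N_1-1})_*\rho^1_{N_1}$; the inequality $\rho^2_{N_1}\le \rho^2$ is given by Proposition~\ref{prop:set_up_scale_prop}\eqref{item:lower_bound_on_derivative}.

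For the inductive step, given $P^1_n,P^2_n$ fake coupled via $H^s_{n-1}$, take $\mc{I}^1_{n+1}$ to be the thickening of $T_{n+1}\subseteq T_n$ at the smaller radius $\|D_xf^{n+1}_\omega\|^{-(1+\sigma)}$; since both the set and the radius shrink compared to step $n$, automatically $\mc{I}^1_{n+1}\subseteq \mc{I}^1_n$ with a boundary margin comparable to $\|D_xf^n_\omega\|^{-(1+\sigma)}$. The fluctuation estimate \eqref{eqn:fluctuation_of_endpoints_eqn} gives $d_{\gamma_2}(H^s_n(y),H^s_{n-1}(y))\le e^{-1.99\ln\|D_xf^n_\omega\|}$ for tempered $y$, which is strictly smaller than the above margin as soon as $\sigma<0.99$, so $H^s_n(\mc{I}^1_{n+1})\subseteq \mc{I}^2_n$. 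Set $\mc{I}^2_{n+1}:=H^s_n(\mc{I}^1_{n+1})$, $\rho^1_{n+1}:=(1-e^{-(n+1)\hat\eta})\rho^1_n\vert_{\mc{I}^1_{n+1}}$, and $\rho^2_{n+1}:=(H^s_n)_*\rho^1_{n+1}$. With $\hat\eta>0$ chosen small enough that $e^{-(n+1)\hat\eta}$ dominates both the Jacobian drift $|\Jac H^s_n-\Jac H^s_{n-1}|\le e^{-\eta n}$ from \eqref{JacExp} and the $\alpha$-H\"older change in $\rho^1_n$ across the $e^{-1.99\lambda n}$-shift, this forces $\rho^2_{n+1}\le \rho^2_n$ pointwise, hence $P^2_{n+1}\subseteq P^2_n$. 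Telescoping yields $\rho^1_n(x)\ge b_1\prod_{i=N}^n(1-e^{-i\hat\eta})\rho^1(x)$ on $T_n$ for an appropriate $b_1$, which is the density bound of Definition~\ref{defn:fake_coupling}.

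The discarded set $P^i_n\setminus P^i_{n+1}$ decomposes into a vertical slice of relative height $e^{-(n+1)\hat\eta}$ over $\mc{I}^i_{n+1}$, whose underlying curves and densities inherit the regularity of $P^i_n$, together with horizontal-strip intervals corresponding to the annulus $\mc{I}^1_n\setminus \mc{I}^1_{n+1}$ and their $H^s_{n-1}$-images. The strips have length bounded below by a constant multiple of $\|D_xf^n_\omega\|^{-(1+\sigma)}$, which is at least $e^{-n\Lambda}$ once $\Lambda$ exceeds $(1+\sigma)$ times a uniform upper bound on $n^{-1}\ln\|D_xf^n_\omega\|$; their $C^2$ norm and log-H\"older density are controlled uniformly in $n$ via the $C^2$ regularity of $\gamma_i$, the log-H\"older regularity of $\rho^i$, and the log-$\alpha$-H\"older Jacobian bound of Proposition~\ref{prop:set_up_scale_prop}\eqref{item:hold_cont_jac}, making every strip $n\Lambda$-good. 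The limit measures $P^i_\infty=\bigcap_n P^i_n$ then satisfy $(H^s_\infty)_*P^1_\infty=P^2_\infty$ because $H^s_n\to H^s_\infty$ uniformly and exponentially fast on the set of tempered points by Proposition~\ref{prop:holonomies_converge_exponentially_fast}. The principal technical obstacle is the simultaneous compatibility at each step of the density-matching identity under the change of holonomy from $H^s_{n-1}$ to $H^s_n$, the containment $P^i_{n+1}\subseteq P^i_n$, and the goodness of the discarded pieces; this is precisely what is enabled by the strict inequality $1.99>1+\sigma$ between the fluctuation exponent in \eqref{eqn:fluctuation_of_endpoints_eqn} and the tempered-neighborhood exponent.
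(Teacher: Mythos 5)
Your proposal follows the same architecture as the paper's proof (thicken the tempered set pointwise, transport by the fake holonomies $H^s_n$, peel off a vertical slice of relative height $e^{-n\hat\eta}$ to absorb the holonomy fluctuations and Jacobian drift, and pass to the limit), and the key quantitative comparison you isolate — the fluctuation exponent $1.99$ beating the thickening exponent $1+\sigma$ — is indeed the engine of the argument. However, there are two concrete gaps. First, the nesting $\mc{I}^1_{n+1}\subseteq\mc{I}^1_n$ is \emph{not} automatic with the radius $\|Df^{n+1}_\omega\|^{-(1+\sigma)}$: for a merely $(C,\lambda,\epsilon)$-subtempered sequence one only has $\|D f^{n+1}_\omega\|\ge e^{C}e^{\lambda}e^{-\epsilon n}\|Df^{n}_\omega\|$, so for large $n$ the norm — and hence your radius — can \emph{increase} from one step to the next, destroying both the containment and the boundary margin on which the whole induction rests. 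The paper avoids this by thickening at radius $\delta_n=\eta_n^{1+\sigma}$ with $\eta_n(x)^{-1}=4\max_{1\le m\le n}\{\|D_xf^m_\omega\|e^{(n-m)\lambda'/2}\}$, a running maximum that is monotone by construction and satisfies $\delta_{n+1}\le e^{-\lambda'/2}\delta_n$, yielding a definite margin $(1-e^{-\lambda'/4})\delta_n$ against which the $\|Df^n_\omega\|^{-1.99}$ fluctuation is then compared.

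Second, your claim that the components of $\mc{I}^1_n\setminus\mc{I}^1_{n+1}$ have length bounded below by a multiple of $\|Df^n_\omega\|^{-(1+\sigma)}$ fails for the construction as you set it up: two adjacent components of $\mc{I}^1_{n+1}$ (neighborhoods of two distinct tempered points that barely fail to merge) can leave an arbitrarily short gap, and that gap is a component of the discarded set. Since $n\Lambda$-goodness requires length at least $e^{-n\Lambda}$, the conclusion that $P^i_n\setminus P^i_{n+1}$ is an $n\Lambda$-good family does not follow. The paper handles this by explicitly trimming intervals of length $K_1e^{-4\Lambda_{\max}n}$ from the ends of every component of $\hat{\mc{I}}^1_n$ before declaring $\mc{I}^1_n$; this forces every gap between surviving components, and every trimmed edge piece, to have length at least a fixed exponential in $n$, which is what makes the discarded families $n\Lambda$-good. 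Both defects are repairable, but they are exactly the technical points the construction must get right, so the proof as written is incomplete.
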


\begin{proof}[Proof of Lemma \ref{lem:local_coupling_lemma}]
We divide the proof into several steps. In Step 0, we introduce the constants that will be used later in the proof; naturally we will also make use of many constants from Proposition~\ref{prop:set_up_scale_prop}, which is essentially the setup for this lemma. Then in the following steps we give an iterative procedure showing how one may construct a new fake coupled pair out of an old one. By iterating that procedure, we then obtain the result.

\noindent\textbf{Step 0: Introduction of constants.} At this step we
introduce some of constants that will be used in the proof.
Most of these constants will be chosen when they appear in the proof.

\begin{enumerate}[leftmargin=*]
    \item\label{item:proof_stretching_intervals}
First, we let $\lambda,\lambda',\epsilon',D_1,$ etc., be the constants from the statement of Proposition \ref{prop:set_up_scale_prop}. For the given $\hat{\gamma}_1$ and $\hat{\gamma}_2$ we let $B_{\delta_0}(x)$ be a neighborhood so that the conclusions of part~\ref{item:there_exists_well_cfd_nbd} of that proposition apply. We will simply write $\mc{C}_{\theta}$ rather than $\mc{C}^i_{\theta}$ below for the cones defined on $B_{\delta_0}(x)$ such that $\hat{\gamma}_1$ and $\hat{\gamma}_2$ have segments that are both uniformly $\theta_0$-transverse to $\mc{C}_{3\theta}^i$ on $B_{\delta_0}(x)$. We let $\Lambda_{\max}$ be sufficiently large so that $\|D_xf_i\|\le e^{\Lambda_{\max}}$ for all $x\in M$ and $1\le i\le m$.
\item\label{item:choice_of_delta_lcl}
Further, in the application of Proposition \ref{prop:set_up_scale_prop} we will insist that $\delta$ is so small that for any $C_0$-good curve with density $\rho$ on a ball of size $\delta$, 
the $\log$-H\"older condition on $\rho$ implies that $1/2<\rho(y)/\rho(x)<2$ on this ball. 
\item
Below, we have certain estimates that will only hold as long as $n$ is sufficiently large. We will have some cutoffs 
$N_1,N_2$ that we define in the course of the proof  at the ends of steps 2 and 6, respectively.
 The cutoffs $N_1$ and $N_2$ only depend on the fixed constants from \eqref{item:proof_stretching_intervals} and \eqref{item:choice_of_delta_lcl} above.
 We then set
$\displaystyle 
N_0=\max\{N,N_1,N_2\}
$ in the conclusion of the theorem
where $N$ is the cutoff for Proposition \ref{prop:set_up_scale_prop} to hold.
\end{enumerate}

\noindent\textbf{Step 1: Definition of $\mc{I}_n^1$.}
Let $\Gamma_1$ be a connected component of $B_{\delta_0/2}(x)\cap \gamma_1$ 
 within distance $\upsilon$ of $\gamma_2$.
Let $G^n_{\omega}$ be the $(C',\lambda',\epsilon',\mc{C}_{2\theta})$-tempered points at time $n$ lying in $\Gamma_1$ (See Definition \ref{defn:cone_tempered}).
Note that $G^{n}_{\omega}\subseteq G^{n-1}_{\omega}$. 
We set
\begin{equation}\label{eqn:defn_eta_n}
\eta_n(x)=\frac{1}{4(\displaystyle\max_{1\le m\le n}\{\|D_xf^m_{\omega}\|e^{(n-m)\lambda'/2}\})},
\end{equation}
and 
\begin{equation}
\label{ChooseDeltaN}
\delta_n(x)=\eta_n^{(1+\sigma)}(x).
\end{equation}

We now construct $\mc{I}_n^1$.
For each $x\in G^n_{\omega}$, we say that $x$ is \emph{padded} if $B_{\delta_n(x)}^{\gamma_1}(x)\subseteq G^n_{\omega}$, where the $B^{\gamma_1}_{\delta_n}(x)$ denotes a ball of radius $\delta_n$ about $x$ in $\gamma_1$ with respect to the arclength on $\gamma_1$. We let $H^n_{\omega}$ denote the set of all such padded
 points. 
 Let $\hat{\mc{I}}_n^1\subset \gamma_1$ be the set
\begin{equation}\label{defn:hat_I_n}
\hat{\mc{I}}_n^1=\bigcup_{x\in H^{n}_{\omega}} B_{\delta_n(x)}^{\gamma_1}(x).
\end{equation}
Note that $\hat{\mc{I}}_n^1$ is a finite union of intervals. 
 Delete intervals of length $K_1e^{-4\Lambda_{\max} n}$ from the edges of each  component
where $K_1>0$ is a fixed small constant that we choose below.  Call this trimmed collection of intervals $\mc{I}^1_n$.

We next check that $\mc{I}_{n}^1\subseteq \mc{I}_{n-1}^1$.
By the definition of $\delta_n$, $\delta_n(x)\le e^{-\lambda'/2}\delta_{n-1}(x)$, thus as long as $K_1$ is sufficiently small,
\begin{equation}\label{eqn:choice_of_K_1_nbd}
\delta_n\le e^{-\lambda'/4}\delta_{n-1}-K_1e^{-4(n-1)\Lambda_{\max}}.
\end{equation}
Thus from the definition of $\mc{I}^1_n$, it is immediate that $\mc{I}^1_n\subseteq \mc{I}^1_{n-1}$.

\noindent\textbf{Step 2: Definition of $\mc{I}^2_n$.}
From the previous step, we know that any point in $\mc{I}^1_n$ satisfies the hypotheses of Proposition \ref{prop:set_up_scale_prop}. Since $\hat{\gamma}_1$ and $\hat{\gamma}_2$ are uniformly $\theta_0$-transverse to $\mc{C}_{3\theta}$, 
it follows from Proposition \ref{prop:set_up_scale_prop}\eqref{item:fake_stable_manifolds_are_transverse_to_pairs}
that 
the fake stable manifold $W^s_{n,\delta_1}(y)$ 
of each point $y\in \mc{I}_n^1$ intersects $\gamma_2$. 
Hence, there is a well defined holonomy $H^s_n\colon \mc{I}^1_n\to \gamma_2$ which satisfies all the conclusions of Proposition \ref{prop:set_up_scale_prop}.
We define
\begin{equation}
\mc{I}^2_n=H^s_{n}(\mc{I}_{n}^1). 
\end{equation}

Next we check that $\mc{I}^2_n\subseteq \mc{I}^2_{n-1}$. For this we will use the control on the fluctuations in the size of the holonomies from Claim \ref{claim:control_on_fluctuations} below.
As we vary $n$, the fluctuations in $H^s_n(y)$
are smaller than the width of the neighborhoods $\delta_n$  in  \eqref{ChooseDeltaN}, and the result will follow.

Suppose that $x\in \mc{I}^1_n$. We must show that $H^s_{n}(x)\in \mc{I}^2_{n-1}$. Note that while $x$ might not be in $H^n_{\omega}$ it is in $G^n_{\omega}$. So, there exists some point $y$ such that $x\in B^{\gamma_1}_{\delta_n(y)}(y)$ and hence also in $B^{\gamma_1}_{\delta_{n-1}(y)}(y)$.

 To show that $H^s_{n}(x)\in \mc{I}_{n-1}^2$, we estimate how far $H^s_{n}(x)$ is from $H^s_{n-1}(x)$ and then estimate how far $H^s_{n}(x)$ is from the boundary of $H^s_{n-1}(B^{\gamma_1}_{\delta_{n-1}(y)}(y))$.
For the former, we use the following claim.

\begin{claim}\label{claim:control_on_fluctuations}
There exists $C_1>0$ such that if $x\in B^{\gamma_1}_{\delta_n(y)}(y)$  for some $y\in \mc{I}^1_{n}$
then
\[
d_{\gamma_2}(H^s_{n}(x),H^s_{n-1}(x))\le C_1\eta_n^{1.99(1-\sigma)^2}(y).
\]
\end{claim}
\begin{proof}
First we show that there exists $C_a$ such that
\begin{equation}\label{eqn:lower_bound_D_y}
\|D_yf^n_{\omega}\|\ge C_a\eta_n^{-(1-\sigma)}(y).
\end{equation}
Let $N\le m\le n$, be the number achieving the maximum in the definition of $\eta_n$, \eqref{eqn:defn_eta_n}.
From $(C',\lambda',\epsilon')$ temperedness, 
\begin{equation}\label{eqn:first_est_f_n_eta_n}
\|D_yf^n_{\omega}\|=\|D_yf^{m+(n-m)}_{\omega}\|\ge e^{-C'}e^{\lambda'(n-m)}e^{-\epsilon' m}\|D_yf^m_{\omega}\|.
\end{equation}
By the definition of $\eta_n$,
\begin{equation}
\eta_n^{-(1-\sigma)}\le 4^{(1-\sigma)}e^{(n-m)(1-\sigma)\lambda'/2}\|D_yf^m_{\omega}\|^{(1-\sigma)}.
\end{equation}
But from $(C',\lambda',\epsilon')$-temperedness, $\|D_yf^m_{\omega}\|\ge e^{-C'}e^{\lambda' m}$. 
Hence as long as $\lambda'\sigma>2\epsilon'$, it follows that there exists $C_b$ such that for all $n\ge N$
we have $C_b\|D_yf^m_{\omega}\|^{(1-\sigma)}\le \|D_yf^m_{\omega}\|e^{-2\epsilon' m}$. Hence there is some $C_c$ such that
\[
C_c\eta_n^{-(1-\sigma)}\le e^{(n-m)\lambda'/2} \|D_yf^m_{\omega}\|e^{-2\epsilon' m}.
\]
Comparing the above equation with \eqref{eqn:first_est_f_n_eta_n} yields equation \eqref{eqn:lower_bound_D_y}.

Next, as explained in  Step 1 
above, all points in $\mc{I}^1_n$ satisfy the conclusions of 
Proposition~\ref{prop:set_up_scale_prop}\eqref{item:nearby_points_are_also_tempered}. Thus,
\[
\|D_xf^n_{\omega}\|\ge \|D_yf^n_{\omega}\|^{(1-\sigma)}.
\]
Combining this with \eqref{eqn:lower_bound_D_y} gives:
\[
\|D_xf^n_{\omega}\|\ge C_a^{(1-\sigma)}\eta_n^{-(1-\sigma)^2}(y).
\]
Then applying  
Proposition \ref{prop:set_up_scale_prop}\eqref{item:fluctuations_in_holonomies} gives the conclusion.
\end{proof}

We now continue with the proof that $\mc{I}^2_{n}\subseteq \mc{I}^2_{n-1}$. 
First, note that by the triangle inequality,
\[
d_{\gamma_1}(x,\partial B_{\delta_{n-1}(y)-K_1e^{-\Lambda_{\max}(n-1)}}^{\gamma_1}(y))\ge \delta_{n-1}(y)-K_1e^{-4(n-1)\Lambda_{\max}}-\delta_{n}(y).
\]
We then apply $H^s_{n-1}$. By Proposition \ref{prop:set_up_scale_prop}\eqref{item:lower_bound_on_derivative} it follows that 
\begin{align}\label{eqn:dist_from_boundary_B_n}
&d_{\gamma_2}(H^s_{n-1}(x),\partial H^s_{n-1}(B_{\delta_{n-1}(y)-K_1e^{-4\Lambda_{\max}(n-1)}}^{\gamma_1}(y)))\\
\ge & D_1(\delta_{n-1}(y)-K_1e^{-4(n-1)\Lambda_{\max}}-\delta_{n}(y)).\notag
\end{align}
But by Claim \ref{claim:control_on_fluctuations}, 
$\displaystyle
d_{\gamma_2}(H^s_{n-1}(x),H^s_{n}(x))\le C_2\eta_n^{1.99(1-\sigma)^2}(y).
$
Hence by the triangle inequality
$$
d_{\gamma_2}(H^s_{n}(x),\partial H^s_{n-1}(B_{\delta_{n-1}(y)-K_1e^{-4\Lambda_{\max}(n-1)}}^{\gamma_1}(y)))
$$$$
\ge D_1(\delta_{n-1}(y)-K_1e^{-4(n-1)\Lambda_{\max}}-\delta_{n}(y))-C_2\eta_n^{-1.99(1-\sigma)^2}(y).
$$
By  
\eqref{eqn:choice_of_K_1_nbd}  $\displaystyle\delta_{n-1}-K_1e^{-(n-1)\Lambda_{\max}}-\delta_n
\ge \left(1-e^{-\lambda/4}\right)\delta_{n-1}.$
Hence as $\eta_n^{1.99(1-\sigma)^2}$ is  of a higher order
than $\delta_n$, there exists some $N_1$ such that for $n\ge N_1$, 
\begin{equation}
d_{\gamma_2}(H^s_{n}(x),\partial \mc{I}^2_{n-1})\ge 2^{-1}D_1
\left(1-e^{-\lambda/4}\right)\delta_{n-1}(y)>0.
\end{equation}
This shows that $H^s_{n}(\mc{I}^1_n)\subset \mc{I}^2_{n-1}$ as desired.\\

\noindent\textbf{Step 3. Lengths of curves in $\mc{I}_n^i\setminus \mc{I}_{n-1}^i$.}
This is needed to estimate the regularity of $P_n^i\setminus P_{n+1}^i$. 

First we consider the size of the trimmed segments when we pass from $\hat{\mc{I}}^1_n$ to $\mc{I}^1_n$. Any connected component of $\hat{\mc{I}}^1_n$ has length at least $\delta_n(x)$ for some $x$. Note that this is bounded below by an exponential $e^{-\Lambda_{\max} n}$. Then as we trim a remaining $K_1e^{-4\Lambda_{\max} n}$ length off these intervals when we pass from $\hat{\mc{I}}^1_n$ to $\mc{I}^1_n$, we see that each interval we trim has length at least $K_1e^{-4\Lambda_{\max} n}$.

There are two ways that $x\in \mc{I}^1_{n-1}$ may fail to be in $\mc{I}^1_{n}$. Write $\mc{I}^1_{n-1}(x)$ for the connected component of $\mc{I}^1_{n-1}$ containing $x$.  Then either $\mc{I}_{n-1}(x)$ contains a point $y$ that is in $\mc{I}^1_n$ or 
the entire component containing $x$ is deleted.
In the first case the connected component of $\mc{I}^1_{n-1}\setminus \mc{I}^1_n$ containing $x$ has length at least $k_1e^{-4\Lambda_{\max} n}$ by the previous paragraph. In the second case, the removed segment is at least $e^{-\Lambda_{\max}n}$ long. Thus we have obtained an exponential lower bound on the lengths of curves in $\mc{I}^1_{n-1}\setminus \mc{I}^1_n$. 

As $H^s_{n}(\mc{I}^1_n)=\mc{I}^2_n$ we can use the size of the gaps in $\mc{I}^1_n$ to estimates the size of those in $\mc{I}^2_n$.
Note that from estimate \eqref{eqn:dist_from_boundary_B_n},  each segment in $\mc{I}^2_{n-1}\setminus \mc{I}^2_n$ has width at least $D_1K_1e^{-4\Lambda_{\max}n}$.

\noindent\textbf{Step 4. Definition of the densities.}
So far we have defined the underlying curves $\mc{I}^1_n,\mc{I}^2_n$ that the standard families $P^1_n,P^2_n$ will be defined on. We now define the densities on $\mc{I}^1_n$ and $\mc{I}^2_n$.
To begin, we will define $\rho^1_N$ and $\rho^2_N$ where $N$ is the first time we attempt to fake-couple.
From Proposition \ref{prop:set_up_scale_prop}\eqref{item:lower_bound_on_derivative}
, there exists $a_0>0$ such that for $B\subseteq \mc{I}^1_n$,
\begin{equation}\label{eqn:comparison_of_densities}
a_0(H^s_{n})_*\rho^1\vert_{B}\le \rho^2\vert_{H^s_{n-1}(B)}.
\end{equation}
We then take as our initial definition:
\begin{equation}\label{defn:rho1_N_and_rho2_N}
\rho^1_N=a_0\rho^1\vert_{\mc{I}^1_N}\text{ and }\rho^2_N=(H^s_{N})_*\rho^1_N.
\end{equation}
This gives us $\rho^1_N$ and $\rho^2_N$.

We now define $\rho^1_n$ and $\rho^2_n$ for $n\ge N$. We set:
\begin{equation}\label{eqn:defn_rho_1_n}
\rho^1_n=\rho^1_{n-1}(1-e^{-n\hat{\eta}})\vert_{\mc{I}^1_n}
\end{equation} where $\hat{\eta}$ is chosen in equation \eqref{eqn:choice_of_hat_eta} below. We then define
\begin{equation}
\rho^2_n=(H^s_{n})_*(\rho^1_{n}).
\end{equation}
As we push forward $\rho^1_{n}$ by the holonomy $H^s_{n}$, which carries $\mc{I}^1_n$ to $\mc{I}^2_n$, $\rho^2_n$ is a measure on $\mc{I}^2_n$. This defines completely $P^1_n$ and $P^2_n$.

The rest of the proof will be checking that the standard families $P^1_n$ and $P^2_n$ have the required properties to be a fake coupling. Some are evident from the definition above, but it remains to check:

(1) the regularity of $\rho^1_n$ and $\rho^2_n$, 

(2) that $\rho^2_n$ is a decreasing sequence of measures, and 

(3) the goodness of the standard families $P^i_n\setminus P^i_{n-1}$ for $i\in \{1,2\}$.\\

\noindent\textbf{Step 5: Regularity of $\rho^1_n$ and $\rho^2_n$.}
In this step we study the log-H\"older constants of $\rho^1_n$ and $\rho^2_n$ for $n\ge N$. 
Note that $\rho^1_n$ is $\rho^1_{N}$ scaled by a constant that it has the same log-H\"older constant as $\rho^1_{N}$.

Before proceeding to study the regularity of $\rho^2_n$, we introduce some notation related to the Jacobian of the holonomies. Typically the Jacobian of an invertible, absolutely continuous map $\phi\colon (X,\nu)\to (Y,\mu)$ is the Radon-Nikodym derivative $d\phi^*\mu/d\nu$. 
In our case, as we are pushing forward the density $\rho^1_n$ by $H^s_{n}$, the result is the same thing as pulling back $\rho^1_n$ by $(H^s_{n})^{-1}$. 
To simplify notation, we will simply write $J_{n}$ for the Jacobian of $(H^s_{n})^{-1}$, which is a function $J_{n}\colon \mc{I}^2_n\to \R_{>0}$. 
Returning to $\rho^2_n$, this function satisfies for $y\in \mc{I}^2_n$ that
\begin{equation}\label{eqn:rho_2_formula}
\rho^2_n(y)=J_{n}(y)\rho^1_n((H^s_{n})^{-1}(y)).
\end{equation}
As the assumptions on the holonomies are symmetric in $\gamma_1$ and $\gamma_2$, we know from 
Proposition~\ref{prop:set_up_scale_prop}\eqref{item:lower_bound_on_derivative} that $H^s_{n}$ is $D_1$-bilipschitz. 
 Thus by Proposition \ref{prop:set_up_scale_prop}\eqref{item:hold_cont_jac}, there exists $D_2$ such that $J_n$ is log-$\alpha$-H\"older with constant $D_2$ for all $n\ge N$. Next,
since $\rho^1_n$ is log-$\alpha$-H\"older with constant $C_0$, $\rho^1_n\circ (H^s_{n})^{-1}$ is log-$\alpha$-H\"older with constant $D_1^{\alpha}C_0$. 
As mentioned before, $J_{n}$ is log-$\alpha$-H\"older with constant $D_2$.
The product of log-$\alpha$-H\"older functions is log-$\alpha$-H\"older with constant equal to the sum of the constants. 
Thus by \eqref{eqn:rho_2_formula}, we see that $\rho^2_n$ is $D_1^{\alpha}C_0+D_2$ log-$\alpha$-H\"older. Thus we have obtained uniform log-$\alpha$-H\"older control for $\rho^1_n$ and $\rho^2_n$.

We need one more estimate before we continue: an actual H\"older, rather than log-H\"older, bound on $\rho^1_n$ and $\rho^2_n$; we need this as at a certain point we will compare the difference of these functions rather than their ratio. We obtain this bound by rescaling the functions by a constant; however we need to be sure the constant is not too big.

From \eqref{eqn:comparison_of_densities}, it follows from the $C_0$ log-$\alpha$-H\"older constant of the density that there exists $D\ge 1$ such for any $x\in \hat{\gamma}_1$ and $y\in \hat{\gamma}_2$,
\begin{equation}
D^{-1}\le \frac{\rho^1(x)}{\rho^2(y)}\le D.
\end{equation}
Note that for a log-$\alpha$-H\"older function $\rho\colon K\to (0,\infty)$ on a set $K$ of diameter at most $1$ that there exists $D$ depending only on the log-H\"older constant of $\rho$ such that 
\[
D^{-1}\le \rho/\max \rho\le 1.
\]
If we let $M$ denote the larger of the maximum of $\rho^1_n$ and the maximum of $\rho^2_n$, then we may define for $i\in \{1,2\}$, $\wt{\rho}^i_n=\rho^i/M$. Then as the maximums of $\rho^1$ and $\rho^2$ are uniformly comparable, note that there exists $D>0$ depending only on $C_0$ such that for $i\in \{1,2\}$,
\[
D^{-1}\le \wt{\rho}^i\le 1.
\]
In particular, as as $\exp$ is $1$-Lipschitz on $(-\infty,0]$, it follows that $\wt{\rho}_n^1,\wt{\rho}_n^2$ are both uniformly $\alpha$-H\"older with the same constant as their log-H\"older constant.  Below we will work with these rescaled functions that have maximum $1$ and just write $\rho^1_n$ instead of $\wt{\rho}^1_n$. Note that we have not gained any extra regularity for free: to get the lower bound $D$ depending only on the log-H\"older constant on both at the same time used substantial input from our setup.
\\

\noindent\textbf{Step 6. Sign and regularity of $\rho_{n-1}^2-\rho_{n}^2$.}
We now analyze $\rho^2_{n-1}-\rho^2_{n}$. In particular, we show that $\rho^2_n$ is a decreasing sequence of densities.
To begin, we will obtain a lower bound on $\rho^2_{n-1}-\rho^2_n$.
Then we will use the various lemmas relating H\"older and log-H\"older functions to conclude a bound on the regularity of $\rho^2_{n-1}-\rho^2_{n}$. 
By definition:
\begin{align*}
\rho^2_{n-1}-\rho^2_{n}=&\rho_{n-1}^1((H^s_{n-1})^{-1}y)J_{n-1}(y)-(1-e^{-n\hat{\eta}})\rho_{n-1}^1((H^s_{n})^{-1}y)J_{n}(y)\\
=&[J_{n-1}(y)(\rho^1_{n-1}((H^s_{n-1})^{-1}(y))-\rho^1_{n-1}((H^s_{n})^{-1}(y)))]+\\
&[\rho^1_{n-1}((H^s_{n})^{-1}(y))(J_{n-1}(y)-J_{n}(y))]+[e^{-n\hat{\eta}}\rho_{n-1}^1((H^s_{n})^{-1}y)J_{n}(y)]\\
=&A+B+C.
\end{align*}
We next estimate $A,B$, and $C$.

\textbf{Term $A$.} 
To estimate term $A$, we first pull the function back to $\gamma_1$ by composing with $H^s_{n}$. 
Let $Q_n=(H^s_{n-1})^{-1}\circ H^s_{n}$. 
For $y\in \mc{I}^1_n$, there exists $y'\in G^n_{\omega}$ satisfying the hypotheses of Claim \ref{claim:control_on_fluctuations} such that $d_{\gamma_2}(H^s_{n}(y),H^s_{n-1}(y))<C_1\eta_n(y')^{-1.99(1-\sigma)^2}$. By Lipschitzness of the holonomies from 
 Proposition \ref{prop:set_up_scale_prop}\eqref{item:lower_bound_on_derivative}, this implies that 
\begin{equation}\label{eqn:est_on_Q}
d_{\gamma_1}(Q(y),y)<D_1C_1\eta_n(y')^{1.99(1-\sigma)^2}.
\end{equation}
Precomposing again with $(H^s_{n})^{-1}$ gives that for $y\in \mc{I}^2_n$,
\begin{equation}\label{eqn:comparison_of_holonomies12}
d_{\gamma_1}((H^s_{n-1})^{-1}(y),(H^s_{n})^{-1}(y))\le D_1^2C_1\eta_n(y')^{1.99(1-\sigma)^2}.
\end{equation}
But this implies, using Lemma \ref{lem:log_holder_C_A_est} and \eqref{eqn:comparison_of_holonomies12} in the second line, that:
\begin{align}
\abs{A}&= \abs{J_{n-1}(y)(\rho^1_{n-1}((H^s_{n-1})^{-1}(y))-\rho^1_{n-1}((H^s_{n})^{-1}(y)))}\\
&\le \abs{J_{n-1}(y)}(D_1^2C_2\eta_n(y')^{1.99(1-\sigma)^2})^{\alpha}\rho^1_{n-1}((H^s_{n})^{-1}y)\\
&\le \abs{J_{n-1}(y)}C_3\eta_n^{1.99(1-\sigma)^2\alpha}\rho^1_{n-1}(H^s_{n}(y))\\
&\le C_Ae^{-1.99\lambda'(1-\sigma)^2\alpha n}\rho^1_{n-1}((H^s_{n})^{-1}y).
\end{align}
where we have used temperedness to pass to the last line.
We now turn to the next term.

\textbf{Term $B$}.  This term is simpler. We use 
\eqref{JacExp} in the third step below:
$$
\abs{B}\le \abs{\rho^1_{n-1}((H^s_{n})^{-1}(y))(J_{n-1}(y)-J_{n}(y))}
\le \abs{\rho^1_{n-1}}\abs{J_{n-1}(y)-J_{n}(y)}
$$$$
\le \abs{\rho^1_{n-1}}e^{-n\eta}
\le e^{-n\eta}\rho^1_{n-1}((H^s_{n})^{-1}(y)).
$$

\textbf{Term $C$}. 
The final term is straightforward 
\[
C=e^{-\hat{\eta}n}\rho_{n-1}^1((H^s_{n})^{-1}y)J_{n}(y)\le D_2 e^{-\hat{\eta} n}\rho_{n-1}^1((H^s_{n})^{-1}(y)).
\]

We can now conclude. Combining the estimates on $A,B,C$, we see that
\[
\rho^2_{n-1}(y)-\rho^2_n(y)\ge [D_2e^{-\hat{\eta} n}-e^{-\eta n}-C_Ae^{-1.99\lambda'(1-\sigma)^2\alpha n}]\rho^1_{n-1}((H^s_{n})^{-1}y).
\]
In particular, as long as 
\begin{equation}\label{eqn:choice_of_hat_eta}
0<\hat{\eta}<\min\{\eta/2,-1.99\lambda'(1-\sigma)^2\alpha/2\},
\end{equation}
it follows that there exists $N_{2}$ such that for $n\ge N_2$, 
\[
\rho^2_{n-1}-\rho^2_{n}\ge e^{-2\hat{\eta} n}\rho^2_{n-1}.
\]
Also because $J_n, \rho^2_n,\rho^2_{n-1}$ are uniformly bounded, there exists $D_3$ such that
\[
D_3\ge \rho^2_{n}-\rho^2_{n-1}\ge e^{-2\hat{\eta} n}D_3^{-1}.
\]
Thus we can apply Claim \ref{claim:bounded_below_plus_holder_gives_log_holder} to the function $(\rho^2_n-\rho^2_{n-1})\ge D_3^{-1}e^{-2\hat{\eta}n}$. As $\rho^2_n$ and $\rho^2_{n-1}$ are uniformly $\alpha$-H\"older from Step 5, we obtain that there exists $D_4$ such that $\rho^2_{n-1}-\rho^2_n$ is uniformly $D_4e^{2\hat{\eta} n}$ log-$\alpha$-H\"older. This concludes the analysis of the H\"older regularity of $\rho^2_{n-1}-\rho^2_n$.
\\

\noindent
\textbf{Step 7: Bookkeeping.}
In this step we verify that for each point $y\in \mc{I}_n^1$ that a positive proportion of the mass over $y$ is retained during the fake coupling procedure. This is straightforward to see because at each step, we discard $e^{-n\hat{\eta}/2}$ proportion of the remaining mass in $\rho^1_n(y)$. Thus from the definition \ref{defn:rho1_N_and_rho2_N} of $\rho^1_N$ the amount of mass is bounded below by
\[
\rho^1_{n}(y)\ge a_0\rho^1(y)\prod_{n\ge N}(1-e^{-n\hat{\eta}})>0.
\]
Thus we keep a positive proportion of the mass above each $y\in \mc{I}^n_{\omega}$ for all $n\ge N$. 
\smallskip

\textbf{Step 8: $n=\infty$ behavior} As the sequences $\rho^1_n$ and $\rho^2_n$ are decreasing they converge to some limiting measures $\rho^1_{\infty}$ and $\rho^2_{\infty}$. Further, by Proposition \ref{prop:holonomies_converge_exponentially_fast}, 
the true stable holonomies $H^s_{\infty}$ satisfy $(H^s_{\infty})_*\rho^1_n=\rho^2_n$ as required.
\smallskip

\textbf{Step 9: $(C,\lambda,\epsilon,\mc{C}_{\theta})$-tempered points are never dropped.} Finally, we must show that we actually keep the $(C,\lambda,\epsilon,\mc{C}_{\theta})$ tempered points throughout the entire procedure, so that part (3) of the requirements for a fake coupling are satisfied. Suppose that $(\omega,x)$ is such a $(C,\lambda,\epsilon,\mc{C}_{\theta})$-tempered trajectory. It suffices to show that for each $n$ that all points in $B_{\delta_n(x)}(x)$ are $(C',\lambda',\epsilon',\mc{C}_{2\theta})$-tempered, as from the procedure above this ensures that $x\in \mc{I}^1_n$ for all $n$.
By Part \eqref{item:nearby_points_are_also_tempered} of Proposition \ref{prop:set_up_scale_prop}, this follows as long as $\delta_n(x)\le \|D_xf^n_{\omega}\|^{-(1+\sigma)}$. This inequality holds because by the definition of $\eta_n$, \eqref{eqn:defn_eta_n}, 
$\eta_n(x)\le \|D_xf^n_{\omega}\|^{-1}$, and $\delta_n(x)=\eta_n^{(1+\sigma)}$.
\smallskip

Thus we have verified all of the required claims in the definition of fake coupling as well as the additional required claim about the goodness of the families $P^i_{n-1}\setminus P^i_{n}$, we conclude the proof.
\end{proof}

We now have everything ready to prove the local coupling lemma, Lemma \ref{ref:small_scale_coupling_lemma}.

\begin{proof}[Proof of Lemma \ref{ref:small_scale_coupling_lemma}]
Almost everything in the statement of Lemma \ref{ref:small_scale_coupling_lemma} is contained in the statement of Lemma \ref{lem:local_coupling_lemma}. We explain them in order.

Item \ref{item:equal_mass_stops_lcl} follows because the points we stop trying to couple at time $n$ are precisely the points in $\hat{\gamma}_i$ that are in $P^i_n\setminus P^i_{n-1}$. 
As the standard family $P^i_n\setminus P^i_{n-1}$ is $n\Lambda$-good, the claim follows with $L=\Lambda$.

Item \ref{item:tail_points_intertwined} is the statement in the final paragraph of Lemma \ref{lem:local_coupling_lemma}.

Item \ref{item:good_tail} is more complicated. There are two ways that a point $x\in \mc{I}^1_{n-1}$ fails to appear in $\mc{I}^1_n$. The first is that $x$ is not in any interval $B_{\delta_n(y)}^{\gamma_1}(y)$ for any $y\in H^n_{\omega}$. The second is if $x$ is in an interval that gets trimmed off of $\hat{\mc{I}}^1_n$.

First we consider the former case. This means that some $y$ such that $x\in B_{\delta_{n-1}(y)}^{\gamma_1}(y)$ failed to be tempered at time $n$. In $\Sigma\times \gamma_1$, we consider the union of these intervals:
\[
U_n=\bigcup \{\{\omega\}\times B_{\delta_{n-1}(y)}^{\gamma_1}(y): y\in \mc{I}_{n-1}^1\setminus \mc{I}_{n}^1 \text{ for the word }\omega\}.
\]
Note that as each of these sets $B_{\delta_{n-1}(y)}^{\gamma_1}(y)$ contains a point $z$ that fails to be tempered at time $n$ that $(\omega,z)$ has cushion that is within $\Lambda_{\max}$ of $C'$, the cutoff for tempering to fail. By Proposition \ref{prop:nearby_points_close_cushion}, as all the points in $B^{\gamma_1}_{\delta_{n-1}(y)}(y)$ satisfy the hypotheses of that proposition due to the size of $\delta_{n-1}(y)\le \|D_yf^n_{\omega}\|^{-(1+\sigma)}$ and the tempering, this implies that all points in $B_{\delta_{n-1}(y)}^{\gamma_1}(y)$ have cushion at most $C'+\Lambda_{\max}+D$. But by Proposition \ref{prop:large_deviations_for_cushion}, the number of points having cushion of this size is exponentially small. Thus $\mu\otimes \rho(U_n)\le D_1e^{-n\eta}$ for some $D_1,\eta>0$, and we have an exponential tail for points experiencing the first type of failure.

In the case that a point fails to be included because it was trimmed off, it was observed in Step 3 of the coupling construction, that every curve being trimmed has length at least $e^{-(1+\sigma)\Lambda_{\max}n}$ and the amount we cut off has length $2K_1e^{-4\lambda_{\max}n}$. Thus as $1/2\le \rho(x)/\rho(y)\le 2$ for two points $x,y$ along the curve we are coupling, the amount we trim has mass at most $4e^{-2\Lambda_{\max} n}$ times the mass of the curve. Thus summing over all curves we stop on at most $4e^{-2\Lambda_{\max} n}$ mass, which is exponentially small.

The last way that mass is lost during the local coupling procedure is when we rescale the density by $(1-e^{-n\hat{\eta}})$ in Step 4, which also gives at most an exponentially small amount of mass is stopping at time $n$. This concludes the proof of the tail bound.

Item \ref{item:positive_prob_of_coupling} follows from  Proposition 
\ref{prop:set_up_scale_prop}\eqref{item:positive_probability_of_tangency}.
\end{proof}

\section{Mixing theorems}\label{sec:claims_for_use_in_limi_theorem}
\subsection{Overview of the section}
 In this section we prove our main result, Theorem \ref{ThQEM}. The proof will rely on coupling and expansion following the standard argument, see e.g.~\cite{chernov2006chaotic}.

First, we show that coupling implies equidistribution of standard families by coupling a given family to a family 
representing volume and using that volume is invariant by the dynamics. 
See Proposition 
\ref{prop:quenched_exp_equidistribution_on_subfamilies} for details.

Next, we use the expansion and exponential equidistribution to obtain exponential mixing using the following reasoning.
Consider an $R$-good standard family $\hat\gamma$ and let $f^n_{\omega}(\hat{\gamma})$ be its image after $n$
iterations. We shall show that for almost all $\omega$ that $f^n_{\omega}(\hat{\gamma})$
contains a subfamily $P_n$ with the following properties:
\begin{enumerate}
    \item $P_n$ consists of $\epsilon n$-good standard pairs
    \item standard pairs in $P_n$ contract backwards in time
    \item the forward image of pairs from $P_n$ equidistribute at an exponential rate
    \item the complement of $P_n$ has exponentially small measure.
\end{enumerate}

Now given H\"older functions $\phi$ and $\psi$ we obtain exponential decorrelation between $\phi\circ f^N_{\omega}$
with $N=cn$
and $\psi$ using that $\psi$ is constant on the elements of $P_n$ (up to exponentially small error),
$\phi\circ f^N_{\omega}$ is equidistributed on the elements of $P_n$ (up to exponentially small error),
and the complement of $P_n$ is exponentially small.

The purpose of this section is to execute this argument precisely, using the results of Sections~\ref{sec:temperedness}, 
\ref{sec:finite_time_smoothing_estimates},
and the appendices.

\subsection{Preparatory lemmas}
Below we will use Definition \ref{defn:forward_tempered_relative_to_a_curve} from \S \ref{SSLoss} in the appendix. 
Briefly, this definition concerns a $(C,\lambda,\epsilon,\theta)$\emph{-forward tempered point} at time $n$ for a vector $v\in T_xM$, which is a $(C,\lambda,\epsilon)$-forward tempered time $n$ such that $E^s_n$ makes angle at least $\theta$ with $v$.

\begin{prop}\label{prop:most_points_forward_subtempered}
Suppose that $M$ is a closed surface and 
$(f_1,\ldots,f_m)$ is an expanding on average tuple of diffeomorphisms in $\Diff^2_{\vol}(M)$. There exists $\lambda>0$ such that for all sufficiently small 
$\epsilon>0$ there exist $C_0$, $N\in \N$, and $\alpha>0$ such that for all $n\ge N$, and any direction $v\in T_x^1M$, 
\begin{equation*}
\mu(\omega: (\omega,x) \text{ is \emph{not} } (\epsilon n+C_0,\lambda,\epsilon,
{ C_0 e^{-\epsilon n}})\text{-forward tempered at time $n$ relative to $v$})
\le e^{-n\alpha}.
\end{equation*}
\end{prop}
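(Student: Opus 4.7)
The plan is to show that the bad event decomposes into two exponentially rare subevents and to control each by a previous proposition. Unpacking the definition, $(\omega,x)$ is $(C_0+\epsilon n,\lambda,\epsilon, C_0 e^{-\epsilon n})$-forward tempered at time $n$ relative to $v$ precisely when $D_xf_\omega^n$ admits a $(C_0+\epsilon n,\lambda,\epsilon)$-tempered splitting and the resulting stable direction $E^s_n(\omega,x)$ makes angle at least $C_0 e^{-\epsilon n}$ with $v$. So it suffices to bound separately the failure of each of these two conditions.

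For the splitting, I would invoke Proposition~\ref{prop:splitting_with_high_probability} directly. That proposition furnishes $D, \alpha_1>0$ (depending on the tuple and on a small $\epsilon>0$) such that for every $C>0$,
\[
\mu\bigl(\{\omega: D_xf_\omega^n \text{ has no } (C,\lambda,\epsilon)\text{-tempered splitting}\}\bigr)\le De^{-\alpha_1 C}.
\]
Applying this with $C=C_0+\epsilon n$ gives a bound $De^{-\alpha_1(C_0+\epsilon n)}$, which is exponentially small in $n$ with rate $\alpha_1\epsilon$, provided $C_0$ is taken sufficiently large to absorb the prefactor.

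For the angle condition, set $\eta_n=C_0 e^{-\epsilon n}$ and apply Proposition~\ref{prop:finite_time_distribution_of_stable}. That proposition produces constants $c_0, C', \theta>0$ such that, as soon as $n\ge c_0|\ln\eta_n|$, the probability that $E^s_{n'}(\omega)\in B_{\eta_n}(v)$ for some $n'>n-1$ is at most $C'\eta_n^\theta$. The hypothesis becomes $n(1-c_0\epsilon)\ge -c_0\ln C_0$, which holds for all sufficiently large $n$ once we choose $\epsilon$ small enough that $c_0\epsilon<1$. The resulting bound is $C'C_0^\theta e^{-\theta\epsilon n}$, again exponentially small in $n$.

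Combining the two estimates by a union bound yields the proposition with $\alpha=\tfrac{1}{2}\min\{\alpha_1,\theta\}\epsilon$, after enlarging $N$ and $C_0$ to absorb the prefactors. There is no real obstacle here beyond aligning the parameters $(\lambda,\epsilon,C_0,c_0)$ consistently between the two cited propositions; the content of the result is already contained in the temperedness and finite-time distribution estimates of Section~\ref{sec:temperedness}, and only has to be specialized to the growing constant $C=C_0+\epsilon n$ and the shrinking angle $\theta=C_0 e^{-\epsilon n}$.
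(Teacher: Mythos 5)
Your proof is correct and follows essentially the same route as the paper: a union bound combining Proposition~\ref{prop:splitting_with_high_probability} applied with the growing constant $C=C_0+\epsilon n$ and Proposition~\ref{prop:finite_time_distribution_of_stable} applied with the shrinking angle $\epsilon'=C_0e^{-\epsilon n}$, checking that $n\ge c_0|\ln\epsilon'|$ holds once $c_0\epsilon<1$. Your write-up is in fact slightly more explicit than the paper's about how the two parameter choices interact, but the argument is the same.
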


\begin{proof}
Proposition \ref{prop:splitting_with_high_probability} says that there exist $\lambda>0$ such that for arbitrarily small $\epsilon>0$, there exists $\alpha>0$ such that the measure of the words $\omega$ that are not $(C,\lambda,\epsilon)$-subtempered for all $n\ge 0$ is at most $e^{-\alpha C}$. 
From Proposition \ref{prop:finite_time_distribution_of_stable} there exists some $C_2,c,\theta>0$ such that for all sufficiently small $\epsilon'$ as long as $n\ge c\abs{\ln (\epsilon')}=N_0$ , then for all $n\ge N_0$, the probability that $E^s_n\in B_{\epsilon'}(v)$ is at most $C_2(\epsilon')^{\theta}$. Taking $\epsilon'=e^{-\epsilon n}$, this gives that the probability that  
 
 $E^s_n\in B_{e^{-\epsilon n}}(v)$ for $n\geq N_0$ is at most $C_2e^{-\theta \epsilon n}$
 as long as $\epsilon$ is sufficiently small relative to $c$, $n\ge c\epsilon n$. Combining these two estimates, we obtain the result.
\end{proof}

 Below, we will typically assume that the standard family or standard pair we are considering has unit mass. The statements below can be adapted to any amount of mass by multiplying the right hand side of the bound by the mass of the family.

\begin{defn}
Given a standard pair $\hat{\gamma}=(\gamma,\rho)$, for $x\in \gamma$ we say that $(\omega,x)$ is $(n,\lambda,\epsilon)$\emph{-backwards good} if 
\begin{enumerate}
    \item 
$f^n_{\omega}(x)$ is contained in a standard pair $B(\omega,x)\subseteq f^n_{\omega}(\hat{\gamma})$ that is $\epsilon n$-good, and  
    \item 
    $Df^{-n}_{\omega}B_{e^{-14\epsilon n}}^{\gamma}(x)$ has diameter at most $e^{-(\lambda/2) n}$. 
\end{enumerate}
\noindent We define analogously the same notion for a standard family.
\end{defn}

\begin{prop}\label{prop:average_curves_contract_backwards}
(Annealed goodness) 
Suppose that $M$ is a closed surface and $(f_1,\ldots,f_m)$ is an expanding on average tuple in $\Diff_{\vol}^2(M)$. 
Then there exists $\lambda>0$ such that for all sufficiently small $\epsilon>0$, if we fix $R>0$ there exists $\alpha,C>0$ such that for any 
$R$-good, unit mass standard family $\hat{\gamma}$ with associated measure $\rho$:
\begin{equation}\label{eqn:exponentially_many_backwards_good}
(\mu\otimes \rho)(\{(x,\omega):(x,\omega) \text{ is not }(n,\lambda,\epsilon)\text{-backwards good}\})\le C
e^{-\alpha n}.
\end{equation}
\end{prop}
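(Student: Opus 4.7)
The plan is to handle the two conditions in the definition of backwards good separately and combine them by a union bound. Condition 2 (backward contraction of small neighborhoods) follows from forward temperedness of $D_x f^n_\omega$ at the tangent direction to $\gamma$, while condition 1 ($\epsilon n$-goodness of the standard pair at time $n$) follows from the existence of a sufficiently recent backwards good time for $(\omega,x)$.

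For condition 2, I would apply Proposition \ref{prop:most_points_forward_subtempered} pointwise to $v = \dot\gamma(x) \in T^1_x M$. For each $x \in \gamma$ this yields, with $\mu$-probability at least $1 - e^{-\alpha n}$ (uniformly in $x$), that $D_x f^n_\omega$ is $(\epsilon n + C_0, \lambda, \epsilon, C_0 e^{-\epsilon n})$-forward tempered relative to $\dot\gamma(x)$. Decomposing $\dot\gamma(x)$ in the tempered basis $\{e^s, e^u\}$, the unstable component has norm at least $\sim C_0 e^{-\epsilon n}$, and so $\|D_x f^n_\omega \dot\gamma(x)\| \ge e^{(\lambda - O(\epsilon))n}$. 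By Proposition \ref{prop:nearby_points_inherit_temperedness}, this pointwise expansion propagates to every $y\in\gamma$ with $d_\gamma(x,y)\le \|D_xf^n_\omega\|^{-(1+\sigma)}$, a tangential radius that comfortably exceeds $e^{-14\epsilon n}$ provided $\epsilon$ is small enough relative to $\lambda$. Thus $f^n_\omega$ is uniformly $e^{(\lambda - O(\epsilon))n}$-expanding along $\gamma$ on this ball, which translates into the desired backward contraction estimate $e^{-\lambda n/2}$ after adjusting $\epsilon$.

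For condition 1, I would use Proposition \ref{prop:exponential_recovery_time_pointwise_annealed_over_curve}. Fix a small $c = c(\epsilon) > 0$ with $c\eta < \epsilon/2$, where $\eta$ is the exponential deterioration rate from Proposition \ref{prop:decay_of_goodness_in_general}, and take $N = \lceil (1-c)n\rceil$; for $n$ large compared to $A\max\{R,1\}/c$ (otherwise the result is trivial after adjusting $C$), one has $N \ge A\max\{R,1\}$. The proposition then gives that, except on a set of $(\mu\otimes\rho)$-measure at most $Ce^{-\alpha_3 c n}$, there is a $(C,\lambda,\epsilon,A,\epsilon',R)$-backwards good time $m \in [N,n]$ for $(\omega,x)$. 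Lemma \ref{lem:deterministic_recovery_lemma_3} then yields a $C_0$-good neighborhood of $f^m_\omega(x)$ inside $f^m_\omega(\hat\gamma)$. Applying the crude estimate Proposition \ref{prop:decay_of_goodness_in_general} to the at most $cn$ remaining iterates, the image neighborhood of $f^n_\omega(x)$ in $f^n_\omega(\hat\gamma)$ is $(C_0 + c n \eta)$-good, hence $\epsilon n$-good for $n$ sufficiently large. The case of a standard family follows from Fubini by integrating the above estimate over the factor measure $\lambda$.

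The main obstacle is bookkeeping: one must simultaneously satisfy the smallness conditions on $\epsilon$ imposed by Propositions \ref{prop:most_points_forward_subtempered}, \ref{prop:nearby_points_inherit_temperedness}, and \ref{prop:exponential_recovery_time_pointwise_annealed_over_curve}, and check that the inheritance radius $\|D_x f^n_\omega\|^{-(1+\sigma)}$ in Proposition \ref{prop:nearby_points_inherit_temperedness} dominates $e^{-14\epsilon n}$. The first is resolved by picking $\epsilon$ small at the start (the ``all sufficiently small $\epsilon$'' clause of the statement), and the second is automatic once $(1+\sigma)\lambda^{-1}\ln\|D_xf^n_\omega\| \lesssim \lambda n$ and $14\epsilon < \lambda(1+\sigma)$.
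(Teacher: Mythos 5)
The paper's proof is a two-line deduction: Proposition \ref{prop:most_points_forward_subtempered} supplies, for each $x$ and $v=\dot\gamma(x)$, a forward tempered time at $n$ off a set of $\mu$-measure $e^{-\alpha n}$, and Proposition \ref{prop:fwd_up_to_epsilon_smoothing} then delivers \emph{both} requirements of backwards goodness at once: item (1) of that proposition gives the $O(\epsilon n)$-good neighborhood of $f^n_\omega(x)$ in $f^n_\omega(\hat\gamma)$ (of length roughly $e^{-5\epsilon n}$, so containing the $e^{-14\epsilon n}$-ball), and item (2) gives that its preimage has length at most $e^{-\lambda n/2}$; one then integrates over $\hat\gamma$. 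Your treatment of condition (1) of backwards goodness --- a reverse-tempered recovery time in the window $[(1-c)n,n]$ followed by the crude deterioration bound of Proposition \ref{prop:decay_of_goodness_in_general} --- is a workable alternative route, though you should cite Proposition \ref{prop:exponential_tail_tempered_times} together with Lemma \ref{lem:prob_good_stable_when_stopped} for a tempered time \emph{after} time $N$; Proposition \ref{prop:exponential_recovery_time_pointwise_annealed_over_curve} as stated only controls the \emph{first} backwards good time.

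The gap is in your condition (2). The inheritance radius in Proposition \ref{prop:nearby_points_inherit_temperedness} is $\|D_xf^n_\omega\|^{-(1+\sigma)}$, and since forward temperedness forces $\ln\|D_xf^n_\omega\|\ge(\lambda-O(\epsilon))n$ with $\epsilon\ll\lambda$, this radius is at most about $e^{-(1+\sigma)(\lambda-O(\epsilon))n}$, which is \emph{much smaller} than $e^{-14\epsilon n}$: the inequality in ``comfortably exceeds'' goes the wrong way, and your own closing condition $14\epsilon<\lambda(1+\sigma)$ says exactly this. So you obtain uniform expansion only on a tiny sub-ball of the set whose preimage must be controlled, and the diameter bound $e^{-\lambda n/2}$ for the preimage of the full $e^{-14\epsilon n}$-ball does not follow. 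A naive attempt to extend the expansion over the whole preimage is circular, since bounding the preimage requires expansion along all of it. The paper avoids this by running the graph transform in finite-time Lyapunov charts (the proof of Proposition \ref{prop:fwd_up_to_epsilon_smoothing}, in particular item (2)), which bounds the preimage length of the entire recovered neighborhood directly. Replacing your Proposition \ref{prop:nearby_points_inherit_temperedness} step with an appeal to Proposition \ref{prop:fwd_up_to_epsilon_smoothing}(2) fixes this; and once you do so, the same forward-temperedness event also yields condition (1), making the separate recovery-time argument unnecessary.
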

\begin{proof}
This  is immediate from Propositions \ref{prop:fwd_up_to_epsilon_smoothing} and \ref{prop:most_points_forward_subtempered}.
\end{proof}

From Proposition \ref{prop:average_curves_contract_backwards}, we can deduce a related quenched statement for almost every $\omega$. 
\begin{lem}\label{lem:quenched_epsilon_lambda_n_good}
(Quenched goodness) Under the hypotheses of Proposition \ref{prop:average_curves_contract_backwards}, 
there exist $\lambda,\alpha,D>0$ such that for all sufficiently small $\epsilon>0$ and a unit mass $R$-good standard family $\hat{\gamma}$, then for almost every $\omega$, there exists $C_{\omega}$ such that $1-C_{\omega}e^{-\alpha n}$ proportion of points in $\hat{\gamma}$ are $(n,\lambda,\epsilon)$-backwards good for $\omega$.
Further,
\[
\mu(\omega: C_{\omega}>C)\le DC^{-1}.
\]
\end{lem}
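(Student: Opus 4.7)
The plan is a standard Fubini--Markov conversion of the annealed bound \eqref{eqn:exponentially_many_backwards_good} into a quenched one, at the cost of slightly shrinking the exponent and introducing a defect constant $C_\omega$ with the required tail.

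Let $\lambda, \epsilon, \alpha, C$ be the constants from Proposition \ref{prop:average_curves_contract_backwards}, and for each $n \in \N$ and $\omega \in \Sigma$ set
\[
B_n(\omega) = \{x \in \hat{\gamma} : (\omega,x) \text{ is not } (n,\lambda,\epsilon)\text{-backwards good}\}, \qquad p_n(\omega) = \rho(B_n(\omega)).
\]
By Fubini's theorem and Proposition \ref{prop:average_curves_contract_backwards},
\[
\int_\Sigma p_n(\omega)\, d\mu(\omega) = (\mu \otimes \rho)\bigl(\{(\omega,x) : x \in B_n(\omega)\}\bigr) \le C e^{-\alpha n}.
\]
Fix any $0 < \alpha' < \alpha$ (for concreteness take $\alpha' = \alpha/2$), and define
\[
C_\omega := \sup_{n \ge 0} p_n(\omega)\, e^{\alpha' n}.
\]
By construction, $\rho(B_n(\omega)) = p_n(\omega) \le C_\omega e^{-\alpha' n}$ for every $n$, which is the desired quenched estimate (after replacing the constant $\alpha$ in the statement by $\alpha'$; since $\alpha'$ is again a positive constant depending only on the tuple and on $\epsilon$, this fits the form of the conclusion).

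It remains to bound the tail of $C_\omega$. By a union bound and Markov's inequality,
\[
\mu(C_\omega > C) \le \sum_{n \ge 0} \mu\bigl(p_n(\omega) > C e^{-\alpha' n}\bigr) \le \sum_{n \ge 0} \frac{\int p_n\, d\mu}{C e^{-\alpha' n}} \le \sum_{n \ge 0} \frac{C_0 e^{-\alpha n}}{C e^{-\alpha' n}} = \frac{C_0}{C} \sum_{n \ge 0} e^{-(\alpha - \alpha') n},
\]
where $C_0$ denotes the constant from \eqref{eqn:exponentially_many_backwards_good}. Since $\alpha - \alpha' > 0$ the geometric series converges to a finite constant $D'$, and we obtain $\mu(C_\omega > C) \le D/C$ with $D = C_0 D'$, as required.

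There is no substantive obstacle here: the argument is purely a Fubini plus Markov computation, and the only cost is the loss from $\alpha$ to $\alpha'$, which is absorbed into the existential quantifier on $\alpha$ in the statement. The same reasoning applies verbatim to an $R$-good standard family as well, since Proposition \ref{prop:average_curves_contract_backwards} holds at that level of generality.
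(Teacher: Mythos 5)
Your proposal is correct and follows essentially the same route as the paper: a Fubini/Markov/union-bound argument applied to the annealed estimate \eqref{eqn:exponentially_many_backwards_good}, halving the exponent from $\alpha$ to $\alpha/2$ to make the series over $n$ summable. The only cosmetic difference is that you make $C_\omega$ explicit as a supremum, where the paper leaves it implicit.
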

\begin{proof}
 Let $A_n^{\omega}$ be the set of points in $\hat{\gamma}$ that are not $(n,\lambda, \epsilon)$-backwards good for $\omega$. 
 Then

$$
\mu(\omega:\exists n\text{ }\rho(A^\omega_n)>Ce^{-(\alpha/2)n})\le \sum_{n\ge 0}\mu(\omega: \rho(A^{\omega}_n)>Ce^{-(\alpha/2)n})
\le \sum_{n\ge 0} C^{-1}C_1e^{-(\alpha/2) n}
\le C^{-1}D,
$$
where the second inequality follows from \eqref{eqn:exponentially_many_backwards_good} and the Markov 
inequality. The result follows.
\end{proof}

We also need another proposition, that says that on the $\epsilon n$-good neighborhoods at time $n$ that we have rapid coupling, which will then imply that these neighborhoods rapidly equidistribute. The following estimate is immediate from Proposition \ref{prop:main_coupling_proposition}.

\begin{prop}\label{prop:coupling_on_pushed_curves_average}
Suppose $(f_1,\ldots,f_m)$ is as in Proposition \ref{prop:average_curves_contract_backwards}. 
Then there exists $\lambda>0$ such that for any sufficiently small $\epsilon>0$ there exist $C,\alpha>0$ such that the following holds.
For any $n\in \N$, suppose $P^1$ and $P^2$ are two unit mass standard families of $\epsilon n$-good curves. 
Then there exists a coupling function $\Upsilon$ and stopping times $\hat{T}^1,\hat{T}^2$ as in Proposition \ref{prop:main_coupling_proposition} such that
for $i\in \{1,2\}$:
\[
(\mu\otimes \rho^i)(\{(x,\omega): \hat{T}^i(x,\omega)>j\})\le Ce^{\epsilon n}e^{-\alpha j}.
\]
\end{prop}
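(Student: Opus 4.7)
The proof is essentially a direct application of Proposition~\ref{prop:main_coupling_proposition} to each constituent pair in the families $P^1$ and $P^2$. The plan is as follows. The main coupling proposition gives, for any two $R$-good standard pairs of equal mass, a coupling function $\Upsilon$ and stopping time $\hat{T}$ satisfying $\mathbb{P}_{\omega,x}(\hat{T}\ge j)\le e^{\max\{R,0\}}e^{-\alpha j}$. Here, since every pair in $P^1$ and $P^2$ is $\epsilon n$-good, the relevant goodness constant is $R=\epsilon n$, which produces exactly the prefactor $e^{\epsilon n}$ appearing in the conclusion.

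The only genuine task is to upgrade the pair-to-pair coupling into a family-to-family coupling. Write $P^1=\{\hat\gamma^1_{\theta_1}\}_{\theta_1\in\Lambda_1}$ and $P^2=\{\hat\gamma^2_{\theta_2}\}_{\theta_2\in\Lambda_2}$ indexed by probability spaces $(\Lambda_i,\lambda_i)$. Using the subdivision of standard pairs described in \S\ref{sec:standard_pairs}, we may refine both indexings to a common probability space $(\Lambda,\lambda)$ in such a way that for each $\theta\in\Lambda$ the pairs $\hat\gamma^1_\theta$ and $\hat\gamma^2_\theta$ are $\epsilon n$-good and have equal mass. For each such $\theta$, apply Proposition~\ref{prop:main_coupling_proposition} to obtain $\Upsilon_\theta$ and stopping times $\hat{T}^1_\theta,\hat{T}^2_\theta$ with
\[
(\mu\otimes\rho^1_\theta)(\{(\omega,x):\hat{T}^1_\theta(\omega,x)>j\})\le e^{\epsilon n}e^{-\alpha j}\cdot\rho^1_\theta(\hat\gamma^1_\theta),
\]
and the analogous bound for $\hat{T}^2_\theta$. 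Assembling these into a single coupling function $\Upsilon$ and stopping times $\hat{T}^1,\hat{T}^2$ on $P^1$ and $P^2$ by integrating over $\theta$ against $\lambda$ yields
\[
(\mu\otimes\rho^i)(\{(\omega,x):\hat{T}^i(\omega,x)>j\})\le \int_\Lambda e^{\epsilon n}e^{-\alpha j}\cdot\rho^i_\theta(\hat\gamma^i_\theta)\,d\lambda(\theta)=Ce^{\epsilon n}e^{-\alpha j},
\]
for $C=1$ (using that $P^1,P^2$ have unit mass). The constants $\lambda,\alpha$ are inherited unchanged from Proposition~\ref{prop:main_coupling_proposition}.

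There is no substantive obstacle: the exponential dependence on $R=\epsilon n$ in the main coupling proposition is precisely what produces the factor $e^{\epsilon n}$, and the family-to-family reduction is a routine rearrangement. The only small bookkeeping point is ensuring the mass-matching refinement of $(\Lambda_1,\lambda_1)$ and $(\Lambda_2,\lambda_2)$, but this is exactly the kind of vertical subdivision of standard pairs already used throughout Section~\ref{sec:coupling}.
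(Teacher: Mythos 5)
Your proposal is correct and follows the same route as the paper, which simply declares this proposition ``immediate from Proposition~\ref{prop:main_coupling_proposition}'': you apply the main coupling proposition pair-by-pair with $R=\epsilon n$ to get the prefactor $e^{\max\{R,0\}}=e^{\epsilon n}$, after a routine mass-matching refinement of the two indexing measures. The paper omits these details entirely, so your write-up is a faithful (and slightly more explicit) version of the intended argument.
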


\begin{rem}\label{rem:defn_of_T_i_hat}
In the applications of Proposition  \ref{prop:coupling_on_pushed_curves_average} 
below we will assume unless it is explicitly stated otherwise that $P^2$ is the family representing
the volume from Proposition~\ref{PrVolStandard}. We couple with a family representing volume because it implies that the statistics of an arbitrary standard family $P_1$ approach those of volume.

In what follows for a word $\omega$ at time $i$, we have subfamilies $P^1_{i,\omega}$ and $P^2_{i,\omega}$ of $f^i_{\omega}(P^1)$. We then apply Proposition \ref{prop:coupling_on_pushed_curves_average} above, to find a pair of stopping times $\hat{T}^1_i$ and $\hat{T}^2_i$  defined on $f^i_{\omega}(P^1_{i,\omega})$ and $f^i_{\omega}(P^2_{i,\omega})$ respectively. Note that the the $\hat{T}^i$ are not defined 
on all of $f^i_{\omega}(\hat{\gamma})$ because not all points in this pair need be $\epsilon n$-good. 
\end{rem}

Then from Proposition \ref{prop:coupling_on_pushed_curves_average} we obtain the following.

\begin{prop}\label{prop:tail_P_i_omega}
Let $(f_1,\ldots,f_m)$, $\hat{\gamma}$, $\rho$, and $\lambda,\epsilon,\alpha>0$ as be as in Proposition \ref{prop:average_curves_contract_backwards}, then there exists $C$ such that if we let the $\hat{T}_n^1$ be the stopping time defined as in Remark \ref{rem:defn_of_T_i_hat}, for all $i,n\ge 0$ we have the bound:
\begin{equation}\label{eqn:subfamily_coupling_time_bound}
(\mu\otimes \rho)((x,\omega): x\in P^1_{i,\omega}\text{ and }
\hat{T}_i^1(x,\omega)>i+n)\le Ce^{\epsilon i}e^{-n\alpha}. 
\end{equation}
\end{prop}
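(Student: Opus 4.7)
The plan is to derive \eqref{eqn:subfamily_coupling_time_bound} directly from Proposition \ref{prop:coupling_on_pushed_curves_average} by conditioning on the past word and invoking the Markov property of the Bernoulli measure $\mu$. Split $\omega=(\omega_{<i},\omega_{\ge i})$ with $\omega_{<i}=(\omega_0,\dots,\omega_{i-1})$ and $\omega_{\ge i}=(\omega_i,\omega_{i+1},\dots)$; under $\mu$ these two factors are independent. By Proposition \ref{prop:average_curves_contract_backwards} the subfamilies $P^1_{i,\omega},P^2_{i,\omega}\subseteq f^i_\omega(\hat\gamma)$ depend only on $\omega_{<i}$ (and the initial standard family) and consist of $\epsilon i$-good standard pairs. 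Moreover, the construction in Remark \ref{rem:defn_of_T_i_hat} gives the decomposition $\hat T_i^1(x,\omega)=i+S_i(x,\omega_{\ge i})$, where $S_i$ is the local coupling stopping time produced by applying Proposition \ref{prop:coupling_on_pushed_curves_average} to $P^1_{i,\omega_{<i}},P^2_{i,\omega_{<i}}$ using the fresh randomness $\omega_{\ge i}$.

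Next, I would fix the past word $\omega_{<i}$ and apply Proposition \ref{prop:coupling_on_pushed_curves_average} with goodness parameter $\epsilon i$. Although that proposition is stated for unit mass families, the underlying tail estimate from Proposition \ref{prop:main_coupling_proposition} is a pure probability bound that scales linearly with the mass of the initial families. Consequently, for each $\omega_{<i}$,
\[
\int_{\Sigma}\rho\bigl(\{x\in P^1_{i,\omega_{<i}}:S_i(x,\omega_{\ge i})>n\}\bigr)\,d\mu(\omega_{\ge i}) \le Ce^{\epsilon i}e^{-\alpha n}\,\rho(P^1_{i,\omega_{<i}}),
\]
with $C,\alpha>0$ uniform in $\omega_{<i}$. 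Here we identify $P^1_{i,\omega_{<i}}$ with its preimage in $\hat\gamma$ under $f^i_\omega$, which carries the same mass because the standard pair pushforward preserves total mass.

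Finally, I would integrate over $\omega_{<i}$ using Fubini, use that $\{\hat T_i^1>i+n\}=\{S_i>n\}$, and bound $\int \rho(P^1_{i,\omega_{<i}})\,d\mu_{<i}(\omega_{<i})\le\rho(\hat\gamma)\le 1$ to obtain \eqref{eqn:subfamily_coupling_time_bound}. There is no serious analytic obstacle: the heavy lifting is already done in Proposition \ref{prop:coupling_on_pushed_curves_average}, and the only real task is the measure-theoretic bookkeeping needed to separate the past word (which determines the geometry and total mass of $P^1_{i,\omega}$) from the future word (which drives the coupling), together with the verification that the coupling tail bound scales linearly in the starting mass.
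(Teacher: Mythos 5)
Your proposal is correct and is exactly the argument the paper leaves implicit: the paper states Proposition \ref{prop:tail_P_i_omega} as an immediate consequence of Proposition \ref{prop:coupling_on_pushed_curves_average}, and your conditioning on $\omega_{<i}$, applying the coupling tail bound with goodness parameter $\epsilon i$ fiberwise, and integrating via Fubini is the intended bookkeeping. No gaps.
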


From this, we easily deduce a statement about each $\omega$.

\begin{prop}\label{prop:quenched_coupling_lemma}
Let $(f_1,\ldots,f_m)$, $\lambda,\epsilon>0$ and $\hat{\gamma},\rho$ be as in the setting of Proposition \ref{prop:average_curves_contract_backwards} and Remark \ref{rem:defn_of_T_i_hat}, then there exists $\alpha,D_1>0$ such that
\begin{align}
\label{BackGood}
\mu(&\omega:{\text{there exists } i\text{ such that }\rho(x: (x, \omega) \text{ is }(i,\lambda,\epsilon)\text{-backwards good})<1-Ce^{-i\alpha}} 
\text{ or }
\\
\label{LongWait}
&\text{there exist } (i,n)\text{ such that }  \rho(x\in P_{i,\omega}^1:\hat{T}_i^1(x,\omega)\ge i+n)\ge C^2e^{\epsilon i}e^{-n\alpha})\le D_1C^{-1}.
\end{align}
\end{prop}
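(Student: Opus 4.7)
The plan is to upgrade the annealed estimates of Propositions~\ref{prop:average_curves_contract_backwards} and~\ref{prop:tail_P_i_omega} to quenched estimates by the Fubini plus Markov (Chebyshev) scheme together with a Borel--Cantelli summation, in exactly the same spirit as Lemma~\ref{lem:quenched_epsilon_lambda_n_good}. The probability in the statement is the $\mu$-measure of a union of two events (one for each alternative in the ``or''), and I will bound each separately and conclude by a union bound.

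The first clause is essentially already contained in Lemma~\ref{lem:quenched_epsilon_lambda_n_good}: applying Fubini to the annealed bound of Proposition~\ref{prop:average_curves_contract_backwards} and then Markov's inequality at level $C e^{-\alpha i}$, with $\alpha$ strictly smaller than the rate $\alpha'$ supplied by that proposition, yields for each $i$ a bound of the form $(C'/C)e^{-(\alpha'-\alpha)i}$ on the measure of the bad $\omega$. Summing this geometric series in $i$ gives a total of $O(1/C)$, which is of the required form.

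For the second clause I will apply the same Fubini--Markov trick to the annealed bound $(\mu\otimes\rho)(\hat{T}^1_i(x,\omega)\ge i+n)\le C' e^{\epsilon i-\alpha' n}$ from Proposition~\ref{prop:tail_P_i_omega}, now at the threshold $t_{i,n}=C^2 e^{\epsilon i-\alpha n}$ with $\alpha<\alpha'$. Markov's inequality then produces, for each pair $(i,n)$, an exceptional set of $\mu$-measure at most $(C'/C^2)e^{-(\alpha'-\alpha)n}$.

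The main obstacle, and the only genuinely new ingredient, is the summability of this last bound over $(i,n)$: after the factor $e^{\epsilon i}$ cancels, the bound no longer sees $i$, so a naive double sum diverges. The key observation is that whenever $t_{i,n}>1$ the Markov event is automatically vacuous, since $\rho(\hat{\gamma})=1$; consequently the sum may be restricted to pairs with $n\ge \alpha^{-1}(\epsilon i + 2\log C)$. Summing first in $n$ over this truncated range produces an extra factor of the form $C^{-2(\alpha'-\alpha)/\alpha}\, e^{-(\alpha'-\alpha)\epsilon i/\alpha}$; the residual exponential in $i$ is geometrically summable, and choosing $\alpha$ small enough that $2(\alpha'-\alpha)/\alpha\ge 1$ makes the total $C$-dependence at most $C^{-1}$. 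A union bound with the first clause then yields the claimed estimate $D_1 C^{-1}$.
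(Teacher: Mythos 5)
Your proposal is correct. The overall skeleton (Fubini, Markov's inequality, then a Borel--Cantelli-type summation over the exceptional sets, exactly as in Lemma~\ref{lem:quenched_epsilon_lambda_n_good}) is the same as the paper's, and your treatment of the first clause is identical to ours. Where you diverge is in how you force convergence of the double sum for the second clause. The paper applies Markov at an \emph{inflated} threshold, roughly $Ce^{2\epsilon i}e^{-(\alpha/2)n}$ against the annealed bound $C_1e^{\epsilon i}e^{-\alpha n}$, so that the resulting $\mu$-measure bound carries an explicit factor $e^{-\epsilon i}e^{-(\alpha/2)n}$ and the sum over $(i,n)$ is geometrically convergent in both variables with no further argument. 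You instead apply Markov at the threshold appearing in the statement itself, observe that the $e^{\epsilon i}$ factors cancel, and recover summability in $i$ by noting that the event is vacuous whenever the threshold exceeds the total mass $\rho(\hat{\gamma})=1$, which truncates the $n$-sum to $n\gtrsim \epsilon i/\alpha$ and reintroduces exponential decay in $i$. Both arguments are valid; yours has the modest advantage of proving the inequality with exactly the threshold $C^2e^{\epsilon i}e^{-n\alpha}$ stated in the proposition (the paper's choice of threshold in the Markov step does not literally match the one in its displayed conclusion, which is glossed over), at the cost of the extra truncation observation, while the paper's choice of threshold makes the summation entirely mechanical. Your side condition $2(\alpha'-\alpha)/\alpha\ge 1$ is harmless but unnecessary: for $C\ge 1$ the exponent $-2-2(\alpha'-\alpha)/\alpha$ is already below $-1$, and for $C<1$ the bound $D_1C^{-1}$ is trivial once $D_1\ge 1$.
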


\begin{proof}
To control the event in  \eqref{LongWait}
 let $B_{i,n}^{\omega}=\{x\in P^1_{i,\omega}: \hat{T}_i^1(x,\omega)>i+n\}$.
By \eqref{eqn:subfamily_coupling_time_bound} and the Markov inequality, there is $C_1>0$ such that
\begin{equation}\label{eqn:upper_bound_on_i_n}
\mu(\{\omega: \rho(B^{\omega}_{i,n})>Ce^{2\epsilon i}e^{-(\alpha/2)n}\})\le C_1C^{-1}e^{-\epsilon i}e^{-(\alpha/2)n}.
\end{equation}
Then using \eqref{eqn:upper_bound_on_i_n}, we find that
$$
\mu(\omega:\text{for some $i,n$ } \rho(\{x:\hat{T}_i^1(x,\omega)\ge i+n\})\ge C^2e^{\epsilon i}e^{-n\alpha/2})
$$$$
\le \sum_{i\ge 0}\sum_{n\ge 0}\mu(\{\omega:\rho(B^{\omega}_{i,n})\ge C^2e^{\epsilon i}e^{-(\alpha/2)n}\})
\le \sum_{i\ge 0}\sum_{n\ge 0} C_1C^{-1}e^{-\epsilon i}e^{-(\alpha/2)n}
\le C^{-1}C_2
$$
for some $C_2$  provided that $\epsilon$ is small enough.
Combining this estimate with
Proposition \ref{prop:average_curves_contract_backwards} to control the event in \eqref{BackGood} allows us to conclude.
\end{proof}

\subsection{Quenched equidistribution}
 Using the quenched coupling lemmas above, it is straightforward to deduce quenched equidistribution and correlation decay theorems.
The ideas in the proofs below are essentially standard, compare with \cite[Ch.~7]{chernov2006chaotic}, however some modifications are necessary because the quenched random dynamics is not stationary. 

We start with quenched equidistribution.

\begin{prop}\label{prop:quenched_exp_equidistribution_on_subfamilies}
(Quenched exponential equidistribution on subfamilies) Let $(f_1,\ldots,f_m)$ be an expanding on average tuple in $\Diff^2_{\vol}(M)$, where $M$ is a closed surface. 
There exists $\lambda>0$ such that for all sufficiently small $\epsilon>0$, fixed $\beta\in (0,1)$ and $R$, there exists $D_1$ such that for any $R$-good, unit mass standard family $\hat{\gamma}$, there exists $\alpha,\nu>0$ such that
for almost every $\omega$, there exists $C_{\omega}\ge 1$ such that, such that coupling as in Remark \ref{rem:defn_of_T_i_hat}:
\begin{enumerate}[leftmargin=*]
    \item
    There exists a subfamily $P_{i,\omega}$ of $(f^i_{\omega})_*\hat{\gamma}$ of $e^{\epsilon i}$-good standard pairs having total  $\rho$--measure $(1-C_{\omega}e^{-\alpha i})$
    \item 
    The atoms of $(f^i_{\omega})^{-1}(P_{i,\omega})$ have diameter at most $e^{-\lambda/2 i}$.
    \item
    The atoms $A_{i,\omega}\in P_{i,\omega}$ exponentially equidistribute, i.e., letting $\overline{A}_{i,\omega}$ be the normalized measure on $A_{i,\omega}$,
    \begin{equation}
    \abs{\int \phi\circ f^n_{\sigma^i\omega}\,d\overline{A}_{i,\omega}-\int \phi\,d\vol}\le C_{\omega}e^{\epsilon i}e^{-\alpha n}\|\phi\|_{C^{\beta}}.
    \end{equation}
    \item
	We have a tail bound 
	$\displaystyle
	\mu(\{\omega:C_{\omega}>C\})\le D_1C^{-1}.
	$
\end{enumerate}

\end{prop}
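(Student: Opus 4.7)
The plan is to build $P_{i,\omega}$ in two stages---first selecting the backwards-good atoms produced by Lemma~\ref{lem:quenched_epsilon_lambda_n_good}, and then discarding those whose coupling with the volume family is too slow---and to prove the equidistribution on each remaining atom by applying the main coupling Proposition~\ref{prop:main_coupling_proposition} atomwise. Clauses (1), (2), and part of (4) will come for free from quenched backwards-goodness, while (3) will require the coupling.

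First I would invoke Lemma~\ref{lem:quenched_epsilon_lambda_n_good} with the constants from Proposition~\ref{prop:average_curves_contract_backwards}. This produces, for $\mu$-a.e.~$\omega$, a constant $C_\omega^{(1)}$ with tail $\mu(C_\omega^{(1)}>C)\le DC^{-1}$ such that for every $i$ the $(i,\lambda,\epsilon)$-backwards-good set has $\rho$-measure at least $1-C_\omega^{(1)}e^{-\alpha i}$. The atoms containing these points are by definition $\epsilon i$-good standard pairs whose preimages under $f^i_\omega$ have diameter at most $e^{-\lambda i/2}$, and collecting them gives a provisional family $\tilde P_{i,\omega}$ from which the final $P_{i,\omega}$ will be extracted; clauses (1) and (2) are then immediate.

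For (3), I would couple each atom $A$ of $\tilde P_{i,\omega}$ with the volume-representing family $P_{\vol}$ of Proposition~\ref{PrVolStandard} using Proposition~\ref{prop:main_coupling_proposition} applied along $\sigma^i\omega$. Volume invariance together with the measure-preserving property of $\Upsilon(\sigma^i\omega,\cdot)\colon A\to P_{\vol}$ gives
\[
\int\phi\circ f^n_{\sigma^i\omega}\, d\overline{A} - \int\phi\, d\vol = \int \bigl[\phi(f^n_{\sigma^i\omega}(x))-\phi(f^n_{\sigma^i\omega}(\Upsilon(\sigma^i\omega,x)))\bigr]\, d\overline{A}(x).
\]
Splitting at $\hat T = n/2$: on $\{\hat T \le n/2\}$ the uniformly $(C,\lambda,\epsilon)$-tempered stable manifold connecting each coupled pair contracts at rate $\lambda$, so H\"older continuity of $\phi$ bounds the contribution by $C\|\phi\|_{C^\beta}e^{-\beta\lambda n/2}$; on $\{\hat T > n/2\}$ it is at most $2\|\phi\|_{C^\beta}\,\overline{A}(\hat T>n/2)$. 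It therefore suffices to control $\overline{A}(\hat T>n)$ atomwise.

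The hard part will be this per-atom tail, because Proposition~\ref{prop:main_coupling_proposition} supplies only annealed control and the mass of an individual $\epsilon i$-good atom cannot be bounded below a priori. The key observation is that an atom of $\tilde P_{i,\omega}$ is $\omega_{<i}$-measurable while $\hat T(\sigma^i\omega,\cdot)$ depends only on $\omega_{\ge i}$, so conditioning on $\omega_{<i}$ and applying Markov atomwise yields
\[
\mu\bigl(\overline{A}(\hat T>n) > e^{(\epsilon+\delta)i}e^{-\alpha n/2}\bigm|\omega_{<i}\bigr) \le e^{-\delta i}e^{-\alpha n/2}.
\]
Summing in $n$ shows that the annealed mass of the ``bad'' atoms (those violating this bound for some $n$) at time $i$ is bounded by $Ce^{-\delta i}$; a further Markov/Borel--Cantelli argument in $i$, analogous to the proof of Proposition~\ref{prop:quenched_coupling_lemma}, then produces a quenched constant $C_\omega^{(2)}$ with tail $\mu(C_\omega^{(2)}>C)\le DC^{-1}$ such that for every $i$ the bad-atom mass is at most $C_\omega^{(2)}e^{-\delta i/2}$. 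Setting $P_{i,\omega}$ to be the subfamily of non-bad atoms of $\tilde P_{i,\omega}$ preserves (1) and (2) (after absorbing this mass into the constant), yields (3) with $\epsilon+\delta$ in place of $\epsilon$ and rate $\min(\beta\lambda/2,\alpha/4)$ in place of $\alpha$---an adjustment absorbed by starting with smaller parameters---and gives (4) via $C_\omega := C_\omega^{(1)} + C_\omega^{(2)}$.
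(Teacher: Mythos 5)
Your proof is correct, and its skeleton matches the paper's: extract the backwards-good atoms via Lemma~\ref{lem:quenched_epsilon_lambda_n_good} for (1)--(2), couple with the volume family, discard atoms that couple too slowly, and prove (3) on the surviving atoms by splitting at time $n/2$ and using H\"older continuity together with the uniform contraction of the tempered stable manifolds. The one step where you genuinely diverge is the per-atom tail on the coupling time, and you correctly identify it as the crux. The paper first establishes a quenched bound on the \emph{total} uncoupled mass of $P_{i,\omega}$ (Proposition~\ref{prop:quenched_coupling_lemma}) and then runs a pigeonhole argument: if a subfamily of atoms of measure $\ge e^{-\alpha i/3}$ each retained more than $e^{\epsilon i}e^{-\alpha n/3}$ uncoupled mass, the total would violate the quenched bound. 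You instead exploit the Markov structure directly: the atoms of $P_{i,\omega}$ are measurable with respect to the first $i$ letters of $\omega$ while $\hat{T}_i$ depends only on $\sigma^i\omega$, so you can apply Proposition~\ref{prop:coupling_on_pushed_curves_average} to each normalized atom, use a conditional Markov inequality, and then upgrade the resulting annealed bound on the bad-atom mass to a quenched one by the same Borel--Cantelli summation used in Proposition~\ref{prop:quenched_coupling_lemma}. Both mechanisms are valid; yours bypasses the quenched total-mass statement and has the minor advantage that the pigeonhole step in the paper only produces a contradiction for $n\gtrsim i$ (a range restriction the paper glosses over), whereas your conditional-Markov bound holds uniformly in $n$ at the cost of the extra $e^{\delta i}$ factor, which, as you note, is absorbed by shrinking $\epsilon$ at the outset.
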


\begin{proof}
From Lemma \ref{lem:quenched_epsilon_lambda_n_good}, the only thing that remains to be checked is that the individual atoms of $A_{i,\omega}$ are exponentially equidistributing.

Let $P_{i,\omega}$ be the subfamily of $f^i_{\omega}(\hat{\gamma})$ of curves that are $i\epsilon$-good.
Let $P^2$ be a standard family representing volume as in Remark \ref{rem:defn_of_T_i_hat}.
Then coupling with $P^2$, we have the stopping time $\hat{T}_i$ on $P_{i,\omega}$ as discussed in Proposition \ref{prop:tail_P_i_omega}
and uniform $\alpha,C_{\omega}>0$ such that for all $i,n\in \N$,
\begin{equation}\label{eqn:quenched_upper_bound}
\rho(x\in P_{i,\omega}:\hat{T}_i(x,\omega)>n+i)\le C_{\omega}e^{\epsilon i}e^{-n\alpha}. 
\end{equation}
We would like to know that most of the curves in $P_{i,\omega}$ have all but an exponentially small amount of their points coupling quickly.

We claim that for a.e.~$\omega$ there exists a subfamily $G_{i,\omega}$ of $\epsilon i$-good curves in $P_{i,\omega}$ of measure at least $1-{C_{\omega}} e^{-\alpha i/3}$ such that for each $A\in G_{i,\omega}$ all but $e^{i\epsilon n}e^{-\alpha/3n}$ of the mass of the subfamily has coupled to volume by time $i+n$, i.e.~$\hat{T}_i(x,\omega)\le i+n$.
Suppose that $\omega$ satisfies \eqref{eqn:quenched_upper_bound} and for the sake of contradiction, suppose that there is a subfamily $B_i$ (of bad pairs) of $\hat{P}_{i,\omega}$ having measure more than than $e^{-\alpha i/3}$ so that for some $n$ all pairs in $B_i$ have more than $e^{i\epsilon}e^{-n\alpha  /3}$ proportion of points not coupled at time $n+i$, i.e.~$\hat{T}_i>i+n$. This implies that 
$\displaystyle
\rho(x:\hat{T}_i(x,\omega)>n+i)\ge C_{\omega}e^{-2\alpha n/3}e^{i\epsilon },
$
contradicting \eqref{eqn:quenched_upper_bound}. Thus the claim about $G_{i,\omega}$ holds.

Suppose now that $A_{i,\omega}\in G_{i,\omega}\subseteq P_{i,\omega}$ is such a good atom where at time $n+i$ all but at most $e^{i\epsilon }e^{-(\alpha/3)n}$ proportion of the mass of $A_{i,\omega}$ has coupled to volume. 
Let $A_{i,\omega}^n\subseteq A_{i,\omega}$ be the set of points that have coupled by time $i+n$. Let $\Upsilon$ be the measure preserving coupling function and let $V^n=\Upsilon(A_{i,\omega}^n)$ be the corresponding set of points in the standard family representing volume that have $\hat{T}_i(x,\omega)\le i+n$. 
Then we may write the integral in question as 
$$
    \abs{\int \phi\circ f^n_{\sigma^i\omega}\,d\overline{A}_{i,\omega}-\int \phi\,dP_{\vol}}
    $$$$
    \le \abs{\int_{A^{n/2}_{i,\omega}} \phi\circ f^n_{\sigma^i\omega}\,d\overline{A}_{i,\omega}-\int_{V^{n/2}}\phi\,dP_{\vol}}
  +\abs{\int_{A_{i,\omega}\setminus A_{i,\omega}^{n/2}}\phi\circ f^n_{\sigma^i\omega}\,d\overline{A}_{i,\omega}}+\abs{\int_{(V^{n/2})^c}\phi\,dP_{\vol}}
  $$$$
    \le \abs{\int_{\Upsilon^{-1}(V^{n/2})} \phi\circ f^n_{\sigma^i\omega}(\Upsilon(x))-\phi(x)\,dP_{\vol}}+2C_{\omega}e^{i\epsilon}e^{-n\alpha/6}\|\phi\|_{C^{\beta}}.
$$
As the points $\Upsilon(x)$ and $x$ both lie in a common $(C_0,\lambda,\epsilon)$-tempered local stable leaf of uniformly bounded length at time $i+n/2$, then we see that at time $i+n$, that 
$$d(f^n_{\sigma^i\omega}\Upsilon(x), f^n_{\sigma^i\omega}(x))\le C^{-1}_0e^{-\lambda/2n}.$$ 
Now the H\"older regularity of $\phi$  implies that 
\begin{equation}
   \abs{\int \phi\circ f^n_{\sigma^i\omega}\,d\overline{A}_{i,\omega}-\int \phi\,dP_{\vol}} \le C^{-\beta}_0e^{-\lambda\beta/2n}\|\phi\|_{C^\beta}   +2C_{\omega}e^{i\epsilon}e^{-n\alpha/6}\|\phi\|_{C^{\beta}},
\end{equation}
which is what what we wanted for the pair $A_{i,\omega}$. The required tail bound  on $C_{\omega}$ follows from Proposition \ref{prop:quenched_coupling_lemma} and \eqref{eqn:quenched_upper_bound} 
 by taking $D_1$ sufficiently large because the first term involving $C_0^{\beta}$  is uniformly bounded independent of $C_{\omega}\ge 1$.
\end{proof}

\begin{thm}\label{prop:quenched_equidistribution}
(Quenched, tempered equidistribution) Suppose that $M$ is a closed surface, $(f_1,\ldots,f_m)$ is an expanding on average tuple in $\Diff_{\vol}^2(M)$, and  $\beta\in (0,1)$ is a H\"older regularity. For any $\epsilon>0$ there exists $\eta>0$ such that for any $R$-good standard family $\hat{\gamma}$ with associated measure $\rho$,  this family satisfies quenched, tempered equidistribution. Namely, for a.e.~$\omega\in\Sigma$, there exists $C_{\omega}$ such that for any $\phi\in C^{\beta}(M)$,  for all natural numbers $k$ and $n$,
\[
\abs{\int \phi\circ f^n_{\sigma^k(\omega)}\,d\rho-\int \phi\,d\vol}\le C_{\omega}e^{k\epsilon}e^{-\eta n}\|\phi\|_{C^{\beta}}.
\]
\end{thm}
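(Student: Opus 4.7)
The plan is to derive the theorem from Proposition~\ref{prop:quenched_exp_equidistribution_on_subfamilies} in two stages: first, extract a full-family quenched equidistribution rate from the atomic bound in 10.5(3); second, temper the resulting random constant in the shift parameter $k$ by a Borel--Cantelli argument applied to the tail estimate 10.5(4).

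For the first stage, note that 10.5 controls equidistribution of the \emph{atoms} of a subfamily $P_{i,\omega}$ of $(f^i_\omega)_*\hat\gamma$ whose complement has $\rho$-mass at most $C_\omega e^{-\alpha i}$. Using the cocycle identity $f^{i+n}_\omega = f^n_{\sigma^i\omega}\circ f^i_\omega$ and integrating the per-atom bound of 10.5(3) over $P_{i,\omega}$, while estimating the integral against the complementary ``bad'' mass by $\|\phi\|_{\infty}\le \|\phi\|_{C^\beta}$, one obtains for any $N\in\N$ and any splitting $N = i+n$,
\[
\abs{\int \phi\circ f^N_\omega\,d\rho - \int\phi\,d\vol} \le C_\omega\, e^{\epsilon i}e^{-\alpha n}\|\phi\|_{C^\beta} + 2\, C_\omega\, e^{-\alpha i}\|\phi\|_{C^\beta}.
\]
Balancing the two error sources by choosing $i = \lfloor N/2\rfloor$ then gives, for $\epsilon<\alpha$,
\[
\abs{\int \phi\circ f^N_\omega\,d\rho - \int\phi\,d\vol} \le 3\, C_\omega\, e^{-\eta_0 N}\|\phi\|_{C^\beta}, \qquad \eta_0 := (\alpha - \epsilon)/2.
\]

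For the second stage, apply this bound to the shifted word $\omega' := \sigma^k\omega$: since $f^n_{\omega'} = f^n_{\sigma^k\omega}$, the left-hand side of the theorem is bounded by $3\, C_{\sigma^k\omega}\, e^{-\eta_0 n}\|\phi\|_{C^\beta}$. The tail estimate in 10.5(4) together with $\sigma$-invariance of $\mu$ gives $\mu(C_{\sigma^k\omega} > e^{k\epsilon}) \le D_1 e^{-k\epsilon}$, whose sum over $k\ge 0$ is finite. By the first Borel--Cantelli lemma, $\mu$-a.s.~only finitely many $k$ violate $C_{\sigma^k\omega} \le e^{k\epsilon}$, so the quantity
\[
C_\omega' := \sup_{k\ge 0}\, C_{\sigma^k\omega}\, e^{-k\epsilon}
\]
is finite almost surely. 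Taking $C_\omega := 3\, C_\omega'$ and $\eta := \eta_0$ yields the desired inequality.

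Neither step presents a genuine obstacle: the first is a routine integration of the atomic bound over the subfamily, and the second is a standard Borel--Cantelli tempering using the polynomial tail of $C_\omega$. The only point requiring slight care is that the constants $\alpha$, $\nu$, $D_1$ provided by Proposition~\ref{prop:quenched_exp_equidistribution_on_subfamilies} are uniform in $\omega$ for the fixed family $\hat\gamma$, so applying it to each shifted word $\sigma^k\omega$ produces the same quantitative statement with only the random constant $C_{\sigma^k\omega}$ varying.
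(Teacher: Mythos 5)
Your deduction is correct and is exactly the argument the paper has in mind: the paper gives no separate proof, declaring the theorem an immediate consequence of Proposition~\ref{prop:quenched_exp_equidistribution_on_subfamilies}, and your two stages (summing the atomic bound over $P_{i,\omega}$ against the exponentially small bad mass to get the $k=0$ case, then tempering $C_{\sigma^k\omega}$ in $k$ via shift-invariance of $\mu$, the tail bound, and Borel--Cantelli) supply precisely that deduction, with the first stage mirroring the paper's own proof of Theorem~\ref{ThQEM}. The only point to adjust is the split $i=\lfloor N/2\rfloor$, which presupposes $\epsilon<\alpha$; since Proposition~\ref{prop:quenched_exp_equidistribution_on_subfamilies} fixes $\alpha$ only after $\epsilon$ is chosen, take instead $i=\lfloor\delta N\rfloor$ with $\delta$ small enough that $\delta\epsilon<(1-\delta)\alpha$, exactly as in the paper's proof of Theorem~\ref{ThQEM}.
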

The above theorem is an immediate consequence of 
 Proposition \ref{prop:quenched_exp_equidistribution_on_subfamilies}, 
so we do not write a separate proof of it.  Next we turn to exponential mixing.

\subsection{Exponential mixing}\label{sec:limit_theorems}

We are now ready to prove  exponential mixing. In a subsequent paper we plan to 
 show that several classical statistical limit theorems are valid in our setting.

\begin{proof}[Proof of Theorem \ref{ThQEM}.]
As before, let $P_{\vol}$ be an $R$-good standard family representing volume. 
We then apply Proposition \ref{prop:quenched_exp_equidistribution_on_subfamilies} with $\hat{\gamma}=P_{\vol}$, and obtain $\lambda,\epsilon,\alpha>0$ such that the conclusions of that proposition hold for these constants. 
Pick some $\omega\in \Sigma$ such that the conclusion of Proposition \ref{prop:quenched_exp_equidistribution_on_subfamilies} holds for $\omega$, and let $C_{\omega}$ be the associated constant. 
We will now show that $f^n_{\omega}$ is exponentially mixing. Let $\delta\in (0,1)$ be some fixed number small enough that $\epsilon\delta-(1-\delta)\alpha<0$.

Below, we will be implicitly rounding to nearest integers so that everything makes sense. 
In particular, we will denote by
$P_{\delta n}$ the standard family $P_{\lfloor \delta n\rfloor,\omega}$ from Proposition \ref{prop:quenched_exp_equidistribution_on_subfamilies}; as $\omega$ is fixed we will omit it below. 

We now record some useful properties of $P_{\delta n}$. First, $P_{\delta n}$ comprises all but $C_{\omega}e^{-\delta \alpha n}$ of the mass of $f^{\delta n}_{\omega}(P_{\vol})$.
Thus, by volume preservation:
\begin{align}
\int \phi \cdot \psi\circ f^n_{\omega}\,dP_{\vol}&=\int \phi\circ (f^{\delta n}_\omega)^{-1}\cdot\psi\circ f^{(1-\delta)n}_{\sigma^{\delta n}(\omega)}\,d(f^{\delta n}_{\omega})_*(P_{\vol})\\
&=\sum_{A\in P_{\delta n}} \int \phi\circ (f^{\delta n}_{\omega})^{-1}\cdot \psi\circ f^{(1-\delta) n}_{\sigma^{\delta n}(\omega)}\,dA\pm C_{\omega}e^{-\delta\alpha n}\|\phi\|_{C^\beta}\|\psi\|_{C^{\beta}}\label{eqn:first_expanded_sum11}.
\end{align}

Now, by Proposition \ref{prop:quenched_exp_equidistribution_on_subfamilies}, the preimage of each curve $A\in P_{\delta n}$ has length at most $e^{-\delta \lambda n/2}$. 
By H\"older continuity of $\phi$
\begin{equation}
\abs{\max \phi\circ (f^{\delta n}_{\omega})^{-1}\vert_A-\min \phi\circ (f^{\delta n}_{\omega})^{-1}\vert_A}<e^{-\beta\delta \lambda n/2}\|\phi\|_{C^{\beta}}.
\end{equation}
In particular, applying this observation to each summand in \eqref{eqn:first_expanded_sum11}, we see that
\begin{align}\label{eqn:phi_nearly_constant1}
\sum_{A\in P_{\delta n}} \int \phi\circ (f^{\delta n}_{\omega})^{-1}\cdot \psi\circ f^{(1-\delta)n}_{\sigma^{\delta n}(\omega)}\,dA
=&\sum_{A\in P_{\delta n}}\int \phi\circ (f^{\delta n}_{\omega})^{-1}\,d\overline{A}\int \psi\circ f^{(1-\delta)n}_{\sigma^{\delta n}(\omega)}\,dA \notag \\
&\,\,\,\,\pm e^{-n\beta \delta \lambda/2}\|\phi\|_{C^\beta}\|\psi\|_{C^{\beta}},
\end{align}
where $\overline{A}$ denotes the unit mass version of $A$.
By the exponential equidistribution estimate from Proposition \ref{prop:quenched_exp_equidistribution_on_subfamilies},
\begin{equation}\label{eqn:equidist_one_pair_}
\int \psi\circ f^{(1-\delta)n}_{\sigma^{\delta n}(\omega)}\,dA=\rho(A)
\left(\int \psi\,d\vol \pm C_{\omega}e^{-((1-\delta)\alpha-\delta \epsilon)n}\|\psi\|_{C^{\beta}})\right),
\end{equation}
where $\rho(A)$ is the mass of the pair $A$.  Note by our choice of $\delta$ that the exponent appearing in the above equation is negative.

Combining  \eqref{eqn:first_expanded_sum11}, \eqref{eqn:equidist_one_pair_}, and \eqref{eqn:phi_nearly_constant1}, we find that
\begin{align*}
\int \phi\cdot \psi\circ f^n_{\omega}\,dP_{\vol}&=\sum_{A\in P_{\delta n}} \left(\int \phi\circ (f^{\delta n}_{\omega})^{-1}\,dA\right)\left(\int \psi\,d\vol\right)\\ 
&\,\,\,\,\,\,\,\,\pm C_{\omega}(e^{-\delta \alpha n}+e^{-\beta\delta\lambda n/2}+e^{-((1-\delta)\alpha-\delta\epsilon)n})\|\phi\|_{C^{\beta}}\|\psi\|_{C^{\beta}}
\end{align*}
But as $P_{\delta n}$ comprises all but at most $C_{\omega}e^{-\delta \alpha n}$ of the mass of $f^n_{\omega}(P_{\vol})$, it follows that:
\begin{align*}
\int \phi\cdot \psi\circ f^n_{\omega}\,dP_{\vol}
&=\left(\int \phi\,dP_{\vol}\pm C_{\omega}\|\phi\|_{C^{\beta}}e^{-\delta \alpha n}\right)\left(\int \psi\,d\vol\right)\\
&\hspace{2em} \pm C_{\omega}(e^{-\delta \alpha n}+e^{-\beta\delta\lambda n/2}+e^{-((1-\delta)\alpha-\delta\epsilon)n})\|\phi\|_{C^{\beta}}\|\psi\|_{C^{\beta}}\\
&=\int \phi\,d\vol\int\psi\,d\vol\pm 4C_{\omega}(e^{-\eta n}\|\phi\|_{C^{\beta}}\|\psi\|_{C^{\beta}}),
\end{align*}
where 
$\displaystyle
\eta=\min\{\delta \alpha,\beta\delta\lambda/2,(1-\delta)\alpha-\delta\epsilon \}.
$
Since the tail bound on $C_{\omega}$ is part of  Proposition~\ref{prop:quenched_exp_equidistribution_on_subfamilies}, the proof is complete.
\end{proof}

We now give the proof of annealed exponential mixing, i.e.~exponential mixing of the skew product.
\begin{proof}[Proof of Corollary~\ref{CrAnEM}.]
   Let $\bar\Phi(\omega)=\int_M \Phi(\omega, x)\,d \vol$, $\bar\Psi(\omega)=\int_M \Psi(\omega, x)\, d\vol$.
   Note that 
   $$ \iint \Phi (\Psi\circ F^n) \,d\mu d\vol=\mathbb{E}_\omega\left(\Phi(\omega, x) \Psi(\sigma^n \omega, f_\omega^n x) \,d\vol\right).
   $$
   Splitting the right hand side into the regions where 
  $C_\omega\!\leq\!\! e^{\eta n/2}$ and $C_\omega\!>\!\! e^{\eta n/2}$ and using 
  \eqref{EqQEM} in the first region and \eqref{EqEMTail} in the second region we obtain
  $$   \iint \Phi (\Psi\circ F^n) \,d\mu \,d\vol=\int \bar\Phi (\bar\Psi\circ \sigma^n) d\mu
  +O\left(e^{-\eta n/2} \|\Phi\|_{C^\beta} \|\Psi\|_{C^\beta} \right).
  $$
  Now the result follows from the exponential mixing for the shift, see 
\cite[Chapter~2]{ParryPollicott}.
\end{proof}

\appendix

\section{Finite time smoothing estimates}
\label{AppSmoothing}
In the following two appendices we present finite time estimates for nonuniformly hyperbolic systems. While such estimates should be familiar to experts in Pesin theory,
it is difficult to find precise references in the literature since most works concentrate on 
infinite orbits. The finite time estimates play an important role in the paper because 
in the main coupling algorithm we want to use the independence of the dynamics,
hence we decide to stop at time $n$ based only on the 
dynamics on the time interval from zero to $n.$

\subsection{Finite time Lyapunov metrics}
\label{SSFTLyap}

Typically one defines Lyapunov metrics for an infinite sequence of diffeomorphisms. In our case have only a finite sequence, so we show that these also have Lyapunov metrics. The most important point in Lemma \ref{lem:lyapunov_metric} below is item (3), which tells us that at a reverse tempered point the Lyapunov metric will not be distorted. 

The appearance of $\lambda'$ in Lemma \ref{lem:lyapunov_metric} reflects 
 that we need to make a small sacrifice in the rate of growth to obtain the uniform estimates.
If we consider sequences that are $(C,\lambda,\epsilon)$-tempered, and construct the Lyapunov metrics that guarantee a growth rate of exactly $e^{\lambda}$ up to a factor of $\epsilon$, then as we let $\epsilon$ go to zero, the Lyapunov metrics get very distorted with respect to the reference metrics. With the lemma below, as $\epsilon$ goes to zero the metrics do not get any more distorted, however, they guarantee only expansion at some rate $\lambda'\le \lambda$.

\begin{lem}\label{lem:lyapunov_metric}
(Lyapunov Metric Estimates) Fix $(C,\lambda)$. Then for any $0<\lambda'\le \lambda$, and any sequence of linear maps $A_1,\ldots,A_n\in \SL(2,\R)$ 
that have a $(C,\lambda,\epsilon)$-subtempered splitting, 
$E^s_i\oplus E^u_i$ with respect to a sequence of uniformly bounded reference metrics $\|\cdot \|_i$,  there exists a sequence of metrics $\|\cdot\|_i'$ such that
\begin{enumerate}
    \item 
    $\|A_i\vert_{E^s}\|_{i}'\le e^{-\lambda'}$
    \item 
    $\|A_i\vert_{E^u}\|_{i}'\ge e^{\lambda'}$
    \item 
    $\frac{1}{\sqrt{2}}\|\xi\|_i\le \|\xi\|_i'\le  4e^{2C+2\epsilon i}\pez{1-e^{2(\lambda'-\lambda)}}^{-1/2}\|\xi\|_i$, for $\xi\in \R^2$.
\end{enumerate}
The same holds for reverse tempered sequences of maps, \emph{mutatis mutandis}. 
\end{lem}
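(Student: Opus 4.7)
My plan is to use the classical weighted-$\ell^2$ construction of Lyapunov metrics, adapted to the finite-orbit setting. First I would define $\|\cdot\|'_i$ separately on each invariant direction: for $v\in E^s_i$, sum forward iterates with weights $e^{2\lambda' k}$, i.e.
\[
(\|v\|'_i)^2 = \sum_{k=0}^{n-i} \|A^k_i v\|^2 \, e^{2\lambda' k},
\]
so that a reindexing $k\mapsto k+1$ telescopes to $\|A_i v\|'_{i+1}\le e^{-\lambda'}\|v\|'_i$. For $v\in E^u_i$ I would instead sum over the backward orbit with the same exponential weights, so that the symmetric reindexing produces the extra $k=0$ term $\|A_iv\|_{i+1}^2$ and yields $\|A_iv\|'_{i+1}\ge e^{\lambda'}\|v\|'_i$. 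Finally, I would extend $\|\cdot\|'_i$ to all of $V_i$ by declaring $E^s_i\perp E^u_i$ in the new metric, i.e.~$(\|v^s+v^u\|'_i)^2=(\|v^s\|'_i)^2+(\|v^u\|'_i)^2$.

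With this definition the proofs of items (1) and (2) reduce to one-line index shifts in the two sums, so the real content lies in item (3). The lower bound $\|v\|'_i \ge \|v\|_i/\sqrt{2}$ should fall out immediately from keeping only the $k=0$ terms in both sums together with the elementary inequality $\|v^s\|_i^2+\|v^u\|_i^2 \ge \tfrac12 \|v\|_i^2$, which holds for any direct-sum decomposition since $|\langle v^s,v^u\rangle|\le\tfrac12(\|v^s\|^2+\|v^u\|^2)$.

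For the upper bound I plan to plug the super- and sub-tempered inequalities from Definition~\ref{defn:tempered_splitting} directly into each weighted sum. For the stable direction the bound $\|A^k_i v^s\|\le e^Ce^{-\lambda k}e^{\epsilon i}\|v^s\|_i$ converts the sum into a geometric series with ratio $e^{2(\lambda'-\lambda)}<1$, producing $\|v^s\|'_i\le e^{C+\epsilon i}(1-e^{2(\lambda'-\lambda)})^{-1/2}\|v^s\|_i$. For the unstable sum, I would pull $v^u\in E^u_i$ back by $k$ steps and invoke subtemperedness along $E^u_{i-k}$ to obtain $\|(A^k_{i-k})^{-1}v^u\|_{i-k}\le e^Ce^{-\lambda k}e^{\epsilon(i-k)}\|v^u\|_i$; the extra factor $e^{-2\epsilon k}$ only accelerates convergence, so the identical upper bound emerges. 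Converting these component-wise bounds into an estimate on $\|v\|'_i$ will require transversality, and here the tempered-splitting angle bound $\angle(E^s_i,E^u_i)\ge e^{-C-\epsilon i}$ supplies that the projections along the splitting are bounded in operator norm by $O(e^{C+\epsilon i})$. Multiplying these two $e^{C+\epsilon i}$ factors together produces the $e^{2C+2\epsilon i}$ in item (3), and a careful bookkeeping of numerical prefactors shows that the final constant fits under the stated~$4$.

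The reverse-tempered case will follow by applying the same construction to the reversed sequence $A_n^{-1},\ldots,A_1^{-1}$, which is itself $(C,\lambda,\epsilon)$-tempered by definition, and then transporting the resulting norms back to the original spaces. I do not anticipate any genuine obstacle; the main item requiring care is tracing where the factor $e^{\epsilon i}$ originates---in the stable sum it is a direct consequence of supertemperedness, while in the unstable sum it comes from the pull-back plus a second use of temperedness---and verifying that both contributions are accommodated by the single factor $e^{2\epsilon i}$ on the right-hand side.
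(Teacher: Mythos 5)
Your proposal is correct and follows essentially the same route as the paper: the same weighted-$\ell^2$ norms on $E^s_i$ (forward sum) and $E^u_i$ (backward sum) declared orthogonal, index shifts for items (1) and (2), the $k=0$ terms plus the parallelogram-type inequality for the lower bound in (3), and the component-wise tempered geometric-series bounds combined with the angle estimate $\angle(E^s_i,E^u_i)\ge e^{-C}e^{-\epsilon i}$ for the upper bound. The two sources of $e^{C+\epsilon i}$ you identify (one from the tempered sums, one from the transversality/projection bound) are exactly how the paper assembles the $e^{2C+2\epsilon i}$ prefactor.
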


The estimates below are similar to \cite[Lem.~III.1.3]{Liu1995smooth}. The reverse version follows by just taking inverses.  This result holds because dropping terms from the definition of the Lyapunov metric doesn't stop them from satisfying the required estimates.

\begin{proof}
We begin by defining the new Lyapunov metric. Then we check the desired properties.

\noindent
For 
$\xi\in E^s_i,$ let $\displaystyle \|\xi\|_i'\!=\!\!\left(\sum_{l=0}^{n-i}\|A_i^l\xi\|_i^2e^{2\lambda'l}\right)^{\!\!1/2}$
and 
for  $\xi\in E^u_i,$ let 
$\displaystyle \|\xi\|_i'\!=\!\!\left(\sum_{l=0}^i e^{2\lambda'l}\|[A_{i-l}^l]^{-1}\xi\|_{i-l}^2\right)^{\!\!1/2}\!\!.
$

We then define $\|\cdot\|_i'$ on all 
of $\R^2$ by declaring $E^s_i$ and $E^u_i$ to be orthogonal.

We now check the required estimate for the stable norm. Let $\xi\in E_i^s$, then
\begin{align*}
(\|A_i\xi\|'_{i+1})^2&=\sum_{l=0}^{n-i-1}\|A_{i+1}^lA_i\xi\|^2e^{2\lambda'l}
=\sum_{l=0}^{n-i-1} \|A^{l+1}_i\xi\|^2e^{2\lambda'l}\\
&=e^{-2\lambda'}\sum_{l=0}^{n-i-1}\|A^{l+1}_i\xi\|^2e^{2\lambda'(l+1)}
\le e^{-2\lambda'}(\|\xi\|'_i)^2.
\end{align*}
Note that the last inequality follows because the  
penultimate expression
is missing the first term in the sum that defines $\|\xi\|_i'
$.

We now check the estimate on $E^u_i$. Suppose $\xi\in E^u_i$, $i<n$, then
\begin{align*}
(\|A_i\xi\|'_{i+1})^2&=\sum_{l=0}^{i+1} e^{2\lambda'l}\|[A_{i+1-l}^l]^{-1}A_i\xi\|_{i+1-l}^2\\
&=\|A_i\xi\|^2_{i+1}+e^{2\lambda'}\sum_{l=1}^{i+1} e^{2\lambda'(l-1)}\|[A_{i-(l-1)}^{l-1}]^{-1}\xi\|_{i-(l-1)}^2\\
&=\|A_i\xi\|^2_{i+1}+e^{2\lambda'}\sum_{l=0}^{i} e^{2\lambda'l}\|[A_{i-l}^{l}]^{-1}\xi\|^2_{i-l}\ge e^{2\lambda'}(\|\xi\|_i')^2.
\end{align*}
This verifies the first two estimates in the lemma. Note that neither of the above required any control on the angle between $E^s$ and $E^u$. 

We now compare the two norms on $E^s_i$ and $E^u_i$. For $\xi\in E^s_i$,
$$
\|\xi\|_i'^2=\sum_{l=0}^{n-i}\|A^l_i\xi\|_i^2e^{2\lambda'}
\le \sum_{l=0}^{n-i} e^{2C}e^{-2\lambda l}e^{2\epsilon i}\|\xi\|_i^2 e^{2\lambda'l}
\le \frac{e^{2C}e^{2\epsilon i}}{1-e^{2(\lambda'-\lambda)}}\|\xi\|_i^2.
$$
Next for $\xi\in E^u_i$, we estimate
\begin{align*}
(\|\xi\|_i')^2&=\sum_{l=0}^i e^{2\lambda'l}\|[A^l_{i-l}]^{-1}\xi\|^2_{i-l}
=\sum_{l=0}^ie^{2\lambda'l}e^{2C}e^{2(i-l)\epsilon}e^{-2\lambda l}\|\xi\|_i^2\\
&\le e^{2C}e^{2i\epsilon}\sum_{l=0}^i e^{2(\lambda'-\lambda)l}e^{-2\epsilon l}\|\xi\|_i^2
\le \frac{e^{2C}e^{i2\epsilon}}{1-e^{2(\lambda'-\lambda)}}\|\xi\|_i^2. 
\end{align*}

We now check final estimate in the theorem. For the lower bound, note that by definition $\|\xi^s\|_i'\ge \|\xi^s\|_i$ and $\|\xi^u\|_{i}'\ge \|\xi^u\|_i$, thus
\begin{equation}
	\|\xi\|_i^2\le (\|\xi^s\|_i+\|\xi^u\|_i)^2\le 2[(\|\xi^s\|_i')^2+(\|\xi^u\|_i')^2]=2(\|\xi\|_i')^2.
\end{equation}
For the upper bound, we have that 
\begin{align}\label{eqn:last_eqn_without_prime}
\|\xi\|_i'\le \|\xi^s\|_i'+\|\xi^u\|_i'&\le \frac{e^{C+\epsilon i}}{\sqrt{1-e^{2(\lambda'-\lambda)}}}(\|\xi^s\|_i+\|\xi^u\|_i).
\end{align}
But we know from subtemperedness that the angle $\theta$ between $E^s_i$ and $E^u_i$ is at least $e^{-C}e^{-i\epsilon}$. So 
 by the Law of Sines we have that for $*\in \{u,s\}$ that $\|\xi^*\|_i\leq \|\xi\|_i/\sin\theta\leq 2\|\xi\|_i/\theta$
because for $0\le \theta\le \pi/2$, $\theta/2\le\sin(\theta)$. 
Thus \eqref{eqn:last_eqn_without_prime} gives
$\displaystyle
\|\xi\|_i'\le \frac{4e^{2C+2\epsilon i}}{\sqrt{1-e^{2(\lambda'-\lambda)}}}\|\xi\|_i,
$
which completes the final estimate in the proof.
\end{proof}

\subsection{Basic calculus facts}
We now record some facts from calculus that will be needed when we study estimates for the graph transform. In the following statements, as elsewhere, we use $\|\phi\|_i$ to denote the supremum of norm of the $i$th partial derivatives of $\phi$.
\begin{lem}\label{lem:C_2_norm_twisted_charts}
(Norms of functions in twisted charts) Suppose that $\phi\colon \R^2\to \R^2$ is a $C^2$ function. Then if we apply a linear change of coordinates $L_1,L_2$ to $\phi$, then we see that 
\[
\|L_2\circ \phi\circ L_1\|_{1}\le \|L_1\|\|L_2\|\|\phi\|_1.
\]
Further, for the second derivatives of $\phi$:
\[
\|L_2\circ \phi \circ L_1\|_2\le \|L_2\|\|\phi\|_2\|L_1\|^2.
\]
\end{lem}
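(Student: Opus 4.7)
The plan is to reduce everything to the chain rule together with the fact that $L_1$ and $L_2$ are linear, hence have vanishing second derivative. I will treat the two bounds in turn and then note that the norms in question (whether interpreted as operator norms on derivatives, or as suprema of entries of the derivative tensors) differ only by dimensional constants that are harmless in our two-dimensional setting; throughout I will work with the derivative viewed as a multilinear map and use the submultiplicativity of the operator norm.

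First I would compute the derivative of $\psi:=L_2\circ \phi\circ L_1$. Since $L_1$ and $L_2$ are linear, the chain rule gives, for every $x\in\R^2$,
\[
D\psi(x) \;=\; L_2\circ D\phi(L_1 x)\circ L_1.
\]
Submultiplicativity of the operator norm then immediately yields $\|D\psi(x)\|\le \|L_2\|\,\|D\phi(L_1 x)\|\,\|L_1\|$, and taking the supremum over $x$ (noting $L_1$ is a bijection) gives the first assertion.

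Next I would differentiate once more. Because $L_1$ and $L_2$ have zero second derivative, the chain rule for second derivatives (equivalently, differentiating the expression $D\psi(x)=L_2\circ D\phi(L_1 x)\circ L_1$) produces the bilinear map
\[
D^2\psi(x)[v,w] \;=\; L_2\bigl(D^2\phi(L_1 x)[L_1 v,\,L_1 w]\bigr),
\]
for $v,w\in \R^2$. Taking norms and using bilinearity of $D^2\phi$ gives
\[
\bigl\|D^2\psi(x)[v,w]\bigr\|\;\le\;\|L_2\|\,\bigl\|D^2\phi(L_1 x)\bigr\|\,\|L_1 v\|\,\|L_1 w\|\;\le\;\|L_2\|\,\|D^2\phi\|\,\|L_1\|^{2}\,\|v\|\,\|w\|,
\]
so that $\|D^2\psi\|\le \|L_2\|\,\|\phi\|_2\,\|L_1\|^{2}$, which is the second assertion.

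I do not anticipate any serious obstacle; the only bookkeeping point is to reconcile the convention of $\|\cdot\|_i$ as the sup of entries of partial derivatives with the operator-norm bounds produced by the chain rule. In dimension two this reconciliation costs only an absolute constant (which is absorbed into the inequality since, for a bilinear map $T$ on $\R^2$, $\|T\|_{\mathrm{op}}$ and $\max_{i,j,k}|T_{ijk}|$ are comparable), so the stated inequalities follow directly.
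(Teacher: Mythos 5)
Your proof is correct, and it is the standard chain-rule argument; the paper in fact states this lemma among its ``basic calculus facts'' without supplying any proof, so there is nothing to compare against and your write-up fills the gap as intended. The one point worth noting is the caveat you already raise: under the paper's convention that $\|\cdot\|_i$ is the supremum of the entries of the $i$th derivative, converting between that and the operator-norm bounds produced by the chain rule can cost a dimensional constant, and the inequality as stated has no explicit constant to absorb it into -- but every application of this lemma in the paper (e.g.\ the bound on $\|\hat{f}_i\|_{C^2}$ in the Lyapunov charts) carries ample slack, so this is immaterial.
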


The next lemma studies how the $C^2$ norm of a curve changes when we apply a linear map.
\begin{lem}\label{lem:linear_map_C_2_norm_curve_est}

Suppose that $\gamma$ is a $C^2$ curve in $\R^2$ and that $L\colon \R^2\to \R^2$ is an invertible linear map. Then 
$\displaystyle
\|L\circ \gamma\|_{C^2}\le \frac{\|L\|}{(m(L))^2}\|\gamma\|_{C^2}
$. Here $\|\gamma\|_{C^2}$ refers to the $C^2$ norm of $\gamma$ as a curve in $\R^2$ and $m(L)$ is the conorm of the matrix, 
$\displaystyle m(L)=\min_{v\neq 0} \|Lv\|/\|v\|$.
\end{lem}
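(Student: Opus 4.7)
The plan is to use the classical formula for the curvature of a parametrized plane curve, since the $C^2$ norm of an unparametrized curve in the paper's sense coincides (up to a harmless constant) with the supremum of its extrinsic curvature. Writing $\alpha = L\circ\gamma$ and assuming $\gamma$ is arclength parametrized, we have $\alpha' = L\gamma'$ and $\alpha'' = L\gamma''$, and the curvature of $\alpha$ at a point is
\[
\kappa_{\alpha} = \frac{|\alpha'\times \alpha''|}{|\alpha'|^{3}}.
\]

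The key observation is that for any two vectors $u,v\in\R^{2}$ and any linear map $L$ one has $Lu\times Lv = (\det L)(u\times v)$. Since $\gamma$ is arclength parametrized, $\gamma'\perp \gamma''$ and so $|\gamma'\times \gamma''| = |\gamma''|$, which equals the curvature of $\gamma$ and is bounded by $\|\gamma\|_{C^{2}}$. Therefore
\[
|\alpha'\times \alpha''| = |\det L|\,|\gamma'\times\gamma''| \le |\det L|\,\|\gamma\|_{C^{2}}.
\]
For the denominator I use $|\alpha'| = |L\gamma'|\ge m(L)|\gamma'| = m(L)$, so $|\alpha'|^{3}\ge m(L)^{3}$.

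Combining these gives
\[
\kappa_{\alpha} \le \frac{|\det L|}{m(L)^{3}}\,\|\gamma\|_{C^{2}}.
\]
Finally, for a $2\times 2$ matrix the determinant factors as a product of singular values, $|\det L| = \|L\|\,m(L)$, so the right-hand side equals $\dfrac{\|L\|}{m(L)^{2}}\,\|\gamma\|_{C^{2}}$, which is the bound claimed.

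The only step that requires mild care is reconciling the paper's definition of $\|\cdot\|_{C^{2}}$ of a curve (the sup over points of the second derivative of the curve viewed as a graph over its tangent line in an exponential chart in $\R^{2}$, which amounts to standard Euclidean geometry here) with the extrinsic curvature; this is standard since the curve has unit speed in its own tangent direction, and the second derivative of the graph at the base point is precisely the curvature. No other step is delicate: the wedge identity and the singular value identity $|\det L|=\|L\|m(L)$ are completely elementary in dimension two, so there is no real obstacle to overcome.
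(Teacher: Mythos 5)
Your proof is correct. It reaches the same bound as the paper, and in fact computes the same underlying quantity, but by a somewhat different route: the paper writes $\gamma$ near a point as a graph $t\mapsto(t,\lambda t^2)$ over its tangent line, applies $L$, and explicitly reparametrizes the image to read off the new second-order coefficient $\hat\lambda = q\lambda/\|v\|^2$ with $|q|\le\|w\|\le\|L\|$ and $\|v\|\ge m(L)$; you instead invoke the classical curvature formula $\kappa_\alpha=|\alpha'\times\alpha''|/|\alpha'|^3$ together with the identities $Lu\times Lv=(\det L)(u\times v)$ and $|\det L|=\|L\|\,m(L)$. These are the same computation in disguise — the paper's $|q|/\|v\|^2$ equals your $|\det L|/|L\gamma'|^3$ — but your packaging is arguably cleaner and makes the role of volume preservation ($\det L$) transparent, at the cost of having to justify (as you do) that the paper's graph-based $\|\cdot\|_{C^2}$ coincides with the supremum of extrinsic curvature; this identification is exact at the base point of each chart since the first derivative of the graph vanishes there. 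Your intermediate bound $|\det L|\,\kappa_\gamma/|L\gamma'|^3$ is also marginally sharper pointwise before the final estimate $|L\gamma'|\ge m(L)$ is applied, though both yield the same stated inequality.
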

\begin{proof}
 By definition, the $C^2$ norm of a curve is the supremum of the second derivative of its graph over each of its tangent spaces.
So, without loss of generality suppose that $\gamma$ passes through the origin and that at this point $\gamma$ is the curve $t\mapsto (t,\lambda t^2)$
($O(t^3)$ terms do not change the computation below).
Then we apply 
$\displaystyle
L=\begin{bmatrix}
a & b \\
c & d
\end{bmatrix}
$
to $(t,\lambda t^2)^T$ to get the curve
$\displaystyle
t\begin{pmatrix}
a \\
c
\end{pmatrix}+\lambda 
t^2\begin{pmatrix}
b\\
d
\end{pmatrix}.
$

To study the $C^2$ norm of $L\circ \gamma$ at $0$, we must write it as a graph over its tangent space, i.e. in the form 
$tu+t^2\hat\lambda u^{\perp}$, where $u$ is a unit vector and $\hat\lambda$ is to be determined.
Let $v=(a,c)^{T}$, $u=v/\|v\|$ and  $w=(b,d)^{T}$. Then we may reparametrize $vt+\lambda wt^2$ in the form 
$u t+\lambda (w/\|v\|^2) t^2$. 
Decomposing $w=p u+q u^\perp$ we obtain the parametrization 
$us+(\lambda q/\|v\|^2) s^2 u^\perp+O(s^3)$ where $s=t+p \lambda/\|v\|^2 t^2.$
Thus $\hat\lambda=q\lambda/\|v\|^2.$ Since $|q|\leq \|w\|,$
$\|w\|\le \|L\|$, and $1/\|v\|\le 1/m(L)$, the result follows.
\end{proof}

We now estimate the $C^2$ norm of a function in terms of its inverse.

\begin{lem}\label{lem:C_2_expanding_est1}
Suppose that $\phi\colon \R\to \R$ (or from one interval to another) is a $C^2$ diffeomorphism. If $\abs{D\phi}>\lambda$, then $\abs{D\phi^{-1}}\le \lambda^{-1}$ and $\|\phi^{-1}\|_2\le \lambda^{-3}\|\phi\|_2$.
\end{lem}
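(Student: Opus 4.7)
The first claim is immediate from the inverse function theorem: writing $y=\phi(x)$, we have $D\phi^{-1}(y) = 1/D\phi(x)$, so $|D\phi^{-1}(y)| = 1/|D\phi(x)| < 1/\lambda$, giving $|D\phi^{-1}| \le \lambda^{-1}$.

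For the second claim, the plan is to differentiate the identity $\phi^{-1}\circ \phi = \mathrm{id}$ twice. The first differentiation gives $D\phi^{-1}(\phi(x))\cdot D\phi(x) = 1$, and differentiating this in $x$ by the product and chain rules yields
\[
D^2\phi^{-1}(\phi(x))\cdot (D\phi(x))^2 + D\phi^{-1}(\phi(x))\cdot D^2\phi(x) = 0.
\]
Solving for $D^2\phi^{-1}(\phi(x))$ gives
\[
D^2\phi^{-1}(\phi(x)) = -\frac{D\phi^{-1}(\phi(x))\cdot D^2\phi(x)}{(D\phi(x))^2}.
\]

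Taking absolute values and using $|D\phi^{-1}|\le \lambda^{-1}$ (from the first part), $|D^2\phi(x)|\le \|\phi\|_2$, and $|D\phi(x)|>\lambda$, we obtain
\[
|D^2\phi^{-1}(\phi(x))| \le \lambda^{-1}\cdot \|\phi\|_2\cdot \lambda^{-2} = \lambda^{-3}\|\phi\|_2.
\]
Since $\phi$ is a diffeomorphism, $\phi(x)$ ranges over the entire domain of $\phi^{-1}$, so taking the supremum yields $\|\phi^{-1}\|_2 \le \lambda^{-3}\|\phi\|_2$. There is no real obstacle here; this is a one-line computation once the identity $\phi^{-1}\circ\phi = \mathrm{id}$ is differentiated twice, and the only subtlety is keeping track of the powers of $\lambda$ (two come from dividing by $(D\phi)^2$, one from the factor of $D\phi^{-1}$).
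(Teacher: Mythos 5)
Your proof is correct and is essentially the same computation as the paper's: the paper inverts the second-order Taylor polynomial $\nu x + Ax^2$ to get the coefficient $-\nu^{-3}A$ for the inverse, which is exactly your identity $D^2\phi^{-1} = -D^2\phi/(D\phi)^3$ obtained by differentiating $\phi^{-1}\circ\phi=\mathrm{id}$ twice. The bookkeeping of the powers of $\lambda$ is the same in both versions.
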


\begin{proof}
At each point, we  express the Taylor polynomial of $\psi^{-1}$ in terms of the Taylor polynomial of $\psi$. Suppose that $\psi$ has Taylor polynomial $\nu x+Ax^2$ at some point, with $|\nu|\leq \lambda.$ Then the Taylor polynomial of $\psi^{-1}$ at the corresponding point is 
$\nu^{-1}x+Cx^2$, where
$\displaystyle
C=-\nu^{-3}A.
$
The conclusion follows.
\end{proof}

 For the future reference, we record a bound on compositions. An overview of estimates like these is contained in \cite[App.~A]{hormander1976boundary}.
\begin{lem}
Suppose we are composing three functions $f,g,h\colon \R^n\to \R^n$, then
\[
\|f\circ g\|_2\le \|f\|_2\|g\|_1^2+\|f\|_1\|g\|_2.
\]
and 
\[
\|f\circ g\circ h \|_2 \le \|f\|_2\|g\|_1^2\|h\|_1^2+\|f\|_1\|g\|_2\|h\|_1^2+\|f\|_1\|g\|_1\|h\|_2.
\]
\end{lem}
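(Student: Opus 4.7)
The plan is to apply the chain rule directly. For the first inequality, differentiating $f\circ g$ twice gives the standard Faà di Bruno expansion of order two,
\[
D^2(f\circ g)(x) = D^2f(g(x))\bigl(Dg(x),Dg(x)\bigr) + Df(g(x))\bigl[D^2 g(x)\bigr].
\]
Taking pointwise operator norms, bounding each factor by its supremum, and then taking the supremum over $x$ yields $\|f\circ g\|_2\le \|f\|_2\|g\|_1^2 + \|f\|_1\|g\|_2$. Since all the norms in the lemma are defined coordinatewise as the supremum of the magnitude of partial derivatives, the same bound holds componentwise; this is entirely routine once one writes out the mixed partials via the chain rule.

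For the second inequality I would iterate the first one. Apply it with the pair $(f\circ g, h)$ in place of $(f,g)$ to obtain
\[
\|f\circ g\circ h\|_2 \le \|f\circ g\|_2\,\|h\|_1^2 + \|f\circ g\|_1\,\|h\|_2.
\]
Then substitute the first inequality into the first summand and the elementary bound $\|f\circ g\|_1\le \|f\|_1\|g\|_1$ (which is immediate from the chain rule $D(f\circ g) = Df\circ g\cdot Dg$) into the second summand. Expanding gives exactly
\[
\|f\circ g\circ h\|_2 \le \|f\|_2\|g\|_1^2\|h\|_1^2 + \|f\|_1\|g\|_2\|h\|_1^2 + \|f\|_1\|g\|_1\|h\|_2.
\]

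There is no real obstacle here; the only thing to be a little careful about is that $\|\cdot\|_i$ is defined as the supremum over partial derivatives rather than as an operator norm, so one should verify that for vector-valued maps the triangle inequality passes through each coordinate of the tensor identity above. This is immediate since each second partial derivative of a component of $f\circ g\circ h$ is a finite sum of products of first and second partials of the components of $f,g,h$, each of which is controlled by the corresponding $\|\cdot\|_i$ norm.
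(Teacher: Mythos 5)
Your proof is correct and is exactly the standard second-order chain-rule (Fa\`a di Bruno) argument that the paper itself relies on implicitly -- the paper states this lemma without proof, merely citing H\"ormander's appendix for estimates of this type. The only caveat, which you already flag, is that with $\|\cdot\|_i$ defined as a supremum over individual partial derivatives (rather than a tensor/operator norm) the sums over intermediate indices introduce dimension-dependent combinatorial constants; the paper ignores these throughout, so your argument matches its level of precision.
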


When we study how fast the dynamics smooths curves, we will represent the curve as a graph and then apply the graph transform to it. The following relates the $C^2$ norm of an embedded curve with the $C^2$ norm of the curve represented as a graph. Recall that the $C^2$ norm of an embedded curve is the same thing as the norm of the curve as a graph over its tangent space at each point in an exponential chart. 

\begin{lem}\label{lem:C_2_norm_as_graph_vs_curve_R_2}
Suppose $\gamma$ is a $C^2$ curve in $\R^2$ that is $\theta$-transverse to the $y$-axis. Then if we represent $\gamma$ as the graph over the $x$-axis of a function $\hat{\gamma}$, then
\[
 \|\hat\gamma\|_1\le \cot \theta\text{, and }\|\hat{\gamma}\|_2\le (\sin \theta)^{-3}\|\gamma\|_{C^2}.
\]
\end{lem}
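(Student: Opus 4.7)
The plan is a direct computation by changing coordinates. Both bounds are purely local: fix a point $p\in\gamma$ and let $\phi$ denote the angle that the tangent to $\gamma$ at $p$ makes with the $x$-axis. The $\theta$-transversality assumption says that the tangent makes angle at least $\theta$ with the $y$-axis; equivalently, $|\phi \mod \pi|\le \pi/2-\theta$, so $|\cos\phi|\ge\sin\theta$.

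For the first inequality, represent $\gamma$ near $p$ as $y=\hat\gamma(x)$. Then $\hat\gamma'(x)=\tan\psi$ where $\psi$ is the angle the tangent makes with the $x$-axis at the corresponding point. Since $|\psi \mod \pi|\le\pi/2-\theta$, we get $|\hat\gamma'(x)|\le\tan(\pi/2-\theta)=\cot\theta$, which gives the desired bound on $\|\hat\gamma\|_1$.

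For the second inequality, I would parametrize $\gamma$ near $p$ in a tangent chart, i.e.~write it as $s\mapsto (s,f(s))$ in coordinates where the first axis is the tangent direction and the origin is at $p$. By the definition of $\|\gamma\|_{C^2}$, we have $f(0)=f'(0)=0$ and $|f''(0)|\le\|\gamma\|_{C^2}$. Passing back to the standard coordinates amounts to rotating by $\phi$, so
\begin{align*}
x(s)&=p_x+s\cos\phi-f(s)\sin\phi,\\
y(s)&=p_y+s\sin\phi+f(s)\cos\phi.
\end{align*}
At $s=0$ one then computes $x'=\cos\phi$, $y'=\sin\phi$, $x''=-f''(0)\sin\phi$, $y''=f''(0)\cos\phi$. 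Plugging into the standard formula
\[
\hat\gamma''(x(0))=\frac{x'y''-y'x''}{(x')^3},
\]
the numerator collapses to $f''(0)(\cos^2\phi+\sin^2\phi)=f''(0)$, yielding
\[
\hat\gamma''(x(0))=\frac{f''(0)}{\cos^3\phi}.
\]
Since $|\cos\phi|\ge\sin\theta$ and $|f''(0)|\le\|\gamma\|_{C^2}$, this gives $|\hat\gamma''|\le(\sin\theta)^{-3}\|\gamma\|_{C^2}$ at $p$. As $p\in\gamma$ was arbitrary, taking the supremum over $p$ gives the claim.

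No real obstacle here — the only thing to get right is the relationship between the angle with the $y$-axis and the angle $\phi$, and to track carefully that the $C^2$ norm of a curve is by definition computed in the tangent chart (already recalled in the discussion of $\|\gamma\|_{C^2}$ in \S\ref{subsubsec:norms}), so the rotation-by-$\phi$ computation is exactly what converts that intrinsic quantity to the second derivative of the graph representation.
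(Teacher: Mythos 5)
Your proof is correct and follows essentially the same route as the paper: both arguments write $\gamma$ locally as a graph over its tangent line at $p$ and convert to a graph over the $x$-axis by a rotation, the only difference being that you invoke the standard parametric formula $\hat\gamma''=(x'y''-y'x'')/(x')^3$ while the paper carries out the equivalent change of variables by explicit Taylor substitution (and measures the angle from the $y$-axis, so its $\sin\theta_p$ is your $\cos\phi$). Nothing is missing.
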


\begin{proof}
The first estimate is essentially the definition of tangent, so we will show the second.

Locally we may represent $\gamma$ as a graph:
\[
p+(\sin \theta_p,\cos\theta_p) t+\phi_p(t)(\!-\!\!\cos\theta_p,\sin\theta_p)\!\!=\!\!
p+(t\sin\theta_p-\phi_p(t)\cos\theta_p,0)+(0,t\cos\theta_p+\phi_p(t)\sin\theta_p)
\]
where $\phi'_p(t)=0$. By definition of $\|\gamma\|_{C^2}$, $\abs{\phi_p''(0)}\le \|\gamma\|_{C^2}$.

In order to estimate $\hat{\gamma}''(0)$, we must write the graph in the form $p+(t,\psi(t))$ for some $\psi$ and estimate $\psi''(0)$.
Accordingly, 
we make a change of variables $s=t/\sin\theta_p$ getting
\begin{equation}\label{eqn:t_over_sin_theta}
 p+\left(s-\phi_p\left(\frac{s}{\sin\theta_p}\right)\cos\theta_p,0\right)+\left(0,\frac{\cos\theta_p}{\sin\theta_p}s+\phi_p\left(\frac{s}{\sin\theta_p}\right)\sin\theta_p\right).
\end{equation}
To estimate the second derivative of the graph at $0$, we need a representation of the form $(u+O(u^3),\psi(u)+O(u^3))$,
so we make a further change of variables $\displaystyle u=s-\phi_p\left(\frac{s}{\sin\theta_p}\right)$.
Then $\displaystyle s=u+\frac{\phi_p''(0) u^2 \cos\theta_p}{2\sin^2 \theta_p}+o(u^2).$
Plugging this into \eqref{eqn:t_over_sin_theta} and using that $\cos^2 \theta_p+\sin^2 \theta_p=1$
we obtain the parametrization
$$p+\left(u,\frac{u \cos\theta_p}{\sin\theta_p}+\frac{\phi''(0) u^2}{2\sin^3 \theta_p}
+o(u^2)\right)$$
and the result follows.
\end{proof}

The next lemma estimates how the density is distorted by diffeomorphisms.

\begin{lem}\label{lem:pushforward_density_est_diffeo}
Suppose that $M$ is a closed Riemannian manifold. There exists $C>0$ such that if $f\colon M\to M$ is a $C^2$ diffeomorphism, $\gamma$ is a $C^2$ curve in $M$ and $\rho$ is a log-$\alpha$-H\"older density along $\gamma$, then the density $f_*\rho$ along $f(\gamma)$ satisfies
\begin{equation}
\label{FRho}
\|\ln(f_*\rho)\|_{C^{\alpha}}\le (1/m(Df))^{1+\alpha}\left(\|\ln \rho\|_{C^{\alpha}}+C\|f\|_{C^2}(1+\|\gamma\|_{C^2})\right).
\end{equation}
The same estimate holds for local diffeomorphisms, mutatis mutandis.
\end{lem}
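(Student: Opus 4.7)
\medskip

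The plan is to use the change-of-variables formula for densities along a curve and then estimate the H\"older norm of the two resulting contributions separately. The density pushforward along the arclength-parametrized curve is governed by the one-dimensional Jacobian $J(x)=\|Df(x)|_{T_x\gamma}\|$, via $(f_*\rho)(f(x))=\rho(x)/J(x)$. Taking logarithms and comparing at two points $x,y\in\gamma$ with images $x'=f(x)$, $y'=f(y)\in f(\gamma)$, we split
\begin{equation*}
\ln (f_*\rho)(x') - \ln (f_*\rho)(y') = \bigl[\ln\rho(x)-\ln\rho(y)\bigr] - \bigl[\ln J(x)-\ln J(y)\bigr].
\end{equation*}

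For the first bracket, since $f$ stretches arclengths along $\gamma$ by at least $m(Df)$, we have $d_\gamma(x,y)\leq (1/m(Df))\,d_{f(\gamma)}(x',y')$. Applying the hypothesis on $\|\ln\rho\|_{C^\alpha}$ and raising to the $\alpha$-th power produces the factor $(1/m(Df))^\alpha\|\ln\rho\|_{C^\alpha}$ in front of $d_{f(\gamma)}(x',y')^\alpha$.

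The main technical step is to bound $\|\ln J\|_{C^\alpha}$ along $\gamma$. I would first estimate $J$ as Lipschitz along $\gamma$ by differentiating the expression $J(x)=\|Df(x)v(x)\|$, where $v(x)$ is the unit tangent. The derivative along $\gamma$ picks up two contributions: the variation of $Df$, controlled by $\|f\|_{C^2}$, and the variation of $v$, controlled by the extrinsic curvature, hence by $\|\gamma\|_{C^2}$. This gives $|\partial_\gamma J|\leq C\|f\|_{C^2}(1+\|\gamma\|_{C^2})$ for some universal $C$ depending only on $M$. Since $J(x)\geq m(Df)$, dividing produces the Lipschitz bound $|\partial_\gamma\ln J|\leq C\|f\|_{C^2}(1+\|\gamma\|_{C^2})/m(Df)$. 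Converting Lipschitz control to $\alpha$-H\"older control on $\gamma$ costs only a bounded factor (depending on the diameter of $M$), and then pulling back to $f(\gamma)$ contributes a further factor of $(1/m(Df))^\alpha$ by the same Lipschitz argument as before, yielding a H\"older bound of the form $C\|f\|_{C^2}(1+\|\gamma\|_{C^2})(1/m(Df))^{1+\alpha}$ for the Jacobian term.

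Combining the two estimates and using $(1/m(Df))^\alpha \leq (1/m(Df))^{1+\alpha}$ in the regime of interest (where $m(Df)\leq 1$, the only case where the estimate is nontrivial) factors out the common $(1/m(Df))^{1+\alpha}$ and gives \eqref{FRho}. The analogous statement for a local diffeomorphism requires no change, since every step is entirely local along $\gamma$. The principal obstacle is the careful bookkeeping of the curvature contribution $\|\gamma\|_{C^2}$ in the Lipschitz bound for $J$: if one naively differentiated $Df$ only, one would miss the necessary dependence on $\|\gamma\|_{C^2}$ which accounts for the tangent direction rotating as we move along $\gamma$.
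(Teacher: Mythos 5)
Your argument is correct and is essentially the proof the paper has in mind: the paper leaves this lemma to the reader, pointing to the ``similar estimate'' \eqref{NewDensity} in Proposition \ref{prop:1-step_smoothing_estimate}(4), which performs exactly your decomposition (log-density term plus log-Jacobian term, with the derivative of the log-Jacobian controlled by $\|f\|_{C^2}$ and the curvature of $\gamma$), only written in graph coordinates rather than intrinsically via the unit tangent. The one spot worth tightening is your parenthetical ``the regime of interest where $m(Df)\le 1$'': for a diffeomorphism of a closed manifold this holds automatically (otherwise $\int_M|\det Df|\,d\vol>\vol(M)$), so it is a fact rather than a restriction, and it is only for the local-diffeomorphism variant that the exponent bookkeeping would need the ``mutatis mutandis.''
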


We leave the proof of the lemma to the readers, since we provide a similar estimate below (see 
\eqref{NewDensity}). 

Next we record an estimate comparing two inner products.

\begin{lem}\label{lem:change_of_metric_change_of_angle}
Suppose that we have two inner products $\|\cdot\|_1$ and $\|\cdot\|_2$ on a vector space $V$ and that 
\[
A\|\cdot\|_1\le \|\cdot\|_2\le B \|\cdot\|_1.
\]
Then for $v,w\in V\setminus \{0\}$
\[
AB^{-1}\angle_1(v,w)\le \angle_2(v,w)\le A^{-1}B\angle_1(v,w),
\]
where $\angle_i$ denotes the angle with respect to the metric $\|\cdot\|_i$.
\end{lem}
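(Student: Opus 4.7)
The plan is to first establish the analogous estimate for the sines of the angles, which follows directly from the norm comparison, and then translate this to a bound on the angles themselves using the elementary inequalities $\sin\theta\le\theta\le\frac{\pi}{2}\sin\theta$ valid on $[0,\pi/2]$.

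First I would use the standard characterization of the sine of the angle between two nonzero vectors in an inner product space as the normalized distance from one vector to the line spanned by the other:
\[
\sin\angle_i(v,w)=\inf_{t\in\R}\frac{\|v-tw\|_i}{\|v\|_i}.
\]
From the hypothesis $A\|\cdot\|_1\le\|\cdot\|_2\le B\|\cdot\|_1$, for every $t\in\R$ we have $A\|v-tw\|_1\le\|v-tw\|_2\le B\|v-tw\|_1$, and similarly for $\|v\|_1,\|v\|_2$. Taking the infimum over $t$ and combining the worst-case numerator and denominator gives
\[
\frac{A}{B}\sin\angle_1(v,w)\le\sin\angle_2(v,w)\le\frac{B}{A}\sin\angle_1(v,w).
\]

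To pass from sines to angles, I would note that both $\angle_1(v,w)$ and $\angle_2(v,w)$ lie in $[0,\pi/2]$ (treating them as unsigned angles between lines, which is the natural interpretation when applied to $\RP^1$ as in the proof of Proposition \ref{prop:nearby_points_close_cushion}). On this interval the inequalities $\frac{2}{\pi}\theta\le\sin\theta\le\theta$ hold, and applying them to the sine bounds above yields the claimed comparison of the angles up to a universal constant factor. In the regime where the conclusion is nontrivial, namely when $(B/A)\angle_1(v,w)$ is not already larger than $\pi/2$, these inequalities give the desired linear bound in both directions.

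There is no substantial obstacle here; the only minor subtlety is how the lemma is interpreted: if one reads $\angle_i$ as the chord-type distance $\sin\angle_i$ on $\RP^1$ (which is how it is used in the applications), then the bound follows with the stated constants immediately from the first displayed inequality. Otherwise the elementary calibration $\sin\theta\asymp\theta$ on $[0,\pi/2]$ converts the sine bound to the angle bound, absorbing any absolute constant into the statement. Either way, the proof is essentially a one-line consequence of the variational formula for $\sin\angle_i$ together with the norm equivalence.
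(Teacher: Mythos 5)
Your argument is sound and genuinely different from the paper's, but it proves a slightly weaker statement than the one asserted. The variational formula $\sin\angle_i(v,w)=\inf_t\|v-tw\|_i/\|v\|_i$ does give the clean two-sided bound $\tfrac{A}{B}\sin\angle_1\le\sin\angle_2\le\tfrac{B}{A}\sin\angle_1$, but the calibration $\tfrac{2}{\pi}\theta\le\sin\theta\le\theta$ then only yields $\tfrac{2}{\pi}AB^{-1}\angle_1\le\angle_2\le\tfrac{\pi}{2}A^{-1}B\angle_1$, i.e.\ the stated constants degraded by a universal factor of $\pi/2$ in each direction (and the loss is genuine: from $\sin\angle_2\ge\tfrac{A}{B}\sin\angle_1$ alone one cannot recover $\angle_2\ge AB^{-1}\angle_1$ when $\angle_1$ is near $\pi/2$ and $A/B$ is small). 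You acknowledge this and note the constant can be absorbed; that is indeed harmless for every application in the paper, where the lemma is always invoked with additional multiplicative slack, but strictly speaking the lemma as stated is not established. The paper gets the exact constants by a different, purely geometric argument: take the minimizing arc $I\subset S^1_1$ realizing $\angle_1(v,w)$, observe $\len_2(I)\le B\len_1(I)$, and push $I$ onto $S^1_2$ by the radial projection $\pi_2$, whose differential along $I$ has $\|\cdot\|_2$-norm at most $1/d_2(0,I)\le A^{-1}$; this bounds $\angle_2(v,w)\le\len_2(\pi_2(I))\le A^{-1}B\angle_1(v,w)$ with no calibration loss, and also works for angles in all of $[0,\pi]$ (your sine-based route silently restricts to the $\RP^1$/line interpretation, since $\sin$ cannot distinguish $\theta$ from $\pi-\theta$). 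If you want the lemma with its stated constants, use the arc-length/radial-projection argument; if you are content with a universal constant, your proof is fine and arguably more elementary.
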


\begin{proof}
We show the upper bound; the lower bound is a straightforward consequence.  Let $S_i^1$ denote the unit sphere with respect to the inner product $i$ and $v$ and $w$ be two unit vectors with respect to $\|\cdot\|_1$. Let $I$ be a curve between $v$ and $w$ such that $\len_1(I)=\angle_1(v,w)$. Then $\len_2(I)\le B\len_1(I)$. Let $\pi_2\colon V\setminus\{0\}\to S^1_2$ denote the radial projection onto $S^1_2$. Then $\angle_2(v,w)\le \len_2(\pi_2(I))$.  Note that the norm of $D\pi_2\vert_{I}$ is bounded above by $1/d_2(0,I)$. Since $d_{2}(0,I)\ge A$, we see that $\len_2(\pi_2(I))\le A^{-1}B\len_1(I)$, so we are done.
\end{proof}

We now record an estimate on how fast a $C^2$ curve can get worse under the dynamics. Note that one iteration
can instantaneously make a line into an $O(1)$ bad curve, hence  the estimate has the form below.

\begin{lem}\label{lem:growth_of_C_2_norms}
Fix $D>0$ then there exists $\Lambda>0$, such that if for $1\le i\le n$, $f_i\in \Diff^2(M)$ is a sequence of diffeomorphisms of a closed Riemannian manifold $M$ with $\|f\|_{C^2}<D$,  $\gamma$ is a $C^2$ curve in $M$, and $\gamma_n=f^n_1(\gamma)$, then
\[
\|\gamma_n\|_{C^2}\le \max\{e^{\Lambda n}\| \gamma\|_{C^2},e^{\Lambda n}\}.
\] 
\end{lem}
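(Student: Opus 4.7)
The plan is to reduce the statement to a one-step bound and then iterate. Concretely, I would first establish that there exist constants $A, B > 0$ depending only on $D$ such that for any $C^2$ diffeomorphism $f$ with $\|f\|_{C^2} \le D$ and any $C^2$ curve $\gamma$,
\begin{equation}\label{eqn:one_step_c2_proposal}
\|f \circ \gamma\|_{C^2} \le A + B \|\gamma\|_{C^2}.
\end{equation}
Granting \eqref{eqn:one_step_c2_proposal}, induction along $f_1, \ldots, f_n$ gives $\|\gamma_n\|_{C^2} \le B^n \|\gamma\|_{C^2} + A(B^n - 1)/(B - 1)$, which is at most $C(A, B) \cdot B^n \max\{\|\gamma\|_{C^2}, 1\}$; choosing $\Lambda$ large enough to absorb the constant $C(A,B)$ yields the claimed bound.

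For the one-step estimate I would work pointwise: fix $x \in \gamma$ and use exponential charts centered at $x$ and at $f(x)$, in which the $C^2$ norm of a curve at a point coincides up to uniform multiplicative constants with the planar curvature (the manifold $M$ being closed). Parametrizing $\gamma$ by arclength near $x$ so that $|\gamma'(0)| = 1$, $\gamma''(0) \perp \gamma'(0)$, and $|\gamma''(0)| \le \|\gamma\|_{C^2}$, one computes
$$(f\circ\gamma)'(0) = Df(x)\gamma'(0), \qquad (f\circ\gamma)''(0) = D^2 f(x)(\gamma'(0), \gamma'(0)) + Df(x)\gamma''(0).$$
The planar curvature formula $\kappa = |c' \wedge c''|/|c'|^3$, together with the wedge identity $Df(x)\gamma'(0) \wedge Df(x)\gamma''(0) = \det(Df(x))\,\gamma'(0) \wedge \gamma''(0)$, then yields
$$\kappa_{f\circ\gamma}(f(x)) \le \frac{\|Df(x)\|\cdot\|D^2 f(x)\| + |\det Df(x)|\cdot\|\gamma\|_{C^2}}{m(Df(x))^3},$$
where $m(Df(x))$ is the conorm as in Lemma \ref{lem:linear_map_C_2_norm_curve_est}. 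Uniform bounds on $\|Df\|$, $\|D^2 f\|$, and $|\det Df|$ in terms of $D$ combine with a uniform lower bound on $m(Df)$ to give \eqref{eqn:one_step_c2_proposal}. Alternatively, one can decompose $f = Df(x) + R$ with $R(0) = DR(0) = 0$ in the chart at $x$, apply Lemma \ref{lem:linear_map_C_2_norm_curve_est} to handle the linear part $Df(x)$, and use the calculus bound for $R$ (whose $C^2$ norm is controlled by $\|f\|_{C^2}$) together with Lemma \ref{lem:C_2_norm_twisted_charts} for the chart change.

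The main subtlety is the lower bound on $m(Df)$: the hypothesis $\|f\|_{C^2} \le D$ does not by itself control the inverse derivative, and without such a control the constant in \eqref{eqn:one_step_c2_proposal} could blow up. In the setting in which this lemma is actually invoked the tuple consists of volume-preserving diffeomorphisms of a surface, so $|\det Df| \equiv 1$ and hence $m(Df) \ge 1/\|Df\| \ge 1/D$, making $A$ and $B$ depend only on $D$. In full generality one would have to let $\Lambda$ depend also on $\max_i \|Df_i^{-1}\|$, but in our application this dependence is automatic. Once \eqref{eqn:one_step_c2_proposal} is in hand, the remaining bookkeeping is the uniformly bounded distortion incurred by passing between exponential charts on the closed manifold $M$, which is absorbed into $A$ and $B$, and iteration then delivers the exponential bound.
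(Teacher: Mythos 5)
Your proof is correct and is essentially the paper's argument (the paper's own proof is a one-sentence sketch asserting that the second derivative of the composition grows at most exponentially); your one-step curvature estimate plus iteration is exactly the intended filling-in of that sketch. Your observation about the conorm $m(Df)$ is a fair point — the hypothesis as literally stated does not control $\|Df^{-1}\|$, and the constant $\Lambda$ implicitly depends on a lower bound for $m(Df_i)$, which in the paper's applications is supplied either by volume preservation or by an explicit hypothesis such as $\|Df_i^{-1}\|\ge D_1^{-1}$ in Proposition \ref{prop:finite_time_smoothing_estimate}.
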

\begin{proof}
Recall that the $C^2$ norm of $\gamma$ is bounded by the maximum over all $t\in \gamma$ of the second derivative of $\gamma$ in an exponential chart at $t$ where $\gamma$ is viewed as a graph over its tangent plane. The result then follows because the second derivative of a sequence of maps with uniformly bounded $C^2$ norm grows at most exponentially fast.
\end{proof}

\subsection{Properties of H\"older functions}
In this subsection, we record some additional claims about H\"older and log-H\"older functions that will be used in the proof of the coupling lemma. 
\begin{claim}\label{claim:bounded_below_plus_holder_gives_log_holder}
Suppose that $\rho\colon M\to \R$ is a $(C,\alpha)$-H\"older function on a metric space $M$
such that $\rho\ge A^{-1}$, for some $A>0$. Then $\ln \rho$ is $AC$-log-$\alpha$-H\"older.
\end{claim}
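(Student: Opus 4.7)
The plan is to reduce the statement to the fact that the logarithm is Lipschitz on any interval bounded away from zero, combined with the H\"older bound on $\rho$ itself. Recalling from \S\ref{subsubsec:norms} that $\|\ln\rho\|_{C^\alpha}$ refers only to the H\"older constant of $\ln\rho$ (not its sup-norm), I need to produce the estimate
\[
|\ln\rho(x)-\ln\rho(y)|\le AC\,d(x,y)^\alpha \quad \text{for all } x,y\in M.
\]

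First I would observe that, since $\rho\ge A^{-1}$, the derivative of $\ln$ on the range of $\rho$ satisfies $|(\ln)'(t)|=1/t\le A$. Hence $\ln$ restricted to $[A^{-1},\infty)$ is $A$-Lipschitz. This gives, for every $x,y\in M$,
\[
|\ln\rho(x)-\ln\rho(y)|\le A\,|\rho(x)-\rho(y)|.
\]

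Next, applying the hypothesis that $\rho$ is $(C,\alpha)$-H\"older, I would chain the two estimates to obtain
\[
|\ln\rho(x)-\ln\rho(y)|\le A\,|\rho(x)-\rho(y)|\le AC\,d(x,y)^\alpha,
\]
which is exactly the claim. There is no real obstacle here: this is a two-line computation that simply records the fact used repeatedly (for instance in Step 6 of the proof of Lemma~\ref{lem:local_coupling_lemma}) to convert lower bounds on a H\"older density into log-H\"older control.
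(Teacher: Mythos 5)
Your proof is correct and is essentially identical to the paper's: both use the $A$-Lipschitz bound for $\ln$ on $[A^{-1},\infty)$ and then chain it with the $(C,\alpha)$-H\"older estimate on $\rho$. Nothing further is needed.
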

\begin{proof}
First, observe that on $[A^{-1},\infty)$, that $\ln$ is $A$-Lipschitz because its derivative $1/x$ is at most $A$. Thus 
$\displaystyle
\abs{\ln(\rho(x))-\ln(\rho(y))}\le A\abs{\rho(x)-\rho(y)}\le AC\abs{x-y}^{\alpha},
$
as desired.
\end{proof}

The next lemma relates two different ways of dealing with log-H\"older functions.

\begin{lem}\label{lem:log_holder_C_A_est}
Suppose that $\rho$ is an $(A,\alpha)$-log H\"older function on a metric space of diameter at most $D$. Then there exists $C_{A,D}$ such that 
\begin{equation}
\abs{\rho(x)-\rho(y)}\le \rho(x)C_{A,D}\abs{x-y}^{\alpha}.
\end{equation}
\end{lem}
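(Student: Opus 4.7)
The plan is to unfold the log-H\"older condition directly and bound the resulting exponential. By hypothesis, $|\ln \rho(x) - \ln \rho(y)| \le A |x-y|^\alpha$ for all $x,y$ in the metric space. Setting $u = \ln \rho(y) - \ln \rho(x)$, we can write $\rho(y) = \rho(x) e^{u}$, and so
\[
|\rho(x) - \rho(y)| = \rho(x) \, |1 - e^{u}|.
\]

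The key elementary inequality is $|1 - e^u| \le |u| \, e^{|u|}$, which follows from the mean value theorem applied to $t \mapsto e^t$ on the interval between $0$ and $u$. Since the diameter of the space is at most $D$, we have $|u| \le A D^\alpha$, so $e^{|u|} \le e^{A D^\alpha}$. Combining these with $|u| \le A |x-y|^\alpha$ yields
\[
|\rho(x) - \rho(y)| \le \rho(x) \cdot A |x-y|^\alpha \cdot e^{A D^\alpha},
\]
so the constant $C_{A,D} = A e^{A D^\alpha}$ works. There is no real obstacle here: the only small subtlety is keeping track of which variable plays the role of the basepoint $\rho(x)$ in the bound, but since the diameter bound $D$ is symmetric in $x$ and $y$, the same constant serves in either direction.
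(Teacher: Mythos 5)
Your proof is correct and follows essentially the same route as the paper: exponentiate the log-H\"older bound and use the diameter bound to linearize the exponential. The only cosmetic difference is that your inequality $\abs{1-e^u}\le \abs{u}e^{\abs{u}}$ handles both signs of $u$ at once, whereas the paper splits into the cases $\rho(y)\ge\rho(x)$ and $\rho(y)<\rho(x)$.
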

\begin{proof}
Suppose that $\rho(y)\ge \rho(x)$. 
Then log-$\alpha$-H\"older gives that 
\[
\ln(\rho(y)/\rho(x))=\abs{\ln(\rho(y)/\rho(x))}\le A\abs{x-y}^{\alpha}.
\]
Thus taking $e^x$, by boundedness of the metric space and the constant $A$, there exists $C_{A,D}$ such that
\[
\frac{\rho(y)}{\rho(x)}\le e^{A\abs{x-y}^{\alpha}}\le 1+C_{A,D}\abs{x-y}^{\alpha}.
\]
Thus
\[
\rho(y)-\rho(x)\le \rho(x)C_{A,D}\abs{x-y}^{\alpha}.
\]
The case when $\rho(y)< \rho(x)$ is similar, so we are done.
\end{proof}

\subsection{Graph transform with estimates on the second derivative}\label{subsec:graph_transform}

We now study the graph transform and record how $C^2$ norms of curves are affected by it. 
If one constructs the stable manifolds by using the graph transform, then after one has checked that the stable manifold is $C^1$, one can check that the manifolds are $C^r$ inductively by studying the action of the graph transform on the jet of the stable manifold which is $C^1$. See for instance the construction in \cite{shub1987global}, which proceeds along these lines.

\begin{prop}\label{prop:1-step_smoothing_estimate}
($C^2$ estimates for the graph transform) Suppose $\lambda>1$ and $F\colon \R^2\to \R^2$ is a $C^2$ diffeomorphism  of the form 
\begin{equation}\label{eqn:F_in_charts}
F=(\sigma_1 x +f_1(x,y),\sigma_2 y+f_2(x,y)),
\end{equation}
with $\min\{\sigma_1,\sigma_2^{-1}\}\ge \lambda$. 
 Suppose that $\gamma$ is a $C^2$ curve given as the graph of a function $\phi\colon I_1\to \R$.
Assume that $F(0,0)=(0,0)$ and that we have the following estimates:
\begin{align}
\|f_1\|_{C^1}&=\epsilon_1,\\
\|f_2\|_{C^1}=\epsilon_2&<\lambda^{-1},\\
\lambda-\epsilon_1-\epsilon_1\|\phi\|_1&>0\label{eqn:main_stretching_assumption_C_1}.
\end{align}
Then the following hold.
\begin{enumerate}
    \item \label{item:horizontal_stretching_1_step}
The curve $F\circ \gamma$ is given as the graph of a function $\wt{\phi}\colon I_2\to \R$ and
\begin{equation}
    \len(I_2)\ge (\lambda-\epsilon_1-\epsilon_1\|\phi\|_{1})\len(I_1).
\end{equation}

\item\label{item:1-step_smoothing_C_1}
We have an estimate on how much $F$ smooths $\phi$,
    \begin{align}\label{eqn:c_1_norm_decay_one_iterate}
    \|\wt{\phi}\|_{C^0}&\le \lambda^{-1}\|\phi\|_{C^0}+\epsilon_2,\\
\|\wt{\phi}\|_{1}&\le (\lambda^{-1}\|\phi\|_1+\epsilon_2+\epsilon_2\|\phi\|_1)(\lambda-\epsilon_1-\epsilon_1\|\phi\|_{1})^{-1}.
\end{align}
\item\label{item:one_step_C_2_smoothing}
There is $\epsilon_0\!>\!0$ such that under the additional assumption that 
$\epsilon_1,\epsilon_2,\|\phi\|_1\!<\!\epsilon_0$
\begin{equation}\label{eqn:one_step_smoothing_C_2}
\|\wt{\phi}\|_2\le \lambda^{-1.99}\|f\|_2+\lambda^{-2.99}\|\phi\|_2.
\end{equation}

\item \label{item:one_step_density_smoothing}
The graph transform smooths densities along curves. If $\rho(x,\phi(x))$ is a 
 log $\alpha$-H\"older
density along $\gamma$
 with respect to the arclength, 
write $\wt{\rho}(x,\wt{\phi}(x))$ for the density of the pushforward of $\rho$ along $F(\gamma)$.
For $\epsilon_0>0$ as in \eqref{item:one_step_C_2_smoothing}, 
if $\epsilon_1,\epsilon_2,\|\phi\|_1<\epsilon_0$, 
then
\begin{equation}\label{eqn:smoothing_rho_1_step_chart_est}
\|\ln \wt{\rho}\|_{C^{\alpha}}\le \lambda^{-.9\alpha}(\|\ln \rho\|_{C^{\alpha}}+\|f\|_2+\|\phi\|_2).
\end{equation}
\end{enumerate}
\end{prop}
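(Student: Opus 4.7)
The plan is to prove the four parts in order, each following by direct computation once the graph $F\circ\gamma$ is reparametrized by its first coordinate. Write $g(x)=\sigma_1 x+f_1(x,\phi(x))$ and $h(x)=\sigma_2\phi(x)+f_2(x,\phi(x))$, so that $F\circ\gamma$ is the image of $x\mapsto (g(x),h(x))$, and $\wt\phi$ is characterized by $\wt\phi(g(x))=h(x)$. Differentiating yields
\[
g'(x)=\sigma_1+\partial_x f_1+\partial_y f_1\cdot \phi'(x),\qquad h'(x)=\sigma_2\phi'(x)+\partial_x f_2+\partial_y f_2\cdot \phi'(x).
\]
The first assertion, \eqref{item:horizontal_stretching_1_step}, follows immediately from $|g'|\ge \sigma_1-\epsilon_1(1+\|\phi\|_1)\ge \lambda-\epsilon_1-\epsilon_1\|\phi\|_1>0$, which by \eqref{eqn:main_stretching_assumption_C_1} is a positive lower bound, so $g$ is a diffeomorphism onto $I_2$ of the claimed length.

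For \eqref{item:1-step_smoothing_C_1} I would bound $|h|\le \sigma_2\|\phi\|_{C^0}+\|f_2\|_{C^0}$ and $|\wt\phi'(g(x))|=|h'(x)|/|g'(x)|$, plugging in the expressions above to obtain the stated estimates for $\|\wt\phi\|_{C^0}$ and $\|\wt\phi\|_1$. The only subtlety is that $\|f_2\|_{C^0}\le \epsilon_2$ on the relevant ball, which uses $F(0,0)=(0,0)$ and the $C^1$ bound together with a compact support/restriction argument; this is trivial once one restricts attention to a neighborhood of the origin.

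The heart of the proof is \eqref{item:one_step_C_2_smoothing}. Differentiating the chain rule identity $h'=\wt\phi'(g)\cdot g'$ once more gives
\[
\wt\phi''(g(x))=\frac{h''(x) g'(x)-h'(x) g''(x)}{g'(x)^3}.
\]
Expanding $h''$ and $g''$ by the chain rule and collecting $\|f\|_2$, $\|f\|_1$, $\|\phi\|_2$, $\|\phi\|_1$ terms, the leading contributions are $\sigma_2\phi''/(g')^3$ and $\partial^2 f_2/(g')^3$. Since $|g'|\ge \lambda-O(\epsilon_0)$ and $\sigma_2\le \lambda^{-1}$, the coefficient of $\|\phi\|_2$ is at most $\lambda^{-1}(\lambda-O(\epsilon_0))^{-3}$ and the coefficient of $\|f\|_2$ is at most $(\lambda-O(\epsilon_0))^{-3}(1+O(\epsilon_0))$. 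All other terms are of the form $\|f\|_1\|\phi\|_2$, $\|f\|_2\|\phi\|_1$, or $\|f\|_1^2\|\phi\|_1$ divided by a cubic power of $g'$, so under $\epsilon_1,\epsilon_2,\|\phi\|_1<\epsilon_0$ they are absorbed into the small loss in the exponent. Choosing $\epsilon_0$ small enough that $(1-O(\epsilon_0))^{-3}\le \lambda^{0.01}$ gives \eqref{eqn:one_step_smoothing_C_2}. This is the step I expect to be the most delicate: we must ensure that both the ``main'' and ``error'' terms fit under $\lambda^{-1.99}$ and $\lambda^{-2.99}$ respectively, which constrains how small $\epsilon_0$ needs to be chosen in a way that depends on $\lambda$.

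For the density estimate \eqref{item:one_step_density_smoothing}, the image density pulled back to $x$ is
\[
\wt\rho(F(x,\phi(x)))=\rho(x,\phi(x))\cdot\frac{\sqrt{1+\phi'(x)^2}}{\sqrt{g'(x)^2+h'(x)^2}},
\]
i.e.\ the product of $\rho$ and the Jacobian of the arclength reparametrization. Taking logarithms splits $\ln\wt\rho\circ F\circ(\mathrm{id},\phi)$ into $\ln\rho\circ(\mathrm{id},\phi)$, which is $\alpha$-H\"older with constant controlled by $\|\ln\rho\|_{C^\alpha}$ and the Lipschitz constant of $(\mathrm{id},\phi)$, and a log-Jacobian term whose $C^\alpha$ norm is bounded by the $C^1$ norm of the involved quantities, times $\|\phi\|_2+\|f\|_2$ (using Lemma~\ref{lem:log_holder_C_A_est} type bounds). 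To transfer the estimate from a parameter on $I_1$ to one on $I_2$, one divides by a factor of $g'$ to the $\alpha$ power, producing the $\lambda^{-.9\alpha}$ factor once $\epsilon_0$ is small. Combining gives \eqref{eqn:smoothing_rho_1_step_chart_est}. None of these steps is difficult in isolation; the bookkeeping of small factors---same as in part (3)---is the only obstacle.
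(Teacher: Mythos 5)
Your proposal is correct and follows essentially the same route as the paper: you parametrize $F\circ\gamma$ by inverting the horizontal coordinate map (your $g$ is the paper's $\psi^{-1}$), control $\wt{\phi}''$ by the chain/quotient rule with all error terms absorbed by taking $\epsilon_0$ small relative to $\lambda$, and handle the density via the arclength Jacobian $\|DF(1,\phi')\|/\|(1,\phi')\|$, exactly as in the paper's term-by-term computation. One cosmetic correction: in $\wt{\phi}''=(h''g'-h'g'')/(g')^3$ the $\sigma_2\phi''$ and $\partial^2 f_2$ contributions carry a factor $(g')^{-2}$ rather than $(g')^{-3}$, but the resulting coefficients $\lambda^{-1}(\lambda-O(\epsilon_0))^{-2}$ and $(\lambda-O(\epsilon_0))^{-2}(1+O(\epsilon_0))^{2}$ still fit under $\lambda^{-2.99}$ and $\lambda^{-1.99}$ once $\epsilon_0$ is small, so the conclusion is unaffected.
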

\vskip2mm

Note that part (1) of the proposition implies that if $I_1$ contains a neighborhood of $0$ of size $\delta$, then then  $I_2$ contains a neighborhood of size $(\lambda-\epsilon_1-\epsilon_1\|\phi\|_{1})\delta$.

\begin{proof}
We write down explicitly a formula for $\wt{\phi}$ and then estimate each term that appears in the formula. It is tedious but straightforward. Throughout we will use $\pi_1$ and $\pi_2$ for the projections onto the two factors in $\R^2$.

We estimate the $C^1$ norm of $\phi$ as a graph over $\R\times\{0\}$. To this end we first study how much the graph of $\phi$ is stretched horizontally, which will verify \eqref{item:horizontal_stretching_1_step} above. To do this we consider a natural map $\psi^{-1}\colon I_1\to \R$: 
\begin{equation}\label{eqn:defn_of_psi_1}
\psi^{-1}\colon x\mapsto (x,\phi(x))\mapsto \pi_1(F(x,\phi(x)))=\lambda x+f_1(x,\phi(x)).
\end{equation}
From the definition of $\psi^{-1}$,
\begin{equation}\label{eqn:psi_-1_stretching}
\|D\psi^{-1}\|\ge \lambda-\epsilon_1-\epsilon_1\|\phi\|_{1},
\end{equation}
thus by \eqref{eqn:main_stretching_assumption_C_1}, $\|D\psi^{-1}\|$ is positive, so $\psi^{-1}$ is monotone. Hence
$F(\gamma)$ is the graph of a function $\wt{\phi}$, and we may write $\psi^{-1}\colon I_1\to I_2$. By  \eqref{eqn:psi_-1_stretching},
$\displaystyle
(\lambda-\epsilon_1-\epsilon_1\|\phi\|_{1})\len(I_1)\le \len(I_2).
$
This completes the proof of item \eqref{item:horizontal_stretching_1_step}.

We now prove item \eqref{item:1-step_smoothing_C_1}. First we give the $C^0$ estimate and then the estimate on the first derivative.
By the assumption on $f_2$, we see that the image of $\phi$  is at most
$\lambda^{-1}\|\phi\|_{C^0}+ \epsilon_2$ from the $x$-axis. Thus 
\begin{equation}
\|\wt{\phi}\|_{C^0}\le \lambda^{-1}\|\phi\|_{C^0}+\epsilon_2.
\end{equation}

Now we estimate $\|\wt{\phi}\|_1$. From equation \eqref{eqn:psi_-1_stretching}, we obtain that:
\begin{equation}\label{eqn:upper_bound_on_psi}
\|D\psi\|\le (\lambda-\epsilon_1-\epsilon_1\|\phi\|_{1})^{-1}.
\end{equation}
This allows us to estimate the $C^1$ norm of $F\circ \gamma$ as a graph over $I_2\times \{0\}$. The curve $F(\gamma)$ is given by the graph of
\begin{equation}
x\mapsto \pi_2F(\psi(x),\phi(\psi(x)))=\lambda^{-1}\phi(\psi(x))+f_2(\psi(x),\phi(\psi(x)))=\wt{\phi}.
\end{equation}
Thus by the chain rule 
$$
\|\wt{\phi}\|_1\le \lambda^{-1}\|\phi\|_1(\lambda-\epsilon_1-\epsilon_1\|\phi\|_{1})^{-1}+\epsilon_2(\lambda-\epsilon_1-\epsilon_1\|\phi\|_{1})^{-1}+\epsilon_2\|\phi\|_{1}(\lambda-\epsilon_1-\epsilon_1\|\phi\|_{1})^{-1}.
$$
Hence, 
\begin{equation}
\|\wt{\phi}\|_{1}\le (\lambda^{-1}\|\phi\|_1+\epsilon_2+\epsilon_2\|\phi\|_1)(\lambda-\epsilon_1-\epsilon_1\|\phi\|_{C^1})^{-1},
\end{equation}
 which finishes the proof of item \eqref{item:1-step_smoothing_C_1}.

We now turn to the $C^2$ estimates and check item \eqref{item:one_step_C_2_smoothing}. To begin we need to obtain a $C^2$ estimate on the function $\psi$ used above. 
 By  \eqref{eqn:defn_of_psi_1} and the chain rule,
\[
\|\psi^{-1}\|_2\le\|f\|_2+2\|f\|_2\|\phi\|_1+\|f\|_2\|\phi\|_1^2+\|f\|_1\|\phi\|_2.
 \]
Thus by Lemma \ref{lem:C_2_expanding_est1}, 
\[
\|\psi\|_{2}\le (\lambda-\epsilon_1-\epsilon_1\|\phi\|_{1})^{-3}(\|f\|_2+2\|f\|_2\|\phi\|_1+\|f\|_2\|\phi\|_1^2+\|f\|_1\|\phi\|_2).
\]

We can now plug everything in to estimate the $C^2$ norm of the image of $\phi$.
By definition $\wt{\phi}$ is equal to  
$\displaystyle \lambda^{-1}\phi(\psi(x))+f_2(\psi(x),\phi(\psi(x))).
$
For the first term, we have the estimate
$$
\|\lambda^{-1}\phi\circ \psi\|_2\le \lambda^{-1}(\|\phi\|_2\|\psi\|_1^2+\|\phi\|_1\|\psi\|_2).
$$
By the chain rule
\begin{align*}
\|f_2(\psi(x),\phi(\psi(x)))\|_2 &\le \|f_2\|_2\|\psi\|_1^2+\|f_2\|_2
\|\phi\|_1\|\psi\|_1^2+\|f_2\|_1 \|\phi\|_1
\|\psi\|_2\\
&+(\|f_2\|_1\|\phi\|_1\|\psi\|_2\!+\!\|f_2\|_1\|\phi\|_2\|\psi\|_1^2\!+\!\|f_2\|_2\|\phi\|_1^2\|\psi\|_1^2
\!+\!\|f_2\|_2\|\phi\|_1\|\psi\|_1^2).
\end{align*}

Hence if 
$\epsilon_1,\epsilon_2,\|\phi\|_1<\epsilon_0$ and  $\epsilon_0$ sufficiently small, then
\begin{align}
\|\psi\|_1&\le \lambda^{-.9999}, \\
\|\psi\|_2&\le \lambda^{-2.999}(\epsilon_0\|\phi\|_2+\|f\|_2).
\end{align}
In particular, as long as $\epsilon_0>0$ is sufficiently small, under the assumptions just listed applying the estimates on $\|\phi\|_1$ and $\|\phi\|_2$ gives 
\begin{align}
\|\lambda^{-1}\phi\circ \psi\|_2&\le \lambda^{-2.999}(\epsilon_0\|f\|_2+\|\phi\|_2), \\
\|f_2(\psi(x),\phi(\psi(x)))\|_2&\le \lambda^{-1.999}\|f\|_2+\epsilon_0\lambda^{-1.8}\|\phi\|_2.
\end{align}
 Combining these estimates, we see that as long as $\epsilon_0$ is sufficiently small,
\[
\|\wt{\phi}\|_2\le \lambda^{-1.99}\|f\|_2+\lambda^{-2.99}\|\phi\|_2.
\]

We next study 
how the H\"older norm of the log of the density $\rho$ along $\gamma$ changes when we iterate the dynamics
and prove item \eqref{item:one_step_density_smoothing}.
From the change of variables formula, we must estimate the following:
\begin{equation}
    \label{NewDensity}
\ln [\rho(\psi(x),\phi(\psi(x)))\|{ DF\vert_{(1,d\phi/dx)}(\psi(x),\phi(\psi(x)))}\|^{-1}]=
\end{equation}
$$
\ln \rho(\psi(x),\phi(\psi(x)))+\ln\|{ DF\vert_{(1,d\phi/dx)}(\psi(x))}\|^{-1}=I+II.
$$

\textbf{Term I}. The estimate of the term $I$ is straightforward: 
$$
\|\ln \rho(\psi(x),\phi(\psi(x)))\|_{C^{\alpha}}\le \|\psi\|_1^{\alpha}\|\ln\rho\|_{C^{\alpha}}
\le \lambda^{-.9\alpha}\|\ln\rho\|_{C^{\alpha}}
$$
by equation \eqref{eqn:upper_bound_on_psi} as we are assuming $\|f_1\|_1,\|f_2\|_1,\|\phi\|_1$ are all small.

\textbf{Term II.} The second term is more complicated to estimate. Note that this term does not actually involve $\rho$ as it is just the Jacobian of the map between two curves. So, to control the log-$\alpha$-H\"older norm of this function we can estimate the derivative of the logarithm, which is an upper bound on the log-$\alpha$-H\"older constant for all $\alpha\le 1$. To begin, we write
\begin{align*}
D\ln \|DF\vert_{(1,d\phi/dx)}(\psi(x),\phi(\psi(x)))\|^{-1}&=2^{-1}D\ln\|DF\vert_{(1,d\phi/dx)}\|^2-2^{-1}D\ln\|(1,d\phi/dx)\|^2\\
&=III+IV,
\end{align*}
where $III$ and $IV$ are evaluated at the point $(\psi(x),\phi(\psi(x))$.

\textbf{Term III.} We now bound term $III$. Because $D\ln f\circ\psi=\psi'D\ln f$, we see that the required estimate will hold assuming that it holds without precomposing with $\psi$ because $\abs{\psi'}\le 1$ under these assumptions.  Thus we suppress the $\psi$ below.  From before we have an expression for $DF$ in terms of $\lambda, f_1,f_2$:
\[
DF=\begin{bmatrix}
\lambda+\frac{df_1}{dx}& \frac{df_1}{dy}\\
\frac{df_2}{dx}& \lambda^{-1}+\frac{df_2}{dy}
\end{bmatrix}.
\]
Thus we are reduced to evaluating
\begin{equation}\label{eqn:derivative_of_top}
D\ln\left[\pez{\lambda+\frac{df_1}{dx}+\frac{df_1}{dy}\frac{d\phi}{dx}}^2 +\pez{\frac{df_2}{dx}+\lambda^{-1}\frac{d\phi}{dx}+\frac{df_2}{dx}\frac{d\phi}{dx}}^2\right],
\end{equation}
where the $df_i$ terms are evaluated at $(x,\phi(x))$. Then taking derivatives gives:
\begin{equation}\label{eqn:A+B_over}
\frac{A+B}{\pez{\lambda+\frac{df_1}{dx}+\frac{df_1}{dy}\frac{d\phi}{dx}}^2 +\pez{\frac{df_2}{dx}+\lambda^{-1}\frac{d\phi}{dx}+\frac{df_2}{dx}\frac{d\phi}{dx}}^2}=\frac{A}{Q}+\frac{B}{Q}.
\end{equation}
where $A$ and $B$ are the derivatives of the two parenthetical terms in equation \eqref{eqn:derivative_of_top} and $Q$ is the denominator of the left hand side of equation \eqref{eqn:A+B_over}. Note that $Q$ can be made arbitrarily close to $\lambda^2$ 
as long as $df_1/dx,df_1/dy$ and $d\phi/dx$ are sufficiently small. 

Keeping in mind that the $f_i$ terms are evaluated at $(x,\phi(x))$, we find that:
$$
A=2\pez{\lambda+\frac{df_1}{dx}+\frac{df_1}{dy}\frac{d\phi}{dx}}\pez{\frac{d^2f_1}{dx^2}+\frac{d^2f_1}{dxdy}\frac{d\phi}{dx}+\pez{\frac{d^2f_1}{dxdy}+\frac{d^2f_1}{dy^2}\frac{d\phi}{dx}}\frac{d\phi}{dx}+\frac{df_1}{dy}\frac{d^2\phi}{dx^2}}
$$
and 
$$
B\!\!=\!\!2\pez{\frac{df_2}{dx}\!+\!\lambda^{-1}\frac{d\phi}{dx}\!+\!\frac{df_2}{dx}\frac{d\phi}{dx}}\pez{\frac{d^2f_2}{dx^2}+\lambda^{-1}\frac{d^2\phi}{dx^2}
+
\pez{\frac{d^2f_2}{dxdy}+\frac{d^2f_2}{dx^2}\!+\!\frac{d^2f_2}{dxdy}\frac{d\phi}{dx}}\frac{d\phi}{dx}\!+\!\frac{df_2}{dx}\frac{d^2\phi}{dx^2}
}.
$$

Pick a small number $\bar\epsilon$.
Then as $\|f_1\|_1,\|f_2\|_1$ and $\|\phi\|_1$ are sufficiently small it is easy to see from the above expressions for $A$ and $B$, that
\begin{equation}\label{eqn:term_III_bound}
\abs{III}\le \frac{\lambda+\bar\epsilon}{\lambda^2-\bar\epsilon}(\|f\|_2+\|\phi\|_2).
\end{equation}

\textbf{Term IV}. We now bound Term IV. For this term we have
$$
2^{-1} D\ln \|(1,d\phi/dx)\|^2=2^{-1}D\ln \left(1+\pez{\frac{d\phi}{dx}}^2\right)\\
=\frac{\frac{d\phi}{dx}\frac{d^2\phi}{dx^2}}{1+\pez{\frac{d\phi}{dx}}^2}.
$$
Since we are assuming $\|\phi\|_1$ is small, we see that 
$\displaystyle
\abs{IV}\le \bar\epsilon \|\phi\|_2.
$

\textbf{Conclusion of estimates on $\abs{D\ln \wt{\rho}}$.} From the above discussion,
\begin{align*}
\|\ln \wt{\rho}(x,\wt{\phi}(x))\|_{C^{\alpha}}&\le \abs{I}+\abs{III}+\abs{IV}\\
\le \lambda^{-.9\alpha}\|\ln \rho\|_{C^{\alpha}}+\left[ \frac{\lambda+\bar\epsilon}{\lambda^2-\bar\epsilon}
+\bar\epsilon\right]
(\|f\|_2+\|\phi\|_2)
&\le \lambda^{-.9\alpha}(\|\ln \rho\|_{C^{\alpha}}+\|f\|_2+\|\phi\|_2).
\end{align*}
where the last inequality holds since the expression in square brackets is less than 1 provided that $\bar\epsilon$
is sufficiently small.
This concludes  the proof of the proposition.
\end{proof}

\subsection{Finite time smoothing estimate}
Now that 
we control the amount of smoothing due to a single iteration of the graph transform, we 
study
a reverse subtempered point for a sequence of diffeomorphisms. 
An important feature of the estimate below is that it covers
curves that are extremely close to the contracting direction. This complicates the estimates 
compared to the case that one only considers curves lying in a cone near the expanding direction.

\begin{prop}\label{prop:finite_time_smoothing_estimate}
Fix constants $C,\lambda,\epsilon,D_1>0$ with $\epsilon<\lambda/30$. Suppose that $f_i\colon \R^2\to \R^2$, $1\le i\le n$, is a sequence of diffeomorphisms  such that $f_i(0)=0$, the sequence $D_0f_i$ has a $(C,\lambda,\epsilon)$-reverse tempered splitting $E^s_i,E^u_i$ in the sense of Definition \ref{defn:tempered_splitting}, $\|f_i\|_{C^2}<D_1$, and $\|Df^{-1}_i\|\ge D_1^{-1}$. Then there exist  constants $ \epsilon_0,\ell_{\max},D_2,D_3,D_4,D_5,D_6, D_7,D_8,C_0$  depending only on $(C,\lambda,\epsilon)$ and $D_1$ such that the following holds. Let $\gamma$ be a $C^2$ curve in $\R^2$ passing through $0$ not tangent to $E^s_0$ at $0$, containing an $R$-good neighborhood of $0$. Let  $f^n=f_n\circ \cdots\circ f_1$. 
Let $\theta=\angle(\dot{\gamma}(0),E^s_0)$, and $\gamma_0$ be a segment of $\gamma$ containing $0$ of length at least
\begin{equation}\label{eqn:max_length_curve_finite_smoothing_estimate}
\len(\gamma_0)=D_2\min\{e^{-R}\theta,e^{-.9\lambda n}\}.
\end{equation}
There is an associated auxiliary quantity
\begin{equation}\label{eqn:deterministic_l_0}
l_0=D_3\theta e^{-2\epsilon n}\min\{e^{-R}\theta,e^{-.9\lambda n}\},
\end{equation}
and a subcurve $\gamma_n$ of $f^n(\gamma_0)$  containing $0$ such that the following hold: 

\begin{enumerate}[leftmargin=*]
    \item \label{item:length_of_curve_seq_1}
The curve $\gamma_n$ has length at least
\[
\ell_{out}=\min\{l_0e^{.9\lambda n}, \ell_{\max}\}.
\]

\item\label{item:preimages_of_recovered_curves_shrink}

If the minimum in item \eqref{item:length_of_curve_seq_1} is realized by $\ell_{\max}$, then
the preimage of $\gamma_n$ in $\gamma$ has length at most
$\displaystyle
D_4e^{-.9\lambda n},
$
and this occurs as long as 
\[
n\ge D_{5}+\frac{\max\{R,0\}-2\ln(\theta)}{.99\lambda}.
\]
Further, in this case, the preimage of $\gamma_n$ in $f^i(\gamma)$ has length at most $D_4e^{-.9\lambda (n-i)}$. 
In fact if $I\subseteq f^i(\gamma)$ is a curve of length at least $D_4e^{-.85\lambda(n-i)}$ containing a point $f^i(x)$, then $f^{n-i}(I)$ contains a $C_0$-good neighborhood of $f^n(x)$.

\item\label{item:C_2_estimate_smoothing_seq}
On $\gamma_n$, we have the estimate:
\begin{equation}\label{eqn:C_2_norm_finite_deterministic_estimate}
\|\gamma_n\|_{C^2}<D_6e^{-2.9\lambda n}e^{D_7\ln\theta}\max\{\|\gamma\|_{C^2},1\}+D_8.
\end{equation}
    
    \item\label{item:regularity_of_density_sequence_lem}
    Finally, for any arbitrarily large $D_9>0$ and fixed $\alpha$, there exist $D_{10},D_{11}$ such that the following holds. Suppose that $\rho$ is a density along $\gamma$ that is log-$\alpha$-H\"older. Then for the same collection of $n$, the density of $\rho_n=(f^n)_*(\rho)$ along $\gamma_n$ with respect to arclength parametrization of $\gamma_n$ satisfies the following estimate, as long as $\|\gamma_n\|_2<D_9$, 
\begin{equation}
    \|\ln \rho_n\vert_{\gamma_n}\|_{C^{\alpha}}\le D_{10}e^{-.9\alpha \lambda n}e^{D_7\ln \theta}(1+\|\ln \rho\|_{C^{\alpha}}+\|\gamma\|_{C^2})+D_{11}.
\end{equation}
\end{enumerate}

\noindent The analogous statement holds for sequences of local diffeomorphisms $f_i$ defined on a sequence of neighborhoods of $0$ in $\R^2$ or of a closed manifold.
\end{prop}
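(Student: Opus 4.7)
The plan is to work in finite time Lyapunov charts adapted to the reverse tempered splitting along the orbit of $0$ (Lemma \ref{lem:lyapunov_metric}), so that each $D_0 f_i$ becomes $\mathrm{diag}(\sigma_{1,i},\sigma_{2,i})$ with $\min\{\sigma_{1,i},\sigma_{2,i}^{-1}\}\ge e^{\lambda'}$ for some $\lambda'$ slightly less than $\lambda$ (we will work with $\lambda'=.999\lambda$, say, and track errors $e^{-.99\lambda n}$ below). The price is a distortion of the reference metric by a factor at most $e^{O(C+\epsilon i)}$ at time $i$; since $\epsilon\ll \lambda$ this factor is negligible compared with the hyperbolic rates, and will be absorbed into $D_6,D_8,D_{10},D_{11}$. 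In these charts the map $f_i$ looks like the normal form of Proposition \ref{prop:1-step_smoothing_estimate}, i.e.\ the linear part plus a nonlinear correction $f_i^{\mathrm{nl}}$ whose $C^1$ norm is small in a neighborhood of $0$ whose size shrinks slowly in $i$ (polynomially in the Lyapunov distortion). The whole argument then consists in iterating Proposition \ref{prop:1-step_smoothing_estimate} $n$ times, keeping the curve inside the region where the $\epsilon_0$-smallness hypothesis of that proposition is valid.

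The first step is to control the initial ``rotation phase'' during which $\gamma$, which is tangent to a direction making angle $\theta$ with $E^s_0$, has not yet become a graph over $E^u_i$ with small slope. Writing $\gamma_i=f^i(\gamma_0)$, the tangent vector at $0$ rotates according to the projective action of $D_0 f_i$; the standard calculation (analogous to Lemma \ref{lem:linearized_perturbation_RP1}) shows that the angle $\theta_i$ that $\dot\gamma_i(0)$ makes with $E^s_i$ grows like $e^{2\lambda' i}\theta$ until it becomes $O(1)$, which happens at a rotation time $n_\theta=O(|\ln\theta|/\lambda)$. During this phase we need $\gamma_0$ to be so short that its image stays within the Lyapunov chart; this is where the length restriction $\operatorname{len}(\gamma_0)=D_2\min\{e^{-R}\theta, e^{-.9\lambda n}\}$ comes from, and also where the auxiliary quantity $l_0$ (the length at time $n$) is born: the subsegment of $\gamma_0$ which survives in the chart has image of length at least $l_0 e^{.9\lambda i}$ until $l_0e^{.9\lambda i}\ge \ell_{\max}$, a fixed size determined by the chart radius. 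To verify the estimates on lengths in items \eqref{item:length_of_curve_seq_1} and \eqref{item:preimages_of_recovered_curves_shrink} I will use Proposition \ref{prop:1-step_smoothing_estimate}\eqref{item:horizontal_stretching_1_step} to see that once the slope of $\gamma_i$ (as a graph over $E^u_i$) is smaller than $1/2$, horizontal length grows by a factor $\ge \lambda-O(\epsilon_0)\ge e^{.9\lambda}$ per step; the estimate on the length of the preimage in $f^i(\gamma)$ of a recovered segment follows immediately by running this inequality backward.

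The $C^2$ estimate \eqref{eqn:C_2_norm_finite_deterministic_estimate} is obtained by iterating part \eqref{item:one_step_C_2_smoothing} of Proposition \ref{prop:1-step_smoothing_estimate}, which gives a recursion of the form
\begin{equation*}
\|\phi_{i+1}\|_2\le \lambda^{-2.99}\|\phi_i\|_2+\lambda^{-1.99}\|f_i\|_2.
\end{equation*}
Iterated, this yields $\|\phi_n\|_2\le \lambda^{-2.99n}\|\phi_0\|_2+\sum_{j<n}\lambda^{-2.99(n-j)-1.99}\|f_j\|_2$, the second term being uniformly bounded because $\|f_j\|_{C^2}\le D_1$. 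During the rotation phase the estimate does not apply in this clean form; a separate bookkeeping shows that passing through this phase costs a multiplicative factor $e^{D_7|\ln\theta|}$ in the initial $C^2$ norm (this comes from the formula $\|\hat\gamma\|_2\le (\sin\theta)^{-3}\|\gamma\|_{C^2}$ of Lemma \ref{lem:C_2_norm_as_graph_vs_curve_R_2} when one first writes $\gamma$ as a graph over $E^u_0$, plus polynomial blow-up during the $n_\theta$ non-smoothing steps). After the rotation phase one has the hypotheses of Proposition \ref{prop:1-step_smoothing_estimate}\eqref{item:one_step_C_2_smoothing} and the recursion drives $\|\phi_i\|_2$ exponentially toward a bounded equilibrium $D_8$. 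The density estimate in item \eqref{item:regularity_of_density_sequence_lem} is obtained by iterating part \eqref{item:one_step_density_smoothing} of Proposition \ref{prop:1-step_smoothing_estimate} in exactly the same manner: the one-step bound \eqref{eqn:smoothing_rho_1_step_chart_est} gives a contraction by $\lambda^{-.9\alpha}$ plus a linear forcing in $\|f_i\|_2+\|\phi_i\|_2$, so together with the $C^2$ bound just obtained we get a geometric decay plus a bounded additive tail.

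The main obstacle will be the initial rotation phase. Before the slope becomes small the one-step graph-transform hypothesis \eqref{eqn:main_stretching_assumption_C_1} is violated, so Proposition \ref{prop:1-step_smoothing_estimate} cannot be applied directly; instead one must track the curve using a graph representation over the rotating subspace $E^u_i$, show that the slope of $\gamma_i$ over $E^u_i$ drops by a factor $e^{-2\lambda'}$ per step (by the same tempered-splitting argument used in the proof of Lemma \ref{lem:nearness_of_unstable_directions_lemma}), and show that during these $O(|\ln\theta|)$ steps the $C^2$ norm of the graph representation grows at most by the polynomial factor $e^{D_7|\ln\theta|}$ appearing in the statement. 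Once this is done, the remainder of the argument is a routine iteration of Proposition \ref{prop:1-step_smoothing_estimate}, with all Lyapunov distortion terms absorbed into the constants $D_\bullet$.
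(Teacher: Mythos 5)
Your proposal is correct and follows essentially the same route as the paper: finite-time Lyapunov charts, restriction to boxes that shrink with $\theta$ and with $e^{-.9\lambda(n-i)}$, iteration of the one-step graph-transform estimates of Proposition \ref{prop:1-step_smoothing_estimate}, a crude $e^{\Lambda N_0}=\theta^{-O(1)}$ bound through the initial angle phase of length $N_0=O(|\ln\theta|/\lambda)$, and then geometric smoothing of the $C^2$ norm and the log-H\"older norm of the density. The only divergence is cosmetic: where you treat the rotation phase as a regime in which Proposition \ref{prop:1-step_smoothing_estimate} is inapplicable and substitute a projective-dynamics argument, the paper instead makes the working boxes small proportionally to $\theta$ so that hypothesis \eqref{eqn:main_stretching_assumption_C_1} holds from step $0$ and items \eqref{item:horizontal_stretching_1_step}--\eqref{item:1-step_smoothing_C_1} apply throughout, deferring only the $C^2$ and density smoothing to after the slope drops below $\epsilon_0$.
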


\begin{proof}
We begin by fixing some notation and constants that we will use throughout the argument. 
Let $\lambda'=.999\lambda$. Then from  Lemma \ref{lem:lyapunov_metric} we obtain finite time Lyapunov metrics $\|\cdot\|'_i$, $0\le i\le n$, associated to this splitting that satisfy for all $\xi\in \R^2$:
\begin{equation}\label{eqn:estimate_on_metric_smoothing_lemma}
    \frac{1}{\sqrt{2}}\|\xi\|_i\le \|\xi\|_i'\le  4e^{2C+2\epsilon (n-i)}\pez{1-e^{2(\lambda'-\lambda)}}^{-1/2}\|\xi\|_i.
\end{equation}
 Note that because the sequence is reverse tempered $\|\cdot\|_n'$ is uniformly comparable
to the original metric independent of $n$.
As is standard, the metrics $\|\cdot\|'_i$ give new linear coordinates $L_i\colon \R^2\to \R^2$ that satisfy that
$(L_i)^*\|\cdot\|_i=\|\cdot\|_i'$. We let $\hat{f}_i=L_{i+1}\circ f_i\circ L_i^{-1}$. Thus from properties of the Lyapunov metric, $D_0\hat{f}_i$ is a uniformly hyperbolic sequence satisfying 
\begin{equation}\label{eqn:strength_lambda_prime}
D_0\hat{f}_i\vert_{E^u_i}\ge e^{.999\lambda}, \quad
D_0\hat{f}_i\vert_{E^s_i}\le e^{-.999\lambda}.
\end{equation}
We write:
\begin{equation}
\hat{f}_i(x,y)=(\sigma_{1,i}x+\hat{f}_{i,1}(x,y),\sigma_{2,i}y+\hat{f}_{i,2}(x,y)),
\end{equation}
 where $D_0\hat{f}_i=\diag(\sigma_{1,i},\sigma_{2,i})$ and $\sigma_{i,1},\sigma_{i,2}^{-1}\ge e^{.999\lambda}$.

 We now record estimates on $C^2$ norms in these charts. By \eqref{eqn:estimate_on_metric_smoothing_lemma}, there is $C_1$ such that:
\begin{equation}\label{eqn:norm_lyapunov_chart_1}
\max\{\|L_i\|,\|L_i^{-1}\|\}\le C_1e^{2C+2\epsilon (n-i)}.
\end{equation}
 Thus by Lemma \ref{lem:C_2_norm_twisted_charts}, for $1\le i\le n$, 
 \begin{equation}\label{eqn:C_2_Lyapunov_chart_est1}
 \|\hat{f}_i\|_{C^2}\le D_1e^{6C} e^{6(n-i)\epsilon}.
\end{equation} 

 For $0\le i\le n$, let
\begin{equation}\label{eqn:r_i_defn}
r_i=C_2\min \{\theta \|\gamma\|_2^{-1}e^{.9\lambda i}, e^{-.9\lambda(n-i)}\},
\end{equation}
where $0<C_2<1$ is a small number that we will choose later. We then restrict to studying the segment of $\gamma$ inside the cube $B_i$ centered at $0\in \R^2$ of side length $r_i$ with respect to the $\|\cdot \|_i'$ metric. Let $\gamma_i$ be the connected component of $0$ in $f^i(\gamma)\cap B_i$. We write $\hat{\gamma}_i$ for the function giving $\gamma_i$ as a graph over the $x$-axis and let $l_i$ be the length of the projection of $\hat{\gamma}_i$ to the $x$-axis in $\R^2$ measured with respect to $\|\cdot\|'_i$.

We begin working with the ambient metric. By the mean value theorem, there exists $C_3$ such that for a $C^2$ curve $\gamma$ in $\R^2$ in an arclength parametrization, 
\[
\angle (\dot{\gamma}(t),\dot\gamma(s))\le C_3\|\gamma\|_{C^2}\abs{t-s},
\]
because 
 $\dot{\gamma}(t)$ is  orthogonal to $\ddot{\gamma}$.
In particular, as our curve $\gamma$ satisfies $\angle(\dot\gamma(0),E^s_0)>\theta$ restricted to a segment of $\gamma$ of length $C_3^{-1}\|\gamma\|_{C^2}^{-1}\theta/2$ around $\gamma(0)$, that on this segment $\angle (E^s_0,\dot{\gamma}(t))>\theta/2$. 
Then from Lemma \ref{lem:change_of_metric_change_of_angle} in the Lyapunov chart we have that, letting $\angle'$ denote angle with respect to the Lyapunov metric, there exists $C_4$ such that:
\begin{equation}\label{eqn:lyapunov_coordinates_angle1}
 C_4^{-1}\theta e^{-2\epsilon n}
 \le \angle'(E^s_0, \dot{\gamma}(t))
 \le C_4\theta
 e^{2\epsilon n}.
 \end{equation}

From the construction of the Lyapunov metric, $\frac{1}{\sqrt{2}}\|\cdot\|_i\le \|\cdot\|'_i$, thus the length of $\gamma$ in the Lyapunov chart is at least $e^{-R}/2$. We now restrict to a segment of $\hat\gamma$, which we call $\hat{\gamma}_0$, with length with respect to the Lyapunov metric:
\begin{equation}\label{eqn:length_of_initial_segment_smoothing_lem}
\len'(\hat{\gamma}_0)=\min\{C_3^{-1}e^{-R}\theta/2,r_0\}.
\end{equation}
From \eqref{eqn:estimate_on_metric_smoothing_lemma}, as the ratio of $\|\cdot\|_n$ to $\|\cdot\|_n'$ does not depend on $n$, we obtain
the restriction \eqref{eqn:length_of_initial_segment_smoothing_lem}  on the length of the initial segment $\hat{\gamma}_0$ gives the condition \eqref{eqn:max_length_curve_finite_smoothing_estimate} appearing in the theorem.

Note that \eqref{eqn:length_of_initial_segment_smoothing_lem} implies that:
$\angle'(E^s_0,\dot\gamma)\ge C_4^{-1}\theta
 e^{-2\epsilon n}/2$. 
 So the length of the projection of $\hat{\gamma}_0$ 
 to the $E^u_0$ axis, which we call $l_0$, has length (with respect to the Lyapunov metric) of at least 
 $\displaystyle
\len'(\gamma_0) \sin(C_4^{-1}\theta
 e^{-2\epsilon n}/2).
$
 Thus 
\begin{equation}\label{eqn:l_0_lower_bnd}
l_0\ge \len'(\gamma_0) \sin(C_4^{-1}\theta
 e^{-2\epsilon n}/2) \ge C_5\theta e^{-2\epsilon n}\min\{e^{-R}\theta,e^{-.9\lambda n}\}\end{equation}

 Also by Lemma \ref{lem:C_2_norm_as_graph_vs_curve_R_2}
$$ 
    \|\hat{\gamma}_0\|_1\le \cot(C_4^{-1}\theta
 e^{-2\epsilon n})\le 2C_4 \theta^{-1}e^{2\epsilon n}.
$$

We apply Proposition \ref{prop:1-step_smoothing_estimate}\eqref{item:one_step_C_2_smoothing}, and get an $\epsilon_0<1/\sqrt{3}$,
 which is the cutoff for the one step $C^2$ smoothing estimate \eqref{eqn:one_step_smoothing_C_2} to hold.

In keeping with the previous proposition, denote 
$$\epsilon_{1,i}=\|\hat{f}_{i,1}\vert_{B_i}\|_1\quad\text{and}\quad \epsilon_{2,i}=\|\hat{f}_{2,i}\vert_{B_i}\|_1.$$ 
Because $\hat{f}_i=D_0\hat{f}_i+(\hat{f}_1,\hat{f}_2)$, we see from the $C^2$ bound on $\hat{f}_i$ that on $B_i$, 
\begin{equation}\label{eqn:estimate_on_f_i_lyapunov_charts}
\max\{\epsilon_{1,i},\epsilon_{2,i} \}\le r_i\|\hat{f}_i\|_2\le C_2e^{-.9\lambda(n-i)}e^{6(n-i)\epsilon}=C_2e^{-(.9\lambda-\epsilon)(n-i)}.
\end{equation}
We now proceed to the main part of the proof.

\noindent\textbf{Step 1.} We begin by checking  that if we inductively define: $\hat{f}_i\hat{\gamma}_i\vert_{B_i}=\hat{\gamma}_{i+1}$, and, as before, $l_i$ is the length of the projection of $\hat{\gamma}_i$ to $E^u_i$ measured with respect to $\|\cdot\|_i'
$, then the sequence $\hat{\gamma}_i$ satisfies the following estimates:
\begin{align}
l_i& \ge \min\{r_i,e^{.99\lambda i} l_0\}\label{eqn:inductive_length_estimate},\\
\|\hat{\gamma}_i\|_1&\le \max\{2\theta^{-1} e^{-i\lambda},\epsilon_0\}\label{eqn:inductive_C_1_est}.
\end{align}

(1) ($l_i$): By Proposition~\ref{prop:1-step_smoothing_estimate}(1)
\[
l_{i+1}\ge \min\{(e^{.999\lambda}-\epsilon_{1,i}-\epsilon_{1,i}\|\hat{\gamma}_i\|_{1})l_i,r_i\}.
\]
Hence to verify \eqref{eqn:inductive_length_estimate}, it suffices to show that
\[
(e^{.999\lambda}-\epsilon_{1,i}-\epsilon_{1,i}\|\hat{\gamma}_i\|_{1})l_i\ge e^{.99\lambda}l_i.
\]
which follows  by \eqref{eqn:estimate_on_f_i_lyapunov_charts} and the inductive hypothesis \eqref{eqn:inductive_C_1_est}
if $C_2$ is chosen sufficiently small.

(2) We now check the estimate on $\|\hat{\gamma}_{i+1}\|_1$ assuming it holds for $i$.

To begin, from Proposition~\ref{prop:1-step_smoothing_estimate}\eqref{item:1-step_smoothing_C_1},
\begin{equation}
\|\hat{\gamma}_{i+1}\|_1\le (e^{-.999\lambda}\|\hat{
\gamma}_i\|_1+\epsilon_{2,i}+\epsilon_{2,i}\|\hat{\gamma}_i\|_1)(e^{.999\lambda}-\epsilon_{1,i}-\epsilon_{1,i}\|\hat{\gamma}_i\|_1)^{-1}.
\end{equation}
There are two cases depending on whether $\|\hat{\gamma}_i\|_1\ge \epsilon_0$ or not. If $\|\hat{\gamma}_i\|_1\ge \epsilon_0$, then as long as $C_2$ is chosen sufficiently small, then the second parenthetical term in the above equation is at most $e^{-.9\lambda}$
 by \eqref{eqn:estimate_on_f_i_lyapunov_charts}.
Hence
\[
\|\hat{\gamma}_{i+1}\|_1\le e^{-.9\lambda}(e^{-.999\lambda}\|\hat{\gamma}_i\|_1+\epsilon_{2,i}+\epsilon_{2,i}\|\hat{\gamma}_i\|_1)\le 
 e^{-.9\lambda}
\|\hat{\gamma}_{i}\|_1(e^{-.999\lambda}+\epsilon_{2,i}\epsilon_0^{-1}+\epsilon_{2,i}).
\]
Because $\epsilon_0>0$ is independent of $C_2$, if $C_2$ is sufficiently small then \eqref{eqn:estimate_on_f_i_lyapunov_charts} gives
$
\|\hat{\gamma}_{i+1}\|_1\le e^{-3\lambda/2}\|\hat{\gamma}_i\|_1,
$
 which concludes the proof since $\|\hat{\gamma}_i\|_1\le 2 \theta^{-1} e^{-i\lambda}$. 

We now consider the case $\|\hat{\gamma}_i\|_1\le \epsilon_0$. In this case it suffices to show that $\|\hat{\gamma}_{i+1}\|_1\le \epsilon_0$. The argument in this case is similar and follows because, as in the previous case, we may ensure that $\epsilon_{1,i}, \epsilon_{2,i}$ are small relative to $\epsilon_0$ through our initial choice of $C_2$.

Thus we have shown that both estimates hold inductively
 proving \eqref{eqn:inductive_length_estimate} and \eqref{eqn:inductive_C_1_est}.

We now conclude item \eqref{item:length_of_curve_seq_1}. 
 Since the Lyapunov metric $\|\cdot\|_n'$ is uniformly comparable to the ambient metric $\|\cdot\|_n$ due to
\eqref{eqn:estimate_on_metric_smoothing_lemma}, it is enough to prove the lower bound on $\len'(\gamma_n).$
Thus the length of $\gamma_0$ is at least $\min\{r_n,e^{.9\lambda n}l_0\}$. Note that 
\[
e^{.9\lambda n}l_0\ge C_5e^{-R}\theta^2 e^{-2\epsilon n}.
\]
Hence if the minimum of $\min\{r_n,e^{.9\lambda n}l_0\}$ is realized by $r_n$, then $r_n=C_2$ because the first term in the definition of $r_n$ (see \eqref{eqn:r_i_defn}) is bigger than $e^{.9\lambda n}\ell_0$. 
 This
shows that $\len'(\gamma_n)\geq \min(\ell_0 e^{.9\lambda n}, C_2)$, completing the proof of part (1).

We now check the claim about the length of the preimage of $\hat{f}^n\hat{\gamma}_0$ in part \eqref{item:preimages_of_recovered_curves_shrink}. This is immediate from our choice of $\len'(\hat{\gamma}_0)$ in \eqref{eqn:length_of_initial_segment_smoothing_lem}. Because the preimage of $\gamma_n$ is contained in a segment of length at most $e^{-.9\lambda n}$ with respect to the Lyapunov metric, and because  $\|\cdot\|_0\le \sqrt{2}\|\cdot\|_0'$, this implies that the length of the initial segment we consider with respect to the ambient metric is at most $\sqrt{2}e^{-.9\lambda n}$. Similar considerations give the claim about the length of the preimage of $\gamma_n$ in $f^i(\gamma)$ at the end of item \eqref{item:preimages_of_recovered_curves_shrink}. Note that the final curve $\gamma_n$ promised by the lemma is not unique: for instance, it need not be centered at $f^n(x)$. The final claim in item \eqref{item:preimages_of_recovered_curves_shrink} follows because any such curve is long enough that it fills the entire segment of $f^i(\gamma)$ we are considering by our choice of $r_i$.

To finish the proof of item \eqref{item:preimages_of_recovered_curves_shrink}, we must see how large $n$ must be in order too ensure that $r_n=\ell_{\max}$. For this to occur $n$ must satisfy
$\displaystyle
e^{.99\lambda n}l_0\ge \ell_{\max}.
$
That is,
$\displaystyle
n\ge \frac{\ln(\ell_{\max})-\ln(l_0)}{.99\lambda}
.$
Now the definition of $l_0$ (see \eqref{eqn:l_0_lower_bnd}) gives
\[
n\ge \frac{\ln(\ell_{\max})-\ln( C_5\theta e^{-2\epsilon n}\min\{e^{-R}\theta,e^{-.9\lambda n}\})}{.99\lambda}.
\]
Now the needed conclusion in item \eqref{item:preimages_of_recovered_curves_shrink}  follows  by considering the two cases depending on which term realizes the minimum and using that $\epsilon<\lambda/30$.

\noindent\textbf{Step 2.} We now obtain item \eqref{item:C_2_estimate_smoothing_seq}, the $C^2$ estimate on $\hat{\gamma}_i$. Should it happen that there is an index $i$ such that $\|\hat{\gamma}_i\|_1\le \epsilon_0$, we call this index $N_0$. 
We proceed under the assumption that there is some such $N_0$. After concluding in this case, we explain how the same estimate holds otherwise.
Observe that if $\|\hat{\gamma}_i\|_1\le \epsilon_0$, then for all $j\ge i$, $\|\hat{\gamma}_j\|_1\le \epsilon_0$ as well. Keeping in mind the strength of hyperbolicity from \eqref{eqn:strength_lambda_prime},
 for all indices $i\ge N_0$, we have from \eqref{eqn:one_step_smoothing_C_2}, that 
\begin{equation}\label{eqn:one_step_C_2_hat_gamma_i}
\|\hat{\gamma}_{i+1}\|_2\le e^{-1.99\lambda \cdot .999}\|\hat{f}_i\|_2+e^{-2.99\lambda\cdot.999}\|\hat{\gamma}_i\|_2.
\end{equation}
By applying the above equation iteratively, we can obtain an estimate on $\|\hat{\gamma}_n\|_2$ in terms of $\|\hat{\gamma}_{N_0}\|_2$.  This gives the required estimate because the homogeneous part of 
\eqref{eqn:one_step_C_2_hat_gamma_i} has multipliers smaller than 1.

By \eqref{eqn:C_2_Lyapunov_chart_est1}, $\|\hat{f}_i\|_2\le D_1e^{6C} e^{6(n-i)\epsilon}$. Let $M=n-N_0$. 
Applying iteratively  \eqref{eqn:one_step_C_2_hat_gamma_i}, we get
\begin{equation}\label{eqn:upperbound_1_C_2_gamma}
\|\hat{\gamma}_n\|_2\le \|\hat{\gamma}_{N_0}\|_2e^{-2.99\lambda\cdot .999 M}+\sum_{i=1}^M D_1e^{6C}e^{6(n-i)\epsilon} e^{-1.99\lambda \cdot .999}e^{-2.99\lambda\cdot.999(M-i-1)}.
\end{equation}
Note that the second term is bounded by a constant $C_6$ depending only on $C,\lambda$ and $\epsilon$. 

To conclude, we also need a bound for $\|\hat{\gamma}_{N_0}\|_2$. By  Lemma \ref{lem:growth_of_C_2_norms}, there exists $\Lambda$ depending only on the $C^2$ norm of the maps $f_i$, which is uniformly bounded by $D_1$, such that 
\begin{equation}\label{eqn:C_2_bound_gamma_i}
\|f^i\gamma\|_{C^2}\le e^{\Lambda i}\max\{ \|\gamma\|_{C^2},1\}.
\end{equation}
Hence $\|\gamma_{N_0}\|\le e^{\Lambda N_0}\max\{\|\gamma\|_{C^2},1\}$. We then need an estimate on $\hat{\gamma}_{N_0}$. Note that in the Lyapunov coordinates  that $\hat{\gamma}_{N_0}$, which as a graph over $E^u_0$ has slope at most $\epsilon_0<1/\sqrt{3}$. Thus by Lemma \ref{lem:C_2_norm_as_graph_vs_curve_R_2} ,
\[
\|\hat{\gamma}_{N_0}\|_2\le \sin(\text{arccot}(\epsilon_0))^{-3}e^{N_0\Lambda}\max\{\|\gamma\|_{C^2},1\}
\le
 2e^{\Lambda N_0}\max\{\|\gamma\|_{C^2},1\},
\]
because $\epsilon_0<1/\sqrt{3}=\tan(\pi/6)=\cot(\pi/3)$. Combining this with \eqref{eqn:upperbound_1_C_2_gamma},
\begin{equation}
\|\hat{\gamma}_n\|_2\le 2e^{\Lambda N_0}e^{-2.99\lambda\cdot .999 M}\max\{\|\gamma\|_{C^2},1\}+C_6.
\end{equation}
But we also have a straightforward estimate for the cutoff $N_0$. From equation \eqref{eqn:inductive_C_1_est}, we know that $N_0\le(\ln(2)-\ln(\theta))/\lambda$. Hence because $N_0\approx -\ln(\theta)$, it is straightforward to see that there exist $C_7,C_8$ such that 
\begin{equation}
\|\hat{\gamma}_n\|_2<C_6+C_7e^{-2.9\lambda n}e^{C_8\ln\theta}\max\{\|\gamma\|_{C^2},1\}.
\end{equation}

In the case that there is no index $i$ such that $\|\hat{\gamma}_i\|_1\le \epsilon_0$, we may conclude similarly as equation \eqref{eqn:inductive_C_1_est} implies that $n\le (\ln(2)-\ln(\theta))/\lambda$. Thus we have finished with Step 2 and conclude item \eqref{item:C_2_estimate_smoothing_seq}.

Before going to Step 3, we record an additional more precise estimate on the rate that $\|\hat{\gamma}_i\|_{2}$ improves. Similar to above, we find:
\begin{align}
\|\hat{\gamma}_{N_0+i}\|_{2}&\le \|\hat{\gamma}_{N_0}\|_2e^{-2.99\lambda\cdot .999 i}+\sum_{j=1}^i D_1e^{6C}e^{6(n-N_0-j)\epsilon} e^{-1.99\lambda \cdot .999}e^{-2.99\lambda\cdot.999(i-j-1)},\notag \\
&\le \|\hat{\gamma}_{N_0}\|_2e^{-2.99\lambda\cdot .999 i}+e^{6(n-N_0)\epsilon}e^{-1.99\lambda \cdot .999}e^{-6i\epsilon}D_1e^{6C} \sum_{k=1}^ie^{-(2.99\lambda\cdot.999-6\epsilon)(k-1)},\notag \\
&\le 2e^{\Lambda N_0}e^{-2.99\lambda\cdot .999 i}\max\{\|\gamma\|_{C^2},1\}+C_9e^{6\epsilon (n-N_0-i)},\label{eqn:finite_time_hat_gamma_est}
\end{align}
for some $C_9>0$.

\noindent\textbf{Step 3.}
We now show item \eqref{item:regularity_of_density_sequence_lem}, i.e.~we obtain estimates for smoothing a density along $\gamma$. 
We let $\hat{\rho}_i$ be the function giving the density $\rho$ on $\hat{\gamma}_i$ in the Lyapunov coordinates. 

We now apply the smoothing estimate. 
As in Step 2, supposing it exists, let $N_0$ be the first index such that $\|\hat{\gamma}_i\|_1\le \epsilon_0$. If such an index $N_0$ does not exist, then we may conclude similarly to in Step 2.
Then for any $i\ge N_0$, by \eqref
{eqn:smoothing_rho_1_step_chart_est},
\begin{equation}
\|\ln \hat{\rho}_{i+1}\|_{C^{\alpha}}\le \lambda^{-.9\alpha}(\|\ln \hat{\rho}_i\|_{C^{\alpha}}+\|\hat{f}_i\|_2+\|\hat{\gamma}_i\|_2).
\end{equation}
As before, let $M=n-N_0$.
By a bookkeeping similar to  Step $2$, we find that 
\begin{equation*}
\|\ln\hat{\rho}_n\|_1\le e^{-.9\cdot.999\lambda \alpha M}\|\ln \hat{\rho}_{N_0}\|_{C^{\alpha}}+\sum_{i=1}^M e^{ -.9\cdot.999\lambda \alpha (M-i)}(\|\hat{f}_{N_0+i}\|_{C^2}+\|\hat{\gamma}_{N_0+i}\|_{C^2}). 
\end{equation*}
By \eqref{eqn:finite_time_hat_gamma_est} and \eqref{eqn:C_2_Lyapunov_chart_est1}, we see that there exists $C_{11}$ such that 
\begin{align}\label{eqn:rho_n_final_est}
\|\ln \hat{\rho}_n\|_{C^{\alpha}}&\le e^{-.9\cdot.999\lambda \alpha M}\|\ln \hat{\rho}_{N_0}\|_{C^{\alpha}}+2e^{\Lambda N_0}\|\gamma\|_{C^2}e^{-.9\lambda\cdot .999 M}+C_{11}.
\end{align}

We now estimate $\|\ln \hat{\rho}_{N_0}\|$. We first obtain an estimate without the use of the Lyapunov charts.  By Lemma \ref{lem:growth_of_C_2_norms} because of the uniform $C^2$ bound $D_1$, there exist $C_{12},\Lambda>0$ such that 
\[
\|\ln(f^{N_0})_*\rho\|_{C^{\alpha}}\le C_{12}(e^{\Lambda N_0}+e^{\Lambda N_0}\|\ln \rho\|_{C^{\alpha}}).
\]
Next, we push forward $\gamma_{N_0}$ and $\rho_{N_0}$ by $L_{N_0}$ to obtain a density in the Lyapunov coordinates. Because $\max\{\|L_{N_0}\|,\|L_{N_0}\|^{-1}\}\le C_1e^{2C}e^{2\epsilon n}$, 
Lemma \ref{lem:pushforward_density_est_diffeo}  gives that there exists $C_{13}$ such that 
\[
\|\ln (L_{N_0})*(f^{N_0}_1)_*\rho\|_{C^{\alpha}}\le C_{13}e^{(2+2\alpha)\epsilon n}\left( e^{\Lambda N_0}+e^{\Lambda N_0}\|\ln \rho\|_{C^{\alpha}} +e^{2\epsilon n}(1+\|\gamma_{N_0}\|_{C^2}) \right).
\]
For the application we are then interested in the regularity of $(Lf^{N_0}_1)_*\rho$ as a function parametrized by $E^u_0$. As at time $N_0$, $\gamma_{N_0}$ is uniformly transverse to $E^s_0$, this projection has uniformly bounded norm. From before, we have the $C^2$ bound on $\gamma_{N_0}$ following \eqref{eqn:C_2_bound_gamma_i}, which gives that there exists $C_{14}$ such that:
\begin{equation}\label{eqn:estimate_on_rho_N_0}
\|\ln\hat{\rho}_{N_0}\|_{C^{\alpha}} \le C_{14}e^{7\epsilon n}e^{\Lambda N_0}(1+\|\ln \rho\|_{C^{\alpha}}+\|\gamma\|_{C^2}). 
\end{equation}

Combining this with \eqref{eqn:rho_n_final_est}, we find 
\begin{equation*}
\|\ln \hat{\rho}_n\| \le e^{-.9\cdot.999\lambda \alpha M}(C_{14}e^{7\epsilon n}e^{\Lambda N_0}(1+\|\ln \rho\|_{C^{\alpha}}+\|\gamma\|_{C^2}))+2e^{\Lambda N_0}\|\gamma\|_{C^2}e^{-.9\lambda\cdot .999 M}+C_{11}.
\end{equation*}
Then as before, because $N_0$ is order $\ln(\theta)$ and $M=n-N_0$,
\begin{equation}\label{eqn:est_on_rho_n}
\|\ln \hat{\rho}_n\|\le C_{15}e^{-.9\lambda\alpha n}e^{C_{16}\ln(\theta)}(1+\|\ln\rho\|_{C^{\alpha}}+\|\gamma\|_{C^2}). 
\end{equation}
As $\|\gamma_n\|_{C^2}<D_9$ for some fixed $D_9$ by assumption, then \eqref{eqn:est_on_rho_n} gives the corresponding estimate on $\rho$ with respect to the arclength parameters on $\gamma_n$, and we conclude item \eqref{item:regularity_of_density_sequence_lem}. 
\end{proof}

\subsection{Loss of regularity}
\label{SSLoss}
In this subsection, we prove some additional estimates that will be used later in the proof of mixing but not the proof of the coupling lemma. 
These estimates say that for all but an exponentially small amount of the curve $\gamma$, typically the images of points in $f^n_{\omega}(\gamma)$ are in a neighborhood that is at least $n\epsilon$-good. 
First we introduce  in Definition \ref{defn:forward_tempered_relative_to_a_curve}, 
a notion of a forward tempered point relative to a curve. 
Then, in Proposition \ref{prop:fwd_up_to_epsilon_smoothing}, we show that the image of a curve at a forward tempered time will be $18\epsilon n$ good.

We begin by stating the main definition of this section. Note that it is similar to definitions we also considered for backwards good points (Definition \ref{defn:backwards_good_time}).

\begin{defn}\label{defn:forward_tempered_relative_to_a_curve}
For a standard pair $\hat{\gamma}=(\gamma,\rho)$ and a word $\omega\in \Sigma$, we say that $n$ is a $(C,\lambda,\epsilon,\theta)$-forward tempered time for $x\in \gamma$ if the sequence of maps $(D_xf^i_{\omega})_{1\le i\le n}$ is $(C,\lambda,\epsilon$)-subtempered and the most contracted direction of $D_xf^n_{\omega}$ exists and is at least $\theta$-transverse to $\gamma$. Similarly, we speak of a trajectory being forward tempered relative to a vector $v\in T_xM$.
\end{defn}

The following lemma gives a quantitative estimate on the length of an image of a curve experiencing a forward tempered time.

\begin{prop}\label{prop:fwd_up_to_epsilon_smoothing}
Suppose that $M$ is a closed surface and that $(f_1,\ldots,f_m)$ is a tuple in $\Diff^2_{\vol}(M)$. 
Then for any $\lambda>0$ and $C_1>0$ there exist $D_0,D_1>0$ and $N\in \N$, such that for all $\theta>0$ and $\lambda/30>\epsilon>0$,
if $\hat{\gamma}=(\gamma,\rho)$ is a $C_1$-good standard pair, $\omega\in \Sigma$ and $x\in \gamma$ has a $(C,\lambda,\epsilon,\theta)$ forward tempered time at time 
\begin{equation}
 \label{N-Theta}   
n\ge N+D_0\ln\theta
\end{equation}
then

(1) The pushforward $f^n_{\omega}(\gamma)$ contains 
    a neighborhood of $f^n_{\omega}(x)$, $B(x)$, such that denoting by $\hat{B}(x)$ the restriction of the standard pair $f^n_{\omega}(\hat{\gamma})$ to $B(x)$, then $\hat{B}(x)$ is an $(18\epsilon n+18\max\{C,0\}+D_1)$-good standard pair.
    
  (2) The preimage of $\hat{B}(x)$, $(f^{n}_{\omega})^{-1}(\hat{B}(x))$, has length at most $e^{-(\lambda/2)n}$.
\end{prop}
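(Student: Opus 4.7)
The plan is to mimic the proof of Proposition \ref{prop:finite_time_smoothing_estimate}, but working forward along the orbit rather than backward. Let $E^s_0 \oplus E^u_0$ denote the tempered splitting at $x$ produced by Proposition \ref{prop:tempered_norm_implies_splitting} applied to the subtempered sequence $D_xf^i_\omega$; by hypothesis, $\gamma$ makes angle at least $\theta$ with $E^s_0$ at $x$. First I would build finite-time Lyapunov metrics $\|\cdot\|_i'$ along the orbit $x, f_\omega(x),\ldots,f^n_\omega(x)$ via Lemma \ref{lem:lyapunov_metric}. In the forward tempered case, the distortion grows in time: $\|\cdot\|_i \leq \|\cdot\|_i' \leq C' e^{2\max\{C,0\}+2\epsilon i}\|\cdot\|_i$, so at time $0$ the metric is essentially the ambient metric, while at time $n$ the metric is distorted by $e^{2\epsilon n}$. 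In the induced charts $\hat f_i = L_{i+1} \circ f_i \circ L_i^{-1}$, the linear part is diagonal with hyperbolicity at least $e^{\lambda'}$ for $\lambda' = 0.999\lambda$, and $\|\hat f_i\|_{C^2} \leq D e^{6\max\{C,0\}} e^{6i\epsilon}$ by Lemma \ref{lem:C_2_norm_twisted_charts}.

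Next, I would restrict the graph of $\gamma$ over $E^u_0$ in the Lyapunov chart to a segment of length $r_0 \approx C_2 \theta / \|\gamma\|_{C^2}$, where $C_2 > 0$ is chosen small so that Proposition \ref{prop:1-step_smoothing_estimate} applies at every iterate. Running the inductive argument from Step 1 of the proof of Proposition \ref{prop:finite_time_smoothing_estimate}, I obtain: (a) the length $l_i$ of the graph in chart $i$ satisfies $l_{i+1} \geq e^{0.99\lambda} l_i$ until it saturates a box of size $r_i = C_2 \min\{\theta \|\gamma\|_{C^2}^{-1} e^{0.9\lambda i}, e^{-0.9\lambda(n-i)}\}$; (b) the slope $\|\hat\gamma_i\|_1$ decays geometrically, reaching a threshold $\epsilon_0$ after $N_0 \sim |\ln\theta|/\lambda$ iterations. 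The hypothesis $n \geq N + D_0|\ln\theta|$ ensures $n - N_0$ is large enough for the smoothing phase. Once $\|\hat\gamma_i\|_1 \leq \epsilon_0$, the $C^2$ and log-H\"older density estimates in Steps 2 and 3 of Proposition \ref{prop:finite_time_smoothing_estimate} give uniform bounds up to multiplicative factors of $e^{c\epsilon n}$ coming from the Lyapunov distortion.

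Converting back to the ambient metric, the image is a curve of definite length containing $f^n_\omega(x)$, and the $C^2$ norm and log-H\"older density constant are bounded by $D_1 e^{18\epsilon n + 18\max\{C,0\}}$, where the coefficient $18$ is the accumulation from: squaring Lyapunov chart norms ($\times 4$) in Lemma \ref{lem:C_2_norm_twisted_charts}, the $\|\hat f_i\|_2 \leq D e^{6\epsilon(n-i)}$ contribution to the graph transform, and an additional factor from converting density estimates back to arclength. For part (2), the preimage of a Lyapunov-coordinate neighborhood of fixed size at time $n$ pulls back under $D_xf^n_\omega|_{E^u}$ to a set of diameter $\lesssim e^{-\lambda n + \epsilon n + \max\{C,0\}}$ in the Lyapunov chart at time $0$, then is compared to the ambient metric; since $\epsilon < \lambda/30$, this is bounded by $e^{-\lambda n/2}$ once $n \geq N$.

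The main obstacle will be the careful bookkeeping of the exponent in the $e^{18\epsilon n}$ factor: each of the Lyapunov chart change, the graph transform $C^2$ estimate, and the density pushforward contributes its own $e^{c\epsilon n}$ term, and the sum of these contributions must be kept below $18\epsilon n$ (with the constant $18$ being the combined coefficient). Subtle points include handling the regime $\|\hat\gamma_i\|_1 > \epsilon_0$ before time $N_0$, where the one-step $C^2$ estimate \eqref{eqn:one_step_smoothing_C_2} does not yet apply, and using the crude exponential-growth bound from Lemma \ref{lem:growth_of_C_2_norms} on that initial window. Everything else is a transcription of the reverse tempered argument with orientations reversed.
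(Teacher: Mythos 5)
Your proposal follows essentially the same route as the paper: forward Lyapunov charts whose distortion grows like $e^{2\max\{C,0\}+2\epsilon i}$, the one-step graph transform run inductively with an initial window of length $N_0\sim|\ln\theta|/\lambda$ handled by the crude bound of Lemma \ref{lem:growth_of_C_2_norms}, and the same accounting producing the $18\epsilon n+18\max\{C,0\}$ constants, so it is correct in structure and detail. One bookkeeping point you transcribe too literally from the reverse-tempered case: your boxes $r_i=C_2\min\{\theta\|\gamma\|_{C^2}^{-1}e^{0.9\lambda i},e^{-0.9\lambda(n-i)}\}$ give $r_i\|\hat f_i\|_2\approx C_2De^{6C+6\epsilon n}$ for $i$ near $n$ (since here $\|\hat f_i\|_2$ grows with $i$ rather than with $n-i$), so a constant $C_2$ does not keep the nonlinear terms below $\epsilon_0$; the paper shrinks the boxes by an extra factor $e^{-10\epsilon n-6C}$, which is exactly the kind of adjustment you flag under "careful bookkeeping of the exponent."
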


\begin{proof}
As before, we will use the deterministic smoothing lemmas. 
We begin by first picking a choice of Lyapunov metrics
to use.
Applying Lemma \ref{lem:lyapunov_metric} with $\lambda'=.999\lambda$ we get,
since the trajectory is forward tempered, that
\begin{equation}\label{eqn:fwd_smoothing_lyapunov_metric}
    \frac{1}{\sqrt{2}}\|\xi\|_i\le \|\xi\|_i'\le  4e^{2C+2\epsilon
i}\pez{1-e^{2(\lambda'-\lambda)}}^{-1/2}\|\xi\|_i.
\end{equation}
As in the proof of Proposition \ref{prop:finite_time_smoothing_estimate}, using the Lyapunov metric, we obtain new dynamics $\hat{f}_i$ in the Lyapunov coordinates, which are given by composing with a sequence of maps $L_i$.
Crucially, these dynamics satisfy that
$\displaystyle
D_0\hat{f}_i\vert_{E^s_i}\le e^{-.999\lambda}$ and $\displaystyle D_0\hat{f}_i\vert_{E^u_i}\ge e^{.999\lambda}.
$
Moreover, we can write:
\[
\hat{f}_i(x,y)=(\sigma_{1,i}x+\hat{f}_{i,1}(x,y),\sigma_{2,i}y+\hat{f}_{i,2}(x,y)).
\]
Further there exists $C_2$ such that 
\begin{equation}\label{eqn:norm_of_Lyapunov_chart_2}
\max\{\|L_i\|,\|L_i\|^{-1}\}\le C_2e^{2C+2\epsilon i}.
\end{equation}
Proceeding as in \eqref{eqn:C_2_Lyapunov_chart_est1}, there exists $D$ such that:
\begin{equation}\label{eqn:C_2_est_Lyapunov_charts_2}
\|\hat{f}_i\|_{C^2}\le De^{6C+6i\epsilon}.
\end{equation}
From here, we set up the constants in a manner similar to before. Things are slightly simpler because by assumption the standard pair is $C_1$-good and hence uniformly long and good. We will take some small $C_3>0$ that we will choose later. Set
\[
r_i=C_3e^{-10\epsilon n-6C}\min\{\theta^{-1},e^{-.9\lambda(n-i)}\}.
\]
As before, we let $B_i$ be the square of side length $r_i$ centered at $0$ with 
respect to the $\|\cdot\|'_i$ metric. As in the previous argument, we let 
$\hat{\gamma}_i$ denote the portion of $f^{i-1}(\hat\gamma)$ lying in $B_{i}$ 
and we let $\hat{\rho}_n$ denote the density along $\hat{\gamma}_n$. Let 
$\epsilon_0>0$ be the cutoff so that \eqref{eqn:one_step_smoothing_C_2} holds 
in Proposition \ref{prop:1-step_smoothing_estimate}

As above, we denote $\epsilon_{1,i}=\|\hat{f}_{i,1}\vert_{B_i}\|_1$ and 
$\epsilon_{2,i}=\|\hat{f}_{2,i}\vert_{B_i}\|_1$. Because 
$\hat{f}_i=D_0\hat{f}_i+(\hat{f}_1,\hat{f}_2)$, we see that from the $C^2$ 
bound on $\hat{f}_i$ that on $B_i$, 
\begin{equation}\label{eqn:estimate_on_f_i_lyapunov_charts_fwd_tempered}
\max\{\epsilon_{1,i},\epsilon_{2,i} \}\le r_i\|\hat{f}_i\|_2
\le C_3e^{-10\epsilon n-6C}e^{-.9\lambda(n-i)}e^{6C}e^{6(n-i)\epsilon}\le
C_3e^{-.9\lambda(n-i)}.
\end{equation}
In particular, note that by choosing $C_3$ sufficiently small in a manner that only depends on
$\lambda$, we may ensure that for all $i$ that $\max\{\epsilon_{1,i},
\epsilon_{2,i}\}<\epsilon_0$.

We then carry out an inductive argument to determine the regularity of $\hat{\gamma}_n$.
In order, we obtain estimates on the length, the $C^2$ norm, and then $\|\ln f^n(\rho)\|_{C^{\alpha}}$.

\noindent\textbf{Step 1. }(Length of the curve)
As in the proof
of Proposition \ref{prop:finite_time_smoothing_estimate}, we see that from the 
choice of constants $\hat{\gamma}_n$ is uniformly transverse to $E^s_n$ and the 
projection of its graph to the $E^u_{n}$ axis fills $E^u_n\cap B_{n}$. Thus 
there exists $C_4$, depending only on $C_3$ such that 
$\hat{\gamma}_n$ has length at least 
$C_4e^{-\epsilon n}$ in the Lyapunov charts. 
By equation 
\eqref{eqn:norm_of_Lyapunov_chart_2}, this implies that,
in the ambient metric,
$f^n_{\omega}(x)$ lies in a
neighborhood of length at least 
\begin{equation}\label{eqn:length_in_final_chart_1}
C_2^{-1}C_4e^{-2C-3\epsilon n}.
\end{equation}

\noindent\textbf{Step 2. }($C^2$ norm of the curve)
We now turn to an estimate on the $C^2$ norm of $\hat{\gamma}_n$.
This is perhaps the most complicated part of the argument along with the estimate on 
smoothing the density.
We apply the estimate 
\eqref{eqn:one_step_smoothing_C_2} from Proposition 
\ref{prop:1-step_smoothing_estimate}. Let $N_0$ be the first iterate such that 
$\|\hat{\gamma}_{N_0}\|_1<\epsilon_0$. From our choice of the size of the 
neighborhood and the comment on the size of $C_3$ immediately after 
\eqref{eqn:estimate_on_f_i_lyapunov_charts_fwd_tempered}, we have that for all 
$i\ge N_0$, the estimate \eqref{eqn:one_step_smoothing_C_2} holds, i.e.~the $C^2$ smoothing estimate is valid. 
Thus we find that:
\begin{equation}
\|\hat\gamma_{i+1}\|_2\le \lambda^{-1.99}\|\hat{f}_i\|_2
+\lambda^{-2.99}\|\hat{\gamma}_{i}\|_2.
\end{equation}
From  \eqref{eqn:C_2_est_Lyapunov_charts_2}, it follows inductively 
that:
\begin{equation}\label{eqn:prelimin_C_2_n_estimate1}
\|\hat\gamma_n\|_2\le e^{-2.99\lambda (n-N_0)} \|\hat{\gamma}_{N_0}\|_2+De^{6C}e^{6\epsilon n}\sum_{j=0}^{n-N_0-1} e^{-1.99\lambda j}e^{6\epsilon j}.
\end{equation}
We then need to estimate $N_0$.
As in the proof of Proposition \ref{prop:finite_time_smoothing_estimate}, we get 
$N_0\!\!=\!\!O_{\lambda}(-\ln(\theta))$.
Thus there exists $C_4,C_5>0$, such that 
\[
\|\hat\gamma_n\|_2\le e^{-2.99 n\lambda}e^{-C_4\ln \theta}\|\hat\gamma_{N_0}\|_2+C_5e^{6C}e^{6\epsilon n}.
\]

As in the proof of Proposition \ref{prop:finite_time_smoothing_estimate}, after \eqref{eqn:C_2_bound_gamma_i}, we see that there exists $\Lambda>0$ such that $\|f^i\gamma\|_{C^2}$, with respect to the ambient metric is at most $e^{\Lambda i}$. 
Using \eqref{N-Theta} and the fact that the angle between $\gamma$ and 
$E^s_{N_0}$ is uniformly large, 
we see that there exists $C_6$ such that with respect to the Lyapunov metric, 
\[
\|\hat{\gamma}_{N_0}\|_2\le e^{-C_6\ln\theta}.
\]
Thus for some $C_7$,
\begin{equation}\label{eqn:hat_gamma_n_eqn_22}
\|\hat\gamma_n\|_2\le e^{-2.99 n\lambda}e^{-C_7\ln \theta}+C_5e^{6C}e^{6\epsilon n}.
\end{equation}
We now record an intermediate estimate that will be useful later.
By possibly increasing the constants, for each $N_0\le i\le n$, we find:
\begin{equation}\label{eqn:intermediate_est_C_2_norm_i}
\|\hat\gamma_i\|_2\le e^{-2.99 i\lambda}e^{-C_7\ln \theta}+C_5e^{6C}e^{6\epsilon i}.
\end{equation}

Equation \eqref{eqn:hat_gamma_n_eqn_22} is an estimate in the Lyapunov chart,
but we need the estimate with respect to the original metric. 
The $C^2$ norm of $\hat{\gamma}_n$ as a curve is uniformly comparable to $\|\hat{\gamma}_n\|_2$ because $\hat{\gamma}_n$ is uniformly transverse to $E^s_n$. 
By Lemma \ref{lem:linear_map_C_2_norm_curve_est} there exists
$C_7$, such that, letting $\gamma_n$ be the segment of $\gamma$ lying in $B_n$,
 we get the following bound in the ambient metric
\[
\|\gamma_n\|_{C^2}\le  (e^{-2.99 n\lambda}e^{-C_7\ln \theta}+C_5e^{6C}e^{6\epsilon n})C_2^3e^{6C+6\epsilon n}.
\]
This is the bound required by the proposition. Indeed for $D_0$ sufficiently large we have:
\begin{equation}\label{eqn:final_est_gamma_n_good_curves}
\|\gamma_n\|_{C^2}\le C_{8}e^{12\max\{C,0\}+12\epsilon n}.
\end{equation}

\textbf{Step 3.} (Regularity of the density) Finally, we turn to estimating the H\"older norm of the pushed density. 
At the same iterate $N_0$ from Step 2, we have that $\epsilon_{N_0,1},\epsilon_{N_0,2},\|\hat{\gamma}_{N_0}\|_1\le \epsilon_0$ and that these estimates hold for all future iterates. 
Consequently, estimate \eqref{eqn:smoothing_rho_1_step_chart_est} applies, hence for
$N_0\le i\le n-1$,
\[
\|\ln \wt\rho_{i+1}\|_{C^\alpha}\le e^{-.9\alpha \lambda}(\|\ln \wt{\rho}_i\|_{C^{\alpha}}+\|\hat{f}_i\|_{2}+\|\hat{\gamma}_i\|_2). 
\]
This leads inductively to the estimate that
\begin{equation}\label{eqn:rho_n_1st_inductive_est}
\|\ln \wt{\rho}_n\|_{C^{\alpha}}\le e^{-.9\alpha \lambda (n-N_0)}\|\ln\wt{\rho}_{N_0}\|_{C^{\alpha}}+\sum_{i=N_0}^{n-1}e^{-.9\lambda\alpha(n-i)}(\|\hat{f}_i\|_2+\|\hat{\gamma}_i\|_2).
\end{equation}
We then need some further estimates in order to simplify this. 

We start with an estimate on $\|\ln \wt{\rho}_{N_0}\|_{C^{\alpha}}$. A similar argument to that giving \eqref{eqn:estimate_on_rho_N_0} yields that $\|\ln \wt{\rho}_{N_0}\|_{C^{\alpha}}\le e^{\Lambda N_0}$, where $\Lambda>0$ only depends on the $C^2$ norm of the diffeomorphisms and the initial regularity of $\gamma$. Hence as long as $D_0$ is large enough, it follows that the first term is uniformly bounded.

For the other terms, we already have estimates for $\|\hat{f}_i\|_2$ and $\|\hat{\gamma}_i\|_2$, \eqref{eqn:C_2_est_Lyapunov_charts_2} and \eqref{eqn:intermediate_est_C_2_norm_i}. 
These yield a bound on the sum in 
 \eqref{eqn:rho_n_1st_inductive_est}:
$$
\sum_{i=N_0}^{n-1}\!\!e^{-.9\lambda\alpha(n-i)}(\|\hat{f}_i\|_2+\|\hat{\gamma}_i\|_2) 
$$$$
\le \sum_{j=0}^{n-N_0-1} \!\! e^{-.9\lambda \alpha j}\left(De^{6C}e^{6\epsilon(n-j)}\!\!+\!e^{-2.99\lambda(n-j)}e^{-C_7\ln(\theta)}\!\!+\!C_5e^{6C}e^{6\epsilon(n-j)}\right)\!\!.
$$
The sum of the first and third terms inside the parentheses is straightforward to evaluate. There is a constant $C_{9}$ such that each is bounded by $C_9e^{6C}e^{6\epsilon n}$. 
The terms involving the $\ln(\theta)$ are only slightly more complicated as either $j$ or $n-j$ is large, hence the terms involving $\lambda$  dominate the $e^{-C_7\ln\theta}$ term as long as $D_0$ is large enough.
Thus by the above estimates, it follows that as long as $D_0$ is sufficiently large that there exists $C_{10}$ such that
\[
\|\ln\wt{\rho}_n\|_{C^\alpha}\le C_{10}e^{6\max\{C,0\}}e^{6\epsilon n}.
\]
This is the form of the estimate in the Lyapunov charts. We then need to pass back to the original metric. 
Applying Lemma \ref{lem:pushforward_density_est_diffeo} 
we see that 
letting $C'$ denote the constant from that lemma 
and using \eqref{eqn:norm_of_Lyapunov_chart_2} and \eqref{eqn:final_est_gamma_n_good_curves})
 we get,
\begin{align*}
\|\ln \rho_n\|_{C^{\alpha}}&\le e^{(1+\alpha)(2C+2\epsilon n)}\left(C_{10}e^{6\max\{C,0\}+6\epsilon n} +C'e^{2C+\epsilon n}(1+C_8e^{12\max\{C,0\}+12\epsilon n})\right)\\
&\le C_{11} e^{18\max\{C,0\}}e^{18\epsilon n}.
\end{align*}
This is the needed conclusion, so we are done.
\end{proof}

\section{Finite time Pesin theory and fake stable manifolds}\label{sec:finite_time_pesin_theory}
\subsection{Fake stable manifolds}\label{subsec:fake_stable_manifolds}
In the proof of the coupling lemma, we will use the holonomies of some ``fake" stable manifolds $W^s_n(\omega,x)$. These manifolds  behave for  finite a time like a true stable manifold insofar as they contract. 
We then prove some lemmas about fake stable curves. 
Some of the results below are variants on standard facts in Pesin theory, however, some of the proofs are a little different due to us only using a finite portion of an orbit. For other facts that look  standard we needed to supply our 
own proofs because we could not find a similar enough statement in the literature.

For a given word $\omega$ and $n\in \N$ the fake stable manifolds are curves that have analogous properties to the stable manifolds up until time $n$. So, unlike true stable manifolds, they are not canonically defined.

Before we begin we recall some notation. Throughout this section we will write 
$\Lambda^{\omega}_n(C,\lambda,\epsilon)$ for the set of   $(C,\lambda,\epsilon)$-tempered 
points $x\in M$
at time $n$ for the word $\omega\in \Sigma$. This is essentially the finite time version of a Pesin block. For many of the results there is a lower bound  on $n$, which is required to ensure that the orbit is actual experiencing hyperbolicity.

Below we will make a number of arguments concerning these fake stable manifolds. The main properties we need concern the holonomies between two transversals to the $W^s_n$ lamination. We need to know that the $W^s_n$ holonomies have a uniformly H\"older continuous Jacobian independent of $n$. In addition we would like to know that as $n\to \infty$ that the holonomies are converging exponentially quickly to the true stable holonomy. 

Before proceeding to the proof, we remark that there are other approaches to fake stable manifolds that are adapted to different sorts of dynamical problems and may differ from each other substantially. For example,  Burns and Wilkinson \cite{burns2010ergodicity}, which originated the term fake manifold, use fake center and stable manifolds where a potentially different fake foliation is defined at every point in the manifold. A different approach in Dolgopyat, Kanigowski, Rodriguez-Hertz \cite{dolgopyat2024exponential} uses a fake foliation that is globally defined but does not cover the entire Pesin regular set. 
 Note that, in contrast with our setting, \cite{burns2010ergodicity} and \cite{dolgopyat2024exponential}
allow systems with some zero exponents, and so the invariant manifolds need not be unique in their settings.
One benefit of the construction described below is that it applies to \emph{every} point in a Pesin block and further gives a single fake stable lamination defined on the manifold rather than a collection of different overlapping laminations. While this makes the fake stable lamination simple to think about, it requires more work to show that it exists.

\subsection{Preliminaries}

Here we present some background that will be used in the next subsection to study the regularity of $E_n^s$.

We start with a useful fact for showing that the limit of a sequence of functions is H\"older continuous. This fact is completely standard.
Note that the statement is false if the diameter of $M_2$ is unbounded. Also, recall that in our setup, the H\"older constant only applies to estimates on the distance between $g(x)$ and $g(y)$ for points with $d(x,y)\le 1$.

\begin{lem}\label{lem:holder_seq_convergence_lemma}
Suppose that $M_2$ is a metric space with bounded diameter. Fix $\eta,\lambda,\delta,\beta>0$. Then there exists $0<\alpha<\beta$ and $D(\eta,\lambda,\delta,\beta,\alpha)$ such that  for any metric space $M_1$
the following holds. Let $g_n\colon M_1\to M_2$, $1\le n\le N$ be a finite or infinite sequence of $\beta$-H\"older continuous functions such that:

\noindent (1)
For $1\le n< N$, $d_{C^0}(g_n,g_{n+1})\le C_1e^{-\delta n}$.

\noindent (2)
The function $g_n$ is $C_3e^{\eta n}$ $\beta$-H\"older continuous
at scale $e^{-C_2}e^{-\lambda n}$,
i.e., if $d(x,y)\le e^{-C_2}e^{-\lambda n}$ then
$ d(g_n(x),g_n(y))\le C_3e^{\eta n}d(x,y)^{\beta}$.

\noindent Then the functions $g_1,\ldots,g_N$ in the sequence, as well as the possible limiting value of the sequence are all uniformly $\alpha$-H\"older with constant at most 
\[
\max\left\{De^{C_2\alpha},2C_1(1-e^{-\delta})^{-1}e^{\beta C_2}D^{\beta}+C_3e^{-C_2(\beta-\alpha)}\right\}.
\]
\end{lem}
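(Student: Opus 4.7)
The plan is to execute a standard scale-matching argument. For fixed $x,y \in M_1$ with $r := d(x,y) \le 1$, I would pick an integer $n$ so that $r$ sits near the critical scale $e^{-C_2}e^{-\lambda n}$ at which property (2) becomes useful; then property (1) lets me transfer the resulting H\"older estimate at level $n$ to every other $g_k$ in the sequence and to the (pointwise) limit.

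Concretely, write $D=\mathrm{diam}(M_2)$ and split on the size of $r$. If $r > e^{-C_2-\lambda}$, I would just invoke the diameter bound
\[
d(g_k(x),g_k(y)) \le D \le D\, e^{\alpha(C_2+\lambda)}\, r^{\alpha},
\]
which for any $\alpha > 0$ already produces a H\"older bound with constant of order $D\,e^{C_2\alpha}$. If instead $r \le e^{-C_2-\lambda}$, I would pick the unique integer $n\ge 1$ with $e^{-C_2}e^{-\lambda(n+1)} < r \le e^{-C_2}e^{-\lambda n}$, and for any $k$ in the sequence (and for the possible limit) use the triangle inequality
\[
d(g_k(x),g_k(y)) \le d(g_n(x),g_n(y)) + d(g_k(x),g_n(x)) + d(g_k(y),g_n(y)).
\]
Property (2) handles the middle term by $C_3 e^{\eta n} r^{\beta}$, while property (1) and summation of a geometric series handle each outer term by at most $C_1(1-e^{-\delta})^{-1} e^{-\delta n}$. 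For the case $k<n$, property (2) applies directly to $g_k$ at the larger scale $e^{-C_2-\lambda k} \ge r$, producing the same bound $C_3 e^{\eta k} r^{\beta} \le C_3 e^{\eta n} r^{\beta}$ which has already been accounted for.

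Substituting the scale relation $e^{-\lambda n} \le e^{C_2+\lambda}\, r$ converts these bounds into constants times $r^{\delta/\lambda}$ and $r^{\beta-\eta/\lambda}$. Choosing
\[
0 < \alpha \le \min\{\delta/\lambda,\; \beta-\eta/\lambda\} < \beta,
\]
which implicitly requires $\eta < \beta\lambda$ (this is the regime of interest, since otherwise the sequence need not stay H\"older), forces both terms to be bounded by constants times $r^{\alpha}$. Passing to the limit uses that a pointwise limit of uniformly $\alpha$-H\"older functions is $\alpha$-H\"older with the same constant.

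The argument is elementary; the only real task is bookkeeping to match the precise constant in the statement. Tracking how $C_2$ enters each piece, the diameter case yields the $D\,e^{C_2\alpha}$ term, while the substitution $e^{-\lambda n}\approx e^{C_2}\, r$ in the scaling case yields the $2C_1(1-e^{-\delta})^{-1} e^{\beta C_2} D^{\beta}$ contribution (where $D^\beta$ absorbs the fact that we are really bounding $r^{\delta/\lambda}$ by $D^{\beta}r^{\alpha}$ using $r\le D$) and the $C_3 e^{-C_2(\beta-\alpha)}$ contribution from $e^{\eta n} r^{\beta-\alpha}$. Taking the maximum of the two regimes gives the claimed bound.
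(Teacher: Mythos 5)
There is a genuine gap: your choice of scale is too coarse, and as a consequence your argument only proves the lemma under the extra hypothesis $\eta<\beta\lambda$, which is not assumed in the statement. By matching $r=d(x,y)$ to the scale $e^{-C_2}e^{-\lambda n}$ at which hypothesis (2) is stated, your middle term becomes $C_3e^{\eta n}r^{\beta}\approx C_3 e^{-C_2\eta/\lambda}\, r^{\beta-\eta/\lambda}$, which is only $O(r^{\alpha})$ for some $\alpha>0$ when $\eta<\beta\lambda$. You acknowledge this and dismiss the complementary case as one where ``the sequence need not stay H\"older,'' but that is false: the lemma is true for all $\eta,\lambda,\delta,\beta>0$. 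Worse, the case you discard is exactly the one the paper needs. In the proof of Proposition \ref{prop:contracted_subspace_holder_est}, the lemma is applied with $\beta=1$, $\eta=\Lambda$ and scale $e^{-\Lambda n}$ (so $\lambda=\Lambda$), i.e.\ $\eta=\beta\lambda$ exactly; your bound on the middle term there is $O(1)$ rather than $O(r^{\alpha})$, so your proof does not cover the application.

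The repair is to decouple the scale used to select $n$ from the scale in hypothesis (2). The paper chooses $\gamma\ge\lambda$ large enough that $\eta<\gamma\alpha_1$ and $\eta\le\gamma(\beta-\alpha_1)$ for a small auxiliary exponent $\alpha_1$, and selects $n$ by $e^{-C_2}e^{-\gamma(n+1)}\le r\le e^{-C_2}e^{-\gamma n}$. Since $\gamma\ge\lambda$, one still has $r\le e^{-C_2}e^{-\lambda n}$, so hypothesis (2) applies at level $n$; but now
\[
C_3e^{\eta n}r^{\beta}=C_3e^{\eta n}r^{\alpha_1}\,r^{\beta-\alpha_1}
\le C_3e^{(\eta-\gamma\alpha_1)n}e^{-C_2\alpha_1}\,r^{\beta-\alpha_1},
\]
and the prefactor is bounded because $\eta<\gamma\alpha_1$. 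The geometric-series term from hypothesis (1) becomes $O\!\left(e^{-\delta n}\right)=O\!\left(r^{\delta/\gamma}\right)$, so the final exponent is $\alpha=\min\{\delta/\gamma,\beta-\alpha_1\}$ (up to a further adjustment $\alpha_2\le\delta/\gamma$). Taking $\gamma$ large shrinks $\alpha$, but the lemma only asserts the existence of \emph{some} $\alpha\in(0,\beta)$ depending on $\eta,\lambda,\delta,\beta$. With this modification the rest of your argument (the three-range split on $r$, the triangle inequality through $g_n$, and passage to the limit) goes through as in the paper.
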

\begin{proof}
We will assume throughout the proof that $g_N$ is fixed and obtain an estimate for $g_N$ that is independent of $N$. 
As the resulting estimate is independent of $N$, the conclusion holds for infinite sequences as well.

To begin we pick some constants.
First, for fixed $\eta>0$ and any $0<\alpha_1<\beta$ there exists $\gamma\ge \lambda$ such that
\begin{equation}\label{eqn:choice_of_alpha_1}
\eta-\gamma\beta \le -\alpha_1\gamma\quad\text{and}\quad \eta<\gamma\alpha_1. 
\end{equation}
Note that $\gamma$ only depends on $\eta,\alpha_1,\lambda,\beta$, but not on $C_1,C_2,C_3$.

Next given $\delta$, let $0<\alpha_2<\beta$ be sufficiently small that we have
\begin{equation}\label{eqn:choice_of_alpha_2}
\delta\ge \alpha_2\gamma.
\end{equation}

Due the first assumption, we have a uniform estimate independent of $N$:
\[
\abs{g_N-g_n}\le \sum_{i=n}^{N-1} C_1e^{-i\delta}\le \frac{C_1e^{-n\delta}}{1-e^{-\delta}}.
\]

Having picked those constants, now consider a pair of points $x,y\in M_1$. We consider three cases 
depending on how far apart $x$ and $y$ are. We proceed from closest to furthest away. 

(1) First suppose that $d(x,y)<\min\{e^{-C_2}e^{-\gamma N},1\}$. Then 
$$
d(g_N(x),g_N(y))\le C_3e^{\eta N}d(x,y)^{\beta}
\le C_3e^{\eta N}  d(x,y)^{\alpha_1} d(x,y)^{\beta-\alpha_1}
$$$$
\le C_3e^{\eta N}e^{-C_2\alpha_1}e^{-\gamma \alpha_1 N
}d(x,y)^{\beta-\alpha_1}
\le C_3e^{-C_2\alpha_1}d(x,y)^{\beta-\alpha_1},
$$
where we have used \eqref{eqn:choice_of_alpha_1}.

(2) Next, we consider the case where $e^{-C_2}e^{-\gamma}e^{-\gamma n}\le d(x,y)\le\min\{1, e^{-C_2}e^{-\gamma n}\}$ for some $1\le n< N$. By the choice of constants $\alpha_1,\alpha_2$ and $\gamma$ in the first part of the proof we find:
\begin{align*}
d(g_N(x),g_N(y))&\le  d(g_N(x),g_n(x))+d(g_n(x),g_n(y))+d(g_n(y),g_N(y))\\
&\le C_1e^{-n\delta}(1-e^{-\delta})^{-1}+C_1e^{\eta n}d(x,y)^{\beta}+C_1e^{-n\delta}(1-e^{-\delta})^{-1}\\
&\le 2C_1e^{-n\gamma\alpha_2}(1-e^{-\delta})^{-1}+C_3e^{\eta n}e^{- n\gamma \alpha_1}e^{-C_2\alpha_1}d(x,y)^{\beta-\alpha_1}.
\end{align*}
Then due to the lower bound on $d(x,y)$ and $\eta<\gamma\alpha_1$ from \eqref{eqn:choice_of_alpha_1}:
\begin{align*}
d(g_N(x),g_N(y))&\le 2C_1(1-e^{-\delta})^{-1}e^{\alpha_2C_2}e^{\alpha_2\gamma}d(x,y)^{\alpha_2}+C_3e^{-C_2\alpha_1}d(x,y)^{\beta-\alpha_1}\\
&\le \left(2C_1(1-e^{-\delta})^{-1}e^{\alpha_2C_2}e^{\alpha_2\gamma}+C_3e^{-C_2\alpha_1}\right)d(x,y)^{\min\{\alpha_2,\beta-\alpha_1\}}. 
\end{align*}
 
(3) Finally we consider the case where $e^{-C_2}e^{-\gamma}<d(x,y)$.  Then we use a trivial bound
\[
d(g_N(x),g_N(y))\le \diam { (M_2)}\le \left(\frac{\diam { (M_2)}}{(e^{-C_2}e^{-\gamma})^{\beta-\alpha_1}}\right)d(x,y)^{\beta-\alpha_1}.
\]

Now using all three cases above, we may conclude. Note that the $(\beta-\alpha_1)$-H\"older constant obtained in the second item above is at least as big as the constant obtained in the first item in the list. Thus the function $g_N(x)$ is uniformly $(\beta-\alpha_1)$-H\"older with constant at most 
\[
\max\left\{(\diam M_2)e^{(C_2+\gamma)(\beta-\alpha_1)},2C_1(1-e^{-\delta})^{-1}e^{\alpha_2C_2}e^{\alpha_2\gamma}+C_3e^{-C_2\alpha_1}\right\}.
\]
As the choice of constants $\alpha_1,\alpha_2,\gamma$ depend only on $\delta,\eta$ we obtain the needed conclusion. 
\end{proof}

We will apply Lemma \ref{lem:holder_seq_convergence_lemma} to obtain regularity of $E^s_n$ after we 
 obtain small scale H\"older continuity of $E^s_n$.

 Next we present a perturbation result on the singular subspaces of linear transformations called Wedin's theorem.
This
theorem gives a bound on the change in the angle between the singular directions. We state a specialized version of this theorem adapted from the presentation in \cite[Thm.~4]{stewart1991perturbation}. First we describe the theorem in some generality, but below we give a precise statement for $\SL(2,\R)$ independent of the discussion and definitions mentioned below.
If $A$ and $\wt{A}$ are two $n\times n$ matrices then we may list their singular values as $\sigma_1\ge \cdots\ge \sigma_n$ and $\wt{\sigma}_1\ge \cdots \wt{\sigma}_n$. Write $\|E\|_F$ for the Frobenius norm of the matrix $E$, 
 i.e.~the $L^2$ norm of its entries viewed as a vector. Fix some index $k$ such that $\sigma_k\ge \wt{\sigma}_{k+1}$. If $\abs{\sigma_k-\wt{\sigma}_{k+1}}\ge \delta$, and 
$\wt{\sigma}_k\ge \delta$, then Wedin's theorem implies that:
\[
\|\sin\Phi\|_F\le \frac{\sqrt{2}\|E\|_F}{\delta},
\] where $\|\sin\Phi\|_F$ denotes the Frobenius norm of the matrix that defines the canonical angles between the right singular subspace associated to $\sigma_1,\ldots,\sigma_k$ and $\wt{\sigma}_1,\ldots,\wt{\sigma}_k$. (The matrix $\sin \Phi$ is defined by taking the inner products between an orthonormal basis of the right singular subspaces of $A$ and $\wt{A}$.) Note that the statement in \cite[Thm.~4]{stewart1991perturbation} is in terms of certain residuals, but by the comment before the theorem, these are bounded by $\|E\|_F$. Below we will use that the Frobenius norm of a $2$ by $2$ matrix satisfies the bound $\|E\|_F\le \sqrt{2}\|E\|$, where $\|E\|$ is the usual operator norm of the matrix \cite[5.6.P23]{horn2013matrix}. 

Although the statement from the above paragraph is somewhat technical, when both the matrix $A$ and its perturbation $A+E$ are in $\SL(2,\R)$, as is the case for us, the statement simplifies considerably. This is because for such a matrix $\sigma_1=\sigma_2^{-1}$ and the top singular value of a matrix in $\SL(2,\R)$ can change by at most $\|E\|$ when we perturb by $E$. If $\|A\|\ge 2$ and $E$ is a perturbation with $\|E\|\le \|A\|/2$, then 
\begin{align*}
\|A+E\|\ge \frac{1}{2}\|A\|,\\
\|A\|-(\|A+E\|)^{-1}\ge \|A\|- \frac{2}{\|A\|}\ge \frac{1}{2}\|A\|,
\end{align*}
as long as $\|A\|\ge 2$. So, we may apply Wedin's theorem with $\delta=\|A\|/2$. In this case, the matrix of canonical angles described above consists of a single number: the angle between the original most expanded singular direction and the new one. Thus we obtain the following proposition.

\begin{prop}\label{prop:SL2_perturbation_directions_lemma}
Suppose that $A$ is a matrix in $\SL(2,\R)$ with $\|A\|\ge 2$. Consider a perturbation $A+E\in \SL(2,\R)$
with $\|E\|\le \|A\|/2$. Denote by $v_A$ and $v_{A+E}$  the most expanded singular vectors of $A$ and $A+E$. Then
\[
\abs{\sin\angle(v_A,v_{A+E})}\le \frac{2\sqrt{2}\|E\|}{\|A\|}.
\]
\end{prop}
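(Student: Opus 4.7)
The proof is essentially bundled into the discussion immediately preceding the statement, and my plan is just to make those observations precise. The strategy is to apply Wedin's theorem in the special case of $2\times 2$ matrices of determinant one, with index $k=1$ (so that the ``top-$k$'' right singular subspace is just the line spanned by $v_A$, and the canonical angle matrix $\sin\Phi$ reduces to the single scalar $|\sin\angle(v_A, v_{A+E})|$).

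First I would list the singular values: since $A,\;A+E\in\SL(2,\R)$, the two singular values of $A$ are $\sigma_1=\|A\|$ and $\sigma_2=\|A\|^{-1}$, and the two singular values of $A+E$ are $\tilde\sigma_1=\|A+E\|$ and $\tilde\sigma_2=\|A+E\|^{-1}$. I would then verify the spectral-gap hypothesis needed for Wedin, namely a lower bound on $\min\{\tilde\sigma_1,\;\sigma_1-\tilde\sigma_2\}$. By the hypothesis $\|E\|\le\|A\|/2$ and the triangle inequality,
\[
\tilde\sigma_1=\|A+E\|\ge \|A\|-\|E\|\ge \frac{\|A\|}{2},
\]
which in particular gives $\tilde\sigma_2=\|A+E\|^{-1}\le 2/\|A\|$. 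Since $\|A\|\ge 2$ we have $2/\|A\|\le \|A\|/2$, and therefore
\[
\sigma_1-\tilde\sigma_2 \;\ge\; \|A\|-\frac{2}{\|A\|}\;\ge\;\frac{\|A\|}{2}.
\]
Thus both quantities are bounded below by $\delta:=\|A\|/2$, so Wedin's theorem applies with this choice of $\delta$.

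Applying Wedin's theorem gives
\[
|\sin\angle(v_A,v_{A+E})| \;=\; \|\sin\Phi\|_F \;\le\; \frac{\sqrt{2}\,\|E\|_F}{\delta} \;=\; \frac{2\sqrt{2}\,\|E\|_F}{\|A\|}.
\]
Finally I would use the standard $2\times 2$ bound $\|E\|_F\le\sqrt{2}\,\|E\|$ quoted in the preceding discussion (or, more sharply here, the fact that for the single canonical angle one needs only the operator-norm residual $\|E\|$ rather than $\|E\|_F$) to conclude the desired inequality. No real difficulty arises; the only thing to be careful about is checking that with $\|A\|\ge 2$ the hypothesis $\|E\|\le\|A\|/2$ really does yield both $\tilde\sigma_1\ge\delta$ \emph{and} $\sigma_1-\tilde\sigma_2\ge\delta$ for a common $\delta$ that leads to the claimed numerical constant.
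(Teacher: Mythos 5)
Your proposal is correct and follows exactly the paper's route: the proposition is proved by specializing Wedin's theorem to $\SL(2,\R)$ with $k=1$, and your verification that both $\|A+E\|$ and $\|A\|-\|A+E\|^{-1}$ are bounded below by $\delta=\|A\|/2$ is precisely the computation in the paragraph preceding the statement. The only caveat (shared with the paper's own write-up) is the final constant: the literal chain $\|\sin\Phi\|_F\le\sqrt{2}\|E\|_F/\delta$ together with $\|E\|_F\le\sqrt{2}\|E\|$ yields $4\|E\|/\|A\|$ rather than $2\sqrt{2}\|E\|/\|A\|$, so to land on the stated constant one must, as you note parenthetically, either keep $\|E\|_F$ in the bound or invoke an operator-norm version of the $\sin\Theta$ theorem.
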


\subsection{Regularity of the most contracting direction}
We now estimate the regularity of $E^s_n(x)$, the most contracted direction of $D_xf^n_{\omega}$, on the set of $(C,\lambda,\epsilon)$-tempered points at time $n$ in terms of $C$. The approach to studying H\"older regularity here may be contrasted with the approach in Shub \cite[Thm.~5.18(c)]{shub1987global}. That approach establishes H\"older regularity for an invariant section of a bundle automorphism under an appropriate bunching condition by comparing the contraction in the fiber with the strength of hyperbolicity in the base. In some sense the approach is similar: it uses the dynamics to study the H\"older regularity at different scales. One can compare equation (***) there with our Lemma \ref{lem:holder_seq_convergence_lemma}.

\begin{prop}\label{prop:contracted_subspace_holder_est}
Suppose that $(f_1,\ldots,f_m)$ is a tuple of diffeomorphisms in $\Diff^2_{\vol}(M)$ of a closed surface $M$. Fix $\lambda>0$ then there exists $\epsilon_0,\beta>0$ such that for any $0\le \epsilon\le \epsilon_0$ there exists $D_1$ such that if for $C\ge 0$, $\Lambda^n_{\omega}(C)$ denotes the $(C,\lambda,\epsilon)$ tempered points at time $n$ for $\omega\in \Sigma$, and $n\ge N_0(C)=\lceil (C+\ln(2))/\lambda\rceil$, then restricted to $\Lambda^n_{\omega}(C)$, $E^s_n$ is $\beta$-H\"older with constant $e^{D_1C}$.
\end{prop}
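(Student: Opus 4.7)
The plan is to realize $E^s_n$ on $\Lambda^n_\omega(C)$ as (the last term in) a rapidly convergent sequence of functions $\{E^s_k\}_{k=N_0}^n$, each of which satisfies a small-scale H\"older estimate, and then apply Lemma~\ref{lem:holder_seq_convergence_lemma} to convert this into a full H\"older estimate on $E^s_n$ with the right dependence on $C$. The two ingredients are: (i) the Cauchy-type bound $\angle(E^s_{m_1}(x),E^s_{m_2}(x))\le e^{-4C+D'}e^{-2(\lambda-\epsilon)m_1}$ from Proposition~\ref{prop:tempered_norm_implies_splitting}(3), which controls the $C^0$ distance between successive $E^s_k$ on $\Lambda^n_\omega(C)\subseteq \Lambda^k_\omega(C)$; and (ii) Wedin's theorem in the form of Proposition~\ref{prop:SL2_perturbation_directions_lemma}, which converts closeness of the matrices $D_xf^k_\omega$ and $D_yf^k_\omega$ into closeness of their most-contracted singular directions.

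For the small-scale estimate, I would work in exponential charts at $x$ and $f^k_\omega(x)$, identifying $T_yM$ with $T_xM$ by parallel transport along short geodesics. The $C^2$ bound on the $f_i$ together with the chain rule (Lemma~\ref{lem:growth_of_C_2_norms}) yields a constant $\Lambda$ and $K$ such that
\[
\|D_xf^k_\omega - D_yf^k_\omega\|\le K e^{\Lambda k} d(x,y),
\]
viewed in the common chart. Since $x,y\in\Lambda^n_\omega(C)\subseteq\Lambda^k_\omega(C)$ are $(C,\lambda,\epsilon)$-subtempered, we have $\|D_xf^k_\omega\|\ge e^{-C}e^{\lambda k}$, and for $k\ge N_0(C)$ this is at least $2$. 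Provided $d(x,y)\le \tfrac{1}{2K}e^{-C}e^{-(\Lambda-\lambda)k}$ (so the perturbation has norm at most half that of $D_xf^k_\omega$), Proposition~\ref{prop:SL2_perturbation_directions_lemma} gives
\[
\sin\angle(E^s_k(x),E^s_k(y))\le 2\sqrt{2}\,K\,e^{C}\,e^{(\Lambda-\lambda)k}\,d(x,y).
\]
This is a Lipschitz (hence $1$-H\"older) estimate valid at scale $e^{-C_2}e^{-(\Lambda-\lambda)k}$ with $C_2\sim C$ and H\"older constant $\sim e^C e^{(\Lambda-\lambda)k}$, which matches hypothesis (2) of Lemma~\ref{lem:holder_seq_convergence_lemma} with $\beta=1$, $C_3\sim e^C$, $\eta=\Lambda-\lambda$.

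I would then invoke Lemma~\ref{lem:holder_seq_convergence_lemma} with the sequence $g_k=E^s_k$, $N_0\le k\le n$, restricted to $\Lambda^n_\omega(C)$ and taking values in the projectivized bundle $\mathbb{P}TM$ (which has bounded diameter), using $\delta=2(\lambda-\epsilon)$ and $C_1=e^{-4C+D'}$ for hypothesis (1). Choosing $\epsilon_0$ so small that $\lambda-\epsilon>0$ guarantees $\delta>0$, and the lemma yields an $\alpha\in(0,1)$ depending only on $\eta,\lambda,\delta$ and a H\"older constant of the form
\[
\max\bigl\{D\,e^{C_2\alpha},\ 2C_1(1-e^{-\delta})^{-1}e^{C_2}D + C_3 e^{-C_2(1-\alpha)}\bigr\};
\]
plugging in $C_1\sim e^{-4C}$, $C_2\sim C$, $C_3\sim e^{C}$, both terms are dominated by $e^{D_1 C}$ for some uniform $D_1$, giving the claim for $E^s_n$. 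The main obstacle is bookkeeping: making sure all the exponential factors in $C$ and $k$ that enter the two hypotheses of Lemma~\ref{lem:holder_seq_convergence_lemma} combine to produce the single bound $e^{D_1 C}$, and ensuring that $\epsilon_0$ is fixed small enough (independently of $C$) so that the exponent $\delta=2(\lambda-\epsilon)$ stays strictly positive, which is precisely what makes the $E^s_k$ a genuinely Cauchy sequence.
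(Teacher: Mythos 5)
Your proposal follows essentially the same route as the paper's proof: the Lipschitz estimate on $x\mapsto D_xf^k_\omega$ in charts combined with Wedin's theorem (Proposition \ref{prop:SL2_perturbation_directions_lemma}) for the small-scale H\"older bound, the Cauchy estimate from Proposition \ref{prop:tempered_norm_implies_splitting}(3) for the fluctuations in $k$, and Lemma \ref{lem:holder_seq_convergence_lemma} to combine them, with $N_0(C)\sim C/\lambda$ absorbing everything into $e^{D_1C}$. The only cosmetic difference is a sign-convention wobble on the constant $e^{\pm 4C+D'}$ from Proposition \ref{prop:tempered_norm_implies_splitting}(3), which is immaterial since either way all constants entering Lemma \ref{lem:holder_seq_convergence_lemma} are of the form $e^{O(C)}$.
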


\begin{proof}
We may always study the dynamics in an atlas of uniformly smooth volume preserving charts on $M$. So, in what follows we will implicitly be working with such charts. 

The first claim is an immediate analog of \cite[Lem.~5.3.4]{barreira2007nonuniform}. There exists $\Lambda>0$ such that for $n\in \N$, if $x,y\in M$ with $d(x,y)\le e^{-\Lambda n}$, then (as viewed in charts),
\begin{equation}\label{eqn:tempered_norm_implies_splitting}
\|D_xf^n_{\omega}-D_yf^n_{\omega}\|\le e^{\Lambda n}d(x,y).
\end{equation}

Our plan is to apply Lemma \ref{lem:holder_seq_convergence_lemma}, so we need to estimate the regularity of $E^s_n$. The first thing we need is a lower bound on $n$ for the subspace $E^s_n$ to necessarily exist. From the definition of $(C,\lambda,\epsilon)$ tempered, we see that as long as 
\begin{equation}
n\ge \left\lceil \frac{C+\ln 2}{\lambda}\right\rceil=N_0(C),
\end{equation}
then $\|D_xf^n_{\omega}\|\ge 2$ and hence there is a well defined most contracted subspace.

Next we estimate the H\"older regularity of $E^s_n$ on $\Lambda^N_{\omega}$ for $N\ge n\ge N_0$. If $x\in \Lambda^n_{\omega}$ and $d(x,y)\le e^{-\Lambda n}/2$, then it follows from \eqref{eqn:tempered_norm_implies_splitting} that 
\[
\|D_xf^n_{\omega}-D_yf^n_{\omega}\|\le e^{\Lambda n}d(x,y)\le 1/2. 
\]
Thus, from Proposition \ref{prop:SL2_perturbation_directions_lemma}, as $\|D_xf^n\|\ge 2$, it follows that for $d(x,y)\le e^{-\Lambda n}/2$ that
\begin{equation}\label{eqn:holder_regularity_E_s_n}
d(E^s_n(x),E^s_n(y))<\sqrt{2} e^{\Lambda n}d(x,y),
\end{equation}
which is the small scale H\"older estimate we were seeking.

Next, we study how fast $E^s_n$ fluctuates as we increase $n$. By assumption the sequence of points is $(C,\lambda,\epsilon)$-tempered. Hence by Proposition \ref{prop:tempered_norm_implies_splitting}, there exists $D_8$ depending only on $\lambda,\epsilon$ such that for $n$ greater than or equal to our same $N_0$ it follows that
 on $\Lambda^{\omega}_n(C,\lambda,\epsilon)$

\begin{equation}\label{eqn:fluctuations_inE_s_n_pf}
\angle(E^s_n(x),E^s_{n+1}(x))\le e^{4C+D_8}e^{-2(\lambda-\epsilon)n}.
\end{equation}

We can now apply Lemma \ref{lem:holder_seq_convergence_lemma} to the sequence of distributions $E^s_n$, for $N_0\le n\le N$ by combining estimates \eqref{eqn:holder_regularity_E_s_n} and \eqref{eqn:fluctuations_inE_s_n_pf}. Thus there exists $0<\beta<1$ and $C_3$ such that the $E^s_n$ are $\beta$-H\"older with constant 
\[
\max\{C_3e^{\Lambda N_0},2e^{4C+D_8}(1-e^{-2(\lambda-\epsilon)})^{-1}e^{\Lambda N_0}C_3+C_3e^{-\Lambda N_0}\}
\]
But by our choice of $N_0\approx C/\lambda$ and absorbing some constants into each other, we find that there is some $C_4$ such that the $\beta$-H\"older constant of $E^s_n$ is at most
$\displaystyle C_4e^{((\Lambda/\lambda)+4)C},
$
which gives the needed conclusion.
\end{proof}

The above lemma will give us a H\"older estimate on the regularity of $Df^n(E^s_n)$ as well, which will allow us to define the fake stable manifolds. Before proceeding, we use the above results to record another useful fact about the continuity of the distribution of the stable directions.

\begin{prop}\label{prop:stationary_measure_is_continuous}
Suppose that $(f_1,\ldots,f_m)$ is an expanding on average tuple of diffeomorphisms on a surface $M$ in $\Diff^2_{\vol}(M)$. Let $\nu^s_x$ be the distribution of stable subspaces through $x$, which is a probability measure on $\mathbb{P}T_xM$, the projectivization of $T_xM$. Then if we identify nearby fibres by parallel transport, the map $x\mapsto \nu^s_x$ is continuous in the weak* topology.
\end{prop}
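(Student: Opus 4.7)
The plan is to realize $\nu^s_x$ as the pushforward of the Bernoulli measure $\mu$ under the (a.e.\ defined) map $\omega\mapsto E^s_\omega(x)$ and then show that this pushforward depends continuously on $x$ by comparing $E^s_\omega(x)$ to its finite-time approximations $E^s_n(\omega,x)$, the most contracted singular direction of $D_xf^n_\omega$. The regularity of finite-time singular subspaces is classical (they depend continuously on the matrix as long as the top singular value is bounded away from $1$), while the convergence rate $E^s_n\to E^s$ on tempered trajectories is controlled by Proposition \ref{prop:tempered_norm_implies_splitting}(3). The combination, together with the uniform tail estimate from Proposition \ref{prop:splitting_with_high_probability}, will give weak-$*$ continuity.

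More concretely, to prove continuity at $x_0$, fix a Lipschitz function $\phi$ on the projectivized unit tangent bundle $\mathbb{P}TM$, and for $x$ near $x_0$ identify $\mathbb{P}T_xM$ with $\mathbb{P}T_{x_0}M$ by parallel transport; this identification is smooth in $x$ and hence changes $\phi$ by $O(d(x,x_0))$. It suffices to show that
\[
F(x):=\int_\Sigma \phi(E^s_\omega(x))\,d\mu(\omega)
\]
is continuous at $x_0$. Given $\varepsilon>0$, use Proposition \ref{prop:splitting_with_high_probability} to pick $C$ so large that the set $B_C\subseteq \Sigma$ of words whose trajectory (uniformly in $x$ in a small neighborhood of $x_0$) fails to be $(C,\lambda,\epsilon)$-tempered has $\mu(B_C)<\varepsilon$; this removes at most $\|\phi\|_\infty\varepsilon$ from $F$. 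On the complement $G_C$, Proposition \ref{prop:tempered_norm_implies_splitting}(3) gives
\[
\angle(E^s_n(\omega,x),E^s_\omega(x))\le e^{-4C+D'}e^{-2(\lambda-\epsilon)n},
\]
so we may fix $n=n(C,\varepsilon)$ so large that on $G_C$ the angular difference is $<\varepsilon/\mathrm{Lip}(\phi)$. Replacing $E^s_\omega(x)$ by $E^s_n(\omega,x)$ on $G_C$ therefore perturbs $F(x)$ by at most $2\varepsilon$.

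It remains to verify that $x\mapsto \int_{G_C}\phi(E^s_n(\omega,x))\,d\mu(\omega)$ is continuous at $x_0$. For this, observe that for fixed $n$ the map $x\mapsto D_xf^n_\omega$ is continuous uniformly in $\omega$ (since the $f_i$ are $C^2$ and $n$ is fixed). On $G_C$ the norm $\|D_xf^n_\omega\|$ is bounded below by $e^{-C}e^{\lambda n}$ for $n$ large, so the most contracted singular direction depends continuously (in fact Lipschitz-continuously, by Proposition \ref{prop:SL2_perturbation_directions_lemma}) on the matrix, with uniform constants over $\omega\in G_C$. Hence $\{x\mapsto E^s_n(\omega,x)\}_{\omega\in G_C}$ is equicontinuous at $x_0$, and dominated convergence gives continuity of the truncated integral. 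Since $\varepsilon$ was arbitrary, $F$ is continuous at $x_0$. The main bookkeeping point is coordinating the three scales ($C$ for the tail, $n$ for the approximation, and $d(x,x_0)$ for the equicontinuity), which is straightforward because the uniform tail bound on temperedness is independent of $x$.
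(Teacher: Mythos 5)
Your proof is correct and follows essentially the same route as the paper's: truncate by the temperedness constant using the uniform tail bound of Proposition~\ref{prop:splitting_with_high_probability}, approximate $E^s_\omega(x)$ by the finite-time directions $E^s_n(\omega,x)$ on the tempered set, and use continuity of the finite-time contracted singular direction in $x$; the paper packages the last two steps into the H\"older continuity of $E^s_n$ on tempered blocks (Proposition~\ref{prop:contracted_subspace_holder_est}) and writes $\nu^s_x$ as a sum over temperedness classes instead of running an $\varepsilon/3$ argument. One small bookkeeping point: the set of words whose trajectory is $(C,\lambda,\epsilon)$-tempered depends on the base point, so a single $B_C$ working ``uniformly in $x$ near $x_0$'' is not what Proposition~\ref{prop:splitting_with_high_probability} provides; when comparing $F(x)$ with $F(x_0)$ you should discard the union $B_C(x)\cup B_C(x_0)$, which still has measure at most $2De^{-\alpha C}$ by the pointwise (but $x$-uniform) tail bound, after which the rest of your argument goes through unchanged.
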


\begin{proof}
Let $\nu^s_x(C,\lambda,\epsilon,n)$ denote the distribution of $E^s_n(\omega)$ for words $\omega$ that are $(c,\lambda,\epsilon)$-tempered for some $c$ in $[C,C+1)$.
Then by Proposition \ref{prop:contracted_subspace_holder_est}, the distribution $E^s_n$ for such words $\omega$  is uniformly H\"older continuous in $n$ for fixed $C$.
So, if $\nu^s_x(C,\lambda,\epsilon)$ denotes the distribution of $E^s_{\omega}(x)$ for $(C,\lambda,\epsilon)$-tempered $\omega$, we see that the measures $\nu^s_x(C,\lambda,\epsilon)$ vary weak* continuously. 
Almost every word $\omega$ is $(C,\lambda,\epsilon)$-tempered for some $C$. 
Thus we see that 
\[
\nu^s_x=\sum_{C=0}^{\infty} \nu^s_x(C,\lambda,\epsilon).
\]
Note that each partial sum of this series varies weak* continuously and that the mass is uniformly absolutely summable pointwise.
Thus the limiting family $\nu^s_x$ is seen to vary weak* continuously.
\end{proof}

\subsection{Construction of fake stable manifolds}\label{subsec:construction_of_fake_stable}

As mentioned above, we will define the fake stable manifolds by taking curves tangent to a smooth approximation to the distribution $V_n$, which is defined to equal $Df^n_{\omega}(E^s_n)$ as above. 
First, we note that Lemma \ref{prop:contracted_subspace_holder_est} above will be applicable to studying the regularity of $V_n$ due to the following.

\begin{lem}\label{lem:temperedness_for_reversed_sequence}
Suppose that $A_1,\ldots,A_n$ is a sequence of linear transformations that are $(C,\lambda,\epsilon)$-tempered. Then the sequence $A_n^{-1},\ldots,A_1^{-1}$ is $(C+\epsilon n,\lambda,\epsilon)$-tempered, and the corresponding splitting is the splitting with the stable and unstable subspaces from the original splitting swapped.
\end{lem}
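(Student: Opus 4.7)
The plan is a direct unfolding of the definitions. Write $B_i=A_{n-i+1}^{-1}$ for $1\le i\le n$, so that $B_i\colon V_{n-i+2}\to V_{n-i+1}$. Then a straightforward telescoping gives
\[
B^j := B_j\cdots B_1 = (A_n\cdots A_{n-j+1})^{-1} = (A^n_{n-j})^{-1},
\]
so that $B^j$ maps $V_{n+1}$ isomorphically onto $V_{n-j+1}$. The natural candidates for the stable/unstable directions of the reversed sequence live in $V_{n+1}$; following the statement of the lemma I would set
\[
\tilde e^s := \frac{A^n e^u}{\|A^n e^u\|},\qquad \tilde e^u := \frac{A^n e^s}{\|A^n e^s\|},
\]
that is, swap the roles of stable and unstable along the pushed-forward frame at the endpoint.

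The core observation is that $B^m_k\circ B^k = B^{m+k}$, so for both directions one computes
\[
\frac{\|B^m_k(B^k\tilde e^u)\|}{\|B^k\tilde e^u\|} = \frac{\|A^{n-k-m}e^s\|}{\|A^{n-k}e^s\|},\quad \frac{\|B^m_k(B^k\tilde e^s)\|}{\|B^k\tilde e^s\|} = \frac{\|A^{n-k-m}e^u\|}{\|A^{n-k}e^u\|}.
\]
Applying the original stable estimate at the starting index $j = n-k-m$ (so that $A^m_j(A^j e^s) = A^{n-k}e^s$) and inverting gives
\[
\frac{\|A^{n-k-m}e^s\|}{\|A^{n-k}e^s\|}\ge e^{-C}e^{\lambda m}e^{-\epsilon(n-k-m)},
\]
which is at least $e^{-(C+\epsilon n)}e^{\lambda m}e^{-\epsilon k}$ because $\epsilon(n-k-m)\le \epsilon n + \epsilon k$ (using $k,m\ge 0$). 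The same bookkeeping with the unstable inequality of $(A_i)$ controls the $\tilde e^s$-direction, and the angle bound follows similarly: $\angle(B^k\tilde e^s,B^k\tilde e^u)=\angle(A^{n-k}e^u,A^{n-k}e^s)\ge e^{-C}e^{-\epsilon(n-k)}\ge e^{-(C+\epsilon n)}e^{-\epsilon k}$. In every case the loss $\epsilon(n-k-m)\mapsto \epsilon k$ costs a factor of at most $e^{\epsilon n}$, so $\tilde C = C+\epsilon n$ suffices, and equality is attained at $k=0$, $m=0$, justifying the constant in the statement.

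There is no real obstacle: the only thing to watch out for is that $\epsilon$ enters the definition of temperedness with the \emph{initial} index $k$, so after time reversal (which replaces $j\mapsto n-k-m$) this index bound must be paid for once; that is exactly what produces the $\epsilon n$ shift. The claim that the roles of stable and unstable are swapped is automatic from the choice of $\tilde e^s$ and $\tilde e^u$ above.
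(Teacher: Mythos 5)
Your argument is correct and is exactly the direct verification the paper leaves to the reader (the lemma is stated without proof): reversing indices turns each forward estimate at starting index $j=n-k-m$ into the required reversed estimate at starting index $k$, at the cost of a single factor $e^{\epsilon n}$, and the angle bound transfers the same way. The only blemish is notational: in the paper's convention $A^k_i=A_{i+k}\cdots A_{i+1}$, so $B^j=(A^j_{n-j})^{-1}$ rather than $(A^n_{n-j})^{-1}$, but the written-out product $(A_n\cdots A_{n-j+1})^{-1}$ is the right object and the rest of the computation is sound.
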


Using Lemmata \ref{prop:contracted_subspace_holder_est} and \ref{lem:temperedness_for_reversed_sequence}
we can  estimate the regularity of $V_n$. 

\begin{lem}\label{lem:pushforward_is_tempered_norm}
Suppose that $(f_1,\ldots,f_m)$ is a tuple in $\Diff^2_{\vol}(M)$ where $M$ is a compact surface. 
Fix $C,\lambda>0$, then there exist $\beta,\eta>0$ such that for any sufficiently small $\epsilon>0$ there exists $D_1,N\in \N$, such that if $\Lambda^n_{\omega}$ is the set of points that are $(C,\lambda,\epsilon)$-tempered at some time $n \ge N$, 
then the distribution $V_n$ defined on $f^n_{\omega}(\Lambda^n_{\omega})$ by $D_xf^n_{\omega}(E^s_n(x))$, is $\beta$-H\"older with constant $D_1e^{\eta\epsilon n}$.
\end{lem}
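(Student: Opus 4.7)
The plan is to reduce this to Proposition \ref{prop:contracted_subspace_holder_est} applied to the time-reversed dynamics. The key geometric observation is that in two dimensions with volume preservation, $V_n$ is (up to a rotation by $\pi/2$) the most contracted singular direction of the inverse linear map. More precisely, write the singular value decomposition $D_xf^n_\omega=U\Sigma V^T$ with $U,V\in O(2)$; then $E^s_n(x)$ and $E^u_n(x)$ are the columns of $V$, while the output directions $D_xf^n_\omega(E^s_n)=V_n(y)$ and $D_xf^n_\omega(E^u_n)=:W_n(y)$ are the columns of $U$, hence mutually perpendicular. A direct SVD computation shows that $W_n(y)$ is exactly the most contracted singular direction of $(D_xf^n_\omega)^{-1}\colon T_yM\to T_xM$, so $V_n=W_n^\perp$ in $\mathbb{P}T_yM$. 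Since rotation by $\pi/2$ is an isometry of the Grassmannian, it suffices to prove the required H\"older estimate for $W_n$.

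To access $W_n$ as a ``most contracted direction'' we work with the time-reversed setup. Consider the tuple $(f_1^{-1},\ldots,f_m^{-1})$, which is again in $\Diff^2_{\vol}(M)$, together with the reversed word $\omega'=(\omega_n,\omega_{n-1},\ldots,\omega_1,\text{anything})$ and the starting point $y=f^n_\omega(x)$. The product of the first $n$ differentials of this reversed random dynamics at $y$ is exactly $(D_xf^n_\omega)^{-1}$, so its most contracted singular direction is $W_n(y)$. By Lemma \ref{lem:temperedness_for_reversed_sequence}, if the forward sequence at $x$ is $(C,\lambda,\epsilon)$-tempered then the reverse sequence at $y$ is $(C+n\epsilon,\lambda,\epsilon)$-tempered. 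Hence $f^n_\omega(\Lambda^n_\omega)$ is contained in the set $\Lambda^{n}_{\omega'}(C+n\epsilon)$ of the reversed tuple and word.

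Now apply Proposition \ref{prop:contracted_subspace_holder_est} to the reversed tuple, the word $\omega'$, the tempering constant $\tilde C=C+n\epsilon$, and time $n$. That proposition produces constants $\beta>0$ and $D_1^\circ>0$, both independent of $C$ and $n$, with the following property: once $n\ge \lceil(\tilde C+\ln 2)/\lambda\rceil$, the most contracted direction of the first-$n$ reverse product is $\beta$-H\"older on $\Lambda^{n}_{\omega'}(\tilde C)$ with constant $e^{D_1^\circ\tilde C}$. The lower bound on $n$ becomes $n\ge \lceil(C+n\epsilon+\ln 2)/\lambda\rceil$, i.e.\ $n(\lambda-\epsilon)\ge C+\ln 2$, which for $\epsilon<\lambda$ is a fixed threshold $N=N(C,\lambda,\epsilon)$. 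The H\"older constant factors as $e^{D_1^\circ(C+n\epsilon)}=e^{D_1^\circ C}\,e^{D_1^\circ\epsilon n}$. Taking perpendiculars preserves H\"older exponent and constant, so $V_n=W_n^\perp$ is $\beta$-H\"older on $f^n_\omega(\Lambda^n_\omega)$ with constant $e^{D_1^\circ C}e^{D_1^\circ\epsilon n}$. Setting $\eta:=D_1^\circ$ and absorbing the $C$-dependent factor $e^{D_1^\circ C}$ into the constant $D_1$ of the present lemma (which is allowed to depend on $C$) yields the claim.

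The work is essentially bookkeeping: Lemma \ref{lem:temperedness_for_reversed_sequence} converts forward temperedness to reverse temperedness with the expected additive loss $n\epsilon$, and Proposition \ref{prop:contracted_subspace_holder_est}'s Hölder constant depends exponentially on the tempering constant, which produces exactly the advertised $e^{\eta\epsilon n}$ factor. The only mildly delicate point is the identification $V_n=W_n^\perp$, which is special to dimension two and to $Df^n_\omega\in SL(2,\mathbb{R})$; in higher dimensions one would instead need a Wedin-type estimate directly for left singular subspaces.
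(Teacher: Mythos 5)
Your proof is correct and follows essentially the same route as the paper's: identify $V_n$ (via its perpendicular) with the most contracted singular direction of the inverse product, use Lemma \ref{lem:temperedness_for_reversed_sequence} to transfer $(C,\lambda,\epsilon)$-temperedness to $(C+n\epsilon,\lambda,\epsilon)$-temperedness for the reversed dynamics, and then invoke Proposition \ref{prop:contracted_subspace_holder_est} for $(f_1^{-1},\ldots,f_m^{-1})$, so that the H\"older constant $e^{D_1(C+\epsilon n)}$ factors into $D_1 e^{\eta\epsilon n}$. Your explicit SVD verification and the computation of the threshold $N$ are slightly more detailed than the paper's, but the argument is the same.
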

\begin{proof}
Apply Proposition \ref{prop:contracted_subspace_holder_est} with $\lambda$ as above to the diffeomorphisms $(f_1^{-1},\ldots,f_m^{-1})$. 
Then there exist $\beta$ and $\epsilon_0$ such that restricted to the set of $(C,\lambda,\epsilon)$-tempered points at time $n\ge O_{\lambda}(C)$, $E^s_n$ is $\beta$-H\"older with constant at most $e^{D_1C}$.
From Lemma \ref{lem:temperedness_for_reversed_sequence}, we see that for the backwards dynamics $(f_{\sigma^{n-i}(\omega)})^{-1}$, the points in $f^n_{\omega}(\Lambda^n_{\omega})$ are $(C+\epsilon n, \lambda, \epsilon)$-tempered. 
Note that $V_n$ is equal to the distribution of the most expanded direction for $(f^n_{\omega})^{-1}$ and that $V_n^{\perp}$ is the most contracted direction of $(f^n_{\omega})^{-1}$. As the set $f^n_{\omega}(\Lambda^n_{\omega})$ is $(C+\epsilon n,\lambda,\epsilon)$-tempered for the backwards dynamics, it follows that as long as $\epsilon$ is sufficiently small and $N_0$ is sufficiently large, for all $n\ge N_0$, $V_n^{\perp}$ is $e^{D_1(C+\epsilon n)}$ $\beta$-H\"older. The statement of the lemma now follows. 
\end{proof}

Next we take a smooth approximation $\wt{V}_n$ to the distribution $V_n$ that will be defined in an open neighborhood of $f^n_{\omega}(\Lambda^n_{\omega})$.
First we extend the domain of $V_n$, and then we smooth the extension. If we do not extend the domain, then we won't be able to integrate the distribution.
If we do not do this smoothing, then we will have little control over the norm of the integral curves to $V_n$ rather than tempered growth in $n$.

\begin{lem}\label{lem:extension_of_holder_dist}
Suppose that $M$ is a smooth closed surface. There exist $D_1,D_2$ such that if $K\subseteq M$ is a subset and $E$ is a distribution defined over $K$ that is $(C,\alpha)$-H\"older then $E$ admits a $(D_1C,\alpha)$-H\"older extension to a neighborhood of $K$ of size $\delta=D_2\min\{1,C^{-1/\alpha}\}$. 
\end{lem}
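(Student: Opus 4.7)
The plan is to reduce the extension problem for a line field on $M$ to a Hölder extension problem for a vector-valued function, to which a standard McShane--Whitney type extension applies, and then to use the smallness of the neighborhood size $\delta$ to project the extension back to the space of line fields without losing regularity.

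First I would identify the target. A distribution on the surface $M$ is a section of the projectivized tangent bundle $\mathbb{P}(TM)$. Globally this bundle has non-trivial topology, but the point is that we only need a \emph{local} extension to a small neighborhood. So I would pick a finite atlas of charts $\{U_i, \varphi_i\}$ on $M$ whose transition maps have uniformly bounded $C^2$ norms, and in each chart trivialize $\mathbb{P}(TM)|_{U_i}$. It is convenient to embed the fiber $\mathbb{RP}^1$ isometrically (up to a universal constant) into $\mathbb{R}^3$ via the map $v\mapsto vv^T$ from unit vectors to symmetric $2{\times}2$ matrices with trace $1$; this makes $\mathbb{RP}^1$ a smooth compact submanifold $\Sigma$ of a Euclidean space, and gives us a $C^\infty$ retraction $\pi\colon \mathcal{N}\to \Sigma$ from some tubular neighborhood $\mathcal{N}$ of $\Sigma$ of uniform width $\rho_0>0$, with $\|\pi\|_{C^1}$ bounded.

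Next I would extend the $(C,\alpha)$-Hölder map $\tilde E\colon K\cap U_i\to \Sigma\subset \mathbb{R}^3$ (obtained from $E$ in the chart) to all of $U_i$ using the classical vector-valued McShane--Whitney extension, which produces a map $F\colon U_i\to \mathbb{R}^3$ that is $(D\cdot C,\alpha)$-Hölder for a universal $D$ depending only on the ambient dimension; equivalently, applied coordinatewise,
\[
F_j(x)=\inf_{y\in K\cap U_i}\bigl(\tilde E_j(y)+C\,d(x,y)^\alpha\bigr),\qquad j=1,2,3,
\]
followed by a truncation. On a neighborhood of $K\cap U_i$ of size $\delta$, this extension satisfies $\mathrm{dist}(F(x),\Sigma)\le D'C\delta^\alpha$. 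Choosing $D_2$ so that $D_2^\alpha D'\le \rho_0/2$, the condition $\delta\le D_2 C^{-1/\alpha}$ guarantees that $F$ lands inside the tubular neighborhood $\mathcal{N}$. I then set the extended distribution $\tilde E_{\mathrm{ext}}:=\pi\circ F$, which is $(D'' C,\alpha)$-Hölder on the $\delta$-neighborhood of $K\cap U_i$ because $\pi$ has bounded Lipschitz constant.

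To globalize, I would patch the local extensions with a partition of unity subordinate to $\{U_i\}$. The patching is done in the ambient Euclidean space (on the $F$-level, not on $\Sigma$), followed by a final application of $\pi$: any convex combination of points that lie within $\rho_0/2$ of $\Sigma$ still lies in $\mathcal{N}$, so the composition with $\pi$ is defined, and the partition-of-unity functions can be taken with uniform $C^0$ and $C^\alpha$ norms, which only multiplies the Hölder constant by a universal factor. This yields $D_1=D''\cdot(\text{patching constant})$ and $D_2$ as above. The main technical point to be careful about is precisely this: ensuring that the patching is performed before projecting back to $\Sigma$, so that differences between local lifts of the line field in overlapping charts do not cause ambiguity, and verifying that the chart transitions only cost a bounded multiplicative constant in the Hölder norm; everything else is standard.
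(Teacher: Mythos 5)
Your proposal is correct and follows essentially the same route as the paper: a chart-by-chart McShane extension of a Euclidean-valued representative of the line field, gluing by a partition of unity, and using the smallness of $\delta$ to return to the space of directions. The only difference is cosmetic but arguably cleaner: you embed $\RP^1$ via $v\mapsto vv^T$ and retract through a tubular neighborhood, which sidesteps the orientability of the line field, whereas the paper chooses a unit vector field representative and normalizes the (nonvanishing) extension at the end.
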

\begin{proof}
We first prove the result with vector fields instead of distributions.
Cover $M$ by finitely many charts. 
In each chart the vector field $X$ is represented as a map $\phi_0\colon K\to S^1\subset\R^2$. 
The McShane extension theorem \cite[Cor.~1]{mcshane1934extension} says that a $(C,\alpha)$-H\"older function defined from a subset $X$ of an arbitrary metric space to $\R$ admits a $(C,\alpha)$-extension to all of $X$. 
Then we  glue the maps from different charts using a partition of unity. 
This proves the result for vector fields. Note that the resulting vector field is defined on the whole manifold. To obtain the result for distributions, we take a unit vector field on $K$ in the direction of $E$, extend it to
a vectorfield $\tilde{X}$ as above and note that the resulting extension is nonzero inside the $\delta$
neighborhood of $K$, so we can take $\tilde E$ to be the direction of $\tilde X$.
\end{proof}

The content of the following lemma is item (2), the $C^2$ estimate on $\wt{V}_n$. While $V_n$ could be seen to be $C^2$, we have little ability to control its norm; thus we need to produce a more regular approximation to this distribution.

\begin{lem}\label{lem:smoothed_version_of_V_n}
Let $(f_1,\ldots,f_m)$ be a tuple of diffeomorphisms in $\Diff^2_{\vol}(M)$, for $M$ a closed surface. Fix $\lambda>0$. Then there exists $\epsilon_1>0$, $\nu_1,\nu_2>0$ and $N\in \N$, $D_1,D_2,D_3$, such that if for $\epsilon<\epsilon_1$ $\Lambda^{\omega}_n$ denotes the set of $(C,\lambda,\epsilon)$-tempered points, then there exists a distribution $\wt{V}_n$ such that 
 \begin{enumerate}
    \item The domain of $\wt{V}_n$ contains all points within distance $D_1e^{-\epsilon\nu_1 n}$ of the domain of $V_n$.
    \item $\wt{V}_n$ is $C^2$ with $\|\wt{V}_n\|_{C^2}\le D_2e^{\epsilon\nu_1 n}$.
    \item At each $x$ in the domain of $V_n$, $d(\wt{V}_n(x),V_n(x))<D_3e^{-\epsilon \nu_2 n}$. 
\end{enumerate}
\end{lem}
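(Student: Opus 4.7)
The plan is to extend $V_n$ as a Hölder distribution to an open neighborhood using Lemma~\ref{lem:extension_of_holder_dist}, then smooth it by convolution with a bump function at an exponentially small scale $r_n$ chosen to balance the competing requirements: smaller $r_n$ means better approximation but worse $C^2$ control, while larger $r_n$ means the reverse. The three conclusions then follow from standard mollifier estimates applied to a $\beta$-Hölder function.

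More specifically, by Lemma~\ref{lem:pushforward_is_tempered_norm}, on its domain $V_n$ is $\beta$-Hölder with constant $L_n := D_1 e^{\eta \epsilon n}$ for some constants $D_1, \eta$ depending on $\lambda, C$. Applying Lemma~\ref{lem:extension_of_holder_dist} with $C = L_n$ and $\alpha = \beta$ yields an extension $\bar{V}_n$, still $\beta$-Hölder with constant $\lesssim L_n$, defined on a neighborhood of size comparable to $L_n^{-1/\beta} = D_1^{-1/\beta} e^{-\eta \epsilon n / \beta}$. Working in a finite atlas of charts in which $TM$ is trivialized, $\bar{V}_n$ can be represented locally as a unit vector field (choosing a sign branch, which is possible since the Hölder oscillation of $\bar V_n$ on balls of radius $r \ll L_n^{-1/\beta}$ is much less than $\pi$). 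Mollify this vector field by convolution with a standard smooth bump $\phi_{r_n}$ supported in a ball of radius $r_n := e^{-\sigma \epsilon n}$, where $\sigma > 0$ is a parameter to be fixed. Then renormalize and project back to $\mathbb{P}TM$; the resulting distribution $\wt{V}_n$ is independent of the sign branch and so globally well-defined.

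Standard mollifier bounds give, for $x$ in the domain of $V_n$,
\begin{equation*}
d(\wt V_n(x), V_n(x)) \lesssim L_n r_n^{\beta} = D_1 e^{(\eta - \sigma\beta)\epsilon n}, \qquad \|\wt V_n\|_{C^2} \lesssim L_n r_n^{\beta - 2} = D_1 e^{(\eta + \sigma(2-\beta))\epsilon n},
\end{equation*}
where the $C^2$ estimate uses that renormalization is smooth away from zero and the mollified vector field has norm close to $1$ for $n$ large. Setting $\sigma > \eta / \beta$ makes $\nu_2 := \sigma \beta - \eta > 0$ and gives $\nu_1 := \eta + \sigma(2-\beta) > 0$; both depend only on $\eta$ and $\beta$. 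The domain condition is also satisfied: $\wt V_n$ is defined on the $r_n$-interior of the domain of $\bar V_n$, which is a neighborhood of $f^n_\omega(\Lambda^n_\omega)$ of size at least $cL_n^{-1/\beta} - r_n$; since $\sigma > \eta/\beta$ we have $\nu_1 > 2\eta/\beta > \eta/\beta$, so this exceeds $D_1 e^{-\nu_1 \epsilon n}$ for $n \geq N$ with $N$ depending only on the constants.

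The only conceptual subtlety is the transition between projective-valued data (the distribution $V_n$) and vector-valued data (needed to convolve), and the fact that the Hölder constant of $V_n$ grows with $n$, so the scale $r_n$ of mollification and the scale $L_n^{-1/\beta}$ of the available extension must both be tracked carefully. Both are handled above by choosing $\sigma$ large enough that $r_n$ is much smaller than the extension neighborhood but not so small that $r_n^{\beta-2}$ explodes too fast. The freedom in $\sigma$ corresponds to trading off between the two exponents $\nu_1$ and $\nu_2$; the lemma only asserts that some positive values exist, so any admissible $\sigma > \eta/\beta$ works. I expect no further obstacles: once $\sigma$ is fixed, the constants $D_1, D_2, D_3$ emerge directly from the mollifier bounds and the Hölder constants in Lemmas~\ref{lem:pushforward_is_tempered_norm} and \ref{lem:extension_of_holder_dist}.
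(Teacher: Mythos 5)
Your proposal is correct and follows essentially the same route as the paper: Hölder regularity of $V_n$ from Lemma~\ref{lem:pushforward_is_tempered_norm}, extension via Lemma~\ref{lem:extension_of_holder_dist}, then mollification at an exponentially small scale with the exponent chosen to balance the $C^0$-approximation and $C^2$-norm bounds. The only cosmetic difference is that you track the mollifier estimates as $L_n r_n^{\beta}$ and $L_n r_n^{\beta-2}$ while the paper cites a slightly weaker $r^{\alpha-4}$ bound for the second derivative; since the lemma only asserts the existence of some positive exponents $\nu_1,\nu_2$, this makes no difference.
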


\begin{proof}
First, from Lemma \ref{lem:pushforward_is_tempered_norm}, given $\epsilon>0$ we may choose $\epsilon_1$ sufficiently small that $V_n$ is $\beta$-H\"older with constant $D_1e^{\eta\epsilon n}$. 
Let $\hat{V}_n$ be an extension of $V_n$ obtained from Lemma \ref{lem:extension_of_holder_dist}, then from the H\"older estimate on $V_n$,\; $\hat{V}_n$ is defined in a neighborhood of $Df^n_{\omega}(\Lambda^n_{\omega})$ of size at least $D_1^{-1/\beta}e^{-\eta\epsilon n/ \beta}$. 

We now take a smooth approximation to $\hat{V}_n$. 
For this we can represent $\hat{V}_n$ in charts as a function $\phi\colon U\to S^1\subset \R^2$, then mollify  $\phi$. From \cite[Eq.~(11)]{fisher2013global}, we have estimates for convolution $f_{\epsilon}=f *\psi_{\epsilon}$ of a standard mollifier $\psi_{\epsilon}$ with a compactly supported function $f\colon \R^2\to \R$:
\begin{equation}\label{eqn:mollifier_estimates}
\|f_{\epsilon}\|_{2}\le \epsilon^{\alpha-4}\|f\|_{\alpha}\hspace{1em}\text{ and }\hspace{1em} \|f-f_{\epsilon}\|_0 \le \epsilon^{\alpha}\|f\|_{\alpha}.
\end{equation}
 As  domain of $\hat{V}_n$ has size at least 
 $D_1^{-1/\beta}e^{-\eta\epsilon n/\beta}$, we can mollify with any $\epsilon'<D_1^{-1/\beta}e^{-\eta\epsilon n/\beta}/100$ and obtain a function that is well defined at all points at least distance $D_1^{-1/\beta}e^{-\eta\epsilon n/\beta}/100$ from the boundary of the domain of $\hat{V}_n$. Let $\wt{V}_n$ denote the mollified function restricted to the points in the domain of $\hat{V}_n$ of distance at most $D_1^{-1/\beta}e^{-\eta\epsilon n/\beta}/100$ from the domain of $V_n$. Then taking $\epsilon'=e^{-\nu\epsilon}$ for some large $\nu$, mollifying with $\psi_{\epsilon'}$, and applying the estimates in \eqref{eqn:mollifier_estimates} gives that there exist constants $D_2,D_3,D_4,D_5$ such that 
\[
\|\wt{V}_n\|_2\le D_2e^{-D_3\epsilon n}\text{ and } d(\wt{V}_n,V_n)<D_4e^{-D_5\epsilon n}.
\]
This gives the needed conclusion.
\end{proof}

The use of the distributions $\wt{V}_n$ is that they are integrable and their $C^2$ norm is well controlled. This implies that if we take a holonomy along the distribution, then we will have good control of the norm of the Jacobian. 

\begin{defn}\label{defn:fake_stable_manifolds}
Fix $\lambda>0$ and sufficiently small $\epsilon>0$. Then take $\epsilon_1<\epsilon/\max\{\nu_1,\nu_2\}$ where $\nu_1,\nu_2$ are as in Proposition \ref{lem:smoothed_version_of_V_n}. We consider a collection of $(C,\lambda,\epsilon_1)$-tempered points. Let $\wt{W}_n$ be the foliation defined by the integral curves to $\wt{V}_n$. The fake stable leaf through $x\in \Lambda^n_{\omega}$ is then defined to be 
$\displaystyle
W^s_{n}(\omega,x)=(f^n_{\omega})^{-1}(\wt{W}_n(f^n_{\omega}(x))). 
$
\end{defn}

We will now state basic facts about the fake stable manifolds.
In particular, we show that
the fake stable manifolds of sufficiently small size enjoy uniform contraction.

\begin{prop}\label{prop:fake_stable_C_2_control}
Suppose that $(f_1,\ldots,f_m)$ is an expanding on average tuple of diffeomorphisms in $\Diff^2(M)$, where $M$ is a closed surface. Fix $\lambda>0$. Then there exists $\lambda',\epsilon_0>0$ such that for any $0\le \epsilon\le \epsilon_0$ and any $C$, there exist $N_0,\delta_0,C_0,\alpha>0$ such that if 
$\Lambda_n^{\omega}\subset M$ is any collection of $(C,\lambda,\epsilon)$-tempered points at time $n\ge N_0$ lying in some ball $B_{\delta_0}\subset M$. Then
\begin{enumerate}
\item
\!\!For $N_0\!\!\le\!\! i\!\!\le\!\! n$ the fake stable manifolds $W^s_{i,\delta_0}(\omega,x)$\! exist and have $C^2$ norm at most $C_0$. 
\item
$d(T_xW^s_i,E^s_i(x))\le e^{-\lambda i/2}$.
\item
The fake stable direction $E^s_i$ is $(C_0,\alpha)$-H\"older continuous on $\Lambda_n^{\omega}$.
    \item 
    The fake stable leaves $W^s_{i,\delta_0}(\omega,x)$ vary H\"older continuously in the $C^1$ topology, and the H\"older constants are independent of $N_0\le i\le n$.   
\item
The fake stable leaves $W^s_{i,\delta_0}(\omega,x)$ are contracting,
i.e.~for $y,z\in W^s_{i,\delta_0}(\omega,x)$, for each $0\le k\le i$,\;
$\displaystyle
d_{W^s_{i,\delta_0}(x)}(f^k_{\omega}(y),f^k_{\omega}(z))\le C_0e^{-\lambda'k}.
$
\end{enumerate}
\end{prop}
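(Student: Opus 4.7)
The plan is to prove all five items by exploiting the smoothed distribution $\wt{V}_i$ from Lemma~\ref{lem:smoothed_version_of_V_n}, which satisfies $\|\wt{V}_i\|_{C^2} \le D_2 e^{\epsilon\nu_1 i}$ and $d(\wt{V}_i,V_i)\le D_3 e^{-\epsilon\nu_2 i}$. Throughout I would fix $\epsilon_0$ so small compared to $\lambda$, $\nu_1$, $\nu_2$ that the slow quasi-polynomial growth $e^{\epsilon\nu_1 i}$ is dwarfed by every hyperbolic rate $e^{\pm\lambda i}$ we encounter, and work in uniformly smooth volume-preserving charts so that the differentials $D_xf^i_\omega$ may effectively be treated as elements of $\SL(2,\R)$.

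Claim (2) comes first because everything else hinges on it. By definition $T_xW^s_i = Df^{-i}_{\omega}\bigl|_{f^i_\omega(x)}(\wt{V}_i(f^i_\omega(x)))$ while $E^s_i(x) = Df^{-i}_{\omega}\bigl|_{f^i_\omega(x)}(V_i(f^i_\omega(x)))$. Since $V_i$ is the most expanded singular direction of the linear map $Df^{-i}_{\omega}\bigl|_{f^i_\omega(x)}$ with expansion $\ge e^{-C}e^{(\lambda-\epsilon)i}$, the area-preservation identity for $\SL(2,\mathbb{R})$ gives the bound $\angle(Au,Av) \le \kappa^{-2}\tan\angle(u,v)$ when $v$ is the most expanded direction and $\|A\| = \kappa$. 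Applied with $v = V_i$, $u = \wt{V}_i$ and $\kappa \ge e^{-C}e^{(\lambda-\epsilon)i}$, this yields $\angle(T_xW^s_i, E^s_i(x)) \le C_1 e^{2C}e^{-2(\lambda-\epsilon)i} \cdot D_3 e^{-\epsilon\nu_2 i}$, which is $\le e^{-\lambda i/2}$ once $N_0$ is large enough and $\epsilon \le \epsilon_0$.

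For claims (1) and (5) I would run a graph-transform bootstrap for the \emph{inverse} dynamics, entirely analogous to \S\ref{subsec:graph_transform} but applied to the sequence $(f_{\omega_i}^{-1})$. In the Lyapunov coordinates of Lemma~\ref{lem:lyapunov_metric} the inverse dynamics expands along $V_j$ with rate $\ge e^{0.999\lambda}$, and the ``initial'' curve $\wt{W}_i$ at time $i$ is the integral curve of $\wt{V}_i$ through $f^i_\omega(x)$, tangent to a direction that is $O(e^{-\epsilon\nu_2 i})$-close to this expanding direction and with ambient $C^2$-norm at most $D_2 e^{\epsilon\nu_1 i}$. The one-step $C^2$ estimate \eqref{eqn:one_step_smoothing_C_2}, applied iteratively from time $i$ down to time $0$, yields a uniform bound $\|W^s_{i,\delta_0}(\omega,x)\|_{C^2}\le C_0$ because the $e^{-2.99\lambda\cdot 0.999}$ contraction factor in \eqref{eqn:one_step_smoothing_C_2} swallows the initial $D_2 e^{\epsilon\nu_1 i}$ once $N_0$ is sufficiently large. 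Simultaneously this proves claim (5): in the same charts, two points $f^k_\omega(y), f^k_\omega(z) \in f^k_\omega(W^s_{i,\delta_0}(\omega,x))$ sit on a curve whose tangent stays uniformly close to $E^s_k$ at every point (by the same perturbation argument as in claim (2), applied along the leaf), and the tempered contraction $\|Df^{i-k}_{\sigma^k\omega}|_{E^s}\|\le C_0 e^{-(\lambda-\epsilon)(i-k)}$ running forward from time $k$ to time $i$, together with the uniform length bound $\delta_0$ at time $0$, gives $d(f^k_\omega(y), f^k_\omega(z)) \le C_0 e^{-\lambda' k}$ with $\lambda' = 0.9\lambda$.

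For claim (3), the direction $T_xW^s_i$ is $O(e^{-\lambda i /2})$-close to $E^s_i(x)$ by claim (2), while $E^s_i$ is $\beta$-H\"older with constant $e^{D_1 C}$ on $\Lambda_n^\omega$ by Proposition~\ref{prop:contracted_subspace_holder_est}, so $T_xW^s_i$ inherits $\alpha$-H\"older regularity. For claim (4), each leaf $W^s_{i,\delta_0}(\omega,y)$ is the integral curve through $y$ of the pullback distribution $U_i(z) := [Df^i_\omega(z)]^{-1}\wt{V}_i(f^i_\omega(z))$, and the H\"older dependence of the leaves in $C^1$ on $y \in \Lambda_n^\omega$ then reduces to the H\"older continuity of $U_i$ together with standard continuous dependence of ODE flows on parameters; the H\"older estimate for $U_i$ combines the $C^2$-control on $\wt{V}_i$, the H\"older continuity of $Df^{-i}_\omega$ provided by \eqref{eqn:tempered_norm_implies_splitting}, and the fact that $N_0$ is chosen large so that Lemma~\ref{lem:holder_seq_convergence_lemma} can absorb the $e^{\epsilon\nu_1 i}$ loss.

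The main obstacle is the uniform-in-$i$ $C^2$ bound in claim (1): the naive estimate loses a factor $e^{i\Lambda}$ from $\|D^2 f^{-i}_\omega\|$, while the initial $C^2$ norm of $\wt{W}_i$ is only $e^{\epsilon\nu_1 i}$ small compared to the hyperbolicity. Both losses must be beaten by the graph-transform contraction of \eqref{eqn:one_step_smoothing_C_2}, which requires careful coordination between $\epsilon_0$, $\nu_1$, and the tempered constants, and is the delicate step where the whole construction could a priori break.
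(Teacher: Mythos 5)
Your proposal is correct and follows essentially the same route as the paper, which itself only sketches this proposition: items (1) and (5) are deferred to the graph transform of Proposition~\ref{prop:finite_time_smoothing_estimate} applied to the reverse-tempered inverse dynamics in Lyapunov charts, item (2) comes from the fact that $\wt{V}_i$ is exponentially close to the most expanded direction of $(D_xf^i_{\omega})^{-1}$ and is therefore attracted toward it at rate $\|D_xf^i_\omega\|^{-2}$, and items (3)--(4) are treated as standard Pesin-theoretic facts. The one slip is in your item (5), where the supertempered contraction you need is that of $Df^k_\omega|_{E^s}$ over the interval $[0,k]$ (to compare the leaf at time $k$ with its length $\delta_0$ at time $0$), not the contraction of $Df^{i-k}_{\sigma^k\omega}$ over $[k,i]$; the graph-transform framework you invoke supplies exactly the correct estimate, so this is an imprecision of citation rather than a gap.
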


\begin{proof}[Proof Sketch.]
The claim about the existence and regularity of the fake stable manifolds in (1) essentially follows from the construction of the stable manifolds described in Section \ref{ScStMan} or Proposition \ref{prop:finite_time_smoothing_estimate}, depending on taste. An integral curve to the $\wt{V}_n$ distribution has $C^2$ norm that is order $e^{O(\epsilon)}$, and is almost tangent to the most expanded direction of $(Df^n_{\omega})^{-1}$ allowing us to apply those lemmas.  Similarly, the final item in the lemma says that the dynamics on the fake stable manifolds is contracting. This also follows from the graph transform argument. 
Specifically one can produce this statement by a generalization of Step 1 in the proof of Proposition \ref{prop:finite_time_smoothing_estimate}, which studies the growth in length of curves in the Lyapunov charts.

The statement (2) saying that $T_xW^s_i$ is near to $E^s_i$ is immediate because $\|Df^n_{\omega}\|\!\!\ge\!\! Ce^{\lambda n}$ by assumption. Since $Df^n_{\omega}E^s_i$ and $\wt{V}_n$ are exponentially close, they will attract further under $(Df^n_{\omega})^{-1}$.

The statements about H\"older-ness are standard facts; it follows from the same argument as in \cite[Sec.~5.3]{barreira2007nonuniform} applied for only finitely many iterations. Alternatively,
Lemma \ref{lem:linearized_perturbation_RP1} contains an explicit computation showing that nearby points inherit a nearby splitting.
The proof of that lemma does not rely on any of the claims from this section.
We will not use (4) as everything we need for the main result of this paper follows from (1), (2), and (3). So will will omit detailed proof. The claim essentially follows H\"older continuity of the stable distribution, H\"older continuity of the holonomies, which will be obtained in Proposition \ref{prop:holonomies_converge_exponentially_fast}, and Lemma \ref{lem:holder_seq_convergence_lemma}. Compare for example, with
\cite[Sec.~8.1.5]{barreira2007nonuniform}, which describes a similar argument.
\end{proof}

\subsection{Rate of convergence of fake stable manifolds}

Proposition \ref{prop:fluctuations_in_fake_stable_leaves}, proven in this section, is one of the key estimates in this paper playing an important role in the local coupling procedure. 

The main crucial feature that the fake stable leaves exhibit is that the fluctuations in $W^s_i$ as we increase $i$ decay exponentially fast. In fact, we have a
quantitative estimate that directly relates the speed of convergence of $W^s_i(\omega,x)$ with the hyperbolicty of $D_xf^i_{\omega}$.

In the following proposition, we will use an additional refinement of $(C,\lambda,\epsilon)$-tempered points that also requires that the stable direction points in a particular direction.  
The definition below is structured so that it is hopefully straightforward to think about. 
When a point is $(C,\lambda,\epsilon)$-tempered, there is a definite rate at which $E^s_n$ converges to $E^s$. Thus if $E^s_n$ happens to lie sufficiently far from the boundary of a cone $\mc{C}$ at a sufficiently large time $n_1$, then $E^s_i\in \mc{C}$ for all $i\ge n_1$.

\begin{defn}\label{defn:cone_tempered}
Suppose $x\in M$ and $\mc{C}\subset T_xM$ is a cone. We say that a word $\omega$ is $(C,\lambda,\epsilon,\mc{C},n_1,n_2)$-tempered if for all $n_1\le i\le n_2$, $E^s_{i}$ is defined and lies in $\mc{C}$. We may also speak of being $(C,\lambda,\epsilon, \mc{C})$-tempered at a time $n$, in which case we mean $n_1=n_2=n$ in the previous sentence.
\end{defn}

 We now estimate how much the fake stable leaves fluctuate. The requirements on the cone are, strictly speaking, not necessary in order to state the theorem below: as long as $N$ is chosen sufficiently large, one can use $E^s_N(x)$ to define the cone $\mc{C}$ in the following proposition and obtain the same result.

\begin{prop}\label{prop:fluctuations_in_fake_stable_leaves}
(Fluctuations in fake-stable leaves) 
 Let $(f_1,\!\ldots\!,f_m)$ be a tuple in $\Diff^2_{\vol}(M)$ for a closed surface $M$.
Fix $\lambda,C_1,\theta_0>0$, then there exists $\epsilon_0>0$ such that for all $0\le \epsilon<\epsilon_0$ and $C>0$ there exist $D_1,N,\delta_0>0$ 
 such that for any $\delta\le \delta_0$ the following holds. Given $x\in M$ and a cone $\mc{C}\subset T_xM$, extend $\mc{C}$ by parallel transport to a conefield $\mc{C}$ defined over $B_{2\delta}(x)$. Suppose that $\gamma$ is a $C$-good curve with distance $d(x,\gamma)<\delta$ and $\gamma$ is $\theta_0$ transverse to $\mc{C}$. If $\omega$ is a 
 $(C_1,\lambda,\epsilon,\mc{C}, n, n+1)$-tempered with $n\ge N$, then 
\begin{equation}
d_{\gamma}(W^s_{n}(x)\cap \gamma,W^s_{n+1}(x)\cap \gamma)\le e^{-1.99 \ln \|D_xf^n_{\omega}\|},
\end{equation}
where $W^s_{n}(x), W^s_{n+1}(x)$ are the fake stable manifolds from Definition \ref{defn:fake_stable_manifolds}.
\end{prop}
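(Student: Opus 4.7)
The plan is to compare the two fake stable manifolds after first pushing them forward by $f^n_{\omega}$, since under $f^n_{\omega}$ both become tiny $C^2$ curves at $f^n_\omega(x)$ with well-controlled geometry. By Definition \ref{defn:fake_stable_manifolds}, $f^n_{\omega}(W^s_{n,\delta_0}(\omega, x))$ is a segment of the integral curve of the smoothed distribution $\wt V_n$ through $f^n_\omega(x)$, while $f^n_{\omega}(W^s_{n+1,\delta_0}(\omega, x))=f_{\omega_{n+1}}^{-1}$ of the analogous integral curve of $\wt V_{n+1}$ at $f^{n+1}_\omega(x)$. By Proposition \ref{prop:fake_stable_C_2_control}(5) together with volume preservation, both pushforwards have diameter $O(\delta_0 \|D_xf^n_\omega\|^{-1}) = O(e^{-\lambda n})$, and by Lemma \ref{lem:smoothed_version_of_V_n} both have $C^2$ norm at most $e^{O(\epsilon)n}$. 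Their tangent directions at $f^n_\omega(x)$ are, up to $e^{-\epsilon\nu_2 n}$-small errors, $V_n = Df^n_\omega E^s_n$ and $Df^n_\omega E^s_{n+1}$ respectively; although $\angle(E^s_n,E^s_{n+1})\le e^{-4C_1+D'}e^{-2(\lambda-\epsilon)n}$ by Proposition \ref{prop:tempered_norm_implies_splitting}(3), the map $Df^n_\omega$ amplifies the small $E^u_n$-component of $E^s_{n+1}$ by $\|D_xf^n_\omega\|$, so the two normalized images at $f^n_\omega(x)$ may make an angle of order one, but with bounded (and in fact order-one) ratio between components.

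\textbf{Main computation in Lyapunov charts.} Working in Lyapunov coordinates $(u,v)$ at $f^n_\omega(x)$ adapted to the splitting, with $u$-axis along $V_n$ (the most expanded direction of $(Df^n_\omega)^{-1}$), the first curve is a graph $(t,\phi_n(t))$ with $\phi_n(0)=\phi_n'(0)=0$ and $|\phi_n''|\le e^{O(\epsilon)n}$, whence $|\phi_n(t)|\le e^{O(\epsilon)n}t^2$. The second curve has the parametrization $(at+\mu(t),bt+\nu(t))$ with $a^2+b^2=1$, $b/a=O(1)$, and $|\mu|,|\nu|\le e^{O(\epsilon)n}t^2$. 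Pulling both back to $x$ by $(Df^n_\omega)^{-1}\approx \diag(\|D_xf^n_\omega\|,\|D_xf^n_\omega\|^{-1})$ in these charts, reparametrizing each as a graph $\psi_n,\psi_{n+1}$ over the $E^s_n$-axis at $x$, and restricting to $|s|\le\delta_0$, one finds
\begin{align*}
|\psi_n(s)| &\le C s^2 \|D_xf^n_\omega\|^{-3}e^{O(\epsilon)n}, \\
\left|\psi_{n+1}(s)-\tfrac{b}{a}\|D_xf^n_\omega\|^{-2}s\right| &\le C s^2 \|D_xf^n_\omega\|^{-3}e^{O(\epsilon)n}.
\end{align*}
Thus $|\psi_n(s)-\psi_{n+1}(s)|\le C' \|D_xf^n_\omega\|^{-2}$ uniformly for $|s|\le\delta_0$, and after choosing $\epsilon_0$ small enough that $e^{O(\epsilon)n}\le e^{0.01\lambda n}$ the bound is at most $e^{-1.99\ln\|D_xf^n_\omega\|}$ for $n\ge N$.

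\textbf{Intersection with $\gamma$ and the Jacobian.} By Proposition \ref{prop:fake_stable_C_2_control}(2), the tangent of $W^s_i(\omega,x)$ at $x$ is within $e^{-\lambda i/2}$ of $E^s_i(x)\in\mc{C}$, and by hypothesis $\gamma$ is $\theta_0$-transverse to $\mc{C}$ and lies within $\delta\ll\delta_0$ of $x$; combined with the uniform $C^2$ bound, each fake stable leaf meets $\gamma$ transversally in a unique point. The arclength positions along $\gamma$ of the two intersection points then differ by at most $|\psi_n-\psi_{n+1}|/\sin\theta_0=O(\|D_xf^n_\omega\|^{-2})$, yielding \eqref{eqn:fluctuation_of_endpoints_eqn}. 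The Jacobian estimate \eqref{JacExp} will follow from a parallel argument: the same Lyapunov-chart analysis shows $\|\psi_n'-\psi_{n+1}'\|_{C^0}=O(\|D_xf^n_\omega\|^{-2})$, and $\Jac H^s_n$ at a point of $\gamma_1$ depends smoothly on the slope of the fake stable leaf at the intersection with $\gamma_2$ via the standard formula for holonomy Jacobians; the main obstacle is the careful bookkeeping of the $e^{O(\epsilon)n}$ losses (from smoothing $\wt V_i\approx V_i$ and from Lyapunov-metric distortion, Lemma \ref{lem:lyapunov_metric}(3)) to ensure they remain strictly below $\|D_xf^n_\omega\|^{-0.01}$ on the relevant scales, which is what fixes the choice of $\epsilon_0$ in the proposition.
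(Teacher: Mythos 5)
Your linear-algebra heuristic is the right one — the $\|D_xf^n_\omega\|^{-2}$ rate does come from the product of the stable contraction of the vertical separation with the unstable expansion of the horizontal variable, and your observation that $\angle(E^s_n,E^s_{n+1})\sim\|D_xf^n_\omega\|^{-2}$ while the pushed-forward tangents at $f^n_\omega(x)$ may differ by an angle of order one is correct. But the proof has a genuine gap at its central step: the displayed bounds
$|\psi_n(s)|\le Cs^2\|D_xf^n_\omega\|^{-3}e^{O(\epsilon)n}$ and the analogous bound for $\psi_{n+1}$ are obtained by ``pulling back by $(Df^n_\omega)^{-1}\approx\diag(\|D_xf^n_\omega\|,\|D_xf^n_\omega\|^{-1})$,'' i.e.\ by treating $f^n_\omega$ as if it were its differential at $x$ on the whole segment of length $\delta_0$. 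This one-shot linearization is not valid: the $C^2$ norm of $f^n_\omega$ (and of its inverse) grows like $e^{\Lambda n}$ for a constant $\Lambda$ determined by $\max_i\|f_i\|_{C^2}$, which is unrelated to $\lambda$ and typically much larger, so the nonlinear remainder over a segment of fixed length $\delta_0$ swamps the claimed $\|D_xf^n_\omega\|^{-3}s^2$ term. Note also that the a priori information you could legitimately import (uniform $C^2$ bound $C_0$ on each $W^s_i$ from Proposition \ref{prop:fake_stable_C_2_control} plus the $O(\|D_xf^n_\omega\|^{-2})$ agreement of the tangents at $x$) only yields $|\psi_n(s)-\psi_{n+1}(s)|=O(\|D_xf^n_\omega\|^{-2}|s|+C_0s^2)$, whose second term is $O(\delta_0^2)$ and does not decay in $n$ at all. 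So the entire content of the proposition is exactly the estimate you asserted without proof.

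The paper closes this gap by an inductive, one-step-at-a-time graph transform: it writes $(f^{n-j}_{\sigma^j\omega})^{-1}(\gamma_n)$ and $(f^{n-j+1}_{\sigma^j\omega})^{-1}(\gamma_{n+1})$ as graphs $\phi^1_j,\phi^2_j$ in the Lyapunov chart at $f^j_\omega(x)$, first iterates until both are uniformly Lipschitz (Step 1), and then transports a short vertical segment joining $\phi^1_{j+1}(x)$ to $\phi^2_{j+1}(x)$ backward one step at a time. At each single step the nonlinear terms $\partial_y\hat f_{j,i}$ are controlled by $e^{-\lambda j/4}$ on a ball of radius $\sim e^{-\lambda j/2}$ (this is where the restriction of the domain and the $e^{O(\epsilon j)}$ bound on $\|\hat f_j\|_{C^2}$ enter), so the vertical separation at a given horizontal coordinate contracts by $e^{1.999(\sigma_{j,2}-\sigma_{j,1})}$ per step; accumulating over $N_0\le j\le N_l$ and relating $\sum_j(\sigma_{j,1}-\sigma_{j,2})$ to $2\ln\|D_xf^n_\omega\|$ (using volume preservation and the smallness of $\epsilon$) gives the $1.99$ exponent. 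To repair your argument you would need to replace the single pullback by this iteration, or otherwise control the accumulated nonlinear error, which cannot be done at a fixed scale $\delta_0$. A secondary remark: the Jacobian estimate \eqref{JacExp} you defer is not part of this proposition (it is proved separately in Proposition \ref{prop:holonomies_converge_exponentially_fast}), so you need not address it here.
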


The reason the proposition follows is evident in the case of a linear map. 
Consider the action of the map $L=\diag(\sigma,\sigma^{-1})$
on $\RP^1$ where $\sigma>1$. 
Note that the map $L$ has an attracting fixed point of multiplier $\sigma^{-2}$, which suggests the asymptotic in the theorem. 
Consider what happens if we apply $L$ to two curves tangent at $(0,0)$ to the expanded direction of $L$: the distance between them will contract by a factor of $\sigma^{-2}$.
The result for a sequence of maps will follow because the temperedness assures a uniform $E^s,E^u$ splitting. 
When we work with this splitting, the full strength of the hyperbolicity will be available allowing us to recover almost $e^{-2\ln \|D_xf^n_{\omega}\|}$ contraction as in the theorem.

The formal proof will rely on the study of the graph transform. 
The argument for this proposition is simpler than the argument in the recovery lemma since the curves we consider in this lemma are (by assumption) well positioned with respect to the stable and unstable splitting. 

There are three steps in the proof. 
We have two curves at $f^n_{\omega}(x)$, one corresponding to the time $n$ fake stable manifolds and one corresponding to the time $n+1$ fake stable manifolds.
In the first step, we iterate the graph transform until these curves look uniformly Lipschitz in the Lyapunov charts. 
In the second step, we iterate the graph transform to see that these two curves approach each other at the appropriate exponential rate.
In the third step, we do some bookkeeping to 
conclude.

\begin{proof}
 Recall that,
 by definition, the fake stable manifold $W^s_{n}(x)$ is given by taking a curve $\gamma_n$ tangent to the distribution $\wt{V}_n$ from Lemma \ref{lem:smoothed_version_of_V_n} and letting $W^s_{n}(x)$ equal $(f^{n}_{\omega})^{-1}(\gamma_n)$ restricted to a segment of length $\delta_0$ about $x$ where $\delta_0$ is chosen as in Proposition \ref{prop:fake_stable_C_2_control}.
Note that we need not take the $\delta_0$ in this proposition to be the same as the one in Proposition \ref{prop:fake_stable_C_2_control}. Indeed, at certain points in the analysis below it may be convenient to decrease $\delta_0$ in a way that depends only on the parameters of the proposition.

The proposition is comparing $(f^n_{\omega})^{-1}(\gamma_n)$ and $(f^{n+1}_{\omega})^{-1}(\gamma_{n+1})$. 
As in previous sections, we will view both of these curves as graphs of functions from $E^u$ to $E^s$ in the Lyapunov charts. 
In this proof we will work with the splitting into stable and unstable subspaces for the subspaces defined by the associated splitting for $Df^n_{\omega}$ rather than $Df^{n+1}_{\omega}$. Recall that $E^s_n$ denotes the most contracted subspace for $Df^n_{\omega}$ and $E^s_{n+1}$ denotes the most contracted subspace for $Df^{n+1}_{\omega}$.

In the Lyapunov charts at $f^j_{\omega}(x)$, we write $(f^{n-j}_{\sigma^{j}(\omega)})^{-1}(\gamma_n)$ as the graph of the function $\phi^1_j$ and we write $(f^{n-j+1}_{\sigma^{j}(\omega)})^{-1}(\gamma_{n+1})$ as the graph of $\phi^2_j(x)$. Let $e^{\Lambda}$ be an upper bound on $\|Df_i\|$, $1\le i\le m$, with $\Lambda>100$.

With respect to the Lyapunov metrics, we use the similar choices as in previous arguments, specifically Proposition \ref{prop:fwd_up_to_epsilon_smoothing}, and thereby obtain essentially identical intermediate estimates. 
View the sequence of maps $f_{\omega}^j$ as being reversed tempered starting at $f^{n+1}_{\omega}(x)$ and ending at $x$.
So, set $\lambda'=.9999\lambda$ and take the finite time Lyapunov metrics as in Lemma \ref{lem:lyapunov_metric} for this sequence.
In particular, note that from the construction of the Lyapunov metrics, the $e^{O(\epsilon n)}$ bound on the $C^2$ norm of the curves $\gamma_n$ from Lemma \ref{lem:smoothed_version_of_V_n} and the angle $\wt{V}_n$ makes with $V_n$ of 
$\displaystyle O\left(e^{O(-\epsilon n)}\right)$ combine to show that there exist $C_2,\nu>0$ such that $\|\phi^1_n\|_1,\|\phi^2_n\|_1\le C_2e^{\nu \epsilon n}$. We now proceed with the proof.

\noindent\textbf{Step 1.} (Lipschitzness) In this step, we will identify $N_l\approx (1-O(\epsilon))n$ such that for $j\le N_l$, $\phi^1_j$ and $\phi^2_j$ are $C^0$ close.

To begin we estimate how far apart $Df^n_{\omega}(E^s_n)$ and $Df^{n}_{\omega}(E^s_{n+1})$ are. 
We claim that there exists $N_0$ such that for $n\ge N_0$, then $\angle(Df^n_{\omega}(E^s_n),Df^{n}_{\omega}(E^s_{n+1}))\le 1/4$.
Note that if $N_0$ is sufficiently large that both $\|Df^{n}_{\omega}\|$ and $\|Df^{n+1}_{\omega}\|$ are at least $e^{10\Lambda}$ and $\angle (E^s_n,E^s_{n+1})<1/100$ both of which follow from  the $(C_1,\lambda,\epsilon)$-temperedness (The latter claim is part of Proposition~\ref{prop:tempered_norm_implies_splitting}).
As in previous computations, it follows that if $\angle(Df^n_\omega E^s_n,Df^{n}_{\omega}E^s_{n+1})\!\!>\!\!1/4$, then $\|Df^n_{\omega}(E^s_{n+1})\|>2$ because $Df^n_{\omega}$ expands $E^u_n$ and contracts $E^s_n$. 
Consequently, $\|Df^{n+1}_{\omega}(E^s_{n+1})\|>2e^{-\Lambda}$. 
But this is not less than $e^{-10\Lambda}$, so it is impossible that
$\angle(Df^n_{\omega}(E^s_n),Df^{n}_{\omega}(E^s_{n+1}))>1/4$. 

Note that in Proposition \ref{prop:finite_time_smoothing_estimate}, we considered smoothing estimates for a reverse tempered point. In the case of this theorem, we may consider $x$ as a reverse tempered point for the sequence of maps $(f^{j}_{\sigma^{n-j}(\omega)})^{-1}$ beginning at $f^{n}_{\omega}(x)$. Consequently, we may read off the intermediate estimates from the proof of that theorem. In particular, as in equation \eqref{eqn:inductive_C_1_est} by possibly restricting the domain of $\phi^1_j$ and $\phi^2_j$ as in that proposition, it follows that there exists $C_3$ such that for $i\in \{1,2\}$ that
\[
\|\phi^i_{n-j}\|_1\le C_3e^{\nu\epsilon n}e^{-j\lambda}.
\]
In particular this shows that if we let $N_l=\lfloor n-\nu\epsilon/\lambda n\rfloor$, then because both curves pass through $0$ and our choice of $N_l$, we see that there exists $C_4$ such that for $i\in \{1,2\}$,
$\|\phi^i_j\|_1\le C_4$. 
Because both pass through $0$, the following estimate holds for all $N_0\le j\le N_l$:
\begin{equation}\label{eqn:basic_lipschitz_separation_estimate}
\abs{\phi^1_j(x)-\phi^2_j(x)}\le 2C_4\abs{x},
\end{equation}
which is the desired estimate for this step in the proof. 

\noindent\textbf{Step 2.} (Contraction) In this step, we study how fast the curves $\phi^1_j$ and $\phi^2_j$ attract as we apply the dynamics $(f_{\sigma^j(\omega)})^{-1}$. Our goal is to show that the $C^0$ distance between these functions is rapidly decreasing, which is the content of  \eqref{eqn:phi_1_2_j_vert_dist}.

First, in the Lyapunov chart we have
\begin{equation}
\hat{f}_{\sigma^j(\omega)}^{-1}=(e^{\sigma_j^1}x +\hat{f}_{j,1}(x,y), e^{\sigma_j^2}y+\hat{f}_{j,2}(x,y)),
\end{equation}
where $\min\{\sigma_j^1,-\sigma_j^2\}\ge .999\lambda$. Then in the Lyapunov charts, the differential is
\begin{equation}\label{eqn:differential_at_sigma_j}
D\hat{f}^{-1}_{\sigma^j(\omega)}=\begin{bmatrix}
e^{\sigma_{j,1}} +\partial_x \hat{f}_{j,1}& \partial_y\hat{f}_{j,1}\\
\partial_x \hat{f}_{j,2}& e^{\sigma_{j,2}}+\partial_y\hat{f}_{j,2}
\end{bmatrix}.
\end{equation}
In addition, write 
\begin{equation}
\Lambda_j=\sum_{i=j}^{N_l} \sigma_{j,1}-\sigma_{j,2}.
\end{equation}
As in Proposition \ref{prop:finite_time_smoothing_estimate}, we have a $C^2$ estimate in the Lyapunov charts. There exists $C_5>0$ such that 
 \begin{equation}
 \|(\hat{f}_{\sigma^i(\omega)})^{-1}\|_{C^2}\le C_5e^{6C_1} e^{6 i \epsilon}.
\end{equation} 

We will now verify inductively that a strengthening of \eqref{eqn:basic_lipschitz_separation_estimate} holds for $N_0<j<N_l$. We now show that by possibly increasing $N_0$, which is fixed and does not depend on $n$, that 
for all $\abs{x}<e^{-{ (\lambda/2)}j}$, and $N_0\le j<N_l$,
\begin{equation}\label{eqn:phi_1_2_j_vert_dist}
    \abs{\phi^1_j(x)-\phi^2_j(x)}\le C_4e^{-1.999\Lambda_j}\abs{x}.
\end{equation}

To show \eqref{eqn:phi_1_2_j_vert_dist}, we measure the distance between $\phi^1_j$ and $\phi^2_j$ using a piece of the 
vertical curve $V(t)$ parallel to $E^s$ between $\phi^1_{j+1}(x)$ and $\phi^2_{j+1}(x)$. We then apply $(f_{\sigma^{j}(\omega)})^{-1}$ to the curve and estimate its length. We then use the Lipschitzness of $\phi^1_{j}$ and $\phi^2_{j}$ to obtain 
\eqref{eqn:phi_1_2_j_vert_dist}.
Let $V(t)$  be a vertical curve (parallel to $E^s$) defined on $[-1,1]$ taking values in the Lyapunov charts such that $V(-1)\in\phi_{j+1}^1$ and $V(1)\in \phi_{j+1}^2$ passing through the point $(x,0)$. 
Then from the inductive hypothesis, we see that $\len(V)\le C_4e^{-1.999\Lambda_{j}}\abs{x}$. 

By applying the differential to $V$, we see by \eqref{eqn:differential_at_sigma_j}, $(\hat{f}_{\sigma^j(\omega)})^{-1}(V)$ is tangent to a vector of the form 
\begin{equation}\label{eqn:action_differential_on_V_t}
\partial_t((\hat{f}_{\sigma^j(\omega)})^{-1}V(t))=\begin{bmatrix}
    \partial_y\hat{f}_{j,1}\\
e^{\sigma_{j,2}}+\partial_{y}\hat{f}_{j,2}
\end{bmatrix}.
\end{equation}
In particular, for $C_5$ as before if we are restricted to a ball of radius $C_5^{-1}e^{-(\lambda/2) j}$, then as the $C^2$ norm of $(\hat{f}_{\sigma^j(\omega)})^{-1}$ is $O(e^{6j\epsilon})$, it follows that 
\begin{equation}\label{eqn:estimate_on_f_j_i_hat}
\abs{\partial_y\hat{f}_{j,i}}<e^{ -(\lambda/4) j}
\end{equation}
for $i\in \{1,2\}$. 
Let $\pi_u$ be the projection onto the $E^u$ direction in the Lyapunov coordinates and let $\pi_s$ be the projection onto the $E^s$ direction in the Lyapunov coordinates.
We see that there exists $C_6$ such that:
\begin{equation}\label{eqn:vertical_dist_est_1}
\abs{\pi_s((\hat{f}_{\sigma^j(\omega)})^{-1}V(-1))-\pi_s((\hat{f}_{\sigma^j(\omega)})^{-1}V(1)))}\le 
C_4e^{-1.999\Lambda_j}e^{(1-\epsilon_j)\sigma_{j,2}}\abs{x} 
\end{equation}
where $\abs{\epsilon_j}\le C_6e^{-\lambda/4 j}$.

We now use \eqref{eqn:vertical_dist_est_1} to estimate the $C^0$  norm of $\phi^1_j$ and $\phi^2_j$, rather than just the distance between two points along these curves.
The endpoints of $(\hat{f}_{\sigma^j(\omega)})^{-1}V(t)$ lie in $\phi^1_j$ and $\phi^2_j$.
Note that when $(\hat{f}_{\sigma^j(\omega)})^{-1}V$ is viewed as a graph over the vertical line parallel to $E^s$ through $\pi_u(\hat{f}_{\sigma^j\omega})^{-1}(x,0)$, that $(\hat{f}_{\sigma^j(\omega)})^{-1}V$ is distance at most $e^{-\lambda/4 j}\len(V)$ from a vertical line by \eqref{eqn:action_differential_on_V_t} and \eqref{eqn:estimate_on_f_j_i_hat}.
Thus as $\phi^1_j$ and $\phi^2_j$ are both $C_4$ Lipschitz for $N_0\le j\le N_l$, we see that
\begin{align}\notag
\abs{\phi^1_j(\pi_1(\hat{f}_{\sigma^j\omega})^{-1}(x,0))-\phi^2_j(\pi_1(\hat{f}_{\sigma^j\omega})^{-1}(x,0))}&<C_4e^{-1.999\Lambda_j}e^{(1-\epsilon_j)\sigma_{j,2}}\abs{x}+C_4e^{-\lambda/4 j}\len(V)\\
&\le (e^{(1-\epsilon_j)\sigma_{j,2}}+C_4e^{-\lambda/4 j})e^{-1.999\Lambda_{j}}\abs{x}.\label{eqn:concluding_line_epsilon_j}
\end{align}
As long as $N_0$ is sufficiently large, for $j\ge N_0$,
\begin{equation}\label{eqn:bound_on_x_epsil1}
\abs{x}\le e^{-(1-\epsilon_j)\sigma_{j,1}}\abs{\pi_1(\hat{f}_{\sigma^j(\omega)})^{-1}(x,0))}.
\end{equation}
Note that if $j$ is larger than some fixed $N_0$ and $\epsilon_j$ is sufficiently small relative to $\lambda$, then
\begin{equation}\label{eqn:smallness_epsilon_j_bound}
(e^{(1-\epsilon_j)\sigma_{j,2}}+C_4e^{-\lambda/4 j}) e^{-(1-\epsilon_j)\sigma_{j,1}}\le e^{1.999(\sigma_{j,2}-\sigma_{j,1})}.
\end{equation}
Combining \eqref{eqn:concluding_line_epsilon_j}, \eqref{eqn:bound_on_x_epsil1}, and \eqref{eqn:smallness_epsilon_j_bound}, we get
$\displaystyle
\abs{\phi^1_{j}(x)-\phi^2_{j}(x)}\le C_4e^{-{ 1.999}\Lambda_{j-1}},
$
as required.

\noindent\textbf{Step 3.} (Bookkeeping and Conclusion) 
So far,  we have obtained that for some $N_0$ and $C_4$ depending only on the constants in the theorem
\[
\abs{\phi^1_{N_0}(x)-\phi^2_{N_0}(x)}\le C_4e^{-1.999\Lambda_{N_0}}
\]
Thus as $\phi^1_0$ and $\phi^2_0$ are related to $\phi^1_{N_0}$ and $\phi^2_{N_0}$ by applying only the fixed number $N_0$ more maps, we see that there exists $C_7$ and $\delta_2>0$ such that on a ball of radius $\delta_2$ in the Lyapunov charts at $x$:
\[
\abs{\phi^1_{0}(x)-\phi^2_{0}(x)}\le C_7e^{-1.999\Lambda_{N_0}}.
\]

Consider a nearby $C$-good curve $\gamma$ that is $\theta_0$-transverse to $\mc{C}$ and hence to $E^s$, $\phi^1_0$, and $\phi^2_0$. It then follows easily from transversality, that as $\phi^1_0$ is nearly tangent to $E^s$ by Proposition \ref{prop:fake_stable_C_2_control}(2) and $\phi^1_0,\phi^2_0$ are uniformly Lipschitz, there exists $C_8$ such that 
\[
d_{\gamma}(\phi^1_0\cap \gamma,\phi^2_0\cap \gamma)\le C_8e^{-1.999\Lambda_{N_0}}.
\]

The only remaining thing we need is to know that $\Lambda_{N_0}$ is within a factor of $.001\Lambda$ of $\ln\|Df^n_{\omega}\|$. 
This will follow as long as we take $\epsilon$ sufficiently small relative to $\lambda,\nu_1,\nu_2$ and the maximum of the norm of the differentials of $f_1,\ldots,f_m$. We omit the computation of exactly how small $\epsilon$ must be. 
Such sufficiently small $\epsilon$ exists
because when we look in the Lyapunov charts, we obtain the straightforward bound that there exists $C_9$ such that
\[
 \ln \|Df^n_{\omega}\|\le C_9+ 4\epsilon n+\sum \sigma_{j,1}.
\]
But $\Lambda_{N_0}$ includes only the hyperbolicity for the iterates $N_0\le j\le N_l$. From volume preservation of the $f_i$, it similarly follows that $\ln \|Df^n\|\le C_{10}+ 4\epsilon n -\sum\sigma_{j,2}$ for some $C_{10}$.
As $N_l=(1-O(\epsilon))n$ and $N_0$ is a fixed independent of $n$, it follows that for sufficiently small $\epsilon$ and sufficiently large $n$ that
$
e^{1.99\ln\|Df^n\|}\le e^{1.999\Lambda_{N_0}},
$
which is the needed conclusion.
\end{proof}

\subsection{Jacobian of the fake stable holonomies}

Now that we have defined the fake stable manifolds and have an estimate for the rate at which their holonomies converge,
we study the Jacobian of their holonomies, whose properties are crucial in the coupling argument. 
The next quantity of interest is the fluctuations in the Jacobian of the holonomies for the fake stable manifolds. 

\begin{prop}\label{prop:holonomies_converge_exponentially_fast}
Suppose that $(f_1,\ldots,f_m)$ is a tuple of diffeomorphisms in $\Diff^2_{\vol}(M)$ for a closed surface $M$.
For $\lambda>0$ there exists $\epsilon_0>0$ such that for all $0\le \epsilon\le \epsilon_0$ and $C>0$, there exists $N\in \N$ and $\delta, \eta,\alpha>0$ such that for any $n\ge N$, 
and any $\omega\in \Sigma$, if $\Lambda_n^{\omega}$ is the set of $(C,\lambda,\epsilon)$-tempered points up to time $n$ then for any ball $B_{\delta}\subseteq M$ of radius $\delta$, the following holds for $x\in \Lambda^{\omega}_n\cap B_{\delta}$.

For any two uniform transversals $T_1$ and $T_2$ to the $W_N^s$ laminations of $B_{\delta}(x)$, $T_1$ and $T_2$ will be uniform transversals to the $W_i^s$ lamination for $N\le i\le n$.
Where defined, consider the holonomies $H^s_i$ between $T_1$ and $T_2$ and moreover the Jacobian $\Jac H^s_i$, which is defined on a subset of $T_1$. 
Then we have the following for all $N\le i\le n$:

(1)
The Jacobians of the holonomies between uniform transversals are uniformly $\alpha$-H\"older and bounded away from zero. 
In particular, this implies that these Jacobians are uniformly log-$\alpha$-H\"older between uniform transversals. 
Specifically,
for fixed $(C_1,\delta_1)$, there exist $D_1,D_2,D_3$ such that if $\gamma_1$ and $\gamma_2$ are a $(C_1,\delta_1)$-configuration in the sense of Definition \ref{defn:C_0_delta_configuration}
with $\gamma_1$ and $\gamma_2$ uniformly transverse to the $E^s_N(x)$ extended by parallel transport in a small neighborhood, and $I\subseteq \Lambda^{\omega}_n$ is a subset of $\gamma_1$
then, for $x,y\in I$,
\begin{equation}
\abs{\log \Jac H^s_n(x)-\log \Jac H^s_n(y)}\le D_1d_{\gamma_1}(x,y)^{\alpha}.
\end{equation}

(2)  The Jacobians from item (1) converge exponentially quickly, i.e.
\begin{equation}
\abs{\Jac H^s_{i-1}-\Jac H^s_{i}}\le D_2e^{-\eta i},
\end{equation}
and
\begin{equation}
\abs{\frac{\Jac H^s_i}{\Jac H^s_{i-1}}-1}\le D_3e^{-\eta i}.
\end{equation}

 (3) The true stable holonomy restricted to $\Lambda^{\omega}_\infty\cap T_1$ is absolutely continuous.
The Jacobian of the fake stable holonomies converges to the Jacobian of the true stable holonomies restricted to the set $\Lambda^{\omega}_\infty\cap T_1$. Namely,
for almost every point of this intersection, $\Jac H^s_n\to \Jac H^s$, this convergence is uniform, and the limit is uniformly H\"older and bounded away from zero.

\end{prop}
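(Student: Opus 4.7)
The plan is to base everything on the fake-holonomy analogue of the Sinai formula for the Jacobian. Using the factorization
\[
H^s_n = (f^n|_{T_2})^{-1} \circ \pi_n \circ (f^n|_{T_1}),
\]
where $\pi_n$ is the holonomy between $f^n(T_1)$ and $f^n(T_2)$ along the integral curves of the $C^2$ vector field $\wt V_n$ from Lemma~\ref{lem:smoothed_version_of_V_n}, the chain rule yields
\[
\log \Jac H^s_n(x) = \log \Jac \pi_n(f^n x) + \sum_{k=0}^{n-1} \Phi_k(x, H^s_n(x)),
\]
with
\[
\Phi_k(x,y) = \log\|Df_{\omega_{k+1}}|_{T_{f^k x} f^k T_1}\| - \log\|Df_{\omega_{k+1}}|_{T_{f^k y} f^k T_2}\|.
\]
This is the central identity; each individual summand is an increment of an unstable-derivative cocycle evaluated at two points that lie on a common fake stable leaf.

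For part (1), I would fix $x_1,x_2\in T_1\cap I$ with $r=d_{\gamma_1}(x_1,x_2)$ and compare $\log \Jac H^s_n(x_i)$ term-by-term. Each $|\Phi_k(x_i,y_i)|$ is bounded by $Ce^{-\alpha\lambda' k}$ because $f^k x_i$ and $f^k y_i=f^k H^s_n(x_i)$ lie on a common fake stable leaf, whose length contracts at rate $e^{-\lambda' k}$ by Proposition~\ref{prop:fake_stable_C_2_control}(5), and the tangent directions to $f^k T_1$, $f^k T_2$ and the stable direction are Hölder by Proposition~\ref{prop:fake_stable_C_2_control}(3). Choose $k_0 \asymp \log(1/r)/(\Lambda+\lambda')$; for $k\le k_0$, estimate $|\Phi_k(x_1,y_1)-\Phi_k(x_2,y_2)|\le Ce^{k\alpha\Lambda}r^\alpha$ using $C^{1+\alpha}$ of the $f_i$, the Lipschitz holonomy from Proposition~\ref{prop:set_up_scale_prop}(4b), and the elementary bound $d(f^k x_1,f^k x_2)\le e^{k\Lambda} r$; for $k>k_0$, use the triangle inequality and the individual bound $|\Phi_k|\le Ce^{-\alpha\lambda' k}$. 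Balancing gives $r^\beta$ with $\beta=\alpha\lambda'/(\Lambda+\lambda')$. The $\log \Jac\pi_n$ term is Hölder with constant depending on $\|\wt V_n\|_{C^2}$ and on the $C^1$ norm of $f^n T_i$ at tempered points; both grow only subexponentially in $\epsilon n$, so the same splitting at $k_0$ absorbs it into the Hölder bound once $\epsilon$ is small enough.

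For part (2), I would subtract the formulas at $n$ and $n-1$:
\[
\log\Jac H^s_n(x) - \log\Jac H^s_{n-1}(x) = \Phi_{n-1}(x,H^s_n x) + [\log\Jac\pi_n(f^n x) - \log\Jac\pi_{n-1}(f^{n-1}x)] + \sum_{k=0}^{n-2}[\Phi_k(x,H^s_n x) - \Phi_k(x,H^s_{n-1} x)].
\]
By Proposition~\ref{prop:fluctuations_in_fake_stable_leaves}, $d_{T_2}(H^s_n x,H^s_{n-1} x)\le e^{-1.99\log\|D_xf^n_\omega\|}\le C e^{-\eta_1 n}$ on the tempered set, so the last sum is $O(e^{-\eta_1 n})$; the term $\Phi_{n-1}$ is already $O(e^{-\alpha\lambda'(n-1)})$; and the bracket containing $\pi_n$ and $\pi_{n-1}$ is $O(e^{-\epsilon\nu_2 n})$ because $\wt V_n$ and $\wt V_{n-1}$ agree to that order on their common domain (Lemma~\ref{lem:smoothed_version_of_V_n}(3)), so the holonomies of these $C^2$-close foliations between the same transversals have Jacobians of the same closeness. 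Taking $\eta$ smaller than all three exponents gives (2). For part (3), the Cauchy property in (2) upgrades to uniform convergence $\Jac H^s_n\to J_\infty$, while the uniform Hölder bound from (1) passes to the limit and gives $\alpha$-Hölder, bounded-below $J_\infty$. On $\Lambda^\omega_\infty\cap T_1$, the leaves $W^s_n$ converge to $W^s$ (Proposition~\ref{prop:fake_stable_C_2_control}(2) on tangent directions, plus graph-transform estimates propagating to the leaves), so $H^s_n\to H^s$ pointwise and $J_\infty$ is the Jacobian of the true stable holonomy; absolute continuity then follows by a standard change-of-variables argument using the uniform Hölder $J_\infty$.

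The main obstacle is the bookkeeping in (1): the $\log\Jac\pi_n$ term has a Hölder constant that grows with $n$ through $\|\wt V_n\|_{C^2}=O(e^{\epsilon\nu_1 n})$ and through $\|f^n T_i\|_{C^2}$, and a naïve bound blows up when combined with the $e^{n\Lambda}$ stretching of $f^n|_{T_1}$. Controlling this requires that $\epsilon$ be small relative to $\lambda$ so that the geometric factor $e^{-\alpha\lambda' k}$ in the tail dominates the subexponential Hölder deterioration, and it requires using the tempered control from Section~\ref{sec:temperedness} together with the refined fake-stable estimates of Appendix~\ref{sec:finite_time_pesin_theory}; the rest of the argument is a careful but standard Sinai-Pesin telescoping.
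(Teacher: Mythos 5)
Your proposal is correct and follows essentially the same route as the paper: the telescoping (Sinai-type) product formula for $\Jac H^s_n$, exponential decay of the individual factors in $k$, the fluctuation bound of Proposition~\ref{prop:fluctuations_in_fake_stable_leaves} to control the change of the head of the sum between $n-1$ and $n$, near-triviality of the top holonomy $\Jac\pi_n$, and a balancing-of-exponents argument for the H\"older continuity (the paper packages that last step as Lemma~\ref{lem:holder_seq_convergence_lemma} rather than splitting the cocycle sum at $k_0(r)$ directly, but it is the same trick). One small point: the exponential decay of $\Phi_k$ requires that the tangent directions $T_{f^kx}f^kT_1$ and $T_{f^ky}f^kT_2$ attract under forward iteration (a Ma\~n\'e-type graph-transform argument, the paper's \eqref{eqn:tangent_vectors_attract}), not merely H\"older continuity of the stable distribution as you cite, though this does not change the structure of the argument.
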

\begin{proof}
\noindent\textbf{Part 1.} (Formula for Jacobian) 
We begin  by exhibiting a formula for the Jacobian of the stable holonomies. This may be compared with \cite[Sec.~8.6.4]{barreira2007nonuniform}, which uses a similar formula though analyzes it differently.
Suppose that $T_1$ and $T_2$ are the two transversals we are considering as in the statement of the proposition. 
Then write $\Pi_i^s$ for the holonomy along $f^i_{\omega}(W^s_i)=\wt{W}_i^s$, the smooth integral curves to $\wt{V}_i$ we used when defining the fake stable foliation.
Then we have the following formula for the Jacobian of $H^s_i$:
\begin{equation}\label{eqn:finite_time_holonomy_formula}
\Jac(H^s_i)(y)=\prod_{k=0}^{i-1} \frac{\Jac(D(f_{\sigma^{k}\omega})^{-1}\vert T_{f^k_{\omega}{H^s_i(y)}}f^k_{\omega}(T^2))}{\Jac(D(f_{\sigma^k\omega})^{-1}\vert T_{f^k_{\omega}(y)}f^k_{\omega}(T^1))}\Jac(\Pi^s_i(y)).
\end{equation}
For finite time this formula is evident because all of the foliations we are considering are smooth: it is just the change of variables formula.

\noindent\textbf{Part 2.} (Exponential convergence)
Applying Lemma \ref{lem:holder_seq_convergence_lemma} we will obtain H\"older continuity for the Jacobians once we know that $\Jac(H^s_i)$ is converging exponentially fast. 

To see that \eqref{eqn:finite_time_holonomy_formula} converges exponentially quickly, two estimates are needed. 

(1) The first is showing that for some $\eta>0$
\begin{equation}\label{eqn:exp_convergence_of_Jacobian1}
\abs{\Jac(\Pi^s_{n})-1}\le C_1e^{-n\delta_1}.
\end{equation}
This is the Jacobian of the foliation holonomy of $\wt{W}^s_n$.  The foliation holonomy is between two transversals that are distance $e^{-(\lambda/2)n}$ apart. By working in Lyapunov charts, it is straightforward to see that the $f^n_{\omega}(T_1)$ and $f^n_{\omega}(T_2)$ make angle at least $Ce^{-\epsilon n}$ with $\wt{W}^s_n$. As $\wt{W}^s_n$ itself has $C^2$ norm at most $e^{O(\epsilon n)}$ from Lemma \ref{lem:smoothed_version_of_V_n}, it is easy to see that there exists some $C_1,\delta_1>0$ such that \eqref{eqn:exp_convergence_of_Jacobian1} holds.

(2) Next we estimate the rate of convergence of:
\begin{equation}\label{eqn:jacobian_to_est}
\prod_{k=0}^{i-1} \frac{\Jac(D(f_{\sigma^{k}\omega})^{-1}\vert T_{f^k_{\omega}{H^s_i(y)}}f^k_{\omega}(T^2))}{\Jac(D(f_{\sigma^k\omega})^{-1}\vert T_{f^k_{\omega}(y)}f^k_{\omega}(T^1))}=\exp\left(\sum_{k=0}^{i-1} P(k,i)\right),
\end{equation}
where  $P(k,i)$ is the logarithm of the $k$th term of the product. 

We claim that there exist $C_2,\delta_2,N_2$, such that
for $i\ge N_2$ and $k\ge 0$,
 \begin{equation}\label{eqn:rate_of_convergence_relative_jacobians}
\abs{\frac{\Jac(D(f_{\sigma^{k}\omega})^{-1}\vert T_{f^k_{\omega}{H^s_i(y)}}f^k_{\omega}(T^2))}{\Jac(D(f_{\sigma^k\omega})^{-1}\vert T_{f^k_{\omega}(y)}f^k_{\omega}(T^1))}-1}\le C_2e^{-\delta_2k}.
\end{equation}
We will not give a detailed proof of this estimate because it standard. The key claim is that if $V_1$ and $V_2$ are the tangent vectors to $\gamma_1$ and $\gamma_2$ at $y$ and $H^s_i(y)$, respectively, then there exists a uniform constant $C_2'$ and $\varpi>0$ such that when we identify $Df^k_{\omega}V_1$ and $Df^k_{\omega}V_2$ by parallel transport along the distance minimizing geodesic between their basepoints, then
\begin{equation}\label{eqn:tangent_vectors_attract}
d(Df^k_{\omega}V_1,Df^k_{\omega}V_2)\le C_2'e^{-k\varpi}.
\end{equation}
One can deduce this in a very similar way to the argument for \cite[Lem.~III.3.7]{mane1987ergodic}, which inductively checks that as one applies more iterates of the dynamics that these two vectors attract exponentially quickly by using that the basepoints of the vectors do as well; this argument is similar to the proof of our Proposition \ref{prop:nearby_points_inherit_temperedness}. 
Once \eqref{eqn:tangent_vectors_attract} is known, then it is straightforward to conclude \eqref{eqn:rate_of_convergence_relative_jacobians} because the Jacobian of a diffeomorphism $f\colon M\!\!\to\!\! M$ restricted to a curve $\gamma\!\subset\! M$ depends H\"older continuously on the direction of  $\dot\gamma$.

 \eqref{eqn:rate_of_convergence_relative_jacobians} shows that the product \eqref{eqn:jacobian_to_est} is uniformly bounded. It then suffices to estimate:
\[
\sum_{k=0}^{i-1} P(k,i)-\sum_{k=0}^{i} P(k,i+1).
\]
We will pick some $0<\theta<1$, and split this sum as follows:
\[
\sum_{k=0}^{\theta i}\left( P(k,i)-P(k,i+1)\right)+\left[\sum_{k\ge \theta i}^i P(k,i)-\sum_{k\ge \theta i}^i P(k,i+1)\right]= I+II.
\]
For any such $\theta$, it follows from \eqref{eqn:rate_of_convergence_relative_jacobians} that there exists $C_3,\delta_3>0$ such that  $\abs{II}\le C_3e^{-\delta_3 i}$. Thus to conclude we need only bound term $I$.
From Proposition \ref{prop:fluctuations_in_fake_stable_leaves} and the temperedness, we know that there exists $C_4,\delta_4$ such that 
\begin{equation}\label{eqn:fluctuations_holonomies_2}
    d_{T_2}(H^s_i(y),H^s_{i+1}(y))\le C_4e^{-\delta_4 i}.
\end{equation}

It is straightforward to see that there exists $\beta,\beta_1>0$ such that the function 
\[
\frac{\Jac(D(f_{\sigma^{k}\omega})^{-1}\vert T_{f^k_{\omega}{H^s_i(y)}}f^k_{\omega}(T^2))}{\Jac(D(f_{\sigma^k\omega})^{-1}\vert T_{f^k_{\omega}(y)}f^k_{\omega}(T^1))}
\]
viewed as a function of $H^s_i(y)$
is $\beta$-H\"older with the H\"older constant at most $e^{\beta_1 k}$ for all $k\le i$. Thus by combining \eqref{eqn:fluctuations_holonomies_2} with the H\"older continuity, we see that $\abs{P(k,i)-P(k,i+1)}\le e^{\beta k}e^{-\delta i}$. Thus as long as $\theta>\beta/\delta$, we see that there exists $C_5,\delta_5$, such that 
\[
\abs{I}\le C_5e^{-\delta_5 i}. 
\]
Combining the estimates on  $I$ and $II$ implies that there exists $C_6,\delta_6$ so that \eqref{eqn:jacobian_to_est} is converging exponentially fast, as desired.

Thus we see that the Jacobian of the holonomies converges exponentially fast pointwise and is uniformly positive. Thus we have concluded (2) of the statement of the proposition.

\noindent\textbf{Part 3.} (Uniform H\"olderness) 
We now apply Lemma \ref{lem:holder_seq_convergence_lemma}. We have just shown that the Jacobian of the holonomies is converging exponentially fast, and certainly the H\"older norm of the terms is growing at most exponentially fast as well as it is the composition of diffeomorphisms along with a holonomy, whose H\"older norm is also growing at most exponentially fast. Thus we conclude (1) above.

\noindent\textbf{Part 4.} The final claim (3) about the holonomies is fairly standard. The following lemma implies the conclusion:
\begin{lem}
Let $\gamma_1$ and $\gamma_2$ be two curves with finite Lebesgue measure and for $n\in \N$ let $\Omega_n\subseteq \gamma_1$ be a decreasing sequence of subsets, each of which is a union of intervals. Suppose that $\displaystyle K:=\bigcap_{n\ge \N}\Omega_n$ is compact.
 Let 
 $\phi_n\colon \Omega_n\to \gamma_2$ be a sequence of absolutely continuous maps with uniformly continuous, equicontinuous Jacobians $J_n$. If $(\phi_n)$ converges uniformly to an injective map $\phi\colon K\to \gamma_2$, and $J_n\vert_k$ converges uniformly to an integrable function $J\colon K\to \R$, then $\phi$ is absolutely continuous with Jacobian $J$.
\end{lem}
We will not include a proof of the above lemma since it is a variant of a lemma in Ma\~n\'e \cite[Thm.~3.3]{mane1987ergodic}
and the proof of \cite{mane1987ergodic} can be modified to obtain a proof of this lemma.
\end{proof}

\bibliographystyle{amsalpha}
\bibliography{biblio.bib}

\providecommand{\bysame}{\leavevmode\hbox to3em{\hrulefill}\thinspace}
\providecommand{\MR}{\relax\ifhmode\unskip\space\fi MR }
\providecommand{\MRhref}[2]{%
  \href{http://www.ams.org/mathscinet-getitem?mr=#1}{#2}
}
\providecommand{\href}[2]{#2}
\begin{thebibliography}{DKRH24}

\bibitem[ABR22]{ABR22}
Jos\'e{}~F. Alves, Wael Bahsoun, and Marks Ruziboev, \emph{Almost sure rates of
  mixing for partially hyperbolic attractors}, J. Differential Equations
  \textbf{311} (2022), 98--157. \MR{4354854}

\bibitem[ABRV23]{ABRV23}
Jos\'e{}~F. Alves, Wael Bahsoun, Marks Ruziboev, and Paulo Varandas,
  \emph{Quenched decay of correlations for nonuniformly hyperbolic random maps
  with an ergodic driving system}, Nonlinearity \textbf{36} (2023), no.~6,
  3294--3318. \MR{4588339}

\bibitem[AGT06]{AGT01}
Artur Avila, S\'ebastien Gou\"ezel, and Masato Tsujii, \emph{Smoothness of
  solenoidal attractors}, Discrete Contin. Dyn. Syst. \textbf{15} (2006),
  no.~1, 21--35. \MR{2191383}

\bibitem[Arn98]{arnold1998random}
Ludwig Arnold, \emph{Random dynamical systems}, Springer Monographs in
  Mathematics, Springer-Verlag, Berlin, 1998. \MR{1723992}

\bibitem[AV10]{AvilaViana10}
Artur Avila and Marcelo Viana, \emph{Extremal {L}yapunov exponents: an
  invariance principle and applications}, Invent. Math. \textbf{181} (2010),
  no.~1, 115--189. \MR{2651382}

\bibitem[Bal00]{BaladiBook}
Viviane Baladi, \emph{Positive transfer operators and decay of correlations},
  Advanced Series in Nonlinear Dynamics, vol.~16, World Scientific Publishing
  Co., Inc., River Edge, NJ, 2000. \MR{1793194}

\bibitem[BC91]{BenedicksCarleson}
Michael Benedicks and Lennart Carleson, \emph{The dynamics of the {H}\'enon
  map}, Ann. of Math. (2) \textbf{133} (1991), no.~1, 73--169. \MR{1087346}

\bibitem[BCS22]{BCS22}
J\'er\^ome Buzzi, Sylvain Crovisier, and Omri Sarig, \emph{Measures of maximal
  entropy for surface diffeomorphisms}, Ann. of Math. (2) \textbf{195} (2022),
  no.~2, 421--508. \MR{4387233}

\bibitem[BCS23]{BCS23}
\bysame, \emph{On the existence of {SRB} measures for {$C^\infty$} surface
  diffeomorphisms}, Int. Math. Res. Not. IMRN (2023), no.~24, 20812--20826.
  \MR{4681273}

\bibitem[BCZG23]{BCZG}
Alex Blumenthal, Michele Coti~Zelati, and Rishabh~S. Gvalani, \emph{Exponential
  mixing for random dynamical systems and an example of {P}ierrehumbert}, Ann.
  Probab. \textbf{51} (2023), no.~4, 1559--1601. \MR{4597327}

\bibitem[BG20]{BG20}
Michael Bj\"orklund and Alexander Gorodnik, \emph{Central limit theorems for
  group actions which are exponentially mixing of all orders}, J. Anal. Math.
  \textbf{141} (2020), no.~2, 457--482. \MR{4179768}

\bibitem[BL85]{BougerolLacroix}
Philippe Bougerol and Jean Lacroix, \emph{Products of random matrices with
  applications to {S}chr\"{o}dinger operators}, Progress in Probability and
  Statistics, vol.~8, Birkh\"{a}user Boston, Inc., Boston, MA, 1985.
  \MR{886674}

\bibitem[BO21]{BenOvadia21}
Snir Ben~Ovadia, \emph{Hyperbolic {SRB} measures and the leaf condition}, Comm.
  Math. Phys. \textbf{387} (2021), no.~3, 1353--1404. \MR{4324380}

\bibitem[Bow75]{BowenLNM}
Rufus Bowen, \emph{Equilibrium states and the ergodic theory of {A}nosov
  diffeomorphisms}, Lecture Notes in Mathematics, vol. Vol. 470,
  Springer-Verlag, Berlin-New York, 1975. \MR{442989}

\bibitem[BP07]{barreira2007nonuniform}
Luis Barreira and Yakov Pesin, \emph{Nonuniform hyperbolicity}, Encyclopedia of
  Mathematics and its Applications, vol. 115, Cambridge University Press,
  Cambridge, 2007, Dynamics of systems with nonzero Lyapunov exponents.
  \MR{2348606}

\bibitem[BRH17]{BrownRodriguezHertz}
Aaron Brown and Federico Rodriguez~Hertz, \emph{Measure rigidity for random
  dynamics on surfaces and related skew products}, J. Amer. Math. Soc.
  \textbf{30} (2017), no.~4, 1055--1132. \MR{3671937}

\bibitem[Bur24]{Burguet24}
David Burguet, \emph{S{RB} measures for {$C^\infty $} surface diffeomorphisms},
  Invent. Math. \textbf{235} (2024), no.~3, 1019--1062. \MR{4701884}

\bibitem[BW10]{burns2010ergodicity}
Keith Burns and Amie Wilkinson, \emph{On the ergodicity of partially hyperbolic
  systems}, Ann. of Math. (2) \textbf{171} (2010), no.~1, 451--489.
  \MR{2630044}

\bibitem[BXY17]{blumenthal2017lyapunov}
Alex Blumenthal, Jinxin Xue, and Lai-Sang Young, \emph{Lyapunov exponents for
  random perturbations of some area-preserving maps including the standard
  map}, Ann. of Math. (2) \textbf{185} (2017), no.~1, 285--310. \MR{3583355}

\bibitem[BXY18]{blumenthal2018lyapunov}
\bysame, \emph{Lyapunov exponents and correlation decay for random
  perturbations of some prototypical 2{D} maps}, Comm. Math. Phys. \textbf{359}
  (2018), no.~1, 347--373. \MR{3781453}

\bibitem[CE80]{ColletEckmann}
Pierre Collet and Jean-Pierre Eckmann, \emph{Iterated maps on the interval as
  dynamical systems}, Progress in Physics, vol.~1, Birkh\"auser, Boston, MA,
  1980. \MR{613981}

\bibitem[Che06]{Chernov06}
N.~Chernov, \emph{Advanced statistical properties of dispersing billiards}, J.
  Stat. Phys. \textbf{122} (2006), no.~6, 1061--1094. \MR{2219528}

\bibitem[Chu20]{chung2020stationary}
Ping~Ngai Chung, \emph{Stationary measures and orbit closures of uniformly
  expanding random dynamical systems on surfaces}, 2020,
  \url{https://arxiv.org/abs/2006.03166}.

\bibitem[CL22]{CastorriniLiverani}
Roberto Castorrini and Carlangelo Liverani, \emph{Quantitative statistical
  properties of two-dimensional partially hyperbolic systems}, Adv. Math.
  \textbf{409} (2022), Paper No. 108625, 122. \MR{4469072}

\bibitem[CLP22]{CLP22}
Vaughn Climenhaga, Stefano Luzzatto, and Yakov Pesin, \emph{S{RB} measures and
  {Y}oung towers for surface diffeomorphisms}, Ann. Henri Poincar\'e
  \textbf{23} (2022), no.~3, 973--1059. \MR{4396671}

\bibitem[CM06]{chernov2006chaotic}
Nikolai Chernov and Roberto Markarian, \emph{Chaotic billiards}, Mathematical
  Surveys and Monographs, vol. 127, American Mathematical Society, Providence,
  RI, 2006. \MR{2229799}

\bibitem[CV13]{DeCastroVarandas}
A.~Castro and P.~Varandas, \emph{Equilibrium states for non-uniformly expanding
  maps: decay of correlations and strong stability}, Ann. Inst. H. Poincar\'e{}
  C Anal. Non Lin\'eaire \textbf{30} (2013), no.~2, 225--249. \MR{3035975}

\bibitem[dCJ02]{deCastro}
Augusto~Armando de~Castro~J\'unior, \emph{Backward inducing and exponential
  decay of correlations for partially hyperbolic attractors}, Israel J. Math.
  \textbf{130} (2002), 29--75. \MR{1919371}

\bibitem[DeW24]{dewitt2024simultaneous}
Jonathan DeWitt, \emph{Simultaneous linearization of diffeomorphisms of
  isotropic manifolds}, J. Eur. Math. Soc. (JEMS) \textbf{26} (2024), no.~8,
  2897--2969. \MR{4756948}

\bibitem[DFL22]{DFL22}
Dmitry Dolgopyat, Bassam Fayad, and Sixu Liu, \emph{Multiple {B}orel-{C}antelli
  lemma in dynamics and multilog law for recurrence}, J. Mod. Dyn. \textbf{18}
  (2022), 209--289. \MR{4447598}

\bibitem[DK07]{dolgopyat2007simultaneous}
Dmitry Dolgopyat and Rapha\"{e}l Krikorian, \emph{On simultaneous linearization
  of diffeomorphisms of the sphere}, Duke Math. J. \textbf{136} (2007), no.~3,
  475--505. \MR{2309172}

\bibitem[DKK04]{DKK04}
Dmitry Dolgopyat, Vadim Kaloshin, and Leonid Koralov, \emph{Sample path
  properties of the stochastic flows}, Ann. Probab. \textbf{32} (2004), no.~1A,
  1--27. \MR{2040774}

\bibitem[DKRH24]{dolgopyat2024exponential}
D.~Dolgopyat, A.~Kanigowski, and F.~Rodriguez~Hertz, \emph{Exponential mixing
  implies {B}ernoulli}, Ann. of Math. (2) \textbf{199} (2024), no.~3,
  1225--1292. \MR{4740539}

\bibitem[DL23]{DemersLiverani}
Mark~F. Demers and Carlangelo Liverani, \emph{Projective cones for sequential
  dispersing billiards}, Comm. Math. Phys. \textbf{401} (2023), no.~1,
  841--923. \MR{4604909}

\bibitem[Dol00]{dolgopyat00}
Dmitry Dolgopyat, \emph{On dynamics of mostly contracting diffeomorphisms},
  Comm. Math. Phys. \textbf{213} (2000), no.~1, 181--201. \MR{1782146}

\bibitem[EL]{eskin2018random}
Alex Eskin and Elon Lindenstrauss, \emph{Random walks on locally homogeneous
  spaces}.

\bibitem[ES23]{elliott2023uniformly}
Rosemary Elliott~Smith, \emph{Uniformly expanding random walks on manifolds},
  Nonlinearity \textbf{36} (2023), no.~11, 5955--5972. \MR{4656974}

\bibitem[FKS13]{fisher2013global}
David Fisher, Boris Kalinin, and Ralf Spatzier, \emph{Global rigidity of higher
  rank {A}nosov actions on tori and nilmanifolds}, J. Amer. Math. Soc.
  \textbf{26} (2013), no.~1, 167--198, With an appendix by James F. Davis.
  \MR{2983009}

\bibitem[Gal10]{Galatolo10}
Stefano Galatolo, \emph{Hitting time in regular sets and logarithm law for
  rapidly mixing dynamical systems}, Proc. Amer. Math. Soc. \textbf{138}
  (2010), no.~7, 2477--2487. \MR{2607877}

\bibitem[GL06]{GouezelLiverani06}
S\'ebastien Gou\"ezel and Carlangelo Liverani, \emph{Banach spaces adapted to
  {A}nosov systems}, Ergodic Theory Dynam. Systems \textbf{26} (2006), no.~1,
  189--217. \MR{2201945}

\bibitem[GS14]{GorodnikSpatzier14}
Alexander Gorodnik and Ralf Spatzier, \emph{Exponential mixing of nilmanifold
  automorphisms}, J. Anal. Math. \textbf{123} (2014), 355--396. \MR{3233585}

\bibitem[HJ13]{horn2013matrix}
Roger~A. Horn and Charles~R. Johnson, \emph{Matrix analysis}, second ed.,
  Cambridge University Press, Cambridge, 2013. \MR{2978290}

\bibitem[H{\"{o}}r76]{hormander1976boundary}
Lars H{\"{o}}rmander, \emph{The boundary problems of physical geodesy}, Arch.
  Rational Mech. Anal. \textbf{62} (1976), no.~1, 1--52. \MR{602181}

\bibitem[Kif86]{kifer1986ergodic}
Yuri Kifer, \emph{Ergodic theory of random transformations}, Progress in
  Probability and Statistics, vol.~10, Birkh\"auser Boston, Inc., Boston, MA,
  1986. \MR{884892}

\bibitem[KM96]{KleinbockMargulis96}
D.~Y. Kleinbock and G.~A. Margulis, \emph{Bounded orbits of nonquasiunipotent
  flows on homogeneous spaces}, Sina\u{\i}'s {M}oscow {S}eminar on {D}ynamical
  {S}ystems, Amer. Math. Soc. Transl. Ser. 2, vol. 171, Amer. Math. Soc.,
  Providence, RI, 1996, pp.~141--172. \MR{1359098}

\bibitem[Liu16]{liu2016lyapunov}
Xiao-{C}huan Liu, \emph{Lyapunov exponents approximation, symplectic cocycle
  deformation and a large deviation theorem}, ProQuest LLC, Ann Arbor, MI,
  2016, Thesis (Ph.D.)--IMPA.

\bibitem[Liv04]{Liverani04}
Carlangelo Liverani, \emph{On contact {A}nosov flows}, Ann. of Math. (2)
  \textbf{159} (2004), no.~3, 1275--1312. \MR{2113022}

\bibitem[LQ95]{Liu1995smooth}
Pei-Dong Liu and Min Qian, \emph{Smooth ergodic theory of random dynamical
  systems}, Lecture Notes in Mathematics, vol. 1606, Springer-Verlag, Berlin,
  1995. \MR{1369243}

\bibitem[McS34]{mcshane1934extension}
E.~J. McShane, \emph{Extension of range of functions}, Bull. Amer. Math. Soc.
  \textbf{40} (1934), no.~12, 837--842. \MR{1562984}

\bibitem[Mn87]{mane1987ergodic}
Ricardo Ma\~n\'e, \emph{Ergodic theory and differentiable dynamics}, Ergebnisse
  der Mathematik und ihrer Grenzgebiete (3) [Results in Mathematics and Related
  Areas (3)], vol.~8, Springer-Verlag, Berlin, 1987, Translated from the
  Portuguese by Silvio Levy. \MR{889254}

\bibitem[OP22]{obata2022positive}
Davi Obata and Mauricio Poletti, \emph{Positive exponents for random products
  of conservative surface diffeomorphisms and some skew products}, J. Dynam.
  Differential Equations \textbf{34} (2022), no.~3, 2405--2428. \MR{4482258}

\bibitem[Pal00]{PalisConj}
Jacob Palis, \emph{A global view of dynamics and a conjecture on the denseness
  of finitude of attractors}, G\'eom\'etrie complexe et syst\`emes dynamiques
  (Orsay, 1995), no. 261, SMF, 2000, pp.~xiii--xiv, 335--347. \MR{1755446}

\bibitem[Pot22]{potrie2022remark}
Rafael Potrie, \emph{A remark on uniform expansion}, Rev. Un. Mat. Argentina
  \textbf{64} (2022), no.~1, 11--21. \MR{4477288}

\bibitem[PP90]{ParryPollicott}
William Parry and Mark Pollicott, \emph{Zeta functions and the periodic orbit
  structure of hyperbolic dynamics}, Ast\'erisque (1990), no.~187-188, 268.
  \MR{1085356}

\bibitem[Roh64]{rohlin1964exact}
V.A. Rohlin, \emph{Exact endomorphisms of a {L}ebesgue space}, 15 papers on
  topology and logic, American {M}athematical {S}ociety {T}ranslations.
  {S}eries 2, vol.~39, American Mathematical Society, Providence, RI, 1964,
  pp.~1--36.

\bibitem[Roh67]{rohlin1967lectures}
V.~A. Rohlin, \emph{Lectures on the entropy theory of transformations with
  invariant measure}, Uspehi Mat. Nauk \textbf{22} (1967), no.~5(137), 3--56.
  \MR{217258}

\bibitem[Rue78]{RuelleTF}
David Ruelle, \emph{Thermodynamic formalism}, Encyclopedia of Mathematics and
  its Applications, vol.~5, Addison-Wesley Publishing Co., Reading, MA, 1978,
  The mathematical structures of classical equilibrium statistical mechanics,
  With a foreword by Giovanni Gallavotti and Gian-Carlo Rota. \MR{511655}

\bibitem[Shu87]{shub1987global}
Michael Shub, \emph{Global stability of dynamical systems}, Springer-Verlag,
  New York, 1987, With the collaboration of Albert Fathi and R\'{e}mi Langevin,
  Translated from the French by Joseph Christy. \MR{869255}

\bibitem[Shu06]{shub2006all}
\bysame, \emph{All, most, some differentiable dynamical systems}, International
  {C}ongress of {M}athematicians. {V}ol. {III}, Eur. Math. Soc., Z\"urich,
  2006, pp.~99--120. \MR{2275672}

\bibitem[Sin72]{SinaiGibbs}
Ja.~G. Sina{\u{\i}}, \emph{Gibbs measures in ergodic theory}, Uspehi Mat. Nauk
  \textbf{27} (1972), no.~4(166), 21--64. \MR{399421}

\bibitem[Ste91]{stewart1991perturbation}
Gilbert~W. Stewart, \emph{Perturbation theory for the singular value
  decomposition}, {SVD} and signal processing, {II}: algorithms, analysis, and
  applications (Richard~J. Vaccaro, ed.), Elsevier, Amsterdam, 1991.

\bibitem[Ste97]{steele1997probability}
J.~Michael Steele, \emph{Probability theory and combinatorial optimization},
  CBMS-NSF Regional Conference Series in Applied Mathematics, vol.~69, Society
  for Industrial and Applied Mathematics (SIAM), Philadelphia, PA, 1997.
  \MR{1422018}

\bibitem[Tsu01]{Tsujii01}
Masato Tsujii, \emph{Fat solenoidal attractors}, Nonlinearity \textbf{14}
  (2001), no.~5, 1011--1027. \MR{1862809}

\bibitem[TZ23]{TsujiiZhang23}
Masato Tsujii and Zhiyuan Zhang, \emph{Smooth mixing {A}nosov flows in
  dimension three are exponentially mixing}, Ann. of Math. (2) \textbf{197}
  (2023), no.~1, 65--158. \MR{4513143}

\bibitem[Via98]{VianaICM}
Marcelo Viana, \emph{Dynamics: a probabilistic and geometric perspective},
  Proceedings of the {I}nternational {C}ongress of {M}athematicians, {V}ol. {I}
  ({B}erlin, 1998), 1998, pp.~557--578. \MR{1648047}

\bibitem[Via99]{VianaLima}
\bysame, \emph{Lecture notes on attractors and physical measures},
  Monograf\'ias del Instituto de Matem\'atica y Ciencias Afines [Monographs of
  the Institute of Mathematics and Related Sciences], vol.~8, Instituto de
  Matem\'atica y Ciencias Afines, IMCA, Lima, 1999, A paper from the 12th
  Escuela Latinoamericana de Matem\'aticas (XII-ELAM) held in Lima, June
  28--July 3, 1999. \MR{2007887}

\bibitem[Via08]{Viana08}
\bysame, \emph{Almost all cocycles over any hyperbolic system have nonvanishing
  {L}yapunov exponents}, Ann. of Math. (2) \textbf{167} (2008), no.~2,
  643--680. \MR{2415384}

\bibitem[Via14]{viana2014lectures}
\bysame, \emph{Lectures on {L}yapunov exponents}, Cambridge Studies in Advanced
  Mathematics, vol. 145, Cambridge University Press, Cambridge, 2014.
  \MR{3289050}

\bibitem[WY01]{WangYoung01}
Qiudong Wang and Lai-Sang Young, \emph{Strange attractors with one direction of
  instability}, Comm. Math. Phys. \textbf{218} (2001), no.~1, 1--97.
  \MR{1824198}

\bibitem[You98]{Young98}
Lai-Sang Young, \emph{Statistical properties of dynamical systems with some
  hyperbolicity}, Ann. of Math. (2) \textbf{147} (1998), no.~3, 585--650.
  \MR{1637655}

\bibitem[You99]{Young99}
\bysame, \emph{Recurrence times and rates of mixing}, Israel J. Math.
  \textbf{110} (1999), 153--188. \MR{1750438}

\end{thebibliography}

\end{document}